    \newtheorem{theorem}{Theorem}\numberwithin{theorem}{section}
    \newtheorem{prop}[theorem]{Proposition}
\newtheorem{proposition}[theorem]{Proposition}
    \newtheorem{cor}[theorem]{Corollary}
\newtheorem{corollary}[theorem]{Corollary}
    \newtheorem{lemma}[theorem]{Lemma}
    \newtheorem{lem}[theorem]{Lemma}
    \newtheorem{conj}[theorem]{Conjecture}
	\newtheorem{conjecture}[theorem]{Conjecture}
    \theoremstyle{definition}
    \newtheorem{definition}[theorem]{Definition}
    \newtheorem{algorithm}[theorem]{Algorithm}
    \newtheorem{example}[theorem]{Example}
\newtheorem{exercise}[theorem]{Exercise}
\newtheorem{prob}[theorem]{Problem}
    \newtheorem{remark}[theorem]{Remark}
    \newtheorem{sagedemo}[theorem]{Sage Example}
    \numberwithin{equation}{section}
    \newcommand{\Sagecombinat}{{\sc Sage-Combinat}\xspace}
    \newcommand{\Sage}{{\sc Sage}\xspace}
    \definecolor{Gray}{rgb}{0.9,0.9,0.9} 
    \newcommand{\BBaf}{\mathbb{B}_{\mathrm{af}}}
    \newcommand{\diag}{\mathrm{dg}}
    \newcommand{\hook}{\mathrm{hook}}
    \newcommand{\charge}{\mathrm{charge}}
    \newcommand{\shape}{\mathrm{shape}}
    \newcommand{\weight}{\mathrm{weight}}
    \newcommand{\height}{\mathrm{height}}
    \newcommand{\la}{\lambda}
    \newcommand{\La}{{\Lambda}}
    \newcommand{\QQ}{{\mathbb Q}}
    \newcommand{\ZZ}{{\mathbb Z}}
    \newcommand{\NN}{{\mathbb N}}
    \newcommand{\CC}{{\mathbb C}}
    \newcommand{\BB}{{\mathbb B}}
    \newcommand{\KK}{{\mathbb K}}
    \newcommand{\AAA}{{\mathbb A}}
    \newcommand{\F}{{\mathcal F}}
    \newcommand{\M}{{\mathcal M}}
    \newcommand{\PP}{{\mathcal P}}
    \newcommand{\LL}{{\mathcal L}}
    \newcommand{\Sop}{{\bf S}}
    \newcommand{\Bop}{{\bf B}}
    \newcommand{\coeff}{{\Big|}}
    \renewcommand{\gray}{\color{Gray}}
    \newcommand{\mfp}{{\mathfrak p}}
    \newcommand{\mfc}{{\mathfrak c}}
    \newcommand{\mfa}{{\mathfrak a}}
    \newcommand{\res}{\mathrm{res}}
    \newcommand{\RSK}{\mathrm{RSK}}
    \newcommand{\spin}{\mathrm{spin}}
    \newcommand{\sK}{{\bf K}}
    \newcommand{\At}{{\tilde A}}
    \newcommand{\lleft}{{\rm left}}
    \renewcommand{\bot}{{\rm bot}}
    \newcommand{\nequiv}{{~\equiv\!\!\!\!\!/~~~}}
    \newcommand{\cover}{{-\!\!\!\!\!\!>}}
    \newcommand{\TAB}{{\vskip .1in \noindent { $A^{(k)}$: }}}
    \newcommand{\OPER}{{\vskip .1in \noindent { $\At^{(k)}$: }}}
    \newcommand{\STRONG}{{\vskip .1in \noindent {$s^{(k)}$: }}}
    \newcommand{\WEAK}{{\vskip .1in \noindent {$\tilde{s}^{(k)}$: }}}
    \newcommand{\STRONGWEAK}{{\vskip .1in \noindent{$s^{(k)}$, $\tilde{s}^{(k)}$: }}}
    \newcommand{\SSYT}{\mathrm{SSYT}}
    \newcommand{\hh}{\tilde{\bf h}}
    \newcommand{\FF}{\tilde{F}}
    \renewcommand{\SS}{\FF}
    \renewcommand{\ss}{{\bf s}}
    \newcommand{\uu}{{A}}
    \newcommand{\sort}{\mathrm{sort}}
    \newcommand{\pchoose}[2]{\begin{pmatrix}#1\\ #2\end{pmatrix}}
    \newcommand{\Strong}{\mathrm{Strong}}
    \newcommand{\Weak}{\mathrm{Weak}}
    \newcommand{\Int}{\mathrm{Int}}
    \newcommand{\Des}{\mathrm{Des}}
    \newcommand{\arm}{\mathrm{arm}}
\newdimen\squaresize \squaresize=9pt
\newdimen\thickness \thickness=0.4pt
\def\square#1{\hbox{\vrule width \thickness
     \vbox to \squaresize{\hrule height \thickness\vss
        \hbox to \squaresize{\hss#1\hss}
     \vss\hrule height\thickness}
\unskip\vrule width \thickness}
\kern-\thickness}
\def\vsquare#1{\vbox{\square{$#1$}}\kern-\thickness}
\def\blk{\omit\hskip\squaresize}
\def\noir{\vrule height\squaresize width\squaresize}%
\def\gris{\gray{\vrule height\squaresize width\squaresize}}
\def\young#1{
\vbox{\smallskip\offinterlineskip
\halign{&\vsquare{##}\cr #1}}}
\def\thisbox#1{\kern-.09ex\fbox{#1}}
\def\downbox#1{\lower1.200em\hbox{#1}}
\renewcommand{\appendix}{%s
   \par
   \setcounter{section}{0}%
   \renewcommand{\thesection}{\Alph{section}}%
}
\begin{document}

    \begin{center}
        {\huge {\bf $k$-Schur functions}}\\[5mm]
        {\huge {\bf and}}\\[5mm]
        {\huge {\bf affine Schubert calculus}}
   \end{center}
   
        \bigskip\medskip
        
    \begin{flushleft}
        first released September 2012, last updated October 2013
    \end{flushleft}

    \vfill
    
    \vspace{3cm}

    \begin{center}
    \includegraphics[width=5in]{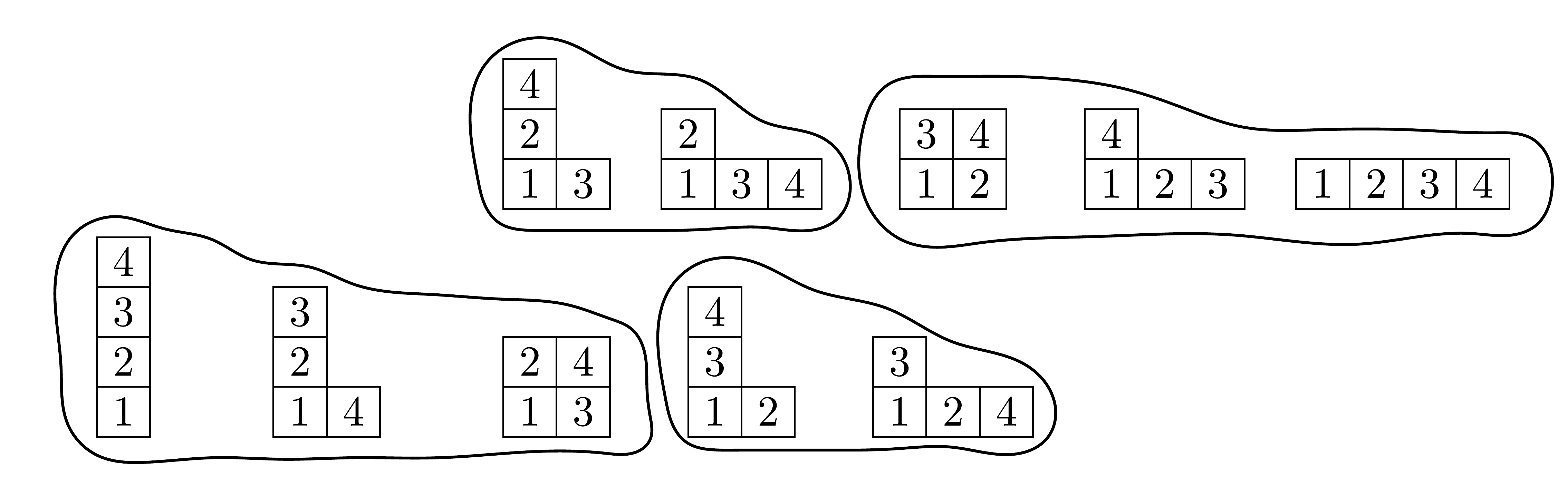}
    \end{center}

    \vfill

    \begin{flushright}
        {\large
        Thomas Lam~~~\\
        Luc Lapointe~~~\\
        Jennifer Morse~~~\\
        Anne Schilling~~~\\
        Mark Shimozono~~~\\
        Mike Zabrocki~~~\\
        }
    \end{flushright}

    \bigskip
    \bigskip
    \bigskip

    \noindent Copyright \copyright\ 2012 by the authors.\\
    These lecture notes may be reproduced in their entirety for
    non-commercial purposes.
    
    \tableofcontents
    \def\Gr{\mathrm{Gr}}
\def\tF{\tilde F}
\def\A{{\mathbb A}}
\def\Fl{\mathrm{Fl}}
\def\af{\mathrm{af}}
\chapter{Introduction}

Affine Schubert calculus is a subject that lies at the crossroads of combinatorics, 
geometry, and representation theory.  Its modern development is motivated by two seemingly 
unrelated directions.  One is the introduction of $k$-Schur functions in the study of 
Macdonald polynomial positivity, a mostly combinatorial branch of symmetric function 
theory.  The other direction is the study of the Schubert bases of the (co)homology 
of the affine Grassmannian, an algebro-topological formulation of a problem in 
enumerative geometry.

Classical Schubert calculus is a branch of enumerative algebraic geometry concerned 
with problems of the form:
\begin{quotation}
How many lines $L$ in 3-space intersect four fixed lines $L_1,L_2,L_3,L_4$?
\end{quotation}
In general, lines are replaced by affine linear subspaces, and conditions on the dimensions of intersections are imposed.  When $L_1,L_2,L_3,L_4$ are in generic position, the answer to the above problem is two; this is a pleasant surprise, since in linear algebra one expects to find $0,1$, or $\infty$ solutions.  Schubert~\cite{Schubert:1879} studied such ``Schubert problems'' in the 19th century.  
At the turn of the 20th century, Hilbert posed as his fifteenth problem the 
rigorous foundation of Schubert's enumerative calculus.  
Subsequent developments in geometry and topology converted
such Schubert problems into problems of computation in the 
cohomology ring $H^*(\Gr(k,n))$ of the Grassmannian
$\Gr(k,n)$ of $k$-planes in $n$-space.  The problems were 
reduced to finding structure constants, now called
{\it Littlewood-Richardson coefficients}~\cite{LR:1934},
of a certain ``Schubert basis'' for $H^*(\Gr(k,n))$.  

The explicit realization of these computations using the theory of Schur functions played 
an important role in transforming Schubert calculus into a contemporary theory 
that stretches into many fields.  The Schur functions $s_\lambda$ form a 
basis for the symmetric function space $\Lambda$ and at the turn of the
century, it was discovered that they match irreducible representations of 
the symmetric group. Later, a deep connection between Schur functions
and the geometry of 
Grassmannians was established when it was shown that the Schubert structure
constants exactly equal coefficients in the product of Schur functions in 
$\Lambda$.  The rich combinatorial backbone of the theory of Schur functions,
including the Robinson--Schensted algorithm, jeu-de-taquin, the plactic 
monoid (see for example~\cite{Sagan}), 
crystal bases~\cite{Nakashima:1993}, and puzzles~\cite{KnutsonTao:2003},
now underlies Schubert calculus and in particular produces
a direct formula for the Littlewood--Richardson coefficients.
The influence of Schur functions on the geometry of Grassmannians 
provoked the broadening of Schubert calculus to other studies
ranging from representation theory to physics.

A trend in Schubert calculus is to generalize the classical setup in
two basic directions: (1) to vary the underlying geometric object being 
considered by replacing the Grassmannian by the flag variety, or more generally 
by a partial flag variety of a Kac--Moody group, and (2) to vary the algebraic 
structure considered by replacing cohomology by equivariant cohomology, 
$K$-theory, quantum cohomology, or other algebraic invariants.  Our interest
is in the case when the Grassmannian is replaced by infinite-dimensional 
spaces $\Gr_G$ known as affine Grassmannians.

Investigations of the quantum cohomology rings of flag varieties led
Peterson~\cite{Pet} to begin a systematic study in this direction
for any complex simple simply-connected algebraic group $G$.
Applying work of Kostant and Kumar~\cite{KK} on the topology of Kac-Moody flag varieties, 
Peterson showed that the equivariant homology $H_T(\Gr_G)$ is isomorphic to a subalgebra
of Kostant and Kumar's nilHecke ring. Moreover, he proved that the 
Littlewood--Richardson coefficients of $H_T(\Gr_G)$ could be identified 
with the 3-point Gromov--Witten invariants of the flag variety of $G$.
A classical result of Quillen \cite{Q} establishes that the affine Grassmannian $\Gr_G$ 
is itself homotopy-equivalent to the group $\Omega K$ of based loops into the 
maximal compact subgroup $K\subset G$.  This places $\Gr_G$ in a unique position 
amongst the homogeneous spaces of all Kac--Moody groups.  It endows $H_T(\Gr_G)$ 
with the structure of a Hopf algebra, and is also partly responsible for the important 
position that the affine Grassmannian has in geometric representation theory.

The aim of this book is to present ongoing work developing a 
theory of {\it affine Schubert calculus} in the spirit of
classical Schubert calculus; here the Grassmannian is replaced 
by the affine Grassmannian.  As with Schubert calculus, topics 
under the umbrella of affine Schubert calculus are vast, but now
it is the combinatorics of a family of polynomials 
called $k$-Schur functions that underpins the theory.

The theory of $k$-Schur functions originated in the apparently unrelated 
study of {\it Macdonald polynomials}.  Macdonald polynomials 
are symmetric functions over $\mathbb Q(q,t)$ that possess 
remarkable properties; the proofs of which have inspired deep 
work in many areas (e.g. double affine Hecke algebras \cite{Che},
quantum relativistic systems \cite{Ruij},
Hilbert schemes of points in the plane \cite{Haiman:2001}).
Macdonald conjectured in the late 80's that the coefficients expressing Macdonald polynomials
in terms of the Schur basis lie in $\mathbb N[q,t]$.  
Since then, the Macdonald/Schur transition coefficients have
been intensely studied from a combinatorial, representation theoretic, 
and algebro-geometric perspective.

In one such study~\cite{LLM:2003}, Lapointe, Lascoux, and Morse found 
computational evidence for a family of new bases for subspaces 
$\Lambda_k^t$ in a filtration $\Lambda^{t}_1\subset\Lambda^{t}_2\subset\dots
\subset\Lambda^t_{\infty}$ of  $\Lambda$.  Conjecturally, the
star feature of each basis was the property that Macdonald polynomials 
expand positively in terms of it, giving a remarkable factorization 
for the Macdonald/Schur transition matrices over $\mathbb N[q,t]$.  
Pursuant investigations of these bases led to
various conjecturally equivalent characterizations and the
discovery that they refined the very aspects of Schur functions 
that make them so fundamental and wide-reaching.
As such, they are now generically called {\it $k$-Schur functions}.

The role of $k$-Schur functions in affine Schubert
calculus emerged over a number of years.  The springboard was a
realization that the combinatorial backbone of $k$-Schur theory 
lies in the setting of the type-$A$ affine Weyl group.
Generalizing the classical theory of Schur functions,
Pieri rules, Young's lattice, the Cauchy identity, tableaux, 
and Stanley symmetric functions were refined using $k$-Schur
functions~\cite{LM:2005,LM:2007,Lam:2006}. 
These are naturally described in terms of 
posets of elements in $\tilde A_k$.
For example, the number of monomial terms in an entry of the  Macdonald/$k$-Schur 
matrix equals the number of reduced expressions for an element 
in $\tilde A_k$.

The combinatorial exploration fused into a geometric one when
the $k$-Schur functions were connected to the quantum cohomology 
of Grassmannians.
%Quantum cohomology originated in string theory and symplectic geometry.
%It has had a great impact on algebraic geometry and is intimately
%tied to the {\it Gromov--Witten invariants}.  These invariants appear
%in the study of subtle enumerative questions such as: how many degree 
%$d$ plane curves of genus $g$ contain $r$ generic points?  
Lapointe and Morse~\cite{LM:2008} showed that each
Gromov--Witten invariant for the quantum cohomology of Grassmannians 
exactly equals a $k$-Schur coefficient in the product of
{$k$-Schur functions} in $\Lambda$.  
A basis of {\it dual (or affine) $k$-Schur functions} was also 
introduced in~\cite{LM:2008}.  In response to questions
about the geometric role for dual $k$-Schur functions and
the significance of the complete set of $k$-Schur coefficients,
Morse and Shimozono conjectured that the 
Schubert bases for cohomology and homology of the affine 
Grassmannian $\Gr$ are given by the
dual $k$-Schur functions and the $k$-Schur functions, respectively. 
Lam proved the conjectures in~\cite{Lam:2008}.  Since then,
the synthesis of affine Schubert calculus and $k$-Schur function theory
has produced a subject involving prolific research in mathematics,
computer science, and physics.

\bigskip

This book arose from an NSF funded Focused Research Group entitled 
``Affine Schubert Calculus: Combinatorial, geometric, 
physical, and computational aspects'', which involved Thomas Lam, 
Luc Lapointe, Jennifer Morse, Anne Schilling, Mark Shimozono, 
Nicolas M. Thi\'ery, and Mike Zabrocki as active participants among others.  
Our exposition here grew out of several lecture series given 
at a summer school on `Affine Schubert Calculus' organized by 
Anne Schilling and Mike Zabrocki 
and held in July 2010 at the Fields Institute in Toronto.  

We give the story in three parts, through varying lenses.
Chapter~\ref{chapter.k schur primer} presents the origins and
early work on $k$-Schur functions, emphasizing the symmetric 
function setting and the combinatorics therein.
The computational aspects are highlighted and illustrated with
examples in \Sage~\cite{sage,sage-combinat}.  More information 
about the open-source computer algebra system \Sage is given in 
Appendix~\ref{appendix:sage}.  Chapter~\ref{chapter.stanley symmetric 
functions} is Thomas Lam's synopsis of his summer school lectures 
entitled ``affine Stanley symmetric functions''.
This chapter explains the combinatorial connections 
between Stanley symmetric functions and $k$-Schur functions 
via the algebraic constructions of nilCoxeter and nilHecke rings.  
Some of the latter constructions are presented for arbitrary root systems.
Chapter~\ref{chapter.affineSchubert} is Mark Shimozono's 
synopsis of his lectures on ``generalizations to other affine group types''.
This chapter presents the nilHecke ring in the very general 
Kac-Moody setting and develops some of the geometric connections.  
The general construction is then applied to the situation of the affine Grassmannian.
Each chapter is self-contained and can in principle be read
independently of the others.

\bigskip

Let us outline the contents of this book.
As discussed, the origin of $k$-Schur functions was in a study of 
Macdonald polyomials where they are characterized as symmetric functions 
that depend on one additional parameter $t$.  However, the bulk
of our presentation lies in the $t=1$ setting.  Although the general 
case is needed for implications in representation and Macdonald theory,
the proven combinatorial and geometric properties largely center around
this special case of $k$-Schur functions.

Extensive computer experimentation led to many conjectured
properties of the $k$-Schur functions.  Most notable is the
$k$-Pieri rule for $k$-Schur functions, allowing one to express 
the product of a $k$-Schur function with a homogeneous 
symmetric function in terms of $k$-Schur functions.  
Chapter~\ref{chapter.k schur primer} starts by laying the
combinatorial foundation needed to describe the $k$-Pieri rule 
including partitions, cores, and the affine Weyl group of type-$A$.
Then, for fixed $k$ and for $t=1$, the $k$-Schur functions are presented 
as the family of symmetric functions which satisfy this Pieri rule.  
These functions form a  basis of a subalgebra of the ring of 
symmetric functions.  
The dual basis lies in a Hopf-dual algebra which may be 
realized as a quotient of the ring of symmetric functions.   
Chapter~\ref{chapter.k schur primer} studies 
the $k$-Schur functions and their duals as symmetric functions, 
including a detailed summary of the 
{\it weak and strong tableaux} for which the $k$-Schur functions 
and their duals are the generating functions.  
Section~\ref{sec:strongweakduality} of this chapter includes an 
account of the affine insertion algorithm 
of~\cite{LLMS:2006}, which explains how the generating functions for 
strong tableaux ($k$-Schur functions) 
are known to be dual to the generating function for weak tableaux 
(dual $k$-Schur functions).

For arbitrary $t$, the $k$-Schur functions span a subspace of the ring of symmetric 
functions which is closed under the coproduct operation.  
It was in this setting that the $k$-Schur functions originally
arose.  They were first defined as a sum of the usual Schur functions
over a combinatorially defined collection of tableaux known as a $k$-atom.  
Lapointe, Lascoux, and Morse~\cite{LLM:2003} conjectured that the Macdonald 
symmetric functions expand positively in terms of $k$-Schur functions.  
An obvious difficulty with this approach is a missing algebraic connection 
that could be used to connect Macdonald symmetric functions with the 
combinatorics of $k$-atoms.
A second definition of the $k$-Schur functions was given in terms of 
symmetric function operators and followed
in subsequent research~\cite{LM:2003,LM2:2003}.
Chapter~\ref{chapter.k schur primer}, Section~\ref{sec:threedef}
discusses these definitions as well as several others
that are conjecturally equivalent.
Section~\ref{section.properties} is used to give a list of 
mostly conjectural properties of $k$-Schur functions
and an account of what is known (to date) about the status of these conjectures.  

Throughout Chapter~\ref{chapter.k schur primer} we have included 
examples of computations with \Sage in order to demonstrate examples 
of the formulas, but also to show how to use the functions that have 
been written by developers and incorporated into \Sage.  These examples
will hopefully both inspire and encourage exploration so that readers 
can generate further data and 
make new conjectures about $k$-Schur functions and their duals.

Chapter~\ref{chapter.stanley symmetric functions} then goes into more
depth about  $k$-Schur functions in the setting of the nil-Coxeter algebra.
In the early 1980s, Stanley~\cite{Sta} became 
interested in the enumeration of the reduced words in 
the symmetric group.  This led him to define a family of symmetric functions $\{F_w \mid w \in S_n\}$ now 
known as {\it Stanley symmetric functions}.  In \cite{Lam:2006}, Lam showed that the dual $k$-Schur functions 
were a special case of the affine Stanley symmetric functions $\tF_w$, analogues of Stanley's symmetric 
functions for the affine symmetric group. 

In earlier work of Fomin and Stanley~\cite{FS}, it was shown that some of the main properties of 
Stanley symmetric functions could be obtained systematically from the nilCoxeter algebra of the 
symmetric group.  This algebra is the associated graded algebra of the group algebra $\CC[S_n]$ 
with respect to the length filtration.  The affine nilCoxeter algebra played the same role 
for affine symmetric functions, and this provided an algebraic tool to study $k$-Schur functions 
and their duals.  This interplay between algebra, combinatorics and symmetric functions is the 
main theme of Chapter~\ref{chapter.stanley symmetric functions}.  The connection to the nilHecke
ring of Kostant and Kumar~\cite{KK} is also explained and parts of the theory is carried 
out in the case of an arbitrary Weyl group.

Chapter~\ref{chapter.affineSchubert} puts the preceding chapters in a more geometric context, and 
begins with a careful development of Kostant and Kumar's nilHecke ring $\A$~\cite{KK}.  The nilHecke
ring can roughly be described as the smash product of the nilCoxeter algebra and a polynomial ring 
and it was introduced to study the torus equivariant cohomology of Kac--Moody partial flag varieties.
This ring acts as divided difference operators on the equivariant cohomology.

Peterson~\cite{Pet} studied the equivariant homology $H_T(\Gr_G)$ of the affine Grassmannian $\Gr_G$ 
of the complex simple simply-connected algebraic group $G$ as a Hopf algebra with the following idea: 
applying the homotopy equivalences $\Gr \simeq \Omega K$ and $\Fl_\af \simeq LK/T_{\mathbb R}$ the natural 
inclusion $\Omega K \hookrightarrow LK/T_{\mathbb R}$ gives rise to an action of $H_T(\Gr_G)$ on $H_T(\Fl_\af)$. 
(Here $\Fl_\af$ denotes the affine flag variety of $G$.)  This action can be described in terms of divided 
difference operators, giving an injection $j: H_T(\Gr_G) \to \A$.  
Peterson's work is given a thorough treatment in 
Chapter~\ref{chapter.affineSchubert}, Section~\ref{sec:affineGrassmannian}.

Using the natural relation between the nilCoxeter algebra and the nilHecke ring, in~\cite{Lam:2006} Lam 
confirmed a conjecture of Morse and Shimozono identifying polynomial representatives for the Schubert 
classes of the affine Grassmannian as 
the $k$-Schur functions in homology and the dual $k$-Schur functions in cohomology.  The algebraic part 
of this result is established in Chapter~\ref{chapter.stanley symmetric functions}, Theorems \ref{thm:aB} and \ref{thm:aBHopf}.

We now discuss various generalizations of (dual) $k$-Schur functions, which are symmetric-function 
versions of the Schubert bases of the dual Hopf algebras of the homology $H_*(\Gr_{SL_n})$ and 
cohomology $H^*(\Gr_{SL_n})$ of the type A affine Grassmannian $\Gr_{SL_n}$. For the affine 
Grassmannians $\Gr_G$ for $G$ of classical type, analogous symmetric functions have been 
defined~\cite{LSS:C,Pon}. However, only for the analogues of dual $k$-Schur functions, is an explicit 
monomial expansion known. The classical type analogues of $k$-Schur functions are only defined
by duality and little is known about their combinatorics.

There is an equivariant or ``double'' analogue of $k$-Schur functions,
called $k$-double Schur functions \cite{LS:kdoubleSchur},
which are to $k$-Schur functions what double Schubert polynomials are to Schubert polynomials.
These are symmetric functions for the Schubert bases of the equivariant homology $H_T(\Gr_{SL_n})$
and $H^T(\Gr_{SL_n})$ for the ``small torus'' $T$, a maximal torus in $G$ (as opposed to 
the maximal torus in the affine Kac-Moody group).
The $k$-Schur functions are recovered from their double analogues by setting some
variables to zero. Aside from setting up the correct symmetric function rings
and bases, the only combinatorial result in this context is a Pieri rule
for $H_T(\Gr_{SL_n})$.

Essentially all of the general theory presented here has an analogue in $K$-theory,
which carries more information than (co)homology. Passing from the $k$-Schur function to its 
$K$-theoretic analogue, is like passing from a Schubert polynomial to a Grothendieck polynomial. For the affine Grassmannian,
as in (co)homology one again has a pair of dual Hopf algebras, but one obtains two \textit{pairs} of
dual bases; both algebras have a structure sheaf basis and an ideal sheaf basis.
Kostant and Kumar developed the
torus-equivariant $K$-theory of Kac-Moody homogeneous spaces \cite{KK:K}
and Peterson's theory can be carried out in $K$-theory as well \cite{LSS}.
In particular Peterson's $j$-basis (see Chapter \ref{chapter.affineSchubert}, Section \ref{SS:jbasis}), 
which is defined algebraically using a leading term condition
for an expansion in the divided difference basis, has an analogue (called the $k$-basis in \cite{LSS})
that corresponds to ideal sheaves of Schubert varieties in the affine Grassmannian.
Peterson's ``quantum equals affine'' theorem \cite{Pet,LS:QH} 
(see Chapter \ref{chapter.affineSchubert}, Section \ref{SS:quantumequalsaffine}) has an analogue in $K$-theory: 
the structure sheaves of opposite Schubert varieties in the quantum $K$-theory $QK^T(G/B)$ of finite-dimensional 
flag varieties $G/B$, appear to multiply in the same way as the structure sheaves of the Schubert varieties in
the $K$-homology $K_T(\Gr_G)$ of the affine Grassmannian \cite{LaLiMiSh}. To establish this connection
one must prove a conjectural Chevalley formula of Lenart and Postnikov \cite[Conjecture 17.1]{LP:2007} for quantum K-theory.

%%%%%%%%%%%%%%%%%%%%%%%%%%%%%%%%%%%%%%%%%%%%%%%%%%%%%%
\section*{Acknowledgements}

This material is based upon work supported by the National Science Foundation under Grant No. DMS-0652641.
``Any opinions, findings, and conclusions or recommendations expressed in this
material are those of the author(s)
and do not necessarily reflect the views of the National Science Foundation.''
We are grateful to the Fields Institute in Toronto for helping organize and support the summer school and
workshop on ``Affine Schubert calculus''.

We would like to thank Tom Denton and Karola M\'esz\'aros for helpful comments and additions
on Chapter~\ref{chapter.k schur primer}, and Jason Bandlow, Chris Berg, Nicolas M. Thi\'ery as well as many
other \Sage developers for their help with the \Sage implementations.

\appendix

\section{Appendix: \Sage}
\label{appendix:sage}

\Sage~\cite{sage} is a completely open source
general purpose mathematical software system, which appeared under the
leadership of William Stein (University of Washington) and has
developed explosively within the last five years. It is similar to
{\sc Maple}, {\sc MuPAD}, {\sc Mathematica}, {\sc Magma}, and up to
some point {\sc Matlab}, and is based on the popular Python
programming language.  \Sage has gained strong momentum in the
mathematics community far beyond its initial focus in number theory,
in particular in the field of combinatorics, see~\cite{sage-combinat}.

Tutorials and instructions on how to install \Sage can be found at the main \Sage website
{\tt http://www.sagemath.org/}. For example, for the basic \Sage syntax and programming
tricks see {\tt http://www.sagemath.org/doc/tutorial/programming.html}.

Many aspects related to $k$-Schur functions and symmetric functions in general
have been implemented in \Sage and in fact are still being developed as an on-going project.
Throughout the text we provide many examples on how to use \Sage to do calculations related
to $k$-Schur functions. Further information about the latest code and developments can be
obtained from the \Sagecombinat website~\cite{sage-combinat}.
 % introduction
    %%%%%%%%%%%%%%%%%%%%%%%%%%%%%%%%%%%%%%%%%%%%%%%
\chapter{Primer on $k$-Schur Functions}
\label{chapter.k schur primer}

\counterwithout{section}{chapter}
\counterwithin{equation}{section}
\setcounter{secnumdepth}{3}
\setcounter{tocdepth}{3}

\begin{center}
{\sc Jennifer Morse}
\footnote{The author was supported by NSF gant DMS--0652641, DMS--0652668,
DMS--1001898.}
{\sc , Anne Schilling} 
\footnote{The author was supported by NSF grants DMS--0652641, DMS--0652652, DMS--1001256,
and OCI--1147247.}
{\sc and Mike Zabrocki}
\footnote{The author was supported by NSERC} \\
{\tt morsej@math.drexel.edu}, {\tt anne@math.ucdavis.edu}, {\tt zabrocki@mathstat.yorku.ca}\\
\mbox{}\\
based on lectures by Luc Lapointe and Jennifer Morse\\
{\tt lapointe@inst-mat.utalca.cl} and {\tt morsej@math.drexel.edu}\\
\end{center}

The purpose of this chapter is to outline some of the results and open problems
related to $k$-Schur functions, mostly in the setting of symmetric function theory.
This chapter roughly follows the outline of several talks given by  Luc Lapointe and Jennifer Morse at 
a conference titled ``Affine Schubert Calculus'' held in July of 2010 at the Fields Institute in 
Toronto~\footnote{see {\tt http://www.fields.utoronto.ca/programs/scientific/10-11/schubert/}}.

In addition it presents many examples based on code written in \Sage~\cite{sage,sage-combinat}
by Jason Bandlow, Nicolas M. Thi\'ery, the last two authors, and many other \Sage developers.
The following presentation is intended to give both an idea of the origins of
the $k$-Schur functions as well as the current ideas and 
computational tools which have been most
productive for demonstrating their properties.  

We will present almost no proofs in this
chapter, but rather refer to the original articles for detailed arguments. Instead the concepts
are illustrated with many \Sage examples to highlight how to discover and experiment with many
of the still open conjectures related to $k$-Schur functions.  The purpose behind most of the
\Sage examples is to demonstrate the formulas with examples and to give the commands
that would allow a first time user of \Sage to be able to use the functions to
generate data that they might need for their own research.

Section~\ref{section.background} reviews much of the combinatorial background
of $k$-Schur theory including partitions, cores, (partial) orders on the affine symmetric 
group, and some symmetric function theory.  This section also sets up the
combinatorial backdrop needed to give the Pieri rules for $k$-Schur functions 
and their duals.
In Section~\ref{sec:Pierirules}, we define a parameterless ($t=1$) 
family of $k$-Schur functions using an analogue of the Pieri rule for 
Schur functions~\cite{LM:2005}. This definition is used to
relate $k$-Schur functions to the geometry and to Stanley symmetric functions 
discussed in Chapter~\ref{chapter.stanley symmetric functions}.
We also give the dual Pieri rule \cite{LLMS:2006} which
gives rise to a monomial expansion of the $k$-Schur functions.  
The Pieri and dual Pieri rule motivate the definition of weak 
and strong order tableaux. 

In Section~\ref{sec:threedef}, we present four conjecturally equivalent
definitions of the $k$-Schur functions for generic $t$.  Some are known 
to be equivalent when $t=1$.
The first definition of $k$-Schur functions appeared
in a paper by Lapointe, Lascoux and Morse~\cite{LLM:2003} and 
is purely combinatorial in nature;
defined as a sum over certain classes of tableaux called atoms.  
Lapointe and Morse~\cite{LM:2003} followed this paper by defining symmetric 
functions which were defined by
algebraic operations instead of a sum over combinatorial objects.  
The last two definitions of the $k$-Schur functions with a generic parameter $t$ 
are defined along lines similar to the parameterless $k$-Schur functions,
but now a $t$-statistic is introduced on weak (resp. strong) order 
tableaux.

In Section~\ref{section.properties} we present many of the properties 
of $k$-Schur functions 
and outline what is known about which property for each of the definitions. 
This is followed by Section~\ref{section.directions} which contains further research
directions and many conjectures that remain to be resolved 
(and hence the content is likely to change in
the future)! Section~\ref{sec:strongweakduality} explains the 
duality between strong and weak
order in terms of a $k$-analogue of the Robinson--Schensted--Knuth algorithm, 
which gives rise to an affine insertion algorithm.  We present 
part of this algorithm by giving a bijection
between permutations and pairs of tableaux. 
Finally in Section~\ref{sec:kshapeplus} some details
about the branching from $k$ to $(k+1)$-Schur functions are given.

%%%%%%%%%%%%%%%%%%%%%%%%%%%%%%%%%%%%%%%%%%%%%%%%%%%%%%%%%%%%%%
\begin{section}{Background and notation}
\label{section.background}
%%%%%%%%%%%%%%%%%%%%%%%%%%%%%%%%%%%%%%%%%%%%%%%%%%%%%%%%%%%%%%
\begin{subsection}{Partitions and cores} \label{subsec:parts}
A {\it partition} $\la = (\la_1, \la_2, \ldots, \la_{\ell(\la)})$ of $m$ 
is a sequence of weakly decreasing positive integers which sum to $m = \la_1 + \la_2 + \cdots + \la_{\ell(\la)}$.
The value of $m$ is called the {\it size} of the partition and this will be denoted by $|\la|$.
The entries of the partition are called the parts and the number of parts of the
partition is denoted by $\ell(\la)$.  As a general convention, 
if $i > \ell(\la)$ then $\la_i = 0$ and the definition of symmetric functions 
(which turn out to be indexed by partitions) given later in this section
respects this convention.  The statistic $n(\la) = \sum_{i=1}^{\ell(\la)} (i-1)\la_i$ on 
partitions has a value between $0$ and $m(m-1)/2$ for partitions of $m$
and this will arise in the definitions of symmetric functions.
A partition $\la$ is called $k$-{\it bounded} if $\la_1 \leq k$.
The notation $\lambda \vdash m$ indicates that $\lambda$ is a partition of $m$ and generally we reserve 
the symbols $\la$, $\mu$, $\nu$ to denote partitions.

A partition will be identified with its {\it Young (or Ferrers) diagram}.  This is a diagram consisting of
square cells arranged in left justified rows stacked on top of each other with the largest row with
$\la_1$ cells on the bottom.  (This convention is also called the French notation; when stacking the rows 
with the largest row at the top is called the English convention).
Alternatively, a Young diagram is a collection of cells in the first quadrant
of the $(x,y)$-plane with $\diag(\la) = \{ (i,j): 1 \leq i \leq \ell(\la)\hbox{ and }1 \leq j \leq \la_i\}$ 
represented as boxes in the Cartesian plane so that the upper right hand
corner of a cell has coordinate which is in this collection.  For consistency with
other references we have chosen that the first coordinate represents the row and the second
coordinate represents the column (each beginning at 1 for the first row and column).  
For an example the Young diagram for the partition $\la = (4,3,3,3,2,2,1)$ is drawn
in Example~\ref{example.partition}.

There is a partial order on partitions that arises naturally in symmetric functions
when ordering basis elements.  For two partitions $\la, \mu$ such that $|\la| = |\mu|$, 
we say that $\la \leq \mu$ if $\sum_{i=1}^r \la_i \leq \sum_{i=1}^r \mu_i$ for all $r \geq 1$. 
This is usually referred to as the {\it dominance order} on partitions.

The {\it conjugate} of a partition $\la$ is the sequence $\la' = (\la_1', \la_2', \ldots, \la_{\la_1}')$
where $\la_r' = \#\{ i : \la_i \geq r \}$.  Alternatively, this can be seen on Young diagrams by reflecting
the diagram in the $x=y$ line of the coordinate plane so that $\diag(\la') = \{ (j,i): (i,j) \in \diag(\la)\}$.
For example in Example~\ref{example.partition} below, $\la' = (7,6,4,1)$ for the partition $\la = (4,3,3,3,2,2,1)$.

For many uses we will need to refer to the number of parts of a partition of a given size $i$
and this will be denoted by $m_i(\la) = \#\{ j : \la_j=i\}$.  The quantity 
\begin{equation}\label{zladef}
z_\la = \prod_{i \geq 1} m_i(\la)! \;\; i^{m_i(\la)}
\end{equation}
is the size of the stabilizer
of a permutation $\sigma \in S_m$, the symmetric group on $m=|\lambda|$ letters,
whose cycle type is $\la$ under the conjugation
action of $S_m$.  That is, if $\sigma$ has cycle type
$\la$, then $z_\la = \#\{ \tau \in S_m : \tau \sigma \tau^{-1} = \sigma \}$.
Since we know that all permutations with the same cycle type are conjugate,
the number of permutations with cycle type $\la$ is equal to $m!/z_\la$.

Each cell in a partition $\la$ has a {\it hook length} which consists of the number of cells in the column
above and in the row to the right (including the cell itself).  
Namely, for a cell $(i,j) \in \diag(\la)$,
the hook length of the cell is $\hook_\la(i,j) = \la_i + \la_j' -i-j+1$.  In Example~\ref{example.partition} below
$\hook_{(4,3,3,3,2,2,1)}(3,2) = 5 = \la_3 + \la_2'-3-2+1$.

For a partition $\la$ with $\la_1 \leq k$, define the {\it $k$-split} of $\la$ as a sequence of partitions
(which will be denoted by $\la^{\rightarrow k}$)
recursively.  If $\la_1 + \ell(\la)-1 \leq k$, then $\la^{\rightarrow k} = (\la)$.  Otherwise,
\begin{equation} \label{equation.k split}
\la^{\rightarrow k} = ((\la_1, \la_2, \ldots, \la_{k-\la_1+1}), (\la_{k-\la_1+2}, \la_{k-\la_1+3}, \ldots, \la_{\ell(\la)})^{\rightarrow k})~.
\end{equation}
In other words, the $k$-split of a partition is found by successively splitting off parts of the partition with hook $k$,
starting with the first part, until that is no longer possible.

\begin{example} \label{example.partition}
The Young diagram for the partition $\la = (4,3,3,3,2,2,1)$ is the diagram on the left
and its conjugate partition $\la' = (7,6,4,1)$ is the diagram in the center.
$$\squaresize=9pt\young{\cr&\cr&\cr&&\cr&&\cr&&\cr&&&\cr} \hskip .75in
\young{\cr&&&\cr&&&&&\cr&&&&&&\cr}\hskip .75in
\young{\cr&\gris\cr&\gris\cr&\gris&\cr&\noir&\gris\cr&&\cr&&&\cr}$$
The diagram on the right is the Young diagram for the partition $\la=(4,3,3,3,2,2,1)$ with the cells that are in 
the hook of the cell $(3,2)$ shaded in.  In this case $\hook_{\la}(3,2) = 5$.
The $4$-split of $\la$ is $\la^{\rightarrow 4}=((4),(3,3), (3,2), (2,1))$ and the $5$-split is $\la^{\rightarrow 5} = ((4,3),(3,3,2),(2,1))$.
\end{example}

We will use the realization of the Young diagram as the set of cells in our
notation and define $\la \subseteq \mu$ if $\diag(\la) \subseteq \diag(\mu)$.  This forms
a lattice, also known as the {\it Young lattice}, on set of partitions and the cover relation is given by 
$\la \rightarrow \mu$ if $\la \subseteq \mu$ and $|\la|+1 = |\mu|$.  The lattice is graded by the size of the
partition and the first 6 levels of the infinite Hasse diagram are shown in Figure~\ref{partHasse}.

\begin{figure}[h]
\includegraphics[width=6.0in]{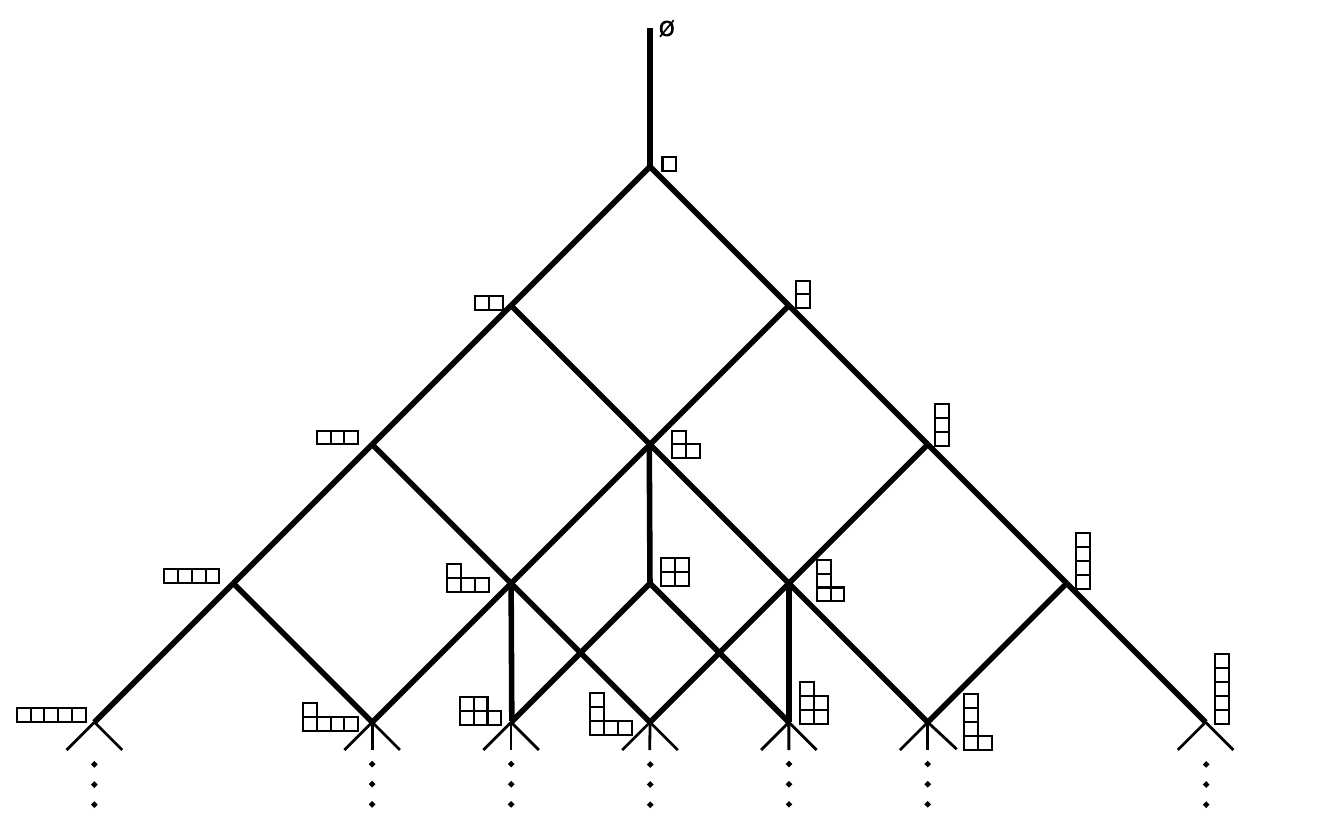}
\caption{The Young lattice of partitions (up to those of size 5) 
ordered by inclusion .}\label{partHasse}
\end{figure}

There are several special types of containments of partitions that will arise in this discussion.
If $\la \subseteq \mu$, then $\mu\slash\la$ is called a {\it skew partition} and it will represent the
cells which are in $\diag(\mu) \slash \diag(\la)$, with the $\slash$ here representing the difference of sets.
We call $\mu\slash\la$ {\it connected} if for any two cells there is
a sequence of cells in $\mu\slash\la$ from one to the other where consecutive cells share an edge.
We say that $\mu\slash\la$ is a {\it horizontal} ({\it vertical}) strip if 
there is at most one cell in each column (row)
of $\mu\slash\la$.  The skew partition $\mu\slash\la$ is called a {\it ribbon} if it does not contain any
$2\times2$ subset of cells.

\begin{sagedemo}
We now demonstrate how to access partitions and their properties in the open source computer algebra
system \Sage (see Appendix~\ref{appendix:sage}). We begin by listing all partitions of 4:
\begin{sageexample}
    sage: P = Partitions(4); P
    Partitions of the integer 4
    sage: P.list()
    [[4], [3, 1], [2, 2], [2, 1, 1], [1, 1, 1, 1]]
\end{sageexample}
\Sage has list comprehension so that the last line could have also been written as
\begin{sageexample}
    sage: [p for p in P]
    [[4], [3, 1], [2, 2], [2, 1, 1], [1, 1, 1, 1]]
\end{sageexample}
We can check how two partitions $\la$ and $\mu$ relate in the dominance order
\begin{sageexample}
    sage: la=Partition([2,2]); mu=Partition([3,1])
    sage: mu.dominates(la)
    True
\end{sageexample}
and draw the entire Hasse diagram
\begin{sageexample}
    sage: ord = lambda x,y: y.dominates(x)
    sage: P = Poset([Partitions(6), ord], facade=True)
    sage: H = P.hasse_diagram()
\end{sageexample}
\begin{sageexample}[test = false]
    sage: view(H)
\end{sageexample}
which outputs the graph. Here we used the python syntax for a function, which
is {\tt lambda x : f(x)} for a function that maps $x$ to $f(x)$.
%The output is shown in Figure~\ref{figure.dominance}.
We can also compute the conjugate of a partition, its $k$-split
\begin{sageexample}
    sage: la=Partition([4,3,3,3,2,2,1])
    sage: la.conjugate()
    [7, 6, 4, 1]
    sage: la.k_split(4)
    [[4], [3, 3], [3, 2], [2, 1]]
\end{sageexample}
and create skew partitions
\begin{sageexample}
    sage: p = SkewPartition([[2,1],[1]])
    sage: p.is_connected()
    False
\end{sageexample}
\end{sagedemo}
\end{subsection}

%%%%%%%%%%%%%%%%%%%%%%%%%%%%%%%%%%%%%%%%%%%%%%%%%%%%%%
\begin{subsection}{Bounded partitions, cores, and affine Grassmannian elements}
\label{section:partitionscores}

We will see that $k$-Schur functions are symmetric functions indexed by 
$k$-bounded partitions and consequently, the underlying combinatorial 
framework we need often comes out of a refinement of classical ideas 
in the theory of partitions.  As it happens, the set of $k$-bounded 
partitions is in bijection with several different sets of natural 
combinatorial objects and often the $k$-Schur function setting is better
expressed in those terms.  To this end, we begin with a discussion 
of several other examples of possible indexing sets.  

As with the $k$-bounded partitions, we are interested in another special
subset of partitions.  In particular, an $r$-{\it core} is a shape where 
none of its cells have a hook-length equal to $r$.  We denote the set of all
$r$-cores by $\mathcal{C}_r$. When we consider a partition 
as a core, the notion of size differs from the usual notion (where size
counts the number of cells in the shape).  In contrast, 
the relevant notion of size on a $(k+1)$-core is to count 
only the number of cells which have a hook-length smaller than $k+1$.
We call this the {\it length} of the core. For a
$(k+1)$-core $\kappa$, its length will
be denoted by $|\kappa|_{k+1}$ or simply $|\kappa|$ 
if it is clear from the context that $\kappa$ is viewed as a $(k+1)$-core.
As $k\to \infty$, this becomes the usual size of the
partition. Later in this section, we will see that the length
is related to the length of elements in the affine symmetric group. 
Now, we give the connection between cores and bounded partitions.

\begin{prop}\label{bijcoresparts} (\cite[Theorem 7]{LM:2005})
There is a bijection between the set of $(k+1)$-cores $\kappa$ with 
$|\kappa|_{k+1}=m$ and partitions
$\la \vdash m$ with $\la_1 \leq k$.
\end{prop}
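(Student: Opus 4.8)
The plan is to realize both sides as data read off the boundary (Maya) word of the diagram, where the $(k+1)$-core condition and the hook-counting both become transparent. Write $p=k+1$. Encode a partition $\kappa$ by its bi-infinite $0/1$ word $b=(b_x)_{x\in\ZZ}$ obtained by tracing the rim of $\diag(\kappa)$ from the far southeast to the far northwest, recording an East step as $0$ and a North step as $1$ (so $b_x=0$ for $x\gg 0$ and $b_x=1$ for $x\ll 0$); concretely the $1$'s sit at the positions $p_1>p_2>\cdots$ with $p_i=\kappa_i-i$. Row $i$ corresponds to the North step at $p_i$, its cells are indexed by the $0$'s at positions $x<p_i$, and the hook length of such a cell is exactly $p_i-x$. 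Two standard facts then drive everything: the hook lengths occurring in row $i$ are $\{\,p_i-x : x<p_i,\ b_x=0\,\}$, and $\kappa$ is a $(k+1)$-core iff no hook equals $p$, i.e. iff $b_x=0\Rightarrow b_{x+p}=0$ for all $x$. I would \emph{define} the candidate map $\mfp$ by $\mfp(\kappa)=\lambda$ with $\lambda_i:=\#\{\,x: p_i-p<x<p_i,\ b_x=0\,\}$, the number of cells of hook length $\le k$ in row $i$.

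Next I would verify that $\mfp(\kappa)$ is a $k$-bounded partition of $m$. Boundedness $\lambda_i\le k$ is immediate, since the window $\{p_i-p+1,\dots,p_i-1\}$ contains only $p-1=k$ integers. The size statement is a tautology after unwinding definitions: summing $\lambda_i$ over $i$ counts all cells of $\diag(\kappa)$ of hook length $\le k$, which is precisely the length $|\kappa|_{k+1}=m$. The one genuinely substantive point is that $\lambda$ is weakly decreasing, and this is exactly where the core hypothesis enters. Telescoping the window counts between consecutive beads gives
\[
\lambda_i-\lambda_{i+1}=(d-1)-Z,\qquad d:=p_i-p_{i+1},
\]
where $Z$ is the number of $0$'s in the interval $[\,p_{i+1}-p+1,\ p_i-p\,]$ of $d$ integers (here one uses that all positions strictly between the consecutive beads $p_{i+1}$ and $p_i$ are $0$'s). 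Since the rightmost point of that interval is $p_i-p$, the core condition forces $b_{p_i-p}=1$ — otherwise $b_{p_i-p}=0$ would give $b_{p_i}=0$, contradicting that $p_i$ is a bead — whence $Z\le d-1$ and $\lambda_i\ge\lambda_{i+1}$.

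For the inverse I would exploit that a word satisfying $b_x=0\Rightarrow b_{x+p}=0$ is exactly an abacus configuration in which the beads on each of the $p$ runners (residue classes mod $p$) are pushed to the top; such a configuration is determined by one threshold per runner, subject to a single global normalization forcing $b$ to come from an honest partition. Given a $k$-bounded $\lambda$, the equations $\#\{\text{beads in }(p_i-p,p_i)\}=k-\lambda_i$ recursively fix the gaps between consecutive beads, reconstructing $b$ up to the global shift that the normalization pins down; call the resulting core $\mfc(\lambda)$. I expect the main obstacle to be precisely this bijectivity step: turning the threshold description of $\mfc$ into a clean closed procedure, checking that it always outputs a legitimate $(k+1)$-core with the correct finiteness and normalization, and confirming that $\mfp\circ\mfc$ and $\mfc\circ\mfp$ are both the identity. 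By contrast the forward direction is routine once the boundary-word dictionary is in place, with the weak-decrease inequality being the only spot where the defining property of a core is actually used.
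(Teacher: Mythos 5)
The paper gives no proof of this proposition (it is quoted from \cite{LM:2005}); it only records the map $\mfp$, which counts cells of hook length at most $k$ in each row, and describes the inverse informally by sliding rows to the right. Your route through the boundary word is genuinely different in packaging and, in my view, cleaner: the core condition becomes the periodicity statement $b_x=0\Rightarrow b_{x+p}=0$, and the one nontrivial point in the forward direction --- that $\mfp(\kappa)$ is weakly decreasing --- falls out of the telescoping identity $\lambda_i-\lambda_{i+1}=(d-1)-Z$ together with the observation that $b_{p_i}=1$ forces $b_{p_i-p}=1$. That half of your argument is complete and correct (including the degenerate case $d\geq p$, where the upper window consists entirely of $0$'s and $\lambda_i=k$ is maximal). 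What the paper's row-sliding description buys is an explicit diagrammatic inverse; what your encoding buys is that well-definedness and monotonicity become two-line computations.

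The part you have not finished is bijectivity, and you say so yourself; but note that your own telescoping identity already closes it, so the "threshold per runner" machinery is unnecessary. Fix the lower beads $p_{i+1}>p_{i+2}>\cdots$ and regard $(d-1)-Z(d)$ as a function of the gap $d=p_i-p_{i+1}\geq 1$, where $Z(d)$ counts $0$'s in $[\,p_{i+1}-p+1,\,p_{i+1}-p+d\,]$. As $d$ increases by $1$ this function increases by $0$ or $1$, starts at a value $\leq\lambda_{i+1}$, and eventually equals $k$, so the target value $\lambda_i-\lambda_{i+1}$ is attained. Moreover if $d<d'$ give the same value, then $Z(d')-Z(d)=d'-d$, i.e.\ every position in $(p_{i+1}-p+d,\;p_{i+1}-p+d']$ is a $0$; in particular $b_{p_i'-p}=0$ for the candidate $p_i'=p_{i+1}+d'$, which violates the core condition. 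Hence among gaps realizing a given value, \emph{exactly one} is compatible with $b_{p_i-p}=1$: this gives injectivity of $\mfp$ on cores and a well-defined recursive inverse in one stroke. Finally, the word so assembled is automatically a core word, because $b_x=0\Rightarrow b_{x+p}=0$ need only be verified when $x+p$ is a bead, and that is precisely the condition $b_{p_i-p}=1$ the recursion enforces at every step (for $i>\ell(\lambda)$ it holds since $p_i-p=p_{i+p}$ is again a bead). With that paragraph added your proof is complete.
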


The bijection from $(k+1)$-cores to $k$-bounded partitions is
$$
\mfp: \kappa \mapsto \la\,,
$$
defined by setting
\begin{equation}\label{bijectiondir1}
\la_i = \#\{ (i,j) \in \kappa : \hook_{\kappa}(i,j) \leq k\}~.
\end{equation}

\begin{example} \label{example.core}
The partition $(12,8,5,5,2,2,1)$ on the left is a $5$-core
since there are no cells in its Ferrers diagram with hook-length
equal to $5$.
Equation~\eqref{bijectiondir1} tells us how to 
applying $\mfp$ to this core to obtain a $4$-bounded partition;
delete each cell in the diagram for the $5$-core 
whose hook-length exceeds $5$ and then slide all
remaining cells to the left. 
\squaresize=9pt
$$\young{\cr&\cr&\cr&&&&\cr&&&&\cr&&&&&&&\cr&&&&&&&&&&&\cr}\hskip .5in
\young{\cr&\cr&\cr\gris&\gris&&&\cr\gris&\gris&&&\cr
\gris&\gris&\gris&\gris&\gris&&&\cr\gris&\gris&\gris&\gris&\gris&\gris&\gris&\gris&&&&\cr}\hskip .5in
\young{\cr&\cr&\cr&&\cr&&\cr&&\cr&&&\cr}$$
The first part of the resulting partition is at most $4.$
\end{example}

The other direction of the bijection is also not difficult.  Consider a $k$-bounded partition
and work from the smallest part of the partition to the largest and slide the cells to the
right until it is a $(k+1)$-core.
Here is a description of the procedure which can be followed with
Example~\ref{example.to_core}.
Start with the top row $\la_{\ell(\la)}$ of the $k$-bounded partition
$\la$ and successively move down a row. For a given row, calculate the hook lengths of its cells; if 
there is a cell with hook length greater than $k$, slide this row to the right until all cells have 
hook length less than or equal to $k$. Continue this process until all rows have been adjusted. 
The end result will be a $(k+1)$-core which we shall denote by $\mfc_k(\la)$ or just $\mfc(\la)$ 
if $k$ is clear from the context.

\begin{example}  \label{example.to_core}
The partition $(4,3,3,3,2,2,1)$ is a $4$-bounded partition.
Here we draw the successive slides of the rows until we reach a 5-core:
\squaresize=9pt
\[
	\young{\cr&\cr&\cr&&\cr&&\cr&&\cr&&&\cr} \rightarrow 
	\young{\cr&\cr&\cr\gris&\gris&&&\cr\gris&\gris&&&\cr\gris&\gris&&&\cr\gris&\gris&&&&\cr} \rightarrow 
	\young{\cr&\cr&\cr\gris&\gris&&&\cr\gris&\gris&&&\cr\gris&\gris&\gris&\gris&\gris&&&\cr\gris&\gris&\gris&\gris&\gris&&&&\cr} \rightarrow 
	\young{\cr&\cr&\cr\gris&\gris&&&\cr\gris&\gris&&&\cr\gris&\gris&\gris&\gris&\gris&&&\cr\gris&\gris&
	\gris&\gris&\gris&\gris&\gris&\gris&&&&\cr}
\]
\end{example}

\begin{sagedemo}
\label{bound2core}
Here is the way to compute the map $\mfc$ in \Sage:
\begin{sageexample}
    sage: la = Partition([4,3,3,3,2,2,1])
    sage: kappa = la.k_skew(4); kappa
    [12, 8, 5, 5, 2, 2, 1] / [8, 5, 2, 2]
\end{sageexample}
For the inverse $\mfp$ we write
\begin{sageexample}
    sage: kappa.row_lengths()
    [4, 3, 3, 3, 2, 2, 1]
\end{sageexample}
If one is only handed the 5-core $(12,8,5,5,2,2,1)$ instead of the skew partition, one can do the following:
\begin{sageexample}
    sage: tau = Core([12,8,5,5,2,2,1],5)
    sage: mu = tau.to_bounded_partition(); mu
    [4, 3, 3, 3, 2, 2, 1]
    sage: mu.to_core(4)
    [12, 8, 5, 5, 2, 2, 1]
\end{sageexample}
All $3$-cores of length $6$ can be listed as:
\begin{sageexample}
    sage: Cores(3,6).list()
    [[6, 4, 2], [5, 3, 1, 1], [4, 2, 2, 1, 1], [3, 3, 2, 2, 1, 1]]
\end{sageexample}
\end{sagedemo}

We now turn our attention to the third set of objects that is in bijection 
with the set of $k$-bounded partitions (and the set of $(k+1)$-cores).  
These come out of studying the type $A$ affine Weyl group and its
realization as the {\it affine symmetric group} $\tilde S_{n}$
given by generators $\{s_0,s_1,\ldots,s_{n-1}\}$ satisfying 
the relations 
\begin{align}
& s_i^2 = 1,\nonumber\\
& s_i s_{i+1} s_i = s_{i+1} s_i s_{i+1}, \label{eq:affsymgens}\\
& s_i s_j = s_j s_i\quad \hbox{ for }i-j \nequiv 0,1,n-1 \pmod{n}\nonumber
\end{align}
with all indices related $\pmod{n}$.
Hereafter, we shall reserve 
parameters $n$ and $k$ and we will set $n=k+1$ throughout.

There is a subset of the elements in $\tilde S_n$ that is particularly conducive 
to combinatorics in large part because it is in 
bijection with the set of $k$-bounded partitions and of $(k+1)$-cores.  
Note that the symmetric group $S_{n}$ generated by $\{ s_1, s_2, \ldots, s_{n-1} \}$ is
a subgroup, where the element $s_i$ represents the permutation which interchanges $i$ and $i+1$.
We will refer to the left cosets of ${\tilde S}_{n} / S_{n}$ as 
\textit{affine Grassmannian elements} and 
they will be identified with their minimal length coset representatives, that is, the elements of
$w \in {\tilde S}_{n}$ such that either $w=id$ or $s_0$ is the only 
elementary transposition such 
that $\ell(w s_0)<\ell(w)$.
\begin{remark}
The definition of affine Grassmannian elements are the special case of a
more general definition.  The $l$-Grassmannian
elements are the minimal length coset representatives of ${\tilde S}_{n} / S_{n}^l$ where
$S_n^l$ is the group generated by $\{ s_0, s_1, s_2, \ldots, s_{n-1} \} \backslash \{ s_l \}$ and
the affine Grassmannian elements are the $0$-Grassmannian elements.
Due to the cyclic symmetry of the affine type $A$ Dynkin diagram, these constructions are of course
all equivalent.
\end{remark}

\begin{sagedemo}
We can create the affine symmetric group and its generators in \Sage as
\begin{sageexample}
    sage: W = WeylGroup(["A",4,1])
    sage: S = W.simple_reflections()
    sage: [s.reduced_word() for s in S]
    [[0], [1], [2], [3], [4]]
\end{sageexample}
For a given element, we can ask for its reduced word or create it from
a word in the generators and ask whether it is Grassmannian:
\begin{sageexample}
    sage: w = W.an_element(); w
    [ 2  0  0  1 -2]
    [ 2  0  0  0 -1]
    [ 1  1  0  0 -1]
    [ 1  0  1  0 -1]
    [ 1  0  0  1 -1]
    sage: w.reduced_word()
    [0, 1, 2, 3, 4]
    sage: w = W.from_reduced_word([2,1,0])
    sage: w.is_affine_grassmannian()
    True
\end{sageexample}
\end{sagedemo}

\begin{prop} \cite{Lascoux:2001} \cite[Proposition 40]{LM:2005} 
\label{prop:affgrass2core}
There is a bijection between the collection of 
cosets $\tilde S_{k+1}/S_{k+1}$ whose minimal length representative 
has length $m$ and $(k+1)$-cores of length $m$.
\end{prop}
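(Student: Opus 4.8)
The plan is to construct the bijection geometrically, by letting $\tilde S_{k+1}$ act on the set of $(k+1)$-cores and realizing the cores as the orbit of the empty partition $\emptyset$. Assign to each cell $(a,b)$ its \emph{residue} $\res(a,b) = (b-a) \bmod (k+1)$. First I would establish the structural lemma that for a fixed residue $i$, a $(k+1)$-core $\kappa$ possesses either addable cells of residue $i$ or removable cells of residue $i$, but never both (and possibly neither); this is exactly where the defining core condition---no cell of hook length $k+1$---is used. Granting this, define the operator $s_i$ on $(k+1)$-cores to add all addable $i$-cells, to remove all removable $i$-cells, or to fix $\kappa$ when neither type is present, and check that the result is again a $(k+1)$-core.

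Next I would verify that these operators satisfy the relations~\eqref{eq:affsymgens}, so that the assignment $s_i \mapsto (\kappa \mapsto s_i\cdot\kappa)$ extends to a genuine action of $\tilde S_{k+1}$ on $(k+1)$-cores; the involutivity $s_i^2=1$ is immediate from the exclusivity lemma, while the braid and commutation relations reduce to a local analysis of cells of residues $i$ and $i\pm 1$. Since $\emptyset$ has a single addable cell $(1,1)$, of residue $0$, and no addable or removable cell of any residue $i\neq 0$, the generators $s_1,\dots,s_k$ fix $\emptyset$; thus the finite subgroup $S_{k+1}=\langle s_1,\dots,s_k\rangle$ lies in the stabilizer of $\emptyset$. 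I would then prove transitivity: any nonempty $(k+1)$-core has a removable cell, hence admits some $s_i$ lowering its size, so repeated application reaches $\emptyset$. Showing that the stabilizer is \emph{exactly} $S_{k+1}$---equivalently, that distinct minimal-length coset representatives produce distinct cores---then yields a bijection of sets between $\tilde S_{k+1}/S_{k+1}$ and $(k+1)$-cores, with the coset $wS_{k+1}$ corresponding to $w\cdot\emptyset$.

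It remains to match the gradings: the minimal-length representative $w$ of the coset sent to $\kappa$ should satisfy $\ell(w)=|\kappa|_{k+1}=m$. I would do this by relating reduced words to chains of cores. Writing $w=s_{i_1}\cdots s_{i_\ell}$ reduced, the Grassmannian (minimal-coset-representative) condition forces each partial product to again be a minimal coset representative, so each left-to-right step $\kappa^{(j-1)}\to\kappa^{(j)}=s_{i_j}\cdot\kappa^{(j-1)}$ is size-increasing and the chain is saturated. The crux is then to show that a single length-increasing application of $s_i$ changes the core-length $|\kappa|_{k+1}$---the number of cells of hook length at most $k$, which by~\eqref{bijectiondir1} equals the size of the associated $k$-bounded partition $\mfp(\kappa)$---by exactly one, even though it may add several cells at once. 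Summing over the $\ell$ steps gives $\ell(w)=|\kappa|_{k+1}$, and Proposition~\ref{bijcoresparts} identifies this with $m$.

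The main obstacle is precisely this last step: understanding how adding an entire ribbon of equal-residue cells redistributes hook lengths so that the count of cells with hook at most $k$ rises by exactly one. Concretely, each newly added cell acquires hook length at most $k$, but some previously counted cells see their hooks grow past $k$; the assertion is that the net change is always $+1$. I expect to control this by tracking $\mfp$ explicitly, showing that the induced move on $k$-bounded partitions $\mfp(\kappa)\to\mfp(s_i\cdot\kappa)$ adds a single box, or alternatively by passing to the abacus/beta-number encoding of cores, where the generators $s_i$ act by simple bead moves and the Coxeter relations, transitivity, and the exact $+1$ increment can all be read off directly. By comparison the remaining ingredients---the exclusivity lemma and the braid relations---are local and routine.
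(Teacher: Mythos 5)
Your plan is correct and coincides with the route the paper takes (and attributes to Lascoux and to Lapointe--Morse): define the $s_i$-action on $(k+1)$-cores exactly as in Definition~\ref{def:siactioncore}, realize the bijection by applying a reduced word of a Grassmannian element to the empty core, and recover the inverse by reading residues of the associated $k$-bounded partition. You have also correctly isolated the one genuinely nontrivial step — that a length-increasing application of $s_i$ raises $|\kappa|_{k+1}$ by exactly one even when several cells are added — and your proposed treatments of it (tracking $\mfp$, or the abacus encoding) are the standard ways it is handled in the cited sources.
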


The bijection of Proposition~\ref{prop:affgrass2core} is defined by an
action of the affine symmetric group on cores.
It suffices to define the left action of the generators $s_i$ of the affine symmetric group
on $(k+1)$-cores.
The diagonal index or {\it content} of a cell $c = (i,j)$ in the diagram for a core is $j-i$.  
We will often instead 
be concerned with the {\it residue} of $c$, denoted by $\res(c)$, which is the diagonal index
mod $k+1$.  
We call a cell $c$ an addable corner of a partition $\mu$ 
if $\diag(\mu) \cup \{ c \}$ is the diagram for a partition and a cell $c$ is
a removable corner of $\mu$ if $\diag(\mu) \backslash \{ c \}$ is diagram for a partition.

\begin{definition} \cite{Lascoux:2001} \cite[Definition 18]{LM:2005} \label{def:siactioncore}
For $\kappa$ a $(k+1)$-core, let $s_i \cdot \kappa$ be the partition with
\begin{enumerate}
\item if there is at least one addable corner of
residue $i$, then the result is $\kappa$ with all addable corners of $\kappa$ of residue $i$ added,
\item if there is at least one removable corner of
residue $i$, then the result is $\kappa$ with all removable corners of $\kappa$ of residue $i$ removed,
\item otherwise, the result is $\kappa$.
\end{enumerate}
\end{definition}

\begin{example}
Consider the $5$-core, $(7,3,1)$.  If we draw its Ferrers diagrams and label each of the
cells with the content modulo $5$ we have the following diagram
\[ \squaresize=12pt\young{3\cr4&0&1\cr0&1&2&3&4&0&1\cr} \; .\]
This diagram has addable corners with residue $2$ and $4$ and removable corners with
residue $1$ and $3$ and all cells of residue $0$ are neither addable nor removable.  Therefore,
$s_2 \cdot (7,3,1) = (8,4,1,1)$, $s_4 \cdot (7,3,1) = (7,3,2)$, $s_1\cdot  (7,3,1) = (6,2,1)$,
$s_3 \cdot (7,3,1) = (7,3)$, $s_0 \cdot (7,3,1) = (7,3,1)$.
\end{example}

\begin{sagedemo}
In \Sage we can get the affine symmetric group action on cores as follows:
\begin{sageexample}
    sage: c = Core([7,3,1],5)
    sage: c.affine_symmetric_group_simple_action(2)
    [8, 4, 1, 1]
    sage: c.affine_symmetric_group_simple_action(0)
    [7, 3, 1]
\end{sageexample}

We can also check directly that the set of affine Grassmannian elements of given length are in bijection
with the corresponding cores:
\begin{sageexample}
    sage: k=4; length=3
    sage: W = WeylGroup(["A",k,1])
    sage: G = W.affine_grassmannian_elements_of_given_length(length)
    sage: [w.reduced_word() for w in G]
    [[2, 1, 0], [4, 1, 0], [3, 4, 0]]

    sage: C = Cores(k+1,length)
    sage: [c.to_grassmannian().reduced_word() for c in C]
    [[2, 1, 0], [4, 1, 0], [3, 4, 0]]
\end{sageexample}
\end{sagedemo}

The bijection of Proposition~\ref{prop:affgrass2core} is realized by taking a reduced 
word for an affine Grassmannian element and acting on the empty $(k+1)$-core.  The resulting
$(k+1)$-core is the image of the bijection.
\cite[Corollary 48]{LM:2005} then states
that the reverse bijection can be found by taking $\la = \mfp(\kappa)$ and
forming the element in the affine symmetric group $s_{\res(c_1)} s_{\res(c_2)} \cdots s_{\res(c_m)}$,
where $c_1, c_2, \ldots, c_m$ are the cells of $\diag(\la)$ read from the smallest row
to the largest with each read from right to left.

We denote the map which sends $(k+1)$-core $\kappa$ to the corresponding
affine Grassmannian element by $\mfa(\kappa) = w_\kappa$. Since $(k+1)$-cores
and $k$-bounded partitions are in bijection, we will also use the notation
$\mfa(\la) = w_\la$ to represent the map from a $k$-bounded partition $\la$ to
an affine Grassmannian element.

\begin{example}  Consider the reduced word,
\[
w = s_1 s_0 s_4 s_2 s_3 s_1 s_0 s_4 s_1 s_2 s_3 s_1 s_0 s_4 s_3 s_2 s_1 s_0~.
\]
We apply this word on the left on an empty $5$-core to build up the result.  The sequence of
applications builds the core as follows:
\[\scriptsize
\emptyset \rightarrow \young{0\cr} \rightarrow \young{&1\cr} \rightarrow \young{&&2\cr}
\rightarrow \young{&&&3\cr}\rightarrow \young{4\cr&&&&4\cr}
\rightarrow \young{&0\cr&&&&&0\cr}\rightarrow \young{&&1\cr&&&&&&1\cr}
\]
\[\scriptsize
\rightarrow \young{3\cr&&\cr&&&&&&\cr}
\rightarrow \young{2\cr\cr&&&2\cr&&&&&&&2\cr}
\rightarrow \young{1\cr\cr\cr&&&\cr&&&&&&&\cr}
\rightarrow \young{\cr\cr&4\cr&&&\cr&&&&&&&\cr}
\]
\[\scriptsize
\rightarrow \young{0\cr\cr\cr&&0\cr&&&\cr&&&&&&&\cr}
\rightarrow \young{\cr\cr\cr&&&1\cr&&&\cr&&&&&&&\cr}
\rightarrow \young{\cr\cr&3\cr&&&\cr&&&&3\cr&&&&&&&&3\cr}
\rightarrow \young{\cr&2\cr&\cr&&&&2\cr&&&&\cr&&&&&&&&\cr}
\]
\[\scriptsize
\rightarrow \young{4\cr\cr&\cr&&4\cr&&&&\cr&&&&&4\cr&&&&&&&&&4\cr}
\rightarrow \young{\cr\cr&\cr&&&0\cr&&&&\cr&&&&&&0\cr&&&&&&&&&&0\cr}
\rightarrow \young{\cr&1\cr&\cr&&&&1\cr&&&&\cr&&&&&&&1\cr&&&&&&&&&&&1\cr}
\]
and hence the resulting $5$-core is $\mfa^{-1}(w)=(12,8,5,5,2,2,1)$.

The reverse bijection comes from reading the residues of the corresponding
$4$-bounded partition $(4,3,3,3,2,2,1)$ from the smallest row to the largest row,
and from right to left within the rows.  For example:
$$\squaresize=12pt\young{4\cr0&1\cr1&2\cr2&3&4\cr3&4&0\cr4&0&1\cr0&1&2&3\cr}$$
is sent under $\mfa$ to the word
$$w' = s_4 s_1 s_0 s_2 s_1 s_4 s_3 s_2 s_0 s_4 s_3 s_1 s_0 s_4 s_3 s_2 s_1 s_0\,.$$
It is not difficult to show that $w$ is equivalent to $w'$.
\end{example}

\begin{sagedemo}
We can verify the previous example in \Sage.
\begin{sageexample}
    sage: la = Partition([4,3,3,3,2,2,1])
    sage: c = la.to_core(4); c
    [12, 8, 5, 5, 2, 2, 1]
    sage: W = WeylGroup(["A",4,1])
    sage: w = W.from_reduced_word([4,1,0,2,1,4,3,2,0,4,3,1,0,4,3,2,1,0])
    sage: c.to_grassmannian() == w
    True
\end{sageexample}
\end{sagedemo}

The affine symmetric group ${\tilde S}_n$ can also be thought of as the group of permutations
of ${\mathbb Z}$ with the property that for $w\in {\tilde S}_n$ we have $w( i + rn ) = w(i) + rn$ for all $r \in \ZZ$
with the additional property that $\sum_{i=1}^n w(i)-i = 0$.
We can choose the convention that the elements $s_i \in {\tilde S}_{n}$ for $0\le i\le n-1$
act on $\ZZ$ by $s_i( i+rn ) = i+1+rn$, $s_i( i+1+rn ) = i+rn$, and $s_i( j ) = j$ for 
$j \nequiv i,i+1~\pmod{n}$.  While the elements $s_i$ generate the group,
there is also the notion of a general transposition $t_{ij}$ which generalizes this notion 
by interchanging $i$ and $j~\pmod{n}$.  Take integers $i < j$ with
$i \nequiv j~\pmod{n}$ and $v = \lfloor (j-i)/n \rfloor$, then $t_{i,i+1} = s_i$ and for
$j-i>1$,
\[
t_{ij} = s_i s_{i+1} s_{i+2} \cdots s_{j-v-2} s_{j-v-1} s_{j-v-2} s_{j-v-3} \cdots s_{i+1} s_i
\]
where all of the indices of the $s_m$ are taken $\pmod{n}$.
For $j>i$, we set $t_{ij} = t_{ji}$. The $t_{ij}$ generalize the elements
$s_i$ by their action $t_{ij}( i +rn ) = j + rn$, $t_{ij}( j+rn) = i+rn$
and $t_{ij}( \ell ) = \ell$ for $\ell \nequiv i,j~\pmod{n}$.
It is not hard to show that
$$t_{ij} w = w t_{w^{-1}(i) w^{-1}(j)},$$
which allows us to define a left as well as a right action on affine permutations.
Here we state the results in terms of the left action.

The elements $w \in {\tilde S}_n$ are determined by the action of $w$ on the values $1$ through $n$ since
this determines the action on all of ${\mathbb Z}$ by $w(i+rn) = w(i) + rn$.  
%Any affine permutation must also satisfy $\sum_{i=1}^n w(i)-i = 0$.  
If $w$ is represented in two line notation,
\begin{center}
\begin{tabular}{ccccccc}
$\cdots$&$-2$&$-1$&$0$&$1$&$2$&$\cdots$\\
$\cdots$&$w(-2)$&$w(-1)$&$w(0)$&$w(1)$&$w(2)$&$\cdots$
\end{tabular}
\end{center}
then $t_{ij} w$ is obtained from $w$ by exchanging $i+rn$ and $j+rn$ in the lower row of the two line
notation for $w$.
We also have that $w t_{ij}$ is obtained from $w$ by exchanging $w(i+rn)$ and $w(j+rn)$.
An element is affine Grassmannian if $w(1) < w(2) < \cdots < w(n)$.  
The tuple of values $[ w(1), w(2), \ldots, w(n)]$ is referred to as the {\it window notation} for $w$.

There is a close relationship between the action of $w^{-1}$ on integers and on cores.  
This relationship is best demonstrated by an example before we give the precise statement.

\begin{example}
Start with $n = k+1 = 3$ and the affine Grassmannian element 
$w = s_1 s_0 s_2 s_1 s_0$ that has 
window notation $[w(1), w(2), w(3)] = [-2, 0, 8]$. We can use this to determine that
$w^{-1}$ is given in two line notation as
\begin{center}
\begin{tabular}{cccccccccccccccccc}
&$\cdots$&$-7$&$-6$&$-5$&$-4$&$-3$&$-2$&$-1$&$0$&$1$&$2$&$3$&$4$&$5$&$6$&$7$&$\cdots$\cr
$w^{-1}=$&$\cdots$&$-12$&$-4$&$-2$&$-9$&$-1$&$1$&$-6$&$2$&$4$&$-3$&$5$&$7$&$0$&$8$&$10$&$\cdots$\cr
\end{tabular}
\end{center}
Now we notice in this notation that $w^{-1}(d)\leq 0$ for all integers $d\leq-3$ (and for
any affine permutation there will always be an integer $d'$ such that $w^{-1}(d) \leq 0$ for all $d\leq d'$).
We also see that for all integers $d\geq 6$ we have $w^{-1}(d)>0$
(and for any affine permutation there will always be a value $D'$ such that $w^{-1}(d) >0$ for all $d \geq D'$). 

Now consider the integers $-2 \leq d \leq 5$
(which are the integers strictly between these two values $d'=-3$ and $D'=6$).
Reading from left to right in the two line notation, we construct a path consisting of East
and South steps where for each $d$ such that $w^{-1}(d) \leq 0$ we place a South step, and for each
$d$ such that $w^{-1}(d) > 0$ we place an East step.  In this example we are looking at the sequence
\begin{center}
\begin{tabular}{cccccccc}
$-2$&$-1$&$0$&$1$&$2$&$3$&$4$&$5$\\
$1$&$-6$&$2$&$4$&$-3$&$5$&$7$&$0$
\end{tabular}
\end{center}
in order to create this path.
The way we have chosen our $d'$ and $D'$ the first
step of this path will always be East and the last step will always be South.  
In this example we have the path:
\begin{center}
\includegraphics[width=1in]{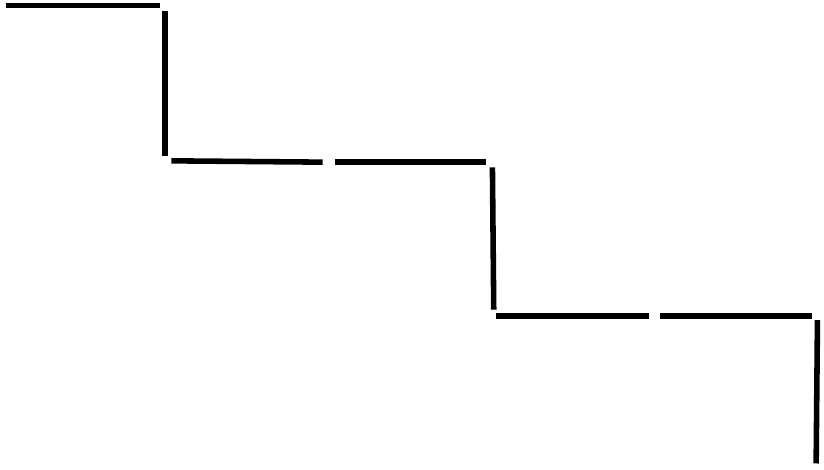}
\end{center}
\noindent
and it is the outline for the diagram of a $3$-core:
{\squaresize=14pt
$$\young{\cr&&\cr&&&&\cr}~.$$}
It is the case that $\mfc(2,2,1) = (5,3,1)$ and 
corresponds to the affine permutations 
$w = s_1 s_0 s_2 s_1 s_0 = \mfa(2,2,1)$ by
the other bijections.
\end{example}

\begin{sagedemo}
We can demonstrate the previous example in \Sage. Using the class
{\sc AffinePermutationGroup} allows to input an affine permutation in
window notation:
\begin{sageexample}
sage: A = AffinePermutationGroup(["A",2,1])
sage: w = A([-2,0,8])
sage: w.reduced_word()
[1, 0, 2, 1, 0]
sage: w.to_core()
[5, 3, 1]
\end{sageexample}
\end{sagedemo}

The action of $t_{ij}$ on the two line notation for $w$ can be translated into the action
for $t_{ij}$ on the two line notation for $w^{-1}$.  Since we have $(t_{ij} w)^{-1} = w^{-1} t_{ij}$,
then the same action of left multiplication by $t_{ij}$ on $w$ has the effect of exchanging
$w^{-1}(i+rn)$ and $w^{-1}(j+rn)$ in the two line notation for $w^{-1}$.  Similarly, right 
multiplication by $t_{ij}$ on $w$ has the effect of exchanging the values of $i+rn$ and $j+rn$ in
the two line notation for $w^{-1}$.

As in our example above, the two line notation for $w^{-1}$ keeps track of the outline of the $(k+1)$-core representing the affine permutation.  The two line notation for $w^{-1}$ 
represents an infinite path where the $0$ and negative values represent South steps and the positive values represent East steps.  There is some point $d'$ for which all steps before are South and another point $D'$ where all steps after are East.  Between $d'$ and $D'$ there is a path which traces the outline of a $(k+1)$-core.  Notice that when $w = w^{-1}$ is the identity, then $d' = 0$ and $D'=1$
and the path between these two points represents the empty core.

We can work out precisely what the action of the transpositions $s_i$ and $t_{ij}$
are on this path and we will
see that left multiplication by $s_i$ on $w$ has the same effect on this
path as the action that $s_i$ has on the $(k+1)$-core given in Definition~\ref{def:siactioncore}.
If left multiplication on $w$ by an $s_i$ increases the length by $1$, then on the sequence
of values of $w^{-1}(i)$ this
has the effect of interchanging a negative value which lies to the left of a positive value.
On the path consisting of South steps for negative values and East steps for positive values,
interchanging a South step that comes just before an East step
has the effect of adding a cell on the path representing the core.

Although we have limited ourselves to affine permutations, $k$-bounded
partitions and $k+1$-cores, we note that there are are other useful ways to describe 
the set such as with abaci or bit sequences that we do not discuss.

\end{subsection}

%%%%%%%%%%%%%%%%%%%%%%%%%%%%%%%%%%%%%%%%%%%%%%%%%%%%%%%%%%%%%%%%%%%%%%%%%%%%%%%%%%%%%%%
\begin{subsection}{Weak order and horizontal chains}
\label{subsection.weak order}

Because our indexing set comes from a quotient of $\tilde S_{k+1}$,
and every Coxeter system is naturally equipped with the weak and 
the strong (Bruhat) orders, the close study of
the weak and strong order posets on  $\tilde S_{k+1}$ is called for.
Here we examine posets that are isomorphic to the weak subposet 
on affine Grassmannian elements and whose
vertices are given by the set 
of $k$-bounded partitions and by the set of $(k+1)$-cores.

The (left) weak order is defined by saying that $w$ is less than or equal to $v$ in $\tilde{S}_{k+1}$
if and only if there is some $u\in \tilde S_{k+1}$ such that $uw=v$ and $\ell(u)+\ell(w)=\ell(v)$.
We denote a cover in (left) weak order by $\rightarrow_{k}$, that is,
$w \rightarrow_{k} v$ if and only if there is some $s_i\in \tilde S_{k+1}$
such that $s_iw=v$ and $\ell(w)+1=\ell(v)$.
The affine Weyl group of type $A$ forms a lattice under inclusion 
in the weak order (this is known due to results of Waugh \cite{Waugh:1999}).

Now given the bijection between $\tilde S_{k+1}/S_{k+1}$ and
$k$-bounded partitions or $(k+1)$-cores, it is natural to question 
how weak order $\rightarrow_k$ is characterized on these other sets.
On the set of $(k+1)$-cores, the weak order relation can 
be framed in terms of the action of the elements $s_i$.
This result can be found in \cite{ Lascoux:2001, Leeuwen:1999} and
is restated in a similar form in \cite[Lemma 8.6]{LLMS:2006}.

\begin{prop} If $\kappa$ and $\tau$ are $(k+1)$-cores
with $|\kappa|_{k+1} = |\tau|_{k+1}+1$, then $\tau \rightarrow_k \kappa$
if and only if there exists an $i$ such that $\kappa = s_i \cdot \tau$.
\end{prop}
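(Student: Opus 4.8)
The plan is to transport the statement to $\tilde S_{k+1}$ through the length-preserving bijection $\mfa$ of Proposition~\ref{prop:affgrass2core}, under which a $(k+1)$-core $\kappa$ corresponds to the affine Grassmannian element $w_\kappa = \mfa(\kappa)$ with $|\kappa|_{k+1} = \ell(w_\kappa)$. Writing $v = \mfa(\tau)$, the hypothesis $|\kappa|_{k+1} = |\tau|_{k+1}+1$ reads $\ell(\mfa(\kappa)) = \ell(v)+1$, and, by the definition of the weak order on cores induced from $\tilde S_{k+1}$, the relation $\tau \rightarrow_k \kappa$ says exactly that $\mfa(\kappa) = s_i v$ for some $i$ with $\ell(\mfa(\kappa)) = \ell(v)+1$ (a genuine cover, since $\mfa(\kappa)$ is automatically Grassmannian once $\kappa$ is a core). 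So the whole proposition reduces to identifying the combinatorial operation $\kappa = s_i \cdot \tau$ of Definition~\ref{def:siactioncore} with the algebraic operation $\mfa(\kappa) = s_i v$ when the latter is a length-increasing step.

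The bridge is the claim, which I would prove via the boundary-path description developed just before the statement: for every residue $i$ one has $s_i \cdot \tau = \mfa^{-1}(s_i v)$ whenever $s_i v$ is Grassmannian, with case~(1) of Definition~\ref{def:siactioncore} occurring exactly when $\ell(s_i v) = \ell(v)+1$ and case~(2) exactly when $\ell(s_i v) = \ell(v)-1$, while case~(3) (where $s_i \cdot \tau = \tau$) occurs exactly when $s_i v$ is not Grassmannian. Granting this, both directions are immediate. For $(\Rightarrow)$: a cover gives $\mfa(\kappa) = s_i v$ Grassmannian with length $+1$, hence case~(1), so $\kappa = \mfa^{-1}(s_i v) = s_i \cdot \tau$. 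For $(\Leftarrow)$: given $\kappa = s_i \cdot \tau$ with $|\kappa|_{k+1} = |\tau|_{k+1}+1$, the core length strictly increases, which rules out case~(2) (length $-1$) and case~(3) (length $0$); so we are in case~(1), whence $s_i v = \mfa(\kappa)$ is Grassmannian with $\ell(s_i v) = \ell(v)+1$, i.e. $\tau \rightarrow_k \kappa$.

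The main obstacle is establishing the bridge, i.e. matching Definition~\ref{def:siactioncore} with left multiplication on the path encoded by the two-line notation of $v^{-1}$. Recall left multiplication by $s_i = t_{i,i+1}$ exchanges, for every $r$, the two consecutive entries in positions $i+rn$ and $i+1+rn$ of $v^{-1}$, i.e. swaps adjacent South/East steps of the boundary path; the points requiring care are (i) that this move keeps us among minimal coset representatives precisely in cases~(1) and~(2), so that $\mfa^{-1}(s_i v)$ is defined and equals $s_i\cdot\tau$; (ii) the residue bookkeeping identifying a position $\equiv i \pmod n$ with a boundary cell of residue $i$, so that it is the corners of residue $i$ that get added or removed; and (iii) the fact that, although the ordinary number of cells may change by more than one when several corners of residue $i$ are simultaneously addable, the \emph{core length} $\ell = |\cdot|_{k+1}$ changes by exactly $\pm 1$, in agreement with $\ell(s_i v) = \ell(v) \pm 1$. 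Once this is checked the equivalence follows; alternatively one may quote the explicit analyses of the $s_i$-action in \cite{Lascoux:2001, Leeuwen:1999} and \cite[Lemma 8.6]{LLMS:2006}.
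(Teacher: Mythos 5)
The paper offers no proof of this proposition---it only points to \cite{Lascoux:2001,Leeuwen:1999} and \cite[Lemma 8.6]{LLMS:2006}---so your argument is being measured against those sources, and it is in fact the standard argument they carry out: since $\rightarrow_k$ on cores is by definition the weak order transported through the bijection $\mfa$ of Proposition~\ref{prop:affgrass2core}, the entire content of the proposition is the compatibility of the $s_i$-action of Definition~\ref{def:siactioncore} with left multiplication by $s_i$ on minimal coset representatives, and your reduction to that bridge is correct. Your trichotomy (length $+1$ / length $-1$ / not Grassmannian matching cases (1)/(2)/(3)) is the right organizing principle, but you should say explicitly that its third branch is the lifting property of the parabolic quotient $\tilde S_{k+1}/S_{k+1}$: if $v$ is a minimal length coset representative and $s_i v > v$ with $s_i v$ not minimal, then $s_i v \in vS_{k+1}$, so the coset---hence the core---is unchanged, which is exactly case (3). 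The bridge itself remains a sketch in your write-up, but you have isolated precisely the three points that need verification, most importantly that it is the core length $|\cdot|_{k+1}$ (cells of hook at most $k$) rather than the raw cell count that changes by exactly one when all addable corners of a given residue are added, so the sketch is sound and completes to a proof along the lines of the cited references.
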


\begin{remark}
It follows that $\rightarrow_k$ can also be characterized on 
$(k+1)$-cores by
$\tau \rightarrow_k \kappa$ if and only if all cells in 
$\kappa/\tau$ have the same $k+1$-residue.
\end{remark}

\begin{figure}[h]
\begin{center}
\includegraphics[width=4in]{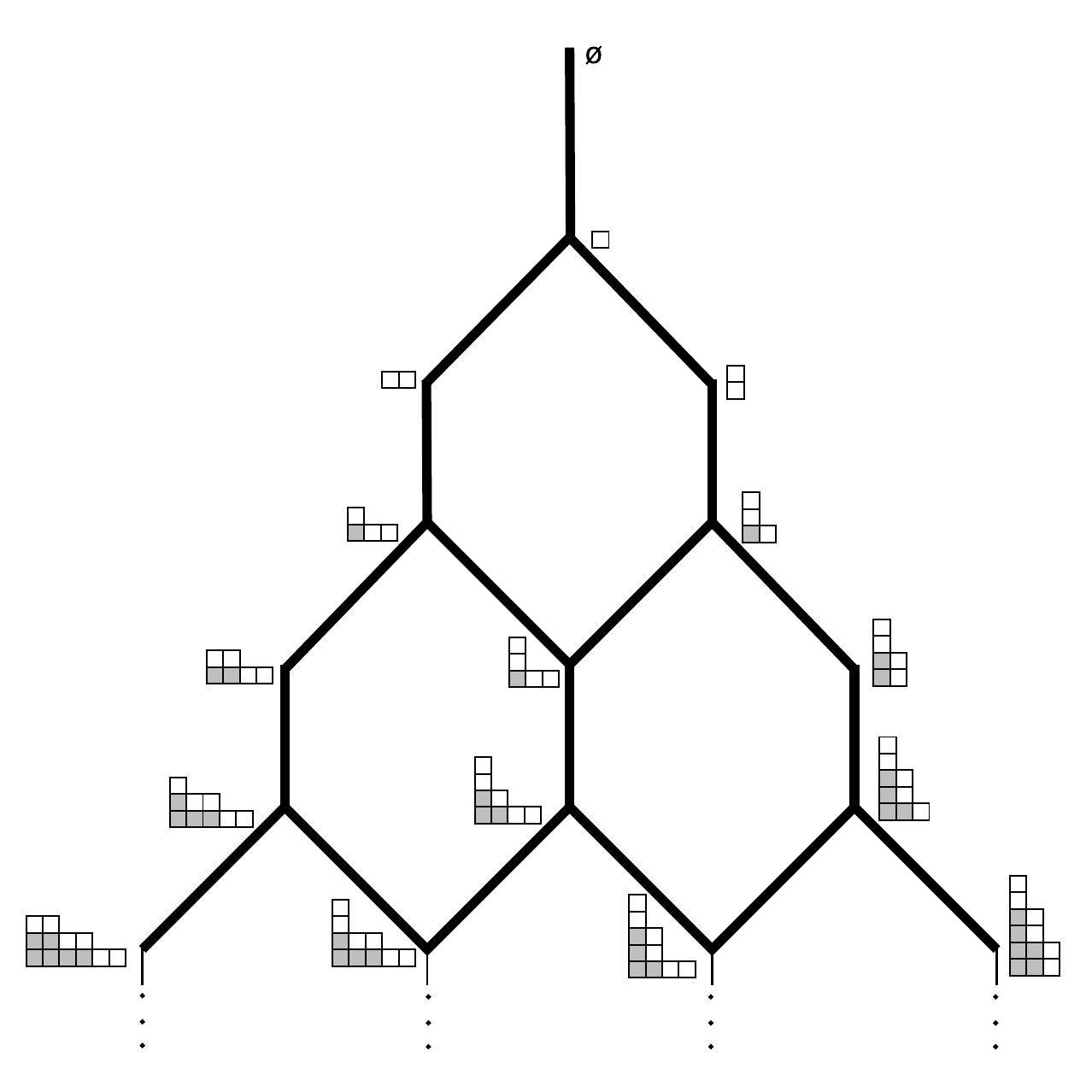}

%Figure 2: 
\caption{
The lattice of 3-cores (up to those of length 6), which correspond to the $2$-bounded partitions,
ordered by the weak order.}\label{partkeq2Hasse}
\end{center}
\end{figure}

\begin{figure}[h]
\begin{center}
\includegraphics[width=6in]{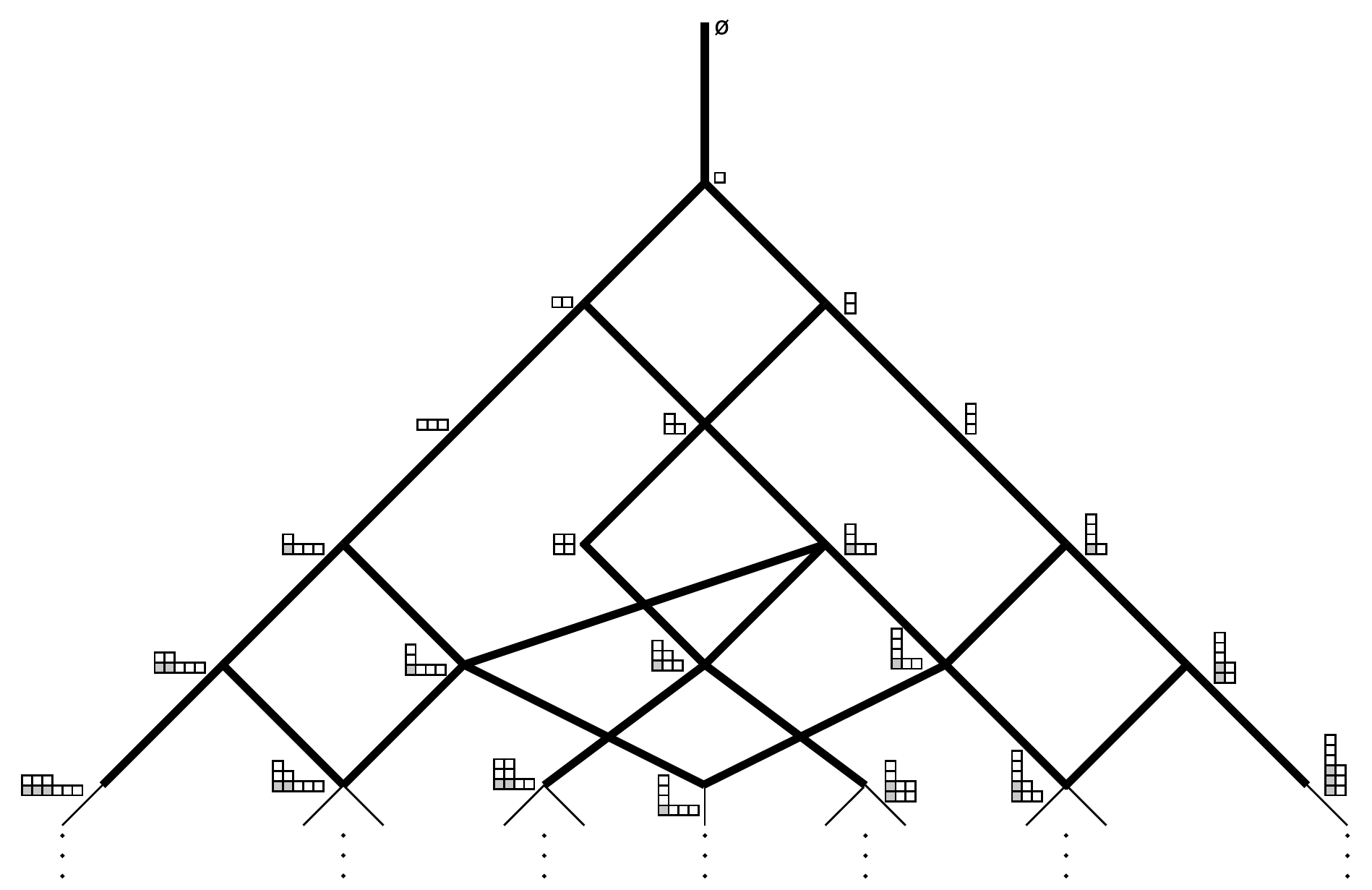}

%Figure 3: 
\caption{
The lattice of 4-cores (up to those of length 6), which correspond to the $3$-bounded partitions,
ordered by the weak order.}\label{partkeq3Hasse}
\end{center}
\end{figure}

The characterization of the weak order poset on $\tilde S_{n}/S_n$
on the level of $k$-bounded partitions
is inspired by viewing the Young covering relation as
$\la \rightarrow \mu$ if $\la \subseteq \mu$ and $\la' \subseteq \mu'$ and 
$|\la|+1 = |\mu|$.  Of course,  $\la' \subseteq \mu'$ if and only if
$\la \subseteq \mu$.  However, working on the subset of $k$-bounded partitions, 
there is a generalization of conjugation under which this is not a 
superfluous condition.

\begin{definition} \label{def:kconjugate}
Let $\la$ be a $k$-bounded partition. 
Then the $k$-{\em conjugate} of $\la$ is defined as
\[
	\la^{\omega_k} := \mfp(\mfc(\la)').
\]
\end{definition}

\begin{sagedemo}
We can obtain the $4$-conjugate of the partition $(4,3,3,3,2,2,1)$ from the last partition in Example~\ref{example.to_core}
by reading off the column lengths of the unshaded boxes in each column. In \Sage:
\begin{sageexample}
    sage: la = Partition([4,3,3,3,2,2,1])
    sage: la.k_conjugate(4)
    [3, 2, 2, 2, 2, 1, 1, 1, 1, 1, 1, 1]
\end{sageexample}
\end{sagedemo}

\begin{prop} (\cite[Corollary 25]{LM:2005})
For $k$-bounded partitions $\lambda$ and $\mu$,
$\lambda \rightarrow_k \mu$ if and only if
$\lambda \subseteq \mu$,
$\lambda^{\omega_k}\subseteq\mu^{\omega_k}$, and $|\lambda|+1=|\mu|$.  
\end{prop}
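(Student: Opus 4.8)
The plan is to transport the entire statement across the bijection $\mfp$ between $(k+1)$-cores and $k$-bounded partitions and to argue on the level of cores. Write $\tau=\mfc(\la)$ and $\kappa=\mfc(\mu)$. By construction the weak order on $k$-bounded partitions is the image of the weak order on $(k+1)$-cores, so $\la\rightarrow_k\mu$ holds precisely when $\tau\rightarrow_k\kappa$, and by the Proposition on cores together with its Remark this happens exactly when $\tau\subseteq\kappa$, all cells of $\kappa/\tau$ share a single residue $i$, and $\kappa=s_i\cdot\tau$ with $|\kappa|_{k+1}=|\tau|_{k+1}+1$. Using Proposition~\ref{bijcoresparts} the last equality is exactly $|\mu|=|\la|+1$, and by Definition~\ref{def:kconjugate} we have $\la^{\omega_k}=\mfp(\tau')$ and $\mu^{\omega_k}=\mfp(\kappa')$. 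Everything therefore reduces to comparing the single-residue condition on $\kappa/\tau$ with the two containments $\mfp(\tau)\subseteq\mfp(\kappa)$ and $\mfp(\tau')\subseteq\mfp(\kappa')$.

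For the forward direction I would prove the following lemma: if $\kappa=s_i\cdot\tau$ is a length-increasing move, then $\mfp(\tau)\subseteq\mfp(\kappa)$. The point is that $\mfp(\kappa)_r$ counts the cells of row $r$ whose hook length is at most $k$, and since hook lengths decrease along a row these cells form a right-justified block; one checks row by row that adding the residue-$i$ addable corners never destroys more low-hook cells in a row than it creates. Here the single-residue hypothesis is essential — it is \emph{not} true that $\tau\subseteq\kappa$ alone forces $\mfp(\tau)\subseteq\mfp(\kappa)$ (a core-length-one containment whose skew carries two residues can strictly decrease a part of the bounded partition), so the structure of an $s_i$-move must really be used. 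Granting the lemma, the conjugation identity $(s_i\cdot\tau)'=s_{-i}\cdot\tau'$ — valid because conjugating a diagram negates residues modulo $k+1$ and sends addable corners to addable corners — lets me apply the same lemma to $\tau'\subseteq\kappa'$ and obtain $\mfp(\tau')\subseteq\mfp(\kappa')$, that is, $\la^{\omega_k}\subseteq\mu^{\omega_k}$. This establishes $(\Rightarrow)$.

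The converse is the main obstacle. Assume the two containments and $|\mu|=|\la|+1$. Since conjugation preserves core length we have $|\la^{\omega_k}|=|\la|$, so $\mu$ is obtained from $\la$ by adding a single cell (in some row $r$) and $\mu^{\omega_k}$ is obtained from $\la^{\omega_k}$ by adding a single cell (in some row $r'$). I would use this pair of added cells to reconstruct a residue $i$ and then argue that necessarily $\kappa=s_i\cdot\tau$. The delicate point is that neither containment individually is equivalent to core containment $\tau\subseteq\kappa$; one must show that the two containments \emph{together} force $\tau\subseteq\kappa$, and then that among core-length-$(+1)$ containments the pair of bounded conditions excludes exactly those skews carrying more than one residue. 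Concretely I expect to argue by contradiction: if the relevant core move involved two distinct residues, one could locate a row in which either $\mfp(\tau)\not\subseteq\mfp(\kappa)$ or $\mfp(\tau')\not\subseteq\mfp(\kappa')$ fails, contradicting the hypotheses — this is precisely the phenomenon behind a non-cover $\mfc(\la)\subseteq\mfc(\mu)$ with mixed residues, where exactly one of the two containments breaks down. Carrying out this dichotomy carefully — showing a single reconstructed residue is forced and yields a legitimate $s_i$-action with $\mfp(s_i\cdot\tau)=\mu$ — is where the real combinatorial work lies, and it is what explains why both $\la\subseteq\mu$ and $\la^{\omega_k}\subseteq\mu^{\omega_k}$, rather than either one alone, are required.
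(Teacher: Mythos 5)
The paper gives no proof of this proposition --- it simply cites \cite[Corollary 25]{LM:2005} --- so your proposal has to stand on its own. Your forward direction is essentially sound: the reduction to cores is legitimate, the conjugation identity $(s_i\cdot\tau)'=s_{-i}\cdot\tau'$ is correct (conjugation negates contents and hence residues, and matches addable corners with addable corners), and the lemma that a length-increasing $s_i$-move gives $\mfp(\tau)\subseteq\mfp(s_i\cdot\tau)$ is true. Your row-by-row sketch of that lemma is still only a sketch --- the real danger is that cells added \emph{above} a given row push hook lengths in that row past $k$, and you need the single-residue structure to show this loss is always compensated --- but the outline is workable, and your caveat that core containment alone does not give $\mfp(\tau)\subseteq\mfp(\kappa)$ is accurate.

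The genuine gap is the converse, and your own phrase ``carrying out this dichotomy carefully \dots\ is where the real combinatorial work lies'' is a description of an unproved claim, not a proof. Two distinct things are missing. First, you must show that $\la\subseteq\mu$, $\la^{\omega_k}\subseteq\mu^{\omega_k}$ and $|\mu|=|\la|+1$ together force $\mfc(\la)\subseteq\mfc(\mu)$; this requires quantitative control of how the row shifts $\mfc(\la)_r-\la_r$ change when a single cell is added (via, say, the string description of $\mfc$ given in the paper), and you supply none. Second, even once $\tau\subseteq\kappa$ with $|\kappa|_{k+1}=|\tau|_{k+1}+1$ is known, that is by definition only a \emph{strong} cover $\tau\Rightarrow_k\kappa$, whose skew is a union of identical translated ribbons; you must exclude ribbons of length at least $2$ (equivalently, skews carrying more than one residue) using the two bounded containments, and ``I expect to argue by contradiction \dots\ one could locate a row'' is precisely the step that needs an actual argument, since locating such a row is where all the combinatorics of hook lengths versus residues is concentrated. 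Until both steps are carried out, the harder half of the equivalence is asserted rather than proved.
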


We now turn our attention to a distinguished set of saturated chains.
Recall that standard tableaux can be viewed
as saturated chains in the Young lattice. This notion can be generalized
to semi-standard tableaux. A semi-standard tableau is an increasing
sequence of partitions in the Young lattice
such that two adjacent partitions in this sequence differ by a horizontal strip. 
Horizontal strips are skew shapes
with at most one cell in any column. As we will see in Section~\ref{sec:colstrict}
horizontal strips are fundamental in the formulation of the Pieri rule for 
Schur functions. The analogue of horizontal strips in the affine setting
was introduced in~\cite{LM:2005}.  As we will see in Section~\ref{sec:weaksection}
these will play a central role in the combinatorics of $k$-Schur functions.
%The chains described by 
%horizontal strips (skew shapes with at most one cell in any column)
%are particularly important; in Section~\ref{sec:colstrict} these chains
%describe the Pieri rule for Schur functions and they are fundamental 
%in the  definition of semi-standard tableaux.  A refined notion of horizontal 
%strips was introduced in \cite{LM:2005} to play a similar
%central role in the combinatorics of $k$-Schur functions, 
%see also Section~\ref{sec:weaksection}.

Crudely, we define a {\em weak horizontal strip} of size $r \leq k$ to be
a horizontal strip $\kappa\slash\tau$ of $(k+1)$-cores $\kappa$ and $\tau$
such that there exists a saturated chain 
\begin{equation}\label{eq:weakhstrip}
\tau \rightarrow_k \tau^{(1)} \rightarrow_k \tau^{(2)} 
\rightarrow_k \cdots \rightarrow_k \tau^{(r)} = \kappa
\,.
\end{equation}
It is helpful to instead think of these strips as
the skew $\kappa\slash\tau$ of $(k+1)$-cores $\kappa$ and $\tau$,
where
\begin{align}
& \kappa/\tau\;\text{is a horizontal strip}\\
& |\kappa|_{k+1}=|\tau|_{k+1}+r\\
& \text{there are exactly $r$ residues in the set of cells of $\kappa/\tau$.}
\label{goodweakstrip}
\end{align}

We discussed how the weak order is naturally realized on 
$k$-bounded partitions and affine Grassmannian elements
and it is also worthwhile to rephrase the notion of strips 
in these different contexts.

In the $k$-bounded partition framework
(for example, useful in \cite{LLM:2003,LM:2003,LM2:2003,LM:2007,BandlowSchillingZabrocki}),
the notion of weak horizontal strip is defined
to be a horizontal strip $\mu/\la$ where $\mu^{\omega_k}/\la^{\omega_k}$
is a vertical strip.  This characterization is motivated by the
following result about weak horizontal strips.
\begin{prop} \label{th:hstrip} (\cite[Section 9]{LM:2005}
Let $\tau\subseteq \kappa$ be $(k+1)$-cores. 
Then $\kappa\slash\tau$ forms a weak horizontal 
strip if and only if 
$\mfp(\kappa)/\mfp(\tau)$ is a horizontal strip and
$\mfp(\kappa')/\mfp(\tau')$ is a vertical strip.
\end{prop}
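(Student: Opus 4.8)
The plan is to transport the entire statement through the bijection $\mfp$ into the language of $k$-bounded partitions, where the condition involving ordinary conjugation of cores becomes a condition involving $k$-conjugation. The first step is the identity $\mfp(\kappa') = \mfp(\kappa)^{\omega_k}$: since $\mfp$ and $\mfc$ are mutually inverse (Proposition~\ref{bijcoresparts}), setting $\la = \mfp(\kappa)$ gives $\mfc(\la)=\kappa$, and Definition~\ref{def:kconjugate} reads $\mfp(\kappa)^{\omega_k}=\mfp(\mfc(\la)')=\mfp(\kappa')$; the same holds for $\tau$. Writing $\mu=\mfp(\kappa)$ and $\la=\mfp(\tau)$, the right-hand side of the proposition becomes exactly ``$\mu/\la$ is a horizontal strip and $\mu^{\omega_k}/\la^{\omega_k}$ is a vertical strip,'' i.e. the $k$-bounded-partition description of a weak horizontal strip recalled just before the statement. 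Thus the proposition asserts that the core-level definition (a saturated chain in weak order, equivalently \eqref{goodweakstrip}) and the $k$-bounded-level definition agree under $\mfp$, and the remaining work is to prove this equivalence.

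For the forward implication I would take a saturated chain $\tau=\tau^{(0)}\rightarrow_k\cdots\rightarrow_k\tau^{(r)}=\kappa$ and transport it through $\mfp$ (which intertwines the two weak orders, both being isomorphic to weak order on Grassmannian elements via $\mfa$) to a chain $\la=\la^{(0)}\rightarrow_k\cdots\rightarrow_k\la^{(r)}=\mu$. Each cover $\tau^{(j)}\rightarrow_k\tau^{(j+1)}=s_{i_j}\cdot\tau^{(j)}$ adds cells of a single residue $i_j$, and since the full skew $\kappa/\tau$ contains exactly $r$ residues by \eqref{goodweakstrip} while the chain has $r$ steps, the residues $i_0,\dots,i_{r-1}$ must be pairwise distinct. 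Applying the $k$-bounded cover characterization \cite[Corollary~25]{LM:2005} to each step yields $\la\subseteq\mu$, $\la^{\omega_k}\subseteq\mu^{\omega_k}$, and (since both $\mfp$ and $k$-conjugation preserve size) that $\mu/\la$ and $\mu^{\omega_k}/\la^{\omega_k}$ each consist of exactly $r$ cells, one added per cover. It then remains to show these cells occupy $r$ distinct columns of $\mu/\la$ and $r$ distinct rows of $\mu^{\omega_k}/\la^{\omega_k}$; this I would establish by showing that the column of the cell added to $\mfp$ at step $j$, and the row of the cell added to $\mfp(\cdot')$, are each governed by the residue $i_j$, so that distinct residues force distinct columns and distinct rows.

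For the backward implication I would argue by induction on $r=|\mu|-|\la|$, the base case $r\leq 1$ being immediate since a one-cell skew is simultaneously a horizontal and a vertical strip and \cite[Corollary~25]{LM:2005} directly supplies the single cover. For the inductive step, assuming $\mu/\la$ is a horizontal strip with $\mu^{\omega_k}/\la^{\omega_k}$ a vertical strip, I would locate a corner cell of $\mu$ lying in $\mu/\la$ whose removal produces a $k$-bounded partition $\nu$ with $\nu\rightarrow_k\mu$ a weak-order cover---so that $\mu^{\omega_k}/\nu^{\omega_k}$ is also a single cell by \cite[Corollary~25]{LM:2005}---and with $\nu/\la$ still a horizontal strip and $\nu^{\omega_k}/\la^{\omega_k}$ still a vertical strip. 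Translating back through $\mfp$, this peels off the last cover of the desired chain and reduces to a weak horizontal strip of size $r-1$, completing the induction.

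The main obstacle, in both directions, is the residue bookkeeping that links the single-residue structure of a core-level cover to the column of the corresponding cell of $\mu/\la$ and the row of the corresponding cell of $\mu^{\omega_k}/\la^{\omega_k}$. Concretely one must understand, via the explicit geometry of $\mfp$ (delete the cells of hook exceeding $k$ and slide left, noting that the surviving cells of each row carry consecutive residues), exactly how the residue of the added corners determines which column and which row change in the bounded partition and in its $k$-conjugate. Once this residue-to-position dictionary is in hand---which is the substance of Section~9 of \cite{LM:2005}---the distinctness of the $r$ residues in a weak horizontal strip immediately yields the horizontal- and vertical-strip conditions in the forward direction, and conversely guides the choice of which corner to peel in the backward direction.
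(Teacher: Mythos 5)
The paper offers no proof of this proposition---it is quoted from Section~9 of \cite{LM:2005}---so your proposal can only be judged on its own merits. Your framing is the right one: the identity $\mfp(\kappa')=\mfp(\kappa)^{\omega_k}$ does follow immediately from Definition~\ref{def:kconjugate} together with the fact that $\mfp$ and $\mfc$ are mutually inverse, so the proposition is precisely the assertion that the chain/residue definition of a weak horizontal strip on cores agrees, under $\mfp$, with the ``horizontal strip whose $k$-conjugates form a vertical strip'' definition on $k$-bounded partitions. Likewise, invoking \cite[Corollary~25]{LM:2005} to convert each weak cover into the addition of exactly one cell to $\la^{(j)}$ and one cell to $(\la^{(j)})^{\omega_k}$ correctly reduces the forward direction to showing that the $r$ added cells occupy $r$ distinct columns (respectively, $r$ distinct rows).

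That reduction is where the proof actually lives, and the mechanism you propose---``distinct residues force distinct columns and distinct rows''---does not work as stated. Whichever residue labelling of the bounded partition you intend, the implication fails: under the labelling inherited from the core, the cell $(r,c)$ of $\la=\mfp(\kappa)$ carries residue $\kappa_r-\la_r+c-r \pmod{k+1}$, which involves the row-dependent shift $\kappa_r-\la_r$, so a single column meets many residues (in the running example $\la=(4,3,3,3,2,2,1)$, $\kappa=(12,8,5,5,2,2,1)$, $k=4$, column $3$ carries residues $0,1,2,1$); under the bounded partition's own content labelling, two vertically adjacent cells $(r,c)$ and $(r+1,c)$ have \emph{consecutive}, hence distinct, residues, so distinctness of residues is perfectly compatible with the configuration you must exclude. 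In other words, nothing in your argument rules out a chain that adds an addable corner $(r,c)$ at one step and an addable corner $(r',c)$ in the same column, with a different residue, at a later step. Excluding this---and dually controlling the rows of the cells added to the $k$-conjugates---requires the genuine combinatorial analysis of how the $s_i$-action on corners of a fixed residue interacts with the hook-length-at-most-$k$ cells surviving into $\mfp(\kappa)$ and $\mfp(\kappa')$; that is the substance of \cite[Section~9]{LM:2005}, and it is also what your backward induction needs in order to certify that the corner you peel off is a weak cover preserving both strip conditions. As it stands, the proposal is a correct reduction plus a plan, with the single load-bearing lemma missing and its proposed substitute false.
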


It is important to note
that it is not sufficient to characterize $\kappa\slash\tau$ being a weak horizontal strip
by assuming that $\mfp(\kappa)/\mfp(\tau)$ is a horizontal strip.  A good example of this can
be observed in Figure \ref{partkeq3Hasse}. Consider the $4$-cores $\kappa = (4,1)$ and $\tau = (2,1)$.  
We note that $\mfp(\kappa) = (3,1)$ and $\mfp(\tau) = (2,1)$ and so even though we see that 
$\mfp(\kappa)/\mfp(\tau)$ is a horizontal strip, there does not exist a path from the
$4$-core $(2,1)$ to $(4,1)$ in the lattice.  If we consider the conjugate
partitions, we see that $\mfp(\kappa') = (1,1,1,1)$ and 
$\mfp(\tau') = (2,1)$ and so $\mfp(\kappa')/\mfp(\tau')$ is not a vertical strip.

Before we discuss the notion of weak horizontal strip in the framework of
affine permutations, we give an alternative description of the $k$-conjugate 
and the map $\mfc$ from $k$-bounded partitions to $(k+1)$-cores that was
communicated to us by Karola M\'esz\'aros~\cite{Meszaros:2011}.
Let $\la = (\la_1,\la_2,\ldots,\la_\ell)$ be a
$k$-bounded partition. Start from the longest part of $\la$, namely $\la_1$, and successively
connect a row of length $i$ to the $(k+1-i)$th row above it (in other words, skip $k-i$ rows). 
Call this a string. Repeat this with the next longest
part that is not yet part of a string until all parts of $\la$ are part of a string.
Denote by $\{\ell^{(j)}_1,\ell^{(j)}_2,\ldots\}$ the parts of $\la$ in the $j$th string where
$1\le j\le r$. Then
\begin{equation}
	(\la^{\omega_k})' = (\ell_1^{(1)}+\ell_2^{(1)} + \cdots , \ell_1^{(2)}+\ell_2^{(2)} + \cdots, \ldots,
	   \ell_1^{(r)} + \ell_2^{(r)} + \cdots).
\end{equation}
Similarly, the $i$th part of $\mfc(\la)$ is obtained by adding all elements in the string containing
$\la_i$ that are smaller or equal to $\la_i$ (or equivalently, all elements in the string of $\la_i$
above $\la_i$).

\begin{example}
Let $\la=(3,3,3,2,1)$ with $k=4$. Then the strings are as follows:
\begin{center}
\includegraphics[width=3cm]{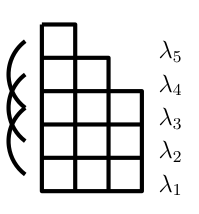}
\end{center}
and 
\begin{equation*}
\begin{split}
	(\la^{\omega_4})' &= (\la_1+\la_3+\la_5, \la_2+\la_4)=(7,5),\\
	\mfc(\la) &= (\la_1+\la_3+\la_5,\la_2+\la_4, \la_3+\la_5, \la_4, \la_5)=(7, 5, 4, 2, 1).
\end{split}
\end{equation*}
\end{example}

Lastly, we interpret horizontal chains in the framework of affine
permutations.  In particular, weak horizontal strips are 
the {\it cyclically decreasing} elements of $\tilde S_{k+1}/S_{k+1}$.
These are affine Grassmannian elements $w$ where
$w= s_{i_1}\cdots s_{i_\ell}$ for a sequence $i_1\cdots i_\ell$ such that
no number is repeated and $j$ precedes $j-1$ (taken modulo $k+1$)
when both $j,j-1\in \{i_1,\ldots, i_\ell\}$.
Then, based on the following proposition, a weak horizontal strip can be thought of as a pair of 
affine Grassmannian elements $v$ and $w$ where $uw=v$ 
and $\ell(u)+\ell(w)=\ell(v)$ for some cyclically decreasing $u$.
\begin{prop}
Let $\tau\subseteq \kappa$ be $(k+1)$-cores. 
Then $\kappa\slash\tau$ forms a weak horizontal 
strip if and only if $\kappa=s_{i_1}\cdots s_{i_\ell}\tau$
for some cyclically decreasing element $w = s_{i_1}\cdots s_{i_\ell}$.
\end{prop}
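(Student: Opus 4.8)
The plan is to prove both implications through the saturated chains that define weak horizontal strips, using the earlier proposition that each weak cover $\sigma \rightarrow_k \sigma'$ of $(k+1)$-cores is realized by a unique simple reflection, $\sigma' = s_i\cdot\sigma$, together with the description of the $s_i$-action in Definition~\ref{def:siactioncore}. The geometric heart of the argument is a single observation about when two cells added along such a chain can occupy a common column. If $(a,b)$ and $(a',b)$ with $a<a'$ are both cells of $\kappa/\tau$, I claim they must be vertically adjacent, $a'=a+1$: choosing them to be consecutive among the new cells of column $b$, every cell strictly between them lies in $\tau$, but if $(a+1,b)\in\tau$ then $(a,b)\in\tau$ by the partition property, contradicting that $(a,b)$ is new. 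Vertically adjacent cells have contents differing by $1$, hence consecutive residues, say $i$ and $i-1$ modulo $k+1$, and the lower cell must be present before the upper one can be added. Thus a column conflict occurs precisely when residue $i$ is added \emph{before} residue $i-1$ somewhere along the chain, and $\kappa/\tau$ is a horizontal strip if and only if no such ``$i$ before $i-1$'' event occurs.

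For the reverse direction, let $w=s_{i_1}\cdots s_{i_\ell}$ be cyclically decreasing with $\kappa=w\cdot\tau$. A word with pairwise distinct residues arranged in cyclically decreasing order is reduced (a standard fact about such words), so each successive left multiplication strictly increases length; by Definition~\ref{def:siactioncore} an increase of length forces the corresponding $s_{i_p}$ to \emph{add} corners, and the successive applications therefore form a saturated chain $\tau\rightarrow_k\cdots\rightarrow_k\kappa$ of length $\ell$. Every added cell carries one of the $\ell$ distinct residues $i_1,\dots,i_\ell$, giving exactly $\ell$ residues in $\kappa/\tau$, and the cyclically decreasing condition guarantees that residue $i$ is never applied before residue $i-1$; by the observation above no column of $\kappa/\tau$ contains two new cells, so $\kappa/\tau$ is a horizontal strip. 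This exhibits $\kappa/\tau$ as a weak horizontal strip.

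Conversely, suppose $\kappa/\tau$ is a weak horizontal strip of size $r$, witnessed by a chain with reflections $s_{a_1},\dots,s_{a_r}$ in application order. Each $s_{a_p}$ contributes cells of residue $a_p$, so all residues of $\kappa/\tau$ lie in $\{a_1,\dots,a_r\}$; the defining requirement of exactly $r$ residues in~\eqref{goodweakstrip} then forces the $a_p$ to be distinct. The remaining task is to produce a \emph{cyclically decreasing} word, that is, one in which $j$ is applied after $j-1$ whenever both occur. For any pair that ever stacks, the horizontal-strip hypothesis and the first paragraph already force this order; to secure it for all adjacent pairs simultaneously I would arrange the $r$ residues (necessarily a proper subset of $\ZZ/(k+1)$, since $r\le k$) in cyclically decreasing order and verify, by induction on $r$ and with the help of the conjugate characterization of Proposition~\ref{th:hstrip} (that $\mfp(\kappa')/\mfp(\tau')$ is a vertical strip), that this prescribed order reconstructs $\kappa$ from $\tau$ through genuine covers. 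I expect this last verification---that reading the residues in cyclically decreasing order keeps exactly the required corners addable at every stage, neither overshooting nor stalling---to be the main obstacle, whereas the reverse direction follows almost immediately from the column-conflict observation.
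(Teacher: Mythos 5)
The paper states this proposition without proof (the chapter defers all proofs to the original articles), so your argument must stand on its own. Your column-conflict observation is correct and useful: a vertical domino in $\kappa/\tau$ forces two vertically adjacent new cells whose residues are $i$ (lower) and $i-1$ (upper), with the lower necessarily added at an earlier step. But the forward direction has a genuine gap, which you yourself flag. From a saturated chain $\tau\rightarrow_k\cdots\rightarrow_k\kappa$ with distinct residues $a_1,\dots,a_r$ you only obtain $w_\kappa=s_{a_r}\cdots s_{a_1}w_\tau$ for a reduced word in $r$ distinct letters, and nothing so far forces that word (or any reduced word for the same element) to be cyclically decreasing --- $s_0s_1$ in $\tilde S_3$ is reduced with distinct letters yet cyclically \emph{increasing}. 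What is missing is exactly the converse of your observation: if $s_j$ adds cells at some step and $s_{j-1}$ adds cells at a later step, then at least one residue-$(j-1)$ corner added must sit directly atop a residue-$j$ cell of $\kappa/\tau$, producing a vertical domino. This is a nontrivial rigidity statement about addable corners of $(k+1)$-cores (it is the substance one must extract from the boundary-path description or from \cite{LM:2005}), and neither the ``pairs that ever stack'' remark nor the proposed induction via Proposition~\ref{th:hstrip} supplies it. Until that lemma is proved, the horizontal-strip hypothesis has not been converted into the ordering constraint that defines cyclically decreasing words.

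There is also a smaller flaw in the reverse direction. You assert that because a cyclically decreasing word is reduced, ``each successive left multiplication strictly increases length''; but reducedness of $w$ does not give $\ell(w\,w_\tau)=\ell(w)+\ell(w_\tau)$. For $k+1=3$, $w=s_0s_2$ is cyclically decreasing and $\tau=(1)$ satisfies $\tau\subseteq w\cdot\tau=(1,1)$, yet $s_0$ acts trivially at its step: the chain you build is not saturated, and $\kappa/\tau$ has one residue rather than $\ell=2$. The fix is routine --- distinctness of the residues together with $\tau\subseteq\kappa$ rules out any removing step, since a removed residue could never be restored, and trivially acting letters may be deleted without spoiling the cyclically decreasing property --- but the justification as written is not correct.
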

\end{subsection}

%%%%%%%%%%%%%%%%%%%%%%%%%%%%%%%%%%%%%%%%%%%%%%%%%%%%%%
\begin{subsection}{Cores and the strong order of the affine symmetric group}
\label{subsection.strong order}

We are now ready to discuss the strong (Bruhat) order, starting from the
core viewpoint \cite{MisraMiwa:1990,Lascoux:2001}.
% (see~\cite[Chapter 9.2]{LLMS:2006}).
A {\em strong cover} is defined on $k+1$ cores by
$$
\tau \Rightarrow_k \kappa \iff
|\mfp(\tau)|+1=|\mfp(\kappa)|\quad\text{and}\quad \tau \subseteq \kappa\,.
$$
When a pair of cores satisfies $\tau\Rightarrow_i\kappa$,
their skew diagram has is made up of ribbons.
To be precise, the {\it head} of a connected ribbon
is the southeast most cell of the ribbon and the {\it tail} is the 
northwest most corner.  Then (see \cite[Proposition 9.5]{LLMS:2006}):
\begin{itemize}
\item each connected component of $\kappa/\tau$ is a ribbon and they are all identical translates
each other;
\item 
the residues of the heads
of the connected components must all be the same and must lie on consecutive positions of those residues 
(the term `consecutive' here means that if two heads are separated by a multiple of $k+1$ cells by a taxicab 
distance then there is one which is exactly $k+1$ distance that is in-between).
\end{itemize}

A {\em strong marked cover} is a strong cover along with a value $c$ which 
indicates the content of the head of one of the copies of the ribbons.
More precisely, we define a {\it marking} as a triple $(\kappa, \tau, c)$ where
 $\kappa,\tau$ are $(k+1)$-cores such
that $\tau \Rightarrow_{k} \kappa$ and $c$ is a number which is $j-i$ for the cell 
(the diagonal index of the cell)
at position $(i,j)$ of the south-east most cell of the connected component of $\kappa/\tau$ which
is marked.

\begin{example}
\label{Ex:strong4core}
Consider the $4$-cores 
$$\tau = (19, 16, 13, 10, 7, 7, 5, 5, 3, 3, 1, 1, 1) \Rightarrow_3 (22, 19, 16, 13, 10, 7, 5, 5, 3, 3, 1, 1, 1) = \kappa$$
which correspond to the skew diagram
$$\squaresize=9pt\young{\gris\cr\gris\cr\gris\cr\gris&\gris&\gris\cr\gris&\gris&\gris\cr\gris&\gris&\gris&\gris&\gris\cr\gris&\gris&\gris&\gris&\gris\cr\gris&\gris&\gris&\gris&\gris&\gris&\gris\cr\gris&\gris&\gris&\gris&\gris&\gris&\gris&&&\cr\gris&\gris&\gris&\gris&\gris&\gris&\gris&\gris&\gris&\gris&&&\cr\gris&\gris&\gris&\gris&\gris&\gris&\gris&\gris&\gris&\gris&\gris&\gris&\gris&&&\cr\gris&\gris&\gris&\gris&\gris&\gris&\gris&\gris&\gris&\gris&\gris&\gris&\gris&\gris&\gris&\gris&&&\cr\gris&\gris&\gris&\gris&\gris&\gris&\gris&\gris&\gris&\gris&\gris&\gris&\gris&\gris&\gris&\gris&\gris&\gris&\gris&&&\cr}$$
This is a relatively large example, where $\kappa/\tau$ contains $5$ different copies of $3$ cells in a
row.  The marking, $c$, can be any one of the $5$ values representing the content
of the rightmost cell in the connected component, $c \in \{ 21, 17, 13, 9, 5 \}$.
\end{example}

\begin{figure}[h]
\begin{center}
\includegraphics[width=4in]{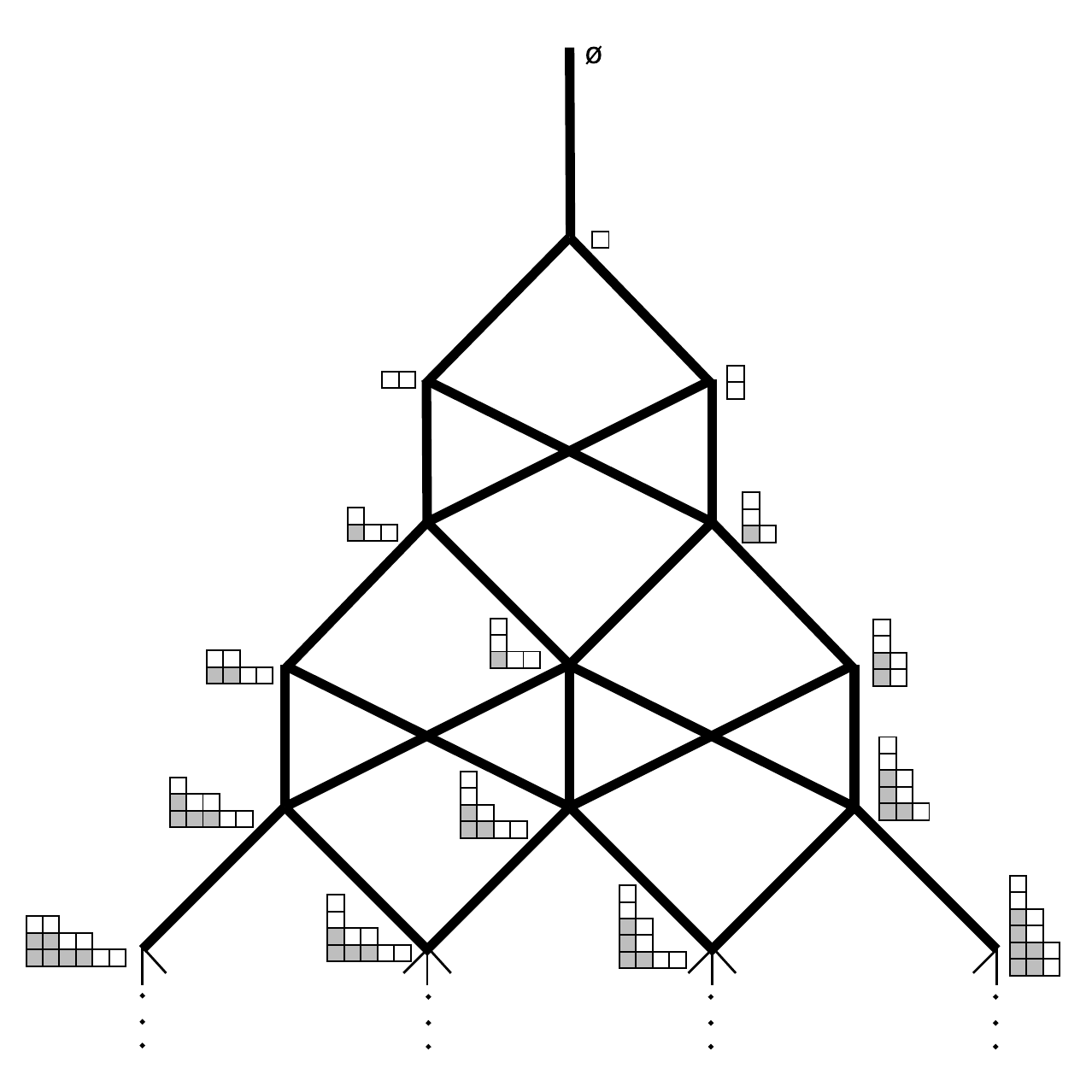}

%Figure 4: 
\caption{
The poset of 3-cores (up to those of length 6), ordered by the strong order (no markings)}\label{strongkeq2Hasse}
\end{center}
\end{figure}

\begin{figure}[h]
\begin{center}
\includegraphics[width=6in]{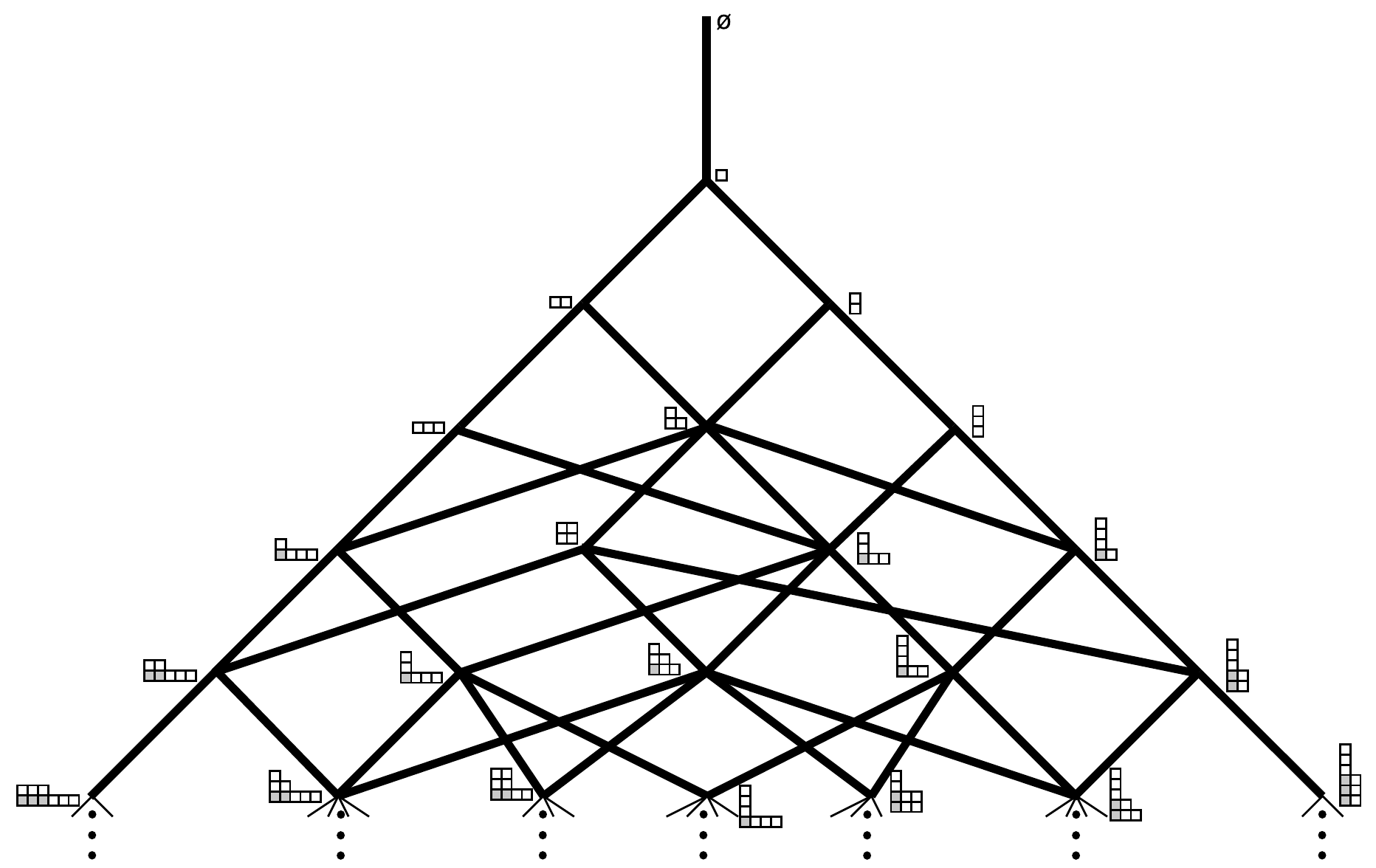}

%Figure 5: 
\caption{The poset of 4-cores (up to those of length 6), ordered by the strong order (no markings) }\label{strongkeq3Hasse}
\end{center}\label{fig:strongkeq3}
\end{figure}

\begin{example}
The diagram for the poset
of the strong order at $k=2$ is given in Figure \ref{strongkeq2Hasse}
and it can be read off the diagram that
the strong covers of $(3,1,1)$ with respect to this order are $(5,3,1)$, 
$(4,2,1,1)$ and $(3,2,2,1,1)$ because $(3,1,1)$ is contained in each of these $3$-cores.
Note that it is not true that there is containment of the corresponding $2$-bounded partitions
since $\mfp(3,1,1) = (2,1,1)$ and $\mfp(3,2,2,1,1) = (1,1,1,1,1)$.

In particular, if $\tau = (3,1,1)$ and $\kappa = (5,3,1)$, then $(3,1,1) \Rightarrow_2 (5,3,1)$ because
$(5,3,1)/(3,1,1)$ consists of two copies of a connected horizontal strip.  
This means that $( (5,3,1)/(3,1,1), 1 )$ and $( (5,3,1)/(3,1,1), 4 )$ are strong
marked covers since the cells
$(2,3)$ and $(1,5)$ are the coordinates of the heads of the horizontal strips
and they have content $1$ and $4$ respectively.
These are represented by the skew tableaux
$$\squaresize=9pt\young{\gris\cr\gris&&*\cr\gris&\gris&\gris&&\cr}\hskip .3in
\young{\gris\cr\gris&&\cr\gris&\gris&\gris&&*\cr}~.$$
\end{example}

Although we started with a discussion of strong order in 
the setting of $k+1$-cores, it comes from ordering elements
$u,w$ of the Coxeter system $\tilde S_{k+1}$ by
$$
w\Rightarrow_k u \iff t_{ij} w=u \quad\text{and}\quad
\ell(w)+1=\ell(u)\,.
$$
The notion of marked covers can be interpreted in this framework as well.

\begin{prop}\label{prop:strongcovertransposition} \cite[Section 2.3]{LLMS:2006}
Let $\tau, \kappa$ be two $(k+1)$-cores such that $\tau \Rightarrow_k \kappa$ 
and assume there is a marking of $\kappa/\tau$ at diagonal $j-1$ and $i$ 
is the diagonal index of the tail of the marked ribbon.
Let $w$ be the affine Grassmannian permutation corresponding to $\tau$ 
and $u$ the affine Grassmannian
permutation corresponding to $\kappa$.  Then we have
\begin{itemize}
\item $w^{-1}(i) \leq 0 < w^{-1}(j)$.
\item $t_{ij} w = u$.
\item The number of connected ribbons which are below the marked one is 
$(-w^{-1}(i) - a)/n$ where $a = -w^{-1}(i)~(mod~n)$ (the representative between 0 and $n$).
\item The number of connected ribbons which are above the marked one is 
$(w^{-1}(j) - b)/n$ where $b = w^{-1}(j)~(mod~n)$ and the total number
of connected components in $\kappa/\tau$ is $1+(-w^{-1}(i)+ w^{-1}(j) - a - b)/n$.
\item The number of cells in the ribbon is $j-i$.
\item The height of the ribbon is the number of $d$ such that $i\leq d<j$ such
that $w^{-1}(d)\leq0$.
\end{itemize}
\end{prop}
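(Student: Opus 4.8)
The plan is to run everything through the dictionary between the core $\tau$ and the two-line notation of $w^{-1}$ set up above: position $d$ carries a South step (a \emph{bead}) when $w^{-1}(d)\le 0$ and an East step (a \emph{space}) when $w^{-1}(d)>0$, and this bi-infinite bead--space sequence traces the boundary of $\tau$ (with $\mfp$, $\mfc$ and Proposition~\ref{prop:affgrass2core} identifying $w$ with $\tau$). The first thing I would record is the classical James-abacus correspondence in exactly this language: moving the bead at position $p$ forward into an empty slot at position $p+h$ adds a border strip (ribbon) of $h$ cells whose contents are precisely $p,p+1,\dots,p+h-1$, its tail on diagonal $p$ and its head on diagonal $p+h-1$, and the number of rows the strip occupies equals the number of beads among the positions $p,p+1,\dots,p+h-1$. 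Every assertion of the proposition is then a readout of this single correspondence, so I would state it carefully and check its compatibility with the content/diagonal-to-position identification.

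Next I would identify the transposition directly, rather than quoting abstract Bruhat-cover uniqueness. Using $(t_{ij}w)^{-1}=w^{-1}t_{ij}$, left multiplication by $t_{ij}$ exchanges, for every $r$, the bottom-row entries of $w^{-1}$ at positions $i+rn$ and $j+rn$. Reading off the marked ribbon, whose tail sits on diagonal $i$ and whose head sits on diagonal $j-1$, the $r=0$ copy must be an addition of cells, so by the abacus correspondence the slot at $i$ is a bead and the slot at $j$ a space before the move; that is, $w^{-1}(i)\le 0 < w^{-1}(j)$, which is the first bullet, and the swap moves that bead from diagonal $i$ to diagonal $j$, adding exactly this ribbon. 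The fact that the full periodic swap carries $\tau$ all the way to $\kappa$ — hence $t_{ij}w=u$, the second bullet — follows once one knows the components of $\kappa/\tau$ are identical translates, which I import from \cite[Proposition 9.5]{LLMS:2006}. The cell count $j-i$ and the height $\#\{d:\ i\le d<j,\ w^{-1}(d)\le 0\}$ then drop straight out of the abacus reading of this one bead move.

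The remaining, genuinely affine, point is that the single reflection $t_{ij}$ moves beads on all periodic copies simultaneously, and these produce the several translated ribbons. A pair $(i+rn,\,j+rn)$ contributes a ribbon exactly when it is an addition, i.e.\ when $w^{-1}(i)+rn\le 0 < w^{-1}(j)+rn$, equivalently when $rn$ lies in the interval $(-w^{-1}(j),\,-w^{-1}(i)]$. I would then simply count multiples of $n$ in this interval: the marked ribbon is $r=0$; the copies with $r>0$ are the multiples of $n$ in $(0,\,-w^{-1}(i)]$, of which there are $(-w^{-1}(i)-a)/n$ with $a=-w^{-1}(i)\bmod n$; and the copies with $r<0$ are the multiples of $n$ in $(-w^{-1}(j),\,0)$, of which there are $(w^{-1}(j)-b)/n$ with $b=w^{-1}(j)\bmod n$. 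Summing these together with the marked copy gives the stated total $1+(-w^{-1}(i)+w^{-1}(j)-a-b)/n$, and the splitting into components above and below the marked one is precisely this split by the sign of $r$.

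The main obstacle I anticipate is not the arithmetic but making the abacus dictionary airtight in the affine-periodic setting: one must verify that a single reflection acting at once on all periodic bead positions yields a disjoint union of translated border strips (with no overlapping or canceling cell moves), and that the diagonal-to-position identification is consistent with the content conventions, so that ``tail on diagonal $i$, head on diagonal $j-1$'' genuinely reads off as the bead move $i\to j$. Once that foundational correspondence is pinned down and the translate structure is taken from \cite[Proposition 9.5]{LLMS:2006}, each bullet reduces to a short computation on the path.
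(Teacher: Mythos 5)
First, a remark on context: the paper itself gives no proof of this proposition --- it is quoted from \cite[Section 2.3]{LLMS:2006}, and the chapter explicitly defers almost all proofs to the original articles --- so your attempt must be judged on its own terms. Your overall strategy is the right one: the bead/space reading of the two-line notation of $w^{-1}$, the identification of a single ribbon addition with a single bead move $p\mapsto p+h$ (tail on diagonal $p$, head on diagonal $p+h-1$, number of rows equal to the number of beads among $p,\dots,p+h-1$), and the count of periodic copies as multiples of $n$ in the interval $(-w^{-1}(j),\,-w^{-1}(i)]$ are all correct. They deliver the first bullet and the third through sixth bullets exactly, and they check out against the worked example with $\tau=(5,3,1)$, $w=s_1s_0s_2s_1s_0$ in the text.

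The gap is in the second bullet, $t_{ij}w=u$. Your bead-set computation shows at most that $t_{ij}w$ and $u$ lie in the same coset of $\tilde{S}_n/S_n$: the function $d\mapsto\chi(v^{-1}(d)\le 0)$ is invariant under right multiplication by $S_n$, so it sees only the core, not the group element. Even that much needs more than you supply, since \cite[Proposition 9.5]{LLMS:2006} gives that the components of $\kappa/\tau$ are translates at consecutive residue positions, but you must also argue that they exhaust \emph{all} valid periodic swaps $(i+rn,\,j+rn)$; this does follow, because in each residue class the beads of a core form a down-set under $d\mapsto d-n$, which forces the component set to be exactly $\{r : -w^{-1}(j)<rn\le -w^{-1}(i)\}$, but it is not a consequence of the translate structure alone. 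More seriously, you never address why $t_{ij}w$ is the \emph{minimal-length} representative of its coset rather than some other element of $uS_n$ --- equivalently why $\ell(t_{ij}w)=\ell(w)+1$ --- and this is precisely where the cover hypothesis $|\mfp(\kappa)|=|\mfp(\tau)|+1$ must enter; your argument never uses it. Verifying Grassmannianness of $t_{ij}w$ directly amounts to showing that no $d$ with $i<d<j$ has $w^{-1}(d)$ in the same length-$n$ window-block as $w^{-1}(i)$ or $w^{-1}(j)$, which is not ``a short computation on the path.'' The standard repair is the route you explicitly declined: since containment of cores is Bruhat order on affine Grassmannian elements and the lengths differ by one, $w\lessdot u$, so $u=tw$ for the unique reflection $t=uw^{-1}$; the bead-set analysis then identifies $t$ as $t_{ij}$ by matching the diagonals of the marked ribbon.
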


\begin{example}
Consider the $3$-core $\tau = (5,3,1)$ which corresponds to the reduced word
$w = s_1 s_0 s_2 s_1 s_0$ and which is also given by its action 
on the integers by $[w(1), w(2), w(3)] = [-2, 0, 8]$.  
Then $t_{-10} = t_{23} = t_{56} = s_2$ and $\kappa = 
t_{-10} \cdot (5,3,1) = (6,4,2)$.
\[\squaresize=9pt
\young{\gris&*\cr\gris&\gris&\gris&\cr\gris&\gris&\gris&\gris&\gris&\cr},
\hskip .2in
\young{\gris&\cr\gris&\gris&\gris&*\cr\gris&\gris&\gris&\gris&\gris&\cr} ,
\hskip .2in
\young{\gris&\cr\gris&\gris&\gris&\cr\gris&\gris&\gris&\gris&\gris&*\cr}
\]
The three markings are on diagonal $-1$ and $2$ and $5$.  
Moreover since
$w^{-1}(-1) = -6 \leq 0 < w^{-1}(0) = 2$, $w^{-1}(2) = -3 \leq 0 < w^{-1}(3) = 5$
and $w^{-1}(5) = 0 \leq 0 < w^{-1}(6) = 8$, these three transpositions satisfy
the conditions of the proposition for each of the three marked strong covers.
The right action is expressed as the element $w t_{-62} = w t_{-35} = w t_{08}$.

The number of connected components is equal to 
$1+(-w^{-1}(i)+w^{-1}(j)-a-b)/n$ which for $j=0$ and $i=-1$ amounts to
$1+(6+2-0-2)/3=3$.
\end{example}
\begin{example}
The $4$-core $\tau$ in Example~\ref{Ex:strong4core}
corresponds to the reduced word
$$w = s_1 s_2 s_0 s_3 s_1 s_2 s_0 s_3 s_1 s_2 s_1 s_0 s_3 s_1 s_2 s_1 s_0 s_3 s_0 s_1 s_2 s_1 s_0 s_3 
s_2 s_1 s_0$$
and $t_{19,22} w = t_{15,18} w = t_{11,14} w = t_{7,10} w = t_{3,6} w$ represent the
five strong covers with the left action and
$w t_{0,19} = w t_{-4,15} = w t_{-8,11} = w t_{-12,7} = w t_{-16,3}$ represent the five strong
covers with the right action.
\end{example}
\begin{remark} Recall that for two elements $w, w' \in S_{n}$ 
with $\ell(w') = \ell(w) + 1$, $w'$ is a cover of $w$ in the (left) weak 
order if 
$$s_i w = w'$$
for some simple transposition $s_i$
and $w'$ is a cover of $w$ in the strong (or Bruhat)
order if
$$t_{ij} w = w'$$
for some transposition $t_{ij}$.  
By analogy, since $\Rightarrow_k$ is a left multiplication by
an affine transposition (Proposition \ref{prop:strongcovertransposition}) 
and $\rightarrow_k$ is left multiplication by a simple affine
transposition on cores (from Definition \ref{def:siactioncore}), the cover relations are called strong and weak covers, respectively.
\end{remark}

We have included the Hasse diagrams for the weak and strong orders
for the poset of $3$ and $4$-cores up to those of length $6$ 
in Figures~\ref{partkeq2Hasse} through~\ref{strongkeq3Hasse}.  Note
that the strong order on cores does not form a lattice.

\begin{sagedemo}
We can produce the weak and strong covers of a given core
\begin{sageexample}
    sage: c = Core([3,1,1],3)
    sage: c.weak_covers()
    [[4, 2, 1, 1]]
    sage: c.strong_covers()
    [[5, 3, 1], [4, 2, 1, 1], [3, 2, 2, 1, 1]]
\end{sageexample}
as well as compare two $(k+1)$-cores with respect to weak and strong order
\begin{sageexample}
    sage: kappa = Core([4,1],4)
    sage: tau = Core([2,1],4)
    sage: tau.weak_le(kappa)
    False
    sage: tau.strong_le(kappa)
    True
\end{sageexample}
Figure~\ref{partkeq3Hasse} can be reproduced in \Sage via:
\begin{sageexample}
    sage: C = sum(([c for c in Cores(4,m)] for m in range(7)),[])
    sage: ord = lambda x,y: x.weak_le(y)
    sage: P = Poset([C, ord], cover_relations = False)
    sage: H = P.hasse_diagram()
    sage: view(H)        #optional
\end{sageexample}
\end{sagedemo}

As with the weak order poset, we are also concerned with certain
chains in the strong order poset.
A {\it strong marked horizontal strip} of size $r$ 
is a succession of strong marked covers,
\begin{equation} \label{equation.strong h-strip}
\kappa^{(0)} \Rightarrow_{k} \kappa^{(1)} \Rightarrow_{k} \kappa^{(2)} 
\Rightarrow_{k} \cdots
\Rightarrow_{k} \kappa^{(r)}
\,,
\end{equation}
where the markings $c_i$ associated to $\kappa^{(i-1)}\Rightarrow_k\kappa^{(i)}$ 
satisfy $c_1 < c_2 < \cdots < c_r$.

\begin{example} 
\squaresize = 6pt
Consider the following sequence of $4$-cores with the markings indicated on the diagram.
$$\young{&\cr} \Rightarrow_3 \young{\times\cr&\cr} \Rightarrow_3 \young{\times\cr\cr&&\cr} \Rightarrow_3 \young{\cr\cr&&&\times\cr}
\hbox{, or more compactly denoted }{\squaresize=10pt \small \young{2^\ast\cr1^\ast\cr&&2&3^\ast\cr}}~.$$
This is not a strong marked horizontal strip because the markings $c_1 = -1, c_2 = -2, c_3 = 3$
are not increasing. However, the same set
of $4$-cores with markings
$$\young{&\cr} \Rightarrow_3 \young{\times\cr&\cr} \Rightarrow_3 \young{\cr\cr&&\times\cr} \Rightarrow_3 \young{\cr\cr&&&\times\cr}
\hbox{, or more compactly denoted }{\squaresize=10pt \small \young{2\cr1^\ast\cr&&2^\ast&3^\ast\cr}}~.$$
as shown have $c_1 = -1 < c_2 = 2 < c_3 = 3$ and hence this is a strong marked horizontal strip.
\end{example}

\begin{remark}
\label{strongstripkinf}
It is worth pointing out that when $k$ is large, a succession of strong marked covers,
\begin{equation} 
\kappa^{(0)} \Rightarrow_{k} \kappa^{(1)} \Rightarrow_{k} \kappa^{(2)} 
\Rightarrow_{k} \cdots
\Rightarrow_{k} \kappa^{(r)}
\,,
\end{equation}
reduces to a saturated chain in Young's lattice
where the condition that $c_1<c_2<\cdots <c_r$ implies that
$\kappa^{(r)}/\kappa^{(0)}$ is a horizontal strip.
\end{remark}

\end{subsection}

%%%%%%%%%%%%%%%%%%%%%%%%%%%%%%%%%%%%%%%%%%%%%%%%%%%%%%%%%%%%
\begin{subsection}{Symmetric functions} \label{sec:symfunc}
%%%%%%%%%%%%%%%%%%%%%%%%%%%%%%%%%%%%%%%%%%%%%%%%%%%%%%%%%%%%
The ring of symmetric functions shall be defined as
\begin{equation}
\La = \QQ[ h_1, h_2, h_3, \ldots],
\end{equation}
the ring of polynomials in the generators $h_r$.
Here we are considering the ring $\La$ without reference to `variables' for which there
is a symmetric group action but we will now make explicit the
connection with symmetric polynomials and symmetric series.  

Let $\sigma$ be a permutation that acts on the variables $\{ x_1, x_2, x_3, \ldots, x_m \}$ by
$\sigma( x_i ) = x_{\sigma_i}$ and this action extends to polynomials.  We call a polynomial
$S_m$-invariant (or symmetric) if 
$\sigma f(x_1, x_2, \ldots, x_m) = f(x_1, x_2, \ldots, x_m)$ for all $\sigma \in S_m$.
The ring $\Lambda$ is identified with functions which are symmetric series in an infinite 
set of variables by setting
\begin{equation}
h_r[X] = \sum_{1 \leq i_1 \leq i_2 \leq \cdots \leq i_r} x_{i_1} x_{i_2} \cdots x_{i_r}
\end{equation}
and symmetric polynomials are then just a specialization of these series with a finite number
of variables, $h_r[X_m] = \sum_{1 \leq i_1 \leq i_2 \leq \cdots \leq i_r \leq m} x_{i_1} x_{i_2} \cdots x_{i_r}$.

For each partition $\lambda=(\la_1,\la_2,\ldots,\la_{\ell(\la)})$, we set 
\begin{equation}
h_{\lambda}[X] := h_{\la_1}[X] h_{\la_2}[X] \cdots h_{\la_{\ell(\la)}}[X]~.
\end{equation}
The set of these symmetric series forms a linear basis for an algebra isomorphic to $\La$.
We will consider various bases for $\La$. One such basis is the {\it monomial basis}
\begin{equation}
m_\lambda[X] = \sum_{\sort(\alpha) 
= \lambda} x_1^{\alpha_1} x_2^{\alpha_2} \cdots x_{\ell(\alpha)}^{\alpha_{\ell(\alpha)}}\; ,
\end{equation}
where the sum is over all sequences $\alpha$ such that if the parts are arranged in weakly decreasing
order the resulting sequence is the partition $\lambda$.  It is not hard to see from the
definitions that the generators of $\La$ are related to the monomial
symmetric functions by 
\begin{equation}\label{hominmonom}
h_r[X] = \sum_{\la \vdash r} m_\la[X]~.
\end{equation}
The $h_r$ are known as the {\it complete homogeneous generators} and the
their products will be referred to as the complete homogeneous or simply homogeneous basis.

The {\it power sum} generators are the elements 
\begin{equation} \label{prexpr}
p_r[X] = m_{(r)}[X] = \sum_{i \geq 1} x_i^r
\end{equation}
and the {\it elementary generators} are defined as
\begin{equation}
e_r[X] = m_{(1^r)}[X] = \sum_{1 \leq i_1 < i_2 < \cdots < i_r} x_{i_1} x_{i_2} \cdots x_{i_r}~.
\end{equation}

The monomials in these sets of generators,
$p_\la[X] := p_{\la_1}[X] p_{\la_2}[X] \cdots p_{\la_{\ell(\la)}}[X]$
and $e_\la[X] := e_{\la_1}[X] e_{\la_2}[X] \cdots e_{\la_{\ell(\la)}}[X]$ also
form bases for the space $\La$ indexed by partitions.

The variable $X$ in these symmetric functions is, for the moment, superfluous notation and
to make certain formulas more compact we will drop $[X]$ when it is implicit that it is there.  
However, we will later consider transformations on the ring of symmetric functions by adding
notation to $[X]$ and there will be times that the $[X]$ will be used to indicate that the expression it is
attached to is a symmetric function.

One place where it will be necessary to keep the reference to the variables explicit is
in the use of a few `plethystic' expressions involving parameters $q$ and $t$.  We may extend the
notation defined above to include all rational expressions in variables $q, t, x_1, x_2, x_3, \ldots$,
$E = E(x_1, x_2, \ldots; q, t)$, then
\begin{equation} \label{eq:plethnot}
p_r[E] = E(x_1^r, x_2^r, \ldots; q^r, t^r)~.
\end{equation}
Note that by using the expression $E$ as
the infinite sum $X = x_1 + x_2 + x_3 + \cdots$, the notation in Equation~\eqref{eq:plethnot}
is consistent with the expression in Equation~\eqref{prexpr}.  

In particular we will frequently use the notation $f\left[ \frac{X}{1-t}\right]$, $f[X(1-t)]$ 
and $f\left[X\frac{1-q}{1-t}\right]$
to represent the symmetric function $f[X]$ with $p_r[X]$ replaced with $p_r[X]/(1-t^r)$, $p_r[X](1-t^r)$ and
$p_r[X]\frac{1-q^r}{1-t^r}$ respectively.  This transformation is sometimes also denoted by
\begin{equation} \label{equation.theta}
\theta_{qt} f[X] = f\left[X\frac{1-q}{1-t}\right]~.
\end{equation} 

When we need to use a finite number of variables, the expression $X_m = x_1+x_2+x_3+ \cdots +x_m$ is
used to indicate that $p_r[X_m] = \sum_{i=1}^m x_i^r$.  Normally we consider symmetric functions
in an arbitrary alphabet which can be specialized appropriately and we assume
that there is an implicit $[X]$ following all symmetric function expressions 
where no variables are specified.

The three types of generators
are related by
\begin{align}
\sum_{i=0}^r (-1)^{r-i} h_{i} e_{r-i} &= 0\hskip .75in &r h_r = \sum_{i=1}^r h_{r-i} p_i\\
\sum_{i=0}^{r} (-1)^{r-i} i h_{i} e_{r-i}&= p_r \hskip .75in &r e_r = \sum_{i=1}^r (-1)^{i-1} e_{r-i} p_i\, .
\label{eq:perelation}
\end{align}
These relations are sufficient to express any one of the generators $\{ e_r, h_r, p_r \}$
in terms of another.

There is a scalar product on the ring of symmetric functions for which the monomial and homogeneous
symmetric functions are orthonormal.  We have also,
\begin{equation}\label{scalardual}
\left< h_\la, m_\mu \right> = \left< p_\la, p_\mu/z_\mu \right> 
 = \delta_{\la\mu}
 := \begin{cases} 1&\hbox{ if }\la = \mu,\\
  0&\hbox{ otherwise,}\end{cases}
\end{equation}
where $z_\mu$ is defined in Equation \eqref{zladef}.

One powerful use of this scalar product is that it allows us to compute
a single coefficient in the expansion of a symmetric function in terms
of these bases.  If $f \in \La$, then $\left< f, h_\mu\right>$ is
the coefficient of $m_\mu$ in $f$.  That is, if $f = \sum_{\gamma} c_\gamma m_\gamma$,
then
\begin{equation}\label{mmucoeff}
\left< f, h_\mu \right> = \sum_\gamma c_\gamma \left< m_\gamma, h_\mu \right> = c_\mu~.
\end{equation}
Similarly $\left< f, m_\mu \right>$ is the coefficient of $h_\mu$ in $f$ and $\left< f, p_\mu/z_\mu \right>$
is equal to the coefficient of $p_\mu$ in $f$.

\begin{sagedemo} \label{sageexample.hm}
We now show how to create various bases in \Sage and how to obtain the coefficients of a given symmetric
function using the computer. We begin by defining the homogeneous and monomial bases:
\begin{sageexample}
    sage: Sym = SymmetricFunctions(QQ)
    sage: h = Sym.homogeneous()
    sage: m = Sym.monomial()
\end{sageexample}
Then we define a symmetric function $f$ and expand it in terms of the monomial basis:
\begin{sageexample}
    sage: f = h[3,1]+h[2,2]
    sage: m(f)
    10*m[1, 1, 1, 1] + 7*m[2, 1, 1] + 5*m[2, 2] + 4*m[3, 1] + 2*m[4]
\end{sageexample}
There are several ways to obtain the coefficients of a given term. Both of the following yield the coefficient of
$m_{211}$ in $f$:
\begin{sageexample}
    sage: f.scalar(h[2,1,1])
    7
    sage: m(f).coefficient([2,1,1])
    7
\end{sageexample}
The order in which bases are multiplied and added determines which is the output basis.  For instance
to demonstrate \eqref{eq:perelation} we consider the following two equivalent expressions:
\begin{sageexample}
    sage: p = Sym.power()
    sage: e = Sym.elementary()
    sage: sum( (-1)**(i-1)*e[4-i]*p[i] for i in range(1,4) ) - p[4]
    4*e[4]
    sage: sum( (-1)**(i-1)*p[i]*e[4-i] for i in range(1,4) ) - p[4]
    1/6*p[1, 1, 1, 1] - p[2, 1, 1] + 1/2*p[2, 2] + 4/3*p[3, 1] - p[4]
\end{sageexample}
\end{sagedemo}
\end{subsection}

%%%%%%%%%%%%%%%%%%%%%%%%%%%%%%%%%%%%%%%%%%%%%%%%%%%%%%
\begin{subsection}{Schur functions}
\label{sectionschur}
A combinatorial definition of the Schur functions 
is given in Section~\ref{sec:Pierirules} which we 
shall generalize to the $k$-Schur functions in Section~\ref{sec:weaksection} and the dual $k$-Schur 
functions in Section~\ref{sec:strongsection}.  
For now we start with two (equivalent) algebraic definitions.

\begin{definition} The Schur functions $s_\la$ are the unique basis of $\Lambda$ for which
\begin{enumerate}
\item $\left< s_\la, s_\mu \right> = \delta_{\lambda\mu}$ for any partitions $\la,\mu$;
\item
$s_\la = m_{\la} +$ terms of the form $r_{\la\mu} m_{\mu}$ 
for partitions $\mu$ of $|\la|$ with $\mu<\la$ in dominance order.
\end{enumerate} 
\end{definition}

We have chosen this as the definition of the Schur functions because it naturally generalizes to the Hall--Littlewood
and Macdonald symmetric functions (which we shall introduce in the next few pages).  
There are many formulas known for the Schur functions which
can either be taken as a defining relation or as a consequence.
In the next section we shall shift perspectives and consider 
the Schur functions as the family of symmetric functions 
which satisfy the Pieri rule (see Equation \eqref{eq:pierirule}).

\begin{example} \label{example.sn3}
In the following we abbreviate $s_{(2,1)}$ by $s_{21}$ if there is no confusion about the parts.
By definition we have that
$$s_{111} =      m_{111}$$
and we may proceed by calculating Gram-Schmidt orthonormalization.
For instance, since $s_{111} = m_{111} = e_3 = h_{111} - 2 h_{21} + h_3$ we have
$$s_{21} = m_{21} - \left< m_{21}, s_{111} \right> s_{111} = m_{21} + 2 m_{111}~.$$
If then $s_{21}$ is expanded in the homogeneous basis, we see that it is $s_{21} = h_{21}-h_3$.
Finally, to calculate $s_3$ we note that
$$s_{3} = m_3 - \left< m_3, s_{21} \right> s_{21} - \left< m_3, s_{111} \right> s_{111}
= m_3 + m_{21} + m_{111}~.$$
\end{example}

\begin{sagedemo} \label{sageexample.sp}
If we wanted to check Example~\ref{example.sn3} using \Sage, we could define $h$ and $m$ as in
Sage Example~\ref{sageexample.hm} and then run
\begin{sageexample}
    sage: Sym = SymmetricFunctions(QQ)
    sage: s = Sym.schur()
    sage: m = Sym.monomial()
    sage: h = Sym.homogeneous()
    sage: m(s[1,1,1])
    m[1, 1, 1]
    sage: h(s[1,1,1])
    h[1, 1, 1] - 2*h[2, 1] + h[3]
\end{sageexample}
etc.. We can also obtain the expansion of the Schur functions into power sum symmetric functions:
\begin{sageexample}
    sage: p = Sym.power()
    sage: s = Sym.schur()
    sage: p(s[1,1,1])
    1/6*p[1, 1, 1] - 1/2*p[2, 1] + 1/3*p[3]
    sage: p(s[2,1])
    1/3*p[1, 1, 1] - 1/3*p[3]
    sage: p(s[3])
    1/6*p[1, 1, 1] + 1/2*p[2, 1] + 1/3*p[3]
\end{sageexample}
and the following calculation shows that the Schur functions are orthogonal:
\begin{sageexample}
    sage: s[2,1].scalar(s[1,1,1])
    0
    sage: s[2,1].scalar(s[2,1])
    1
\end{sageexample}
\end{sagedemo}

Since the Schur functions form a basis of the ring of symmetric functions $\La$, a product of two 
Schur functions can again be expanded in terms of Schur functions:
\begin{equation} \label{equation.LR coefficients}
	s_\la s_\mu = \sum_{\nu} c_{\la \mu}^\nu s_\nu.
\end{equation}
It turns out that the coefficients $c_{\la \mu}^\nu$,
called {\it Littlewood--Richardson coefficients},
are nonnegative integer coefficients.
The famous \textit{Littlewood--Richardson} rule~\cite{LR:1934} 
provides a combinatorial expression for the coefficients $c_{\lambda \mu}^\nu$. 
It says that $c_{\lambda\mu}^\nu$ is equal to the number of 
semi-standard
tableaux of skew shape $\nu/\lambda$ and weight $\mu$ whose column reading 
word is Yamanouchi. Here $\lambda,\mu,\nu$ are partitions and a 
semi-standard tableau of shape $\nu/\lambda$ is a filling of the skew shape which is weakly increasing 
across rows and strictly increasing up columns. The weight of a
tableau or word is 
$\mu=(\mu_1,\mu_2,\ldots)$, where $\mu_i$ counts the number of $i$
in the tableau or word. Furthermore, a word is Yamanouchi if all 
right subwords have partition weight.

Remarkably, Schur functions and their Littlewood--Richardson 
coefficients tie into the study of the geometry of the Grassmannian 
$\Gr_{\ell n}$ (the manifold of $\ell$-dimensional subspaces of $\mathbb C^n$).
The cohomology ring of $\Gr_{\ell n}$
has a basis of Schubert classes $\sigma_\lambda$,
indexed by shapes $\lambda\in \mathcal P^{\ell n}$ 
contained in an $\ell \times (n-\ell)$ rectangle.  
The intersection numbers are encoded by the
structure constants of $H^*(\Gr_{\ell n})$ 
in the Schubert basis:
\begin{equation}
\label{struc}
\sigma_\lambda\cup\sigma_\mu = \sum_{\nu\in\mathcal P^{\ell n}}
c_{\lambda\mu}^\nu \sigma_\nu
\,.
\end{equation}
The explicit understanding of $H^*(\Gr_{\ell n})$ and of these intersections 
is gained by Schur functions. Letting $I=\langle e_{n-\ell+1},\ldots,e_n\rangle$
and $\La_{(\ell)} = \QQ[ h_1, h_2, h_3, \ldots, h_\ell ]$,
there is an isomorphism,
\begin{equation}
H^*(\Gr_{\ell n})\cong \Lambda_{(\ell)}/ I
\,,
\end{equation}
under which $\sigma_\lambda$ corresponds to $s_\lambda$.
Importantly, $s_\nu\in I$ when 
$\nu\not\in\mathcal P^{\ell n}$,
and thus the structure constants of $H^*(\Gr_{\ell n})$ 
are none other than the Littlewood--Richardson coefficients
\eqref{equation.LR coefficients} for Schur function products.

The ring of symmetric functions is also endowed with a {\it Hopf algebra structure}. A systematic study
of $\La$ from the Hopf algebra perspective is given in~\cite{Zelevinsky}, 
see also~\cite{Mac:1995}.
In particular, the coproduct on the Schur basis is given in terms of the Littlewood--Richardson
coefficients
\begin{equation} \label{equation.coproduct schur}
	\Delta(s_\nu) = \sum_{\la,\mu} c_{\la\mu}^\nu s_\la \otimes s_\mu.
\end{equation}

There is also an algebraic involution on $\La$ 
for which $\omega( h_\la ) = e_\la$ and $\omega( p_\la ) = (-1)^{|\la| - \ell(\la)} p_\la$
and $\omega( s_\la ) = s_{\la'}$.

With the scalar product it is natural to introduce the operation which is dual to multiplication.
That is, for a homogeneous symmetric function $f$ of degree $k$, multiplication by $f$ is an operation
which will raise the degree of a symmetric function by $k$, and the notation $f^\perp$ will represent
an operator that will lower a symmetric function by degree $k$ and its action is defined as
\begin{equation}
f^\perp(g) = 
\sum_{\la} \left< g, f s_\la \right> s_\la 
= \sum_{\la} \left< g, f h_\la \right> m_\la \;.
\end{equation}

The reason why we introduce these operators is that we find that we can define `creation operators'
for the Schur functions which allow us to show that the Schur functions satisfy the Pieri rule.  Set
$\Sop_m = \sum_{r \geq 0} (-1)^r h_{m+r} e_r^\perp$.  The sum is apparently infinite but we need only calculate
up to $r$ equal to the degree of the symmetric function it is acting on.  These operators have the property
that for a partition $\la = (\la_1,\la_2,\ldots,\la_\ell)$ and $m \geq \lambda_1$,
\begin{equation} \label{bernsteinops}
\Sop_{m} s_{\la}  = s_{(m,\la_1,\la_2, \ldots, \la_{\ell})}~.
\end{equation}

One reason these operators are particularly useful is that commutation rules such as
$p_r \Sop_m = \Sop_m p_r + \Sop_{m+r}$ and $e_r \Sop_m = \Sop_{m+1} e_{r-1} + \Sop_{m} e_r$ and
$h_r \Sop_m = \sum_{i=0}^r \Sop_{m+i} h_{r-i}$ can be used to show the
Murnaghan--Nakayama rule and the Pieri rules (respectively).  We will also use the operators $\Sop_m$
to express creation operators for the Hall--Littlewood symmetric functions and a means for computing
them.  In particular,
$$s_{\la} = \Sop_{\la_1} \Sop_{\la_2} \cdots \Sop_{\la_{\ell(\la)}}(1)~.$$
\end{subsection}

%%%%%%%%%%%%%%%%%%%%%%%%%%%%%%%%%%%%%%%%%%%%%%%%%%%%%%
\begin{subsection}{Hall--Littlewood symmetric functions} \label{sec:HLsymfunc}
The Hall--Littlewood symmetric functions have a definition which is similar to that
of the Schur functions.  These functions form a basis of the ring of symmetric functions
over a field containing a parameter $t$.  We work in the fraction field over the polynomials in
the parameter $t$, and set
\begin{equation}
\La_{t} = \QQ(t)[h_1, h_2, h_3, \ldots ]~.
\end{equation}

The functions $Q'_\la[X;t]$ are defined as the family of symmetric functions satisfying
$$Q'_\la[X;t] = s_{\la} + \hbox{ terms of the form }r_{\la\mu}(t) s_\mu\hbox{ for }\mu > \la$$
and 
$$\left< Q'_\la[X;t], Q'_\mu[X;t] \right>_{t} = 0\hbox{ if }\la \neq \mu$$
where the scalar product $\left<~\cdot~,~\cdot~\right>_t$ is defined so that
\begin{equation} \label{equation.scalar_t}
	\left< p_\la, p_\mu \right>_t = z_\la \delta_{\la\mu} \prod_i (1-t^{\la_i})~.
\end{equation}
By this definition of the $t$-scalar product, we see that
$\left< p_\la,p_\mu \right>_t = \left< p_\la[X], p_\mu[X(1-t)] \right>$,
where on the right hand side we have used the usual scalar product from Equation \eqref{scalardual}.

We will mainly be using the scalar product from Equation \eqref{scalardual}, so define
$\{P_\la[X;t]\}_{\lambda \vdash n}$ to be the dual basis to the basis
$\{ Q'_\lambda[X;t] \}_{\la \vdash n}$. Since we know that
$$\left< Q'_\lambda[X;t], Q'_\mu[X(1-t);t] \right> = 0 \quad \text{if $\la \neq \mu$,}$$
we must have $P_\lambda[X;t] = c_\la Q'_\la[X(1-t);t]$ for some coefficients 
$c_\la$.  Explicit formulas for the coefficient $c_\la$ are known, but since
$$1 = \left< Q'_\la[X;t], P_\la[X;t] \right> = \left< Q'_\la[X;t], c_\la Q'_\la[X(1-t);t] \right>~,$$
it follows that $c_\la = \left< Q'_\la[X;t], Q'_\la[X(1-t);t] \right>^{-1}~.$

\begin{example} \label{example.HL}
We compute the Hall--Littlewood symmetric functions
for partitions of $3$ using this method to demonstrate how they might be implemented
in a computer program.

The triangularity relation shows that $Q'_{3} = s_3$.  Then $Q'_{21}$ is defined as
$$Q'_{21} = s_{21} - \frac{\left< s_{21}, Q'_3 \right>_t}{\left< Q'_3, Q'_3 \right>_t} Q'_3~.$$
By calculating from the expansion of the Schur functions in the power sums as in 
Example~\ref{sageexample.sp}, we have that $\left< s_{21}, s_3 \right>_t = t^2 - t$ and
$\left< s_3, s_3 \right>_t = 1-t$.  We conclude that $Q'_{21} = s_{21} + t s_3$.

Similarly we can compute the last Hall--Littlewood symmetric function of size $3$ by
the computation
$$Q'_{111} = s_{111} - \frac{\left< s_{111}, Q'_{21} \right>_t}{\left< Q'_{21}, Q'_{21} \right>_t} Q'_{21} - \frac{\left< s_{111}, Q'_3 \right>_t}{\left< Q'_3, Q'_3 \right>_t} Q'_3~.$$
Using the previous examples it is not difficult to compute
the scalar products $\left< s_{111}, s_{21} \right>_t = t^2 - t$, $\left< s_{111}, s_{3} \right>_t = t^2 - t^3$, 
$\left< s_{21}, s_{21} \right>_t = (1-t)(1-t+t^2)$.  From these computations we compute that
$$ Q'_{111} = s_{111} + (t+t^2) s_{21} + t^3 s_{3}~.$$
\end{example}

There are other ways of computing the Hall--Littlewood symmetric functions.  They
can also be defined by means of `creation' operators that generalize the creation
operators for the Schur functions.  Define
\begin{equation} \label{Bopdef}
\Bop_{m} = \sum_{i,j \geq 0} (-1)^i t^j h_{m+i+j} e_i^\perp h_j^\perp = \sum_{j \geq 0} t^j \Sop_{m+j} h_j^\perp ~.
\end{equation}
This family of operators \cite{Jing} has the property that if $m \geq \la_1$,
\begin{equation} \label{equation.B_on_Qp}
\Bop_{m}( Q'_{\la}[X;t]) = Q'_{(m,\la_1,\la_2, \ldots, \la_{\ell})}[X;t]~.
\end{equation}
These operators will play an important role in one of the definitions of
the $k$-Schur functions in Section \ref{sec:threedef}.

\begin{sagedemo}
In \Sage, Example~\ref{example.HL} can be checked as follows. Note that now we need to define the
Schur functions over the base ring of the Hall--Littlewood functions:
\begin{sageexample}
  sage: Sym = SymmetricFunctions(FractionField(QQ["t"]))
  sage: Qp = Sym.hall_littlewood().Qp()
  sage: Qp.base_ring()
  Fraction Field of Univariate Polynomial Ring in t over Rational Field
  sage: s = Sym.schur()
  sage: s(Qp[1,1,1])
  s[1, 1, 1] + (t^2+t)*s[2, 1] + t^3*s[3]
\end{sageexample}
Recall the map $\theta_{qt}(q,t)$ of Equation~\eqref{equation.theta} which sends $p_k \mapsto p_k(1-q^k)/(1-t^k)$. Then we can
transform the usual scalar product $\langle \cdot, \cdot \rangle$ to the scalar product for the Hall--Littlewood polynomials
$\langle ~\cdot~, ~\cdot~ \rangle_t$ of~\eqref{equation.scalar_t} by setting $t=0$ and replacing $q$ by $t$ in $\theta_{qt}(q,t)$, 
that is $\theta_{qt}(t,0)$. We can now check our previous computation for $\langle s_{21}, s_3 \rangle_t$ in \Sage as follows:
\begin{sageexample}
  sage: t = Qp.t
  sage: s[2,1].scalar(s[3].theta_qt(t,0))
  t^2 - t
\end{sageexample}
We can also check~\eqref{equation.B_on_Qp}:
\begin{sageexample}
  sage: s(Qp([1,1])).hl_creation_operator([3])
  s[3, 1, 1] + t*s[3, 2] + (t^2+t)*s[4, 1] + t^3*s[5]
  sage: s(Qp([3,1,1]))
  s[3, 1, 1] + t*s[3, 2] + (t^2+t)*s[4, 1] + t^3*s[5]
\end{sageexample}
\end{sagedemo}
\end{subsection}

%%%%%%%%%%%%%%%%%%%%%%%%%%%%%%%%%%%%%%%%%%%%%%%%%%%%%%%%%%%%%%%%%%%%%
\begin{subsection}{Macdonald symmetric functions}
\label{subsection.macdonald}
Finally we introduce the Macdonald symmetric functions which form a basis of the space
of the symmetric functions with two parameters
\begin{equation}
\La_{q,t} = \QQ(q,t)[h_1, h_2, h_3, \ldots]~.
\end{equation}
We provide here a definition of the Macdonald symmetric functions that generalizes those of the
Hall--Littlewood and Schur functions introduced in the previous sections.  The Macdonald symmetric
functions $H_\la[X;q,t]$ are defined so that they are the unique basis which has the property
that 
$$H_\la[X;q,t] = r_\la(q,t) s_{\la}[X/(1-q)] + \hbox{ terms of the form }r_{\la\mu}(q,t) s_\mu[X/(1-q)]$$
with $\mu > \la$ and 
$$\left< H_\la[X;q,t], H_\mu[X;q,t] \right>_{qt} = 0\hbox{ if }\la \neq \mu$$
where the scalar product $\left<~\cdot~, ~\cdot~ \right>_{qt}$ is defined so that
$$\left< p_\la, p_\mu \right>_{qt} = z_\la \delta_{\la\mu} \prod_i (1-q^{\la_i})(1-t^{\la_i})~.$$  In addition, the condition 
that $\left< H_\la[X;q,t], s_{(n)}[X] \right> =t^{n(\la)}$ determines
the correct scalar multiple of the elements.

\begin{sagedemo}
Here we show how to expand the Macdonald symmetric functions in terms of Schur functions for 
all partitions of 3:
\begin{sageexample}
  sage: Sym = SymmetricFunctions(FractionField(QQ["q,t"]))
  sage: Mac = Sym.macdonald()
  sage: H = Mac.H()
  sage: s = Sym.schur()
  sage: for la in Partitions(3):
  ....:     print "H", la, "=", s(H(la))
  H [3] = q^3*s[1, 1, 1] + (q^2+q)*s[2, 1] + s[3]
  H [2, 1] = q*s[1, 1, 1] + (q*t+1)*s[2, 1] + t*s[3]
  H [1, 1, 1] = s[1, 1, 1] + (t^2+t)*s[2, 1] + t^3*s[3]
\end{sageexample}
When $q=0$, the expansion is upper triangular with respect to dominance order.  In
particular, $H_\la[X;0,t] = Q'_\la[X;t]$.
\begin{sageexample}
  sage: Sym = SymmetricFunctions(FractionField(QQ["t"]))
  sage: Mac = Sym.macdonald(q=0)
  sage: H = Mac.H()
  sage: s = Sym.schur()
  sage: for la in Partitions(3):
  ....:    print "H",la, "=", s(H(la))
  H [3] = s[3]
  H [2, 1] = s[2, 1] + t*s[3]
  H [1, 1, 1] = s[1, 1, 1] + (t^2+t)*s[2, 1] + t^3*s[3]
  sage: Qp = Sym.hall_littlewood().Qp()
  sage: s(Qp[1, 1, 1])
  s[1, 1, 1] + (t^2+t)*s[2, 1] + t^3*s[3]
\end{sageexample}
and when $t=0$ it is lower triangular
\begin{sageexample}
  sage: Sym = SymmetricFunctions(FractionField(QQ["q"]))
  sage: Mac = Sym.macdonald(t=0)
  sage: H = Mac.H()
  sage: s = Sym.schur()
  sage: for la in Partitions(3):
  ....:    print "H",la, "=", s(H(la))
  H [3] = q^3*s[1, 1, 1] + (q^2+q)*s[2, 1] + s[3]
  H [2, 1] = q*s[1, 1, 1] + s[2, 1]
  H [1, 1, 1] = s[1, 1, 1]
\end{sageexample}
\end{sagedemo}

Macdonald~\cite{Mac:1988} introduced the symmetric functions that 
bear his name in 1988 
by expanding on the work of Kevin Kadell (see notes in~\cite[p. 387]{Mac:1995}).
Attraction to researching Macdonald polynomials grew from conjectures
giving combinatorial, geometric, and representation theoretic 
meaning to the Macdonald/Schur coefficients
\begin{equation}
K_{\lambda\mu}(q,t) := \left< H_\mu[X;q,t], s_\la \right>\,,
\end{equation}
usually referred to as the Macdonald--Kostka or $q,t$-Kostka 
coefficients.  From the definition that we have presented here, 
it is not even clear that these coefficients are polynomials in $q$ and $t$.
Nevertheless, Macdonald conjectured that they are in fact
positive sums of monomials
in $q$ and $t$; that is, $K_{\lambda\mu}(q,t)\in\mathbb N[q,t]$.
These coefficients have since been a matter of great interest.

When $q=0$, the {\it Kostka-Foulkes polynomials} 
$K_{\lambda\mu}(0,t)$ are the Hall-Littlewood/Schur
transition coefficients since
$H_\lambda[X;0,t]=Q'_\lambda[X;t]$.
Kostka--Foulkes polynomials appear in other
contexts including affine Kazhdan--Lusztig theory \cite{LT:2000}
and affine tensor product multiplicities~\cite{NY:1997, SchillingWarnaar:1999}.  Moreover, these
polynomials encode the dimensions of certain bigraded $S_n$-modules \cite{GP:1992}.
Lascoux and Sch\"utzenberger \cite{LS:1981} give an intrinsically
positive formula for $K_{\lambda\mu}(0,t)$ using tableaux which we
now describe.

Recall that a semi-standard tableau is a nested sequence of partitions
such that consecutive partitions form a horizontal strip, 
also identified by a filling of a partition with the integers so that 
the label of the integer $i$ indicates the cells which are added 
between the $(i-1)^{st}$ and $i^{th}$ partition in the sequence.
The horizontal strip condition ensures that the entries increase
strictly (resp. weakly) along columns (resp. rows).
A semi-standard tableau has {\it weight} $\mu$ when there are 
$\mu_i$ labels of $i$.  When $\mu$ forms a partition, the
tableau is said to have partition weight and when the weight is
$(1,1,\ldots,1)$, the tableau is called {\it standard}.
A statistic (non-negative integer) called 
{\it charge}, defined in \cite{LS:1981},
can be associated to each semi-standard tableau.
Here, we describe charge for any semi-standard
tableaux with partition weight.  

First consider the definition of charge on a standard tableau $T$.
Define the {\it index} $I$ of $T$, starting from $I_1=0$, by
\begin{equation}
I_r =
\begin{cases}
\label{convindex}
I_{r-1} +1 & \text{if $r$ is east of $r-1$ }\\
I_{r-1} & \text{otherwise}\,,
\end{cases}
\end{equation}
for $r=2,\ldots,n$.
The {charge} of $T$ is the sum of entries in $I(T)$.
The notion of charge is easily extended to a generic semi-standard 
tableau by successively computing the index of an appropriate
choice of $i$ cells containing the letters $1,2,\ldots,i$.

\begin{definition}
\label{convchoice}
From a specific $x$ in cell $c$ of a tableau $T$,
the desired choice of $x+1$ is the south-easternmost 
one lying above $c$.  If there are none above $c$,  
the choice is the south-easternmost $x+1$ in all of $T$.
\end{definition}

Consider now any semi-standard tableau $T$  with partition weight.
Starting from the rightmost 1 in $T$, use Definition~\ref{convchoice}
to distinguish a standard sequence of $i$ cells containing $1,2,\ldots,i$.
Compute the index and then delete all cells in this sequence.
Repeat the process on the remaining cells.
The total {\it charge} is defined to be the sum of all the index vectors.

\begin{example}\label{ex:convcharge}
$$
\footnotesize
\tableau[sbY]
{\tf 6\cr\tf 4&5 \cr\tf 3&4
\cr2&\tf 2&3&\tf 5\cr1&1&\tf 1&2&3&\tf 7\cr}
\qquad
\qquad
\tableau[sbY]
{\cr&\tf 5 \cr&\tf 4
\cr\tf 2&&3&\cr1&\tf 1&& 2&\tf 3&\cr}
\qquad
\qquad
\tableau[sbY]
{\cr& \cr&
\cr&&\tf 3&\cr \tf 1&&& \tf 2&&\cr}
\qquad
\qquad
$$
$$
\hspace{-2.5cm}
I=[0,0,0,0,1,1,2]\qquad
\quad
I=[0,0,1,1,1]\qquad
\qquad\quad
I=[0,1,1]
$$
so that the charge is 9.

We can check this in \Sage:
\begin{sageexample}
    sage: t = Tableau([[1,1,1,2,3,7],[2,2,3,5],[3,4],[4,5],[6]])
    sage: t.charge()
    9
\end{sageexample}
\end{example}

This given,
with the {\it shape} of a semi-standard tableau $T$ denoted by $\shape(T)$
and the charge denoted $\charge(T)$, it is proven  in \cite{LS:1981} that
\begin{equation}
\label{eq:HLcomb}
K_{\lambda\mu}(0,t)=
\sum_{\weight(T)= \mu\atop\shape(T)=\lambda} t^{\charge(T)}
\,.
\end{equation}
Despite having such concrete results for the $q=0$ case, it 
was a big effort just to establish polynomiality for general 
$K_{\lambda\mu}(q,t)$
\cite{GT:1996,KirillovNoumi:1999,Knop:1996,LapointeVinet:1996,Sahi:1996}.
The geometry of Hilbert schemes was finally needed to prove positivity 
\cite{Haiman:1999,Haiman:2001}, where Haiman completed his proof by showing 
that there is a 
representation theoretical model (often referred to as the ``$n!$ conjecture'' 
\cite{GH:1996}) for which these coefficients are formulas for graded 
multiplicities of occurrences of
irreducible representations.  
A formula in the spirit of \eqref{eq:HLcomb} still remains a mystery.

There are other ways to express the charge. For example in~\cite{LenartSchilling:2012}
a different formulation of charge is used, which comes from the Ram--Yip formula
for Macdonald polynomials~\cite{RamYip:2011}, and is related to the quantum Bruhat 
graph, which first arose in connection with the quantum cohomology of the flag 
variety~\cite{BFP,FultonWoodward:2004}. This point of view is dual to the description above
in the sense that the positions of the entries in a tableau $T$ are recorded by columns $b_1\cdots b_n$,
where $n$ is the value of the largest letter in $T$.
Column $b_i$ records the column positions of the letters $i$ in $T$, where the columns of $T$
are labelled from right to left.

We attach to $b_1\cdots b_n$ a reordered filling $c:=c_1\cdots c_n$ according to the following 
algorithm, which is based on the circular order $\prec_i$ on $[m]$ starting at $i$, namely 
$i\prec_i i+1\prec_i\cdots \prec_i m\prec_i 1\prec_i\cdots\prec_i i-1$. Here $m$ is the width of $T$.

\begin{algorithm}\label{algreord}\hfill\\
let $c_1:=b_1$;\\
for $j$ from $2$ to $n$ do\\
\indent for $i$ from $1$ to height of $b_j$ do\\
\indent\indent let $c_j(i):=\min\,(b_j\setminus\{c_j(1),\ldots,c_j(i-1)\},\,\prec_{c_{j-1}(i)})$\\
\indent end do;\\
end do;\\
return $c:=c_1\ldots c_n$.\\
\end{algorithm}
Then the charge of $T$ is
\begin{equation*}
  \charge(T) = \sum_{\gamma \in \Des(c)} \arm(\gamma),
\end{equation*}
where $\Des(c)$ are all boxes in $c$ which contain a descent with the box directly to the right and
$\arm(\gamma)$ is the arm length of the box $\gamma$, that is the number of boxes to the right of
$\gamma$.

\begin{example}
Let us consider the tableau $T$ of Example~\ref{ex:convcharge}. Then
\[
    \raisebox{0.5cm}{b =}\; \tableau[sbY]{6&6&6\cr 5&5&4&6&5\cr 4&3&2&5&3&6&1\cr}
    \qquad \raisebox{0.5cm}{\text{and}} \qquad
    \raisebox{0.5cm}{c =}\; \tableau[sbY]{\tf 6&3&4\cr 5&\tf 6&2&5&5\cr 4&5&6&\tf 6&3&\tf 6&1\cr}
\]
where the boxes with descents are in bold, so that $\charge(T)=2+3+3+1=9$.
\end{example}

\end{subsection}

%%%%%%%%%%%%%%%%%%%%%%%%%%%%%%%%%%%%%%%%%%%%%%%%%%%%%%%%%%%%%%%%%%%%%%%%%%%%%%%%%%%%%
\begin{subsection}{Empirical approach to $k$-Schur functions}

The $k$-Schur functions came out of a study of the
$q,t$-Kostka coefficients and this study~\cite{LLM:2003}
led to a refinement of Macdonald's positivity conjecture;
for any fixed integer $k>0$
and each $\lambda\in \mathcal P^k$ (a partition
where $\lambda_1\leq k$),
\begin{equation}
H_{\lambda}[X;q,t\, ] = \sum_{\mu\in\mathcal P^k}
K_{\mu \lambda}^{(k)}(q,t) \, A_{\mu}^{(k)}[X;t\, ] \quad\text{where}\quad
K_{\mu \lambda}^{(k)}(q,t) \in \mathbb N[q,t] \, ,
\label{mackkostka}
\end{equation}
for some family of polynomials defined
by certain sets of tableaux $\mathcal A^k_\mu$ as:
\begin{equation}
A_{\mu}^{(k)}[X;t] = \sum_{T\in \mathcal A^k_\mu}
t^{\charge(T)} \, s_{\shape(T)}\, \,.
\label{atom}
\end{equation}
In \cite{LLM:2003}, Lapointe, Lascoux, and Morse
conjectured that such $\{A^{(k)}_\mu[X;t]\}_{\mu_1\leq k}$ exists
and forms a basis for 
$$
\Lambda^t_{(k)}=\mathrm{span} \{H_\lambda[X;q,t]\}_{\lambda_1\leq k}\,.
$$
They also conjectured  that more remarkably, for any $k'>k$, 
\begin{equation}
\label{surepos}
A_\lambda^{(k)}[X;t] = \sum_{\mu}
B_{\lambda,\mu}^{(k,k')}(t)\, A_\mu^{(k')}[X;t]\quad\text{where
$B_{\lambda,\mu}^{(k,k')}(t)\in\mathbb N[t]$.}
\end{equation}
Given that $A_\mu^{(k)}[X;t]=s_\mu$ for $k\geq |\mu|$,
the decomposition \eqref{mackkostka} strengthens Macdonald's
conjecture.  Although these new bases arose in the context 
of Macdonald polynomials, pursuant work led to unexpected 
connections with geometry, physics, and
representation theory.  At the root, $\{A^{(k)}_\mu[X;t]\}$ generalizes
the very aspects of the Schur basis that make it so fundamental and
wide-reaching.  As such, the functions are called {\it $k$-Schur functions}.

Before we give any formal definition for 
$k$-Schur functions, we begin with some computational examples 
that demonstrate how this all came about.
\begin{example}\label{ex:atom2}
Consider the Hall--Littlewood symmetric functions that are spanned by partitions of $3$ and $4$ with
$\la_1 \leq 2$:
\begin{equation}\label{equation.HL expansion}
\begin{split}
Q'_{111}[X;t] &= s_{111} + (t+t^2) s_{21} + t^3 s_{3}\\
Q'_{21}[X;t] &=  s_{21} + t s_{3}
\end{split}
\end{equation}
and
\begin{align*}
Q'_{1111}[X;t] &= s_{1111} + (t+t^2+t^3) s_{211} + (t^2+t^4) s_{22} + (t^3+t^4+t^5) s_{31} + t^6 s_{4}\\
Q'_{211}[X;t] &= s_{211} + t s_{22} + (t+t^2) s_{31} + t^3 s_{4}\\
Q'_{22}[X;t] &=  s_{22} + t s_{31} + t^2 s_{4}~.
\end{align*}
The first clue of the existence of $2$-Schur functions is to notice that there is a set of symmetric
functions $A^{(2)}_\la[X;t]$ for $\la \vdash 3,4$ such that $\la_1 \leq 2$ and 
\begin{itemize}
\item they form a basis for this subset of Hall--Littlewood symmetric functions that have coefficients
that are non-negative polynomials in $t$ when expanded in the Schur basis;
\item when expanded in the Hall--Littlewood basis have a term $Q'_\lambda[X;t]$ and
all other terms are larger in dominance order;
\item have a leading term in the Schur basis which
is $s_\la$ and are in the linear span of Schur functions indexed by partitions which are larger
than $\la$ in dominance order;
\item if the involution $\omega$ is applied and $t$ is replaced by $1/t$, then the 2-Schur function 
is equal to another $2$-Schur function up to a power of $t$;
\item the Schur function indexed by the partition which is largest in dominance
order is a power of $t$ times $s_{(\la^{\omega_k})'}$ 
(indexed by the conjugate of the $k$-conjugate of $\lambda$);
\item for which the Macdonald symmetric functions
are positive when expressed in terms of these elements (have coefficients
which are polynomials in $q$ and $t$ with non-negative integer coefficients).
\end{itemize}
It turns out that these conditions are enough to characterize the $2$-Schur functions.
The triangularity conditions with respect to the Hall--Littlewood polynomials and the
Schur functions require that $A_{22}^{(2)}[X;t] = s_{22} + t s_{31} + t^2 s_4$.
Then again by triangularity and positivity we have $A_{211}^{(2)}[X;t] = s_{211} + t s_{31}$, and
finally $A_{1111}^{(2)}[X;t] = s_{1111} + t s_{211} + t^2 s_{22}$.
\end{example}

\begin{sagedemo}
The conditions involving the $k$-conjugate of the partition can easily be checked
\begin{sageexample}
    sage: la = Partition([2,2])
    sage: la.k_conjugate(2).conjugate()
    [4]
    sage: la = Partition([2,1,1])
    sage: la.k_conjugate(2).conjugate()
    [3, 1]
    sage: la = Partition([1,1,1,1])
    sage: la.k_conjugate(2).conjugate()
    [2, 2]
\end{sageexample}
as well as the statement about the application of $\omega$ composed with $t\mapsto 1/t$:
\begin{sageexample}
    sage: Sym = SymmetricFunctions(FractionField(QQ["t"]))
    sage: ks = Sym.kschur(2)
    sage: ks[2,2].omega_t_inverse()
    1/t^2*ks2[1, 1, 1, 1]
    sage: ks[2,1,1].omega_t_inverse()
    1/t*ks2[2, 1, 1]
    sage: ks[1,1,1,1].omega_t_inverse()
    1/t^2*ks2[2, 2]
\end{sageexample}
We can also check the positive expansion of the Macdonald polynomials:
\begin{sageexample}
    sage: Sym = SymmetricFunctions(FractionField(QQ["q,t"]))
    sage: H = Sym.macdonald().H()
    sage: ks = Sym.kschur(2)
    sage: ks(H[2,2])
    q^2*ks2[1, 1, 1, 1] + (q*t+q)*ks2[2, 1, 1] + ks2[2, 2]
    sage: ks(H[2,1,1])
    q*ks2[1, 1, 1, 1] + (q*t^2+1)*ks2[2, 1, 1] + t*ks2[2, 2]
    sage: ks(H[1,1,1,1])
    ks2[1, 1, 1, 1] + (t^3+t^2)*ks2[2, 1, 1] + t^4*ks2[2, 2]
\end{sageexample}
\end{sagedemo}

At $k=3$, the above conditions are no longer a complete characterization of the $k$-Schur functions
and it is difficult to guess what $A^{(3)}_\lambda$ is for some of the partitions
at $\lambda \vdash 7$.  We add as a last condition that the $k$-Schur functions
should be the `smallest' basis with the above properties.
The existence of such a basis alone is enough to recognize that there is
something remarkable about the $k$-Schur functions, because it is unusual to
see a basis for which the Macdonald symmetric functions expand positively.

\begin{example}  At $k=3$, the triangularity condition for the Hall--Littlewood
basis implies that,
\begin{align*}
A^{(3)}_{331}[X;t] = Q'_{331}[X;t] = s_{331} &+ t s_{421} + (t+t^2) s_{43}
+ t^2 s_{511}+ (t^2+t^3) s_{52}\\ & + (t^3+t^4) s_{61} + t^5 s_{7}
\end{align*}
and since 
\begin{align*}
Q'_{322}[X;t] = s_{322} &+ t s_{331} + (t+t^2) s_{421} + (t^2+t^3) s_{43}
+ t^3 s_{511}\\ &+ (t^2+t^3+t^4) s_{52} + (t^4+t^5) s_{61} + t^6 s_{7},
\end{align*}
we can use the triangularity and positivity properties from above that imply
$A^{(3)}_{322}[X;t]$ must be given by
$$A^{(3)}_{322}[X;t] = Q'_{322}[X;t] - t Q'_{331} = s_{322}+ t s_{421} + t^2 s_{52}~.$$
However, to try to determine $A^{(3)}_{3211}[X;t]$, we calculate
\begin{align*}
Q'_{3211}[X;t] = s_{3211} &+ t s_{322} + (t+t^2) s_{331} + t s_{4111} +
(t+t^2+t^3) s_{421}\\ &+ (t^2+t^3+t^4) s_{43}+ (t^2+t^3+t^4)s_{511} + (2t^3+t^4+t^5) s_{52}\\
&+ (t^4+t^5+t^6)s_{61} + t^7 s_{7}
\end{align*}
and it is difficult to tell from the conditions above whether
$A^{(3)}_{3211}$ should be $Q'_{3211}[X;t] - t^2 Q'_{331}[X;t]$ or $Q'_{3211}[X;t] - t Q'_{322}[X;t]$ 
or some other linear combination of terms.
\end{example}

We leave it as an exercise for the reader to calculate
$A_{111}^{(2)}[X;t]$, $A_{21}^{(2)}[X;t]$ from~\eqref{equation.HL expansion},
using the properties in Example~\ref{ex:atom2} as a characterization.  With the additional symmetric 
function $Q'_{31}[X;t] = s_{31} + t s_{4}$, it is a worthwhile exercise to determine the symmetric functions 
$A_{31}^{(3)}[X;t]$, $A_{22}^{(3)}[X;t]$, $A_{211}^{(3)}[X;t]$ and $A_{1111}^{(3)}[X;t]$ 
to see how it might be possible to define the $k$-Schur functions 
for small values by experimentation.

Once it is clear that these symmetric functions exist, it is a matter of
determining an algorithm or a formula for computing them.  
It was along these lines that Lapointe, Lascoux, and Morse discovered
$k$-Schur functions and their first formula appeared in \cite{LM:2003}
where a method is given for constructing the sets 
$\mathcal A_\lambda$ in \eqref{atom}.
Subsequently, various conjecturally equivalent definitions have
arisen, each having different benefits and detriments.
In the following section we present 
a construction for $k$-Schur functions when the parameter
$t$ is set to one and the parameter case is then
addressed in Section~\ref{sec:threedef}.

\end{subsection}

%%%%%%%%%%%%%%%%%%%%%%%%%%%%%%%%%%%%%%%%%%%%%%%%%%%%%%%%%%%%%%%%%%%%%%%%%%%%%%%%%%%%%%%%
\begin{subsection}{Notes on references}

The combinatorics of affine permutations can be expressed in terms of 
combinatorial models other than  $k$-bounded partitions and $(k+1)$-cores 
such as abaci, windows, codes, and $k$-castles 
\cite{BJV:2009, BB, Denton:2012, ErikssonEriksson:1998, Lascoux:2001}.
Certain combinatorial aspects of $k$-Schur functions are best expressed 
in terms of $k$-bounded partitions whereas others are better suited 
to $(k+1)$-cores or affine permutations.  It could be that
other formulations for the index set are well suited for 
expressing properties which have not yet been discovered.

For symmetric function notation we generally follow the notation of Macdonald~\cite{Mac:1995}
with the addition of the use of plethystic notation (see Equation~\eqref{eq:plethnot}) to 
encode certain transformations
of alphabets.  The scalar product $\left<~\cdot~,~\cdot~\right>_{t}$ defined in Section
\ref{sec:HLsymfunc} and
$\left<~\cdot~,~\cdot~\right>_{qt}$ defined in Section \ref{subsection.macdonald}
are not the scalar products that are used in~\cite{Mac:1995}, 
but they are the scalar products needed in order
to define the Hall--Littlewood $Q'$-basis and the Macdonald $H$-basis 
that we will use in subsequent sections. 
Macdonald does not use the $H$-basis
in~\cite{Mac:1995}, however it is a transformation of the basis referred to as the integral basis (the $J$-basis)
and they are related by the transformation 
$J_\lambda[X;q,t] = H_\lambda[X(1-t); q,t] = \theta_{t0}( H_\lambda[X;q,t])$
(see, for instance, \cite[Eq. I.16]{GH:1996}).

The operators $\Sop_m$ defined in Equation~\eqref{bernsteinops} are usually referred to
as Bernstein operators.  They first appear in \cite[p. 69]{Zelevinsky} (see 
also~\cite[Example 29 Section I.5]{Mac:1995}).  
Their generalizations to creation operators for Hall--Littlewood symmetric
functions is due to Jing~\cite{Jing} (see also~\cite[Example 8 Section III.5]{Mac:1995}).
\end{subsection}
\end{section}

%%%%%%%%%%%%%%%%%%%%%%%%%%%%%%%%%%%%%%%%%%%%%%%%%%%%%%%%%%%%%%
\begin{section}{From Pieri rules to $k$-Schur functions 
at $t=1$}\label{sec:Pierirules}
%%%%%%%%%%%%%%%%%%%%%%%%%%%%%%%%%%%%%%%%%%%%%%%%%%%%%%%%%%%%%%

We first present the definition of the Schur functions
as a generating function for semi-standard tableaux.  This presentation
provides the context to show how the $k$-Schur functions and the 
dual $k$-Schur functions both generalize this definition of the Schur 
functions.  We carefully explain how the algebraic Pieri 
rule naturally gives rise to distinguished sequences of partitions 
and show how the combinatorics of 'weak' and 'strong' tableaux
captures these relations.

%%%%%%%%%%%%%%%%%%%%%%%%%%%%%%%%%%%%%%%%%%%%%%%%%%%%%%%%%%%%%%%%%%%%%%%%%%%%%%%%%%%%%%%%%%%%
\begin{subsection}{Semi-standard tableaux and a monomial expansion of Schur functions}
\label{sec:colstrict}

Recall that the Schur functions satisfy the {\it Pieri rule} for multiplication of a Schur
function by a homogeneous symmetric function
\begin{equation}
\label{eq:pierirule1}
h_1 s_\la = \sum_{\mu: \la \rightarrow \mu} s_\mu
\end{equation}
and more generally,
\begin{equation}\label{eq:pierirule}
h_r s_\la = \sum_{\mu} s_\mu\; ,
\end{equation}
where the sum is over all partitions $\mu$ where $\la \subseteq \mu$, $|\mu| = |\la|+r$
and $\mu/\la$ is a horizontal strip.

Consider how we can use these rules now to expand a homogeneous symmetric function in
terms of the Schur basis.  A given $h_\la = h_{\la_1} h_{\la_2} \cdots h_{\la_\ell}$
may be seen as a sequence of operators that act on $1 = s_{\emptyset}$ which we act on, first
by $h_{\la_1}$, then $h_{\la_2}$, and so on.  We demonstrate this with the following
example.
\begin{example}  We expand the expression $h_{431}$ in terms of Schur functions by
successive applications of the Pieri rule:
\squaresize=5pt
\begin{align*}
h_{431} &=  s_{\young{&&&\cr}} h_{31} =
( s_{\young{\gris&\gris&\gris\cr&&&\cr}} + s_{\young{\gris&\gris\cr&&&&\gris\cr}} 
+ s_{\young{\gris\cr&&&&\gris&\gris\cr}} + s_{\young{&&&&\gris&\gris&\gris\cr}})h_{1}\\
&= \big( s_{\young{\gris\cr&&\cr&&&\cr}} + s_{\young{&&&\gris\cr&&&\cr}} +s_{\young{&&\cr&&&&\gris\cr}}+
s_{\young{\gris\cr&\cr&&&&\cr}}+ s_{\young{&&\gris\cr&&&&\cr}}+s_{\young{&\cr&&&&&\gris\cr}}\\
&\hskip .3in +s_{\young{\gris\cr\cr&&&&&\cr}}+s_{\young{&\gris\cr&&&&&\cr}}+s_{\young{\cr&&&&&&\gris\cr}}+
s_{\young{\gris\cr&&&&&&\cr}}+s_{\young{&&&&&&&\gris\cr}} \big)\cr
&= s_{\young{\cr&&\cr&&&\cr}} + s_{\young{&&&\cr&&&\cr}} + s_{\young{ \cr&\cr&&&&\cr}}
+2 s_{\young{&&\cr&&&&\cr}} + s_{\young{\cr\cr&&&&&\cr}} + 2 s_{\young{&\cr&&&&&\cr}} 
 + 2 s_{\young{\cr&&&&&&\cr}} \\
 &\hskip .3in +s_{\young{&&&&&&&\cr}} \; .
\end{align*}
\end{example}

Notice in the example that every term in the expansion of the product of $h$'s is represented
by a sequence of partitions which records how that term appears in the final expression.
For instance there are two terms representing $(5,3)$, one that arose from the sequence $\emptyset
\subseteq (4) \subseteq (5,2) \subseteq (5,3)$ and the other which arose from the sequence
$\emptyset \subseteq (4) \subseteq (4,3) \subseteq (5,3)$. A sequence 
\begin{equation}
	\la^{(0)} \subseteq \la^{(1)} \subseteq \cdots \subseteq\la^{(d)}\; ,
\end{equation}
where each $\la^{(i+1)}/\la^{(i)}$ for $0\leq i < d$ is a horizontal strip, is 
called a {\it semi-standard skew tableau}. When $\la^{(0)}$ is empty, the sequence is
called a {\it semi-standard tableau} (see also Section~\ref{subsection.macdonald}). 
We say that the {\it shape} of the tableau is $\la^{(d)}/\la^{(0)}$ 
(or $\la^{(d)}$ if $\la^{(0)}=\emptyset$) and the weight of the tableau is the sequence 
\begin{equation}\label{eq:content}
(|\la^{(1)}/\la^{(0)}|, |\la^{(2)}/\la^{(1)}|, \ldots, |\la^{(d)}/\la^{(d-1)}|)~.
\end{equation}
Note that for any partition there is precisely one semi-standard tableau that has
both shape and weight equal to $\la$.  If $\la$ and $\mu$ are both partitions of 
the same size, then there is at least one partition of shape $\la$ and weight $\mu$ 
if and only if $\la\geq\mu$.

A semi-standard tableau of shape $\la$ is usually thought of as
a filling of the partition diagram for $\la$ by placing a $1$ 
in each of the cells of
$\la^{(1)}/\la^{(0)}$, a $2$ in each of the cells of $\la^{(2)}/\la^{(1)}$, and more generally
each of the cells of $\la^{(i)}/\la^{(i-1)}$ is labelled with an $i$.
Note, iteration of the special case \eqref{eq:pierirule1} 
ensures that each $\la^{(i)}/\la^{(i-1)}$ contains exactly one cell.
These are the tableaux of weight $(1,1,\ldots,1)$ and they are
called {\it standard} tableaux.

\begin{example}\squaresize = 6pt
The coefficient of $s_{52}$ in $h_{421}$ is equal to $2$.  The reason for this is
that one term comes from
$$\young{&&&\cr} \subseteq \young{&\cr&&&\cr} \subseteq \young{&\cr&&&&\cr} 
\hbox{ which is represented by the diagram } \squaresize = 10pt\small\young{2&2\cr1&1&1&1&3\cr}$$
and the other comes from
$$\young{&&&\cr} \subseteq \young{\cr&&&&\cr} \subseteq \young{&\cr&&&&\cr} 
\hbox{ which is represented by the diagram } \squaresize = 10pt\small \young{2&3\cr1&1&1&1&2\cr}\;.$$
\end{example}

\begin{remark}
\label{def:standardizedtab}
Another useful alternative to the conventional definition of
tableaux is to instead define a semi-standard tableau
to be a standard tableau with certain conditions on
its reading word.  The reading word of a tableau
is obtained by taking the entries of $T$ from top to bottom
and left to right.  A tableau of weight $\alpha$ is then
a standard tableau having increasing reading
words in the alphabets
\begin{equation}
\label{readingwordcond}
\mathcal A_{\alpha,x} =
[1+\Sigma^{x-1}\alpha,\Sigma^x\alpha] \quad \text{where} \quad
\Sigma^x\alpha
 = \sum_{i\leq x}\alpha_i\,,
\end{equation}
for each $x=1,\ldots,\ell(\alpha)$.
For example, this observation is
central in the study of quasisymmetric functions.
\end{remark}

It is typical to represent the number of semi-standard tableaux of shape
$\la$ and weight $\mu$ by the symbol $K_{\la\mu}$.  These numbers are often referred
as the {\it Kostka} coefficients.
Based on the Pieri rule and a relatively straightforward proof by induction, we can generalize
roughly what we see in the example, namely  that for $\mu \vdash m$,
\begin{equation}
h_\mu = \sum_{\la \vdash m} K_{\la\mu} s_\la~.
\end{equation}

From this, the monomial expansion of a Schur function can be derived.  
In particular,
because $\left< s_\la, s_\mu \right> = \delta_{\la\mu}$, we can
conclude that
%$h_\mu = \sum_{\la \vdash m} \left< h_\mu, s_\la \right> s_\la$
%and hence $\left< h_\mu, s_\la \right> =  K_{\la\mu}$.
$\left< h_\mu, s_\la \right> =  K_{\la\mu}$.
The coefficient in the monomial expansion of a Schur function then pops out
by the duality $\langle m_\lambda,h_\mu\rangle=\delta_{\lambda\mu}$:
$$
s_\la= \sum_{\mu} d_{\mu\la}\, m_\mu
\quad\text{where}\quad
d_{\mu\la} 
= \langle h_\mu, \sum_\alpha d_{\alpha \la} m_\alpha\rangle
= \langle h_\mu, s_\la \rangle
=K_{\la\mu}
\,.
$$
This formula provides us with a combinatorial expansion of the Schur functions 
in the monomial basis.  We can also derive a formula in terms of variables.
Note that the construction of tableaux with weight $\alpha$ applies for
any composition $\alpha$.  Let 
$\SSYT(\lambda,\alpha)$ be the set of tableaux with weight $\alpha$ and shape 
$\lambda$.  There is an involution
\cite{BK,LS}  on this set that maps
a tableau of weight $\alpha$ to a tableau whose weight
is a permutation of $\alpha$.
Thus, given that $K_{\lambda\alpha}=K_{\lambda\sigma(\alpha)}$,
\begin{equation}\label{eq:schurtableauxexp}
s_\la[X_m] = \sum_{T} {\bf x}^T\;,
\end{equation}
where the sum is over all possible semi-standard tableaux of shape $\la$ with entries in $\{ 1, 2, \ldots, m \}$. 
Here ${\bf x}^T$ denotes $x_1^{\alpha_1} x_2^{\alpha_2} \cdots x_m^{\alpha_m}$, where 
$\alpha$ is the weight of $T$.

\begin{sagedemo}
We now demonstrate on how to produce all semi-standard tableaux 
of a given shape and weight
\begin{sageexample}
    sage: SemistandardTableaux([5,2],[4,2,1]).list()
    [[[1, 1, 1, 1, 2], [2, 3]], [[1, 1, 1, 1, 3], [2, 2]]]
\end{sageexample}
The Kostka matrix can be computed as follows:
\begin{sageexample}
    sage: P = Partitions(4)
    sage: P.list()
    [[4], [3, 1], [2, 2], [2, 1, 1], [1, 1, 1, 1]]
    sage: n = P.cardinality(); n
    5
    sage: K = matrix(QQ,n,n,
    ....:           [[SemistandardTableaux(la,mu).cardinality()
    ....:            for mu in P] for la in P])
    sage: K
    [1 1 1 1 1]
    [0 1 1 2 3]
    [0 0 1 1 2]
    [0 0 0 1 3]
    [0 0 0 0 1]
\end{sageexample}
\end{sagedemo}

The self duality of the Schur functions is a very
remarkable property.  In fact, the self duality and a condition on triangularity
can be taken as either the defining property for Schur functions or as property which easily
follows from the definition.  It is this duality that implies that $\left< h_\mu, s_\la \right>$
will be the coefficient of $s_\la$ in the expansion of $h_\mu$.  In the next two sections
we shall introduce two bases of a subalgebra/quotient algebra which follows from ideas in
this construction.  
\end{subsection}

%%%%%%%%%%%%%%%%%%%%%%%%%%%%%%%%%%%%%%%%%%%%%%%%%%%%%%%%%
\begin{subsection}{Weak tableaux and a monomial expansion of dual $k$-Schur functions} 
\label{sec:weaksection}
%%%%%%%%%%%%%%%%%%%%%%%%%%%%%%%%%%%%%%%%%%%%%%%%%%%%%%%%%

We will present the $k$-Schur functions as a basis of the space
\begin{equation}\label{kboundedspace}
\La_{(k)} = \QQ[ h_1, h_2, h_3, \ldots, h_k ]\,.
\end{equation}
It will develop that this basis has a role in $\La_{(k)}$ that 
in many ways is analogous to the role of the Schur basis
in the study of the symmetric function space $\Lambda$.
The algebra of $\La_{(k)}$
is no longer a self dual Hopf algebra.  
The algebra is dual however to a quotient of the ring of symmetric functions.  We define
\begin{equation}
\La^{(k)} = \La / {\big<} m_{\la} : \la_1 > k {\big>} \;.
\end{equation}

A basis of $\La_{(k)}$ are the elements $h_\la$ for partitions $\la$ 
where $\la_1 \leq k$.  
The elements $m_\la$ with $\la_1 \leq k$ may be chosen as representatives of the dual algebra.  
We will present two bases: the {\it $k$-Schur functions} $s^{(k)}_{\la}$ which form a basis for
$\La_{(k)}$ and the {\it dual $k$-Schur functions} $\SS^{(k)}_{\la}$ which are representative elements 
of the basis for the dual algebra $\La^{(k)}$.  We proceed by defining the basis of $k$-Schur functions 
and the other will be determined by duality.

Recall by Proposition~\ref{bijcoresparts} that
the $k$-bounded partitions of size $m$ are in bijection with the $(k+1)$-cores of 
length $m$ (or equivalently cores with $m$ cells with hook length of 
size less than or equal to $k$).  At this point, we keep in mind that
the $k$-bounded partition $\lambda$ that serves as an
index for either a $k$-Schur function or a dual $k$-Schur
function represents the shape of a $k+1$-core, $\mfc(\lambda)$.
Sometimes we will use the properties of the partition $\lambda$ and
other times we will use the corresponding core, $\mfc(\la)$.  
Later, we will interpret things in
the language of $k$-bounded partitions and affine Grassmannian
elements.

%Our point of departure 
We will define the $k$-Schur functions at $t=1$ based on a
{\it weak Pieri rule} for $k$-Schur functions (which at
this point exists only as data on the computer because we deduced
what it must be from the atom definition in the last chapter).
This is the relation, for a $k$-bounded partition $\mu$ and $r\leq k$,
\begin{equation}\label{eq:kweakpieri}
h_r\, s^{(k)}_\mu = \sum_{\la} s^{(k)}_\la\; ,
\end{equation}
summing over $k$-bounded partitions $\la$ where $\mfc(\la)/\mfc(\mu)$ is a weak horizontal strip
of size $r$ (see Equation \eqref{eq:weakhstrip}).  

\begin{example} With $k=3$, to compute $h_1 s^{(3)}_{31}$, we find all $4$-cores
that cover $\mfc(3,1) = (4,1)$ in the weak order poset Figure~\ref{partkeq3Hasse}
and there are two, one of shape $\mfc(3,2) = (5,2)$ and one of shape $\mfc(3,1,1) = (4,1,1)$:
$$h_1 s^{(3)}_{31} = s^{(3)}_{32} + s^{(3)}_{311}~.$$
To compute $h_2 s^{(3)}_{31}$, we follow the weak horizontal chains
of length $2$ from $(4,1)$ in Figure~\ref{partkeq3Hasse} and notice that there are
two of shape $\mfc(3,3) = (6,3)$ and $\mfc(3,2,1) = (5,2,1)$:
$$h_2 s^{(3)}_{31} = s^{(3)}_{33} + s^{(3)}_{321}~.$$
Note that there is a length $2$ chain from $\mfc(3,1) = (4,1)$ to $\mfc(3,1,1,1) = (4,1,1,1)$, but
because $(4,1,1,1)/(4,1)$ is not a weak horizontal strip, this term is omitted.
There is only one horizontal chain of length $3$ from $\mfc(3,1) = (4,1)$ implying that
$$h_3 s^{(3)}_{31} = s^{(3)}_{331}~.$$
\end{example}

As we have shown for usual Schur functions, the iteration of the
Pieri rule can be used to inspire a family of tableaux where, 
in this case, their enumeration gives
the coefficient of $s^{(k)}_\la$ in the expansion 
of $h_\mu\,s_\emptyset^{(k)}$.
Consider the following example.

\begin{example}  \label{example.kSchur}
For $k=4$, we compute $h_{431}$ in terms of
$k$-Schur functions.  We will index the $k$-Schur function
by a diagram for a $5$-core with the added cells indicated by an
$\ast$.  The indexing $4$-bounded partition can be read off of these
diagrams by counting only the non-grey cells in each row.
\definecolor{dark-gray}{gray}{0.65}
\renewcommand{\gray}{\color{dark-gray}}
\squaresize=5pt
$$h_{431} = h_{31}\,s^{(4)}_{\young{&&&\cr}} = 
h_1\,s^{(4)}_{\young{\ast&\ast&\ast\cr\gris&\gris&\gris&&\ast&\ast&\ast\cr}}  = 
s^{(4)}_{\young{\ast\cr&&\cr\gris&\gris&\gris&&&&\cr}}
+s^{(4)}_{\young{&&&\ast\cr\gris&\gris&\gris&\gris&&&&\ast\cr}}~.$$

Let $k=6$ and compute an example with more terms:

\begin{align*}
h_{431} &= 
h_1
(s^{(6)}_{\young{\ast&\ast&\ast\cr&&&\cr}} + 
s^{(6)}_{\young{\ast&\ast\cr&&&&\ast\cr}} + 
s^{(6)}_{\young{\ast\cr\gris&&&&\ast&\ast&\ast\cr}}) \\
&=\big(s^{(6)}_{\young{\ast\cr&&\cr&&&\cr}}+
s^{(6)}_{\young{&&&\ast\cr&&&\cr}}+s^{(6)}_{\young{&&\cr&&&&\ast\cr}}\big) 
+ \big(s^{(6)}_{\young{\ast\cr&\cr\gris&&&&&\ast\cr}}+
s^{(6)}_{\young{&&\ast\cr&&&&\cr}}\big)
+ 
\big(s^{(6)}_{\young{\ast\cr\cr\gris&&&&&&\cr}} + 
s^{(6)}_{\young{&\ast\cr\gris&\gris&&&&&&\ast\cr}}\big)~.
\end{align*}
\end{example}

\renewcommand{\gray}{\color{Gray}}

The iteration imposes the conditions needed to characterize
a {\it weak tableau};  it must be a sequence of $(k+1)$-cores,
\begin{equation}
\emptyset = \kappa^{(0)} \subseteq \kappa^{(1)} \subseteq \cdots \subseteq \kappa^{(d)} = \mfc(\la)
\end{equation}
such that
$\kappa^{(i)}/\kappa^{(i-1)}$ is a weak horizontal strip.  
We say that its {\it shape} is $\mfc(\la)$ and 
define its weight $\alpha$ as we did in Equation~\eqref{eq:content},
but now using the size function on cores;
\begin{equation}
\alpha_i=|\kappa^{(i)}/\kappa^{(i-1)}|_{k+1}\,,\quad
\text{for $i=1,\ldots,d$}\,.
\end{equation}
Let us emphasize that an entry $\alpha_i$ of the weight 
does {\it not} record the number of times letter $i$ 
appears in the tableau.  In fact, using \eqref{goodweakstrip}, 
we see that instead, $\alpha_i$ records the number of distinct
residues used to label the cells of $\kappa^{(i)}/\kappa^{(i-1)}$.
We can more concisely describe weak tableaux in the following
terms:

\begin{prop}
\cite{LM:2005}
Let $\kappa$ be a $(k+1)$-core
and let $\alpha=(\alpha_1,\ldots,\alpha_d)$ be a composition of 
$|\kappa|_{k+1}$ with no part larger than $k$.  A weak tableau
of weight $\alpha$ is a semi-standard 
filling of shape $\kappa$ with letters $1,\ldots,d$ such that
the collection of cells filled with letter $i$ is labeled 
by exactly $\alpha_i$ distinct $(k+1)$-residues.
\end{prop}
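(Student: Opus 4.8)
The statement is an equivalence between two descriptions of a weak tableau, so the plan is to prove each direction separately and then isolate the one genuinely nontrivial point.

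\emph{From a chain of cores to a filling.} Starting from a weak tableau $\emptyset = \kappa^{(0)} \subseteq \cdots \subseteq \kappa^{(d)} = \kappa$, I would place the letter $i$ in each cell of $\kappa^{(i)}/\kappa^{(i-1)}$. Since every weak horizontal strip is in particular a horizontal strip of the underlying shapes, this is exactly the correspondence between saturated chains with horizontal-strip steps and semi-standard fillings recalled in Section~\ref{sec:colstrict}: rows are weakly increasing and columns strictly increasing. Condition~\eqref{goodweakstrip} in the definition of a weak horizontal strip says that $\kappa^{(i)}/\kappa^{(i-1)}$ uses exactly $\alpha_i = |\kappa^{(i)}/\kappa^{(i-1)}|_{k+1}$ distinct residues, and these cells are precisely the letter-$i$ cells; so the filling has the asserted residue property. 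This direction is routine.

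\emph{From a filling back to a chain.} Conversely, given a semi-standard filling of the core $\kappa$ in which the letter-$i$ cells carry exactly $\alpha_i$ residues, set $\kappa^{(i)}$ to be the subdiagram of cells with entry $\le i$. Semi-standardness alone guarantees that each $\kappa^{(i)}$ is a partition and that $\kappa^{(i)}/\kappa^{(i-1)}$ is a horizontal strip, so the only thing left to check is that every $\kappa^{(i)}$ is again a $(k+1)$-core and that each step is a weak (not merely ordinary) horizontal strip. I would proceed by downward induction, peeling off the top layer: the letter-$d$ cells form a horizontal strip $\kappa/\kappa^{(d-1)}$ using $\alpha_d \le k$ residues, and I want to conclude that $\kappa^{(d-1)}$ is a core and $\kappa/\kappa^{(d-1)}$ a weak horizontal strip, after which I apply the inductive hypothesis to the filling of $\kappa^{(d-1)}$ by $1,\dots,d-1$ with weight $(\alpha_1,\dots,\alpha_{d-1})$, a composition of $|\kappa^{(d-1)}|_{k+1} = |\kappa|_{k+1}-\alpha_d$.

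\emph{The main obstacle.} The crux, and the step I expect to be hardest, is showing that the intermediate shapes are cores. This is \emph{not} a local consequence of the residue count: a horizontal strip sitting at the top of a core, using few residues, can perfectly well remove to a non-core, and correspondingly $\sum_i \alpha_i$ can \emph{exceed} $|\kappa|_{k+1}$ for a general semi-standard filling. The hypothesis that forces the issue is the global one, $\sum_i \alpha_i = |\kappa|_{k+1}$. The way I would exploit it is through the length interpretation: by Proposition~\ref{prop:affgrass2core}, $|\kappa|_{k+1} = \ell(w_\kappa)$, and by Definition~\ref{def:siactioncore} together with the weak-order description of $\rightarrow_k$, each application of a single generator $s_c$ that adds cells to a core raises $|\cdot|_{k+1}$ by exactly one while possibly adding a whole residue class of cells. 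Thus building any layer by adding its residue classes one at a time can increase the core-length by at most the number of residues in that layer, i.e.\ by at most $\alpha_i$, and equality across all layers is possible only if no addition is ever ``wasted'' on a non-core intermediate. Since the telescoping sum of these increments must reach $|\kappa|_{k+1} = \sum_i \alpha_i$ exactly, every inequality is forced to be an equality, which pins each $\kappa^{(i)}$ to be a core and each layer to be a genuine weak horizontal strip. I would then package the per-layer conclusion using Proposition~\ref{th:hstrip} (or equivalently the characterization of weak horizontal strips by cyclically decreasing elements), completing the induction and hence the equivalence.
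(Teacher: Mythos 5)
Your forward direction is fine, and you have correctly located the real content of the statement: the backward direction, and within it the fact that the intermediate shapes $\kappa^{(i)}$ are themselves $(k+1)$-cores. You are also right that this is not a local consequence of the residue counts. For $k=1$ and $\kappa=(2,1)$ (a $2$-core with $|\kappa|_2=2$), the standard filling placing $1,2,3$ in the cells $(1,1),(1,2),(2,1)$ is semi-standard and every letter occupies exactly one residue, yet $\kappa^{(2)}=(2)$ is not a $2$-core; this filling is excluded only because $\sum_i\alpha_i=3\neq 2=|\kappa|_2$. So the global hypothesis must do the work, exactly as you say.

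The gap is in how you make it do that work. The telescoping bound ``building any layer by adding its residue classes one at a time can increase the core-length by at most $\alpha_i$'' is circular as stated: the length $|\cdot|_{k+1}$ and the operators $s_c$ of Definition~\ref{def:siactioncore} are defined only on cores, and whether the intermediate shapes are cores is precisely what is at issue; moreover $s_c$ adds \emph{all} addable corners of residue $c$ of the current core, which need not coincide with the residue-$c$ cells of your layer. To run the count you would need to (i) extend the length to arbitrary partitions, say $n_k(\mu)=\#\{c\in\diag(\mu):\hook_\mu(c)\le k\}$, (ii) prove the per-layer inequality $n_k(\kappa^{(i)})-n_k(\kappa^{(i-1)})\le\alpha_i$ for a horizontal strip occupying $\alpha_i$ residues, and (iii) show that equality in (ii) forces $\kappa^{(i)}$ to be a core and the strip to be weak. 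None of these is established, and (ii)--(iii) are essentially the whole theorem. The closing appeal to Proposition~\ref{th:hstrip} cannot close the gap, since that proposition presupposes that both shapes are already cores. For the record, the chapter itself gives no proof of this statement but defers to \cite{LM:2005}, where the result is obtained not by a global count but by structural lemmas on cores (e.g.\ that a $(k+1)$-core has no addable and removable corner of the same residue, and the interlacing of the extremal cells of a fixed residue), which is the machinery your sketch would need to supply.
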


\begin{example}
\label{Ex:weaktab}
For $k=6$, the weak tableaux of weight $(4,3,1)$ are
\begin{equation*}
{\squaresize=12pt
\young{3\cr2&2&2\cr1&1&1&1\cr}\hskip .2in
\young{2&2&2&3\cr1&1&1&1\cr}\hskip .2in
\young{2&2&2\cr1&1&1&1&3\cr} \hskip .2in
\young{2&2&3\cr1&1&1&1&2\cr}}
\end{equation*}
\begin{equation*}{\squaresize=12pt
\young{3\cr2&2\cr1&1&1&1&2&3\cr}\hskip .1in
\young{3\cr2\cr1&1&1&1&2&2&2\cr}\hskip .1in
\young{2&3\cr1&1&1&1&2&2&2&3\cr}~.
}
\end{equation*}
For $k=3$, we list all weak
tableaux of shape $(5,2,1)$ which can be extracted by 
looking at all successions of horizontal chains
(with non-increasing sizes) in Figure~\ref{partkeq3Hasse}.
\begin{equation*}\squaresize=12pt
\young{6\cr4&5\cr1&2&3&4&5\cr} 
\hskip .1in \young{5\cr4&6\cr1&2&3&4&6\cr}
\hskip .1in \young{4\cr3&6\cr1&2&4&5&6\cr} 
\hskip .1in \young{4\cr2&6\cr1&3&4&5&6\cr}
\end{equation*}
\begin{equation*}\squaresize=12pt
\young{5\cr3&4\cr1&1&2&3&4\cr} \hskip .1in 
\young{4\cr3&5\cr1&1&2&3&5\cr} \hskip .1in 
\young{3\cr2&5\cr1&1&3&4&5\cr} 
\hskip.5in
\young{4\cr2&3\cr1&1&2&2&3\cr} \hskip .1in \young{3\cr2&4\cr1&1&2&2&4\cr}
\end{equation*}
\begin{equation*}\squaresize=12pt
\young{3\cr2&3\cr1&1&2&2&3\cr} 
\hskip .5in 
\young{4\cr2&3\cr1&1&1&2&3\cr} \hskip .1in \young{3\cr2&4\cr1&1&1&2&4\cr}
\hskip .5in 
\young{3\cr2&2\cr1&1&1&2&2\cr}
\end{equation*}
\end{example}

\begin{sagedemo} \label{sageex:weaktab}
In \Sage we can list all weak $k$-tableaux for a given shape and weight. For example, the two
weak 6-tableaux of weight $(4,3,1)$ and shape $(5,3)$ can be obtained as follows:
\begin{sageexample}
    sage: T = WeakTableaux(6, [5,3], [4,3,1])
    sage: T.list()
    [[[1, 1, 1, 1, 3], [2, 2, 2]], [[1, 1, 1, 1, 2], [2, 2, 3]]]
\end{sageexample}
The 13 weak 3-tableaux of shape $(5,2,1)$ in Example~\ref{Ex:weaktab} can be obtained as
\begin{sageexample}
    sage: k = 3
    sage: c = Core([5,2,1], k+1)
    sage: la = c.to_bounded_partition(); la
    [3, 2, 1]
    sage: for mu in Partitions(la.size(), max_part = 3):
    ....:     T = WeakTableaux(k, c, mu)
    ....:     print "weight", mu
    ....:     print T.list()
    ....:
    weight [3, 3]
    []
    weight [3, 2, 1]
    [[[1, 1, 1, 2, 2], [2, 2], [3]]]
    weight [3, 1, 1, 1]
    [[[1, 1, 1, 2, 4], [2, 4], [3]], [[1, 1, 1, 2, 3], [2, 3], [4]]]
    weight [2, 2, 2]
    [[[1, 1, 2, 2, 3], [2, 3], [3]]]
    weight [2, 2, 1, 1]
    [[[1, 1, 2, 2, 4], [2, 4], [3]], [[1, 1, 2, 2, 3], [2, 3], [4]]]
    weight [2, 1, 1, 1, 1]
    [[[1, 1, 3, 4, 5], [2, 5], [3]], [[1, 1, 2, 3, 5], [3, 5], [4]],
     [[1, 1, 2, 3, 4], [3, 4], [5]]]
    weight [1, 1, 1, 1, 1, 1]
    [[[1, 3, 4, 5, 6], [2, 6], [4]], [[1, 2, 4, 5, 6], [3, 6], [4]],
     [[1, 2, 3, 4, 6], [4, 6], [5]], [[1, 2, 3, 4, 5], [4, 5], [6]]]
\end{sageexample}
\end{sagedemo}

For any $k$-bounded partition $\la$ and $k$-bounded partition $\mu$,
we denote the number of weak tableaux of shape $\mfc(\la)$ and weight $\mu$
by $K^{(k)}_{\la\mu}$.  These numbers, called
{\it (weak) $k$-Kostka coefficients}, satisfy an
important property
\begin{equation}
K_{\la\mu}^{(k)}=
\begin{cases}
1 & \text{ if $\la=\mu$}\\
0 & \text{ if $\la \not\geq  \mu$}\,, 
\end{cases}
\end{equation}
with respect to dominance order on partitions.
Thus,  the matrix of coefficients 
$||K_{\lambda\mu}^{(k)}||$ over all
 $k$-bounded partitions $\lambda$ and $\mu$ of the same size
is unitriangular and thus invertible.
It is with this in hand that we arrive at
a family of functions that satisfy the weak Pieri rule.
To be precise, $k$-Schur functions were characterized 
in \cite{LM:2007} by the system obtained by taking
\begin{equation} \label{weakKostka}
h_\mu = \sum_{\la \;:\; \la_1\leq k} K^{(k)}_{\la\mu} s^{(k)}_\la \; ,
\end{equation}
for all $k$-bounded partitions $\mu$.
In fact, this system defines the $k$-Schur 
basis $\{s_\la^{(k)}\}_{\la_1 \leq k}$
because the elements $h_\la$ for $\la_1 \leq k$ form a basis 
for the space $\La_{(k)}$ and the transition matrix is invertible
over the integers.

\begin{example}
For $k=6$, the weak tableaux in Example~\ref{Ex:weaktab} tell us that
$$
h_{431} = s^{(6)}_{431} + s^{(6)}_{44} + 2 s^{(6)}_{53}+s^{(6)}_{521} + 
s^{(6)}_{611} + s_{62}^{(6)}\,.
$$
\end{example}

In Section~\ref{sec:threedef}, various different notions of $k$-Schur
functions will be given.  We will work with a running example 
and do a computation to give an idea of how each of the notions 
can be implemented and to
demonstrate their relative difficulty.

\begin{example}\squaresize = 6pt
Let us calculate $s_{3211}^{(3)}$ in terms of homogeneous
symmetric functions.  The $k$-Schur function $s^{(k)}_\la$
can be computed recursively using the weak Pieri rule.  The product
$h_{\la_1} s^{(k)}_{(\la_2, \la_3, \ldots, \la_{\ell})} = 
s^{(k)}_{\la} +$ other terms which
are indexed by $k$ bounded partitions $\gamma$ where
$\la$  is smaller than $\gamma$ in dominance order.

We begin by noting that $h_3 s_{311}^{(3)} = s_{3211}^{(3)}$,
and we thus must expand $s_{211}^{(3)}$.  
This time, noting that $h_2 s^{(3)}_{11} = s^{(3)}_{211}$ 
we turn to the computation of $s^{(3)}_{11}$. Since $h_1 s^{(3)}_{1} = 
s^{(3)}_{11} + s^{(3)}_{2}$
and $s^{(3)}_{2} = h_2$, we have computed $s^{(3)}_{11} = h_{11} - h_{2}$ which can
be substituted back to obtain $s^{(3)}_{211} = h_{211} - h_{22}$.
We conclude that $s^{(3)}_{3211} = h_{3211} - h_{322}$.

We leave it to the reader as an exercise to compute at least a few other of the
$3$-Schur functions of size $7$ by hand to get a feel for the difficulty
of these computations.  Fortunately, the functions can be verified against
a calculation using \Sage.
\end{example}

\begin{sagedemo}
Here we give the expansion of the $k$-Schur functions for $k=3$ in terms 
of the homogeneous basis as we did in the previous example.
Our current setting is the $t=1$ case.
\begin{sageexample}
    sage: Sym = SymmetricFunctions(QQ)
    sage: ks = Sym.kschur(3,t=1)
    sage: h = Sym.homogeneous()
    sage: for mu in Partitions(7, max_part =3):
    ....:     print h(ks(mu))
    ....:
    h[3, 3, 1]
    h[3, 2, 2] - h[3, 3, 1]
    h[3, 2, 1, 1] - h[3, 2, 2]
    h[3, 1, 1, 1, 1] - 2*h[3, 2, 1, 1] + h[3, 3, 1]
    h[2, 2, 2, 1] - h[3, 2, 1, 1] - h[3, 2, 2] + h[3, 3, 1]
    h[2, 2, 1, 1, 1] - 2*h[2, 2, 2, 1] - h[3, 1, 1, 1, 1] 
        + 2*h[3, 2, 1, 1] + h[3, 2, 2] - h[3, 3, 1]
    h[2, 1, 1, 1, 1, 1] - 3*h[2, 2, 1, 1, 1] + 2*h[2, 2, 2, 1] 
        + h[3, 2, 1, 1] - h[3, 2, 2]
    h[1, 1, 1, 1, 1, 1, 1] - 4*h[2, 1, 1, 1, 1, 1] + 4*h[2, 2, 1, 1, 1] 
        + 2*h[3, 1, 1, 1, 1] - 4*h[3, 2, 1, 1] + h[3, 3, 1]
\end{sageexample}
\end{sagedemo}

We are now in the position to use duality to produce a second basis, 
this time for the algebra $\La^{(k)}$.  
Although we do not 
have a scalar product on the spaces $\La_{(k)}$
and $\La^{(k)}$ separately, we appeal to a pairing between the two spaces 
\begin{equation} \label{equation.k scalar}
	\langle ~\cdot~, ~\cdot~ \rangle\; : \; \Lambda_{(k)} \times \Lambda^{(k)} \rightarrow \QQ\;,
\end{equation}
where $h_\mu \in \La_{(k)}$ and $m_\la \in \La^{(k)}$ are dual elements
\begin{equation}
\left< h_\mu, m_\la \right> = \delta_{\la\mu}~.
\end{equation}
This equation is precisely Equation~\eqref{scalardual} 
for the scalar product on symmetric functions.

The dual $k$-Schur functions $\SS^{(k)}_\la$ were introduced
in~\cite{LM:2008} as the unique basis of the degree $m$ 
subspace of $\La^{(k)}$ (with $m\geq 1$) that is dual to the basis 
of $\{s^{(k)}_\la\}_{\lambda\vdash m, \lambda_1\leq k}$ under the pairing~\eqref{equation.k scalar}.
It follows from \eqref{weakKostka} that
the enumeration of weak tableaux gives their monomial expansion:
\begin{align}\label{eq:dualkschurmexp}
\SS^{(k)}_\la &= \sum_{\mu:\mu_1 \leq k} \left< h_\mu, \SS^{(k)}_\la \right> m_\mu\nonumber\\
&= \sum_{\mu:\mu_1 \leq k}\; \sum_{\gamma :\gamma_1\leq k} K^{(k)}_{\gamma\mu} 
\left< s^{(k)}_{\gamma}, \SS^{(k)}_\la \right> m_\mu\\
&= \sum_{\mu:\mu_1 \leq k} K^{(k)}_{\la\mu} m_\mu~.\nonumber
\end{align}
There is an involution on the set of weak tableaux of fixed shape $\mfc(\la)$ 
and weight $\alpha$ that sends these tableaux to the set of
weak tableaux of shape $\mfc(\lambda)$ and weight which 
is a permutation of $\alpha$.
Thus,
the dual $k$-Schur functions are the weight
generating functions for weak tableaux.  That is, for $(k+1)$-core
$\la$,
\begin{equation}
\label{gendualk}
\SS_\lambda^{(k)} = \sum_{T=\text{weak tab}\atop
\shape(T)=\mfc(\lambda)}
x^{\text{weight}(T)}\,.
\end{equation}

Note that what we call the dual $k$-Schur functions here are also equal to affine
Stanley symmetric functions indexed by affine Grassmannian elements.  This connection is
discussed with more detail in Chapter \ref{chapter.stanley symmetric functions}, Section \ref{sec:noncommkschur}
and Chapter \ref{chapter.k schur primer}, Section \ref{sec:nilcoxeter}.
Note that our notation differs slightly from the notation in Chapter \ref{chapter.stanley symmetric functions}
by adding a superscript indicating $k$.

\begin{example}\label{calcdualkSchurfromposet}
The calculation of the dual $k$-Schur function 
$\SS_{321}^{(3)}$ follows immediately by extracting
the weights of each weak tableau of shape $(5,2,1)$,
listed in Example~\ref{Ex:weaktab}. We conclude that
$$\SS_{321}^{(3)} = m_{321} + 2 m_{3111} + m_{222} + 2 m_{2211} + 3 m_{21111}
+ 4 m_{111111}~.$$
\end{example}

%%%%%%%%%%%%%%%%%%%%%%%%%%%%%%%%%%%%%%%%%%%%%%%%%%%%%%%%%%%%%%%%%%%%%%%%%%%%%%%%%%%
\subsection{Other realizations}
\label{sec:nilcoxeter}

We have now seen how the weak Pieri rule -- given in terms 
of weak horizontal chains in the $(k+1)$-core realization 
of the weak poset -- leads to the family of dual $k$-Schur 
functions in terms of weak tableaux.  Equivalently, we could 
have invoked the definition of weak horizontal chains 
on the level of $k$-bounded partitions or on affine Grassmannian 
elements and this would easily give rise to characterizations for 
dual $k$-Schur functions in these other settings.
Before we move on to draw a weight generating
function characterization for $k$-Schur functions starting
instead from a ``strong'' Pieri rule, some exposition on these
other interpretations is warranted.

Let us start by retracing our steps that led from the
weak Pieri rule to the generating function for dual 
$k$-Schur functions, this time in the setting of $k$-bounded 
partitions.
In these terms, the weak Pieri relation is given for
any partition $\la$ with $\la_1\leq k$ to be
\begin{equation}\label{eq:kweakpieri}
h_r s^{(k)}_\la = \sum_{\mu:\mu_1\leq k} s^{(k)}_\mu\; ,
\end{equation}
where the sum is over partitions $\mu$ 
such that $\mu/\la$ is a horizontal strip and 
$\mu^{\omega_k}/\la^{\omega_k}$ is a vertical strip of size $r$.
Again, we use the iteration of this relation to impose
conditions on a family of weak tableaux in the $k$-bounded setting.

\begin{example}  \label{example.kSchur}
Iteratively, we calculate $h_{431}$ in terms of $k$-Schur functions for $k=6$:
\squaresize=5pt
\begin{align*}
h_{431} &= h_1(s^{(6)}_{\young{\ast&\ast&\ast\cr&&&\cr}} + s^{(6)}_{\young{\ast&\ast\cr&&&&\ast\cr}} + s^{(6)}_{\young{\ast\cr&&&&\ast&\ast\cr}}) \\
&=\big(s^{(6)}_{\young{\ast\cr&&\cr&&&\cr}}+s^{(6)}_{\young{&&&\ast\cr&&&\cr}}+s^{(6)}_{\young{&&\cr&&&&\ast\cr}}\big) 
+ \big(s^{(6)}_{\young{\ast\cr&\cr&&&&\cr}}+s^{(6)}_{\young{&&\ast\cr&&&&\cr}}\big)
+ \big(s^{(6)}_{\young{\ast\cr\cr&&&&&\cr}} + s^{(6)}_{\young{&\ast\cr&&&&&\cr}}\big)~.
\end{align*}
\end{example}

\begin{sagedemo}
\Sage can be used to verify Example~\ref{example.kSchur}:
\begin{sageexample}
    sage: ks6 = Sym.kschur(6,t=1)
    sage: ks6(h[4,3,1])
    ks6[4, 3, 1] + ks6[4, 4] + ks6[5, 2, 1] + 2*ks6[5, 3] 
      + ks6[6, 1, 1] + ks6[6, 2]
\end{sageexample}

\Sage also knows that the $k$-Schur functions live in the subring $\La_{(k)}$ of the ring of 
symmetric functions:
\begin{sageexample}
    sage: Sym = SymmetricFunctions(QQ)
    sage: ks = Sym.kschur(3,t=1)
    sage: ks.realization_of()
    3-bounded Symmetric Functions over Rational Field with t=1
    sage: s = Sym.schur()
    sage: s.realization_of()
    Symmetric Functions over Rational Field
\end{sageexample}
\end{sagedemo}

When $\la$ and $\mu$ are $k$-bounded partitions,
the weak Kostka coefficients $K_{\la\mu}^{(k)}$ are 
interpreted to be the number of sequences of $k$-bounded partitions, 
\begin{equation}
\emptyset = \la^{(1)} \subseteq \la^{(2)} \subseteq \cdots \subseteq\la^{(d)} = \la
\end{equation}
where $\la^{(i)}/\la^{(i-1)}$ is a horizontal strip of size $\mu_i$ and
$(\la^{(i)})^{\omega_k}/(\la^{(i-1)})^{\omega_k}$ is a vertical strip.
We then follow the line of reasoning from earlier to yield
\eqref{gendualk}, where we can instead think of weak tableaux
as these sequences of $k$-bounded shapes.

\begin{example}
The seven tableaux that make up the terms in the expansion of $h_{431}$ in
terms of $k$-Schur functions with $k=6$ are \squaresize=12pt
\begin{equation*}
\young{3\cr2&2&2\cr1&1&1&1\cr}\hskip .3in
\young{2&2&2&3\cr1&1&1&1\cr}\hskip .3in
\young{2&2&2\cr1&1&1&1&3\cr} \hskip .3in
\young{3\cr2&2\cr1&1&1&1&2\cr}
\end{equation*}
\begin{equation*}
\young{2&2&3\cr1&1&1&1&2\cr}\hskip .3in
\young{3\cr2\cr1&1&1&1&2&2\cr}\hskip .3in
\young{2&3\cr1&1&1&1&2&2\cr}~.
\end{equation*}
\end{example}

\begin{sagedemo}
We can reproduce these tableaux in \Sage using the bounded representation of weak $k$-tableaux:
\begin{sageexample}
    sage: k = 6
    sage: weight = Partition([4,3,1])
    sage: for la in Partitions(weight.size(), max_part = k):
    ....:     if la.dominates(weight):
    ....:         print la
    ....:         T = WeakTableaux(k, la, weight, representation = 'bounded')
    ....:         print T.list()
    ....:
    [6, 2]
    [[[1, 1, 1, 1, 2, 2], [2, 3]]]
    [6, 1, 1]
    [[[1, 1, 1, 1, 2, 2], [2], [3]]]
    [5, 3]
    [[[1, 1, 1, 1, 3], [2, 2, 2]], [[1, 1, 1, 1, 2], [2, 2, 3]]]
    [5, 2, 1]
    [[[1, 1, 1, 1, 2], [2, 2], [3]]]
    [4, 4]
    [[[1, 1, 1, 1], [2, 2, 2, 3]]]
    [4, 3, 1]
    [[[1, 1, 1, 1], [2, 2, 2], [3]]]
\end{sageexample}
\end{sagedemo}

Lastly, let us turn to the language of the affine symmetric group and
retrace the steps leading to the dual $k$-Schur functions.
Here, the weak Pieri rule is, for a $k$-bounded partition $\lambda$ the corresponding
affine Grassmannian element (described in Section \ref{section:partitionscores}) 
will be $\mfa(\mfc(\lambda))$ which we will shorten to
$\mfa(\lambda)$.  The weak Pieri rule on affine Grassmannian elements can be stated as
\begin{equation}
h_{r} \, s_\la^{(k)} = \sum_{u=\text{cyclically decreasing}\atop\ell(u)=r, u \mfa(\la) = \mfa(\mu) } s_{\mu}^{(k)}\,,
\end{equation}
where the sum is over cyclically decreasing reduced words such that there is a $k$-bounded
partition $\mu$ such that $u \mfa(\la) = \mfa(\mu)$.

The iteration of this relation produces another interpretation 
of the weak Kostka numbers.  First, for any $k$-bounded
partition $\mu$, define a {\it $\mu$-factorization} of $w$ 
to be a decomposition of the form 
$w=w^{\ell(\mu)}\cdots w^1$ where each $w^i$ is a cyclically decreasing 
element of length $\mu_i$.  Then, for $k$-bounded partitions $\lambda$
and $\mu$, $K^{(k)}_{\la,\mu}$ is the number of 
$\mu$-factorizations of $w = \mfa(\lambda)$.
In particular, if we consider the case that $\mu=(1,1,\dots,1)$,
then the weak Kostka number $K^{(k)}_{\la,(1^n)}$ is precisely the
number of reduced words for $w = \mfa(\la)$.

\begin{example}
Note from the previous example that the coefficient of $m_{111111}$ 
in $\SS^{(k)}_{521}$ is $4$ indicating that there are 4 reduced words
in ${\tilde S}_3$ for the affine Grassmannian element corresponding to the core
$(5,2,1)$; they are $s_2 s_0 s_3 s_2 s_1 s_0 = s_0 s_2 s_3 s_2 s_1 s_0 =
s_0 s_3 s_2 s_3 s_1 s_0 = s_0 s_3 s_2 s_1 s_3 s_0$.
\end{example}

From here, we again use the line of reasoning and duality to arrive at 
the interpretation for dual $k$-Schur functions as
\begin{equation}
\SS_w^{(k)}=\sum_{\mu} 
K^{(k)}_{\mfa^{-1}(w)\mu}\, m_{\mu}
\end{equation}
where we have indexed the dual $k$-Schur function by an affine Grassmannian
permutation to emphasize that the coefficients $K^{(k)}_{\mfa^{-1}(w)\mu}$
represent $\mu$-factorizations of $w$.
From the definition and symmetry of the weak Kostka numbers,
the dual $k$-Schur functions can be suggestively written as the weight generating function:
\begin{equation}
\label{dualingrass}
\SS_w^{(k)}=\sum_{w=w^{1}w^2\dots w^r}
x^{\ell(w^{1})}x^{\ell(w^2)}\cdots x^{\ell(w^r)}
\,,
\end{equation}
over all factorizations of $w$ into products of cyclically decreasing $w^i$.  

This interpretation 
for dual $k$-Schur functions is the starting point in \cite{Lam:2006} 
to a family of symmetric functions called {\it affine Stanley 
symmetric functions} where the condition that $w$ is affine Grassmannian is relaxed
and we allow the function to be indexed by arbitrary affine permutations
(not just an affine Grassmannian element).  For more information see
Chapter~\ref{chapter.stanley symmetric functions}, Definition~\ref{definition.affine Stanley}.

Programs to compute affine Stanley functions are in \Sage and we can 
do computations with dual $k$-Schur functions 
by taking affine Grassmannian elements.

\begin{sagedemo}
We verify Example~\ref{calcdualkSchurfromposet} in \Sage
by first converting the indexing partition $(3,2,1)$
to an affine Grassmannian element:
\begin{sageexample}
    sage: mu = Partition([3,2,1])
    sage: c = mu.to_core(3)
    sage: w = c.to_grassmannian()
    sage: w.stanley_symmetric_function()
    4*m[1, 1, 1, 1, 1, 1] + 3*m[2, 1, 1, 1, 1] + 2*m[2, 2, 1, 1] 
        + m[2, 2, 2] + 2*m[3, 1, 1, 1] + m[3, 2, 1]
    sage: w.reduced_words()
    [[2, 0, 3, 2, 1, 0], [0, 2, 3, 2, 1, 0], [0, 3, 2, 3, 1, 0], 
     [0, 3, 2, 1, 3, 0]]
\end{sageexample}
Alternatively, we can access the dual $k$-Schur functions from the quotient space:
\begin{sageexample}
    sage: Sym = SymmetricFunctions(QQ)
    sage: Q3 = Sym.kBoundedQuotient(3,t=1)
    sage: F3 = Q3.affineSchur()
    sage: m = Q3.kmonomial()
    sage: m(F3([3,2,1]))
    4*m3[1, 1, 1, 1, 1, 1] + 3*m3[2, 1, 1, 1, 1] + 2*m3[2, 2, 1, 1] 
        + m3[2, 2, 2] + 2*m3[3, 1, 1, 1] + m3[3, 2, 1]
\end{sageexample}
\end{sagedemo}

\end{subsection}

%%%%%%%%%%%%%%%%%%%%%%%%%%%%%%%%%%%%%%%%%%%%%%%%%%%%%%%%
\begin{subsection}{Strong marked tableaux and a monomial expansion 
of $k$-Schur functions}
\label{sec:strongsection}
%%%%%%%%%%%%%%%%%%%%%%%%%%%%%%%%%%%%%%%%%%%%%%%%%%%%%%%%
The Pieri rule for the dual $k$-Schur functions $\SS^{(k)}_\la$ is probably less intuitive than the one
for the $k$-Schur functions because it does have coefficients in the expansion which are
not simply $1$ or $0$.  In the last section we gave an explicit definition of the dual $k$-Schur functions
in~\eqref{eq:dualkschurmexp}, so it is possible to experiment with these elements to see how they 
behave under multiplication by an element $h_r \in \La^{(k)}$.  Through the following computations 
we hope to demonstrate how it might be possible to experiment with data for the Pieri rule for the 
dual $k$-Schur functions and then later explain what that Pieri rule is.

\begin{example} \label{example.dual expansion}
In Example~\ref{calcdualkSchurfromposet} we computed the dual $k$-Schur function
indexed by $(3,2,1)$ for $k=3$. Using the tableau definition of the dual $k$-Schur functions
from Equation~\eqref{eq:dualkschurmexp} it is possible to expand $\SS^{(3)}_\la$ for all 3-bounded partitions 
$\la$ of size $7$ and use this to find the expansion of $h_1 \SS_{321}^{(3)}$.  If the $k$-Schur functions 
up to size $7$ are already known, then this also can be computed using the duality.
In particular we obtain
\begin{equation} \label{equation.dual expansion}
	h_1 \SS_{321}^{(3)} = 2 \SS_{331}^{(3)} + \SS_{322}^{(3)} + 3 \SS_{3211}^{(3)} 
	+ \SS_{31111}^{(3)}~.
\end{equation}
This example demonstrates that if a dual $k$-Schur function indexed by a partition
$\la$ is multiplied by $h_1$, the resulting partitions indexing the functions in the expansion
do not necessarily contain $\la$ (notice that $(3,2,1)$ is not contained in $(3,1,1,1,1)$).
\end{example}

\begin{sagedemo}
We now demonstrate how the computations for Example~\ref{example.dual expansion} can be carried out 
in \Sage. The dual $k$-Schur functions can be accessed through the $k$-bounded quotient space:
\begin{sageexample}
    sage: Sym = SymmetricFunctions(QQ)
    sage: Q3 = Sym.kBoundedQuotient(3,t=1)
    sage: F3 = Q3.affineSchur()
    sage: h = Sym.homogeneous()
    sage: f = F3[3,2,1]*h[1]; f
    F3[3, 1, 1, 1, 1] + 3*F3[3, 2, 1, 1] + F3[3, 2, 2] + 2*F3[3, 3, 1]
\end{sageexample}
In terms of 3-bounded partitions, it is not so clear how to interpret the coefficients in the expansion 
in Example~\ref{example.dual expansion}. However, in terms of 4-cores it becomes more obvious.
The 4-core of $(3,2,1)$ is contained in the cores of the partitions in the expansion:
\begin{sageexample}
    sage: c = Partition([3,2,1]).to_core(3)
    sage: for p in f.support():
    ....:   print p, SkewPartition([p.to_core(3).to_partition(),c.to_partition()])
    ....:
    [3, 1, 1, 1, 1] [[5, 2, 1, 1, 1], [5, 2, 1]]
    [3, 2, 1, 1] [[6, 3, 1, 1], [5, 2, 1]]
    [3, 2, 2] [[5, 2, 2], [5, 2, 1]]
    [3, 3, 1] [[7, 4, 1], [5, 2, 1]]
\end{sageexample}
The corresponding skew diagrams are
\begin{equation*}
\young{\gris\cr\gris&\gris&&\cr\gris&\gris&\gris&\gris&\gris&&\cr} \qquad
\young{\gris&\cr\gris&\gris\cr\gris&\gris&\gris&\gris&\gris\cr} \qquad
\young{\cr\gris\cr\gris&\gris&\cr\gris&\gris&\gris&\gris&\gris&\cr} \qquad
\young{\cr\cr\gris\cr\gris&\gris\cr\gris&\gris&\gris&\gris&\gris\cr}\; ,
\end{equation*}
respectively. 
Comparing with~\eqref{equation.dual expansion}, one is led to conjecture that the 
coefficient of $\SS_\mu^{(k)}$ in the expansion of $h_1 \SS^{(k)}_\la$ 
equals the number of 
connected components of $\mfc(\mu)/\mfc(\la)$.
Alternatively, we may mark the lowest rightmost cell of one of these connected
components and the number of ways of marking is equal to this coefficient.  
\end{sagedemo}

Recall that we defined a strong marked horizontal strip of size $r$ in Equation~\eqref{equation.strong h-strip}
as a sequence cores
\begin{equation*}
\kappa^{(0)} \Rightarrow_{k} \kappa^{(1)} \Rightarrow_{k} \kappa^{(2)} \Rightarrow_{k} \cdots
\Rightarrow_{k} \kappa^{(r)}
\end{equation*}
and integers $c_1 < c_2 < \cdots < c_r$ such that $(\kappa^{(i)}, \kappa^{(i-1)},c_i)$
is a marking for each $1 \leq i \leq r$.
It is in these terms that we state the Pieri rule for dual $k$-Schur functions.

\begin{theorem} \cite[Theorem 4.9]{LLMS:2006} \label{thrm:strongPieri}  
For $r \geq 1$,
\begin{equation} \label{strongPieri}
	h_r \SS^{(k)}_\la = \sum_{(\kappa^{(\ast)},c_\ast)} \SS^{(k)}_{\mfp(\kappa^{(r)})} \;,
\end{equation}
where the sum is over all strong marked horizontal strips 
\begin{equation}
\kappa^{(\ast)} = \left( \mfc(\la) = \kappa^{(0)} \Rightarrow_{k}\kappa^{(1)} \Rightarrow_{k} \kappa^{(2)} \Rightarrow_{k} \cdots \Rightarrow_{k} \kappa^{(r)}\right)
\end{equation}
with markings $c_\ast = (c_1 < c_2 < \cdots < c_r)$.
\end{theorem}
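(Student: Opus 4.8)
The plan is to extract the expansion coefficients by duality and then to prove the resulting combinatorial identity by the affine insertion algorithm of Section~\ref{sec:strongweakduality}. First I would use that $\{\SS^{(k)}_\mu\}$ is by definition the basis dual to $\{s^{(k)}_\mu\}$ under the pairing~\eqref{equation.k scalar}. Hence the coefficient of a given $\SS^{(k)}_{\mfp(\kappa^{(r)})}$ on the right of~\eqref{strongPieri} is
\begin{equation*}
\langle s^{(k)}_\mu,\, h_r\, \SS^{(k)}_\la\rangle, \qquad \mu = \mfp(\kappa^{(r)}),
\end{equation*}
so it suffices to prove, for every $k$-bounded $\mu$ with $|\mu|=|\la|+r$, that this number equals the count of strong marked horizontal strips of size $r$ from $\mfc(\la)$ to $\mfc(\mu)$. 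A preliminary identification makes the product honest: one checks $\SS^{(k)}_{(r)}=h_r$ in $\La^{(k)}$, since the weak tableaux of single-row shape $(r)$ (a $(k+1)$-core because $r\leq k$) biject with weakly increasing words, so by~\eqref{gendualk} their weight generating function is $\sum_{\nu\vdash r} m_\nu = h_r$. Thus $h_r\,\SS^{(k)}_\la=\SS^{(k)}_{(r)}\SS^{(k)}_\la$ is a genuine product in $\La^{(k)}$, and left multiplication by $h_r$ is adjoint, through the Hall pairing restricting to~\eqref{equation.k scalar}, to the skewing operator $h_r^\perp$ on $\La_{(k)}$; this recasts the target as $\langle h_r^\perp s^{(k)}_\mu,\, \SS^{(k)}_\la\rangle$. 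That manipulation, however, only reorganizes the question: the genuine content is combinatorial and must come from elsewhere.

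The core step is to realize this structure constant through the strong--weak duality. I would build the affine RSK bijection sending a biword (equivalently a matrix) to a pair $(P,Q)$ of the same core shape, with $P$ a weak tableau and $Q$ a strong marked tableau, which proves the affine Cauchy identity
\begin{equation*}
\sum_\la \SS^{(k)}_\la[X]\, s^{(k)}_\la[Y] \;=\; \sum_{\mu:\mu_1\leq k} m_\mu[X]\, h_\mu[Y]
\end{equation*}
(the right side being immediate from the weak Kostka relation~\eqref{weakKostka} and $\langle h_\mu,m_\la\rangle=\delta_{\la\mu}$). The decisive feature of affine insertion is its asymmetry: inserting a single letter updates the weak insertion tableau $P$ while recording one strong marked cover in $Q$. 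Since $h_r$ corresponds to inserting a single weakly increasing row of length $r$, tracking how $Q$ grows shows that its shape changes by a strong cover at each of the $r$ letters, that the marked cells acquire strictly increasing contents $c_1<\dots<c_r$ (forced by the left-to-right order of insertion), and that the cumulative shape change $\mfc(\mu)/\mfc(\la)$ is exactly a strong marked horizontal strip of size $r$. Summing $x^{\mathrm{weight}}$ over all outputs then yields~\eqref{strongPieri}.

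\textbf{Main obstacle.} The hard part is the construction and correctness of the strong (internal) insertion itself. Unlike classical RSK, a single bump can displace an entire ribbon and interact with the markings, so the delicate points are: (i) that inserting one letter changes the recording tableau by \emph{exactly one} strong marked cover (using the transposition description of strong covers in Proposition~\ref{prop:strongcovertransposition} to control which ribbons move and where their heads land); (ii) that the map is a bijection, which requires the reversibility and confluence lemmas governing how ribbons bump past one another in a $(k+1)$-core; and (iii) the statistic match, namely that successive insertions of a weakly increasing row automatically produce strictly increasing markings, so the output is a valid strong marked horizontal strip. A cleaner but less self-contained alternative is to identify the dual $k$-Schur functions with the Schubert basis of the cohomology of the affine Grassmannian and deduce~\eqref{strongPieri} from the corresponding Pieri/Chevalley formula obtained via the nilHecke action developed in Chapter~\ref{chapter.stanley symmetric functions} (Theorems~\ref{thm:aB} and~\ref{thm:aBHopf}), where Proposition~\ref{prop:strongcovertransposition} again makes the ribbon combinatorics transparent.
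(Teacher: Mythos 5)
Your proposal follows essentially the same route as the paper, which does not prove this theorem itself but cites \cite{LLMS:2006} and explains (in Section~\ref{sec:strongweakduality} and the notes on references) that the proof is precisely the affine-insertion/Cauchy-kernel argument you outline: establish the strong--weak duality bijectively and read off the Pieri rule from the fact that a single $h_r$-factor changes the strong tableau by exactly one strong marked horizontal strip with increasing markings. Two immaterial quibbles: your labelling of $P$ as weak and $Q$ as strong reverses the paper's convention, and the identification $\SS^{(k)}_{(r)}=h_r$ only holds for $r\le k$ whereas the theorem is stated for all $r\ge 1$ (multiplication by $h_r$ in the quotient $\La^{(k)}$ is well defined regardless, so this step is not actually needed).
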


Iterating the Pieri rule on the dual $k$-Schur functions defines a different 
notion of `semi-standard tableaux'.
We say that a {\it strong marked tableau} of shape $\la \vdash m$ (or shape $\mfc(\la)$ if
the shape is more properly given as a $(k+1)$-core) and content
$\alpha = ( \alpha_1, \alpha_2, \ldots, \alpha_d)$ with $\alpha_1 + \alpha_2 +
\cdots + \alpha_d =m$ and $\alpha_i\geq 1$ is a sequence of $(k+1)$-cores
\begin{equation}
\kappa^{(0)} = \emptyset \Rightarrow_{k} \kappa^{(1)} \Rightarrow_{k} \kappa^{(2)} \Rightarrow_{k}
\cdots \Rightarrow_{k} \kappa^{(m)} = \mfc(\la)
\end{equation}
and markings $c_\ast = (c_1, c_2, \ldots, c_m)$ such that 
$(\kappa^{(v)},\kappa^{(v+1)}, \ldots,\kappa^{(v+\alpha_{r})})$ and 
$(c_{v+1}, c_{v+2}, \ldots, c_{v+\alpha_r})$ with $v=\alpha_1+\cdots+\alpha_{r-1}$
forms a strong marked horizontal strip for each $1 \leq r \leq d$.

Recall from Section~\ref{subsection.strong order} that a strong marked cover is also an application of a 
transposition $t_{ij}$ in the affine symmetric group to a core (either by the left or right action,
see Proposition \ref{prop:strongcovertransposition}).  Therefore, it is possible to also view a
tableau as an element of the affine Grassmannian written as a sequence of these
transpositions.  The condition that a sequence of transpositions 
$t_{i_{a+b}j_{a+b}}\cdots t_{i_{a+2}j_{a+2}} t_{i_{a+1}j_{a+1}} t_{i_aj_a}$
forms a strong marked horizontal strip (with the left action) implies that $j_a < j_{a+1} < \cdots < j_{a+b}$.

\begin{example} \label{ex:strongtab}
Consider the path 
\begin{multline*}
	\emptyset \Rightarrow_3 (1) \Rightarrow_3 (2) \Rightarrow_3 (2,1) \Rightarrow_3 (3,1,1) 
	\Rightarrow_3 (3,1,1,1) \Rightarrow_3 (3,3,1,1) \\
	\Rightarrow_3 (4,3,2,1) \Rightarrow_3 (4,4,2,2).
\end{multline*}
We choose $\alpha = (2,2,3,1)$ and a sequence
$c = (0,1,-1,2,-3,1,3,-2)$ as a sequence of markings which form strong marked 
horizontal strips of the lengths given by the entries of $\alpha$ to demonstrate 
the concept of a strong marked tableau.  The strong marked tableau of shape $(4,4,2,2)$
records all cells representing strong marked horizontal strips labeled with
the same main label but subscripted by which set of ribbons they 
belong to. The cell which is marked (the head of
one of the ribbons) will be indicated by including an $\ast$ as a superscript.  Hence 
the example above is represented by the diagram:
\squaresize=14pt
$$\young{3_1^\ast&4_1^\ast\cr2_2&3_3\cr2_1^\ast&3_2&3_2^\ast&4_1\cr1_1^\ast&1_2^\ast&2_2^\ast&3_3^\ast\cr}\;.$$

This example is also represented by the following sequence of transpositions with the left action
$$t_{-2-1} t_{34} t_{02} t_{-3-2} t_{23} t_{-10} t_{12} t_{01} 
$$
which act on the empty $4$-core.  Each transposition adds
the strong marking in a horizontal strip according to the tableau.

This same example is also represented using the right action of the transpositions by
$$t_{01} t_{02} t_{-11} t_{05} t_{-21} t_{-12} t_{06} t_{-52}~.
$$
\end{example}

Just as we did for semi-standard and weak tableaux, we set $\sK^{(k)}_{\la\mu}$ to be the number of
strong marked tableaux of shape $\la$ and weight
$\mu$.  By iterating the Pieri rule on dual $k$-Schur
functions and using the notion of strong marked tableaux to record the terms which appear in the expansion
of products of elements of the homogeneous symmetric functions, we have that 
for a partition $\mu \vdash m$ (not necessarily $k$-bounded)
\begin{equation}\label{strongtabexphmu}
	h_\mu = \sum_{\la : \la_1 \leq k} \sK^{(k)}_{\la\mu} \SS^{(k)}_{\la}~.
\end{equation}

Then for a partition $\la \vdash m$ with $\la_1 \leq k$,
\begin{equation}\label{eq:kschurmexp}
	s^{(k)}_{\la} = \sum_{\mu} \langle s^{(k)}_{\la}, h_\mu \rangle m_\mu = \sum_{\mu} \sK^{(k)}_{\la\mu} m_\mu~.
\end{equation}

We have hidden all of the work that is needed to show the monomial expansions of $k$-Schur functions
and their duals are correct by this presentation.  In fact, the proof~\cite{LLMS:2006} follows a nearly reverse path
of reasoning to demonstrate the results we have presented here.
Equation \eqref{eq:kschurmexp} is taken as the definition of the $k$-Schur functions 
and~\eqref{eq:dualkschurmexp}
is the definition of the dual $k$-Schur functions, and then the argument uses combinatorics and algebraic
expressions to show that these elements are dual.  The combinatorial part of that argument is developed in
Section \ref{sec:strongweakduality}.

\begin{example}\squaresize=12pt \label{ex:strongtableaux}
Consider the coefficient of $m_{421}$ in $s_{3211}^{(3)}$. It is calculated by finding all strong marked 
tableaux that begin with a horizontal strip of length $4$ and hence contains the subtableau 
$\small \young{1_4\cr1_1^\ast&1_2^\ast&1_3^\ast&1_4^\ast\cr}$\;. 
This is followed by a horizontal strip of length $2$ and hence contains one of the five subtableaux:
$$\small \young{1_4&2_1^\ast&2_2^\ast\cr1_1^\ast&1_2^\ast&1_3^\ast&1_4^\ast&2_1&2_2\cr},\quad
\young{1_4&2_1^\ast&2_2\cr1_1^\ast&1_2^\ast&1_3^\ast&1_4^\ast&2_1&2_2^\ast\cr},\quad
\young{1_4&2_1&2_2\cr1_1^\ast&1_2^\ast&1_3^\ast&1_4^\ast&2_1^\ast&2_2^\ast\cr},\quad
\young{2_1^\ast\cr1_4&2_2^\ast\cr1_1^\ast&1_2^\ast&1_3^\ast&1_4^\ast&2_2\cr},\quad
\young{2_1^\ast\cr1_4&2_2\cr1_1^\ast&1_2^\ast&1_3^\ast&1_4^\ast&2_2^\ast\cr}~.$$
There are $9$ strong marked tableaux of shape $\mfc(3,2,1,1) = (6,3,1,1)$ of 
weight $(4,2,1)$ which are given by the following
$$\small 
\young{3_1\cr3_1^\ast\cr1_4&2_1^\ast&2_2^\ast\cr1_1^\ast&1_2^\ast&1_3^\ast&1_4^\ast&2_1&2_2\cr},\quad
\young{3_1\cr3_1^\ast\cr1_4&2_1^\ast&2_2\cr1_1^\ast&1_2^\ast&1_3^\ast&1_4^\ast&2_1&2_2^\ast\cr},\quad
\young{3_1\cr3_1^\ast\cr1_4&2_1&2_2\cr1_1^\ast&1_2^\ast&1_3^\ast&1_4^\ast&2_1^\ast&2_2^\ast\cr},\quad
\young{3_1^\ast\cr2_1^\ast\cr1_4&2_2^\ast&3_1\cr1_1^\ast&1_2^\ast&1_3^\ast&1_4^\ast&2_2&3_1\cr},\quad
\young{3_1\cr2_1^\ast\cr1_4&2_2^\ast&3_1^\ast\cr1_1^\ast&1_2^\ast&1_3^\ast&1_4^\ast&2_2&3_1\cr},
$$
$$
\small 
\young{3_1\cr2_1^\ast\cr1_4&2_2^\ast&3_1\cr1_1^\ast&1_2^\ast&1_3^\ast&1_4^\ast&2_2&3_1^\ast\cr},\quad
\young{3_1^\ast\cr2_1^\ast\cr1_4&2_2&3_1\cr1_1^\ast&1_2^\ast&1_3^\ast&1_4^\ast&2_2^\ast&3_1\cr},\quad
\young{3_1\cr2_1^\ast\cr1_4&2_2&3_1^\ast\cr1_1^\ast&1_2^\ast&1_3^\ast&1_4^\ast&2_2^\ast&3_1\cr},\quad
\young{3_1\cr2_1^\ast\cr1_4&2_2&3_1\cr1_1^\ast&1_2^\ast&1_3^\ast&1_4^\ast&2_2^\ast&3_1^\ast\cr}
$$
A similar argument can be used to find any coefficient of $m_\mu$ for any $\mu \vdash 7$, however there are 
$210$ strong marked tableaux of shape $\mfc(3,2,1,1) = (6,3,1,1)$ and 
weight $(1,1,1,1,1,1,1)$ 
and hence we will not show the complete computation of $s_{3211}^{(3)}$ using this method.
\end{example}

In later sections we will roughly outline how this formula is proven
by generalizing the Robinson--Schensted--Knuth algorithm.

\begin{sagedemo}
In this example we show how \Sage can be used to complete calculations from Examples \ref{ex:strongtab} and \ref{ex:strongtableaux}.
The strong tableaux can be entered as a list of entries with markings indicated by a negative number.

\begin{sageexample}
sage: T = StrongTableau([[-1,-1,-2,-3],[-2,3,-3,4],[2,3],[-3,-4]], 3)
sage: T.to_transposition_sequence()
[[-2, -1], [3, 4], [0, 2], [-3, -2], [2, 3], [-1, 0], [1, 2], [0, 1]]
sage: T.intermediate_shapes()
[[], [2], [3, 1, 1], [4, 3, 2, 1], [4, 4, 2, 2]]
sage: [T.content_of_marked_head(v+1) for v in range(8)]
[0, 1, -1, 2, -3, 1, 3, -2]
sage: T.left_action([0,1])
[[-1, -1, -2, -3, 5], [-2, 3, -3, 4], [2, 3, -5], [-3, -4], [5]]
\end{sageexample}

The strong tableaux can be listed and by equation \eqref{eq:kschurmexp}
the number of strong tableaux of shape $\mfc(\lambda)$ and content $\mu$
can be calculated by determining the coefficient
of $m_\mu$ in $s^{(k)}_\lambda$.

%    sage: StrongTableaux(3, [4,1], [4]).list()
%    [[[-1, -1, -1, -1], [1]]]
%    sage: StrongTableaux(3, [6,3], [4,2]).list()
%    [[[-1, -1, -1, -1, 2, 2], [1, -2, -2]],
%     [[-1, -1, -1, -1, 2, -2], [1, -2, 2]],
%     [[-1, -1, -1, -1, -2, -2], [1, 2, 2]]]
\begin{sageexample}
sage: ST = StrongTableaux(3, [6,3,1,1], [4,2,1]); ST
Set of strong 3-tableaux of shape [6, 3, 1, 1] and of weight (4, 2, 1)
sage: ST.list()
[[[-1, -1, -1, -1, 2, 2], [1, -2, -2], [-3], [3]],
 [[-1, -1, -1, -1, 2, -2], [1, -2, 2], [-3], [3]],
 [[-1, -1, -1, -1, -2, -2], [1, 2, 2], [-3], [3]],
 [[-1, -1, -1, -1, 2, 3], [1, -2, 3], [-2], [-3]],
 [[-1, -1, -1, -1, 2, 3], [1, -2, -3], [-2], [3]],
 [[-1, -1, -1, -1, 2, -3], [1, -2, 3], [-2], [3]],
 [[-1, -1, -1, -1, -2, 3], [1, 2, 3], [-2], [-3]],
 [[-1, -1, -1, -1, -2, 3], [1, 2, -3], [-2], [3]],
 [[-1, -1, -1, -1, -2, -3], [1, 2, 3], [-2], [3]]]
sage: ks = SymmetricFunctions(QQ).kschur(3,1)
sage: m = SymmetricFunctions(QQ).m()
sage: m(ks[3,2,1,1]).coefficient([4,2,1])
9
\end{sageexample}

\end{sagedemo}

\begin{remark}
We have been purposely lax in our notation to make some of the concepts slightly easier
to follow, but the duality creates a few issues with the element
which represents $\SS^{(k)}_\la$ in $\La^{(k)}$.  
The equalities in Equations~\eqref{strongPieri} and~\eqref{strongtabexphmu}
represent equality in the realization of the dual algebra 
$\La^{(k)} = \La/\big< m_\la : \la_1>k \big>$,
so the equality really means equivalence in the quotient algebra.
For computational purposes, we would typically take a representative element from the linear
span of $\{ m_\la \}_{\la_1 \leq k}$, but for certain purposes a multiplicative basis might be more desirable.   
In those cases, the basis $\{ p_\la \}_{\la_1 \leq k}$ works well since $p_{r} \in \big< m_\la : \la_1>k \big>$ for
$r > k$.  A combinatorial formula for $\SS^{(k)}_\la$ in terms of the power sum basis is also known by 
reference~\cite{BandlowSchillingZabrocki}.
\end{remark}

\end{subsection}

%%%%%%%%%%%%%%%%%%%%%%%%%%%%%%%%%%%%%%%%%%%%%%%%%%%%
\begin{subsection}{$k$-Littlewood--Richardson coefficients}
\label{sec:nilcoxeter}

Although the original definition~\cite{LLM:2003} of $k$-Schur functions was
inspired to explain the positivity of the expansion of Macdonald symmetric 
functions in terms of Schur functions, it has since been established
that the theory of $k$-Schur functions and their duals can be naturally
applied to study problems in geometry, physics and representation theory.

The application of $k$-Schur functions to geometric problems
began when Lapointe and Morse discovered that their structure constants 
could be identified with certain geometric invariants
in a way that mimics the identification of 
Littlewood--Richardson coefficients with Schubert
structure constants in the Grassmannian variety
(recall from Section~\ref{sectionschur});
computation in the quantum cohomology of Grassmannians reduces to $k$-Schur 
calculations.  The (small) quantum cohomology ring $QH^*(\Gr_{\ell n})$ is 
a deformation of the classical cohomology ring that is
motivated by ideas in string theory (e.g. \cite{[Ag],[Wi]}).  
As abelian groups, $QH^*(\Gr_{\ell n})=H^*(\Gr_{\ell  n})\otimes\mathbb Z[q]$ 
and the Schubert
classes $\sigma_\lambda$ with $\lambda\in \mathcal P^{\ell n}$ form a 
$\mathbb Z[q]$-linear basis, where recall that $\mathcal P^{\ell n}$ is the set 
of all partitions in an $\ell \times (n-\ell)$ rectangle.  The appeal lies in the multiplicative 
structure which is defined by
$$
\sigma_\lambda * \sigma_\mu = \sum_{\nu\in\mathcal P^{\ell n}\atop
|\nu|=|\lambda|+|\mu|-dn}
q^dC_{\lambda\mu}^{\nu,d}\sigma_\nu
\,,
$$
where $C_{\lambda\mu}^{\nu,d}$ are the 
{\it 3-point Gromov--Witten invariants},
counting the number of rational curves of degree $d$ in $\Gr_{\ell n}$
that meet generic translates of certain Schubert varieties.

As with the usual cohomology, the quantum cohomology ring
can be connected to symmetric functions.  In particular,
$$
QH^*(\Gr_{\ell n}) \cong \left(\Lambda_{(\ell)}\otimes\mathbb Z[q]\right)/
J^{\ell n}_q \, ,
$$
where $J^{\ell n}_q=\langle e_{n-\ell+1},\ldots,e_{n-1},e_n+(-1)^{\ell}q\rangle$.
When $\lambda \in \mathcal P^{\ell n}$, the Schubert class $\sigma_\la$
still maps to the Schur function $s_\la$ implying that
for $\la,\mu\in\mathcal P^{\ell n}$,
\begin{equation}
\label{gwcoef}
\sum_\nu c_{\la\mu}^\nu\, s_\nu 
\mod J^{\ell n}_q
= 
\sum_{\nu\in\mathcal P^{\ell n}\atop
|\nu|=|\lambda|+|\mu|-dn}
q^d C_{\lambda\mu}^{\nu,d}s_\nu 
\, .
\end{equation}
Unfortunately, there are $\nu\not \in \mathcal P^{\ell n}$ where
$s_\nu$ does not lie in $ J^{\ell n}_q$.  Instead,
reduction modulo this ideal requires a complicated
algorithm \cite{BCF,[Cu],Kac,[Wa]} involving negatives.
Therefore, the Schur functions cannot 
be used to directly obtain the quantum structure constants. 

It was proven in \cite{LM:2008} that the $k$-Schur basis
can be used to circumvent this problem; the appropriate
$k$-Schur functions lie in $J^{\ell n}_q$
(as usual, we set $n=k+1$).
To be precise, the {\it $k$-Littlewood--Richardson 
coefficients} are the structure coefficients $c_{\la\mu}^{\nu(k)}$ of the algebra 
of $k$-Schur functions
\begin{equation}
s^{(k)}_\la s^{(k)}_\mu = \sum_\nu c_{\la\mu}^{\nu(k)} s^{(k)}_\nu\,.
\end{equation}
Let $\Pi^{\ell,k+1}$ be the set of partitions with no part larger than $\ell$ and 
no more than $k+1-\ell$ rows of length smaller than $\ell$.
It is proven that $s_\nu^{(k)}$ modulo $J^{\ell n}_q$ 
reduces to a power of $q$ times a positive $s_{\mathfrak r (\nu)}$ 
when $\nu \in \Pi^{\ell n}$ and is otherwise zero
(where $\mathfrak r (\nu)\in \mathcal P^{\ell n}$ is the $n$-core of $\nu$).
Thus, considering
\begin{equation}
\label{klitric}
s_\lambda^{(k)} \, s_\mu^{(k)} = \sum_{\nu\in \Pi^{\ell,k+1}}
a_{\lambda\mu}^{\nu(k)} \, s_{\nu}^{(k)} +
\sum_{\nu\not\in \Pi^{\ell,k+1 }}
c_{\lambda \mu}^{\nu,k} s_{\nu}^{(k)}\,,
\end{equation}
the 3-point Gromov--Witten invariants are none other than a
special case of $k$-Littlewood--Richardson coefficients;
for $\lambda,\mu,\nu\in\mathcal P^{\ell n}$,
\begin{equation}
\label{gw}
C_{\lambda \mu}^{\nu,d} = a_{\lambda \mu}^{\hat \nu(n-1)} \, ,
\end{equation}
where the value of $d$ associates a certain unique 
element $\hat \nu\in\Pi^{\ell n}$ to each $\nu$.
It also follows that the $k$-Littlewood--Richardson 
coefficients include the fusion rules for the Wess--Zumino--Witten 
conformal field theories associated to $\widehat{su}(\ell)$ at 
level $k+1-\ell$ and certain Hecke algebra structure constants
studied by Goodman and Wenzl in \cite{[GW]}.

Note that the quantum structure constants \eqref{gwcoef} are only 
a subset of the complete set of $k$-Littlewood--Richardson 
coefficients \eqref{klitric}.  To understand the bigger picture,
recall that the $k$-Schur functions are a basis for $\Lambda_{(k)}$
and that $\Lambda_{(k)}\cong H_*(\Gr)$ where $\Gr$ is the
{\it affine Grassmannian}
quotient $\Gr=SL_{k+1}(\mathbb C((t)))/SL_{k+1}(\mathbb C[[t]])$
\cite{Bott}.
Bott also showed that $H^*(\Gr)\cong \Lambda^{(k)}$, the space
equipped with the dual $k$-Schur basis.
Morse and Shimozono conjectured
that the $k$-Schur functions and their duals are isomorphic to the Schubert 
classes of the homology and cohomology of the affine Grassmannian.
Chapter~\ref{chapter.stanley symmetric functions} explains
how Lam proved this conjecture.

Many attempts have been made to understand the coefficients $c_{\la\mu}^{\nu(k)}$
but the complete combinatorial picture has yet to be drawn.
Knutson formulated a conjecture for the subset of
quantum Littlewood--Richardson coefficients 
as presented in~\cite{BKT:2003} in terms of puzzles~\cite{KnutsonTao:2003}. 
Coskun~\cite{Coskun:2009} gave a positive geometric rule to compute the structure 
constants of the cohomology ring of two-step flag varieties in terms of Mondrian tableaux. 
As will be discussed in Section~\ref{sec:rectprop},
the case when either $\la$ or $\mu$ is a rectangle with a hook of size $k$ is
special.  Lapointe and Morse~\cite{LM:2007} showed that if $R$ is a rectangular partition
with maximal hook $k$, then
$$s_R^{(k)} s_{\la}^{(k)} = s_{R \cup \la}^{(k)}~.$$
In~\cite{MS:2012,MS:2013}, Morse and Schilling define crystal operators on
$\alpha$-factorizations (or equivalently weak $k$-tableaux) to determine
some structure coefficients of the Schur function times $k$-Schur function
expansion using a sign-reversing involution. This includes the case of fusion
coefficients.

Recently there has been some progress in the understanding
of these coefficients by viewing them in terms of the
nil-Coxeter algebra.
Slightly change the setting from the affine symmetric group to
the affine nil-Coxeter algebra $\mathbb{A}_n$ of type $A_{n-1}$
defined by the generators $\uu_0, \uu_1, \uu_2, \ldots, \uu_{n-1}$ 
satisfying the same relations as the affine symmetric group
Equation~\eqref{eq:affsymgens} except that the 
quadratic relation is altered to be
$$\uu_i^2=0\,,
$$
for any $i\in I:=\{0, 1, 2, \ldots, n-1\}$.

As before, $n=k+1$.  For $1 \leq r \leq k$, we set
$$
	\hh_r = \sum_{w~\text{cyclically~decreasing}\atop \ell(w)=r} \uu_w\,.
$$
Note that the coefficient of $x^\alpha$ in \eqref{dualingrass} is the 
coefficient of $\uu_w$ in $\hh_{\alpha_1} \hh_{\alpha_2}\cdots \hh_{\alpha_\ell}$
and thus, the affine Stanley symmetric function is 
\[
	\FF_w = \sum_{\alpha} (\hbox{coefficient of } \uu_w \in \hh_{\alpha_1} \hh_{\alpha_2} \cdots \hh_{\alpha_\ell}) 
	\; x^\alpha~.
\]

As is stated in Theorem~\ref{thm:ahcommute} of Chapter~\ref{chapter.stanley symmetric functions}, 
the elements $\hh_r$ mutually commute
and there is a subalgebra $\BBaf \subseteq \mathbb{A}_n$
which is generated by the elements $\hh_r$ for $1 \leq r \leq k$. 
Moreover, $\BBaf$ is isomorphic to $\La_{(k)}$ (see Proposition~\ref{prop:aB} of Chapter~\ref{chapter.stanley symmetric functions}) 
and there is a coalgebra structure which encodes the structure of the dual algebra.  
The {\it noncommutative $k$-Schur functions} can be realized as
elements $\ss^{(k)}_\la \in \BBaf$
in this algebra by expanding $s_\lambda^{(k)}$ in the homogeneous generators $h_i$ and replacing
$h_i$ with $\hh_i$.

Let $a_{\lambda w}$ be the coefficients in the expansion of ${\bf s}_\la^{(k)}$ in terms of expressions 
in the generators $\uu_i$ in the affine nil-Coxeter algebra
\begin{equation}\label{eq:nilcoxeterexpansion}
{\bf s}_\la^{(k)} = \sum_{w} a_{\lambda w} \uu_w~.
\end{equation}
The expansion of ${\bf s}_\la^{(k)}$ has a single term indexed by an affine Grassmannian permutation,
it is precisely the affine Grassmannian permutation which corresponds to the
partition $\lambda$ and the coefficient $a_{\la w_\la} = 1$ where $\mfa(\la) = w_\la$.

By~\cite[Proposition 6.7]{Lam:2006},  the  coefficient $a_{\la w}$  is  the
coefficient  of  $\FF_\la$  in  the  element  $\FF_w$.    In~\cite{Lam:2008},  these  coefficients
were  shown  to  be  positive. Furthermore, by the arguments in~\cite[Section 17]{Lam:2006} it follows that
\begin{equation} \label{equation.structure coef}
	c_{\lambda \mu}^{\nu(k)} = a_{\lambda vw^{-1}},
\end{equation}
where $w = \mfa(\mu)$ and $v = \mfa(\nu)$, see Proposition~\ref{prop:affgrass2core} .

Berg, Bergeron, Thomas, and Zabrocki~\cite{BBTZ:2011} gave a combinatorial expansion
of $k$-Schur functions indexed by a rectangle with maximal hook $k$  in the nil-Coxeter algebra
of the form of Equation~\eqref{eq:nilcoxeterexpansion}.
This work was extended by Berg, Saliola, and Serrano~\cite{BSS:2012} to
give expansions of $k$-Schur functions when the indexing partition is a `maximal rectangle' with a 
smaller hook removed. In addition they proved some conjectures of~\cite{LLMS:2006} in~\cite{BSS:2012a},
in particular that ``skew shaped'' strong Schur functions are symmetric.

\begin{sagedemo}
We now show how to compute the noncommutative Schur functions in the affine nil-Coxeter algebra
$\mathbb{A}_n$.
We can construct all cyclically decreasing words from the reduced words of the Pieri factors
for the affine type $A_{n-1}$  where $n=k+1$.
\begin{sageexample}
  sage: W = WeylGroup(["A",3,1])
  sage: [w.reduced_word() for w in W.pieri_factors()]
  [[], [0], [1], [2], [3], [1, 0], [2, 0], [0, 3], [2, 1], [3, 1], [3, 2], 
   [2, 1, 0], [1, 0, 3], [0, 3, 2], [3, 2, 1]]
\end{sageexample}
Then the noncommutative homogeneous symmetric functions are given by summing over all cyclically decreasing
words of specified length:
\begin{sageexample}
  sage: A = NilCoxeterAlgebra(WeylGroup(["A",3,1]), prefix = 'A')
  sage: A.homogeneous_noncommutative_variables([2])
  A[1,0] + A[2,0] + A[0,3] + A[3,2] + A[3,1] + A[2,1]
\end{sageexample}
The noncommutative $k$-Schur functions are obtained by expanding 
the usual $k$-Schur functions in terms of the homogeneous symmetric function:
\begin{sageexample}
  sage: A.k_schur_noncommutative_variables([2,2])
  A[0,3,1,0] + A[3,1,2,0] + A[1,2,0,1] + A[3,2,0,3] + A[2,0,3,1]
  + A[2,3,1,2]
\end{sageexample}
Now let us test that $a_{\lambda w}$ is indeed related to $c_{\la\mu}^{\nu(k)}$. The structure coefficient
$c_{21,21}^{321(5)}=2$ as we can see from the following computation:
\begin{sageexample}
  sage: Sym = SymmetricFunctions(ZZ)
  sage: ks = Sym.kschur(5,t=1)
  sage: ks[2,1]*ks[2,1]
  ks5[2, 2, 1, 1] + ks5[2, 2, 2] + ks5[3, 1, 1, 1] + 2*ks5[3, 2, 1] 
  + ks5[3, 3] + ks5[4, 2]
\end{sageexample}
Let $v$ (resp. $w$) be the affine Grassmannian element corresponding to the $5$-bounded partition
$(3,2,1)$ (resp. $(2,1)$). Then by~\eqref{equation.structure coef} the coefficient of $A_{vw^{-1}}$ in 
${\bf s}^{(5)}_{21}$ should also be 2:
\begin{sageblock}
  sage: mu = Partition([2,1])
  sage: nu = Partition([3,2,1])
  sage: w = mu.from_kbounded_to_grassmannian(5)
  sage: v = nu.from_kbounded_to_grassmannian(5)
  sage: A = NilCoxeterAlgebra(WeylGroup(["A",5,1]), prefix = 'A')
  sage: ks = A.k_schur_noncommutative_variables([2,1])
  sage: ks.coefficient(v*w^(-1))
  2
\end{sageblock}
\end{sagedemo}

\end{subsection}

%%%%%%%%%%%%%%%%%%%%%%%%%%%%%%%%%%%%%%%%%%%%%%%%
\begin{subsection}{Notes on references}
Note that in certain references (e.g. \cite{LM:2003, LM2:2003, LM:2007}) 
the notation for $\La_{(k)}$ and $\La^{(k)}$ are switched.
We chose our convention to be consistent with~\cite{Lam:2008}, where the notation is motivated from
the relation between these spaces and the homology/cohomology of the affine Grassmannian.

In reference~\cite[Conjecture 21]{LM:2003} it is stated that the $k$-Schur 
functions defined using an algebraic definition satisfy the Pieri 
rule of Equation~\eqref{eq:kweakpieri}.  Later references \cite{LM:2005,LM:2007} 
do use Equation~\eqref{eq:kweakpieri} as
the definition and prove properties from this point of view.
The definition used in~\cite{LM:2003, LM2:2003}
will be presented in the next section and is 
conjecturally equivalent to Equation~\eqref{eq:kweakpieri} at $t=1$.

The proof of Theorem~\ref{thrm:strongPieri} is cited here as due to the 
affine insertion
algorithm that was studied by Lam, Lapointe, Morse and Shimozono~\cite{LLMS:2006}.
This algorithm will be discussed further in 
Section~\ref{subsection.affine insertion}.
A bijective algorithm is used to show that there is a duality
between strong and weak orders on affine Grassmannians that is manifest in
the Pieri rules for $k$-Schur functions and their duals.

When the details of affine Stanley symmetric functions are
carried out in Chapter~\ref{chapter.stanley symmetric functions},
there is a minor difference of notation; the affine nil-Coxeter algebra 
is denoted $\mathbb{A}_n$ in Section~\ref{sec:nilcoxeter} and as
$\mathbb{A}_{\rm af}$ in Chapter~\ref{chapter.stanley symmetric functions}.
In~\cite{Pos}, Postnikov used the affine nil-Temperley--Lieb algebra, which is a 
quotient of the nil-Coxeter algebra, to provide a model for the quantum cohomology.

\end{subsection}
\end{section}

%%%%%%%%%%%%%%%%%%%%%%%%%%%%%%%%%%%%%%%%%%%%%%%%
\begin{section}{Definitions of $k$-Schur functions}\label{sec:threedef}
%%%%%%%%%%%%%%%%%%%%%%%%%%%%%%%%%%%%%%%%%%%%%%%%
In the previous section we defined $k$-Schur functions and their dual basis
using the notions of strong and weak tableaux, but these are
parameterless symmetric functions.  In this section we will provide 
the original definition of $k$-Schur functions as well as
several others that are conjecturally equivalent.

%%%%%%%%%%%%%%%%%%%%%%%%%%%%%%%%%%%%%%%%%%%%%%%%

%%%%%%%%%%%%%%%%%%%%%%%%%%%%%%%%%%%%%%%%%%%%%%%%%%
\begin{subsection}{Atoms as tableaux} \label{sec:atom}
The origin of 
the tableaux definition \eqref{atom}
comes from identifying the $k$-Schur functions 
on the poset of standard
tableaux ranked by the charge statistic.
It is known that there is one Schur function
in the expansion of the Macdonald symmetric functions for each standard tableau, 
so given that the $k$-Schur functions are these irreducible components 
of the Macdonald symmetric
functions, it is natural to try to connect them with standard tableaux.
These conjecturally symmetric functions were first given the name `atoms' 
because they are the indecomposable pieces that come together  to give
Macdonald symmetric functions.  
They can be determined experimentally and from that
point it is possible to conjecture beautiful properties 
that they possess.

\begin{example}\label{ex:potatos}
From Example \ref{ex:atom2} we `know' that 
$A_{22}^{(2)}[X;t] = s_{22} + t s_{31}+ t^2 s_{4}$,
$A_{211}^{(2)}[X;t] = s_{211} + t s_{31}$, 
$A_{1111}^{(2)}[X;t] = s_{1111} + t s_{211} + t^2 s_{22}$.
The following picture contains the standard tableaux of size $4$ graded by 
charge.  The tableau on the left
has charge 0, the one on the right has charge $6$ and otherwise the left to right positions depends on
the value of the charge.  On this diagram we have circled the groups of these tableaux which represent
the atoms:
\begin{center}
\includegraphics[width=5in]{kschurnotes/potato1.pdf}
\end{center}
Moreover using the same method as in Example~\ref{ex:atom2} to compute the
$3$-Schur functions, we obtain $s_{1111}^{(3)}[X;t] = s_{1111} + t s_{211}$, 
$s_{211}^{(3)} = s_{211} + t s_{31}$,
$s_{22}^{(3)} = s_{22}$, $s_{31}^{(3)} = s_{31} + t s_{4}$.  We then picture these
symmetric functions as groups of tableaux on the tableaux poset ranked by charge by placing
a circle around the `copies' of the $3$-Schur functions:
\begin{center}
\includegraphics[width=5in]{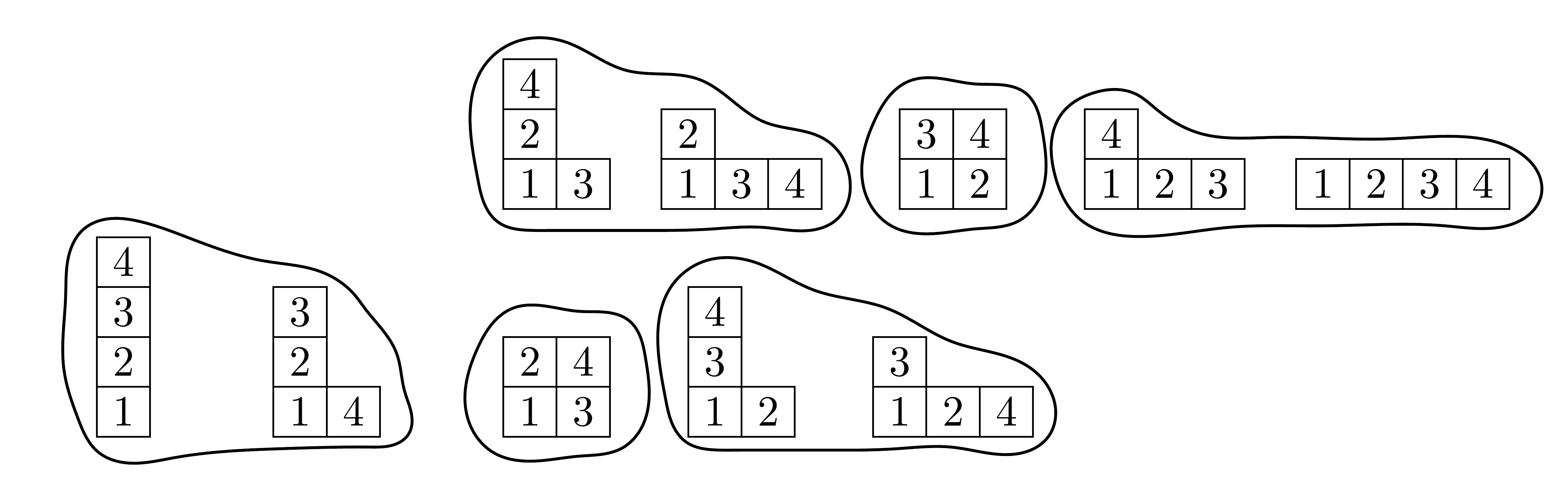}
\end{center}
Notice in this example that the atoms of level $k=3$ are contained in those of level $2$. We will see later 
that it is a property of $k$-Schur functions that the ones of level $k$ always expand positively in terms of 
$k$-Schur functions of level $k+1$.

At $k=4$ the $k$-Schur function $s^{(4)}_\lambda = s_\lambda$ for $\la \vdash 4$. Hence if we were 
to redraw the picture and circle the pieces representing the $k$-Schur functions, then 
each tableau would be in its own circle.  This observation shows that moving from
level $2$ to $3$ to $4$ is that the circles representing the $k$-Schur functions
break into smaller and smaller pieces.
\end{example}

The atoms can be computed by means of a recursive combinatorial procedure.  We will 
describe each of the steps as operations on tableaux.  The operators will act on a single tableau or 
a set of tableaux (depending on what is appropriate) and return a set of tableaux.

First we define $\sigma_i$ as the operator which takes a tableau with $a$ cells labeled with $i$ and $b$
cells labeled with $i+1$ and changes it into a tableau with $b$ cells 
labelled by $i$ and $a$ labelled with $i+1$.
This is done by considering the reading word and placing a closed parenthesis ``)'' 
under each letter in the
word labelled with an $i$ and and an open parenthesis ``('' under each letter in the 
word labelled by an $i+1$.
The word naturally has cells which have matching open and closed parentheses and the 
remainder of the parentheses are `free.'  Change the free open or closed parentheses and their corresponding 
labels so that in the end there are $b$ labels with $i$ and $a$ labels of $i+1$.  The result of this operation is the 
tableau of the same shape, but with this new reading word. This operation is also known as the 
`reflection along an $i$-string' in a crystal graph, see for example~\cite{LLT:1995}.

\begin{example}\squaresize=10pt
Consider the action of $\sigma_3$ on the semi-standard tableau
\begin{equation*}
{\squaresize=12pt\young{4&5&5\cr3&3&4&6&6\cr2&2&3&3&3&5&6\cr1&1&1&1&2&2&3&3&4\cr}}\;.
\end{equation*}

Because there are four more $3$s in the word than there
are $4$s, we should change four of the label $3$s to the label $4$.
Reading the entries in this tableau that are $3$ or $4$ only, the word is $4334333334$ which, replacing $4$ with $($ and $3$ with $)$,
corresponds to the word of parentheses $())()))))($.  If we ignore all the parentheses which match up and change
the rightmost four unmatched closed parentheses to open parentheses, this corresponds to the word $())()((((( \rightarrow
4334344444$.  Now this sequence of labels is placed back in the tableau in the positions of the labels of $3$ and $4$,
and the resulting tableau is
\begin{equation*}
{\squaresize=12pt\young{4&5&5\cr3&3&4&6&6\cr2&2&3&4&4&5&6\cr1&1&1&1&2&2&4&4&4\cr}}~.
\end{equation*}
\end{example}

The next operation is an analogue of the Pieri rule on tableaux.
Define an operation $\BB_r$ for $r \geq 1$ which adds a horizontal strip of size $r$ to the tableau 
in all possible ways and then acts by operators $\sigma_i$ to change 
the weight so that there are $r$ labels of $1$.
That is, if we take a tableau $T$ of shape $\la \vdash m$, 
$$\BB_r(T) =  \{ \sigma_1 \sigma_2 \cdots \sigma_{\ell(\mu)}(T \subseteq \mu) : \mu/\la \hbox{ is a horizontal strip of size }r \}$$
where $T \subseteq \mu$ represents the semi-standard tableau $T$ as a sequence of partitions with an additional partition $\mu$ attached.

\begin{example}\label{ex:B3action}
Consider the action of $\BB_3$ on the tableau $\scriptsize\young{3\cr2\cr1&1\cr}$.  This is equal to
\begin{align*}
\BB_3({\scriptsize\young{3\cr2\cr1&1\cr}}) = 
\sigma_1 \sigma_2 \sigma_3 \left\{ \small {\squaresize=12pt \young{4\cr3\cr2&4\cr1&1&4\cr}, \young{4\cr3\cr2\cr1&1&4&4\cr}, \young{3\cr2&4\cr1&1&4&4\cr},
\young{3\cr2\cr1&1&4&4&4\cr}}\right\}&\\
= \sigma_1 \sigma_2 \left\{ {\squaresize=12pt \young{4\cr3\cr2&3\cr1&1&3\cr}, \young{4\cr3\cr2\cr1&1&3&3\cr}, \young{3\cr2&3\cr1&1&3&4\cr},
\young{3\cr2\cr1&1&3&3&4\cr}}\right\}&\\
= \sigma_1 \left\{ {\squaresize=12pt \young{4\cr3\cr2&2\cr1&1&2\cr}, \young{4\cr3\cr2\cr1&1&2&2\cr}, \young{3\cr2&2\cr1&1&2&4\cr},
\young{3\cr2\cr1&1&2&2&4\cr}}\right\}&\\
= \left\{ {\squaresize=12pt \young{4\cr3\cr2&2\cr1&1&1\cr}, \young{4\cr3\cr2\cr1&1&1&2\cr}, \young{3\cr2&2\cr1&1&1&4\cr},
\young{3\cr2\cr1&1&1&2&4\cr}}\right\}&\;.
\end{align*}
\end{example}

The last notion that we need is that of a {\it katabolizable tableau}.  To introduce this defintion we
need the notion of jeu de taquin, Knuth equivalence or the Robinson--Schensted--Knuth algorithm.  We
introduce a generalization of the Robinson--Schensted--Knuth algorithm in Section~\ref{sec:strongweakduality}.
We assume here that the reader is familiar with this notion (and if not then one can skip ahead a few sections or 
consult~\cite{Sagan} for example).

Let $T$ be a tableau with reading word $w$.  The definition that $T$ is katabolizable with respect to a sequence of
partitions $\la^{(\ast)} = (\la^{(1)}, \la^{(2)}, \ldots, \la^{(r)})$ is recursive.  It is required that $r = \ell(\la^{(1)})$.
Then let $w = uv$, where $u$ is the largest subword of $w$ that does not contain an $r$.  Let $v'$ be 
$v$ with all letters $1$ through $r$ deleted.  We say that $T$ is {\it katabolizable} with respect to
$\la^{(\ast)}$ if $T$ contains as a subtableau the
semi-standard tableau of shape $\la^{(1)}$ and weight
$\la^{(1)}$ and if $\RSK(v'u) = (P,Q)$ where
$P$ is a tableau which is $(\la^{(2)}, \ldots, \la^{(r)})$ katabolizable with the
labels shifted by $r$.

\begin{example}
Consider the tableau
$$T = {\squaresize=12pt \young{3\cr2&2&3\cr1&1&1&4\cr}}$$
which is katabolizable with respect to the sequence of partitions $((3,2),(2,1))$.
This is because $r=\ell(\la^{(1)})=2$. Furthermore, the reading word of $T$ is $w = 32231114 = uv$,
where $u = 3$ and $v = 2231114$.
Then $v' = 34$ and the $P$ tableau of $\RSK(v'u)$ is ${\scriptsize \young{4\cr3&3\cr}}$.

Now $T$ is not katabolizable with respect to the sequence $((3),(2,2),(1))$.  The reason is that
in this case $u = 3223$ and $v = 1114$, so that $v' = 4$. Then the $P$ tableau of $\RSK(v'u)$ is 
${\scriptsize \young{4\cr3\cr2&2&3\cr}}$ and this tableau does not contain ${\scriptsize \young{3&3\cr2&2\cr}}$ and
hence it is not katabolizable with respect to the sequence $((2,2),(1))$.
\end{example}

Recall that we introduced the notion of the $k$-split of a partition in~\eqref{equation.k split}.
Now define an operator on tableaux $\KK^{\rightarrow k}$
that acts on a set of tableaux with partition weight
$\la$ such that $\KK^{\rightarrow k}$ kills
all tableaux that are not katabolizable with respect to the $k$-split of $\la$ and keeps the 
ones that are katabolizable.

\begin{example}
Consider the action of $\KK^{\rightarrow 3}$ on the following set of 
semi-standard tableaux with weight  
$(2,1,1,1,1)$.  Since $(2,1,1,1,1)^{\rightarrow 3} = ((2,1), (1,1,1))$, a tableau will only 
survive if it contains $\scriptsize\young{2\cr1&1\cr}$
as a subtableau.
\begin{equation*}
\KK^{\rightarrow k} \left\{ {\squaresize=12pt \young{3\cr2&4\cr1&1&5\cr}, \young{3\cr2&5\cr1&1&4\cr}, 
\young{4\cr2&3\cr1&1&5\cr},
 \young{4\cr2&5\cr1&1&3\cr}, \young{5\cr2&3\cr1&1&4\cr}, \young{5\cr2&4\cr1&1&3\cr}} \right\} 
= \left\{ {\squaresize=12pt \young{3\cr2&5\cr1&1&4\cr}} \right\}~.
\end{equation*}
\end{example}

Given a $k$-bounded partition $\la$, we define the $k$-atom $\AAA^{(k)}_\la$ as a set of
tableaux which are computed recursively as
\begin{equation}
\AAA^{(k)}_\la = \KK^{\rightarrow k} \BB_{\la_1} \AAA^{(k)}_{(\la_2, \la_3, \ldots, \la_{\ell(\la)})}~.
\end{equation}

\begin{example}\label{ex:atom4}
We can compute the atom $\AAA^{(3)}_{11} = \Big\{ \squaresize=10pt \scriptsize \young{2\cr1\cr} \Big\}$ and 
$\BB_2( \AAA^{(3)}_{11} ) = \Big\{\squaresize=10pt \scriptsize \young{3\cr2\cr1&1\cr}, \young{2\cr1&1&3\cr} \Big\}$.
The atom $\AAA^{(3)}_{211} =  \Big\{\scriptsize\squaresize=10pt \young{3\cr2\cr1&1\cr}, \young{2\cr1&1&3\cr} \Big\}$ 
because each of these tableaux survives the operator $\KK^{\rightarrow 3}$.  As we have seen in 
Example~\ref{ex:B3action}, we
know what the action of $\BB_3$ on the first tableau is, and there are six additional tableaux when
$\BB_3$ acts on $\scriptsize\young{2\cr1&1&3\cr}$ so that
\squaresize=12pt
$$\BB_3( \AAA^{(3)}_{211} )  = \Big\{ \young{4\cr3\cr2&2\cr1&1&1\cr}, \young{4\cr3\cr2\cr1&1&1&2\cr}, \young{3\cr2&2\cr1&1&1&4\cr},
\young{3\cr2\cr1&1&1&2&4\cr},\young{3\cr2&2&4\cr1&1&1\cr},
$$
\begin{equation}\label{eq:3atomexamp}
\young{3\cr2&4\cr1&1&1&2\cr},
\young{4\cr2\cr1&1&1&2&3\cr},
\young{2&2&4\cr1&1&1&3\cr},
\young{2&4\cr1&1&1&2&3\cr},
\young{2\cr1&1&1&2&3&4\cr}
\Big\}
\end{equation}
It is an unusual situation, but all $10$ of these tableaux are katabolizable with respect to
the sequence $(3,2,1,1)^{\rightarrow 3} = ((3),(2,1),(1))$ and hence survive the operator $\KK^{\rightarrow 3}$.

If on the other hand, we do the computation with $k=4$, we find
\begin{equation} \label{equation.A example}
	\AAA^{(4)}_{221} = \Big\{\squaresize=12pt \young{3\cr2\cr1&1\cr} \Big\}\quad \text{and} \quad
	\AAA^{(4)}_{3211}  = \Big\{ \young{4\cr3\cr2&2\cr1&1&1\cr}, \young{3\cr2&2\cr1&1&1&4\cr} \Big\}\;.
\end{equation}
We can check~\eqref{equation.A example} in \Sage via
\begin{sageexample}
    sage: la = Partition([3,2,1,1])
    sage: la.k_atom(4)
    [[[1, 1, 1], [2, 2], [3], [4]], [[1, 1, 1, 4], [2, 2], [3]]]
\end{sageexample}
\end{example}

Now we define the symmetric function $A^{(k)}_\la[X;t]$ in terms of the set of tableaux $\AAA^{(k)}_\la$ as
\begin{equation}
A^{(k)}_\la[X;t] = \sum_{T \in \AAA^{(k)}_\la} t^{\charge(T)} s_{\shape(T)}~.
\end{equation}

\begin{example}\label{ex:atom4sf}
The charge of the tableau of shape $(3,2,1,1)$ in~\eqref{equation.A example} is $0$, 
whereas the charge of the tableau of shape $(4,2,1)$
is 1. Hence we obtain
\begin{equation*}
A_{3211}^{(4)}[X;t] = s_{3211} + t s_{421}~.
\end{equation*}

The element $A^{(3)}_{3211}[X;t]$ is a generating function
for the tableaux in \eqref{eq:3atomexamp} with a term of the form $t$ raised to the charge
times the Schur function indexed by the shape.  By computing the charge of each of
these tableaux we determine
\begin{equation*}
A^{(3)}_{3211}[X;t] = s_{3211} + t s_{4111} + (t +t^2)s_{421} + t s_{331} + (t^2 +t^3)s_{511} +
t^2 s_{43} + t^3 s_{52} + t^4 s_{61}~.
\end{equation*}
\end{example}

The atoms by this definition are conjectured to be a basis of the
space
\begin{align}
\La^t_{(k)} &= \LL_{\QQ(q,t)}\{ H_\la[X;q,t] : \la_1 \leq k \}\nonumber\\
&= \LL_{\QQ(q,t)}\{ s_\la\left[\frac{X}{1-t}\right] : \la_1 \leq k \}\label{Laktspacedef}\\
&= \LL_{\QQ(q,t)}\{ Q'_\la[X;t] : \la_1 \leq k \}~.\nonumber
\end{align}
However, no proof that these functions are even elements of this space currently exists.

\end{subsection}

%%%%%%%%%%%%%%%%%%%%%%%%%%%%%%%%%%%%%%%%%%%%%%%%%%%%%%%%%%%%%%%%%%
\begin{subsection}{A symmetric function operator definition} \label{sec:operator}
In this section we present a conjecturally equivalent definition
for $k$-Schur functions that is in similar spirit to the
atom description given in the previous section, but now with an
algebraic flavor.

In Section~\ref{sec:HLsymfunc} we defined the operator $\Bop_m$ which has the property that 
$\Bop_m( Q'_\la[X;t])
= Q'_{(m,\la)}[X;t]$.  We now define a new operator in terms of the $\Bop_m$ as
\begin{equation}\label{def:Bopla}
	\Bop_\la := \prod_{1 \leq i < j \leq \ell(\la)} (1 - t R_{ij}) \Bop_{\la_1} \Bop_{\la_2} \cdots \Bop_{\la_{\ell(\la)}}\;,
\end{equation}
where 
\begin{equation*}
	R_{ij}( \Bop_{\mu_1} \Bop_{\mu_2} \cdots \Bop_{\mu_{\ell(\mu)}}) 
	=  \Bop_{\mu_1} \Bop_{\mu_2} \cdots \Bop_{\mu_i + 1} 
	\cdots \Bop_{\mu_j-1} \cdots \Bop_{\mu_{\ell(\mu)}}~.
\end{equation*}  
Alternatively $\Bop_\la$ is also given by the equation
\begin{equation}\label{eq:opersum}
	\Bop_\la = \sum_{\nu,\mu} c^\nu_{\mu\la} s_\nu[X] s_\mu[X(t-1)]^\perp\;,
\end{equation}
where the sum is over partitions $\nu$ and $\mu$ such that $\ell(\nu) \leq \ell(\la)$ and $\ell(\mu) \leq \ell(\la)$
and $c^\nu_{\mu\la} = \left< s_\mu s_\la, s_\nu \right>$ is the Littlewood--Richardson coefficient
from~\eqref{equation.LR coefficients}.

The `parabolic' Hall--Littlewood symmetric functions that we will need are defined in terms of these
operators.  For a sequence of partitions $\la^{(\ast)} = (\la^{(1)}, \la^{(2)}, \ldots, \la^{(d)})$ define
the symmetric function
\begin{equation*}
H_{\la^{(\ast)}}[X;t] = \Bop_{\la^{(1)}} \Bop_{\la^{(2)}} \cdots \Bop_{\la^{(d)}} (1)\;.
\end{equation*}

It is easy to see from~\eqref{eq:opersum} that when $t=1$, we have 
$H_{\la^{(\ast)}}[X;1] = s_{\la^{(1)}} s_{\la^{(2)}} \cdots s_{\la^{(d)}}$.
The index set of these symmetric functions is much larger than the set of partitions, so that 
these elements are not linearly independent.  In fact, these symmetric function interpolate between 
the Hall--Littlewood symmetric functions and the Schur functions since
$H_{(\la)}[X;t] = s_{\la}$, that is, the element indexed by a list containing exactly one partition,
and $H_{((\la_1),(\la_2), \ldots, (\la_{\ell(\la)}))}[X;t] = Q'_{\la}[X;t]$, that is, the element indexed by 
a list where each partition is a single part of $\la$.

There is a conjectured combinatorial interpretation for the expansion of $H_{\la^{(\ast)}}[X;t]$ 
(see~\cite{ShimozonoWeyman}) if $\la^{(\ast)}$ is a sequence of partitions such that the concatenation 
of the partitions in $\la^{(\ast)}$ is a partition (i.e. if for each of the adjacent partitions in $\la^{(\ast)}$ we 
have $\la^{(i)}_{\ell(\la^{(i)})} \geq \la^{(i+1)}_1$). In this case
\begin{equation} \label{conj:parabolicformula}
	H_{\la^{(\ast)}}[X;t] = \sum_{T} t^{\charge(T)} s_{\shape(T)}\;,
\end{equation}
where the sum is over all tableaux $T$ which are katabolizable with respect to $\la^{(\ast)}$.

The first algebraic definition of $k$-Schur functions requires an
intermediary basis; the $k$-split basis for $\La^t_{(k)}$.
This basis is made up of, for each $k$-bounded partition $\la$, 
$G^{(k)}_\la[X;t] =  H_{\la^{\rightarrow k}}[X;t]$.

\begin{sagedemo} \label{ex:3split}
The $3$-split basis element $G_{3211}^{(3)}$ is a $t$-analogue of the product of $s_{3} s_{21} s_{1}$.  
The operators $\Bop_\lambda$ are programmed in \Sage and we may calculate
$\Bop_{(3)}\Bop_{(2,1)} \Bop_{(1)}(1)$ in the following steps:
\begin{sageexample}
    sage: s = SymmetricFunctions(QQ["t"]).schur()
    sage: G1 = s[1]
    sage: G211 = G1.hl_creation_operator([2,1]); G211
    s[2, 1, 1] + t*s[2, 2] + t*s[3, 1]
    sage: G3211 = G211.hl_creation_operator([3]); G3211
    s[3, 2, 1, 1] + t*s[3, 2, 2] + t*s[3, 3, 1] + t*s[4, 1, 1, 1] 
     + (2*t^2+t)*s[4, 2, 1] + t^2*s[4, 3] + (t^3+t^2)*s[5, 1, 1] 
     + 2*t^3*s[5, 2] + t^4*s[6, 1]
\end{sageexample}
\squaresize=5pt
This calculation shows that
$\Bop_{(2,1)}(s_{1}) = s_{\young{\cr\cr&\cr}} + 
t s_{\young{&\cr&\cr}}+ t s_{\young{\cr&&\cr}}$ and
\begin{align*}\Bop_3( s_{\young{\cr\cr&\cr}} + 
t s_{\young{&\cr&\cr}}+ t s_{\young{\cr&&\cr}} ) &= s_{\young{\cr\cr&\cr&&\cr}} + t s_{\young{&\cr&\cr&&\cr}}
+ t s_{\young{\cr&&\cr&&\cr}} + t s_{\young{\cr\cr\cr&&&\cr}} + (t+2t^2) s_{\young{\cr&\cr&&&\cr}} + t^2 s_{\young{&&\cr&&&\cr}}
 \\&\hskip.4in
+ (t^2 + t^3) s_{\young{\cr\cr&&&&\cr}}
+ 2 t^3 s_{\young{&\cr&&&&\cr}} + t^4 s_{\young{\cr&&&&&\cr}}
\end{align*}
and this expression is equal to $G_{3211}^{(3)}$.
\end{sagedemo}

Then, to arrive at the $k$-Schur functions, a second operator is needed.
Let $T^{(k)}_i$ be an operator on symmetric functions defined so that
\begin{equation}\label{eq:killingoperators}
	T^{(k)}_i( G^{(k)}_\la[X;t] ) = \begin{cases} G^{(k)}_\la[X;t]& \hbox{ if } \la_1 = i,\\
	0&\hbox{ otherwise.}
	\end{cases}
\end{equation}

For $k$-bounded partition $\la$, the elements $\At_\la[X;t]$ 
were defined in \cite{LM:2003} by a recursive algorithm;
if $\ell(\la) = 1$ and $r \leq k$, then $\At_{(r)}[X;t] = s_{(r)}$.
Otherwise we set for $\la_1 \leq m \leq k$,
\begin{equation}\label{def:algatom}
\At^{(k)}_{(m, \la_1, \la_2, \ldots, \la_{\ell(\la)})}[X;t] = T^{(k)}_{m} \Bop_{m} \At_{\la^{(k)}}[X;t]~.
\end{equation}

\begin{example} 
The $3$-split of $(2,1,1)$ is $(2,1,1)^{\rightarrow 3} = ((2,1),(1))$, hence
$$
	\At_{211}^{(3)} = T^{(3)}_2 \Bop_2( s_{11} ) = T^{(3)}_2 ( s_{211} + t s_{31} ) 
	=  T^{(3)}_2 ( G_{211}^{(3)} - G_{22}^{(3)} ) = s_{211} + t s_{31}.
$$ 
Moreover, we may calculate using Example~\ref{ex:3split} that 
\begin{align*}
	\Bop_3( \At_{211}^{(3)} ) &= s_{3211} + t s_{331} + t s_{4111} + (t+t^2) s_{421} 
	+ t^2 s_{43} + (t^2+t^3) s_{51} + t^3 s_{52} + t^4 s_{61}\\
	&= G_{3211}^{(3)} - t s_{322} - t^2 s_{421} - t^3 s_{52}~.
\end{align*}
We can also determine that $G_{322}^{(3)} = s_{322} + t s_{421} + t^2 s_{52}$ (using the same 
techniques as in Example~\ref{ex:3split}), so that
\begin{equation*}
	\At_{3211}^{(3)} = T^{(3)}_3 \Bop_3( \At_{211}^{(3)} ) = G_{3211}^{(3)} - t G_{322}^{(3)}\;.
\end{equation*}
This (not coincidently) is equal to $A_{3211}^{(3)}[X;t]$ as was calculated in Example~\ref{ex:atom4sf}.
\end{example}

The algebraic operations mimic those of the combinatorial definition defined in the previous section
and so it is important to emphasize that the combinatorial and algebraic definitions are equivalent.
\begin{conj} \label{conj:Atomequiv} 
For $k>0$ and a $k$-bounded partition $\la$,
\begin{equation}
A^{(k)}_\la[X;t] = \At^{(k)}_\la[X;t]~.
\end{equation}
\end{conj}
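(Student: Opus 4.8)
The plan is to prove the identity by induction on $\ell(\la)$, matching the two recursions one step at a time. The base case $\ell(\la)=1$ is immediate: for $r\leq k$ both definitions give $A^{(k)}_{(r)}[X;t]=\At^{(k)}_{(r)}[X;t]=s_{(r)}$. For the inductive step write $\la=(m,\bar\la)$ with $m=\la_1$ and $\bar\la=(\la_2,\ldots,\la_{\ell(\la)})$, and assume $A^{(k)}_{\bar\la}[X;t]=\At^{(k)}_{\bar\la}[X;t]$. Comparing the recursive definition \eqref{def:algatom} with the atom recursion $\AAA^{(k)}_\la=\KK^{\rightarrow k}\BB_{m}\AAA^{(k)}_{\bar\la}$, it suffices to establish two compatibility statements at the level of charge generating functions: (i) the tableau operator $\BB_m$ realizes the algebraic creation operator $\Bop_m$, and (ii) the combinatorial katabolism filter $\KK^{\rightarrow k}$ realizes the algebraic projection $T^{(k)}_m$ of \eqref{eq:killingoperators}. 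Writing $\langle S\rangle:=\sum_{T\in S}t^{\charge(T)}s_{\shape(T)}$ for the generating function of a set $S$ of partition-weight tableaux, the two claims are $\langle\BB_m S\rangle=\Bop_m\langle S\rangle$ and $\langle\KK^{\rightarrow k}S\rangle=T^{(k)}_m\langle S\rangle$.

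For (i) I would first record the ``global'' instance: taking $S$ to be the set of all tableaux of weight $\bar\la$ gives $\langle S\rangle=Q'_{\bar\la}[X;t]$, whence $\langle\BB_m S\rangle=Q'_{(m,\bar\la)}[X;t]=\Bop_m Q'_{\bar\la}[X;t]$ by \eqref{equation.B_on_Qp}, since $\BB_m$ bijects weight-$\bar\la$ tableaux onto weight-$(m,\bar\la)$ tableaux. The real work is upgrading this to the \emph{subset} $S=\AAA^{(k)}_{\bar\la}$, which requires per-tableau control of how charge and shape change under ``add a horizontal $m$-strip, then relabel by $\sigma_1\cdots\sigma_{\ell}$''. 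Concretely one wants $\langle\BB_m\{T\}\rangle=t^{\charge(T)}\Bop_m s_{\shape(T)}$ for each single $T$, so that summing over $T\in S$ and using linearity of $\Bop_m$ yields $\langle\BB_m S\rangle=\Bop_m\langle S\rangle$. This per-tableau identity is the tableau model behind \eqref{Bopdef}: the crystal reflections $\sigma_i$ act on the newly added letters only, and the charge offset introduced by prepending a lowest row of $1$'s must reproduce exactly the Schur expansion of $\Bop_m s_{\shape(T)}$ dictated by $\Bop_m=\sum_{j\geq0}t^j\Sop_{m+j}h_j^\perp$.

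For (ii) the bridge is the parabolic Hall--Littlewood katabolism formula \eqref{conj:parabolicformula}: since $G^{(k)}_\la[X;t]=H_{\la^{\rightarrow k}}[X;t]$, that formula identifies $G^{(k)}_\mu[X;t]$ with the charge generating function of tableaux katabolizable with respect to $\mu^{\rightarrow k}$. Because $T^{(k)}_m$ is defined on the $k$-split basis by keeping precisely the $G^{(k)}_\mu$ with $\mu_1=m$, the identity $\langle\KK^{\rightarrow k}S\rangle=T^{(k)}_m\langle S\rangle$ becomes the statement that killing the non-katabolizable tableaux in $\BB_m S$ matches projecting $\Bop_m\langle S\rangle$ onto that part of the $G^{(k)}$-basis. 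One must carry this out \emph{iteratively}, since $\KK^{\rightarrow k}$ is applied at every step and the surviving tableaux satisfy nested katabolism conditions; this nesting is what produces corrections such as $\At^{(3)}_{3211}=G^{(3)}_{3211}-tG^{(3)}_{322}$ rather than a single $G$-term, and it should be bookkept by expanding both sides in the $G^{(k)}$-basis and comparing coefficients.

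The hard part is step (ii), and more precisely the fact that it rests on \eqref{conj:parabolicformula}, which is itself only conjectural (the Shimozono--Weyman formula, \cite{ShimozonoWeyman}). Indeed the argument above essentially \emph{reduces} Conjecture~\ref{conj:Atomequiv} to the parabolic katabolism formula for the particular sequences $\la^{\rightarrow k}$, together with two further ingredients that are not known in general: that $\{G^{(k)}_\la\}$ is a basis of $\La^t_{(k)}$ (needed to make the $G$-expansion in (ii) well defined, cf.\ \eqref{Laktspacedef}), and the per-tableau charge behaviour in (i). A reasonable first target is therefore to prove \eqref{conj:parabolicformula} for $k$-split sequences; failing a full proof, I would at least verify the whole chain at $t=1$, where the weak/strong tableau descriptions of Section~\ref{sec:weaksection} and the known results of \cite{LLMS:2006} give independent access to both $A^{(k)}_\la$ and $\At^{(k)}_\la$ and make the induction tractable as a consistency check.
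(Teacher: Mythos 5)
The first thing to say is that the statement you were asked to prove is an open conjecture: the paper offers no proof of Conjecture~\ref{conj:Atomequiv}, only the remark that it has been verified by computer up to degree $19$, so there is no argument of the authors' to compare yours against. What you have written is, by your own admission, not a proof but a reduction to other open problems, and each of the three ingredients you isolate is itself unproven. The most serious is your step (ii): you lean on the parabolic katabolism formula \eqref{conj:parabolicformula}, which is the Shimozono--Weyman conjecture and is explicitly flagged as open in the paper (the notes at the end of Section~\ref{sec:threedef} say precisely that its resolution ``would be helpful in attacking Conjecture~\ref{conj:Atomequiv}'', so your reduction matches the route the authors themselves envisage). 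But even granting \eqref{conj:parabolicformula}, step (ii) needs more: that formula identifies $G^{(k)}_\mu$ with the charge generating function of \emph{all} tableaux katabolizable with respect to $\mu^{\rightarrow k}$, whereas $\KK^{\rightarrow k}$ is applied to the particular subset $\BB_m\AAA^{(k)}_{\bar\la}$. To conclude that filtering this subset matches applying $T^{(k)}_m$ to its generating function, you need the tableaux of $\BB_m\AAA^{(k)}_{\bar\la}$ to decompose into katabolism classes compatible with the $G^{(k)}$-expansion of $\Bop_m\At^{(k)}_{\bar\la}$; that is an additional unproven compatibility, not a corollary of \eqref{conj:parabolicformula}. (You would also need $\{G^{(k)}_\la\}$ to be a basis, which at least is proven in \cite{LM:2003}.)

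Your step (i) also hides a substantive claim. The per-tableau identity $\langle\BB_m\{T\}\rangle = t^{\charge(T)}\Bop_m s_{\shape(T)}$ cannot be literally right as a statement depending only on $\shape(T)$ and $\charge(T)$: the set $\BB_m\{T\}$ and the charges of its members depend on the filling of $T$, and the relabeling by $\sigma_1\cdots\sigma_{\ell}$ redistributes charge in a filling-dependent way. What is available (Zabrocki's thesis, cited in the paper) is a description of the action of $\Bop_m$ on Schur functions, and the global weight-$\bar\la$ instance you record does follow from \eqref{equation.B_on_Qp}; but upgrading to the subset $\AAA^{(k)}_{\bar\la}$ is exactly the hard combinatorial content, and linearity of $\Bop_m$ does not give it to you unless a corrected, filling-aware per-tableau statement holds. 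Finally, your fallback of ``verifying the chain at $t=1$'' is weaker than you suggest: the paper states that the equivalence of the atom definition with the others is unknown even at $t=1$ beyond degree $11$, and that $t=1$ does not simplify the atom side, so that specialization makes the conjecture computationally checkable but not provable by the known $t=1$ machinery of \cite{LLMS:2006}. In short, your reduction is sensible and consistent with the authors' stated strategy, but none of its pillars is established, so this is a research program rather than a proof.
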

\end{subsection}

%%%%%%%%%%%%%%%%%%%%%%%%%%%%%%%%%%%%%%
\begin{subsection}{Weak tableaux II} \label{sec:weak II}

We have seen that a fruitful characterization for $k$-Schur functions 
(without parameter $t$) is given by inverting \eqref{weakKostka}; for $\mu_1\leq k$,
\begin{equation}
\label{kschurdef}
h_\mu=\sum_{\lambda: \lambda_1\leq k} K_{\lambda\mu}^{(k)}\,s_\lambda^{(k)}\,,
\end{equation}
where the weak Kostka numbers $K_{\lambda\mu}^{(k)}$ count weak $k$-tableaux.
Here we present $k$-Schur functions that reduce to these parameterless
$k$-Schur functions when $t=1$.  The method is to 
introduce {\it weak Kostka-Foulkes polynomials} as 
polynomials in $\mathbb N[t]$ defined by 
refining  the charge statistic to a statistic 
that associates a non-negative integer called the $k$-charge
to each $k$-tableau.  Then setting
$$
K_{\lambda\mu}^{(k)}(t)= \sum_{{\shape(T)=\mfc(\lambda)\atop\text{weight}(T)=\mu}}
t^{\text{kcharge}(T)}\,,
$$
it happens that $K^{(k)}_{\lambda\lambda}(t)=1$ and since there are no  $k$-tableaux of shape $\mfc(\lambda)$
and weight $\mu$ when $\mu>\lambda$, the $k$-charge matrix $K^{(k)}_{\lambda\mu}(t)$ is 
unitriangular.  So, in the spirit of \eqref{kschurdef},
\begin{equation}
\label{tinvdef}
Q'_\mu[X;t] =
\sum_{\lambda} K_{\lambda\mu}^{(k)}(t)\, \tilde s^{(k)}_\lambda[X;t]
\end{equation}
characterizes the functions $\{\tilde s^{(k)}_\lambda[X;t]\}$.

%\todo{References missing!}

There are several different characterizations for $k$-charge. 
We give here two distinct formulations defined directly on 
$k$-tableaux, discovered by Lapointe-Pinto and Morse
\cite{LapointePinto,DalalMorse:2013}.
There are other formulations including one on 
$\alpha$-factorizations, one on an object called affine Bruhat 
countertableau \cite{DalalMorse:2012,DalalMorse:2013},
and in relation with the energy function on Kirillov--Reshetikhin 
crystals~\cite{MS:2013}.

The $k$-charge statistic on $k$-tableaux is first described
in the standard case since it is in these terms 
that we define it for semi-standard $k$-tableaux.
Important to the definition is a number $\text{diag}(c_1,c_2)$,
associated to cells $c_1$ and $c_2$ in a $(k+1)$-core,
defined to be the number of diagonals of residue 
$x$ that are strictly between $c_1$ anc $c_2$ where $x$ is the residue 
of the lower cell.  When it is well-defined to do so, functions defined
with a cell as input can instead take a letter as input.
For example, in a standard $k$-tableau it is natural to discuss 
the residue of a specific letter (since any cell containing that 
letter has the same residue) instead of the residue of a specific cell.

\begin{definition}
Given a standard $k$-tableau $T$ on $m$ letters, put a bar on  
the topmost occurrence of letter $r$, for each $r=1,\ldots,m$.
Define the {\it index} of $T$, starting from $I_1=0$, by
\begin{equation}
I_r = 
\begin{cases}
\label{meindex}
I_{r-1} +1+\text{diag}({\bar r},{\overline{r-1}}) & \text{if $\bar r$ is east of $\overline{r-1}$ }\\
I_{r-1}-\text{diag}({\bar r},{\overline{r-1}}) & \text{otherwise}\,,
\end{cases}
\end{equation}
for $r=2,\ldots,m$.
The {\it $k$-charge} of $T$ is the sum of entries in $I(T)$, denoted by $\mathrm{kcharge}(T)$.
\end{definition}

\begin{example}
\label{exme}
For $k=3$,
$$
T=\tableau[scY]{4_2\cr 2_3&6_0\cr 1_0&3_1&4_2&5_3&6_0}\implies
I(T) =[0,0,1,1,3,3]\;\;
\implies \mathrm{kcharge}(T)=8
\,.
$$
\end{example}

It is not immediately clear that the $k$-charge is a non-negative integer
and it is sometimes helpful to use a different formulation of $k$-charge.
Let $T_{\leq x}$ denote the subtableau obtained by deleting all
letters larger than $x$ from $T$.
\begin{definition}
Given a $k$-tableau $T$, the $T$-residue order of
$\{0,\ldots,k\}$ is defined by
$$
x>x-1>\cdots>0>k>\cdots > x+1\,,
$$
where $x$ is the residue of the highest addable
corner of $T$.  Note that $x=1-\ell(\lambda)\pmod{k+1}$,
for $\lambda$ the shape of $T$.
\end{definition}
\begin{example}
With $k=3$, consider
$$
T= \tableau[scY]{4_3\cr 1_0&2_1&3_2&4_3}\qquad
T_{\leq 3}= \tableau[scY]{1_0&2_1&3_2}\,.
$$
The $T_{\leq 3}$-residue order is $3>2>1>0$ and
the $T$-residue order is $2>1>0>3$.
\end{example}

Given a standard $k$-tableau $T$ on $m$ letters,
define the {\it index} $J(T)=[J_1,\ldots,J_m]$,
starting from $J_1=0$, by setting for $r=2,\ldots,m$,
\begin{equation}
\label{indexcondition}
J_r = 
\begin{cases}
J_{r-1} +1 & \text{if $\res(r)>\res(r-1)$}\\
J_{r-1} & \text{otherwise}\,,
\end{cases}
\end{equation}
under $T_{\leq r}$-residue order (see Example~\ref{exkindex}).

\begin{prop}
For a standard $k$-tableau $T$ of shape $\lambda$,
$$
\mathrm{kcharge}(T)=\sum_r \left( J_r(T) + \mathrm{diag}(c_r,c^{(r)})\right)\,,
$$
where $c_r$ is the highest cell containing an $r$
and $c^{(r)}=(\ell(\shape(T_{\le r}))+1,1)$.
\end{prop}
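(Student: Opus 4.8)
The plan is to prove the stronger term-wise identity
$$ I_r = J_r + \mathrm{diag}(c_r, c^{(r)}) \qquad\text{for every } r, $$
from which the Proposition follows immediately by summing over $r$, since $\mathrm{kcharge}(T) = \sum_r I_r$ by the defining formula \eqref{meindex}. This reduction to a term-wise statement is legitimate because every object indexed by $r$ — the index entries $I_r$ and $J_r$, the topmost cell $c_r$, the row count $\ell(\shape(T_{\le r}))$, and hence the virtual cell $c^{(r)}$ — depends only on the truncation $T_{\le r}$. I would prove the identity by induction on $r$. For the base case the first letter of a standard $k$-tableau is forced to occupy the single cell $(1,1)$, so that $I_1 = J_1 = 0$ and $c^{(1)} = (2,1)$ sits on the diagonal immediately below $c_1$, whence $\mathrm{diag}(c_1, c^{(1)}) = 0$.

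For the inductive step, writing $\Delta I_r = I_r - I_{r-1}$ and $\Delta J_r = J_r - J_{r-1}$ and invoking the hypothesis $I_{r-1} = J_{r-1} + \mathrm{diag}(c_{r-1}, c^{(r-1)})$, the claim collapses to the local increment identity
$$ \Delta I_r - \Delta J_r = \mathrm{diag}(c_r, c^{(r)}) - \mathrm{diag}(c_{r-1}, c^{(r-1)}). $$
Here $\Delta I_r$ equals $1 + \mathrm{diag}(c_r, c_{r-1})$ or $-\mathrm{diag}(c_r, c_{r-1})$ according as $c_r$ lies east of $c_{r-1}$ (by \eqref{meindex}), while $\Delta J_r$ equals $1$ or $0$ according as $\res(r) > \res(r-1)$ in the $T_{\le r}$-residue order (by \eqref{indexcondition}). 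Proving this requires two ingredients: a basepoint-change rule expressing the consecutive-cell quantity $\mathrm{diag}(c_r, c_{r-1})$ through diagonals measured to the first-column virtual cells, and a dictionary translating the content-based ``east'' test into the residue-order test.

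The bridge tying these together is that $c^{(r)} = (\ell(\shape(T_{\le r}))+1,\,1)$ lies exactly on the diagonal of residue $-\ell(\shape(T_{\le r}))$, which is the pivot $x$ of the $T_{\le r}$-residue order. Thus the residue order is precisely the order on contents read starting from the $c^{(r)}$-diagonal and proceeding upward, so that an ascent in residue order records a content increase which does not wrap past the pivot, while $\mathrm{diag}(c_r, c^{(r)})$ counts exactly those wraps. I expect the main obstacle to be the steps in which adding letter $r$ opens a new row, so that $\ell(\shape(T_{\le r}))$ increases: there the pivot of the residue order shifts and the basepoint $c^{(r)}$ slides down the first column, and one must verify by a case analysis — organized by whether $c_r$ is east of $c_{r-1}$ and whether a new row is created — that the telescoping difference $\mathrm{diag}(c_r, c^{(r)}) - \mathrm{diag}(c_{r-1}, c^{(r-1)})$ simultaneously absorbs the change of comparison cell and the change of pivot. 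A secondary technical point is that letter $r$ may fill several cells of a single residue, so one must check that the topmost representative $c_r$, and the letter-indexed values of $\res$ and of $\mathrm{diag}$, are the correct ones throughout.
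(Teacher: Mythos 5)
The paper states this proposition without proof (the chapter explicitly defers to \cite{LapointePinto,DalalMorse:2013}), so there is no in-text argument to compare against; I can only judge your plan on its own terms. Your central move --- strengthening the claim to the term-wise identity $I_r = J_r + \mathrm{diag}(c_r,c^{(r)})$ and inducting on $r$ --- is the right one, and it is consistent with the paper's own data: in Examples~\ref{exme} and~\ref{exkindex} one has $I=[0,0,1,1,3,3]$ and $J=[0,0,1,1,2,3]$, and the only nonzero diagonal correction is $\mathrm{diag}(c_5,(4,1))=1$, exactly where $I$ and $J$ differ. Your structural key is also correct: $c^{(r)}$ is the highest addable corner of $T_{\le r}$, its residue is the pivot $x$ of the $T_{\le r}$-residue order, and $\mathrm{diag}(c_r,c^{(r)})=\lfloor (b-q-1)/(k+1)\rfloor$ where $b,q$ are the contents of $c_r,c^{(r)}$ counts the ``wraps'' past that pivot.

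The gap is that the entire quantitative content of the argument is deferred to an unexecuted case analysis, and one of the ingredients you would need is mischaracterized. Concretely, three things are missing. First, the test in \eqref{meindex} is whether $\bar r$ lies in a strictly greater \emph{column} than $\overline{r-1}$; you call this ``content-based,'' which it is not a priori, and the increment computation only works after you prove that for topmost cells of consecutive letters ``strictly east'' is equivalent to ``strictly greater content.'' This is true but uses the $k$-tableau structure: $\bar r=(i,j)$ is an addable corner and $\overline{r-1}=(i',j')$ a removable corner of $\mu=\shape(T_{\le r-1})$, so if $\bar r$ sat strictly above and weakly east with smaller content one would get $j'\le j-1=\mu_i\le\mu_{i'+1}\le j'-1$, a contradiction. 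Second, you need the precise pivot-shift criterion: $\ell(\shape(T_{\le r}))$ increases exactly when $\res(r)$ equals the residue of $c^{(r-1)}$ (the new-row corner is always addable and every addable corner of $\res(r)$ is filled), which also yields $\res(r)\ne x$ and $\res(r)\ne\res(r-1)$, facts needed to make the rank comparison in \eqref{indexcondition} well defined. Third, the arithmetic itself: writing $b-q=\hat\beta+D_r(k+1)$ and $a-q=\hat\alpha+E(k+1)$ with $\hat\beta,\hat\alpha$ the residues of $b-q,a-q$ modulo $k+1$, one must check $\Delta I_r-\Delta J_r=D_r-D_{r-1}$ in the four sub-cases ($\hat\beta\gtrless\hat\alpha$ crossed with east/west) against the three pivot scenarios ($q=p$; $q=p-1$ with $\hat\alpha\ne0$; $q=p-1$ with $\hat\alpha=0$). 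I have carried this bookkeeping out and it does close in every case, so your strategy succeeds --- but as written the proposal is a road map rather than a proof, with the decisive verification left as an assertion.
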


\begin{example}
\label{exkindex}
For $k=3$,
$$
T=\tableau[scY]{4_2\cr 2_3&6_0\cr 1_0&3_1&4_2&5_3&6_0}\implies
J(T) =[0,0,1,1,2,3]\,,
\mathrm{diag}(c_5,(4,1))=1
\implies \mathrm{kcharge}(T)=8
\,.
$$
\end{example}

\begin{remark}
Given a standard $k$-tableau of shape $\lambda$
where $k\geq h(\lambda)$,  the index conditions
\eqref{meindex} and \eqref{indexcondition} both
reduce to \eqref{convindex}.
Thus, since a diagonal of residue $x$ occurs at
most once in $\lambda$ for any $x$, 
$k$-charge reduces to charge. 
\end{remark}

As with the charge of Lascoux and Sch\"utzenberger, we extend the definition of
$k$-charge to semi-standard $k$-tableaux by
successively computing on an appropriate choice of
standard sequences.  The trick is to introduce a
method for making this choice in $k$-tableaux.

\begin{definition}
\label{wierdchoice}
From an $x$ (of some residue $i$)
in a semi-standard $k$-tableau $T$,
the appropriate choice of $x+1$ will be
determined by choosing its residue from the set $A$ of
all $(k+1)$-residues labelling $x+1$'s.  Reading counter-clockwise from $i$,
this choice is the closest $j\in A$ on a circle labelled clockwise
with $0,1,\ldots,k$.
\end{definition}

\begin{remark}
Definition~\ref{wierdchoice} reduces to
Definition~\ref{convchoice} when $T$ is a
semi-standard tableau of shape $\lambda$ and
$k\geq h(\lambda)$.  In particular, consider
$x$ of $(k+1)$-residue $i$ in $T$.
Note that $h(\lambda)\leq k$ implies there is
a unique cell $c$ of residue $i$ that contains $x$.
Let $j$ be the first entry on the circle
reading counter-clockwise from $i$ that is a residue
of a cell containing $x+1$.
%Again, there is a
%unique cell $c'$ of residue $j$ that contains $x+1$.
If there is an $x+1$ above $c$, then
the south-easternmost cell containing
an $x+1$ that is above $c$ has residue $j$
since there are no $x+1$'s of a residue counter-clockwise
between $i$ and $j$.  If there are none above $c$,
then for the same reason, the south-easternmost cell
containing an $x+1$ has residue $j$.
\end{remark}

In the semistandard case, we need to be more specific about the 
residue order used in~\eqref{indexcondition}. The letters $r$ that 
occur in the tableau are ordered with respect to the standard subsequence
they belong to under Definition~\ref{wierdchoice}. So $r$ in the first chosen standard
subword is bigger than the one from the second standard subword etc. Each letter $r$ has
its own distinct residue $i$. The $J$-index of~\eqref{indexcondition} should be
computed with respect to the $T_{\le r_i}$-residue order when dealing with letter $r_i$.

\begin{example}
With $k=4$ and weight $(2,2,2,2,2,2,1)$:
$$
\tiny
\tableau[mbY]
{\tf 7_0\cr \tf 6_1\cr \tf 5_2&6_3\cr\tf 3_3&4_4&\tf 7_0\cr
\tf 2_4&3_0&5_1&\tf 5_2&6_3\cr
1_0&\tf 1_1&2_2&\tf 3_3&4_4&\tf 4_0&5_1&\tf 5_2&6_3}
\qquad
\qquad
\qquad
\tableau[mbY]
{\cr \cr &\tf 6_3\cr&\tf 4_4&\cr
&\tf 3_0&\tf 5_1&&\tf 6_3\cr
\tf 1_0&&\tf 2_2&&\tf 4_4&&\tf 5_1&&\tf 6_3}
\qquad \qquad
\qquad \qquad
\qquad
$$
Note that for example
$$
T_{\le 6_3} = 
\tableau[scY]
{5_2&6_3\cr 3_3&4_4\cr
 2_4&3_0&5_1& 5_2&6_3\cr
1_0& 1_1&2_2& 3_3&4_4& 4_0&5_1& 5_2&6_3}
$$
so that the highest addable cell in $T_{\le 6_3}$ has residue 1.
We have
$$
J=[0,0,0,1,1,1,1]\;\text{and}\; \text{diag}(c_4,c^{(4)})= 1\qquad
\qquad
J=[0,1,1,1,2,2]\;\quad 
\qquad
\qquad
$$
$$
I=[0,0,0,2,1,1,1]\;\text{since $\text{diag}(\bar 3,\bar 4)=1$,
$\text{diag}(\bar 4,\bar 5)=1$}
\qquad
I=[0,1,1,1,2,2]
\qquad
$$
Using either index $I$ or $J$, we find that the $k$-charge
of $T$ is 12.
\end{example}

\begin{sagedemo}
We verify the above example in \Sage:
\begin{sageexample}
  sage: T = WeakTableau([[1,1,2,3,4,4,5,5,6],[2,3,5,5,6],[3,4,7],
  ....:  [5,6],[6],[7]],4)
  sage: T.k_charge()
  12
\end{sageexample}
We can also demonstrate an example of Equation \eqref{tinvdef}
using \Sage.
\begin{sageexample}
  sage: Sym = SymmetricFunctions(QQ["t"].fraction_field())
  sage: Qp = Sym.hall_littlewood().Qp()
  sage: ks = Sym.kBoundedSubspace(3).kschur()
  sage: t = ks.base_ring().gen()
  sage: ks(Qp[3,2,2,1])
  ks3[3, 2, 2, 1] + t*ks3[3, 3, 1, 1] + t^2*ks3[3, 3, 2]
  sage: sum(t^T.k_charge()*ks(la) for la in Partitions(8, max_part=3)
  ....:  for T in WeakTableaux(3,la,[3,2,2,1],representation = 'bounded'))
  ks3[3, 2, 2, 1] + t*ks3[3, 3, 1, 1] + t^2*ks3[3, 3, 2]
\end{sageexample}
\end{sagedemo}

\end{subsection}

%%%%%%%%%%%%%%%%%%%%%%%%%%%%%%%%%%%%%%%%%%%%%%%%%%%%%
\begin{subsection}{Strong tableaux II } \label{sec:strong}
The definition of the $k$-Schur functions in terms of strong marked tableaux as in~\eqref{eq:kschurmexp}
also has a version with a parameter $t$.  For this definition we need to define the spin of a strong marked 
ribbon.  Recall that if $\tau$ and $\kappa$ are $(k+1)$-cores such that $\tau \Rightarrow_k \kappa$, 
then $\kappa/\tau$ is a skew partition which consists of several copies of connected components which 
are ribbons of the same size and shape.  Let $h$ represent the {\it height} of one of these ribbons 
(that is that it occupies $h$ rows).  Now a strong marked cover consists of the skew partition $\kappa/\tau$
and a marking $c$ of one of the connected components.  If there are $r$ connected components
in $\kappa/\tau$, then the {\it spin} of a marked cover is equal to $(h-1) \times r$
plus the number of ribbons which are above the marked one.

The {\it spin of a strong marked tableau},
$\kappa^{(0)} = \emptyset \Rightarrow_{k} \kappa^{(1)} \Rightarrow_{k} \kappa^{(2)} 
\Rightarrow_{k}\cdots \Rightarrow_{k} \kappa^{(m)}$ 
with markings $c_1, c_2, \ldots , c_m$ is the sum of the spins of the strong 
marked ribbons $\kappa^{(i)}/\kappa^{(i-1)}$ with marking $c_i$.

\begin{example}
Recall from Example~\ref{ex:strongtab} the marked 
semi-standard tableau with $k = 3$:
\squaresize=14pt
$$
	\young{3_1^\ast&4_1^\ast\cr2_2&3_3\cr2_1^\ast&3_2&3_2^\ast&4_1\cr1_1^\ast&
	1_2^\ast&2_2^\ast&3_3^\ast\cr}\;.
$$
There is a contribution of $1$ to the spin for the ribbon of cells labelled by $2_2$ and
there is a contribution of $1$ due to the labeling of the lower occurrence of $3_3$.  
Therefore the total spin of this tableau is $2$.
\end{example}

The $k$-Schur function (this time with a $t$) in terms of strong tableaux 
are defined as (see also~\cite[Conjecture 9.11]{LLMS:2006})
\begin{equation} \label{eq:kschurt}
	s^{(k)}_\la[X;t] = \sum_{\mu \vdash |\la|} \sum_{(\kappa^{(\ast)}, c_\ast)} 
	t^{\spin(\kappa^{(\ast)}, c_\ast)} m_\mu\;,
\end{equation}
where the sum is over all strong marked tableaux $(\kappa^{(\ast)}, c_\ast)$ of shape $\la$ and
weight $\mu$.

\begin{example} 
For this definition, there is a reason to choose a smaller example for computation.  We have thus 
far used as our running example the $3$-Schur function indexed by the partition $(3,2,1,1)$.  
For the coefficient of the monomial $m_{1111111}$ there are $210$
strong marked tableaux. To choose a smaller example we take
as an example the $3$-bounded partition $(3,1,1)$ which corresponds to the $4$-core $(4,1,1)$ 
which has $10$ strong marked tableaux in total.  The strong partial order on
$4$-cores contains the following interval (see Figure~\ref{strongkeq3Hasse}):
\begin{center} 
\includegraphics[width=3in]{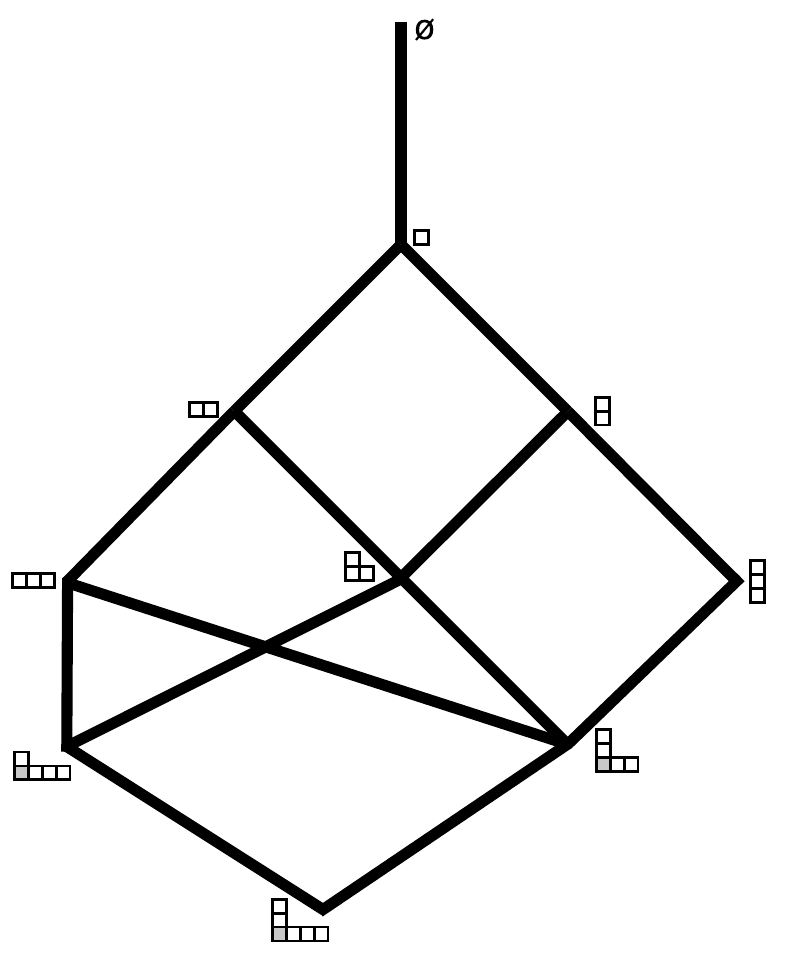}
\end{center}
These correspond to the following 10 marked strong standard tableaux:
\squaresize=12pt
$$\young{5^\ast\cr4\cr1^\ast&2^\ast&3^\ast&4^\ast\cr},\quad
  \young{4\cr4^\ast\cr1^\ast&2^\ast&3^\ast&5^\ast\cr},\quad
  \young{4\cr3^\ast\cr1^\ast&2^\ast&4^\ast&5^\ast\cr},\quad
  \young{4\cr2^\ast\cr1^\ast&3^\ast&4^\ast&5^\ast\cr}, \quad
  \young{5^\ast\cr3^\ast\cr1^\ast&2^\ast&4&4^\ast\cr}, 
  $$
$$\young{4^\ast\cr3^\ast\cr1^\ast&2^\ast&4&5^\ast\cr}, \quad
  \young{4^\ast\cr2^\ast\cr1^\ast&3^\ast&4&5^\ast\cr}, \quad
  \young{5^\ast\cr2^\ast\cr1^\ast&3^\ast&4&4^\ast\cr},\quad
  \young{5^\ast\cr4^\ast\cr1^\ast&2^\ast&3^\ast&4\cr},\quad
  \young{3^\ast\cr2^\ast\cr1^\ast&4&4^\ast&5^\ast\cr}\;.$$
The first four of these strong marked standard tableaux have spin equal to $1$ and the
remaining $6$ have spin equal to $0$.  There is a semi-standard 
tableau of weight $\mu$ which 
corresponds to the standard tableau if the cells labeled $1,2, \ldots, \mu_1$ form a strong marked
horizontal strip, $\mu_1+1, \mu_1+2, \ldots, \mu_1+\mu_2$ form another strong marked horizontal
strip, etc.. From these 10 strong marked standard tableaux it is possible to read off that
$$s_{311}^{(3)}[X;t] = t m_{41} + t m_{32} + (1+2t) m_{311} + (1+2t) m_{221} + (3+3t) m_{2111} 
+ (6+4t) m_{111111}~.$$
\end{example}

\begin{sagedemo}
In this example we will show how \Sage can be used to compute
the monomial expansion of the $k$-Schur function by computing a statistic for each strong $k$-tableau.
\begin{sageexample}
    sage: t = var("t")
    sage: for mu in Partitions(5):
    ....:     print mu, sum(t^T.spin() for T in StrongTableaux(3,[4,1,1],mu))
    [5] 0
    [4, 1] t
    [3, 2] t
    [3, 1, 1] 2*t + 1
    [2, 2, 1] 2*t + 1
    [2, 1, 1, 1] 3*t + 3
    [1, 1, 1, 1, 1] 4*t + 6
    sage: StrongTableaux( 3, [4,1,1], (1,)*5 ).cardinality()
    10
    sage: StrongTableaux( 3, [4,1,1], (1,)*5 ).list()
    [[[-1, -2, -3, 4], [-4], [-5]],
     [[-1, -2, -3, -4], [4], [-5]],
     [[-1, -2, -3, -5], [-4], [4]],
     [[-1, -2, 4, -4], [-3], [-5]],
     [[-1, -2, 4, -5], [-3], [-4]],
     [[-1, -2, -4, -5], [-3], [4]],
     [[-1, -3, 4, -4], [-2], [-5]],
     [[-1, -3, 4, -5], [-2], [-4]],
     [[-1, -3, -4, -5], [-2], [4]],
     [[-1, 4, -4, -5], [-2], [-3]]]
\end{sageexample}
\end{sagedemo}

The definitions $A^{(k)}_\la[X;t], \At^{(k)}_\la[X;t],
s^{(k)}_\la[X;t],$ and $\tilde s^{(k)}_\la[X;t]$ of what 
are generically known as $k$-Schur functions have very different 
characters.   Each connects to a different area of
algebraic combinatorics and has its own benefits and detriments.
The conjectured properties of $k$-Schur functions are sometimes clear 
from one definition, but difficult to prove for another.
In the next section we will examine these properties and
see why it would be very beneficial to resolve the following conjecture.

\begin{conj} \label{conj:allequiv} 
For $k>0$ and a $k$-bounded partition $\la$,
\begin{equation*}
s^{(k)}_\la[X;t] = A^{(k)}_\la[X;t] = \At^{(k)}_\la[X;t]
= \tilde s^{(k)}_\la[X;t]\,.
\end{equation*}
\end{conj}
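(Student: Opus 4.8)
The plan is to prove the four equalities by exhibiting a single characterization that each family provably satisfies and then showing that this characterization has a unique solution. The four definitions split naturally into two camps: the \emph{atom} pair $A^{(k)}_\la[X;t]$ and $\At^{(k)}_\la[X;t]$, anchored to Hall--Littlewood/Macdonald positivity and living (conjecturally) in the space $\La^t_{(k)}$ of \eqref{Laktspacedef}; and the \emph{tableau} pair $s^{(k)}_\la[X;t]$ and $\tilde s^{(k)}_\la[X;t]$, defined by the spin generating function \eqref{eq:kschurt} over strong marked tableaux and by inverting the $k$-charge transition \eqref{tinvdef} over weak tableaux, respectively. Conjecture~\ref{conj:Atomequiv} already asserts $A^{(k)}_\la[X;t]=\At^{(k)}_\la[X;t]$ inside the first camp, so I would take that equality as the first link and concentrate the remaining effort on the two cross-camp bridges $\At^{(k)}_\la[X;t]=\tilde s^{(k)}_\la[X;t]$ and $\tilde s^{(k)}_\la[X;t]=s^{(k)}_\la[X;t]$.

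First I would dispatch the specialization $t=1$, where every family should collapse to the parameterless $k$-Schur basis of Section~\ref{sec:weaksection}. For the tableau pair this is essentially formal: setting $t=1$ in \eqref{tinvdef} turns $K^{(k)}_{\la\mu}(t)$ into the plain weak Kostka number and $Q'_\mu[X;1]$ into $h_\mu$, recovering the defining system \eqref{weakKostka}, while setting $t=1$ in \eqref{eq:kschurt} ignores the spin and recovers \eqref{eq:kschurmexp}; the identity $s^{(k)}_\la[X;1]=\tilde s^{(k)}_\la[X;1]$ is then exactly the weak/strong duality furnished by the affine insertion algorithm of Section~\ref{sec:strongweakduality} and \cite{LLMS:2006}. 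The genuinely new content already at $t=1$ is to show the atoms satisfy the same system, i.e.\ that $A^{(k)}_\la[X;1]$ expands unitriangularly against the $h_\mu$ with coefficients counting weak tableaux; I would attempt this by tracking the operators $\Bop_m$ and $T^{(k)}_i$ through the recursion \eqref{def:algatom} and comparing their action to the weak Pieri rule \eqref{eq:kweakpieri}.

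The $t$-deformation is the heart of the problem, and the plan is to reduce all three remaining bases to a common $t$-graded transition matrix against a fixed reference basis. Using the triangularity of $\At^{(k)}_\la[X;t]$ against the Hall--Littlewood $Q'_\la[X;t]$ via the $k$-split basis $G^{(k)}_\la[X;t]$, the unitriangularity of the $k$-charge matrix $K^{(k)}_{\la\mu}(t)$ pinning down $\tilde s^{(k)}_\la[X;t]$, and the spin generating function pinning down $s^{(k)}_\la[X;t]$, each family is determined once one identifies the correct triangular normalization. The technical core is then a statistic-matching problem: one must exhibit (ideally bijective) identities equating the charge statistic on the $k$-atom $\AAA^{(k)}_\la$, the $k$-charge on weak tableaux, and the spin on strong marked tableaux, fibred over a common weight. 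A natural route is to pass through the noncommutative model of Section~\ref{sec:nilcoxeter}: expand each family in the affine nil-Coxeter algebra $\BBaf$ and compare the resulting $t$-weighted coefficients of $\uu_w$, exploiting the commutation of the $\hh_r$.

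The hard part will be the cross-camp bridge to the atom definitions. Unlike the tableau pair, the atoms $A^{(k)}_\la[X;t]$ and $\At^{(k)}_\la[X;t]$ have no proven symmetric-function anchor: as the text notes, it is not even known that they lie in $\La^t_{(k)}$, so one cannot yet invoke any triangularity characterization inside that space. Thus the first real obstacle is to prove the membership $A^{(k)}_\la[X;t]\in\La^t_{(k)}$ and establish the Hall--Littlewood unitriangularity that the tableau side already enjoys; only then does the uniqueness argument become available. The second obstacle, conditional on the first, is the exact equidistribution of charge, $k$-charge, and spin---statistics defined on three combinatorially very different sets---which I expect to require new insertion or crystal-theoretic bijections generalizing those of \cite{LLMS:2006} and the Kirillov--Reshetikhin energy function of \cite{MS:2013}. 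Because these two obstacles are precisely the content of long-standing open problems in the subject, I would regard a complete proof as beyond routine methods and would aim instead to reduce the full conjecture to the single membership statement $A^{(k)}_\la[X;t]\in\La^t_{(k)}$.
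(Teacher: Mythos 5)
This statement is labeled a conjecture in the paper, and the paper offers no proof of it: the authors state explicitly (in the notes following Section~\ref{sec:threedef}) that the equivalence has only been verified computationally up to degree $11$ and that no proof is known even at $t=1$. So there is no argument in the paper to compare yours against; what can be assessed is whether your plan accurately reflects the state of the art and whether its mechanisms could work. Your identification of the known links is largely correct: $s^{(k)}_\la[X;1]=\tilde s^{(k)}_\la[X;1]$ via \cite[Theorem 4.11]{LLMS:2006}, the collapse of \eqref{tinvdef} to \eqref{weakKostka} at $t=1$, and the fact that the genuinely open content already at $t=1$ is whether the atoms satisfy the weak Pieri rule (Conjectures 41 of \cite{LLM:2003} and 21 of \cite{LM:2003}).

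Two corrections, one factual and one structural. Factually, your claim that neither $A^{(k)}_\la[X;t]$ nor $\At^{(k)}_\la[X;t]$ has a proven anchor in $\La^t_{(k)}$ is wrong for $\At^{(k)}$: by \cite[Theorem 33]{LM:2003} the operator-defined family is proven to form a basis of $\La^t_{(k)}$ (as is $\tilde s^{(k)}$ by \cite{DalalMorse:2012}); it is $A^{(k)}$ and the strong-tableau $s^{(k)}[X;t]$ whose membership is open, so the known/unknown divide cuts across your two ``camps'' rather than along them. Structurally, the uniqueness mechanism you propose --- pin each family down by triangularity against a reference basis such as $Q'_\la[X;t]$ together with positivity --- is documented in the paper to fail for $k\ge 3$: the worked example of $A^{(3)}_{3211}$ shows that Hall--Littlewood triangularity plus Schur positivity does not determine the answer (one cannot decide between $Q'_{3211}-t^2Q'_{331}$ and $Q'_{3211}-tQ'_{322}$ from those constraints alone). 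Only $\tilde s^{(k)}$ comes with a fully specified unitriangular transition matrix $K^{(k)}_{\la\mu}(t)$; for the others, producing a ``common $t$-graded transition matrix'' is essentially equivalent to the conjecture itself. Your closing reduction of the problem to the membership $A^{(k)}_\la[X;t]\in\La^t_{(k)}$ is therefore also too optimistic: even granting membership, a characterization strong enough to force equality with $\tilde s^{(k)}_\la[X;t]$ is missing, which is precisely why the statement remains open.
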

\end{subsection}

%%%%%%%%%%%%%%%%%%%%%%%%%%%%%%%%%%%%%%%%%%%%%%%%%%%
\begin{subsection}{Notes on references}
The atom definition of the $k$-Schur functions was the first description
and they were referred to by Alain Lascoux as potatoes (`les patates')
since when copies of the atoms were identified on photocopies of
the cyclage poset they could be circled to form tuber-like shapes (which
we have endeavored to recreate in Example~\ref{ex:potatos}).
The combinatorial definition uses the concept of 
katabolizable tableaux~\cite{Shimozono:2001, Shimozono2:2001} which
generalizes the notion of cyclage which comes from~\cite{LS:1981}.
The non-recursive definition of the charge statistic
on tableaux also comes from~\cite{LS:1981}.

It turns out that the atom definition is still roughly the easiest and fastest 
definition to implement on a computer.  The implementation
of the $k$-Schur functions in \Sage uses the definition of
$A^{(k)}_\la[X;t]$.  For this reason, it is important to note that while 
Conjecture~\ref{conj:Atomequiv} has been checked up to degree $m=19$,
Conjecture~\ref{conj:allequiv} has only been checked up to bounded partitions
of $m=11$, but for $m>11$ there is currently no proof that these
definitions are equivalent in general, even for $t=1$.

The operators $\Bop_\la$ from Equation~\eqref{def:Bopla} were defined by
Shimozono and Zabrocki in~\cite{ShimozonoZabrocki} as a tool for understanding
the generalized Kostka polynomials (also known as parabolic Kostka coefficients)
that were studied by Shimozono and Weyman \cite{ShimozonoWeyman}.
The combinatorial interpretation for the Schur expansion of a composition of
these operators when the indexing partitions concatenate to a partition
in terms of katabolizable tableaux is still an open problem.  Certain
cases of this are known (e.g.~\cite{SchillingWarnaar:1999}) and its resolution
would be helpful in attacking Conjecture~\ref{conj:Atomequiv}.
In~\cite{BravoLapointe:2009} the Bernstein operators at $t=1$ are used to derive
a recursion relation for the $k$-Schur functions which allow an easy expansion
in the complete homogeneous basis.

In References~\cite{LM:2007,LM:2008} the definition of the $k$-Schur functions
at $t=1$ is taken to be the symmetric functions which satisfy the $k$-Pieri rule
of Equation~\eqref{eq:kweakpieri}.  Later Lam, Lapointe, Morse and 
Shimozono~\cite{LLMS:2006} showed that the $k$-Schur functions which satisfy the
$k$-Pieri rule~\eqref{eq:kweakpieri} are equivalent
to Equation~\eqref{eq:kschurt} at $t=1$, and Equation~\eqref{eq:kschurt} 
is~\cite[Conjecture 9.11]{LLMS:2006}.  Assaf and Billey \cite[Definition 3.2]{AssafBilley} have
a slightly different statement of Equation~\eqref{eq:kschurt} 
as a quasi-symmetric function expansion in the fundamental basis.

Recently, Dalal and Morse~\cite{DalalMorse:2012} have proven a second characterization of
the $k$-Pieri at $t=1$ which leads to a very different sort of tableaux that enumerate $K^{(k)}_{\la\mu}$.
They use these tableaux to provide an alternative combinatorial
interpretation to Equation~\eqref{eq:HLcomb}. 

We note that the line between what is called a `conjectured property' and what is
called a `definition' is sometimes a little blurry because we have provided 
several definitions of $k$-Schur functions which are conjectured to be equivalent.
There are reasons to do this instead of taking one as definition and the
rest as conjectured formulas;  historically the $k$-Schur functions were 
presented in the literature this way.   However, we note that there
may be more definitions of the $k$-Schur functions than presented in this section.
For instance, the $k$-shape poset presented in Section~\ref{sec:kshapeplus} and the 
representation theoretical definition which is briefly discussed in Section~\ref{sec:reptheory} 
are two other properties which may be taken as definitions, but are currently only conjectured 
to be equivalent to the definitions presented here.
\end{subsection}
\end{section}

%%%%%%%%%%%%%%%%%%%%%%%%%%%%%%%%%%%%%%%%%%%%%%%%%%%%%%
\begin{section}{Properties of $k$-Schur functions and their duals}
\label{section.properties}
%%%%%%%%%%%%%%%%%%%%%%%%%%%%%%%%%%%%%%%%%%%%%%%%%%%%%%
In this section we list many of the properties of the $k$-Schur functions,
both conjectured and proven. This section is mainly meant to
be a statement of the `current state of affairs' and is likely to change.  
There may not be a lot to
say about each of these properties and instead we provide a proper reference
for the statement of the conjecture or property, but we shall endeavor to
add a few words about what needs to be proven in order to say why the property is true.
The properties are each marked in the discussion with one of four headers
$A^{(k)}$, $\At^{(k)}$, $\tilde{s}^{(k)}$, $s^{(k)}$ to indicate comments 
on which of the four definitions
(the $k$-atom definition of Section~\ref{sec:atom}, 
the operator definition of Section~\ref{sec:operator}, 
the weak tableaux definition of Section~\ref{sec:weak II}, and
the strong tableaux definition of Section~\ref{sec:strong}).
Since $s_\la^{(k)} = \tilde{s}_\la^{(k)}$ at $t=1$ by~\cite[Theorem 4.11]{LLMS:2006},
we often group these two definitions together.

%%%%%%%%%%%%%%%%%%%%%%%%%%%%%%%%%%%%%%%%%%%%%%%%%%%%
\begin{subsection}{$k$-Schur functions are Schur functions when $k\geq|\la|$ and when $t=0$}
\label{subsection.limit}
Note that the stated property for $k\ge |\la|$ (resp. $t=0$) are two different statements.
Nevertheless, it has been proven for all definitions
of the $k$-Schur functions.  It is known that $s^{(k)}_\la[X;t] = s_\la$ plus other terms indexed by
partitions $\mu$ which are 
strictly larger in dominance order, each with a positive power of $t$ as a coefficient.

\TAB (proven, \cite[Property 6]{LLM:2003}) 
When $k$ is larger than the largest hook of $\la$, then the only katabolizable 
tableau is the semi-standard tableau of shape $\la$ and weight $\la$.  The charge of
this tableau is $0$ so it is clear that $A^{(k)}_\la[X;t] = s_\la$. For $t=0$, 
we know that $A^{(k)}_\la[X;0] = s_\la$ because only the tableau
of shape $\la$ and weight $\la$ has charge $0$ in the atom tableaux.

\OPER (proven, \cite[Property 8]{LM:2003}) 
This is slightly more complicated than the tableau definition
but notice that $G^{(k)}_{\la}[X;t] = s_\la$ if $k \geq |\la|$.  The action of the
operator $\Bop_m( s_\la) = s_{(m,\la_1, \ldots, \la_{\ell(\la)})}$ + other terms with
first part which is larger than $m$ (and hence will be killed by the operator $T^{(k)}_m$).
Also at $t=0$, the operator $\Bop_m \coeff_{t=0} = \Sop_m$ and $G^{(k)}_\la[X;0] = s_\la$.

\STRONG (proven, \cite[Property 39]{LM:2007}) 
One would need to trace through the definition of
the strong order, but for $k \geq |\la|$, all marked strong tableaux will be standard tableaux
and the markings of the marked strong tableaux are exactly the condition that these tableaux
should be isomorphic to semi-standard tableaux.  Because the heights of the ribbons are all
$1$, the spins for all tableaux are $0$.  Therefore, 
$s^{(k)}_\la[X;t] = \sum_{\mu} K_{\la\mu} m_\mu = s_\la$.

\WEAK (proven) When $k$ is large, weak $k$-tableaux are usual semi-standard tableaux 
and~\eqref{tinvdef} turns into the definition of Schur functions. Similarly, $Q'_\la[X;0]=s_\la$
which shows the second statement.
\end{subsection}

%%%%%%%%%%%%%%%%%%%%%%%%%%%%%%%%%%%%%%%%%%%%%%%%%%%%
\begin{subsection}{The $k$-Schur function is Schur positive}
In fact, a more refined conjecture to the statement that $k$-Schur functions are Schur positive,
is that the $k$-Schur functions expand positively in the $(k+1)$-Schur functions.
This property is usually referred to as `$k$-branching' and is discussed in
slightly more detail in Section~\ref{sec:kbranch} and again in Section~\ref{sec:kshapeplus}.
Repeated applications of the $k$-branching property
yields a positive expansion of the $k$-Schur functions in
the Schur functions.  
An explicit rule for the $k$-branching formula with a $t$
is conjectured in \cite[Conjecture 1]{LLMS:2010}.

\TAB (proven, \cite[Property 7]{LLM:2003}) This property follows directly from the definition.  
Since $A^{(k)}_\la[X;t]$ is a sum of Schur functions, one for each tableau in the 
set, this property is true by definition.  

\OPER (conjecture, \cite[Equation (1.6)]{LM:2003}) 
Equation \eqref{conj:parabolicformula}
is an outstanding conjecture due to Shimozono and Weyman \cite{ShimozonoWeyman}
that $H_{\la^{(\ast)}}[X;t]$ is Schur positive
whenever the concatenation of all of the partitions of $\la^{(\ast)}$ form a partition
(which happens for all $k$-splits of partitions).  
Conjecture \ref{conj:Atomequiv} is likely to 
follow from this conjecture and hence so would the property that $\At^{(k)}_\la[X;t]$
is Schur positive.

\STRONG (for arbitrary $t$ it follows from \cite[Conjecture 1]{LLMS:2010}, for $t=1$ follows 
from~\cite[Theorem 2]{LLMS:2010}) 
In Section~\ref{sec:kshapeplus} we will give a precise
conjecture of how $s^{(k)}_\la[X;t]$ expands positively in $s^{(k+1)}_\la[X;t]$.  
If we iteratively apply that rule,
it must be that $s^{(k)}_\la[X;t]$ expands positively in the limit as 
$k$ increases, and when $k\geq \la_1+\ell(\la)-1$
then $s^{(k)}_\la[X;t] = s_\la[X]$.
In addition, some conjectures of~\cite{LLMS:2006} were proven in~\cite{BSS:2012a},
in particular that ``skew shaped'' strong Schur functions are symmetric.

Assaf and Billey~\cite{AssafBilley} convert the 
monomial expansion of $k$-Schur functions
in terms of strong marked tableaux into a 
quasi-symmetric function expansion.   They conjectured that
a dual equivalence graph structure can be placed on strong tableaux and using 
Assaf's earlier work \cite{Assaf:2008,Assaf:2010} this structure
can in theory be used to show that the $s^{(k)}_\la[X;t]$ are Schur positive
as long as a computer check on a finite number of elements is completed.  Billey informs us that
this calculation has a current estimated running time which is quite long and has not yet been
completed.

\WEAK (conjecture, proven for $t=1$) 
Since $s_\la^{(k)} = \tilde{s}_\la^{(k)}$
for $t=1$, this property is true by~\cite[Theorem 2]{LLMS:2010}.
A promising approach for generic $t$ is underway;
by duality, the positivity of $\tilde s_{\lambda}^{(k)}[X;t]$ in 
terms of $\tilde s_{\mu}^{(k+1)}[X;t]$'s follows from the
positivity of dual $k+1$-Schur functions into dual $k$-Schur functions.
This, in turn, would follow by showing that there is a compatibility 
between $k$-charge and the weak bijection introduced in \cite{LLMS:2010}.
A partial solution has been given in \cite{LapointePinto},
where the compatibility is shown  for standard $k$-tableaux.

\end{subsection}

%%%%%%%%%%%%%%%%%%%%%%%%%%%%%%%%%%%%%%%%%%%%%%%%%%
\begin{subsection}{At $t=1$, the $k$-Schur functions satisfy the $k$-Pieri rule}
\label{sec:kpieriprop}
In Sections~\ref{sec:weaksection} and~\ref{sec:strongsection} we discussed the $k$-Pieri
rule at $t=1$. It states that for $1 \leq r \leq k$, $h_r s^{(k)}_\la[X;1] = \sum_\mu s^{(k)}_\mu[X;1]$ where
the sum is over all partitions $\mu$ such that $\mu \vdash |\la|+r$, $\mfc(\la) \subseteq \mfc(\mu)$ and 
$\mfc(\mu)/\mfc(\la)$ is a weak horizontal strip.

\vskip .1in
\noindent
\TAB (conjecture \cite[Conjecture 41]{LLM:2003}) The tableaux operations for the definition of the
$k$-atoms are somewhat unusual and more work needs to be done to understand
many of their properties.  Combinatorially, the $k$-Pieri rule is most 
clearly stated in terms of $(k+1)$-cores, the connection
between $k$-atoms and $(k+1)$-cores is not clear in this definition.

\OPER (conjecture \cite[Conjecture 21]{LM:2003}) In order to prove this property it seems as though
it would be necessary to understand the commutation relationship between
multiplication by a symmetric function $h_r$ and the operators
$T_i$ of Equation \eqref{eq:killingoperators} when $t=1$.

\WEAK (proven \cite[Theorem 29]{LM:2007}) This property follows by showing
that the number of weak $k$-tableaux of weight $\alpha$ equals the
number of $k$-tableaux of weight $\beta$, for $\beta$ any rearrangement
of the parts of $\alpha$.

\STRONG (proven \cite[Theorem 4.11]{LLMS:2006}) 
It was proven in \cite{LLMS:2006} that $s_\lambda^{(k)}=\tilde s_\lambda^{(k)}$
when $t=1$.   The result then follows from the Pieri rule 
on $\tilde s_\lambda^{(k)}$ from \cite{LM:2007}.

\vskip .1in

It is conjectured that the $k$-Schur functions also satisfy a $t$-analogue 
of the $k$-Pieri rule Equation~\eqref{eq:kweakpieri}
where the operator $\Bop_m$ takes the role of 
multiplication by $h_m$ (and reduces as such when $t=1$).
Recall that Zabrocki~\cite{ZabrockiThesis} determined  the action of $\Bop_m$ on 
Schur functions in his thesis and he gave a 
new proof for the charge formulation of
Hall--Littlewood polynomials $Q_\lambda[X;t]$.

Given a $(k+1)$-core $\lambda$ with $\lambda_1=m$, let $\rho$ be 
the unique partition whose first part is $m$ and where
$\rho/\lambda$ is a horizontal strip of
size $m$ (that is, $\rho$ is obtained by adding a cell to the top of
each column of $\lambda$).   For $\mathfrak p(\lambda)_1\leq r \leq k$,
Maria-Elena Pinto conjectured that
\begin{equation}\label{eq:tpieri}
{\mathbf B_r} \, s_{\lambda}^{(k)}[X;t] = 
\sum_{\mu/\lambda~\text{weak~$r$-strip}} t^{a(\mu)}  s_{\mu}^{(k)}[X;t]
\,,
\end{equation}
where ${a(\mu)}$ is the number of cells of $\rho/\lambda$ whose residue 
does not label a cell of $\mu/\lambda$
(see also \cite{DalalMorse:2013} for a different
conjectured formula using strong order chains
in $\tilde A^k$).

\begin{example}
Let $k=5$ and consider $\lambda=(4,2,1,1)$, so that
$\mathfrak p (\lambda)=(3,2,1,1)$.  
$$
\rho =
{\small \tableau[scY]{\tf 2 | | , \tf 5 | , , \tf 1, \tf 2 | ,,,}}\,\raisebox{-0.5cm}{.}$$
Possible horizontal weak 3-strips are
$${\small \tableau[scY]{\tf 2 | | , \tf 5 | , , \tf 1, \tf 2 | ,,,}}
\qquad
{\small \tableau[scY]{\tf 2 | | , \tf 5 | ,  | ,,,, \tf 4 ,\tf 5}}
\qquad
{\small \tableau[scY]{\tf 2 | |  | , , \tf 1, \tf 2 | ,,,, \tf 4 }}
\qquad
{\small \tableau[scY]{ | ,\tf 5 | , , \tf 1 | ,,,, \tf 4, \tf 5 }}
\qquad
{\small \tableau[scY]{ | ,\tf 5 | ,  | ,,,, \tf 4, \tf 5 ,\tf 0}}
$$
and their respective powers are $t^0, t^1$ (the cell of residue 1 in $\rho$), $t^1$ (the cell of residue 5 in $\rho$), 
$t^2$ (the two cells of residue 2 in $\rho$) and $t^3$ (the cell of residue 1 and the 2 cells of residue 2 in $\rho$).
This gives
\begin{multline*}
{\mathbf B_3} \, s_{4211}^{(5)}[X;t] =
s_{44211}^{(5)}[X;t] +t \,
s_{62211}^{(5)}[X;t] + t \,
s_{54111}^{(5)}[X;t] + t^2 \,
s_{6321}^{(5)}[X;t] +  t^3 \, 
s_{7221}^{(5)}[X;t]\;.
\end{multline*}
\end{example}

As with the action of $\Bop_m$ on a Schur functions, if $m < \la_1$, 
the $k$-Schur expansion of $\Bop_m( s^{(k)}_\la[X;t] )$ has
negative terms.  Currently, there is no conjecture describing
these terms that cancel when $t=1$.

\begin{sagedemo}
Here we demonstrate the action of the $\Bop_m$ operator
on the $k$-Schur function basis.  For $k \geq m \geq \la_1-1$ this
is conjectured to expand positively in the $k$-Schur basis.
\begin{sageexample}
    sage: Sym = SymmetricFunctions(FractionField(QQ["t"]))
    sage: ks4 = Sym.kschur(4)
    sage: ks4([3, 1, 1]).hl_creation_operator([1])
    (t-1)*ks4[2, 2, 1, 1] + t^2*ks4[3, 1, 1, 1] + t^3*ks4[3, 2, 1] 
     + (t^3-t^2)*ks4[3, 3] + t^4*ks4[4, 1, 1]
    sage: ks4([3, 1, 1]).hl_creation_operator([2])
    t*ks4[3, 2, 1, 1] + t^2*ks4[3, 3, 1] + t^2*ks4[4, 1, 1, 1]
     + t^3*ks4[4, 2, 1]
    sage: ks4([3, 1, 1]).hl_creation_operator([3])
    ks4[3, 3, 1, 1] + t*ks4[4, 2, 1, 1] + t^2*ks4[4, 3, 1]
    sage: ks4([3, 1, 1]).hl_creation_operator([4])
    ks4[4, 3, 1, 1]
\end{sageexample}
\end{sagedemo}
\end{subsection}

%%%%%%%%%%%%%%%%%%%%%%%%%%%%%%%%%%%%%%%%%%%%%%%%%%%%%%
\begin{subsection}{$k$-conjugation}
The $\omega$-involution sending $s_\lambda$ to $s_{\la'}$
acts simply on $k$-Schur functions as well.
It was conjectured in \cite{LLM:2003,LM:2003} that,
for some non-negative power of $t$,
\begin{equation}\label{eq:kconjwt}
\omega( s^{(k)}_\la[X;t]) = t^{d} s^{(k)}_{\la^{\tilde{\omega}_k}}[X;1/t]\,,
\end{equation}
where $\la^{\tilde{\omega}_k}$ is a $k$-bounded partition depending on $\la$ and $k$.
Later, it was shown in \cite{LM:2005} that $\la^{\tilde{\omega}_k}=\la^{\omega_k}$ 
corresponds to usual conjugation in the $k+1$-core framework
(see Definition~\ref{def:kconjugate}).
One consequence of $k$-branching \cite{LLMS:2010} is that
the power of $d$ counts cells of $\mfc(\la)$ 
with hook greater than $k$ (see also \cite{CH:2008}).

\TAB (conjecture~\cite[Conjecture 36]{LLM:2003})
Using the definition of $k$-Schur functions in terms of tableaux atoms, there is definitely 
an orientation in the definition of katabolism that is not compatible with the notion of conjugation
of the tableau.  This is because the first step of the definition involves a split of the reading
word of the tableau which involves reading the rows, and it really is not
clear how this would carry to the columns.

One possible approach to the tableaux definition would be to show that there is a bijection 
between the tableaux in the atoms $\AAA^{(k)}_{\la}$ and $\AAA^{(k)}_{\la^{\omega_k}}$.  
Since the $k$-conjugation is most easily expressed in terms of $(k+1)$-cores, 
it seems that some connection between atoms and $(k+1)$-cores will need to be found.

\OPER (conjecture~\cite[Conjecture 40]{LM:2003}, at $t=1$ this is~\cite[Conjecture 19]{LM:2003}) 
Since the algebraic definition depends on the operation of $k$-split, it is not clear
how the action of $\omega$ interacts with the individual operators.  There is some hope
that some algebraic tools will be developed to resolve this conjecture by looking at the
expansion in terms of $s_\lambda[X/(1-t)]$ since
for an element $f[X;t] \in \La^t_{(n)}$ it is at least known 
that $\omega( f[X;1/t]) \in \La^t_{(n)}$.

\STRONGWEAK (conjecture, for $t=1$ this is~\cite[Theorem 38]{LM:2007})  At $t=1$ this property
follows because of~\cite[Theorem 33]{LM:2007}, where  a formula for the product of $e_r$ and
a $k$-Schur function is given as
\begin{equation}\label{eq:erkpierirule}
e_r s_\lambda^{(k)} = \sum_\mu s_\mu^{(k)}\;,
\end{equation}
where the sum is over all $k$-bounded partitions $\mu$ of size $|\la|+r$ such that
$\mu/\la$ is a vertical strip and $\mu^{\omega_k}/\la^{\omega_k}$ a horizontal strip.

\begin{sagedemo}
Let us check an example of equation~\eqref{eq:kconjwt} by a calculation in \Sage.
In order to invert the parameter $t$, we must first expand
the $k$-Schur function in a basis which is independent of the parameter $t$.
In fact, if we apply the involution $\omega$ alone, the function no longer
lies in the space spanned by the $k$-Schur functions.  However, if we
apply $\omega$ and invert the parameter, then it does belong to the right
space.
\begin{sageexample}
    sage: Sym = SymmetricFunctions(FractionField(QQ["t"]))
    sage: ks3 = Sym.kschur(3)
    sage: ks3([3,2]).omega()
    Traceback (most recent call last):
    ...
    ValueError: t^2*s[1, 1, 1, 1, 1] + t*s[2, 1, 1, 1] + s[2, 2, 1] is not 
    in the image of Generic morphism:
    From: 3-bounded Symmetric Functions over Fraction Field of Univariate 
    Polynomial Ring in t over Rational Field in the 3-Schur basis
    To:   Symmetric Functions over Fraction Field of Univariate Polynomial Ring
    in t over Rational Field in the Schur basis

    sage: s = Sym.schur()
    sage: s(ks3[3,2])
    s[3, 2] + t*s[4, 1] + t^2*s[5]
    sage: t = s.base_ring().gen()
    sage: invert = lambda x: s.base_ring()(x.subs(t=1/t))
    sage: ks3(s(ks3([3,2])).omega().map_coefficients(invert))
    1/t^2*ks3[1, 1, 1, 1, 1]
\end{sageexample}
In fact, there is a short-cut for the last computation in \Sage by simply asking
\begin{sageexample}
    sage: ks3[3,2].omega_t_inverse()
    1/t^2*ks3[1, 1, 1, 1, 1]
\end{sageexample}
\end{sagedemo}
\end{subsection}

%%%%%%%%%%%%%%%%%%%%%%%%%%%%%%%%%%%%%%%%%%%%%%%%%%%%%
\begin{subsection}{The $k$-Schur functions form a basis for $\La^t_{(k)}$} \label{sec:basis}
Recall from Equation~\eqref{Laktspacedef} that the definition of 
$\La^t_{(k)}$ is the linear span over $\QQ(q,t)$ of
the symmetric functions $H_\la[X;q,t]$ (or $s_\la[X/(1-t)]$ or $Q'_\la[X;t]$)
over all partitions $\la$ with $\la_1 \leq k$.  
For each of our definitions, it is not necessarily clear 
that the $k$-Schur functions even lie in $\La^t_{(k)}$.
However, if they do and if they are linearly independent, 
they will form a basis since they are also indexed by $k$-bounded partitions.

\TAB (conjecture \cite[Conjecture 8]{LLM:2003}) 
The atoms are known to be linearly independent~\cite[Property 7]{LLM:2003}
since they are triangular with respect to the Schur functions.
Nevertheless, it remains a conjecture that they are elements 
of $\La^t_{(k)}$; their combinatorial
definition does not give a direct connection with the known bases of $\La^t_{(k)}$. 
It seems as though the most likely means of proving this conjecture is to show
Conjecture~\ref{conj:Atomequiv}, otherwise there is no obvious connection 
with the spanning elements which define $\La^t_{(k)}$.

\OPER (proven \cite[Theorem 33]{LM:2003})  This result is non-trivial because 
it is not easy to demonstrate that the elements $G^{(k)}_\la[X;t]$ form
a basis of $\La^t_{(k)}$.

\STRONG (conjecture, discussion of this definition is in \cite[Section 9.3]{LLMS:2006} but
this particular property is not directly addressed; at $t=1$ this is~\cite[Property 27]{LM:2007})
In reference \cite{LM:2007} the $k$-Schur functions $s^{(k)}_\la$
are defined as the basis which satisfies
the $k$-Pieri rule of \eqref{eq:kweakpieri} and from that definition it is clear that
$s^{(k)}_\la \in \La_{(k)}$. The fact that they form a basis follows from
a unitriangularity relation with the basis $\{ h_\la : \la_1 \leq k \}$ that follows 
from the $k$-Pieri rule.

\WEAK (proven \cite{DalalMorse:2012}) Since $Q'_\la[X;t]$ forms a basis of $\La^t_{(k)}$ and the matrix $K_{\la\mu}^{(k)}(t)$
is invertible, $\tilde{s}_\la^{(k)}[X;t]$ also forms a basis of $\La^t_{(k)}$.

\end{subsection}

%%%%%%%%%%%%%%%%%%%%%%%%%%%%%%%%%%%%%%%%%%%%%%%%%%%%%
\begin{subsection}{The $k$-rectangle property} \label{sec:rectprop}

A remarkable property of the $k$-Schur functions is that 
it is trivial to multiply any $s_\lambda^{(k)}$ by
a $k$-Schur function indexed by a $k$-rectangle -- any partition of the
form $(\ell^{k-\ell+1})$.  Precisely,
for any $k$-bounded partition $\lambda$ and any integer
$1 \leq \ell \leq k$, 
\begin{equation}
s_{\ell^{k+1-\ell}}\,s^{(k)}_\lambda = s^{(k)}_{\lambda\cup\ell^{k+1-\ell}}\,,
\end{equation}
where $\lambda\cup\nu$ depicts the partition obtained by
putting the parts of $\lambda$ and $\nu$ into non-increasing order.
In fact, this property has a generic $t$ analog in which
the Schur function $s_{\ell^{k+1-\ell}}$ is replaced by
the operator $\Bop_{\ell^{k+1-\ell}}$ defined in Equation \eqref{def:Bopla}.  
Then, given $\lambda$ and an integer $1 \leq \ell \leq k$, 
\begin{equation}
\Bop_{(\ell^{k-\ell+1})} s^{(k)}_\la[X;t] = 
t^{|\mu| - \ell(\mu) \ell} s^{(k)}_{(\ell^{k-\ell+1}) \cup \la}[X;t]~,
\end{equation}
where $\la = (\mu, \nu)$ with $\mu_{\ell(\mu)} > \ell \geq \nu_1$.
A by-product of this result is that any $k$-Schur function
can be obtained by $k$-rectangle translation of
elements in a distinguished set of $k!$ $k$-Schur functions.
These $k!$ elements are those indexed by irreducible partitions
--partitions with at most $k-r$ parts of size $r$, for $1\leq r\leq k$.
For any $k$-bounded partition $\nu$, up to a $t$-factor,
\begin{equation}
s_\nu^{(k)}[X;t] = \Bop_{R_1} \Bop_{R_2} \cdots \Bop_{R_d} s^{(k)}_\la[X;t]
\,,
\end{equation}
where $\la$ is the irreducible partition obtained by
removing $k$-rectangles $R_1,\ldots, R_d$ from $\nu$.

\vskip .1in
\noindent
\TAB (conjecture \cite[Conjecture 21]{LLM:2003})  
This is not known and there are no obvious techniques to be tried.
One caveat of the atom definition is that the $t=1$ case does not
simplify things; the result is also unknown when $t=1$.

\OPER (proven \cite[Theorem 26]{LM2:2003}) This is shown by developing properties
of the $\Bop_\lambda$ operators and the commutation relations with
the operator $T_m^{(k)}$.

\STRONGWEAK (conjecture, but proven for $t=1$ in~\cite[Theorem 40]{LM:2007}) 
When $t=1$, the operator $\Bop_R$ reduces to 
multiplication by $s_R$ and it was shown that
the linear operation of adding a $k$-rectangle to the index of a $k$-Schur function 
commutes with the Pieri rule
from Equation \eqref{eq:kweakpieri}.  
\end{subsection}

%%%%%%%%%%%%%%%%%%%%%%%%%%%%%%%%%%%%%%%%%%%%%%%%%%%%%%
\begin{subsection}{When $t=1$, the product of $k$-Schur functions is $k$-Schur positive} 
\label{sec:teq1positive}
Note that $\La^t_{(k)}$ is not an algebra and the product of
two arbitrary $k$-Schur functions does not remain in the space.
However, when $t=1$ we have $\La_{(k)}^{t=1} = \La_{(k)}$ 
and as discussed in Section~\ref{sec:weaksection}, the space defined in
Equation~\eqref{kboundedspace} is closed under multiplication.
The structure coefficients $c_{\la\mu}^{\nu(k)}$ defined by
\begin{equation}\label{eq:kLRrule}
s^{(k)}_\la s^{(k)}_\mu = \sum_\nu c_{\la\mu}^{\nu(k)} s^{(k)}_\nu
\end{equation}
are non-negative integer coefficients.
Recall from Section~\ref{sec:nilcoxeter}
that these are
now called $k$-Littlewood-Richardson coefficients and
they are a family of constants that includes Gromov-Witten invariants
for complete flag varieties, WZW-fusion coefficients,
and the structure constants of Schubert polynomials.

\vskip .1in
\noindent
\TAB (conjecture~\cite[Conjecture 39]{LLM:2003}) 
Note that the $k$-Pieri rule is a special case of computing
the product of two $k$-Schur functions.
As discussed in Section~\ref{sec:kpieriprop},
the techniques to work with
atoms have yet to be developed and even this `simple' case
remains unproven.  However, the atom definition is likely to be
useful in gaining insight into the 
combinatorial nature of the structure coefficients.

\OPER (conjecture \cite[Conjecture 20]{LM:2003})
In this case we have again that the $k$-Pieri rule at $t=1$ is a conjecture.
It would be sufficient to show that $s^{(k)}_\lambda[X;1] =
\At^{(k)}_\la[X;1]$ since the $k$-Pieri rule characterizes $s^{(k)}_\lambda[X;1]$ and
hence the result is known in this case.
It seems that the algebraic definition using operators might be
helpful finding a $t$-analogue of the coefficients $c_{\la\mu}^{\nu(k)}$.

\STRONGWEAK (proven \cite[Corollary 8.2]{Lam:2008}) 
Lam~\cite{Lam:2008} proved that the $s^{(k)}_\la[X;1]$ 
are isomorphic to the Schubert basis for the
homology of the affine Grassmannian.  From geometric considerations, 
it follows that the structure coefficients $c_{\la\mu}^{\nu(k)}$ 
enumerate certain curves in a finite flag variety.
\end{subsection}

%%%%%%%%%%%%%%%%%%%%%%%%%%%%%%%%%%%%%%%%%%%%%%%%%%%%%
\begin{subsection}{Positively closed under coproduct}\label{sec:positivecoprod}
The space $\La^t_{(k)}$ is not an algebra, but since it is linearly spanned by the elements
$s_\lambda[X/(1-t)]$ for $\lambda_1 \leq k$, it is a coalgebra under the coproduct defined by
\begin{equation}\label{coproductdef}
\Delta( s_\la[X/(1-t)] ) = \sum_{\mu, \nu}  c^{\la}_{\mu\nu} s_\mu[X/(1-t)] s_{\nu}[Y/(1-t)]
\end{equation}
(where the $c^\la_{\mu\nu}$ are the Littlewood--Richardson coefficients). Compare this also
with the coproduct~\eqref{equation.coproduct schur} on $\La$. Since if $\la$ is $k$-bounded 
then all of the terms $\mu,\nu$ which appear in this expansion will also be $k$-bounded,
it is conjectured that if the coefficients $C^{\la}_{\mu\nu}(t)$ are defined as the coefficients in the expansion
\begin{equation}\label{coproductexp}
\Delta( s^{(k)}_\la[X;t] ) = \sum_{\mu,\nu} C^{\la(k)}_{\mu\nu}(t) s^{(k)}_\mu[X;t] s^{(k)}_\nu[Y;t]\;,
\end{equation}
then the $C^{\la}_{\mu\nu}(t)$ are polynomials in $t$ with non-negative integer coefficients.

\TAB (conjecture \cite[Conjecture 17]{LLM:2003})

\OPER (conjecture \cite[Conjecture 41]{LM:2003})

\STRONG (conjecture, proven for $t=1$ in~\cite[Corollary 8.1]{Lam:2008}) 
Because of the duality of the elements
$\SS^{(k)}_\mu$ to the elements $s^{(k)}_\la$, it follows
that at $t=1$, the basis $\SS^{(k)}_\la$ of the space $\La^{(k)}$ multiplies as
\begin{equation*}
\SS^{(k)}_\mu \SS^{(k)}_\nu = \sum_{\la} C_{\mu\nu}^{\la(k)} \SS^{(k)}_\la \;,
\end{equation*}
where $C^{\la(k)}_{\mu\nu} = C^{\la(k)}_{\mu\nu}(1)$.

Although we can say little about the $A^{(k)}$ and $\At^{(k)}$ cases, 
we can determine from the definition of $s^{(k)}_\la[X;t]$ that
\begin{equation*}
	s^{(k)}_\la[X+z;t] = \sum_{r \geq 0} z^r h_r^\perp s^{(k)}_\la[X;t] 
	= \sum_{r\geq 0} z^r C^{\la(k)}_{\mu(r)}(t) s^{(k)}_\mu[X;t]\;.
\end{equation*}
In this special case the coefficients are
\begin{equation*}
	C^{\la(k)}_{\mu(r)}(t) = \sum_{\kappa^{(\ast)},c_\ast} t^{\spin(\kappa^{(\ast)},c_\ast)}
\end{equation*}
with the sum is over all strong marked horizontal strips from $\mfc(\mu)$ to
$\mfc(\la)$.  That is, the sum runs over $\kappa^{(\ast)},c_\ast$ which are $(k+1)$-core tableaux of the form
\begin{equation}
\kappa^{(0)} = \mfc(\la) \Rightarrow_k \kappa^{(1)} \Rightarrow_k \kappa^{(2)} \Rightarrow_k \cdots \Rightarrow_k \kappa^{(r)} = \mfc(\mu)
\end{equation}
and markings $c_1 < c_2 < \cdots < c_r$ where $c_i$ is the 
content of the lower right
hand cell of one of the ribbons of $\kappa^{(i)}/\kappa^{(i-1)}$ and 
where $\spin(\kappa^{(\ast)},c_\ast)$ is defined, as before, as the sum of the spins of the
strong marked ribbons $(\kappa^{(i)}/\kappa^{(i-1)}, c_i)$.

\WEAK (conjecture)
For $t=1$, $s_\lambda^{(k)} = {\tilde s}_\lambda^{(k)}$ and hence also follows from \cite[Corollary 8.1]{Lam:2008}.

\begin{sagedemo}
Here is a calculation in \Sage where we observe that
the coefficients that appear in an example of~\eqref{coproductexp} are
polynomials in $\NN[t]$.
\begin{sageexample}
    sage: Sym = SymmetricFunctions(FractionField(QQ["t"]))
    sage: ks3 = Sym.kschur(3)
    sage: ks3[3,1].coproduct()
    ks3[] # ks3[3, 1] + ks3[1] # ks3[2, 1] + (t+1)*ks3[1] # ks3[3] 
    + ks3[1, 1] # ks3[2] + ks3[2] # ks3[1, 1] + (t+1)*ks3[2] # ks3[2] 
    + ks3[2, 1] # ks3[1] + (t+1)*ks3[3] # ks3[1]  + ks3[3, 1] # ks3[]
\end{sageexample}
\end{sagedemo}

\end{subsection}

%%%%%%%%%%%%%%%%%%%%%%%%%%%%%%%%%%%%%%%%%%%%%%%%%%%%
\begin{subsection}{The product of a $k$-Schur and $\ell$-Schur function is $(k+\ell)$-Schur positive}

Recall that one characterization of $\La^t_{(k)}$ is that it is the linear span
of $\left\{ s_\lambda\left[ X/(1-t) \right] \right\}_{\lambda_1\leq k}$.  Now if we know that $f \in \La^t_{(k)}$
and $g \in \La^t_{(\ell)}$ then we know by the Littlewood--Richardson rule that
$f g$ will be in the linear span of $\left\{ s_\lambda\left[X/(1-t) \right] \right\}_{\lambda_1 \leq k+\ell}$.  
By the discussion in Section~\ref{sec:basis}, it is not even clear that the products 
$s_\lambda^{(k)}[X;t] s_\mu^{(\ell)}[X;t]$ or $A_\lambda^{(k)}[X;t] A_\mu^{(\ell)}[X;t]$ will
be in the space $\La^t_{(k+\ell)}$, but it has been proven that $\At_\lambda^{(k)}[X;t]
\At_\mu^{(\ell)}[X;t]$ and $\tilde{s}_\lambda^{(k)}[X;t] \tilde{s}_\mu^{(\ell)}[X;t]$ is an element of $\La^t_{(k+\ell)}$.

Given that the product of a $k$-Schur function and an $\ell$-Schur function
is in the linear span of $\La^t_{(k+\ell)}$, it is natural to conjecture that
the resulting product will be $(k+\ell)$-Schur positive.  We do not know of an attribution
for this conjecture but it seems to have been passed around in discussions and talks on
the subject.
		
\begin{sagedemo}
We demonstrate an example of this conjecture in \Sage by showing that
the product of a $3$-Schur function and a $2$-Schur function expands
positively in terms of $5$-Schur functions.
\begin{sageexample}
    sage: Sym = SymmetricFunctions(FractionField(QQ["t"]))
    sage: ks2 = Sym.kschur(2)
    sage: ks3 = Sym.kschur(3)
    sage: ks5 = Sym.kschur(5)
    sage: ks5(ks3[2])*ks5(ks2[1])
    ks5[2, 1] + ks5[3]
    sage: ks5(ks3[2])*ks5(ks2[2,1])
    ks5[2, 2, 1] + ks5[3, 1, 1] + (t+1)*ks5[3, 2] + (t+1)*ks5[4, 1] 
      + t*ks5[5]
\end{sageexample}
\end{sagedemo}

\end{subsection}

%%%%%%%%%%%%%%%%%%%%%%%%%%%%%%%%%%%%%%%%%%%%%%%%%%%%%%%%%
\begin{subsection}{Branching property from $k$ to $k+1$} \label{sec:kbranch}
One of the properties that is easy to observe when conjecturing the
existence of atoms is that the atoms seem to split into smaller pieces
as $k$ increases.  In the limit (when $k \geq |\la|$), we know that $s^{(k)}_\la[X;t] = s_\la$
(see Section~\ref{subsection.limit}).
Although it is clear that $\La^t_{(k)} \subseteq \La^t_{(k+1)}$, it is not easy
to prove this branching property.

One reason in particular that this is a difficult property to understand is that both the
definition of $A^{(k)}_\la$ and $\At^{(k)}_\la$ involve the operation of the $k$-split, one
in the katabolism procedure, and the other in the $k$-split basis $G^{(k)}_\la[X;t]$.  In theory
the $k$-split of a partition $\la$ can be very different than the $(k+1)$-split of the same
partition (e.g. consider the $4$ and $5$ split of $(4,4,4,3,3,3,2,2,1,1)$ which are 
$((4),(4),(4),(3,3),(3,2),(2,1,1))$ and $((4,4),(4,3),(3,3,2),(2,1,1))$ respectively).  A priori we would 
not expect to see that $A^{(k)}_\la[X;t]$ expands positively in $A^{(k+1)}[X;t]$ or $\At^{(k)}[X;t]$ expands 
positively in $\At^{(k+1)}_\la[X;t]$. However this property was one that was used to conjecture/compute 
the $k$-atoms before there was a first formal definition.

\STRONG (proven for $t=1$ in \cite{LLMS:2010}, \cite[Conjecture 3]{LLMS:2010} 
is combinatorial formula) 
At $t=1$, the proof of the $k \rightarrow k+1$ branching 
for $s^{(k)}_\la[X;t]$ 
follows from the study \cite{LLMS:2010} of a poset on particular partitions
called $k$-shapes.  In Section~\ref{sec:kshapeplus} we will give some 
details on the combinatorics behind these results and state the
explicit combinatorial formula for this rule in Theorem~\ref{thm:brancht1}.
In short, \cite{LLMS:2010} proves that
$s^{(k)}_\la$ expands positively in the elements $s^{(k+1)}_\la$ 
and gives a conjecture for the expansion 
of $s^{(k)}_\la[X;t]$ in terms of elements
of the form $s^{(k+1)}_\la[X;t]$.
For generic $t$, the same paper gives a conjecture formula
Conjecture~\ref{conj:ktbranch} and discusses the additional properties 
needed for the result to hold in general.
Some progress has been made in this direction in \cite{LapointePinto}.

\WEAK (conjecture) At $t=1$, $s^{(k)}_\la = {\tilde s}^{(k)}_\la$, hence the result in this case also follows from
refrence \cite{LLMS:2010}.

\begin{sagedemo}
Here are some examples confirming the branching conjecture (for the implementation
in \Sage):
\begin{sageexample}
    sage: Sym = SymmetricFunctions(FractionField(QQ["t"]))
    sage: ks3 = Sym.kschur(3)
    sage: ks4 = Sym.kschur(4)
    sage: ks5 = Sym.kschur(5)
    sage: ks4(ks3[3,2,1,1])
    ks4[3, 2, 1, 1] + t*ks4[3, 3, 1] + t*ks4[4, 1, 1, 1] + t^2*ks4[4, 2, 1]
    sage: ks5(ks3[3,2,1,1])
    ks5[3, 2, 1, 1] + t*ks5[3, 3, 1] + t*ks5[4, 1, 1, 1] + t^2*ks5[4, 2, 1] 
     + t^2*ks5[4, 3] + t^3*ks5[5, 1, 1]

    sage: ks5(ks4[3,2,1,1])
    ks5[3, 2, 1, 1]
    sage: ks5(ks4[4,3,3,2,1,1])
    ks5[4, 3, 3, 2, 1, 1] + t*ks5[4, 4, 3, 1, 1, 1] 
     + t^2*ks5[5, 3, 3, 1, 1, 1]
    sage: ks5(ks4[4,3,3,2,1,1,1])
    ks5[4, 3, 3, 2, 1, 1, 1] + t*ks5[4, 3, 3, 3, 1, 1] 
     + t*ks5[4, 4, 3, 1, 1, 1, 1] + t^2*ks5[4, 4, 3, 2, 1, 1] 
     + t^2*ks5[5, 3, 3, 1, 1, 1, 1] + t^3*ks5[5, 3, 3, 2, 1, 1] 
     + t^4*ks5[5, 4, 3, 1, 1, 1]
\end{sageexample}
\end{sagedemo}
\end{subsection}

%%%%%%%%%%%%%%%%%%%%%%%%%%%%%%%%%%%%%%%%%%%%%%%%%%%%%%%%%
\begin{subsection}{$k$-Schur positivity of Macdonald symmetric functions}\label{sec:macpos}
Even equipped with all these definitions, it has yet to be understood why
the Macdonald polynomials expand positively in terms of 
$k$-Schur functions.  Recall from \eqref{mackkostka} that,
for any $k$-bounded partition $\mu$, the coefficients  in
$$H_\mu[X;q,t] = 
\sum_{\la: \la_1 \leq k} K^{(k)}_{\la\mu}(q,t) s^{(k)}_\la[X;t]$$
are conjectured to be polynomials in $q$ and $t$ with non-negative
integer coefficients.  An ideal solution to this problem would be 
to find statistics
$a_\mu^{(k)}$ and $b_\mu^{(k)}$ on weak tableaux such that
$$K^{(k)}_{\la\mu}(q,t) = \sum_{T} q^{a_\mu^{(k)}(T)} t^{b_\mu^{(k)}(T)}$$
where the sum is over all standard weak tableaux of shape $\lambda$.
Section \ref{sec:weak II} discusses partial progress in this direction
where such a solution is given for the cases
$K^{(k)}_{\la\mu}(1,1) = K_{\la{1^{|\mu|}}}^{(k)}$ 
and $K^{(k)}_{\la\mu}(0,1) = K_{\la\mu}^{(k)}$.

\TAB (conjecture \cite[Conjecture 8]{LLM:2003})  
This conjecture was the original motivation for
studying $k$-Schur functions;  it was a promising attack on
a combinatorial interpretation of the Macdonald--Kostka coefficients
especially when coupled with the
conjecture \cite[Eq. (1.15)]{LLM:2003} that 
$K_{\la\mu}(q,t) - K^{(k)}_{\lambda\mu}(q,t)$ is in $\NN[q,t]$.
However, a clear combinatorial interpretation of $K_{\lambda\mu}(q,t)$ 
remains elusive as does even the positivity of the polynomials 
$K^{(k)}_{\lambda\mu}(q,t)$.

\OPER (conjecture \cite[Eq. (1.7)]{LM:2003}) 
A preliminary attack of the $k=2$ case of this conjecture was considered in
\cite{LM:1998} and \cite{Zabrocki:1998} although the complete
formulation of the conjecture had not been yet made.  
Lapointe and Morse together with Lascoux
developed the ideas further into $k$-atoms.  Even without knowledge
of this conjecture, the latter reference also refers
to collections of tableaux as `atoms'
and some of the symmetric functions defined there
were actually the $2$-atoms.

\STRONG (conjecture \cite[Eq. (11.6)]{LM:2005})  It was because of the characterization
of the $k$-Schur functions as the basis that satisfies the $k$-Pieri rule of 
Equation~\eqref{eq:kweakpieri} that there is a
combinatorial interpretation for $K^{(k)}_{\lambda\mu}(1,1)$ in terms of weak tableaux.

\WEAK (conjecture) 
This is a conjecture, but for $q=0$, $H_\mu[X;0,t] = Q'_\mu[X;t]$ and 
the definition \cite{DalalMorse:2013} of $\tilde s_\lambda^{(k)}$ yields
that $K_{\lambda\mu}^{(k)}(0,t) = K_{\lambda\mu}^{(k)}(t)$.

\begin{sagedemo}
Here are some of the $k$-analogues of the $(q,t)$-Macdonald--Kostka coefficients computed
in \Sage:
\begin{sageexample}
    sage: Sym = SymmetricFunctions(FractionField(QQ["q,t"]))
    sage: H = Sym.macdonald().H()
    sage: ks = Sym.kschur(3)
    sage: ks(H[3])
    q^3*ks3[1, 1, 1] + (q^2+q)*ks3[2, 1] + ks3[3]
    sage: ks(H[3,2])
    q^4*ks3[1, 1, 1, 1, 1] + (q^3*t+q^3+q^2)*ks3[2, 1, 1, 1] 
     + (q^3*t+q^2*t+q^2+q)*ks3[2, 2, 1] 
     + (q^2*t+q*t+q)*ks3[3, 1, 1] + ks3[3, 2]
    sage: ks(H[3,1,1])
    q^3*ks3[1, 1, 1, 1, 1] + (q^3*t^2+q^2+q)*ks3[2, 1, 1, 1] 
     + (q^2*t^2+q^2*t+q*t+q)*ks3[2, 2, 1] 
     + (q^2*t^2+q*t^2+1)*ks3[3, 1, 1] + t*ks3[3, 2]
\end{sageexample}
\end{sagedemo}

\end{subsection}
\end{section}

%%%%%%%%%%%%%%%%%%%%%%%%%%%%%%%%%%%%%%%%%%%%%%%%%%%%%
\begin{section}{Directions of research and open problems}
\label{section.directions}
In this section we consider further directions of  $k$-Schur
research, some in their early stages.

%%%%%%%%%%%%%%%%%%%%%%%%%%%%%%%%%%%%%%%%%%%%%%%%%%%%%
\begin{subsection}{A $k$--Murnaghan-Nakayama rule}
\label{subsection.MN}
The Murnaghan--Nakayama rule~\cite{LR:1934, Mur:1937, Nak:1941} is a combinatorial formula for
the characters $\chi_\la(\mu)$ of the symmetric group in terms of ribbon tableaux. Under the 
Frobenius characteristic map, there exists an analogous statement on the level of symmetric functions,
which follows directly from the formula
\begin{equation} \label{origMNrule}
	p_r s_\la = \sum_\mu (-1)^{\height(\mu/\la)} s_\mu.
\end{equation}
Here $p_r$ is the $r$-th power sum symmetric function, $s_\la$ is the Schur function
labeled by partition $\la$, and the sum is over all partitions $\la \subseteq \mu$ for which 
$\mu/\la$ is a border strip of size $r$. Recall that a border strip is a connected skew shape
without any $2\times 2$ squares. The height $\height(\mu/\la)$ of a border strip
$\mu/\la$ is one less than the number of rows.

In~\cite{BandlowSchillingZabrocki}, an analogue of the Murnaghan--Nakayama rule for
the product of $p_r$ times $s^{(k)}_\la$ is given. This is derived using the $k$-Pieri rule,
and is expressed in terms of the action of the affine symmetric group (resp. nil-Coxeter group)
on cores. To give the precise result we need to make a couple of definitions.
We define a {\it vertical domino} in a skew-partition
to be a pair of cells in the diagram, with one sitting directly above the
other.  For the skew of two $k$-bounded partitions $\la \subseteq \mu$ we
define the height as
\begin{equation}
\label{e:height}
	\height(\mu/\la) = \text{number of vertical dominos in $\mu/\la$} \;.
\end{equation}
For ribbons, that is skew shapes without any $2\times 2$ squares,
the definition of height can be restated as the number of occupied rows minus the number
of connected components.  Notice that this is compatible with the usual definition of
the height of a border strip.

\begin{definition} \label{def:kribbon}
  The skew of two $k$-bounded partitions, $\mu / \lambda$, is called a
  \emph{$k$-ribbon of size $r$} if $\mu$ and $\lambda$ satisfy the following
  properties: 
  \begin{enumerate}
    \item[(0)] (containment condition) $\la \subseteq \mu$ and $\la^{\omega_k}
      \subseteq \mu^{\omega_k}$; 
    \item[(1)] (size condition) $|\mu/\la|=r$;
    \item[(2)] (ribbon condition) $\mfc(\mu)/\mfc(\la)$ is a ribbon;
    \item[(3)] (connectedness condition) $\mfc(\mu)/\mfc(\la)$
      is $k$-connected, that is, the contents of  $\mfc(\mu)/\mfc(\la)$ form an interval of $[0,k]$
      (where 0 is $k$ are adjacent);
    \item[(4)] (height statistics condition) $\height(\mu / \la) + \height(
      \mu^{\omega_k} / \lambda^{\omega_k}) = r-1$.  
  \end{enumerate}
\end{definition}

Then the $k$-Murnaghan--Nakayama rule states:
\begin{theorem} \label{thm:MN rule}
For $1\le r\le k$ and $\la$ a $k$-bounded partition, we have
\begin{equation*}
  p_r s_\lambda^{(k)} = \sum_\mu (-1)^{\height(\mu/\la)} s_\mu^{(k)},
\end{equation*}
where the sum is over all $k$-bounded partitions $\mu$ such that $\mu /
\lambda$ is a $k$-ribbon of size $r$.
\end{theorem}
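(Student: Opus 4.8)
The plan is to derive Theorem~\ref{thm:MN rule} from the two $k$-Pieri rules together with a Newton identity, following one of the standard routes to the classical rule~\eqref{origMNrule}. First I would rewrite $p_r$ using~\eqref{eq:perelation},
\begin{equation*}
p_r = \sum_{i=0}^{r} (-1)^{r-i}\, i\, h_i\, e_{r-i}\,,
\end{equation*}
which is a legitimate identity inside $\La_{(k)}$ for $1\le r\le k$: each of $p_r$, $h_i$, $e_{r-i}$ with index at most $k$ lies in $\QQ[h_1,\dots,h_k]=\La_{(k)}$, and by Section~\ref{sec:teq1positive} this space is closed under multiplication at $t=1$. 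Since the $i=0$ term vanishes, I would apply the $e$-Pieri rule~\eqref{eq:erkpierirule} and then the weak ($h$-)Pieri rule~\eqref{eq:kweakpieri} to $s_\la^{(k)}$ and collect the coefficient of $s_\mu^{(k)}$, obtaining
\begin{equation*}
p_r\, s_\la^{(k)} = \sum_{\mu} \Big( \sum_{i=1}^{r} (-1)^{r-i}\, i\, N_i(\la,\mu) \Big)\, s_\mu^{(k)}\,,
\end{equation*}
where $N_i(\la,\mu)$ counts the $k$-bounded partitions $\nu$ with $\nu/\la$ a size-$(r-i)$ vertical strip whose $k$-conjugate $\nu^{\omega_k}/\la^{\omega_k}$ is horizontal, and with $\mu/\nu$ a size-$i$ horizontal strip whose $k$-conjugate $\mu^{\omega_k}/\nu^{\omega_k}$ is vertical. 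The theorem then reduces to the combinatorial claim that this inner alternating sum equals $(-1)^{\height(\mu/\la)}$ when $\mu/\la$ is a $k$-ribbon of size $r$ in the sense of Definition~\ref{def:kribbon}, and vanishes otherwise.

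To analyze $N_i$ I would pass to the $(k+1)$-core picture through the bijection $\mfc$. The conditions imposed simultaneously on $\nu/\la$ and on its $k$-conjugate are exactly the \emph{weak} strip conditions on cores (Proposition~\ref{th:hstrip}), so a choice of intermediate $\nu$ is a splitting of the skew $\mfc(\mu)/\mfc(\la)$ into a lower portion (the inner weak vertical strip) and an upper portion (the outer weak horizontal strip). The containment and size conditions (0)--(1) are then immediate, and I expect that non-vanishing of the signed sum forces $\mfc(\mu)/\mfc(\la)$ to be a single $k$-connected ribbon, producing conditions (2)--(3). The height balance (4), $\height(\mu/\la)+\height(\mu^{\omega_k}/\la^{\omega_k})=r-1$, I anticipate will emerge as the bookkeeping identity that pins down the common value of the sign and reconciles the statistic~\eqref{e:height} read off from $\mu/\la$ with the one read off from the core ribbon.

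The evaluation of the inner sum I would perform by the telescoping in the weights $i$ that reduces $\sum_i(-1)^{r-i}i\,N_i$ to a single term in the classical border-strip lemma; the elementary model cases already show the mechanism, since a horizontal strip gives $r\cdot 1-(r-1)\cdot 1=1=(-1)^{0}$ and a vertical strip gives $(-1)^{r-1}$. For a connected $k$-ribbon the admissible splittings are organized by the turns of the core ribbon, and the weighted alternating sum collapses to $(-1)^{\height(\mu/\la)}$, while for every shape that is not a $k$-ribbon the contributions cancel by a sign-reversing involution that moves one cell across a turn. As a consistency check, taking $k\ge|\mu|$ collapses the $\omega_k$-conditions and Definition~\ref{def:kribbon} to ordinary border strips, recovering~\eqref{origMNrule} and fixing the normalization of the sign. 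Alternatively, as indicated in~\cite{BandlowSchillingZabrocki}, the entire computation can be lifted to the affine nil-Coxeter algebra of Section~\ref{sec:nilcoxeter}: one defines noncommutative power sums $\mathbf{p}_r$ from the commuting generators $\hh_i$ by the same Newton identity and computes $\mathbf{p}_r\,\ss_\la^{(k)}$ by left multiplication, so that adding a $k$-ribbon becomes the action of an explicit product of affine transpositions on $\mfc(\la)$ and the sign is tracked by counting vertical dominoes.

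The hard part will be the combinatorial core of the second and third paragraphs: establishing the cancellation in the \emph{doubled} setting. Unlike the classical rule, the Pieri rules constrain $\mu/\la$ and its $k$-conjugate $\mu^{\omega_k}/\la^{\omega_k}$ at once, so each term of the alternating sum records a splitting of the core ribbon that must be consistent with a splitting of its conjugate, and the involution used for cancellation has to respect both descriptions. Proving that the surviving configurations are precisely the $k$-ribbons of Definition~\ref{def:kribbon}---in particular that the $k$-connectedness condition (3) and the height balance (4) are genuinely forced rather than merely sufficient---is where the real work lies, and it is the step most likely to require the affine-symmetric-group reformulation to make the telescoping and the sign accounting transparent.
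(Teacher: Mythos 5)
Your overall strategy is sound and is consistent with how this result is actually established: the paper itself gives no proof of Theorem~\ref{thm:MN rule} but attributes it to \cite{BandlowSchillingZabrocki}, describing that proof as ``derived using the $k$-Pieri rule'' and phrased via the affine nil-Coxeter action on cores --- which is precisely your secondary route. Your starting identity $p_r=\sum_{i=0}^{r}(-1)^{r-i}\,i\,h_i e_{r-i}$ is \eqref{eq:perelation}, both Pieri rules \eqref{eq:kweakpieri} and \eqref{eq:erkpierirule} are available at $t=1$, and your sanity checks (the pure horizontal and pure vertical strip cases, and the $k\ge|\mu|$ degeneration) are correct.

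The difficulty is that what you have written is a plan, not a proof, and the missing piece is the entire content of the theorem. The claim that $\sum_{i=1}^{r}(-1)^{r-i}\,i\,N_i(\la,\mu)$ equals $(-1)^{\height(\mu/\la)}$ exactly when $\mu/\la$ is a $k$-ribbon in the sense of Definition~\ref{def:kribbon} and vanishes otherwise is asserted with only a heuristic (``the weighted alternating sum collapses\dots the contributions cancel by a sign-reversing involution''), and you yourself flag that constructing an involution compatible with the simultaneous constraints on $\mu/\la$ and on $\mu^{\omega_k}/\la^{\omega_k}$ is ``where the real work lies.'' This is not a routine verification: the weight $i$ in the alternating sum means a naive pairing does not suffice (one needs the telescoping of $i\mapsto i-1$ terms, not a bare sign reversal), and the surviving configurations must be shown to satisfy all of conditions (0)--(4) of Definition~\ref{def:kribbon} --- in particular the $k$-connectedness (3) and the height balance (4), whose necessity is exactly the subtle point (the paper even remarks that condition (2) may be redundant given the others, which indicates how delicate the characterization is). Until that cancellation lemma is proved --- either directly on pairs of weak strips in the core picture, or, as in \cite{BandlowSchillingZabrocki}, by first establishing an explicit signed expansion of the noncommutative power sum $\mathbf{p}_r$ in the basis $\{A_w\}$ of the affine nil-Coxeter algebra and then reading off the left action on $\mfc(\la)$ --- the argument is incomplete.
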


Computer evidence suggests that the ribbon condition (2) of Definition~\ref{def:kribbon}
might be superfluous because it is implied by the other conditions of the definition. 
This was checked for $k,r\le $11 and for all $|\la|=n\le 12$ and
$|\mu| = n+r$. Also, the $k$-Murnaghan--Nakayama rule of Theorem~\ref{thm:MN rule}
was only proven for the definition of $k$-Schur functions $s^{(k)}_\la[X;1]$ and not in terms of
$A^{(k)}_\la[X;1]$ or $\At^{(k)}_\la[X;1]$.

Note that a Murnaghan--Nakayama rule potentially provides us 
with a fourth, independent definition of the $k$-Schur functions
in a manner similar to Equation~\eqref{eq:kweakpieri}.
The fault with this approach is that it is not immediately obvious that this
system of equations is invertible and consequently defines the elements $s^{(k)}_\la$.

An analog of the Murnaghan--Nakayama rule for the elements $\SS^{(k)}_\lambda[X]$ would give
a combinatorial interpretation of the $k$-Schur functions in the power sum basis at $t=1$.

\begin{sagedemo}
Let us show how to compute the Murnaghan--Nakayama rule for $\SS^{(k)}_\lambda[X]$. 
The quotient space $\Lambda^{(k)}$ is implemented in \Sage, but there are several
means of computing the coefficients of $p_k \SS^{(k)}_\lambda[X]$ by duality. 
As an example, let us compute $p_2 \SS^{(3)}_{21}[X]$:
\begin{sageexample}
    sage: Sym = SymmetricFunctions(QQ)
    sage: Q3 = Sym.kBoundedQuotient(3,t=1)
    sage: F = Q3.affineSchur()
    sage: p = Sym.power()
    sage: F[2,1]*p[2]
    -F3[1, 1, 1, 1, 1] - F3[2, 1, 1, 1] + F3[3, 1, 1] + F3[3, 2]
\end{sageexample}
Hence this computation shows that 
\[
	p_2 \SS^{(3)}_{21} = \SS^{(3)}_{32} + \SS^{(3)}_{311} - \SS^{(3)}_{2111} - \SS^{(3)}_{11111}.
\]
\end{sagedemo}
\end{subsection}

%%%%%%%%%%%%%%%%%%%%%%%%%%%%%%%%%%%%%%%%%%%%%%%%%%%%%
\begin{subsection}{A rectangle generalization at $t$ a root of unity}
Let $\zeta_m$ be an $m^{th}$ root of unity (take $\zeta_m = e^{2\pi i/m}$).  A result due to 
Lascoux, Leclerc and Thibon~\cite{LLT:1993} states that if $\lambda = (1^{m_1}, 2^{m_2}, \ldots, d^{m_d})$ 
and $m_i = q_i m + r_i$ for  $0 \leq r_i < m$, then
\begin{equation*}
Q'_\lambda[X; \zeta_m ] =
(Q'_{(1^m)}[X; \zeta_m ])^{q_1} (Q'_{(2^m)}[X; \zeta_m ])^{q_2}
\cdots (Q'_{(d^m)}[X; \zeta_m ])^{q_d} Q'_{\nu}[X; \zeta_m]
\end{equation*}
where $\nu = (1^{r_1},2^{r_2}, \ldots, d^{r_d})$.
A similar property is shown by Descouens and Morita~\cite{DM:2007}
for the Macdonald symmetric functions.  Namely they show
\begin{equation*}
H_\lambda[X; q,\zeta_m ] =
(H_{(1^m)}[X; q,\zeta_m ])^{q_1} (H_{(2^m)}[X; q,\zeta_m ])^{q_2}
\cdots (H_{(d^m)}[X; q,\zeta_m ])^{q_d} H_{\nu}[X; q,\zeta_m]~.
\end{equation*}
Moreover it is shown in these references that
$$ Q'_{(r^m)}[X;\zeta_m] = p_m \circ h_r$$
and
$$ H_{(r^m)}[X;q, \zeta_m] = p_m \circ h_r[X/(1-q)] \left( \prod_{i=1}^r (1-q^{i m}) \right)$$
where $\circ$ is the operation of plethysm.

Since at arbitrary $t$, both of these functions expand positively in $k$-Schur
functions, it is natural to ask if this property is shared by the $k$-Schur functions
themselves.  At $t=1$ the $k$-Schur functions satisfy (see Section \ref{sec:rectprop})
\begin{equation}
s^{(k)}_{(\ell^{k-\ell+1})}[X;1] s^{(k)}_{\lambda}[X;1]
= s^{(k)}_{(\ell^{k-\ell+1}) \cup \lambda}[X;1]~.
\end{equation}
At an $m^{th}$ root of unity this property seems to generalize and we conjecture

\begin{conj}  For $\ell \leq k$ and $\zeta_m = e^{2\pi i/m}$
\begin{equation*}
s^{(k)}_{(\ell^{m(k-\ell+1)})}[X;\zeta_m] s^{(k)}_{\lambda}[X;\zeta_m]
= s^{(k)}_{(\ell^{m(k-\ell+1)}) \cup \lambda}[X;\zeta_m]
\end{equation*}
and moreover
$$s^{(k)}_{(\ell^{m(k-\ell+1)})}[X;\zeta_m] = p_m \circ s^{(k)}_{(\ell^{k-\ell+1})}[X;1]~.$$
\end{conj}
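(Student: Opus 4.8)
The plan is to reduce the entire statement to a single root-of-unity factorization for the creation operators $\Bop_P$ of Equation~\eqref{def:Bopla}, modeled on the Lascoux--Leclerc--Thibon factorization of Hall--Littlewood polynomials. Write $R=(\ell^{k-\ell+1})$ for the $k$-rectangle and $R^m=(\ell^{m(k-\ell+1)})$ for its $m$-fold stacking, and let $\phi_m=p_m\circ(-)$ denote plethysm by $p_m$, the ring endomorphism of $\Lambda$ sending $p_r\mapsto p_{mr}$. First I would record three facts that are proven earlier or follow from the $k$-split. Applying the generic-$t$ rectangle property of Section~\ref{sec:rectprop} (\cite{LM2:2003}) to $\la=\emptyset$ gives $s^{(k)}_R[X;t]=\Bop_R(1)=H_{(R)}[X;t]=s_R$, so the $k$-rectangle $k$-Schur function is $t$-independent and equal to the ordinary Schur function $s_R$. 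Iterating the same property, $\Bop_R^m\,s^{(k)}_\lambda[X;t]=t^{\,c}\,s^{(k)}_{R^m\cup\lambda}[X;t]$; because $R^m\cup\lambda$ has the same parts $>\ell$ as $\lambda$, the exponent is $c=m(|\mu|-\ell(\mu)\ell)$ for $\lambda=(\mu,\nu)$, an $m$-fold multiple of the one-step exponent, so $t^{c}\big|_{t=\zeta_m}=1$. In particular $\Bop_R^m(1)=s^{(k)}_{R^m}[X;t]$, which is also $G^{(k)}_{R^m}[X;t]=H_{(R,\dots,R)}[X;t]$ since the $k$-split of $R^m$ is $m$ copies of $R$. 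All functions involved have coefficients in $\ZZ[t]$ in the Schur basis, so every specialization $t\to\zeta_m$ below is unproblematic.

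The crux is the following generalization of the Lascoux--Leclerc--Thibon identity from single-row blocks to an arbitrary block $P$:
\begin{equation}\label{eq:master}
H_{(\underbrace{P,\dots,P}_{m},\,\nu^{(1)},\dots,\nu^{(d)})}[X;\zeta_m]
= (p_m\circ s_P)\cdot H_{(\nu^{(1)},\dots,\nu^{(d)})}[X;\zeta_m]
\end{equation}
for every sequence of partitions $(\nu^{(1)},\dots,\nu^{(d)})$; equivalently, at $t=\zeta_m$ the operator $\Bop_P^m$ acts as multiplication by $p_m\circ s_P$ on the span of the parabolic Hall--Littlewood functions $H_{\la^{(\ast)}}$. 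The case where $P=(i)$ is a single row recovers $Q'_{(i^m)}[X;\zeta_m]=p_m\circ h_i$ and the factorization of \cite{LLT:1993}, while \cite{DM:2007} is the analogous statement for the Macdonald $H$-basis. Taking $P=R$ and the empty sequence in \eqref{eq:master}, together with $s^{(k)}_{R^m}=\Bop_R^m(1)=H_{(R,\dots,R)}$ from the first paragraph, immediately gives the second assertion $s^{(k)}_{R^m}[X;\zeta_m]=p_m\circ s_R=p_m\circ s^{(k)}_{(\ell^{k-\ell+1})}[X;1]$.

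For the product formula I would then propagate \eqref{eq:master} through the $k$-Schur recursion. Expand $s^{(k)}_\lambda[X;t]=\At^{(k)}_\lambda[X;t]=\sum_\mu c_{\lambda\mu}(t)\,G^{(k)}_\mu[X;t]$ in the $G$-basis of Section~\ref{sec:operator}, where $G^{(k)}_\mu[X;t]=H_{\mu^{\to k}}[X;t]$. Applying $\Bop_R^m$ termwise prepends $m$ copies of $R$ to each sequence $\mu^{\to k}$, and then \eqref{eq:master} pulls the prepended block out as the scalar $p_m\circ s_R$ upon setting $t=\zeta_m$:
\begin{equation*}
\Bop_R^m\,s^{(k)}_\lambda[X;t]\big|_{t=\zeta_m}
=(p_m\circ s_R)\sum_\mu c_{\lambda\mu}(\zeta_m)\,H_{\mu^{\to k}}[X;\zeta_m]
=(p_m\circ s_R)\cdot s^{(k)}_\lambda[X;\zeta_m].
\end{equation*}
Combining this with $\Bop_R^m\,s^{(k)}_\lambda[X;t]\big|_{t=\zeta_m}=s^{(k)}_{R^m\cup\lambda}[X;\zeta_m]$ from the first paragraph, and with the identification of $p_m\circ s_R$ as $s^{(k)}_{R^m}[X;\zeta_m]$ from the second, yields exactly $s^{(k)}_{R^m}[X;\zeta_m]\,s^{(k)}_\lambda[X;\zeta_m]=s^{(k)}_{R^m\cup\lambda}[X;\zeta_m]$, the first assertion.

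The hard part is proving the master identity \eqref{eq:master}. The natural route is Jing's vertex-operator realization of the $\Bop_r$, in which $\Bop_P$ is a fixed polynomial in the $\Bop_r$ (the factors $1-tR_{ij}$ in \eqref{def:Bopla} being the straightening that turns a product of row operators into the block $\Bop_P$). At $t=\zeta_m$ the commutation relations among the vertex operators degenerate, and the Descouens--Morita computation shows that an $m$-fold product of \emph{single-row} operators collapses to $\phi_m$ of the corresponding $t=1$ operator. The two advances required are, first, to carry this degeneration through the $R_{ij}$-straightening so that a block of $m$ identical \emph{rectangular} operators $\Bop_R^m$ collapses to multiplication by $\phi_m(s_R)$ rather than by the Hall--Littlewood answer $\phi_m(h_\ell^{\,k-\ell+1})$; and second, to upgrade the resulting value-at-$1$ statement to the operator statement in \eqref{eq:master}, namely that the degenerate $m$-fold block genuinely acts as a multiplication operator, so its annihilation part acts trivially after the collapse. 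I expect this last point, controlling the $e_i^\perp,h_j^\perp$ pieces of $\Bop_P$ under the root-of-unity degeneration, to be the main obstacle; once it is established, the reductions above are formal.
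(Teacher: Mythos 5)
The paper does not prove this statement: it is presented as an open conjecture, supported only by a \Sage computation, so there is no proof of record to compare yours against. Your proposal does not close that gap. Writing $R=(\ell^{k-\ell+1})$ and $R^m=(\ell^{m(k-\ell+1)})$, your outer reductions are sound as far as they go: the generic-$t$ rectangle property of Section~\ref{sec:rectprop} (proven only for the operator definition $\At^{(k)}$) does give $s^{(k)}_R[X;t]=s_R$ and $\Bop_R^m\, s^{(k)}_\la[X;t]=t^{mc}\,s^{(k)}_{R^m\cup\la}[X;t]$ with $\zeta_m^{mc}=1$, and the $k$-split of $R^m$ is indeed $m$ copies of $R$. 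But both assertions of the conjecture are then made to rest on your ``master identity'' --- that at $t=\zeta_m$ the operator $\Bop_R^m$ acts as multiplication by $p_m\circ s_R$ on the span of the parabolic Hall--Littlewood functions --- and that identity is itself an unproven conjecture at least as strong as the statement it is meant to establish. You acknowledge this and correctly locate the obstruction (controlling the annihilation part of $\Bop_R$ under the root-of-unity degeneration), but a reduction to a harder open problem is not a proof.

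Two further cautions about the master identity as you have stated it. First, the precedents you invoke (Lascoux--Leclerc--Thibon for $Q'_\la[X;\zeta_m]$ and Descouens--Morita for $H_\la[X;q,\zeta_m]$) are factorizations of \emph{values} at genuine partitions $\la$, i.e.\ in the regime where the repeated blocks concatenate with the rest to form a partition; your identity is asserted for arbitrary sequences $(\nu^{(1)},\dots,\nu^{(d)})$, and your application genuinely needs this extra generality, because when $\la$ has parts larger than $\ell$ the sequence $(R,\dots,R,\la^{\rightarrow k})$ does not concatenate to a partition. There is no evidence offered that the identity survives outside the partition regime. Second, your product-formula step expands $\At^{(k)}_\la[X;t]$ in the $G$-basis, whereas the conjecture (and the $t=1$ rectangle property it generalizes) is stated for $s^{(k)}_\la[X;t]$; these definitions are only conjecturally equivalent (Conjecture~\ref{conj:allequiv}), so even a complete proof of the master identity would establish the statement only for the operator definition.
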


\begin{sagedemo} 
We demonstrate an example of this conjecture by building two copies of
the $k$-Schur functions in \Sage, one where the parameter $t$ is specialized
to a fourth root of unity, and the other
where $t=1$.  Expanding these $k$-Schur functions in the power sum basis makes it possible
to see the relationship between these elements, checking the second relation:
\begin{sageexample}
    sage: R = QQ[I]; z4 = R.zeta(4)
    sage: Sym = SymmetricFunctions(R)
    sage: ks3z = Sym.kschur(3,t=z4)
    sage: ks3 = Sym.kschur(3,t=1)
    sage: p = Sym.p()
    sage: p(ks3z[2, 2, 2, 2, 2, 2, 2, 2])
    1/12*p[4, 4, 4, 4] + 1/4*p[8, 8] - 1/3*p[12, 4]
    sage: p(ks3[2,2])
    1/12*p[1, 1, 1, 1] + 1/4*p[2, 2] - 1/3*p[3, 1]
    sage: p(ks3[2,2]).plethysm(p[4])
    1/12*p[4, 4, 4, 4] + 1/4*p[8, 8] - 1/3*p[12, 4]
\end{sageexample}
The first relation can be checked as follows:
\begin{sageexample}
    sage: ks3z[3, 3, 3, 3]*ks3z[2, 1]
    ks3[3, 3, 3, 3, 2, 1]
\end{sageexample}
\end{sagedemo}

\end{subsection}

%%%%%%%%%%%%%%%%%%%%%%%%%%%%%%%%%%%%%%%%%%%%%%%%%%%%%%%%%
\begin{subsection}{A dual-basis to $s_\lambda^{(k)}[X;t]$}

Recall from Section~\ref{sec:HLsymfunc} that $P_\lambda[X;t]$ is the dual 
basis to $Q'_\lambda[X;t]$ with respect to the $\left<~.~,~.~\right>$ scalar product.
Moreover, we have the expansion \eqref{tinvdef}
\begin{equation}
Q'_\mu[X;t] = \sum_{\lambda \vdash |\mu|, \lambda_1\leq k} 
K^{(k)}_{\lambda\mu}(t) \tilde s^{(k)}_\lambda[X;t]
\end{equation}
and
\begin{equation}
\tilde s^{(k)}_\lambda[X;t] = \sum_{\mu \vdash |\la|, \mu_1 \leq k} 
K^{(k)}_{\mu\lambda}(t)^{-1} Q'_\mu[X;t]~,
\end{equation}
where $K^{(k)}_{\lambda\mu}(t)$ $t$-enumerate weak tableaux
of shape $\mfc(\la)$ and weight $\mu$.
A $t$-generalization for dual $k$-Schur functions 
of Equation \eqref{eq:dualkschurmexp}
then comes out of this \cite{DalalMorse:2013} by duality, 
\begin{equation}\label{eq:dualkschurwitht}
\SS^{(k)}_\lambda[X;t] := \sum_{\mu\vdash |\la|, \mu_1\leq k} K^{(k)}_{\lambda\mu}(t) P_\mu[X;t]
\end{equation}
These elements clearly live in a space spanned by $\{ P_\la[X;t] \}_{\la_1\leq k}$.  In fact,
by triangularity considerations of the symmetric functions $P_\la[X;t]$, we have that
$$\La_t^{(k)} := 
\LL\left\{ \SS^{(k)}_\lambda[X;t] \right\}_{\la_1\leq k} =
\LL \left\{ P_\lambda[X;t] \right\}_{\la_1\leq k} =
\LL \left\{ s_\lambda[X] \right\}_{\la_1\leq k} =
\LL \left\{ m_\lambda[X] \right\}_{\la_1\leq k}~.$$
While this space is not closed under the usual product, it is closed under coproduct.  

Now recall from Section \ref{sec:HLsymfunc}, that 
$\left< Q'_\la[X;t], P_\mu[X;t] \right> = \delta_{\la\mu}$. Hence
\begin{equation*}
\begin{split}
	\left< s^{(k)}_\mu[X;t], \SS^{(k)}_\lambda[X;t] \right> 
	&= \sum_{\substack{\gamma \vdash |\mu|\\\gamma_1 \leq k}} K^{(k)}_{\gamma\mu}(t)^{-1} 
	\left< Q'_\gamma[X;t], \SS^{(k)}_\lambda[X;t] \right>\\
	&= \sum_{\substack{\gamma \vdash |\mu|\\\gamma_1 \leq k}} K^{(k)}_{\gamma\mu}(t)^{-1} 
	K^{(k)}_{\lambda\gamma}(t)
	= \delta_{\lambda\mu}~.
\end{split}
\end{equation*}
Therefore we can see that the elements 
$\left\{\SS_\la^{(k)}[X;t]\right\}_{\la_1 \leq k}$ are another $t$-analogue 
of the Schur functions which,
by triangularity considerations, live in the linear span of $\LL\{ m_\la : \la_1 \leq k \}$
and are dual to $\left\{ s_\la^{(k)}[X;t] \right\}_{\la_1 \leq k}$ with respect
to the usual scalar product.

Just as with the space $\La^t_{(k)}$, the linear span of
the dual elements for the $k$-Schur functions
is a subspace and not an algebra with respect to the usual product.
We can however make it an algebra, by introducing a product 
\begin{equation}\label{eq:newtproduct}
\SS^{(k)}_\nu[X;t] \cdot^t \SS^{(k)}_\mu[X;t] := 
\sum_{\lambda} C_{\nu\mu}^{\lambda(k)}(t) \SS^{(k)}_\la[X;t]\;,
\end{equation}
where the coefficients $C_{\nu\mu}^{\lambda(k)}(t)$ are precisely those defined by
Equation \eqref{coproductexp}.  The coefficients $C^{\la(k)}_{\mu\nu}(t)$ are 
discussed in Section \ref{sec:positivecoprod}
and they are conjectured to be polynomials in $t$ with non-negative integer coefficients.
To be clear, we take as definition that the space is
closed under a product where the structure coefficients are
\begin{equation}\label{eq:prodcoeffs}
C_{\nu\mu}^{\lambda(k)}(t) = 
\left< \Delta(s^{(k)}_\la[X;t]), \SS^{(k)}_\nu[X;t] \SS^{(k)}_\mu[Y;t]\right>~.
\end{equation}

There are several ways of computing a more explicit formula for these coefficients,
but let us consider one that can be found by expanding the elements
$s^{(k)}_\la[X;t]$ and $\SS^{(k)}_\la[X;t]$ in the Schur basis.
Since
\begin{equation}\label{eq:ksexpansion}
s^{(k)}_\la[X;t] = \sum_{\mu \vdash |\la|} 
\left< s^{(k)}_\la[X;t], s_\mu[X] \right> s_\mu[X]
\end{equation}
and
\begin{equation}\label{eq:dksexpansion}
\SS^{(k)}_\la[X;t] = \sum_{\mu \vdash |\la|} 
\left< \SS^{(k)}_\la[X;t], s_\mu[X] \right> s_\mu[X]\;,
\end{equation}
a formula for these coefficients is found by combining~\eqref{eq:prodcoeffs},
\eqref{eq:ksexpansion}, and~\eqref{eq:dksexpansion} to obtain
\begin{equation}
\label{equation.C}
C_{\nu\mu}^{\lambda(k)}(t) = \sum_{\substack{\theta\vdash|\la|\\
\tau\vdash|\nu|,\gamma\vdash|\mu|}}
c^{\theta}_{\tau\gamma}
\left< s^{(k)}_\la[X;t], s_\theta[X] \right>
\left< \SS^{(k)}_\nu[X;t], s_\tau[X] \right>
\left< \SS^{(k)}_\mu[X;t], s_\gamma[X] \right>
\end{equation}
where the $c^\la_{\mu\nu}$ are the Littlewood--Richardson coefficients previously discussed.

Little is known about this product and it would be useful to understand it in terms
of another basis.  However, it is possible to compute these coefficients as elements
of a quotient algebra or, as we do here, define a projection operator and notice that
the product $\cdot^t$ is simply the usual product followed by a projection into the
space $\La_t^{(k)}$.

\begin{prop} \label{prop:prodproj}
Let $\Theta^{(k)}$ be a projection from $\La$ to $\La_t^{(k)}$, the space spanned by
functions dual to the $k$-Schur functions, defined by
$\Theta^{(k)}( P_\la[X;t] ) = P_\la[X;t]$ if $\la_1 \leq k$, 
and $\Theta^{(k)}( P_\la[X;t] ) = 0$
if $\la_1 > k$, then 
\begin{equation}
\SS^{(k)}_\nu[X;t] \cdot^t \SS^{(k)}_\mu[X;t]
= \Theta^{(k)}( \SS^{(k)}_\nu[X;t] \SS^{(k)}_\mu[X;t] )\;.
\end{equation}
\end{prop}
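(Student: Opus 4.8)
The plan is to verify the identity coefficientwise in the basis $\{\SS^{(k)}_\la[X;t]\}_{\la_1\le k}$ of $\La_t^{(k)}$. First I would note that both sides lie in $\La_t^{(k)}$: the left-hand side by the very definition \eqref{eq:newtproduct} of $\cdot^t$, and the right-hand side because $\Theta^{(k)}$ has image $\La_t^{(k)}$ by construction. Since $\{s^{(k)}_\la[X;t]\}_{\la_1\le k}$ and $\{\SS^{(k)}_\la[X;t]\}_{\la_1\le k}$ are dual bases under the scalar product $\left<\cdot,\cdot\right>$, an element $F\in\La_t^{(k)}$ is completely determined by the numbers $\left<s^{(k)}_\la[X;t],F\right>$ over all $\la$ with $\la_1\le k$; so it suffices to show that these pairings agree for the two sides. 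For the left-hand side this is immediate: duality together with \eqref{eq:newtproduct} gives $\left<s^{(k)}_\la[X;t],\SS^{(k)}_\nu[X;t]\cdot^t\SS^{(k)}_\mu[X;t]\right>=C^{\la(k)}_{\nu\mu}(t)$.

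The substance is the right-hand side, and the key observation is that $\Theta^{(k)}$ is invisible to pairing against any element of $\La^t_{(k)}$. Concretely, $\Theta^{(k)}$ is the projection of $\La$ onto $\La_t^{(k)}=\LL\{P_\la[X;t]:\la_1\le k\}$ along $\ker\Theta^{(k)}=\LL\{P_\la[X;t]:\la_1> k\}$. I would then check that $\La^t_{(k)}=\LL\{Q'_\la[X;t]:\la_1\le k\}$ is exactly the annihilator of $\ker\Theta^{(k)}$: writing $f=\sum_\gamma b_\gamma Q'_\gamma[X;t]$, the relation $\left<Q'_\gamma[X;t],P_\rho[X;t]\right>=\delta_{\gamma\rho}$ from Section~\ref{sec:HLsymfunc} gives $\left<f,P_\rho[X;t]\right>=b_\rho$, so $f$ annihilates every $P_\rho[X;t]$ with $\rho_1>k$ precisely when $f\in\La^t_{(k)}$. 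Consequently, for any $f\in\La^t_{(k)}$ and any $G\in\La$, decomposing $G=\Theta^{(k)}(G)+\bigl(G-\Theta^{(k)}(G)\bigr)$ with the second summand in $\ker\Theta^{(k)}$ yields $\left<f,\Theta^{(k)}(G)\right>=\left<f,G\right>$. Applying this with $f=s^{(k)}_\la[X;t]\in\La^t_{(k)}$ and $G=\SS^{(k)}_\nu[X;t]\SS^{(k)}_\mu[X;t]$ reduces the right-hand side coefficient to $\left<s^{(k)}_\la[X;t],\SS^{(k)}_\nu[X;t]\SS^{(k)}_\mu[X;t]\right>$.

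It then remains to identify this last pairing with $C^{\la(k)}_{\nu\mu}(t)$. Here I would invoke the self-duality of the Hopf algebra $\La$: the product and coproduct are mutually adjoint under the Hall scalar product, so that $\left<f,gh\right>=\left<\Delta(f),g\otimes h\right>$ for all $f,g,h$, where $g\otimes h$ is identified with $g[X]h[Y]$. Taking $f=s^{(k)}_\la[X;t]$, $g=\SS^{(k)}_\nu[X;t]$, $h=\SS^{(k)}_\mu[X;t]$ and comparing with the definition \eqref{eq:prodcoeffs} gives $\left<s^{(k)}_\la[X;t],\SS^{(k)}_\nu[X;t]\SS^{(k)}_\mu[X;t]\right>=\left<\Delta(s^{(k)}_\la[X;t]),\SS^{(k)}_\nu[X;t]\SS^{(k)}_\mu[Y;t]\right>=C^{\la(k)}_{\nu\mu}(t)$, which matches the left-hand side and finishes the argument.

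The proof is essentially formal once this framework is in place, and I expect no deep difficulty; the one point demanding care is the bookkeeping between the two dual spaces $\La^t_{(k)}$ and $\La_t^{(k)}$ and the pairings involved. In particular one must confirm that the single Hall pairing $\left<\cdot,\cdot\right>$ simultaneously underlies both the $(Q',P)$-duality of Section~\ref{sec:HLsymfunc} and the $(s^{(k)},\SS^{(k)})$-duality, and that the coproduct appearing in \eqref{eq:prodcoeffs} is the standard one on $\La$ for which product–coproduct adjointness holds. Verifying these compatibilities is the main (though routine) obstacle, since every substantive step above collapses once they are granted.
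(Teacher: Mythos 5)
Your proposal is correct and follows essentially the same route as the paper's proof: both reduce to computing the coefficient of $\SS^{(k)}_\la[X;t]$, both observe that $\Theta^{(k)}$ is invisible when pairing against $s^{(k)}_\la[X;t]$ because the latter lies in $\LL\{Q'_\gamma[X;t] : \gamma_1\le k\}$ and $\left<Q'_\gamma,P_\rho\right>=\delta_{\gamma\rho}$, and both then identify $\left<s^{(k)}_\la[X;t],\SS^{(k)}_\nu[X;t]\SS^{(k)}_\mu[X;t]\right>$ with $C^{\la(k)}_{\nu\mu}(t)$. The only (cosmetic) difference is that you invoke the product--coproduct adjointness of the Hall pairing abstractly at the last step, whereas the paper carries out the equivalent computation concretely by expanding in the Schur basis and comparing with Equation~\eqref{equation.C}.
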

\begin{proof}
In order to prove this we need to show that the coefficient of $\SS^{(k)}_\la[X;t]$ in
the expression $\Theta^{(k)}( \SS^{(k)}_\nu[X;t] \SS^{(k)}_\mu[X;t] )$ is equal to
$C^{\la(k)}_{\nu\mu}(t)$.  Since $s^{(k)}_\la[X;t]$ is in the linear span
of elements $\{ Q'_\la[X;t] \}_{\la_1 \leq k}$, we can conclude that
the coefficient of $\SS^{(k)}_\la[X;t]$ in $\Theta^{(k)}( \SS^{(k)}_\nu[X;t] \SS^{(k)}_\mu[X;t] )$
is equal to
\begin{equation}\label{eq:dualstructurecoeffs}
\left< \Theta^{(k)}( \SS^{(k)}_\nu[X;t] \SS^{(k)}_\mu[X;t] ), s^{(k)}_\la[X;t] \right> =
\left< \SS^{(k)}_\nu[X;t] \SS^{(k)}_\mu[X;t], s^{(k)}_\la[X;t] \right>~.
\end{equation}
We can use Equations~\eqref{eq:ksexpansion} and~\eqref{eq:dksexpansion} to expand the
right hand side so that it is equal to
\begin{align*}
&\sum_{\substack{\theta\vdash|\la|\\
\tau\vdash|\nu|,\gamma\vdash|\mu|}} \left< s_\tau s_\gamma, s_\theta \right>
\left< \SS^{(k)}_\nu[X;t], s_\tau[X] \right> \left< \SS^{(k)}_\mu[X;t], s_\gamma[X] \right>
\left< s^{(k)}_\la[X;t], s_\theta[X] \right>\\
=&\sum_{\substack{\theta\vdash|\la|\\
\tau\vdash|\nu|,\gamma\vdash|\mu|}} c^\theta_{\tau\gamma}
\left< \SS^{(k)}_\nu[X;t], s_\tau[X] \right> \left< \SS^{(k)}_\mu[X;t], s_\gamma[X] \right>
\left< s^{(k)}_\la[X;t], s_\theta[X] \right> = C^{\la(k)}_{\nu\mu}(t)
\end{align*}
by~\eqref{equation.C}.
This shows that the coefficients that appear in the $t$-product in Equation~\eqref{eq:newtproduct} 
are the same that appear by usual multiplication followed by a projection by $\Theta^{(k)}$.
\end{proof}

\begin{sagedemo}
We can compute the coefficients $C_{\la\mu}^{\nu(k)}(t)$ as structure coefficients of
the dual basis elements $\SS^{(k)}_\la[X;t]$.  These elements are implemented in \Sage
in a space representing the quotient of the ring of symmetric functions $\Lambda$ 
by the ideal generated by the Hall-Littlewood symmetric functions 
$P_\lambda[X;t]$ with $\lambda_1>k$.
\begin{sageexample}
    sage: Sym = SymmetricFunctions(QQ["t"].fraction_field())
    sage: Q3 = Sym.kBoundedQuotient(3)
    sage: dks = Q3.dual_k_Schur()
    sage: dks[2, 1, 1]*dks[3, 2, 1]
    (t^7+t^6)*dks3[2, 1, 1, 1, 1, 1, 1, 1, 1] 
    + (t^4+t^3+t^2)*dks3[2, 2, 2, 1, 1, 1, 1] 
    + (t^3+t^2)*dks3[2, 2, 2, 2, 1, 1] 
    + (t^5+2*t^4+2*t^3+t^2)*dks3[2, 2, 2, 2, 2] 
    + (t^5+2*t^4+t^3)*dks3[3, 1, 1, 1, 1, 1, 1, 1] 
    + (2*t^5+3*t^4+4*t^3+3*t^2+t)*dks3[3, 2, 1, 1, 1, 1, 1] 
    + (2*t^2+t+1)*dks3[3, 2, 2, 1, 1, 1]
    + (t^4+3*t^3+4*t^2+3*t+1)*dks3[3, 2, 2, 2, 1]
    + (t^5+t^4+4*t^3+4*t^2+3*t+1)*dks3[3, 3, 1, 1, 1, 1]
    + (2*t^5+3*t^4+5*t^3+6*t^2+4*t+2)*dks3[3, 3, 2, 1, 1]
    + (t^4+t^3+3*t^2+2*t+1)*dks3[3, 3, 2, 2]
    + (t^5+3*t^4+3*t^3+4*t^2+2*t+1)*dks3[3, 3, 3, 1]
\end{sageexample}
\end{sagedemo}
\end{subsection}

%%%%%%%%%%%%%%%%%%%%%%%
\begin{subsection}{A product on $\La_{(k)}^t$}

What is interesting about the $\SS^{(k)}_\la[X;t]$ 
elements is that they are clearly closed under
the usual coproduct operation of the symmetric functions.  It is therefore natural
to consider the coefficients that appear in the coproduct of the
elements which are dual to the $k$-Schur functions as the structure constants
of the product of $k$-Schur functions.  

Define the coefficients $c^{\la(k)}_{\nu\mu}(t)$ by the coproduct formula
$$\Delta(\SS^{(k)}_\la[X;t]) = \sum_{\nu,\mu} c^{\la(k)}_{\nu\mu}(t) \SS^{(k)}_{\nu}[X;t]
\SS^{(k)}_{\mu}[Y;t]~.$$
We will give a more precise calculation of these coefficients below, but assuming
that they exist, we then define
\begin{equation}\label{eq:defkschurtprod}
s^{(k)}_\nu[X;t] \cdot_t s^{(k)}_\mu[X;t] := \sum_{\la \vdash |\nu|+|\mu|} c^{\la(k)}_{\nu\mu}(t)
s^{(k)}_{\la}[X;t]
\end{equation}
where
$$c^{\la(k)}_{\nu\mu}(t) := \left< \Delta(\SS^{(k)}_\la[X;t]), s^{(k)}_{\nu}[X;t]
s^{(k)}_{\mu}[Y;t]\right>~.$$

We can then derive from Equations \eqref{eq:ksexpansion} and \eqref{eq:dksexpansion},
that
\begin{equation}
c^{\la(k)}_{\nu\mu}(t) = \sum_{\substack{\theta\vdash|\la|\\
\tau\vdash|\nu|,\gamma\vdash|\mu|}} c^{\theta}_{\tau\gamma}
\left< \SS^{(k)}_\la[X;t], s_\theta[X] \right>
\left< s^{(k)}_{\nu}[X;t], s_\tau[X]\right>
\left< s^{(k)}_{\mu}[X;t], s_\gamma[X] \right>~.
\end{equation}

At this point it is possible to see that while it is not obvious what the
product structure on the $k$-Schur functions should be, if we take this to be the definition
then, as in the case with the product $\cdot^t$, the product on $k$-Schur functions
can also be realized as a projection of the usual product.

\begin{prop}  Let $\Theta_{(k)}$ be a projection from the space $\La$ to the space
$\La^t_{(k)}$ spanned by the $k$-Schur functions defined by $\Theta_{(k)}( Q'_\la[X;t] ) = Q'_\la[X;t]$
if $\la_1 \leq k$ and $\Theta_{(k)}( Q'_\la[X;t] ) = 0$, then
$$s^{(k)}_\nu[X;t] \cdot_t s^{(k)}_\mu[X;t] = \Theta_{(k)}( s^{(k)}_\nu[X;t] s^{(k)}_\mu[X;t] )~.$$
\end{prop}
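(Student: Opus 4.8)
The plan is to mirror the proof of Proposition~\ref{prop:prodproj}, exploiting the duality between the two families $\{s^{(k)}_\la[X;t]\}$ and $\{\SS^{(k)}_\la[X;t]\}$. Since the image of $\Theta_{(k)}$ is by definition $\La^t_{(k)} = \LL\{s^{(k)}_\la[X;t]\}_{\la_1\leq k}$, both sides of the claimed identity lie in $\La^t_{(k)}$, so it suffices to compare coefficients of each $s^{(k)}_\la[X;t]$. The pairing $\langle s^{(k)}_\mu[X;t], \SS^{(k)}_\la[X;t]\rangle = \delta_{\la\mu}$ established in the previous subsection lets me read off the coefficient of $s^{(k)}_\la[X;t]$ in any $f \in \La^t_{(k)}$ as $\langle f, \SS^{(k)}_\la[X;t]\rangle$. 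Thus I would first reduce the proposition to the single equality
\[
\langle \Theta_{(k)}(s^{(k)}_\nu[X;t]\, s^{(k)}_\mu[X;t]), \SS^{(k)}_\la[X;t]\rangle = c^{\la(k)}_{\nu\mu}(t)\,.
\]

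Next I would establish that $\Theta_{(k)}$ may simply be dropped when pairing against the dual side. Writing an arbitrary $f=\sum_\gamma a_\gamma Q'_\gamma[X;t]$, the projection $\Theta_{(k)}$ only discards the summands with $\gamma_1>k$. Since $\SS^{(k)}_\la[X;t]$ lies in $\LL\{P_\gamma[X;t]\}_{\gamma_1\leq k}$ and $\langle Q'_\gamma[X;t], P_\delta[X;t]\rangle = \delta_{\gamma\delta}$, those discarded summands are orthogonal to $\SS^{(k)}_\la[X;t]$, so a one-line computation gives $\langle \Theta_{(k)}(f), \SS^{(k)}_\la[X;t]\rangle = \langle f, \SS^{(k)}_\la[X;t]\rangle$. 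Applying this with $f = s^{(k)}_\nu[X;t]\, s^{(k)}_\mu[X;t]$ reduces the goal to $\langle s^{(k)}_\nu[X;t]\, s^{(k)}_\mu[X;t], \SS^{(k)}_\la[X;t]\rangle = c^{\la(k)}_{\nu\mu}(t)$.

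Finally I would invoke the adjointness of product and coproduct coming from the self-duality of $\La$ recalled in Section~\ref{sectionschur} (cf.\ Equation~\eqref{equation.coproduct schur}): for all $f,g,h$ one has $\langle fg, h\rangle = \langle f[X]\, g[Y], \Delta h\rangle$. Taking $f = s^{(k)}_\nu[X;t]$, $g = s^{(k)}_\mu[X;t]$ and $h = \SS^{(k)}_\la[X;t]$, and using symmetry of the scalar product, the left side becomes
\[
\langle \Delta(\SS^{(k)}_\la[X;t]),\, s^{(k)}_\nu[X;t]\, s^{(k)}_\mu[Y;t]\rangle\,,
\]
which is exactly the definition of $c^{\la(k)}_{\nu\mu}(t)$. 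This closes the chain of equalities, and hence the proposition.

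All of these steps are routine once the framework is assembled; the only point demanding care — the analogue of the subtle step in Proposition~\ref{prop:prodproj} — is the self-adjointness of $\Theta_{(k)}$. Because $\Theta_{(k)}$ is defined on the $Q'$-basis while the element it is paired against is supported on the $P$-basis, and the two bases behave oppositely under truncation by $\la_1 \leq k$, one must match these conventions precisely; this is where an error would most easily creep in.
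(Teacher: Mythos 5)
Your proposal is correct and follows essentially the same route as the paper: pair against $\SS^{(k)}_\la[X;t]$ to extract coefficients, drop $\Theta_{(k)}$ via the orthogonality of the $Q'$- and $P$-bases, and identify the result with $c^{\la(k)}_{\nu\mu}(t)$. The only cosmetic difference is that you invoke the product--coproduct adjointness abstractly in the last step, whereas the paper unwinds it explicitly through the Schur-function expansions and Littlewood--Richardson coefficients; these are the same computation.
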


\begin{proof}
The proof proceeds exactly as it did in Proposition~\ref{prop:prodproj}.  We compute that
the coefficient of $s^{(k)}_\la[X;t]$ in $\Theta_{(k)}( s^{(k)}_\nu[X;t] s^{(k)}_\mu[X;t] )$ is
\begin{align*}
&\left< \Theta_{(k)}( s^{(k)}_\nu[X;t] s^{(k)}_\mu[X;t] ), \SS^{(k)}_\la[X;t] \right>
= \left<s^{(k)}_\nu[X;t] s^{(k)}_\mu[X;t], \SS^{(k)}_\la[X;t] \right>\\
=& \sum_{\substack{\theta\vdash|\la|\\
\tau\vdash|\nu|,\gamma\vdash|\mu|}}
c^{\theta}_{\tau\gamma} \left< s^{(k)}_\nu[X;t], s_\tau[X] \right>
\left< s^{(k)}_\mu[X;t], s_\gamma[X] \right>
\left< \SS^{(k)}_\la[X;t], s_\theta[X] \right>
=c^{\la(k)}_{\nu\mu}(t)~.
\end{align*}
So we see again that the structure coefficients in the definition of the $t$-product
in Equation~\eqref{eq:defkschurtprod} are exactly those that occur by taking the usual product and then
projecting using the map $\Theta_{(k)}$.
\end{proof}

It was discussed in Section~\ref{sec:teq1positive} that the coefficients $c^{\la(k)}_{\mu\nu}(1)$
are non-negative integers (conjecturally for certain definitions), but in the following 
example we will see that $c^{\la(k)}_{\mu\nu}(t)$
are not generally elements of $\NN[t]$.  This makes us believe that this product is not `the'
product to define on the space $\La^t_{(k)}$ (if there is such a product).  At least in certain
cases, the operators $\Bop_\lambda$ from Equation~\eqref{eq:opersum} 
also provide a means of defining a $t$-analogue of multiplication
and in light of Equation~\eqref{eq:tpieri}, it seems possible that there is some other
$t$-product on $\Lambda_{(k)}^t$ which is the right one to consider on this space.

\begin{sagedemo} We show how a product of $k$-Schur functions can be computed under the
projection $\Theta_{(k)}$.  The product is first computed in the $Q'_\mu[X;t]$ basis and
then the projection is computed by restricting the support to those partitions whose
parts are less than or equal to $k$, which is 2 in this example.
\begin{sageexample}
    sage: ks2 = SymmetricFunctions(QQ["t"]).kschur(2)
    sage: HLQp = SymmetricFunctions(QQ["t"]).hall_littlewood().Qp()
    sage: ks2( (HLQp(ks2[1,1])*HLQp(ks2[1])).restrict_parts(2) )                               
    ks2[1, 1, 1] + (-t+1)*ks2[2, 1]
\end{sageexample}
The coefficient $t-1$ should appear as the coefficient of $\SS^{(2)}_{1} \otimes \SS^{(2)}_{11}$
in the expansion of the coproduct $\Delta(\SS^{(2)}_{21})$.  We can calculate this using
\Sage using the following commands.
\begin{sageexample}
    sage: dks = SymmetricFunctions(QQ["t"]).kBoundedQuotient(2).dks()
    sage: dks[2,1].coproduct()
    dks2[] # dks2[2, 1] + (-t+1)*dks2[1] # dks2[1, 1] + 
     dks2[1] # dks2[2] + (-t+1)*dks2[1, 1] # dks2[1] +
     dks2[2] # dks2[1] + dks2[2, 1] # dks2[]
\end{sageexample}
\end{sagedemo}

\end{subsection}

%%%%%%%%%%%%%%%%%%%%%%%%%%%%%%%%%%%%%%%%%%%%%%%%%%%%
\begin{subsection}{A representation theoretic model of $k$-Schur functions}
\label{sec:reptheory}

The {\it Frobenius map} is a map from $S_m$-modules to symmetric functions of degree $m$
which takes an irreducible module indexed by the partition $\lambda$ to the Schur function also 
indexed by the partition $\lambda \vdash m$.  Here we denote this map by $\F$ with 
$\F( V_\lambda ) = s_\lambda$, where $V_\lambda$ is an irreducible $S_m$-module.

The parameter $t$ in the ring of symmetric functions represents a grading and we can define
for a graded module $V = \bigoplus_{d \geq 0} V_d$, 
$$\F_t(V) = \sum_{d \geq 0} t^d \F( V_d)~.$$

For example, if we consider the ring of polynomials $\CC[a_1, a_2, \ldots, a_m]$ as a module graded 
by the degree in the variables $a_i$, then $\F_t( \CC[a_1, a_2, \ldots, a_m] ) = h_m\left[ \frac{X}{1-t} \right]$.
In particular, we also have for any irreducible $S_m$-module $V_\lambda$ that the tensor product  
$V_\lambda \otimes  \CC[a_1, a_2, \ldots, a_m]$ is an $S_m$-module, where $S_m$ acts diagonally 
on the tensors and 
$$\F_t( V_\lambda \otimes  \CC[a_1, a_2, \ldots, a_m] ) = s_\lambda\left[ \frac{X}{1-t} \right]~.$$

Mark Haiman and Li-Chung Chen~\cite{CH:2008} defined $\M^{(k)}$ to be the category of graded 
finitely generated $\CC[a_1, a_2, \ldots, a_m] \ast S_m$-modules $V$
such that $V$ has an $S_m$-equivariant $\CC[a_1, a_2, \ldots, a_m]$ free resolution using only
the $V_\lambda \otimes \CC[a_1, a_2, \ldots, a_m]$ with $\la_1 \leq k$.  Then they conjecture the 
following.
\begin{conj}  The modules which are irreducible in $\M^{(k)}$
have images under the map $\F_t$ which are equal to
$\omega s_\lambda^{(k)}[X;t]$ for $\lambda \vdash m$ and $\la_1 \leq k$.
\end{conj}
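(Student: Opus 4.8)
The plan is to read the conjecture as a \emph{categorification} statement and to pin down $\F_t$ of the irreducible objects by a duality argument internal to $\M^{(k)}$. First I would record that for the skew group algebra $\CC[a_1,\dots,a_m]\ast S_m$ the modules $M_\lambda := V_\lambda\otimes\CC[a_1,\dots,a_m]$ are precisely the indecomposable projectives (the $S_m$-equivariant free $\CC[a_1,\dots,a_m]$-modules generated in degree zero), and that $\F_t(M_\lambda)=s_\lambda[X/(1-t)]$ as in the setup. By \eqref{Laktspacedef} the classes of the $M_\lambda$ with $\lambda_1\le k$ map under $\F_t$ onto a spanning set of $\La^t_{(k)}$, and the resolution condition defining $\M^{(k)}$ is exactly the requirement that a module admit a projective resolution supported on $\{\lambda_1\le k\}$. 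I would then build the graded Grothendieck group $K_0^t(\M^{(k)})$ as a $\ZZ[[t]]$-module and show that $\F_t$ induces an isomorphism onto (a graded completion of) $\La^t_{(k)}$ sending $[M_\lambda]\mapsto s_\lambda[X/(1-t)]$.

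Second, the image $\F_t(L_\lambda)$ of an irreducible is then forced: the simple objects should be the basis dual to the projectives $[M_\lambda]$ under the Euler pairing $\langle[P],[L]\rangle_E=\sum_i(-1)^i\dim_t\operatorname{Ext}^i(P,L)$, which for a projective $P=M_\lambda$ collapses to $\dim_t\operatorname{Hom}(M_\lambda,L)$. The content of the conjecture is therefore the identity of two dual bases of $\La^t_{(k)}$: that the basis dual to $\{s_\lambda[X/(1-t)]\}$ under $\langle\,,\,\rangle_E$ is $\{\omega s^{(k)}_\lambda[X;t]\}$. The $\omega$-twist and the transpose in the indexing are explained by the large-$k$ base case: when $k\ge|\lambda|$ the subcategory is the full category, its simples are the residue modules $V_\mu$ with $\F_t(V_\mu)=s_\mu$, and these are exactly $\langle\,,\,\rangle_E$-dual to the $s_\lambda[X/(1-t)]$; matching the module $V_{\lambda'}$ with the index $\lambda$ reproduces $\F_t(L_\lambda)=s_{\lambda'}=\omega s^{(k)}_\lambda[X;t]$ there (using $s^{(k)}_\lambda=s_\lambda$ from Section~\ref{subsection.limit}). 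Equivalently one may tensor throughout with the sign module $V_{(1^m)}$, under which $\F_t$ intertwines with $\omega$, and work in the sign-twisted category.

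To execute the symmetric-function side I would fix a definition of the $k$-Schur functions for which the basis property of $\La^t_{(k)}$ is \emph{proven} --- namely the operator functions $\At^{(k)}_\lambda[X;t]$, shown to be a basis in \cite{LM:2003} (see Section~\ref{sec:basis}) --- and write the transition $s_\lambda[X/(1-t)]=\sum_\mu d_{\lambda\mu}(t)\,\omega s^{(k)}_\mu[X;t]$. The goal is to identify $d_{\lambda\mu}(t)$ with graded composition multiplicities $[M_\lambda:L_\mu]_t$ of $\M^{(k)}$. This would follow from a BGG-type reciprocity in $\M^{(k)}$ relating the multiplicities of standard objects in projectives to multiplicities of simples in standards, once one exhibits standard objects whose $\F_t$ are the generators $s_\lambda[X/(1-t)]$ and shows $\M^{(k)}$ is quasi-hereditary (or at least carries the required unitriangularity, which mirrors on the combinatorial side the unitriangularity of the $k$-Kostka matrices). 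At $t=1$ I would cross-check signs and normalizations against Lam's identification of $s^{(k)}_\lambda[X;1]$ with Schubert classes of the affine Grassmannian (Section~\ref{sec:teq1positive}).

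The hard part will be the representation theory of $\M^{(k)}$ itself. There is at present no construction of its irreducible objects $L_\lambda$, and no proof that $\M^{(k)}$ is quasi-hereditary or otherwise admits the BGG reciprocity that would make its decomposition matrix unitriangular and equal to the $k$-Schur transition matrix; supplying this is the whole game. The structural gap is compounded by the partly conjectural foundations on the combinatorial side: the equivalence of the several definitions of $s^{(k)}_\lambda[X;t]$ (Conjecture~\ref{conj:allequiv}) and even the positivity $K^{(k)}_{\lambda\mu}(q,t)\in\NN[q,t]$ from \eqref{mackkostka} are open, so one cannot freely import a clean symmetric-function basis and expect it to match a homologically defined one without first determining which definition the category ``sees.'' I would expect genuine progress to require either a Haiman-style geometric model --- a Hilbert-scheme or quiver-variety realization making the $L_\lambda$ geometric and their graded characters computable, generalizing the $n!$-theorem machinery of \cite{Haiman:2001} --- or a transfer of Lam's affine-Grassmannian geometry from the $t=1$ level to the full graded, $t$-deformed setting.
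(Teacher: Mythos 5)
The statement you are trying to prove is not proved anywhere in the paper: it is stated (and attributed to Chen and Haiman \cite{CH:2008}) precisely as an open conjecture, with no proof, and the paper even remarks that the companion module $W$ of the subsequent conjecture is only \emph{conjectured} to lie in $\M^{(k)}$. So there is no ``paper's proof'' to compare against, and your text should be judged as a proposed attack on an open problem. As such it is a reasonable and well-informed research plan, but it is not a proof, and you say as much yourself in the final paragraph.

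The concrete gaps are the ones you name, and they are fatal to the argument as written. First, the duality step is not available: to run the Euler-pairing argument you need $\M^{(k)}$ to be an abelian (or at least exact) category with enough projectives, a well-defined set of simple objects $L_\la$ indexed by $k$-bounded partitions, and finite-dimensional graded $\operatorname{Ext}$-groups so that $\langle [M_\la],[L_\mu]\rangle_E$ makes sense and is unitriangular. None of this is established --- the paper defines $\M^{(k)}$ only by a resolution-support condition and does not construct its irreducibles --- so the sentence ``the image $\F_t(L_\la)$ is then forced'' is exactly the content of the conjecture, not a consequence of formal duality. Second, even granting a BGG-type reciprocity, identifying the resulting dual basis of $\La^t_{(k)}$ with $\{\omega s^{(k)}_\la[X;t]\}$ requires knowing which of the (conjecturally equivalent, see Conjecture~\ref{conj:allequiv}) definitions of the $k$-Schur functions the category realizes; the only definition with a proven basis property for $\La^t_{(k)}$ is $\At^{(k)}_\la[X;t]$, and there is no argument connecting the homological transition matrix $[M_\la:L_\mu]_t$ to the combinatorial one. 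Your closing assessment --- that genuine progress likely needs a geometric model in the style of Haiman's $n!$ theorem or a graded lift of Lam's affine-Grassmannian identification --- is the right diagnosis, but it also confirms that what precedes it is a roadmap rather than a proof.
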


We say that two partitions $\la$ and $\mu$ of the same size are called {\em skew-linked} if there 
exists a skew partition $\gamma\slash\tau$ such that $\la_i$ is the number of cells in the $i^{th}$ row 
of $\gamma\slash\tau$ and $\mu_i'$ is the number of cells in the $i^{th}$ column of $\gamma\slash\tau$.  
The definition of skew-linked is not associated with a particular value of $k$, but we have seen 
examples of partitions which are skew-linked through the $(k+1)$-cores.  We always have that $\la$ 
and $(\la^{\omega_k})'$ are skew-linked since the skew partition representing the cells of $\mfc_k(\la)$ with 
hook less than $k+1$ has $\la_i$ cells in row $i$ and the transpose of $\mfc_k(\la)$ has $\la^{\omega_k}_i$ 
cells with hook less than $k+1$ in row $i$.  But there are other examples of pairs of partitions which 
are skew-linked. Of course, if $\la$ and $\mu$ are skew-linked, then $\mu'$ and $\la'$ are skew-linked.

\begin{example}
For a small example, consider that the partition $(2,2,1)$ is skew-linked to $(2,2,1)$, $(3,2)$, $(4,1)$ 
and $(5)$ through the skew partitions $(2,2,1)$, $(3,2,1)/(1)$, $(4,2,1)/(2)$, and $(5,3,1)/(3,1)$ respectively.

For a larger example, the partition $(4,4,2,2,2,1)$ is skew linked to $(6,4,2,2,1)$ because the skew partition $(6,4,2,2,2,1)/(2)$ has
rows given by $(4,4,2,2,2,1)$ and columns given by the partition $(5,4,2,2,1,1)~.$
\begin{center}
\begin{tabular}{ccc}\squaresize=9pt
$\young{\cr&\cr&\cr&\cr&&&\cr&&&\cr}$&$\longleftrightarrow$&$\young{\cr&\cr&\cr&\cr&&&\cr\blk&\blk&&&&\cr}$\cr
&&$\updownarrow$\cr
&&$\young{\cr&\cr&\cr&&&\cr&&&&&\cr}$
\end{tabular}
\end{center}
Notice that $(6,4,2,2,1)$ is not the conjugate of the $k$-conjugate of $(4,4,2,2,2,1)$ for any $k$.
\end{example}

\begin{theorem} (L.-C. Chen) 
If $\la$ is skew-linked to $\mu$ by a skew partition $\gamma/\tau$ with $\tau \vdash d$, 
then $V_\la$ occurs with multiplicity $1$ at degree 
$d$  in $V_\mu \otimes \CC[a_1, a_2, \ldots, a_m]$ and does not appear at a lower degree.
\end{theorem}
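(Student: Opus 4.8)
The plan is to pass from the graded $S_m$--module $V_\mu\otimes\CC[a_1,\dots,a_m]$ to the symmetric function side, where the statement becomes a sharp assertion about the lowest--degree term of a single Schur coefficient, and then to prove that assertion by a cancellation--free expansion together with a combinatorial minimisation governed by the skew shape $\gamma/\tau$. First I would translate the theorem. Since $\F_t$ sends the irreducible $V_\nu$ to $s_\nu$ and the graded multiplicity of $V_\la$ in a graded module is recovered by pairing $\F_t$ against $s_\la$ in the Hall inner product, the stated multiplicity is the coefficient series
$$c_{\mu\la}(t):=\langle s_\mu[X/(1-t)],\,s_\la\rangle\in\NN[[t]],$$
using $\F_t(V_\mu\otimes\CC[a_1,\dots,a_m])=s_\mu[X/(1-t)]$. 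Thus the theorem is equivalent to the claim that $c_{\mu\la}(t)=t^{d}+(\text{higher order in }t)$ with $d=|\tau|$; that is, the smallest power of $t$ occurring is $t^{|\tau|}$ and its coefficient is $1$.

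Next I would make the expansion manifestly cancellation--free. Writing $X/(1-t)=X+t\,\tfrac{X}{1-t}$ and applying the addition--of--alphabets (comultiplication) rule for $s_\mu$, then using $\langle s_{\mu/\nu}s_\sigma,s_\la\rangle=\langle s_{\mu/\nu},s_{\la/\sigma}\rangle$, yields the recursion
$$c_{\mu\la}(t)=\sum_{\nu\subseteq\mu}t^{|\nu|}\sum_{\sigma}c_{\nu\sigma}(t)\,\langle s_{\mu/\nu},\,s_{\la/\sigma}\rangle,$$
with base case $c_{\emptyset\emptyset}=1$. Every summand lies in $\NN[[t]]$ because Littlewood--Richardson coefficients are nonnegative, so there is no cancellation. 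Consequently the lowest degree appearing in $c_{\mu\la}(t)$ is the minimum, over contributing configurations, of a purely additive weight, and the coefficient of that lowest power equals the number of minimising configurations. This reduces the theorem to two combinatorial facts about the skew--linking data.

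The existence half --- that $t^{|\tau|}$ occurs with coefficient at least $1$ --- I would obtain from the single term $\nu=\sigma=\tau$ of the recursion, which (the constant term of $c_{\tau\tau}(t)$ being $1$) contributes $t^{|\tau|}\,\langle s_{\mu/\tau},\,s_{\la/\tau}\rangle$. Here the skew shape does the work: since $\gamma_i=\tau_i+\la_i$ read by rows and $\gamma'_j=\tau'_j+\mu'_j$ read by columns, the two skews $\mu/\tau$ and $\la/\tau$ are conjugate--compatible through the common ambient shape $\gamma$, and I expect a direct skew Littlewood--Richardson computation to give $\langle s_{\mu/\tau},\,s_{\la/\tau}\rangle=1$. (In particular the skew--linking forces $\tau\subseteq\la$ and $\tau\subseteq\mu$, which I would verify first.)

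The harder half is the lower bound: that for a skew--linked $\la$ no term of $c_{\mu\la}(t)$ occurs below degree $|\tau|$, and that the degree--$|\tau|$ coefficient is exactly $1$ rather than merely positive. I expect this to be the main obstacle. The plan is to unroll the nonnegative recursion into a chain $\emptyset=\eta^{(0)}\subseteq\eta^{(1)}\subseteq\cdots\subseteq\mu$ carrying total weight $\sum_i i\,|\eta^{(i)}/\eta^{(i-1)}|$ together with a Littlewood--Richardson filling producing $s_\la$, and to show that any such chain contributing $s_\la$ has weight $\ge|\tau|$, with equality forcing both the chain and its filling to be the unique object encoded by $\gamma/\tau$. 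The crux is to convert the mere existence of a skew shape with row content $\la$ and column content $\mu'$ into a sharp lower bound on this weight and an accompanying uniqueness statement; this is precisely where the notion of skew--linked pairs is indispensable, since the row constraint (producing $\la$) and the column constraint (building $\mu$) must be controlled simultaneously, and it is the step I would expect to require the most care.
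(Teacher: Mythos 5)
The paper states this result without proof, attributing it to L.-C.~Chen (cf.\ \cite{CH:2008}), so there is no in-text argument to compare yours against; I can only assess the proposal on its own terms. Your translation of the theorem into the assertion that $c_{\mu\la}(t)=\left< s_\mu[X/(1-t)],s_\la\right>$ has lowest term $t^{|\tau|}$ with coefficient $1$ is correct, and so is the cancellation-free recursion obtained from $X/(1-t)=X+tX/(1-t)$; reducing everything to a minimal-weight configuration in a manifestly nonnegative expansion is a reasonable strategy.

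However, the existence half of your argument rests on a false premise. Skew-linking does \emph{not} force $\tau\subseteq\mu$ or $\tau\subseteq\la$: the paper's own example has $\la=(2,2,1)$ skew-linked to $\mu=(5)$ via $\gamma/\tau=(5,3,1)/(3,1)$, and $\tau=(3,1)$ is contained in neither $(5)$ nor $(2,2,1)$. Consequently the term $\nu=\sigma=\tau$ of your recursion, which was to contribute $t^{|\tau|}\left< s_{\mu/\tau},s_{\la/\tau}\right>$, vanishes identically ($s_{\mu/\tau}=0$), and your identification of the witnessing configuration collapses. The theorem itself survives this example --- here $c_{(5),(2,2,1)}(t)=t^{n(\la)}/\prod_{c}(1-t^{\hook(c)})$ has lowest term $t^{4}=t^{|\tau|}$ with coefficient $1$ --- but the minimizing chain is $\emptyset\subseteq(2)\subseteq(4)\subseteq(5)$, which is not indexed by $\tau$ in the way you propose, so a correct proof must extract the witness from $\gamma/\tau$ by some other mechanism. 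Even in cases where the containments do hold, the identity $\left< s_{\mu/\tau},s_{\la/\tau}\right>=\sum_\nu c^\mu_{\tau\nu}c^\la_{\tau\nu}=1$ is asserted rather than proved. Finally, you rightly flag the lower bound and the exactness of the multiplicity as the crux, but no mechanism is offered for converting the simultaneous row constraint (giving $\la$) and column constraint (giving $\mu'$) of $\gamma/\tau$ into the required weight inequality and uniqueness; as it stands the proposal establishes neither half of the theorem.
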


As a corollary they construct a module by moding out  the module $V_\mu \otimes \CC[a_1, a_2, \ldots, a_m]$ 
by the space generated by all modules $V_\gamma$  which are at degree $d$ which are not $V_\lambda$.
In the particular case when $\la$ is a $k$-bounded partition, $\la'$ is skew-linked to $\mu = \la^{\omega_k}$ 
and the module $V_\la$ is conjectured to have Frobenius image equal to $\omega s_\la^{(k)}[X;t]$.

\begin{conj}
For a $k$-bounded partition $\lambda$, let $d$ be the number of cells in $\mfc_k(\la)$ which have a 
hook length greater than $k$. Let $W$ be the module which is generated by $V_{\lambda^{\omega_k}}$ 
as an $S_m \ast \CC[a_1, a_2, \ldots, a_m]$-module and cogenerated by the copy of $V_{\lambda'}$ 
of degree $d$. Then $\F_t( W ) = \omega s_\la^{(k)}[X;t]$.
\end{conj}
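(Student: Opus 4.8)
The plan is to compute the graded Frobenius series $\F_t(W)$ by realizing $W$ as a finite-dimensional subquotient of the free module $M = V_{\la^{\omega_k}} \otimes \CC[a_1,\ldots,a_m]$, for which we already know $\F_t(M) = s_{\la^{\omega_k}}[X/(1-t)]$. First I would recast the target into a cleaner form. Using the proven $k$-conjugation identity \eqref{eq:kconjwt}, namely $\omega(s_\la^{(k)}[X;t]) = t^{d} s_{\la^{\omega_k}}^{(k)}[X;1/t]$, together with the fact that $d$ equals the number of cells of $\mfc(\la)$ of hook greater than $k$, the claim $\F_t(W)=\omega s_\la^{(k)}[X;t]$ becomes equivalent (after the degree reversal implicit in passing from the generator to the cogenerator) to the statement that, in the ordinary grading, $\F_t(W) = s_{\la^{\omega_k}}^{(k)}[X;t]$. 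Here $d$ is intrinsic: conjugation preserves hook lengths and $k$-conjugation is an involution, so the number of large-hook cells of $\mfc(\la^{\omega_k})$ again equals $d$. This reformulation is convenient because the degree-zero part of $W$ is $V_{\la^{\omega_k}}$, matching the constant term $s_{\la^{\omega_k}}$ of $s_{\la^{\omega_k}}^{(k)}[X;t]$, while Chen's theorem places the cogenerator $V_{\la'}$ at degree $d$, matching the top dominance term $t^{d} s_{\la'}$.

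Second, I would prove that $W$ lies in the category $\M^{(k)}$, that is, that it admits an $S_m$-equivariant free resolution built only from the modules $V_\mu \otimes \CC[a_1,\ldots,a_m]$ with $\mu_1 \leq k$. Granting this, additivity of $\F_t$ on exact sequences yields $\F_t(W) = \sum_i (-1)^i \F_t(F_i)$, a $\ZZ[t]$-linear combination of the functions $s_\mu[X/(1-t)]$ with $\mu_1 \leq k$, so that $\F_t(W) \in \La^t_{(k)}$ by the description \eqref{Laktspacedef} of that space. This is the step that makes the $k$-Schur functions relevant at all, since they form a basis of $\La^t_{(k)}$.

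Third, I would establish the Schur-triangular shape of $\F_t(W)$. From $\F_t(M)=s_{\la^{\omega_k}}[X/(1-t)] = s_{\la^{\omega_k}} + t(\cdots)$ and the fact that $W$ is a quotient of $M$ generated in degree zero by $V_{\la^{\omega_k}}$, the $t^0$-term of $\F_t(W)$ is exactly $s_{\la^{\omega_k}}$; Chen's multiplicity-one statement, together with the vanishing of $V_{\la'}$ below degree $d$, forces the top term in dominance order to be $t^{d} s_{\la'}$ with a monomial coefficient and every intermediate Schur coefficient to lie in $\NN[t]$. Writing $\F_t(W) = \sum_\gamma b_\gamma(t)\, s^{(k)}_\gamma[X;t]$ using the second step and comparing with the unitriangular characterization of the $k$-Schur basis (for instance the $Q'$-triangular system \eqref{tinvdef}, or the operator basis $\At^{(k)}$, which is proven to be a triangular basis of $\La^t_{(k)}$), the matching of the bottom term $s_{\la^{\omega_k}}$ forces $b_{\la^{\omega_k}}(t) = 1$ and all other $b_\gamma(t) = 0$, giving $\F_t(W) = s^{(k)}_{\la^{\omega_k}}[X;t]$ and hence the conjecture.

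The hard part will be the second step: proving that $W \in \M^{(k)}$, i.e.\ controlling the minimal $S_m$-equivariant free resolution of $W$ so that only $k$-bounded shapes $V_\mu$ occur. This is precisely the delicate homological input underlying the Chen--Haiman framework — it amounts to computing the equivariant Betti numbers of these skew-linked modules and showing they are supported on $k$-bounded partitions — and it is the reason the statement is only a conjecture. A companion subtlety inside the third step is showing that no Schur term survives above $t^d$ (equivalently, that $W$ is concentrated in degrees $0$ through $d$): this does not follow formally from Chen's theorem about $V_{\la'}$ alone, and would require either a direct dimension and socle analysis of the cogenerated quotient, or a geometric model in the spirit of Haiman's isospectral constructions in which the grading bound is manifest.
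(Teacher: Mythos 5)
The statement you are trying to prove is an open conjecture of Chen and Haiman: the paper gives no proof, and explicitly notes that even the membership $W\in\M^{(k)}$ is only conjectural. So your write-up should be judged as a strategy, and as such it has two genuine gaps beyond the one you acknowledge. First, your opening reduction invokes ``the proven $k$-conjugation identity \eqref{eq:kconjwt}.'' That identity is \emph{not} proven for generic $t$ in any of the four definitions of $k$-Schur functions; the paper records it as a conjecture throughout Section~\ref{section.properties} (it is established only at $t=1$, via the $e_r$-Pieri rule). Moreover, even granting it, $\omega s_\la^{(k)}[X;t]=t^d s^{(k)}_{\la^{\omega_k}}[X;1/t]$ is the $t\mapsto 1/t$ reversal of $s^{(k)}_{\la^{\omega_k}}[X;t]$, and the ``degree reversal implicit in passing from the generator to the cogenerator'' is not a free operation on $W$: you would need an actual graded self-duality of $W$ (or of its Frobenius series) to justify replacing the target by $s^{(k)}_{\la^{\omega_k}}[X;t]$. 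Nothing in Chen's theorem supplies that.

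Second, the triangularity argument in your third step does not close. Writing $\F_t(W)=\sum_\gamma b_\gamma(t)\,s^{(k)}_\gamma[X;t]$ and matching the lowest dominance term does force $b_{\la^{\omega_k}}(t)=1$ and kills the coefficients below $\la^{\omega_k}$, but it says nothing about $b_\gamma(t)$ for $\gamma>\la^{\omega_k}$: an element such as $s^{(k)}_{\la^{\omega_k}}[X;t]+t\,s^{(k)}_\mu[X;t]$ with $\mu>\la^{\omega_k}$ has the same bottom term, and pinning the single top term $t^d s_{\la'}$ still leaves all intermediate coefficients undetermined. (The paper's own Example~\ref{ex:atom2} discussion makes exactly this point: Schur-triangularity plus extreme-term data does not characterize $k$-Schur functions once $k\ge 3$.) To finish you would need to identify \emph{all} Schur coefficients of $\F_t(W)$, or else a characterization of $s^{(k)}_{\la^{\omega_k}}[X;t]$ inside $\La^t_{(k)}$ by properties you can verify for $\F_t(W)$ — which, together with the resolution problem you already flag in your second step, is essentially the content of the conjecture itself.
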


Haiman and Li-Chung Chen~\cite{CH:2008} note that this module is only conjectured to lie within 
the category of $\M^{(k)}$.

\end{subsection}

%%%%%%%%%%%%%%%%%%%%%%%%%%%%%%%%%%%%%%%%%%%%%%%%%%%%
\begin{subsection}{From Pieri to $K$-theoretic $k$-Schur functions}
\label{sec:Kkpieri}

As mentioned in the introduction, a trend in Schubert calculus 
is to generalize the classical setup.  The replacement of cohomology 
by $K$-theory is a particularly fruitful variation.
Lascoux and Sch\"utzenberger introduced the Grothendieck
polynomials in \cite{LS:Schub} as representatives for the
$K$-theory classes determined by structure sheaves of Schubert
varieties.  Grothendieck polynomials have since been connected to 
representation theory and algebraic geometry and combinatorics is
again at the forefront 
(e.g. \cite{Dem,KK:K,Lascoux:1990,FK:K}).
For example, the stable Grothendieck polynomials $G_\lambda$
are inhomogeneous
symmetric polynomials whose lowest homogeneous degree component
is a Schur function.  Buch proved in~\cite{Buc} that they are the
weight generating functions
\begin{equation}
\label{defgroth}
G_\lambda = \sum_{T~\text{set-valued}\atop \shape(T)=\lambda}
(-1)^{|\lambda|-|\weight(T)|}\,x^{\weight(T)} \,,
\end{equation}
of tableaux called set-valued tableaux.  Such
a tableau $T$ is a filling of each cell in a shape 
with a set of integers, where a set $X$ below (west of) $Y$ 
satisfies $\max X<(\leq)\min Y$.  The weight of  $T$ is 
$\alpha$ where $\alpha_i$ is the number of cells in $T$ containing an $i$.
Pieri rules are given in \cite{Lenart:2000}  
in terms of binomial numbers and a generalization for Yamanouchi tableaux
gives \cite{Buc} a combinatorial rule for the structure
constants.

Ideas in $k$-Schur theory extend to the inhomogeneous setting
providing combinatorial tools that apply to torus-equivariant $K$-theory 
of the affine Grassmannian of $SL_{k+1}$.
Similar to the development described in Section~\ref{sec:weaksection},
a close study of a Pieri rule and its iteration is carried out in \cite{Morse},
leading to a family of affine set-valued tableaux that are
in bijection with elements of the affine nil-Hecke algebra.
These simultaneously generalize set-valued tableaux {\it and}
weak $k$-tableaux.

These tableaux are defined along the lines described in
Remark~\ref{def:standardizedtab}; the semi-standard case is
given by putting conditions on the reading words of the
standard case.  Recall from Section~\ref{sec:weak II} that $T_{\leq x}$ 
is the subtableau obtained 
by deleting all letters larger than $x$ from $T$ and note that this is well-defined
for a set-valued tableau $T$.
A standard {\it affine set-valued tableau} $T$ of degree $n$ is then
defined as a set-valued filling such that, for each $1\leq x\leq n$,
$\shape(T_{\leq x})$ is a core and the cells containing an
$x$ form the set of all removable corners
of $T_{\leq x}$ with the same residue.

\begin{example}
\label{exstdsvktab}
With $k=2$, the standard affine set-valued tableaux of degree 5 with shape
$\mfc(2,1,1)=(3,1,1)$ are
\begin{equation}
{\tiny
{{\tableau*[lcY]{\{3,4\}_1\cr\{2\}_2\cr
\{1\}_0&\{3,4\}_1&\{5\}_2 }}} \quad
{{\tableau*[lcY]{\{3,5\}_1\cr\{2\}_2\cr
\{1\}_0&\{3\}_1&\{4\}_2 }}} \quad
{{\tableau*[lcY]{\{5\}_1\cr\{4\}_2\cr
\{1,2\}_0&\{3\}_1&\{4\}_2 }}} \quad
{{\tableau*[lcY]{\{4\}_1\cr\{3\}_2\cr
\{1,2\}_0&\{4\}_1&\{5\}_2 }}} \quad
{{\tableau*[lcY]{\{4\}_1\cr\{2,3\}_2\cr
\{1\}_0&\{4\}_1&\{5\}_2 }}}}
\end{equation}
\begin{equation}
\label{ssasvtab}
{\tiny
{{\tableau*[lcY]{\{5\}_1\cr\{4\}_2\cr
\{1\}_0&\{2,3\}_1&\{4\}_2 }}} \quad
{{\tableau*[lcY]{\{5\}_1\cr\{3,4\}_2\cr
\{1\}_0&\{2\}_1&\{3,4\}_2 }}} \quad
{{\tableau*[lcY]{\{4\}_1\cr\{3\}_2\cr
\{1\}_0&\{2\}_1&\{3,5\}_2 }}} }
\,\raisebox{-1cm}{.}
\end{equation}
\end{example}

For the semi-standard case, first note that a set-valued tableau $T$ 
of weight $\alpha$ is a standard set-valued tableau
with increasing reading words in the alphabets
$\mathcal A_{\alpha,x}$ of \eqref{readingwordcond}, where the reading 
word is obtained by reading
letters from a cell in decreasing order (and as usual, cells are
taken from top to bottom and left to right).
Since letters in standard affine set-valued tableaux
can occur with multiplicity, the {\it lowest reading word} in $\mathcal A$
-- reading the lowest occurrence of the letters in $\mathcal A$
from top to bottom and left to right -- is used.  Again,
letters in the same cell are read in decreasing order.
In Example~\ref{exstdsvktab}, the lowest reading words
in $\{1,\ldots,5\}$ are $21435,52134,52134,32145,32145,51324,51243,41253$.

The affine $K$-theoretic generalization of a tableau with weight $\alpha$
is then given, for any $k$-bounded composition $\alpha$, by 
a standard affine set-valued tableau of degree $|\alpha|$ where, for each 
$1\leq x\leq \ell(\alpha)$,
\begin{enumerate}
\item the lowest reading word in
$\mathcal A_{\alpha,x}$ is increasing
\item the letters of $\mathcal A_{\alpha,x}$ occupy $\alpha_x$ distinct
residues
\item the letters of $\mathcal A_{\alpha,x}$ form a horizontal strip.
\end{enumerate}

\begin{example}
The affine set-valued tableaux in \eqref{ssasvtab} of Example~\ref{exstdsvktab}
all have weight $(2,1,1,1)$ and shape $(3,1,1)=\mfc(2,1,1)$, for $k=2$.
\end{example}

It is proven in~\cite{Morse} that the weight generating functions
of affine set-valued tableaux
\begin{equation}
G_\lambda^{(k)} = \sum_{T~\text{affine set-valued tableau}\atop
\shape(T)=\mfc(\lambda)}
\!  \!
\!  \!
(-1)^{|\lambda|+|\weight(T)|}\,x^{\weight(T)}
\end{equation}
 are Schubert representatives for $K$-theory
of the affine Grassmannian of $SL_{k+1}$ called affine 
stable Grothendieck polynomials \cite{Lam:2006,LSS}.
These reduce to Grothendieck polynomials for large $k$ and 
the term of lowest degree in $G_\lambda^{(k)}$ is
the dual $k$-Schur function $\SS_\lambda^{(k)}$.
As in the $k$-Schur set-up, these do not form a self-dual basis.
Instead, the dual to $\{G_\lambda^{(k)}\}_{\lambda_1\leq k}$ is an inhomogeneous 
basis $\{g_\lambda^{(k)}\}_{\lambda_1\leq k}$ for $\Lambda_{(k)}$.

The set-up discussed in Sections~\ref{sec:colstrict} and \ref{sec:weaksection} is extended 
in~\cite{Morse} and gives affine $K$-theoretic properties for this basis  
such as Pieri rules and an
analog to property \eqref{eq:kconjwt} for an inhomogeneous
involution $\Omega$;
\begin{equation}
\label{invoO}
\Omega g_\lambda^{(k)} = g_{\lambda^{\omega_k}}^{(k)}\,.
\end{equation}
Among some of the open combinatorial problems, it remains to develop 
an affine $K$-theoretic set-up in the dual world along the lines discussed 
in Section~\ref{sec:strongsection};
finding the Pieri rules for $\{G_\lambda^{(k)}\}_{\lambda_1\leq k}$ and 
giving a weight-generating formulation for $\{g_\lambda^{(k)}\}_{\lambda_1\leq k}$.
In addition, there are properties of Grothendieck polynomials that
conjecturally extend to these new bases.  For example, there is
a combinatorial expansion of Grothendieck polynomials into Schur functions 
described by a family of skew tableaux \cite{Lenart:2000}.
It is conjectured that $G_\lambda^{(k)}$ and $g_\lambda^{(k)}$
have combinatorial expansions in terms of dual $k$-Schur functions
and $k$-Schur functions, respectively.  
Noncommutative versions of the affine stable Grothendieck polynomials
and $\{g_\lambda^{(k)}\}_{\lambda_1\leq k}$ are given in~\cite{LSS}, where geometric
aspects of these bases are also explored.  Further conjectures relating 
to these bases can be found in \cite{LSS,Morse}.

\begin{sagedemo} The basis $\{g_\lambda^{(k)}\}_{\lambda_1\leq k}$ has been implemented in
\Sage and so it is possible to begin experimenting with these combinatorial
problems.
\begin{sageexample}
sage: Sym = SymmetricFunctions(QQ)
sage: Sym3 = Sym.kBoundedSubspace(3,t=1)
sage: Kks3 = Sym3.K_kschur()      
sage: s = Sym.s()
sage: m = Sym.m()
sage: s(Kks3[3,1])
s[3] + s[3, 1] + s[4]
sage: m(Kks3[3,1])
m[1, 1, 1] + 4*m[1, 1, 1, 1] + m[2, 1] + 3*m[2, 1, 1] + 2*m[2, 2] 
+ m[3] + 2*m[3, 1] + m[4]
sage: ks3 = Sym3.kschur()
sage: ks3(Kks3[3,1])
ks3[3] + ks3[3, 1]
sage: Kks3[3,1]*Kks3[2]
-Kks3[3, 1, 1] - Kks3[3, 2] + Kks3[3, 2, 1] + Kks3[3, 3]
sage: Kks3[3,1].coproduct()
Kks3[] # Kks3[3, 1] - Kks3[1] # Kks3[2] + Kks3[1] # Kks3[2, 1]
+ 2*Kks3[1] # Kks3[3] + Kks3[1, 1] # Kks3[2] - Kks3[2] # Kks3[1]
+ Kks3[2] # Kks3[1, 1] + 2*Kks3[2] # Kks3[2] + Kks3[2, 1] # Kks3[1]
+ 2*Kks3[3] # Kks3[1] + Kks3[3, 1] # Kks3[]
\end{sageexample}
Since the elements $\{G_\lambda^{(k)}\}_{\lambda_1\leq k}$ are an infinite sum of
elements of degree greater than or equal to $|\lambda|$ and we
do not know their algebra structure, we cannot currently 
represent them as we do other bases in \Sage.  However, a
preliminary implementation of this basis does exist that allows
one to compute the elements up to a a given degree.  
\begin{sageexample}
sage: SymQ3 = Sym.kBoundedQuotient(3,t=1)
sage: G1 = SymQ3.AffineGrothendieckPolynomial([1],6) 
sage: G2 = SymQ3.AffineGrothendieckPolynomial([2],6)                                                  
sage: (G1*G2).lift().scalar(Kks3[3,1])  
-1
\end{sageexample}
Notice how the coproduct applied to $g_{(3,1)}^{(3)}$ agrees with the product structure
in this one calculation
since we see that coefficient of $G_{(3,1)}^{(3)}$ in the product of $G_{(2)}^{(3)}$ and
$G_{(1)}^{(3)}$ is equal to the coefficient of $g_{(2)}^{(3)} \otimes g_{(1)}^{(3)}$ in
$\Delta(g_{(3,1)}^{(3)})$.

We may also list all the terms which appear in a product of elements
of $\{G_\lambda^{(k)}\}_{\lambda_1\leq k}$.  This is sufficient
for generating data for a Pieri rule on $\{G_\lambda^{(k)}\}_{\lambda_1\leq k}$ since
we can compute products and use the duality with the $\{g_\lambda^{(k)}\}_{\lambda_1\leq k}$
basis to determine the structure coefficients up to a degree higher 
than what appears in our product.
\begin{sageexample}
sage: G31 = SymQ3.AffineGrothendieckPolynomial([3,1], 8)
sage: for d in range(5,10):              
....:     for la in Partitions(d,max_part=3):
....:         c = (G1*G31).lift().scalar(Kks3(la))
....:         if c!=0:
....:             print la, c
....:
[3, 2] 2
[3, 1, 1] 1
[3, 3] -1
[3, 2, 1] -2
[3, 3, 1] 1
[3, 2, 1, 1] 1
[3, 3, 1, 1] -1
\end{sageexample}
This \Sage calculation shows that no terms indexed by partitions
of size 9 appear in the product
of $G_{(3,1)}^{(3)}$ and $G_{(1)}^{(3)}$, and since we believe that this indicates 
highest degree of terms which will appear in our product will be $8$, then
$$G_{(3,1)}^{(3)} G_{(1)}^{(3)} = 2 G_{(3,2)}^{(3)} + G_{(3,1,1)}^{(3)} - G_{(3,3)}^{(3)} - 2 G_{(3,2,1)}^{(3)}
+ G_{(3,3,1)}^{(3)} + G_{(3,2,1,1)}^{(3)} - G_{(3,3,1,1)}^{(3)}~. $$
\end{sagedemo}
\end{subsection}

\end{section}

%%%%%%%%%%%%%%%%%%%%%%%%%%%%%%%%%%%%%%%%%%%%%%%%%%%%%
\begin{section}{Duality between the weak and strong orders} \label{sec:strongweakduality}
%%%%%%%%%%%%%%%%%%%%%%%%%%%%%%%%%%%%%%%%%%%%%%%%%%%%%
In this section we consider a $k$-analogue of the Cauchy identity and the Robinson--Schensted--Knuth
(RSK) algorithm (or insertion algorithm). The RSK algorithm provides a bijection between
permutations and pairs of tableaux satisfying certain conditions. As shown in~\cite{LLMS:2006}
this can be generalized to the affine setting.

%%%%%%%%%%%%%%%%%%%%%%%%%%%%%%%%%%%%%%%%%%%%%%%%%%%%%
\begin{subsection}{$k$-analogue of the Cauchy identity}

In the algebra of symmetric functions (or rather polynomials) an important identity is
the {\it Cauchy identity}, stating that
\begin{equation}\label{eq:cauchykernel}
\prod_{i,j=1}^m \frac{1}{1-x_i y_j} = \prod_{j=1}^m \sum_{r \geq 0} h_r[X_m] y_j^r
= \sum_{\la} h_\la[X_m] m_\la[Y_m] = \sum_{\la} s_\la[X_m] s_\la[Y_m]\;,
\end{equation}
where the last two sums run over all partitions $\la$.

Although there is an algebraic proof of this identity that follows from calculations in Section~\ref{sec:symfunc},
there is also a direct combinatorial proof of this result.  Recall from Equation~\eqref{eq:schurtableauxexp}
that the Schur function is equal to
\begin{equation} \label{eq:tableauxsum}
	s_\la[X_m] = \sum_T {\bf x}^T \;,
\end{equation}
where the sum is over all semi-standard tableaux of shape $\la$ and ${\bf x}^T$ is a monomial 
which represents the product over $i$ of $x_i$ raised to the number of $i$ in the tableaux.  

The Schur functions are in fact characterized as the unique basis which satisfies 
Equation~\eqref{eq:cauchykernel} and which is triangularly related to the monomial symmetric functions.
Notice that
\begin{equation} \label{equation.kernel}
\prod_{i,j=1}^m \frac{1}{1-x_i y_j}  = \sum_{M} \prod_{i,j=1}^m
(x_i y_j)^{m_{ij}}= \sum_{M} ({\bf xy})^M \;,
\end{equation}
where the sum is over all $m \times m$ matrices $M = ( m_{ij} )_{1 \leq i,j \leq m}$ with 
non-negative integer entries $m_{ij}$. There is a famous bijection due to Robinson--Schensted--Knuth 
\cite{Knuth, Robinson:1938, Schensted:1961} (see for instance \cite{Sagan} for a clear exposition of the 
bijection) that identifies such a matrix $M$ to a biword in the alphabet of letters $\pchoose{r}{s}$ with 
$1 \leq r,s \leq m$. The bijection maps these biwords to pairs $(P, Q)$, where $P$ and $Q$ are 
semi-standard tableaux of the same shape and ${\bf x}^P {\bf y}^Q = \prod_{i,j=1}^m (x_i y_j)^{m_{ij}}$.
As a consequence we conclude that
\begin{equation}\label{eq:secondCauchy}
	\prod_{i,j=1}^m \frac{1}{1-x_i y_j} = \sum_{M} ({\bf xy})^M = 
	\sum_{(P,Q)} {\bf x}^P {\bf y}^Q 
	= \sum_{\la} \left(\sum_{\shape(P) = \la} {\bf x}^P \right)\left(\sum_{\shape(Q) = \la} {\bf y}^Q\right)~.
\end{equation}
Since $\sum_{\shape(P) = \la} {\bf x}^P$ is triangularly related to the monomial basis,
Equation \eqref{eq:tableauxsum} must hold by comparing Equations~\eqref{eq:cauchykernel}
and~\eqref{eq:secondCauchy}. In this section we consider the generalization of the Cauchy
identity and the RSK algorithm to $k$-Schur functions and their dual basis at $t=1$.

By taking the coefficient of $x_1 x_2 \cdots x_m y_1 y_2 \cdots y_m$
in Equation~\eqref{eq:cauchykernel}, we find the identity
$$m! = \sum_{\la \vdash m} f_\la^2\;,$$
where $f_\la$ is the number of standard tableaux of shape $\la$.  This may be seen as an algebraic
formulation of the more standard presentation of the RSK algorithm on permutations, 
namely, there is a bijection between permutations $\pi$ and pairs of tableaux $(P,Q)$,
where $P$ and $Q$ are standard tableaux of the same shape.

\vskip .2in

In Equation~\eqref{eq:dualkschurmexp}, we stated that $\SS^{(k)}_\la = \sum_\mu K^{(k)}_{\la\mu} m_\mu$,
where $K^{(k)}_{\la\mu}$ is equal to the number of weak tableaux of shape $\mfc(\la)$ and weight $\mu$.  
The collective results in~\cite{Lam:2006,LM:2008} show that 
\begin{equation}
\SS^{(k)}_\la[X_m] = \sum_T {\bf x}^T\;,
\end{equation}
where the sum is over all weak tableaux $T$ of shape $\mfc(\la)$ in the weight
$\{1, 2, \ldots, m\}$.  

Now consider the following $k$-bounded analogue of the kernel~\eqref{equation.kernel} given by
\begin{equation}\label{eq:klevelkernel}
	\prod_{j=1}^m (1 + h_1[X_m] y_j + h_2[X_m] y_j^2 + \cdots + h_k[X_m] y_j^k) 
	= \sum_{\la : \la_1 \leq k} h_\la[X_m] m_\la[Y_m] 
	= \sum_{M} ({\bf xy})^M,
\end{equation}
where the sum on the right hand side of the equation is over all $k$-bounded matrices 
$M = (m_{ij})_{1\leq i,j \leq n}$, whose entries are non-negative integers and satisfy
$\sum_{i=1}^n m_{ij} \leq k$, and $({\bf xy})^M = \prod_{i,j} (x_i y_j)^{m_{ij}}$.

In~\cite{LLMS:2006} the authors provide a bijection between the set of $k$-bounded matrices and
pairs of tableaux $(P, Q)$ such that $P$ is a strong tableau and $Q$ is a weak tableau that are both 
of the same $(k+1)$-core shape.  This is done by introducing an insertion algorithm which generalizes 
that of the RSK bijection (and reduces to RSK case when $k \geq \sum_{i,j} m_{ij}$).

Here we provide an exposition of a special case of this bijection, namely between
permutation matrices (with entries in $\{0,1\}$ with a single $1$ in each row and column) and
pairs of tableaux $(P,Q )$, where $P$ is a strong standard tableau and $Q$ is a weak standard tableau.

The bijection in~\cite{LLMS:2006} (which is called `affine insertion' in analogy with
RSK-insertion) shows that the duality of the weak and strong functions can be expressed 
through the duality of the kernel in~\eqref{eq:klevelkernel} and hence
\begin{equation}\label{eq:strongweakkernel}
\prod_{j=1}^m (1 + h_1[X_m] y_j + h_2[X_m] y_j^2 + \cdots + h_k[X_m] y_j^k) = 
\sum_{\la : \la_1 \leq k} \Strong_{\mfc(\la)}[X_m] \Weak_{\mfc(\la)}[Y_m],
\end{equation}
where the functions are defined as
\begin{equation}
\Weak_{\kappa}[X_m] = \sum_{T} {\bf x}^T
\end{equation}
with the sum over all weak tableaux of shape $\kappa$ (a $(k+1)$-core) and 
\begin{equation}
\Strong_{\kappa}[X_m] = \sum_{T} {\bf x}^T
\end{equation}
with the sum is over all strong tableaux of shape $\kappa$. In both cases ${\bf x}^T$ represents a
monomial associated to the weight.
The equality $\SS^{(k)}_\la[X_m] = \Weak_{\mfc(\la)}[X_m]$ with dual
$k$-Schur functions relies on a permutation action on the weight
which can be found in~\cite{LM:2007}.
The equality $s^{(k)}_\la[X_m] = \Strong_{\mfc(\la)}[X_m]$ follows by a duality argument after
showing that the functions $\Strong_{\kappa}[X_m]$ form a basis~\cite{LLMS:2006}.

Let $f^{\mathrm{weak}}_\kappa$ be the number of standard weak tableaux of shape $\kappa$ 
and $f^{\mathrm{strong}}_\kappa$ be the number of standard strong tableaux of shape $\kappa$,
 where $\kappa$ is a $(k+1)$-core.
If we take the coefficient of $ x_1 x_2 \cdots x_m y_1 y_2 \cdots y_m$
in Equation \eqref{eq:strongweakkernel}, 
we find the following combinatorial result
\begin{equation}\label{eq:kRSKenumeration}
m! = \sum_{\la : \la_1 \leq k} f^{\mathrm{strong}}_{\mfc(\la)} f^{\mathrm{weak}}_{\mfc(\la)}\;,
\end{equation}
where the sum is over $k$-bounded partitions of $\la$ of $m$.
This formula can be seen as a manifestation of a bijection between
the set of permutations $\sigma$ of $S_m$ and pairs of tableaux $(P^{(k)},Q^{(k)})$, where
$P^{(k)}$ is a strong standard tableau and $Q^{(k)}$ is a weak standard tableau.

\end{subsection}

%%%%%%%%%%%%%%%%%%%%%%%%%%%%%%%%%%%%%%%%%%%%%%%%%%%%%%
\begin{subsection}{A brief introduction to Fomin's growth diagrams}
Affine insertion is proved using Fomin's growth diagrams~\cite{Fomin:1995}
which is a tool of presenting insertion algorithms on graded posets via certain local rules.
To put the affine insertion algorithm in context, we give a brief
presentation of the usual RSK algorithm between permutations and
pairs of standard tableaux in terms of growth diagrams in order to
show how the algorithms compare.
In Section~\ref{subsection.affine insertion}  we will demonstrate how the algorithm can be 
generalized to the bijection
which explains Equation \eqref{eq:kRSKenumeration}.  The treatment
we present in this section follows roughly the way of viewing
Fomin's growth diagrams that is presented in \cite[Section 5.2]{Sagan}
with a few modifications in orientation.

We begin with an $n \times n$ permutation matrix corresponding to a permutation
$\pi$ (we use the convention that row $i$ has a $1$ in column $\pi_i$) and
convert it into a pair of standard tableaux of the same shape.
To do this we draw an $n \times n$ lattice of squares and label the vertices of 
this lattice with partitions and the centers of these squares with the entries of the permutation
matrix.  At the start of the procedure we begin by labeling only
the first row and first column of vertices with empty
partitions and fill in the rest of the diagram by a recursive procedure using a set of {\it local rules}.

To describe the RSK algorithm we describe a `local rule'
which is a bijection between two types of arrays.

\noindent
{\bf Case 1}:
\begin{center}
\begin{tabular}{ccc}
$\lambda$&$\rightarrow$&$\mu$\cr
$\downarrow$&0&\cr
$\nu$&&\cr
\end{tabular}
\hskip .5in
$\longleftrightarrow$
\hskip .5in
\begin{tabular}{ccc}
&&$\mu$\cr
&&$\downarrow$\cr
$\nu$&$\rightarrow$&$\gamma$\cr
\end{tabular}
\end{center}
\begin{enumerate}
\item[(a)] If $\lambda = \mu = \nu$, then $\gamma = \nu$.
\item[(b)] If $\mu \neq \nu$, then $\gamma = \mu \cup \nu$.
\item[(c)] If $\lambda$ is strictly contained in $\mu = \nu$, then if $\mu$
is obtained from $\lambda$ by adding a cell in row $i$, then $\gamma$ is
obtained from $\mu$ by adding a cell in row $i+1$.
\end{enumerate}

\noindent
{\bf Case 2}:
\begin{center}
\begin{tabular}{ccc}
$\lambda$&$\rightarrow$&$\mu$\cr
$\downarrow$&1&\cr
$\nu$&&\cr
\end{tabular}
\hskip .5in
$\longleftrightarrow$
\hskip .5in
\begin{tabular}{ccc}
&&$\mu$\cr
&&$\downarrow$\cr
$\nu$&$\rightarrow$&$\gamma$\cr
\end{tabular}
\end{center}
This case can only occur when $\lambda = \mu = \nu$, and then $\gamma$ is obtained from $\mu$ 
by adding a cell in the first row.

By successively applying these local rules, the growth diagram
is filled in until it is an $(n+1) \times (n+1)$ array of partitions.  
Because each of the rules we apply is
a bijection, we need only remember the last row and last column of the array and the rest
of the table can be recovered by applying the local rule in reverse.
The last row of this table is a sequence of partitions each of which differ by a single cell
and so can be interpreted as a standard tableau which agrees with the insertion tableau of $\pi$.  
The last column of table is also a sequence of partitions each of which differ by a single cell;
the corresponding standard tableau agrees with the recording tableau corresponding to the 
permutation $\pi$.

The important thing to notice is that the local rules can be reversed.  For this
reason if we are given just the pair of tableaux that represent the last row and the last
column of the table, it is possible to reconstruct the entire table and
hence the permutation matrix.

A beautiful feature of the growth diagram perspective on the RSK insertion algorithm is also
that it makes it completely manifest that interchanging the insertion tableau $P$ and recording tableau
$Q$ inverts the permutation $\pi$. This can be seen by interchanging the rows and columns of the array,
which inverts the permutation matrix and interchanges $P$ and $Q$.

\begin{example} \label{example:rskgrowth} 
Let us consider the permutation $4132$ as a running example.  
We begin with a row and a column of 5 empty partitions and the
entries of the permutation matrix in an array pictured below.
\squaresize=7pt
\begin{center}
\begin{tabular}{ccccccccc}
$\emptyset$&$\rightarrow$&$\emptyset$&$\rightarrow$&$\emptyset$&$\rightarrow$&$\emptyset$&$\rightarrow$&$\emptyset$\cr
$\downarrow$&$0$&&$0$&&$0$&&$1$&\cr
$\emptyset$&&&&&&&&\cr
$\downarrow$&$1$&&$0$&&$0$&&$0$&\cr
$\emptyset$&&&&&&&&\cr
$\downarrow$&$0$&&$0$&&$1$&&$0$&\cr
$\emptyset$&&&&&&&&\cr
$\downarrow$&$0$&&$1$&&$0$&&$0$&\cr
$\emptyset$&&&&&&&&\cr
\end{tabular}
\end{center}
The local rules may be applied at first only in one place:
\begin{center}
\begin{tabular}{ccccccccc}
$\emptyset$&$\rightarrow$&$\emptyset$&$\rightarrow$&$\emptyset$&$\rightarrow$&$\emptyset$&$\rightarrow$&$\emptyset$\cr
$\downarrow$&$0$&$\downarrow$&$0$&&$0$&&$1$&\cr
$\emptyset$&$\rightarrow$&$\emptyset$&&&&&&\cr
$\downarrow$&$1$&&$0$&&$0$&&$0$&\cr
$\emptyset$&&&&&&&&\cr
$\downarrow$&$0$&&$0$&&$1$&&$0$&\cr
$\emptyset$&&&&&&&&\cr
$\downarrow$&$0$&&$1$&&$0$&&$0$&\cr
$\emptyset$&&&&&&&&\cr
\end{tabular}
\end{center}
In successive steps, the local rules may be applied in each corner.
\begin{center}
\begin{tabular}{ccccccccc}
$\emptyset$&$\rightarrow$&$\emptyset$&$\rightarrow$&$\emptyset$&$\rightarrow$&$\emptyset$&$\rightarrow$&$\emptyset$\cr
$\downarrow$&$0$&$\downarrow$&$0$&$\downarrow$&$0$&&$1$&\cr
$\emptyset$&$\rightarrow$&$\emptyset$&$\rightarrow$&$\emptyset$&&&&\cr
$\downarrow$&$1$&$\downarrow$&$0$&&$0$&&$0$&\cr
$\emptyset$&$\rightarrow$&\young{\cr}&&&&&&\cr
$\downarrow$&$0$&&$0$&&$1$&&$0$&\cr
$\emptyset$&&&&&&&&\cr
$\downarrow$&$0$&&$1$&&$0$&&$0$&\cr
$\emptyset$&&&&&&&&\cr
\end{tabular}
\end{center}

\begin{center}
\begin{tabular}{ccccccccc}
$\emptyset$&$\rightarrow$&$\emptyset$&$\rightarrow$&$\emptyset$&$\rightarrow$&$\emptyset$&$\rightarrow$&$\emptyset$\cr
$\downarrow$&$0$&$\downarrow$&$0$&$\downarrow$&$0$&$\downarrow$&$1$&\cr
$\emptyset$&$\rightarrow$&$\emptyset$&$\rightarrow$&$\emptyset$&$\rightarrow$&$\emptyset$&&\cr
$\downarrow$&$1$&$\downarrow$&$0$&$\downarrow$&$0$&&$0$&\cr
$\emptyset$&$\rightarrow$&\young{\cr}&$\rightarrow$&\young{\cr}&&&&\cr
$\downarrow$&$0$&$\downarrow$&$0$&&$1$&&$0$&\cr
$\emptyset$&$\rightarrow$&\young{\cr}&&&&&&\cr
$\downarrow$&$0$&&$1$&&$0$&&$0$&\cr
$\emptyset$&&&&&&&&\cr
\end{tabular}
\end{center}
By continuing to apply the local rules, we arrive at the following completed table:
\begin{center}
\begin{tabular}{cccccccccc}
$\emptyset$&$\rightarrow$&$\emptyset$&$\rightarrow$&$\emptyset$&$\rightarrow$&$\emptyset$&$\rightarrow$&$\emptyset$&\cr
$\downarrow$&$0$&$\downarrow$&$0$&$\downarrow$&$0$&$\downarrow$&$1$&$\downarrow$&\cr
$\emptyset$&$\rightarrow$&$\emptyset$&$\rightarrow$&$\emptyset$&$\rightarrow$&$\emptyset$&$\rightarrow$&\young{\cr}&\cr
$\downarrow$&$1$&$\downarrow$&$0$&$\downarrow$&$0$&$\downarrow$&$0$&$\downarrow$&\cr
$\emptyset$&$\rightarrow$&\young{\cr}&$\rightarrow$&\young{\cr}&$\rightarrow$&\young{\cr}&$\rightarrow$&\young{\cr\cr}&$Q$\cr
$\downarrow$&$0$&$\downarrow$&$0$&$\downarrow$&$1$&$\downarrow$&$0$&$\downarrow$&\cr
$\emptyset$&$\rightarrow$&\young{\cr}&$\rightarrow$&\young{\cr}&$\rightarrow$&\young{&\cr}&$\rightarrow$&\young{\cr&\cr}&\cr
$\downarrow$&$0$&$\downarrow$&$1$&$\downarrow$&$0$&$\downarrow$&$0$&$\downarrow$&\cr
$\emptyset$&$\rightarrow$&$\young{\cr}$&$\rightarrow$&\young{&\cr}&$\rightarrow$&\young{\cr&\cr}&$\rightarrow$&\young{\cr\cr&\cr}&\cr
&&&&$P$&&&&\cr
\end{tabular}
\end{center}

\squaresize=7pt
Now in order to reconstruct this entire table, we need only remember the two standard tableaux 
$(P,Q)={\Big(} \scriptsize \young{4\cr3\cr1&2\cr},\young{4\cr2\cr1&3\cr} {\Big)}$ which
correspond to the last row and the last column of the table. This indeed agrees with the usual
insertion tableau $P$ and recording tableau $Q$ when row-inserting $4132$ (see for 
example~\cite{Sagan}).
\end{example}

\end{subsection}

%%%%%%%%%%%%%%%%%%%%%%%%%%%%%%%%%%%%%%%%%%%%%%%%%%%%%%
\begin{subsection}{Affine insertion}
\label{subsection.affine insertion}

Now that we have presented Fomin's growth diagrams as a tool for understanding RSK, 
we will give the local rules necessary to understand $k$-affine insertion for permutations.
This is the algorithm presented in~\cite{LLMS:2006}.
Parts of these local rules use operations which are described in Sections~\ref{section:partitionscores}, 
\ref{subsection.weak order} and \ref{subsection.strong order}
(in particular the action of $s_i$, the notion of strong and weak cover, and the vocabulary of content and residue).

We have stripped down the algorithm presented in~\cite{LLMS:2006} in hopes of making it clearer by
having fewer details to follow.  The algorithm presented in~\cite{LLMS:2006} is slightly more general 
because it includes an additional rule that generalizes the bijection from $k$-bounded non-negative 
integer matrices to pairs tableaux of the same shape, the first is
a strong semi-standard tableau, the second is a weak 
semi-standard tableau.  This more general bijection
is sufficient to show Equation~\eqref{eq:strongweakkernel}. Here we are pairing down the presentation
rule to only demonstrate how Equation~\eqref{eq:kRSKenumeration} works.

To do this we construct again a table that will be a growth diagram, where
there is an $(n+1) \times (n+1)$ array of $(k+1)$-cores and between these entries we
put the $n \times n$ permutation matrix.  

There is an additional piece of information which is recorded in this matrix besides the shapes of the 
$k+1$-cores. The horizontal connectors in our growth diagrams keep track of (possibly empty) strong 
covers and the vertical connectors keep track of (possibly empty) weak covers.  If there are two 
$(k+1)$-cores that are adjacent in the same row, $\tau \rightarrow \kappa$, then either
$\tau = \kappa$ or $\kappa$ covers $\tau$ in the strong order. In the second case, we also need to
mark one of the connected components of $\kappa \slash \tau$ in the diagram of $\kappa$ or keep track of
this marking on the arrow $\xrightarrow{c}$, where $c$ represents the content of the diagonal of
the marked cell.  When working with the diagram in the examples below we only record this information 
by marking a cell of $\kappa\slash\tau$ within core $\kappa$ for compactness of notation.

We begin with the first row and column of this array consisting of empty cores only (and hence there are 
no markings necessary). Given a corner of the table that is partially filled in, we complete the rest with 
the following local rules.

\noindent
{\bf Case 1}:
\begin{center}
\begin{tabular}{ccc}
$\tau$&$\xrightarrow{c}$&$\kappa$\cr
$\downarrow$&0&\cr
$\theta$&&\cr
\end{tabular}
\hskip .5in
$\longleftrightarrow$
\hskip .5in
\begin{tabular}{ccc}
&&$\kappa$\cr
&&$\downarrow$\cr
$\theta$&$\xrightarrow{c'}$&$\zeta$\cr
\end{tabular}
\end{center}
Try to apply (a)-(c) in this order. If a case does not apply, proceed to the next case.
\begin{enumerate}
\item[(a)] If $\tau = \kappa$, then $\zeta = \theta$ and neither $\kappa$ nor $\zeta$ will be marked; 
if $\tau = \theta$, then $\zeta = \kappa$ and $c'=c$.
\item[(b)] If $\kappa \slash \tau$ is not contained in $\theta \slash \tau$, let $r$ be the residue of the cells 
$\theta\slash\tau$ $\pmod{k+1}$ (there is exactly one). In this case $\zeta$ is $s_r$ applied to $\kappa$.  
One cell of $\kappa$ on the diagonal with content $c$ is marked. In $\zeta$ mark the component that
has an overlap with the marked ribbon in $\kappa$ (alternatively, 
if $c$ does not have residue $r$, then $c'=c$; otherwise $c'$ is on one diagonal higher than $c$).
\item[(c)] If $\tau$ is strictly contained in $\kappa = \theta$, let $c'$ be the content of the first
diagonal which is weakly to the left of the marked ribbon of $\kappa/\tau$ and is an addable cell of $\kappa$. 
Then $\zeta = s_{c'} \kappa$ and the marked cell of $\zeta$ is on the diagonal with content $c'$.
\end{enumerate}

\noindent
{\bf Case 2}:
\begin{center}
\begin{tabular}{ccc}
$\tau$&$\rightarrow$&$\kappa$\cr
$\downarrow$&1&\cr
$\theta$&&\cr
\end{tabular}
\hskip .5in
$\longleftrightarrow$
\hskip .5in
\begin{tabular}{ccc}
&&$\kappa$\cr
&&$\downarrow$\cr
$\theta$&$\xrightarrow{c'}$&$\zeta$\cr
\end{tabular}
\end{center}
This case can only occur when $\tau = \kappa = \theta$, and then $\zeta$ is $s_{\tau_1}$ applied to 
$\tau$ (the effect of adding a cell in the first row of $\tau$, but as a $(k+1)$-core).
A marking $c'$ is added in last cell of the first row of $\zeta$.

\begin{example}  Let us compute the growth diagram for the matrix corresponding
to the permutation 4132.
As in Example~\ref{example:rskgrowth}, we begin our growth diagram with the first row
and column consisting of empty cores.

\squaresize=7pt
\begin{center}
\begin{tabular}{ccccccccc}
$\emptyset$&$\rightarrow$&$\emptyset$&$\rightarrow$&$\emptyset$&$\rightarrow$&$\emptyset$&$\rightarrow$&$\emptyset$\cr
$\downarrow$&$0$&&$0$&&$0$&&$1$&\cr
$\emptyset$&&&&&&&&\cr
$\downarrow$&$1$&&$0$&&$0$&&$0$&\cr
$\emptyset$&&&&&&&&\cr
$\downarrow$&$0$&&$0$&&$1$&&$0$&\cr
$\emptyset$&&&&&&&&\cr
$\downarrow$&$0$&&$1$&&$0$&&$0$&\cr
$\emptyset$&&&&&&&&\cr
\end{tabular}
\end{center}

Below is the growth diagram for $k=1$:
\begin{center}
\begin{tabular}{ccccccccc}
$\emptyset$&$\rightarrow$&$\emptyset$&$\rightarrow$&$\emptyset$&$\rightarrow$&$\emptyset$&$\rightarrow$&$\emptyset$\cr
$\downarrow$&$0$&$\downarrow$&$0$&$\downarrow$&$0$&$\downarrow$&$1$&$\downarrow$\cr
$\emptyset$&$\rightarrow$&$\emptyset$&$\rightarrow$&$\emptyset$&$\rightarrow$&$\emptyset$&$\rightarrow$&$\young{\ast\cr}$\cr
$\downarrow$&$1$&$\downarrow$&$0$&$\downarrow$&$0$&$\downarrow$&$0$&$\downarrow$\cr
$\emptyset$&$\rightarrow$&$\young{\ast\cr}$&$\rightarrow$&$\young{\cr}$&$\rightarrow$&$\young{\cr}$&$\rightarrow$&$\young{\ast\cr&\cr}$\cr
$\downarrow$&$0$&$\downarrow$&$0$&$\downarrow$&$1$&$\downarrow$&$0$&$\downarrow$\cr
$\emptyset$&$\rightarrow$&$\young{\ast\cr}$&$\rightarrow$&$\young{\cr}$&$\rightarrow$&$\young{\cr&\ast\cr}$&$\rightarrow$&$\young{\ast\cr&\cr&&\cr}$\cr
$\downarrow$&$0$&$\downarrow$&$1$&$\downarrow$&$0$&$\downarrow$&$0$&$\downarrow$\cr
$\emptyset$&$\rightarrow$&$\young{\ast\cr}$&$\rightarrow$&$\young{\cr&\ast\cr}$&$\rightarrow$&$\young{\cr&\ast\cr&&\cr}$&$\rightarrow$&$\young{\ast\cr&\cr&&\cr&&&\cr}$\cr
\end{tabular}
\end{center}
Note that since the value of $k$ is too small, Case 1(b) is not used.
\squaresize=9pt
By reading the last row of this table we can encode it as a single strong tableau.  When $k=1$ there is only one weak tableau of shape $(m, m-1, \ldots, 2, 1)$.  The last row and column of this table can then be encoded as
\squaresize=12pt
$$\Big( \young{4^\ast\cr3&4\cr2&3^\ast&4\cr1^\ast&2^\ast&3&4\cr},
\young{4\cr3&4\cr2&3&4\cr1&2&3&4\cr} \Big)~.$$

\squaresize=7pt
For $k=2$ and starting with the same permutation the situation is a little more complicated:
\begin{center}
\begin{tabular}{ccccccccc}
$\emptyset$&$\rightarrow$&$\emptyset$&$\rightarrow$&$\emptyset$&$\rightarrow$&$\emptyset$&$\rightarrow$&$\emptyset$\cr
$\downarrow$&$0$&$\downarrow$&$0$&$\downarrow$&$0$&$\downarrow$&$1$&$\downarrow$\cr
$\emptyset$&$\rightarrow$&$\emptyset$&$\rightarrow$&$\emptyset$&$\rightarrow$&$\emptyset$&$\rightarrow$&$\young{\ast\cr}$\cr
$\downarrow$&$1$&$\downarrow$&$0$&$\downarrow$&$0$&$\downarrow$&$0$&$\downarrow$\cr
$\emptyset$&$\rightarrow$&$\young{\ast\cr}$&$\rightarrow$&$\young{\cr}$&$\rightarrow$&$\young{\cr}$&$\rightarrow$&\young{\ast\cr\cr}\cr
$\downarrow$&$0$&$\downarrow$&$0$&$\downarrow$&$1$&$\downarrow$&$0$&$\downarrow$\cr
$\emptyset$&$\rightarrow$&$\young{\ast\cr}$&$\rightarrow$&$\young{\cr}$&$\rightarrow$&$\young{&\ast\cr}$&$\rightarrow$&$\young{\cr\ast\cr&\cr}$\cr
$\downarrow$&$0$&$\downarrow$&$1$&$\downarrow$&$0$&$\downarrow$&$0$&$\downarrow$\cr
$\emptyset$&$\rightarrow$&$\young{\ast\cr}$&$\rightarrow$&$\young{&\ast\cr}$&$\rightarrow$&$\young{\ast\cr&&\cr}$&$\rightarrow$&$\young{\ast\cr\cr&&\cr}$\cr
\end{tabular}
\end{center}

Now it is necessary to apply all four rules to fill in the growth diagram.
The first time that Case 1 (b) occurs is constructing the last entry in the fourth row of cores
(the last entry of the third row of the permutation matrix).  

\begin{center}
\begin{tabular}{ccc}
$\young{\cr}$&$\rightarrow$&$\young{\ast\cr\cr}$\cr
$\downarrow$&0&\cr
$\young{&\ast\cr}$&&\cr
\end{tabular}
%$t_{0,1} \emptyset = (1)$ and $s_0 \emptyset = (1)$
\hskip .5in
$\longleftrightarrow$
\hskip .5in
\begin{tabular}{ccc}
&&$\young{\ast\cr\cr}$\cr
&&$\downarrow$\cr
$\young{&\ast\cr}$&$\rightarrow$&$\zeta = s_1 \young{\ast\cr\cr} = 
\young{\cr\ast\cr&\cr}$\cr
\end{tabular}
% $s_0 t_{0,1} \emptyset = t_{-1,0} s_0 \emptyset$
\end{center}

If we just record the last row as a strong tableau and the last column as a weak tableau, we have the follow pair:
\squaresize=12pt
$$\Big( \young{4^*\cr3^*\cr1^*&2^*&3\cr}, \young{3\cr2\cr1&3&4\cr}\Big)$$
and this pair of tableaux is sufficient to reconstruct the entire table.

\squaresize=7pt
For $k=3$, the case is similar to the $k=2$ case in that there are examples where all four rules are applied
in order to construct the table:
\begin{center}
\begin{tabular}{ccccccccc}
$\emptyset$&$\rightarrow$&$\emptyset$&$\rightarrow$&$\emptyset$&$\rightarrow$&$\emptyset$&$\rightarrow$&$\emptyset$\cr
$\downarrow$&$0$&$\downarrow$&$0$&$\downarrow$&$0$&$\downarrow$&$1$&$\downarrow$\cr
$\emptyset$&$\rightarrow$&$\emptyset$&$\rightarrow$&$\emptyset$&$\rightarrow$&$\emptyset$&$\rightarrow$&$\young{\ast\cr}$\cr
$\downarrow$&$1$&$\downarrow$&$0$&$\downarrow$&$0$&$\downarrow$&$0$&$\downarrow$\cr
$\emptyset$&$\rightarrow$&$\young{\ast\cr}$&$\rightarrow$&$\young{\cr}$&$\rightarrow$&$\young{\cr}$&$\rightarrow$&$\young{\ast\cr\cr}$\cr
$\downarrow$&$0$&$\downarrow$&$0$&$\downarrow$&$1$&$\downarrow$&$0$&$\downarrow$\cr
$\emptyset$&$\rightarrow$&$\young{\ast\cr}$&$\rightarrow$&$\young{\cr}$&$\rightarrow$&$\young{&\ast\cr}$&$\rightarrow$&$\young{\ast\cr&\cr}$\cr
$\downarrow$&$0$&$\downarrow$&$1$&$\downarrow$&$0$&$\downarrow$&$0$&$\downarrow$\cr
$\emptyset$&$\rightarrow$&$\young{\ast\cr}$&$\rightarrow$&$\young{&\ast\cr}$&$\rightarrow$&$\young{\ast\cr&\cr}$&$\rightarrow$&$\young{\ast\cr\cr&&\cr}$\cr
\end{tabular}
\end{center}
Because we can reconstruct the table from the last row and column of this table,
it is necessary to keep track only of the strong tableau representing the last row and
the weak tableau representing the last column.  This is represented by the pair,
\squaresize=12pt
$$\Big( \young{4^*\cr3^*\cr1^*&2^*&4\cr}, \young{4\cr2\cr1&3&4\cr}\Big).$$
\end{example}

\begin{example} \label{ex:kRSKformeq3}
In the following table we have presented permutations of $3$ (corresponding to 
permutation matrices) which
are in bijection with pairs of weak and strong tableaux for $k= 1,2$ and $3$.  When $k=3$, the
tableaux are in bijection with pairs of standard tableaux of the same
shape by dropping the markings in the strong tableaux.

{\squaresize=9pt
\begin{center}
\begin{tabular}{|c|c|c|c|}
\hline
$\sigma$&$(P^{(1)}, Q^{(1)})$&$(P^{(2)}, Q^{(2)})$&$(P^{(\infty)}, Q^{(\infty)})$\cr
\hline
$123$ &
$\scriptsize\Big( \young{3\cr2&3\cr1^\ast&2^\ast&3^\ast\cr}, \young{3\cr2&3\cr1&2&3\cr}\Big)$&
$\scriptsize\Big( \young{3\cr1^\ast&2^\ast&3^\ast\cr}, \young{3\cr1&2&3\cr }\Big)$&
$\scriptsize\Big( \young{1^*&2^*&3^*\cr}, \young{1&2&3\cr} \Big)$\cr
\hline
$132$&
$\scriptsize\Big ( \young{3\cr2&3^\ast\cr1^\ast&2^\ast&3\cr}, \young{3\cr2&3\cr1&2&3\cr} \Big)$&
$\scriptsize\Big( \young{3^\ast\cr1^\ast&2^\ast&3\cr}, \young{3\cr1&2&3\cr } \Big)$&
$\scriptsize\Big( \young{3^*\cr1^*&2^*\cr}, \young{3\cr1&2\cr} \Big)$\cr
\hline
$213$&
$\scriptsize\Big ( \young{3\cr2^\ast&3\cr1^\ast&2&3^\ast\cr}, \young{3\cr2&3\cr1&2&3\cr} \Big)$&
$\scriptsize\Big( \young{3\cr2^\ast\cr1^\ast&3^\ast\cr}, \young{3\cr2\cr1&3\cr } \Big)$&
$\scriptsize\Big( \young{2^*\cr1^*&3^*\cr}, \young{2\cr1&3\cr} \Big)$\cr
\hline
$231$&
$\scriptsize\Big ( \young{3\cr2^\ast&3^\ast\cr1^\ast&2&3\cr}, \young{3\cr2&3\cr1&2&3\cr} \Big)$&
$\scriptsize\Big( \young{2^\ast\cr1^\ast&3&3^\ast\cr}, \young{3\cr1&2&3\cr } \Big)$&
$\scriptsize\Big( \young{2^*\cr1^*&3^*\cr}, \young{3\cr1&2\cr} \Big)$\cr
\hline
$312$&
$\scriptsize\Big ( \young{3^\ast\cr2&3\cr1^\ast&2^\ast&3\cr}, \young{3\cr2&3\cr1&2&3\cr} \Big)$&
$\scriptsize\Big( \young{3\cr3^\ast\cr1^\ast&2^\ast\cr}, \young{3\cr2\cr1&3\cr } \Big)$&
$\scriptsize\Big( \young{3^*\cr1^*&2^*\cr}, \young{2\cr1&3\cr} \Big)$\cr
\hline
$321$&
$\scriptsize\Big ( \young{3^\ast\cr2^\ast&3\cr1^\ast&2&3\cr}, \young{3\cr2&3\cr1&2&3\cr} \Big)$&
$\scriptsize\Big( \young{3^\ast\cr2^\ast\cr1^\ast&3\cr}, \young{3\cr2\cr1&3\cr } \Big)$&
$\scriptsize\Big( \young{3^*\cr2^*\cr1^*\cr}, \young{3\cr2\cr1\cr} \Big)$\cr
\hline
\end{tabular}
\end{center}
}
\end{example}
\begin{example}
In the following table we have permutations of $4$ (corresponding to 
permutation matrices) which
are in bijection with pairs of weak and strong tableaux for $k= 1,2$ and $3,4$.  When $k=4$, the
tableaux are in bijection with pairs of standard tableaux of the same
shape by dropping the markings in the strong tableaux.

{\squaresize=8pt
\begin{center}
\begin{tabular}{|c|c|c|c|c|}
\hline
$\sigma$&$(P^{(1)}, Q^{(1)})$&$(P^{(2)}, Q^{(2)})$&$(P^{(3)}, Q^{(3)})$&$(P^{(\infty)}, Q^{(\infty)})$\cr
\hline
$1234$ & 
$\scriptsize\Big( \young{4\cr3&4\cr2&3&4\cr1^*&2^*&3^*&4^*\cr}, \young{4\cr3&4\cr2&3&4\cr1&2&3&4\cr}\Big)$&
$\scriptsize\Big( \young{3&4\cr1^*&2^*&3^*&4^*\cr}, \young{3&4\cr1&2&3&4\cr}\Big)$&
$\scriptsize\Big( \young{4\cr1^*&2^*&3^*&4^*\cr}, \young{4\cr1&2&3&4\cr}\Big)$&
$\scriptsize\Big( \young{1^*&2^*&3^*&4^*\cr}, \young{1&2&3&4\cr}\Big)$\cr\hline
$1243$ & 
$\scriptsize\Big( \young{4\cr3&4\cr2&3&4^*\cr1^*&2^*&3^*&4\cr}, \young{4\cr3&4\cr2&3&4\cr1&2&3&4\cr}\Big)$&
$\scriptsize\Big( \young{3&4^*\cr1^*&2^*&3^*&4\cr}, \young{3&4\cr1&2&3&4\cr}\Big)$&
$\scriptsize\Big( \young{4^*\cr1^*&2^*&3^*&4\cr}, \young{4\cr1&2&3&4\cr}\Big)$&
$\scriptsize\Big( \young{4^*\cr1^*&2^*&3^*\cr}, \young{4\cr1&2&3\cr}\Big)$\cr\hline
$1324$ & 
$\scriptsize\Big( \young{4\cr3&4\cr2&3^*&4\cr1^*&2^*&3&4^*\cr}, \young{4\cr3&4\cr2&3&4\cr1&2&3&4\cr}\Big)$&
$\scriptsize\Big( \young{3^*&4\cr1^*&2^*&3&4^*\cr}, \young{3&4\cr1&2&3&4\cr}\Big)$&
$\scriptsize\Big( \young{4\cr3^*\cr1^*&2^*&4^*\cr}, \young{4\cr3\cr1&2&4\cr}\Big)$&
$\scriptsize\Big( \young{3^*\cr1^*&2^*&4^*\cr}, \young{3\cr1&2&4\cr}\Big)$\cr\hline
$1342$ & 
$\scriptsize\Big( \young{4\cr3&4\cr2&3^*&4^*\cr1^*&2^*&3&4\cr}, \young{4\cr3&4\cr2&3&4\cr1&2&3&4\cr}\Big)$&
$\scriptsize\Big( \young{3^*&4^*\cr1^*&2^*&3&4\cr}, \young{3&4\cr1&2&3&4\cr}\Big)$&
$\scriptsize\Big( \young{3^*\cr1^*&2^*&4&4^*\cr}, \young{4\cr1&2&3&4\cr}\Big)$&
$\scriptsize\Big( \young{3^*\cr1^*&2^*&4^*\cr}, \young{4\cr1&2&3\cr}\Big)$\cr\hline
$1423$ & 
$\scriptsize\Big( \young{4\cr3&4^*\cr2&3&4\cr1^*&2^*&3^*&4\cr}, \young{4\cr3&4\cr2&3&4\cr1&2&3&4\cr}\Big)$&
$\scriptsize\Big( \young{4^*\cr3\cr1^*&2^*&3^*\cr}, \young{4\cr3\cr1&2&3\cr}\Big)$&
$\scriptsize\Big( \young{4\cr4^*\cr1^*&2^*&3^*\cr}, \young{4\cr3\cr1&2&4\cr}\Big)$&
$\scriptsize\Big( \young{4^*\cr1^*&2^*&3^*\cr}, \young{3\cr1&2&4\cr}\Big)$\cr\hline
$1432$ & 
$\scriptsize\Big( \young{4\cr3&4^*\cr2&3^*&4\cr1^*&2^*&3&4\cr}, \young{4\cr3&4\cr2&3&4\cr1&2&3&4\cr}\Big)$&
$\scriptsize\Big( \young{4^*\cr3^*\cr1^*&2^*&3\cr}, \young{4\cr3\cr1&2&3\cr}\Big)$&
$\scriptsize\Big( \young{4^*\cr3^*\cr1^*&2^*&4\cr}, \young{4\cr3\cr1&2&4\cr}\Big)$&
$\scriptsize\Big( \young{4^*\cr3^*\cr1^*&2^*\cr}, \young{4\cr3\cr1&2\cr}\Big)$\cr\hline
$2134$ & 
$\scriptsize\Big( \young{4\cr3&4\cr2^*&3&4\cr1^*&2&3^*&4^*\cr}, \young{4\cr3&4\cr2&3&4\cr1&2&3&4\cr}\Big)$&
$\scriptsize\Big( \young{3\cr2^*\cr1^*&3^*&4^*\cr}, \young{3\cr2\cr1&3&4\cr}\Big)$&
$\scriptsize\Big( \young{4\cr2^*\cr1^*&3^*&4^*\cr}, \young{4\cr2\cr1&3&4\cr}\Big)$&
$\scriptsize\Big( \young{2^*\cr1^*&3^*&4^*\cr}, \young{2\cr1&3&4\cr}\Big)$\cr\hline
$2143$ & 
$\scriptsize\Big( \young{4\cr3&4\cr2^*&3&4^*\cr1^*&2&3^*&4\cr}, \young{4\cr3&4\cr2&3&4\cr1&2&3&4\cr}\Big)$&
$\scriptsize\Big( \young{4\cr3\cr2^*&4^*\cr1^*&3^*\cr}, \young{4\cr3\cr2&4\cr1&3\cr}\Big)$&
$\scriptsize\Big( \young{2^*&4^*\cr1^*&3^*\cr}, \young{2&4\cr1&3\cr}\Big)$&
$\scriptsize\Big( \young{2^*&4^*\cr1^*&3^*\cr}, \young{2&4\cr1&3\cr}\Big)$\cr\hline
$2314$ & 
$\scriptsize\Big( \young{4\cr3&4\cr2^*&3^*&4\cr1^*&2&3&4^*\cr}, \young{4\cr3&4\cr2&3&4\cr1&2&3&4\cr}\Big)$&
$\scriptsize\Big( \young{2^*&4\cr1^*&3&3^*&4^*\cr}, \young{3&4\cr1&2&3&4\cr}\Big)$&
$\scriptsize\Big( \young{4\cr2^*\cr1^*&3^*&4^*\cr}, \young{4\cr3\cr1&2&4\cr}\Big)$&
$\scriptsize\Big( \young{2^*\cr1^*&3^*&4^*\cr}, \young{3\cr1&2&4\cr}\Big)$\cr\hline
$2341$ & 
$\scriptsize\Big( \young{4\cr3&4\cr2^*&3^*&4^*\cr1^*&2&3&4\cr}, \young{4\cr3&4\cr2&3&4\cr1&2&3&4\cr}\Big)$&
$\scriptsize\Big( \young{2^*&4^*\cr1^*&3&3^*&4\cr}, \young{3&4\cr1&2&3&4\cr}\Big)$&
$\scriptsize\Big( \young{2^*\cr1^*&3^*&4&4^*\cr}, \young{4\cr1&2&3&4\cr}\Big)$&
$\scriptsize\Big( \young{2^*\cr1^*&3^*&4^*\cr}, \young{4\cr1&2&3\cr}\Big)$\cr\hline
$2413$ & 
$\scriptsize\Big( \young{4\cr3&4^*\cr2^*&3&4\cr1^*&2&3^*&4\cr}, \young{4\cr3&4\cr2&3&4\cr1&2&3&4\cr}\Big)$&
$\scriptsize\Big( \young{3\cr2^*\cr1^*&3^*&4^*\cr}, \young{4\cr3\cr1&2&3\cr}\Big)$&
$\scriptsize\Big( \young{2^*&4^*\cr1^*&3^*\cr}, \young{3&4\cr1&2\cr}\Big)$&
$\scriptsize\Big( \young{2^*&4^*\cr1^*&3^*\cr}, \young{3&4\cr1&2\cr}\Big)$\cr\hline
$2431$ & 
$\scriptsize\Big( \young{4\cr3&4^*\cr2^*&3^*&4\cr1^*&2&3&4\cr}, \young{4\cr3&4\cr2&3&4\cr1&2&3&4\cr}\Big)$&
$\scriptsize\Big( \young{4^*\cr2^*\cr1^*&3&3^*\cr}, \young{4\cr3\cr1&2&3\cr}\Big)$&
$\scriptsize\Big( \young{4^*\cr2^*\cr1^*&3^*&4\cr}, \young{4\cr3\cr1&2&4\cr}\Big)$&
$\scriptsize\Big( \young{4^*\cr2^*\cr1^*&3^*\cr}, \young{4\cr3\cr1&2\cr}\Big)$\cr\hline
\end{tabular}
\end{center}
\begin{center}
\begin{tabular}{|c|c|c|c|c|}
\hline
$\sigma$&$(P^{(1)}, Q^{(1)})$&$(P^{(2)}, Q^{(2)})$&$(P^{(3)}, Q^{(3)})$&$(P^{(\infty)}, Q^{(\infty)})$\cr
\hline
$3124$ & 
$\scriptsize\Big( \young{4\cr3^*&4\cr2&3&4\cr1^*&2^*&3&4^*\cr}, \young{4\cr3&4\cr2&3&4\cr1&2&3&4\cr}\Big)$&
$\scriptsize\Big( \young{3\cr3^*\cr1^*&2^*&4^*\cr}, \young{3\cr2\cr1&3&4\cr}\Big)$&
$\scriptsize\Big( \young{4\cr3^*\cr1^*&2^*&4^*\cr}, \young{4\cr2\cr1&3&4\cr}\Big)$&
$\scriptsize\Big( \young{3^*\cr1^*&2^*&4^*\cr}, \young{2\cr1&3&4\cr}\Big)$\cr\hline
$3142$ & 
$\scriptsize\Big( \young{4\cr3^*&4\cr2&3&4^*\cr1^*&2^*&3&4\cr}, \young{4\cr3&4\cr2&3&4\cr1&2&3&4\cr}\Big)$&
$\scriptsize\Big( \young{4\cr3\cr3^*&4^*\cr1^*&2^*\cr}, \young{4\cr3\cr2&4\cr1&3\cr}\Big)$&
$\scriptsize\Big( \young{3^*&4^*\cr1^*&2^*\cr}, \young{2&4\cr1&3\cr}\Big)$&
$\scriptsize\Big( \young{3^*&4^*\cr1^*&2^*\cr}, \young{2&4\cr1&3\cr}\Big)$\cr\hline
$3214$ & 
$\scriptsize\Big( \young{4\cr3^*&4\cr2^*&3&4\cr1^*&2&3&4^*\cr}, \young{4\cr3&4\cr2&3&4\cr1&2&3&4\cr}\Big)$&
$\scriptsize\Big( \young{3^*\cr2^*\cr1^*&3&4^*\cr}, \young{3\cr2\cr1&3&4\cr}\Big)$&
$\scriptsize\Big( \young{4\cr3^*\cr2^*\cr1^*&4^*\cr}, \young{4\cr3\cr2\cr1&4\cr}\Big)$&
$\scriptsize\Big( \young{3^*\cr2^*\cr1^*&4^*\cr}, \young{3\cr2\cr1&4\cr}\Big)$\cr\hline
$3241$ & 
$\scriptsize\Big( \young{4\cr3^*&4\cr2^*&3&4^*\cr1^*&2&3&4\cr}, \young{4\cr3&4\cr2&3&4\cr1&2&3&4\cr}\Big)$&
$\scriptsize\Big( \young{4\cr3^*\cr2^*&4^*\cr1^*&3\cr}, \young{4\cr3\cr2&4\cr1&3\cr}\Big)$&
$\scriptsize\Big( \young{3^*\cr2^*\cr1^*&4&4^*\cr}, \young{4\cr2\cr1&3&4\cr}\Big)$&
$\scriptsize\Big( \young{3^*\cr2^*\cr1^*&4^*\cr}, \young{4\cr2\cr1&3\cr}\Big)$\cr\hline
$3412$ & 
$\scriptsize\Big( \young{4\cr3^*&4^*\cr2&3&4\cr1^*&2^*&3&4\cr}, \young{4\cr3&4\cr2&3&4\cr1&2&3&4\cr}\Big)$&
$\scriptsize\Big( \young{3\cr3^*\cr1^*&2^*&4^*\cr}, \young{4\cr3\cr1&2&3\cr}\Big)$&
$\scriptsize\Big( \young{3^*&4^*\cr1^*&2^*\cr}, \young{3&4\cr1&2\cr}\Big)$&
$\scriptsize\Big( \young{3^*&4^*\cr1^*&2^*\cr}, \young{3&4\cr1&2\cr}\Big)$\cr\hline
$3421$ & 
$\scriptsize\Big( \young{4\cr3^*&4^*\cr2^*&3&4\cr1^*&2&3&4\cr}, \young{4\cr3&4\cr2&3&4\cr1&2&3&4\cr}\Big)$&
$\scriptsize\Big( \young{3^*\cr2^*\cr1^*&3&4^*\cr}, \young{4\cr3\cr1&2&3\cr}\Big)$&
$\scriptsize\Big( \young{3^*\cr2^*\cr1^*&4&4^*\cr}, \young{4\cr3\cr1&2&4\cr}\Big)$&
$\scriptsize\Big( \young{3^*\cr2^*\cr1^*&4^*\cr}, \young{4\cr3\cr1&2\cr}\Big)$\cr\hline
$4123$ & 
$\scriptsize\Big( \young{4^*\cr3&4\cr2&3&4\cr1^*&2^*&3^*&4\cr}, \young{4\cr3&4\cr2&3&4\cr1&2&3&4\cr}\Big)$&
$\scriptsize\Big( \young{4^*\cr3\cr1^*&2^*&3^*\cr}, \young{3\cr2\cr1&3&4\cr}\Big)$&
$\scriptsize\Big( \young{4\cr4^*\cr1^*&2^*&3^*\cr}, \young{4\cr2\cr1&3&4\cr}\Big)$&
$\scriptsize\Big( \young{4^*\cr1^*&2^*&3^*\cr}, \young{2\cr1&3&4\cr}\Big)$\cr\hline
$4132$ & 
$\scriptsize\Big( \young{4^*\cr3&4\cr2&3^*&4\cr1^*&2^*&3&4\cr}, \young{4\cr3&4\cr2&3&4\cr1&2&3&4\cr}\Big)$&
$\scriptsize\Big( \young{4^*\cr3^*\cr1^*&2^*&3\cr}, \young{3\cr2\cr1&3&4\cr}\Big)$&
$\scriptsize\Big( \young{4^*\cr3^*\cr1^*&2^*&4\cr}, \young{4\cr2\cr1&3&4\cr}\Big)$&
$\scriptsize\Big( \young{4^*\cr3^*\cr1^*&2^*\cr}, \young{4\cr2\cr1&3\cr}\Big)$\cr\hline
$4213$ & 
$\scriptsize\Big( \young{4^*\cr3&4\cr2^*&3&4\cr1^*&2&3^*&4\cr}, \young{4\cr3&4\cr2&3&4\cr1&2&3&4\cr}\Big)$&
$\scriptsize\Big( \young{4^*\cr3\cr2^*&4\cr1^*&3^*\cr}, \young{4\cr3\cr2&4\cr1&3\cr}\Big)$&
$\scriptsize\Big( \young{4\cr4^*\cr2^*\cr1^*&3^*\cr}, \young{4\cr3\cr2\cr1&4\cr}\Big)$&
$\scriptsize\Big( \young{4^*\cr2^*\cr1^*&3^*\cr}, \young{3\cr2\cr1&4\cr}\Big)$\cr\hline
$4231$ & 
$\scriptsize\Big( \young{4^*\cr3&4\cr2^*&3^*&4\cr1^*&2&3&4\cr}, \young{4\cr3&4\cr2&3&4\cr1&2&3&4\cr}\Big)$&
$\scriptsize\Big( \young{4^*\cr2^*\cr1^*&3&3^*\cr}, \young{3\cr2\cr1&3&4\cr}\Big)$&
$\scriptsize\Big( \young{4^*\cr2^*\cr1^*&3^*&4\cr}, \young{4\cr2\cr1&3&4\cr}\Big)$&
$\scriptsize\Big( \young{4^*\cr2^*\cr1^*&3^*\cr}, \young{4\cr2\cr1&3\cr}\Big)$\cr\hline
$4312$ & 
$\scriptsize\Big( \young{4^*\cr3^*&4\cr2&3&4\cr1^*&2^*&3&4\cr}, \young{4\cr3&4\cr2&3&4\cr1&2&3&4\cr}\Big)$&
$\scriptsize\Big( \young{4^*\cr3\cr3^*&4\cr1^*&2^*\cr}, \young{4\cr3\cr2&4\cr1&3\cr}\Big)$&
$\scriptsize\Big( \young{4\cr4^*\cr3^*\cr1^*&2^*\cr}, \young{4\cr3\cr2\cr1&4\cr}\Big)$&
$\scriptsize\Big( \young{4^*\cr3^*\cr1^*&2^*\cr}, \young{3\cr2\cr1&4\cr}\Big)$\cr\hline
$4321$ & 
$\scriptsize\Big( \young{4^*\cr3^*&4\cr2^*&3&4\cr1^*&2&3&4\cr}, \young{4\cr3&4\cr2&3&4\cr1&2&3&4\cr}\Big)$&
$\scriptsize\Big( \young{4^*\cr3^*\cr2^*&4\cr1^*&3\cr}, \young{4\cr3\cr2&4\cr1&3\cr}\Big)$&
$\scriptsize\Big( \young{4^*\cr3^*\cr2^*\cr1^*&4\cr}, \young{4\cr3\cr2\cr1&4\cr}\Big)$&
$\scriptsize\Big( \young{4^*\cr3^*\cr2^*\cr1^*\cr}, \young{4\cr3\cr2\cr1\cr}\Big)$\cr\hline
\end{tabular}
\end{center}
}
\end{example}

\end{subsection}

%%%%%%%%%%%%%%%%%%%%%%%%%%%%%%%%%%%%%%%%%%%%%%%%%%%%%%
\begin{subsection}{The $t$-compatible affine insertion algorithm}
Since the expression on the left of Equation \eqref{eq:kRSKenumeration} is independent of $k$, we
realize that since $w \leftrightarrow (P^{(k)}, Q^{(k)}) \leftrightarrow (P^{(k+1)}, Q^{(k+1)})$,
where $P^{(k)}$ and $P^{(k+1)}$ are strong $k$ and $(k+1)$-tableaux
may help us to see how $s^{(k)}_\la$ can be expressed as a positive sum of elements
$s^{(k+1)}_\mu$. It may be possible to understand the expansion of $s^{(k)}_\la$ in
terms of Schur functions or in terms of $s^{(k+1)}_\la$ using this bijection, but this would impose certain 
conditions on the shapes of tableaux $P^{(k)}$ and $P^{(k+1)}$ that do not seem to hold.

In this section we present some evidence suggesting that one might hope for
an affine insertion bijection which has additional properties
that are not shared with the affine insertion algorithm of Section~\ref{subsection.affine insertion}.

There is also a $t$-analogue of Equation~\eqref{eq:strongweakkernel} which can be used
as a stronger guide for the combinatorics of an analogue of the Robinson--Schensted--Knuth
bijection. The coefficient of $m_{(1^m)}[X]$ in $Q'_{(1^m)}[X;t]$ is equal to the $t$-analogue of $m!$, 
$[m]_t! = (1-t)^{-m} \prod_{i=1}^m (1-t^i)$,
and hence we have
\begin{align}
[m]_t! &= \left< Q'_{(1^m)}[X;t], h_{(1^m)}[X] \right> = \sum_{\la \vdash m} K_{\la(1^m)}(t) \left< s_\la[X], h_{(1^m)}[X] \right>\nonumber\\
       &= \sum_{\la \vdash m} K_{\la(1^m)}(t) f_\la 
       = \sum_{\la : \la_1 \leq k} K_{\la(1^m)}^{(k)}(t) \left< s_{\la}^{(k)}[X;t], h_{(1^m)}[X] \right>~. \label{eq:tkernel}
\end{align}
The polynomial $\left< s_{\la}^{(k)}[X;t], h_{(1^m)}[X] \right>$ is equal to 
$\sum_{P} t^{\spin(P)}$, where the sum is over all strong 
standard $k$-tableaux of shape $\mfc(\la)$ by Equation~\eqref{eq:kschurt}.  The coefficient
$K_{\la(1^m)}^{(k)}(t)$ is a $t$-analogue of the number of standard weak $k$-tableaux of shape $\mfc(\la)$.
This equation is just one possible refinement of Equation~\eqref{eq:kRSKenumeration}.
Similarly, the right hand side of this equation depends on $k$ while the left hand side
does not and this indicates that we might hope to see some relationship between the
bijection at level $k$ and level $k+1$ that relates the $t$ statistic.

If we are looking to explain this algebraic expression with a bijection,
we would like to find a statistic $charge$ on standard weak $k$-tableaux
and a bijection between permutations and pairs of strong and weak tableaux
of the same shape $w^{-1} \leftrightarrow (P^{(k)}, Q^{(k)})$ such that
\begin{equation}\label{eq:chargespinrelation}
\charge(w) = \spin(P^{(k)}) + \charge(Q^{(k)})~.
\end{equation}
Note that we taking the association with
$w^{-1}$ to ensure that everything agrees since as $k\rightarrow\infty$
the statistic charge was defined so that the charge of a permutation
is the charge of its insertion tableau.
The statistic $\spin$ on $k$-strong tableaux is different in nature than the
$\charge$ statistic and in general
$\spin(P^{(\infty)}) = 0$.

The reason the previous affine insertion algorithm is not quite `real' is that we are unable to
use it to explain this $t$-analogue.  A `real' affine insertion algorithm would
allow us to take a definition of the charge statistic so that if 
$w^{-1} \leftrightarrow (P^{(k)}, Q^{(k)})$, then
$\charge(Q^{(k)}) = \charge(w) - \spin(P^{(k)})$.  It would need to be the case
that if $u$ and $v$ are two different permutations such that
$u^{-1} \leftrightarrow (P^{1(k)}, Q^{(k)})$ and $v^{-1} \leftrightarrow (P^{2(k)},Q^{(k)})$,
then $\charge(u) - \spin(P^{1(k)}) = \charge(v) - \spin(P^{2(k)})$.
The following example, based on the calculations from Example~\ref{ex:kRSKformeq3},
shows that this affine insertion algorithm is not compatible with the
spin statistic in this sense.

\begin{example}  It is probably easiest to see that Equation \eqref{eq:chargespinrelation}
cannot hold in our example unless $\charge(w) = \spin( P^{(1)} )$ because when $k=1$ all of
the $Q^{(1)}$ tableaux are the same.
Consider the case $k=1$ and $u = u^{-1} = 132$ with $\charge(u)=2$.
Then $P_1^{(1)} = \scriptsize\young{3\cr2&3^\ast\cr1^\ast&2^\ast&3\cr}$
and $\spin( P_1^{(1)} ) = 1$.

Also, $v = v^{-1} = 213$ with $\charge(v)=1$. 
Then $P_2^{(1)} = \scriptsize\young{3\cr2^\ast&3\cr1^\ast&2&3^\ast\cr}$
and $\spin( P_2^{(1)} ) = 1$, but 
$Q^{(1)} = \scriptsize\young{3\cr2&3\cr1&2&3\cr}$ is the same in both cases
and if there is a charge statistic it should be the same.
\end{example}
\end{subsection}
\end{section}

%%%%%%%%%%%%%%%%%%%%%%%%%%%%%%%%%%%%%%%%%%%%%%%%%%%%%%
\begin{section}[The $k$-shape poset]{The $k$-shape poset and a branching rule for expressing 
$k$-Schur in $(k+1)$-Schur functions}\label{sec:kshapeplus}
%%%%%%%%%%%%%%%%%%%%%%%%%%%%%%%%%%%%%%%%%%%%%%%%%%%%%%
One of the more recent developments with the definition of $k$-Schur functions defined
as the sum over strong tableaux is an explanation of why they expand positively in the $(k+1)$-Schur
functions.  That is, there are nonnegative integer coefficients $b^{(k)}_{\mu\la}$ such that
$$s^{(k-1)}_\mu = \sum_{\la} b^{(k)}_{\mu\la} s^{(k)}_\la$$
and in this section we will describe a combinatorial interpretation 
for the coefficients $b^{(k)}_{\mu\la}$.

If we consider a partition $\la$ as a collection of cells, then $\Int^k(\la) = \{ b \in \diag(\la) : \hook_\la(b)>k \}$
and $\partial^k(\la) = \la/\Int^k(\la)$.  We define the row shape (resp. column shape) of $\la$ 
to be the composition $rs^k(\la)$
(resp. $cs^k(\la)$) consisting of the number of cells in each of the rows of $\partial^k(\la)$.  
The partition $\la$ is said to be a $k$-shape if both $rs^k(\la)$ and $cs^k(\la)$ are partitions.
Let $\Pi^k$ denote the set 
of $k$-shapes and $\Pi^k_N$ represent the set of $k$-shapes $\la$ such that $|\partial^k(\la)| = N$.
Notice that both the $k$-cores and the $(k+1)$-cores of size $N$ are a subset of $\Pi^k_N$.

\begin{example}
If $k=3$, then
$$\Pi^3_3 = \{ (1,1,1), (2,1), (3), (3,1), (2,1,1) \}$$
since they correspond to the $3$-boundaries
$$\young{\cr\cr\cr} \hskip .2in
\young{\cr&\cr} \hskip .2in
\young{&&\cr} \hskip .2in
\young{\cr\blk&&\cr} \hskip .2in
\young{\cr\cr\blk&\cr}$$
$$\Pi^3_4 = \{ (2,2), (4,1), (3,1,1), (2,1,1,1), (4,2), (2,2,1,1) \}$$
$$\young{&\cr&\cr} \hskip .2in
\young{\cr\blk&&&\cr} \hskip .2in
\young{\cr\cr\blk&&\cr} \hskip .2in
\young{\cr\cr\cr\blk&\cr} \hskip .2in
\young{&\cr\blk&\blk&&\cr} \hskip .2in
\young{\cr\cr\blk&\cr\blk&\cr}$$
\end{example}

\begin{example}
The partition $\la = (6,2,1)$ is a $4$-shape but it is not a $3$-shape.
We calculate that
$$\partial^4(\la) = \young{\cr&\cr\blk&\blk&&&&&\blk\cr}$$
and hence $rs^4(\la) = (4,2,1)$ and $cs^4(\la) = (2,1,1,1,1,1)$, but
$$\partial^3(\la) = \young{\cr&\cr\blk&\blk&\blk&&&&\blk\cr}$$
so that $rs^3(\la) = (3,2,1)$ and $cs^3(\la) = (2,1,0,1,1,1)$ and hence it is not a $3$-shape.
\end{example}

\begin{example}
We include a table of the number of $k$-shapes for 
$1 \leq k \leq 9$ and $1 \leq N \leq 13$.  In the limit
(for $N<k$) it is the case that $|\Pi_N^k|$ is equal to the
number of partitions of $N$.
\begin{center}
\begin{tabular}{|c||c|c|c|c|c|c|c|c|c|c|c|c|c|c|}
\hline
$k\backslash N$&0&$1$&$2$&$3$&$4$&$5$&$6$&$7$&$8$&$9$&$10$&$11$&$12$&$13$\cr
\hline
$k=1$&1&1&1&1&1&1 &1 & 1&1&1&1&1&1&1\cr
\hline
$k=2$&1&1&3&3&6&6 &10&10&15&15&21&21&28&28\cr
\hline
$k=3$&1&1&2&5&6&10&15&21&27&40&48&65&81&103\cr
\hline
$k=4$&1&1&2&3&8&9 &15&23&35&42&69&86&116&155\cr
\hline
$k=5$&1&1&2&3&5&11&14&21&30&49&67&90&120&177\cr
\hline
$k=6$&1&1&2&3&5&7 &16&19&30&41&60&89&127&163\cr
\hline
$k=7$&1&1&2&3&5&7 &11&21&27&40&56&79&107&163\cr
\hline
$k=8$&1&1&2&3&5&7 &11&15&29&36&54&73&105&138\cr
\hline
$k=9$&1&1&2&3&5&7 &11&15&22&38&49&70&97&134\cr
\hline
\end{tabular}
\end{center}
\end{example}

We will define a poset structure on the set $\Pi^k_N$ by describing
how the elements are related by a set of row  and column moves.  
In order to define row and column moves we need to define a notion
of row-type and column-type strings which describe the movement of cells
to get from one $k$-shape to another.

For a cell $b = (x,y)$ we say that the diagonal index of $b$ is
$d(b) = y-x$.  Two cells $b,b'$ are called contiguous if $|d(b) - d(b')| \in
\{ k, k+1 \}$.  A string of length $\ell$ is a skew shape $\mu/\la$ consisting
of cells $\{a_1, a_2, \ldots, a_\ell \}$ such that $a_{i+1}$ and $a_i$ are
contiguous for each $1 \leq i < \ell$ and $a_{i+1}$ lies strictly below $a_i$.

For a skew diagram $D$, define $\lleft_a(D)$ to be the leftmost cell in the same row as the cell $a$
and $\bot_a(D)$ to be the bottommost cell in same column as $a$.

A string $\mu/\la = \{a_1, a_2, \ldots, a_\ell \}$ is called a row-string if $\hook_\la(\lleft_{a_1}(\partial^k(\la))) = k$
and $\hook_\la(\bot_{a_\ell}(\partial^k(\la))) < k$.  It is called a column-string if
the transpose picture is a row-string, or, if $\hook_\la(\lleft_{a_1}(\partial^k(\la))) < k$ and
$\hook_\la(\bot_{a_\ell}(\partial^k(\la))) = k$.

A row move (resp. column move) of rank $r$ and length $\ell$ is a chain of parititions
$\la = \la^0 \subset \la^1 \subset \cdots \subset \la^r = \mu$ that satisfies
\begin{itemize}
\item $\la,\mu \in \Pi^k$
\item $s_i = \la^{i}/\la^{i-1}$ is a row-type (resp. column-type) string consisting of $\ell$ cells for all $1 \leq i \leq r$.
\item the strings $s_i$ are all translates of each other
\item the top cells (resp. rightmost cells) of $s_1, s_2, \ldots, s_r$ occur in consecutive columns (resp. rows) from left to right
(resp. bottom to top).
\end{itemize}

To be clear, a column move is a sequence of partitions whose conjugate partitions are a row move.

\begin{example}  If $k=5$ and $N=18$, then $\la = (9,5,4,4,2,1,1,1)$ and $\mu = (9,7,5,4,2,1,1,1)$
are both $5$-shapes and the sequence of partitions 
$\la^0 = \la  \subset \la^1 = (9,5,5,4,2,1,1,1) \subset \la^2 = (9,6,5,4,2,1,1,1) \subset \la^3 = \mu$
have the following boundaries:
\squaresize=9pt
\begin{equation*}
\young{\cr\cr\cr&\cr\blk&&&\cr\blk&&&\cr\blk&\blk&&&\cr\blk&\blk&\blk&\blk&\blk&&&&\cr} \rightarrow
\young{\cr\cr\cr&\cr\blk&&&\cr\blk&\blk&&&\cr\blk&\blk&&&\cr\blk&\blk&\blk&\blk&\blk&&&&\cr} \rightarrow
\young{\cr\cr\cr&\cr\blk&&&\cr\blk&\blk&&&\cr\blk&\blk&\blk&&&\cr\blk&\blk&\blk&\blk&\blk&&&&\cr} \rightarrow
\young{\cr\cr\cr&\cr\blk&&&\cr\blk&\blk&&&\cr\blk&\blk&\blk&\blk&&&\cr\blk&\blk&\blk&\blk&\blk&&&&\cr} 
\end{equation*}
\end{example}

The set $\Pi^k_N$ is endowed with a poset structure with an edge  
from $\la$ to $\mu$ (or a cover relation $\la \cover \mu$) if there is a row
or a column move from $\la$ to $\mu$.  With some analysis one can show that the minimal elements of
the poset structure on $\Pi^k_N$ are the $k$-cores and the maximal elements are the $(k+1)$-cores.

\begin{example} \label{ex:keq3Neq5poset}
The set of elements $\Pi^3_5$ is endowed with the
the following poset structure.  The edges representing a row move are
labeled with an $r$ and those with a column move with a $c$.
\begin{center}
\includegraphics[width=3in]{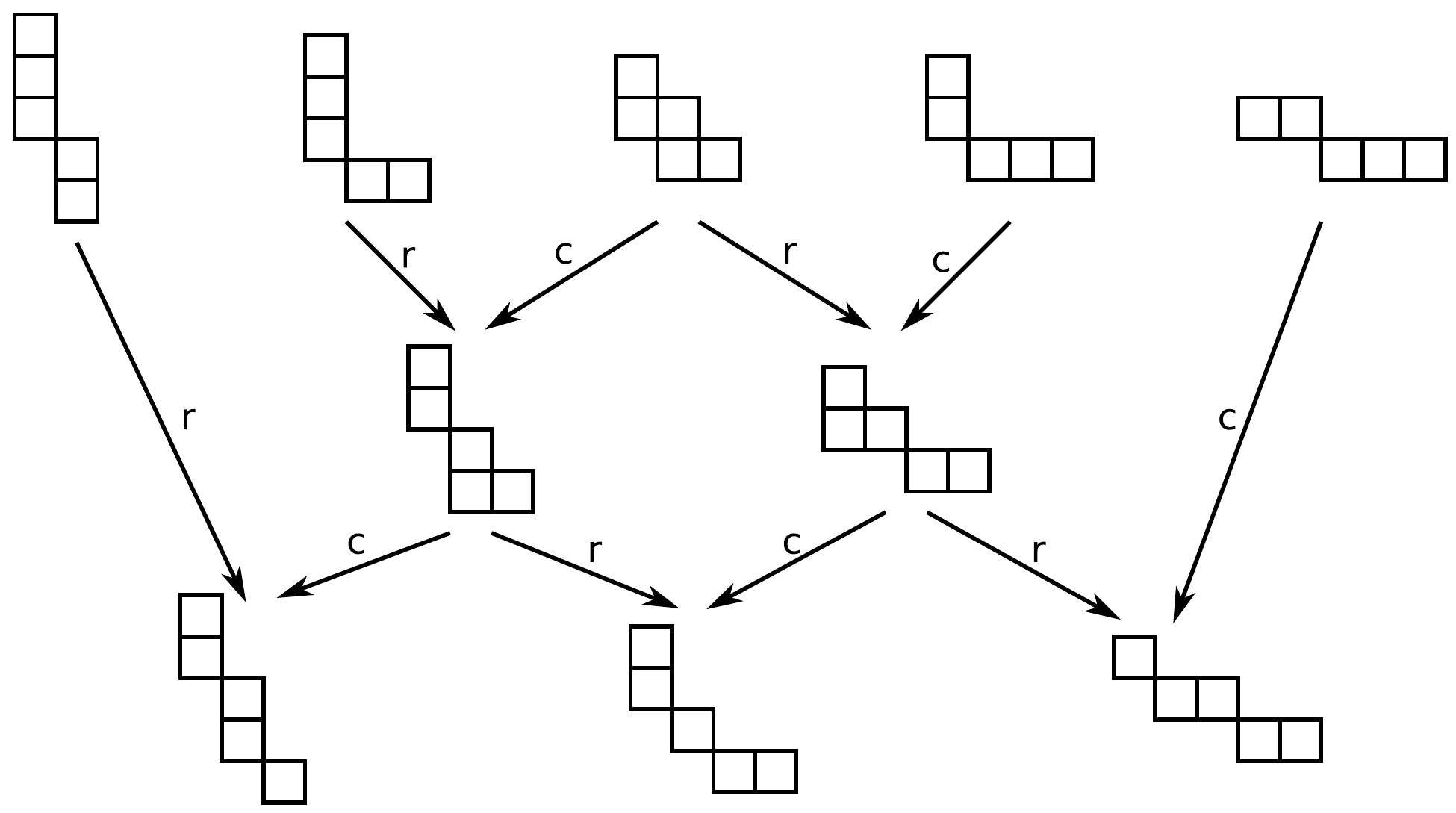}
\end{center}
\end{example}

\begin{example} \label{ex:keq2Neq5poset}
In the set of elements of $\Pi^2_5$, we see that the highest elements in
this partial order are those that are the lowest elements of $\Pi^3_5$.  The Hasse diagram resembles
the following.
\begin{center}
\includegraphics[width=2.5in]{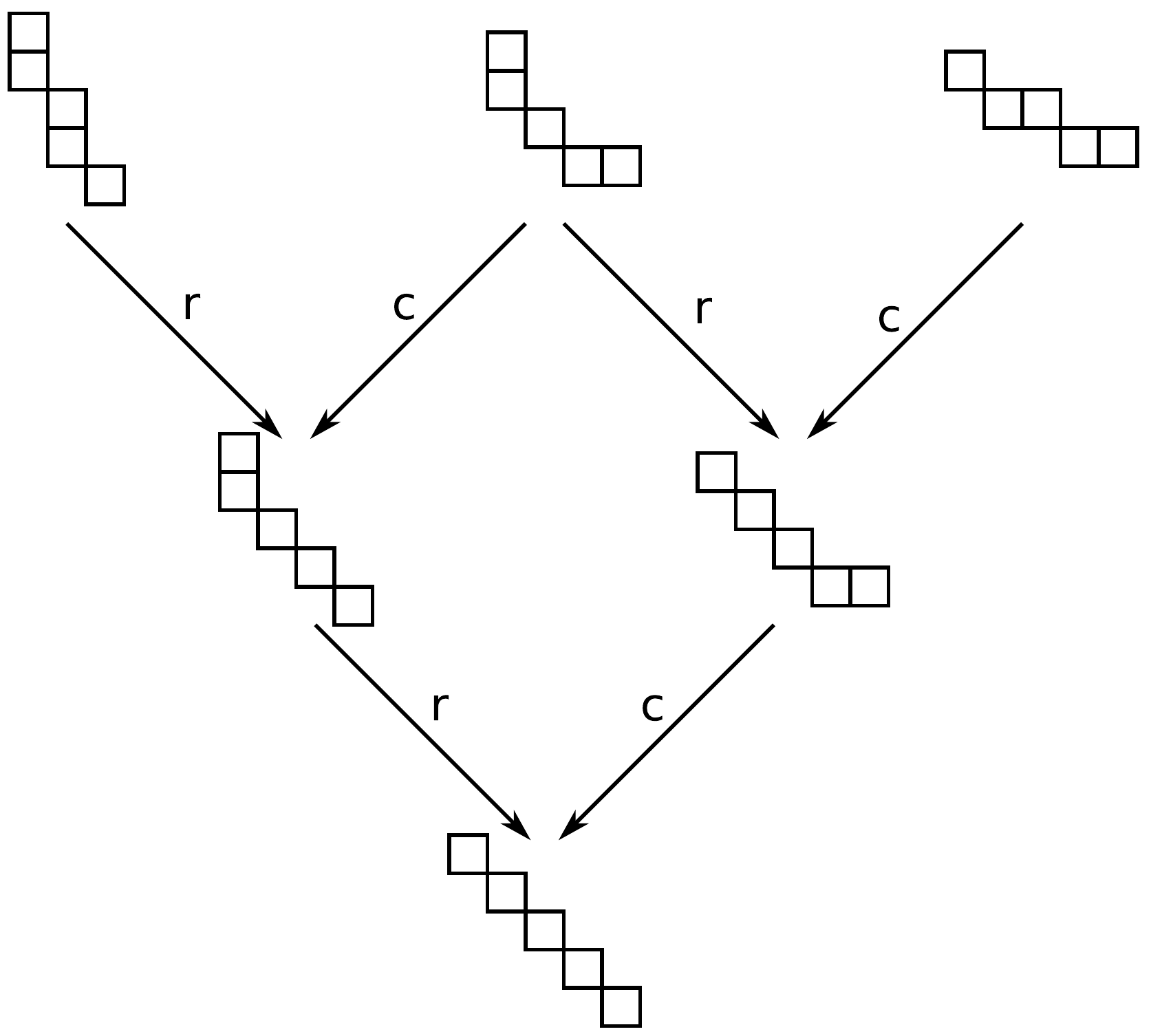}
\end{center}
\end{example}

Now the combinatorial interpretation for the branching coefficients of the $k$-Schur
functions is given in terms of paths within this poset (up to an equivalence on diamonds).

Define a charge of a move to be $0$ if it is a row move and $r\ell$ for a move of
length $\ell$ and rank $r$.

If $m, M, {\tilde m}, {\tilde M}$ are moves relating the $k$-shapes $\la$ and $\gamma$ through the
following diagram, 
 \dgARROWLENGTH=2.5em
\begin{equation} \label{E:commudiagram}
\begin{diagram}
\node[2]{\la} \arrow{sw,t}{m} \arrow{se,t}{M} \\ \node{\mu}
\arrow{se,b}{\tilde M} \node[2]{\nu} \arrow{sw,b}{\tilde m} \\
\node[2]{\gamma}
\end{diagram}
\end{equation}
such that
$$\charge(m) + \charge({\tilde M}) = \charge({\tilde m}) + \charge(M)$$
then the two paths from $\la$ to $\gamma$ are equivalent.  Now consider two
$k$-shapes $\kappa, \tau \in \Pi^k_N$ where $\kappa$ is a $(k+1)$-core and
$\tau$ is a $k$-core.  Let $\PP(\kappa, \tau)$ be the set of paths from $\kappa$
to $\tau$ with respect to this equivalence relation.

The reason the $k$-shapes are related to $k$-Schur functions is that we have the following
theorem.
\begin{theorem}{\cite[Theorem 2]{LLMS:2010}} \label{thm:brancht1} 
Let $\la$ be $k-1$ bounded partition, 
$$s^{(k-1)}_\la[X] = \sum_{ \mu } |\PP(\mfc_{k}(\mu), \mfc_{k-1}(\la))| s^{(k)}_\mu[X]$$
where the sum is over all $k$-bounded partitions $\mu$.
\end{theorem}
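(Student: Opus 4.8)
The plan is to show that the expansion coefficients $b^{(k)}_{\la\mu}$ determined by $s^{(k-1)}_\la=\sum_\mu b^{(k)}_{\la\mu}s^{(k)}_\mu$ coincide with the path counts $p_{\la\mu}:=|\PP(\mfc_k(\mu),\mfc_{k-1}(\la))|$. These coefficients are well-defined because $\La_{(k-1)}\subseteq\La_{(k)}$ and $\{s^{(k)}_\mu\}_{\mu_1\le k}$ is a basis of $\La_{(k)}$; moreover $b^{(k)}_{\la\mu}$ is supported on $\mu$ with $|\mu|=|\la|=:N$. First I would work at $t=1$, where by~\cite[Theorem 4.11]{LLMS:2006} the weak and strong characterizations of the $k$-Schur functions agree, so both the weak Pieri rule~\eqref{eq:kweakpieri} and the strong marked tableau expansion~\eqref{eq:kschurmexp} are simultaneously available. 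A convenient reformulation is $b^{(k)}_{\la\mu}=\langle s^{(k-1)}_\la,\SS^{(k)}_\mu\rangle$ under the pairing $\La_{(k)}\times\La^{(k)}\to\QQ$, which makes the coefficients accessible through the monomial expansions of $\SS^{(k)}_\mu$ and of $s^{(k-1)}_\la$.

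The heart of the uniqueness argument is to produce a recursion satisfied by both sides. Applying the weak $(k-1)$-Pieri rule to $s^{(k-1)}_\la$ and the weak $k$-Pieri rule to each $s^{(k)}_\mu$, then computing the coefficient of a fixed $s^{(k)}_\rho$ in $h_r\,s^{(k-1)}_\la$ in two ways, gives the identity
\[
\sum_{\mu:\,\mfc_k(\rho)/\mfc_k(\mu)\ \text{weak}\ r\text{-strip}} b^{(k)}_{\la\mu}
=\sum_{\nu:\,\mfc_{k-1}(\nu)/\mfc_{k-1}(\la)\ \text{weak}\ r\text{-strip}} b^{(k)}_{\nu\rho}\,.
\]
Together with the initial condition $b^{(k)}_{\la\mu}=\delta_{\la\mu}$ whenever $N\le k-1$ (Section~\ref{subsection.limit}, where $s^{(k-1)}_\la=s_\la=s^{(k)}_\la$), this linear system determines the full array $\{b^{(k)}_{\la\mu}\}$ uniquely by induction on $N$ and on dominance order.

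It then remains to show that the path counts $p_{\la\mu}$ obey the same recursion and initial condition. Here I would exploit the structure of $\Pi^k_N$: its minimal elements are exactly the $k$-cores $\mfc_{k-1}(\la)$ and its maximal elements the $(k+1)$-cores $\mfc_k(\mu)$, with covers given by the row and column moves defined through $\hook_\la(\lleft_{a_1}(\partial^k(\la)))$ and $\hook_\la(\bot_{a_\ell}(\partial^k(\la)))$. The key lemma to establish is that enlarging the maximal endpoint $\mfc_k(\mu)$ by a weak $k$-horizontal strip of size $r$ (a move among $(k+1)$-cores) induces, after passage to the diamond equivalence of~\eqref{E:commudiagram}, a bijection between equivalence classes of paths that matches the two sums above. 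Concretely one pushes the strip enlargement through the poset, using that each local row/column configuration either commutes with the enlargement or is absorbed by an elementary diamond, so that the number of equivalence classes of paths transforms according to the weak Pieri combinatorics at levels $k-1$ and $k$.

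The main obstacle is precisely this compatibility. One must verify that the charge-balance condition $\charge(m)+\charge(\tilde M)=\charge(\tilde m)+\charge(M)$ defining the diamond equivalence collapses exactly the redundant paths, so that $p_{\la\mu}$ inherits the recursion with no spurious contributions. This demands a careful case analysis of how row-strings and column-strings commute and translate when the outer shape grows by a horizontal strip, and it is the point at which the containment data governing weak strips at each level (the conditions on $\mfp(\kappa)/\mfp(\tau)$ and on the conjugates $\la^{\omega_{k-1}},\mu^{\omega_k}$ from Proposition~\ref{th:hstrip}) must be reconciled with the move structure of $\Pi^k_N$. Once this lemma is in place, the two arrays $\{b^{(k)}_{\la\mu}\}$ and $\{p_{\la\mu}\}$ satisfy the same recursion with the same initial data, and uniqueness of the solution forces $b^{(k)}_{\la\mu}=p_{\la\mu}$, which is the asserted branching formula.
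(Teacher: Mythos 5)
First, note that the paper does not prove this statement at all: it is quoted as \cite[Theorem 2]{LLMS:2010}, so there is no internal proof to compare against, and your proposal must stand on its own.

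Your reduction framework is sound. The coefficients $b^{(k)}_{\la\mu}$ are well defined, the double application of the weak Pieri rules at levels $k-1$ and $k$ does yield the stated linear identity, and that identity together with the base case $b^{(k)}_{\la\mu}=\delta_{\la\mu}$ for $|\la|\le k-1$ does determine the array uniquely (taking $r=\nu_1$ and $\la=(\nu_2,\nu_3,\ldots)$ gives unitriangularity in dominance order, so downward induction works exactly as you indicate; this is just the statement that the weak Pieri rule characterizes the basis). The base case for the path counts is also fine, since for $N\le k-1$ every partition of $N$ is simultaneously a $k$-core and a $(k+1)$-core and $\Pi^k_N$ has only trivial paths.

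The genuine gap is that the entire combinatorial content of the theorem sits in the lemma you defer: that the numbers $p_{\la\mu}=|\PP(\mfc_k(\mu),\mfc_{k-1}(\la))|$ satisfy the same recursion, i.e.
$\sum_{\mu} p_{\la\mu}\,\chi\bigl(\mfc_k(\rho)/\mfc_k(\mu)\ \text{weak $r$-strip}\bigr)=\sum_{\nu} p_{\nu\rho}\,\chi\bigl(\mfc_{k-1}(\nu)/\mfc_{k-1}(\la)\ \text{weak $r$-strip}\bigr)$.
You describe this as ``the main obstacle'' and sketch only a heuristic (``pushes the strip enlargement through the poset,'' ``either commutes with the enlargement or is absorbed by an elementary diamond'') without specifying the maps, verifying that row- and column-moves actually commute or cancel under the diamond relation \eqref{E:commudiagram}, or checking that no equivalence classes are created or destroyed. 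This compatibility between the move structure of $\Pi^k_N$ and weak strips at the two adjacent levels is precisely what occupies the bulk of \cite{LLMS:2010}, where it is established not by a direct Pieri-recursion argument but by constructing an explicit ``weak bijection'' on tableaux that factors the weak Kostka matrices as $K^{(k-1)}_{\nu\mu}=\sum_{\la}p_{\nu\la}K^{(k)}_{\la\mu}$. Without a proof of your key lemma (or an equivalent bijection), the argument is a correct reduction of the theorem to its hardest step, not a proof of the theorem.
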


While there is not a complete proof, the charge is defined so that the following conjecture
should also hold.
\begin{conj}{\cite[Conjecture 3]{LLMS:2010}} \label{conj:ktbranch} 
Let $\la$ be $k-1$ bounded partition, 
$$s^{(k-1)}_\la[X;t] 
= \sum_{ \mu } \sum_{{\bf p} \in \PP(\mfc_{k+1}(\mu), \mfc_k(\la))} t^{\charge({\bf p})} s^{(k)}_\mu[X;t]$$
where the sum is over all $k$-bounded partitions $\mu$.
\end{conj}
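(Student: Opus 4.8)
The plan is to prove the graded branching identity by upgrading the already-established $t=1$ result of Theorem~\ref{thm:brancht1} with a spin-compatible refinement, working throughout with the strong-tableaux incarnation $s^{(k)}_\la[X;t]$ of Equation~\eqref{eq:kschurt}, since that is the definition carrying an explicit $t$-statistic, namely $\spin$. First I would fix the objects being graded on each side: on the left, strong marked $(k-1)$-tableaux weighted by $t^{\spin}$; on the right, pairs consisting of a strong marked $k$-tableau (weighted by $t^{\spin}$) together with a path class $\mathbf{p}\in\PP(\mfc(\mu),\mfc(\la))$ in the relevant $k$-shape poset $\Pi^k_N$, weighted by $t^{\charge(\mathbf{p})}$. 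As a preliminary I would verify that $\charge$ descends to equivalence classes of paths, i.e.\ that it is constant across the diamonds of Equation~\eqref{E:commudiagram}; this is immediate from the defining balance $\charge(m)+\charge(\tilde M)=\charge(\tilde m)+\charge(M)$. I would then confirm the two anchoring specializations that must hold for any correct normalization: at $t=1$ the identity is precisely Theorem~\ref{thm:brancht1}, and in the stable range $k-1\geq|\la|$ (where $s^{(k-1)}_\la=s_\la$ and each admissible move is forced) the contribution is forced to be $t^0$, which pins down the constant in the definition of $\charge$.

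The central reduction is to show that both sides obey the same creation-operator recursion. Building $s^{(k-1)}_\la[X;t]$ via the graded Pieri operators $\Bop_m$ of Equation~\eqref{eq:tpieri} reduces the claim to a single-move compatibility: I would prove that passing from $\Pi^k_{N-1}$ to $\Pi^k_N$ by adjoining one row or column move intertwines with the action of $\Bop_m$ so that a row move contributes the factor $t^0$ and a column move of rank $r$ and length $\ell$ contributes the factor $t^{r\ell}$ demanded by $\charge$. The diamond equivalence on paths should match exactly the commutation relations among the $\Bop_m$, together with the residue bookkeeping of Equation~\eqref{eq:tpieri} recording which cells of $\rho/\la$ fail to label $\mu/\la$, so that the generating function $\sum_{\mathbf{p}} t^{\charge(\mathbf{p})}$ over path classes reproduces the $t$-weights of the iterated Pieri rule. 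Because the transition matrix $K^{(k)}_{\la\mu}(t)$ is unitriangular, it is enough to verify this recursion on a single Pieri step at a time, and the $t=1$ bijection of Theorem~\ref{thm:brancht1} already guarantees agreement of the underlying unweighted recursions, so only the exponents of $t$ need to be reconciled.

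The hard part, and the reason this stays a conjecture, is establishing that the $\charge$ grading on $k$-shape paths coincides with the difference of $\spin$ gradings between the two levels. Concretely, I would seek a weight- and statistic-preserving bijection $T\mapsto(T',\mathbf{p})$ from strong marked $(k-1)$-tableaux of shape $\mfc(\la)$ to pairs $(T',\mathbf{p})$ with $T'$ a strong marked $k$-tableau and $\mathbf{p}\in\PP(\mfc(\mu),\mfc(\la))$, satisfying
\[
\spin(T)=\spin(T')+\charge(\mathbf{p}).
\]
This is exactly the ``real'' affine insertion whose absence is lamented in Section~\ref{sec:strongweakduality}: the affine insertion of~\cite{LLMS:2006} realizes the $t=1$ count but is demonstrably not spin-compatible, so one must either construct a genuinely new insertion or reindex the $k$-shape moves so that each column move absorbs precisely the extra spin created when a ribbon's height increases in passing from level $k-1$ to level $k$, and then control this move by move. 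I expect this spin/charge matching to be the decisive obstacle.

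A possibly more tractable alternative is to dualize. By Equation~\eqref{tinvdef} the claim is equivalent to a positive expansion of dual $k$-Schur functions into dual $(k-1)$-Schur functions whose $t$-coefficients are governed by $\charge$, which in turn becomes a compatibility between $\charge$ of $k$-shape paths and the $k$-charge statistic on weak $k$-tableaux at the two levels. One could then attempt to transport the two $k$-charge formulations through the weak bijection of~\cite{LLMS:2010}. The standard-tableau instance of this compatibility is exactly what~\cite{LapointePinto} establishes, so on this route the whole program reduces to promoting that compatibility from standard to semistandard $k$-tableaux, the essential difficulty being to control how the statistic behaves under the standardization choices of Definition~\ref{wierdchoice} — the same semistandard obstruction that blocks the graded $k$-Pieri rule of Equation~\eqref{eq:tpieri}.
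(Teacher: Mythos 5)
The statement you are addressing is presented in the paper only as a conjecture (it is \cite[Conjecture 3]{LLMS:2010}, reproduced as Conjecture~\ref{conj:ktbranch}); the paper supplies no proof, only the $t=1$ case (Theorem~\ref{thm:brancht1}) and worked examples. Your proposal is therefore correctly read as a strategy rather than a proof, and to your credit you flag the decisive obstacle yourself: the existence of a weight-preserving bijection $T\mapsto(T',\mathbf{p})$ with $\spin(T)=\spin(T')+\charge(\mathbf{p})$, or dually a compatibility between $k$-charge and the weak bijection of \cite{LLMS:2010}. This matches the paper's own assessment of the state of the art in Section~\ref{sec:kbranch} and in the Schur-positivity discussion, including the citation of \cite{LapointePinto} for the partial (standard-tableau) case and the explicit counterexample showing the $t=1$ affine insertion is not spin-compatible. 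Your diagnosis of where the difficulty lies is accurate, but the argument stops exactly at the point where the conjecture is open, so no proof has been produced.

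Two further cautions about the intermediate reductions. First, your ``central reduction'' builds the left-hand side by iterating the graded creation operators $\Bop_m$ via Equation~\eqref{eq:tpieri}; but that equation is itself only a conjecture of Pinto, and it is moreover not known that the strong-tableau definition \eqref{eq:kschurt} of $s^{(k)}_\la[X;t]$ satisfies any $\Bop_m$-recursion, since the equivalence of the competing definitions is Conjecture~\ref{conj:allequiv}. So the reduction to a single-move compatibility trades one conjecture for two others rather than reducing to anything established; if you want an unconditional skeleton you would need to work directly with the strong-tableau generating function, or with the weak-tableau/$k$-charge definition on the dual side. Second, the anchoring checks you propose ($t=1$ specialization, stable range, charge constant on diamond classes) are all essentially immediate and do not constrain the exponents of $t$ beyond what is already known: the unitriangularity argument only pins down the answer once the graded single-step compatibility is proved, and that single step is precisely the missing spin/charge matching. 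The dual route through the semistandard extension of the $k$-charge compatibility of \cite{LapointePinto} is, as you say, the most promising path, but promoting it past the standardization choices of Definition~\ref{wierdchoice} is the open problem, not a step you can currently carry out.
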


\begin{example} Example \ref{ex:keq3Neq5poset} and Theorem \ref{thm:brancht1} can be used to calculate the following expansions of
$2$-Schur functions in $3$-Schur functions.
$$s_{11111}^{(2)}[X] = s_{11111}^{(3)}[X] + s_{2111}^{(3)}[X] + s_{221}^{(3)}[X]$$
$$s_{2111}^{(2)}[X] = s_{2111}^{(3)}[X] + s_{221}^{(3)}[X] + s_{311}^{(3)}[X]$$
$$s_{221}^{(2)}[X] = s_{221}^{(3)}[X] + s_{311}^{(3)}[X]+ s_{32}^{(3)}[X]~.$$
This is because the two paths from the shape $(3,2,1)$ to $(4,2,1,1)$ are equivalent under
the diamond relation.

The relation that appears in Conjecture \ref{conj:ktbranch} says that
$$s_{11111}^{(2)}[X;t] = s_{11111}^{(3)}[X;t] + t^2 s_{2111}^{(3)}[X;t] + t^3 s_{221}^{(3)}[X;t]$$
$$s_{2111}^{(2)}[X;t] = s_{2111}^{(3)}[X;t] + t s_{221}^{(3)}[X;t] + t^2 s_{311}^{(3)}[X;t]$$
$$s_{221}^{(2)}[X;t] = s_{221}^{(3)}[X;t] + t s_{311}^{(3)}[X;t]+ t^2 s_{32}^{(3)}[X;t]~.$$
This is because the column moves with charge $2$ are those from $(3,2,1,1)$ to $(3,2,2,1,1)$ 
and from $(5,2)$ to $(5,3,1)$.  The others all have charge $1$.
\end{example}

\begin{example} The other poset we have drawn shows that 
$s_{11111}^{(1)}[X] = s_{11111}^{(2)}[X]+2s_{2111}^{(2)}[X]+s_{221}^{(2)}[X].$  In this example the two 
paths from $(2,1,1,1)$ to $(1,1,1,1,1)$ are not equivalent.
We can check that the charge of the move from $(4,2,1,1)$ to $(4,3,2,1,1)$ is 3, while the move from 
$(5,3,2,1)$ to $(5,4,3,2,1)$ is 4.  Since the charge from $(5,3,1)$ to $(5,3,2,1)$ is 2, 
we conclude that Conjecture~\ref{conj:ktbranch} can be used to compute
$$s_{11111}^{(1)}[X;t] = s_{11111}^{(2)}[X;t]+(t^3+t^4)s_{2111}^{(2)}[X;t]+t^6 s_{221}^{(2)}[X;t].$$
\end{example}

\end{section}
 % Mike's and Anne's chapter
    %\newtheorem{theorem}{Theorem}
%\newtheorem{corollary}[theorem]{Corollary}
%\newtheorem{prop}[theorem]{Proposition}
%\newtheorem{proposition}[theorem]{Proposition}
%\newtheorem{lemma}[theorem]{Lemma}
%\newtheorem{lem}[theorem]{Lemma}
%\newtheorem{conjecture}[theorem]{Conjecture}
%\newtheorem{example}{Example}
%\newtheorem{definition}{Definition}
%\newtheorem{remark}[theorem]{Remark}
%
%\newtheorem{exercise}{Exercise}
%
%\numberwithin{theorem}{section}
%\numberwithin{equation}{section}

\chapter{Stanley symmetric functions and Peterson algebras}
\label{chapter.stanley symmetric functions}

\setcounter{section}{0}

\begin{center}
{\sc Thomas Lam} \footnote{The author was supported by NSF grants DMS-0652641, DMS-0901111, and DMS-1160726, and by a Sloan Fellowship.} \\
{\tt tfylam@umich.edu}
\end{center}

\renewcommand\A{{\mathbb A}}
\def\B{{\mathbb B}}
\def\C{{\mathcal C}}

\def\a{{\mathbf{a}}}
\def\af{{\mathrm{af}}}
\def\afA{{\mathbb A}_{\mathrm{af}}}
\def\ah{\tilde \h}
\def\aNC{(\A_\af)_0}
\def\Pet{{\mathbb P}}
\def\aB{{\mathbb B}_{\af}}
\def\aI{I_\af}
\def\al{\alpha}
\def\aPsi{\Xi_\af}
\def\aQ{Q_\af}
\def\aW{W_\af}
\def\Bo{{\mathcal B}}
\def\Des{\mathrm{Des}}
\def\fW{W}
\def\fI{I}
\def\Fl{{\mathrm{Fl}}}
\def\Frac{{\mathrm{Frac}}}
\def\Fun{{\mathrm{Fun}}}
\def\Gr{{\mathrm{Gr}}}
\def\h{{\mathbf{h}}}
\def\i{{\mathbf{i}}}
\def\id{\mathrm{id}}
\def\j{{\mathbf{j}}}
\def\p{{\mathcal P}}
\def\Q{{\mathbb Q}}
\def\om{\omega}
\def\s{{\mathbf{s}}}
\def\Sym{{\mathrm{Sym}}}
\def\tA{\tilde{\mathbb A}}
\renewcommand\tF{\tilde F}
\def\tS{\tilde S}
\def\wt{{\mathrm{wt}}}
\def\Z{{\mathbb Z}}

\newcommand\os[1]{\overline{#1}\;}
\newcommand\ip[1]{\langle #1 \rangle}
\newcommand\fixit[1]{{\bf ** #1 **}}
\newcommand\remind[1]{{\bf ** #1 **}}

This purpose of this chapter is to introduce Stanley symmetric functions and affine Stanley symmetric functions from the combinatorial and algebraic point of view.  The presentation roughly follows 3 lectures I gave at a conference titled
``Affine Schubert Calculus'' held in July of 2010 at the Fields Institute in 
Toronto~\footnote{see {\tt http://www.fields.utoronto.ca/programs/scientific/10-11/schubert/}}.

 The goal is to develop the theory (with the exception of positivity) without appealing to geometric reasoning.  The material is aimed at an audience with some familiarity with symmetric functions, Young tableaux and Coxeter groups/root systems. 

Stanley symmetric functions are a family $\{F_w \mid w \in S_n\}$ of symmetric functions indexed by permutations.  They were invented by Stanley \cite{Sta} to enumerate the reduced words of elements of the symmetric group.  The most important properties of the Stanley symmetric functions are their symmetry, established by Stanley, and their Schur positivity, first proven by Edelman and Greene \cite{EG}, and by Lascoux and Sch\"{u}tzenberger \cite{LS:Schub}.

Recently, a generalization of Stanley symmetric functions to affine permutations was developed in \cite{Lam:2006}.  These affine Stanley symmetric functions turned out to have a natural geometric interpretation \cite{Lam:2008}: they are pullbacks of the cohomology Schubert classes of the affine flag variety $LSU(n)/T$ to the affine Grassmannian (or based loop space) $\Omega SU(n)$ under the natural map $\Omega SU(n) \to LSU(n)/T$.  The combinatorics of reduced words and the geometry of the affine homogeneous spaces are connected via the nilHecke ring of Kostant and Kumar \cite{KK}, together with a remarkable commutative subalgebra due to Peterson \cite{Pet}.    The symmetry of affine Stanley symmetric functions follows from the commutativity of Peterson's subalgebra, and the positivity in terms of affine Schur functions is established via the relationship between affine Schubert calculus and quantum Schubert calculus \cite{LS:QH,LL}.  The affine-quantum connection was also discovered by Peterson.  

The affine generalization also connects Stanley symmetric functions with the theory of Macdonald polynomials \cite{Mac:1995} and $k$-Schur functions \cite{LLM:2003} -- my own involvement in this subject began when I heard a conjecture of Mark Shimozono relating the Lapointe-Lascoux-Morse $k$-Schur functions to the affine Grassmannian.

While the definition of (affine) Stanley symmetric functions does not easily generalize to other (affine) Weyl groups (see \cite{BH,BL,FK:C,LSS:C,Pon}), the algebraic and geometric constructions mentioned above do.

The first third (Sections \ref{sec:combin} - \ref{sec:affine}) of the chapter centers on the combinatorics of reduced words.  We discuss reduced words in the (affine) symmetric group, the definition of the (affine) Stanley symmetric functions, and introduce the Edelman-Greene correspondence.  Section \ref{sec:Weyl} reviews the basic notation of Weyl groups and affine Weyl groups.  In Sections \ref{sec:algebra}-\ref{sec:finiteFS} we introduce and study four algebras: the nilCoxeter algebra, the Kostant-Kumar nilHecke ring, the Peterson centralizer subalgebra of the nilHecke ring, and the Fomin-Stanley subalgebra of the nilCoxeter algebra.  The discussion in Section \ref{sec:finiteFS} is new, and is largely motivated by a conjecture (Conjecture \ref{conj:LP}) of the author and Postnikov.  In Section \ref{sec:geom}, we give a list of geometric interpretations and references for the objects studied in the earlier sections.

%The appendix (Section \ref{sec:DEG}) discusses a relation between Coxeter-Knuth relations and elementary dual equivalences which is likely known, but for which we could not find in the literature.  
We have not intended to be comprehensive, especially with regards to generalizations and variations.  There are four such which we must mention: 
\begin{enumerate} 
\item
There is an important and well-developed connection between Stanley symmetric functions and Schubert polynomials, see \cite{BJS, LS:Schub}.  
\item
There is a theory of (affine) Stanley symmetric functions in classical types; see  \cite{BH,BL,FK:C,LSS:C,Pon}.
\item
Nearly all the constructions here have $K$-theoretic analogues.  For full details see \cite{Buc, BKSTY,FK:K,LSS}.
\item
There is a $t$-graded version of the theory.  See \cite{LLM:2003,LM:2005,LM:2007}.
\end{enumerate}

We have included exercises and problems throughout which occasionally assume more prerequisites.  The exercises vary vastly in terms of difficulty.  Some exercises essentially follow from the definitions, but other problems are questions for which the answers might not yet be known.

\section{Stanley symmetric functions and reduced words}
\label{sec:combin}
For an integer $m \geq 1$, let $[m] = \{1,2,\ldots,m\}$.  For a partition (or composition) $\la = (\la_1,\la_2,\ldots,\la_\ell)$, we write $|\la| = \la_1 + \cdots + \la_\ell$.  The {\it dominance order} $\preceq$ on partitions is given by $\la \prec \mu$ if for some $k > 0 $ we have $\la_1 + \la_2 + \cdots + \la_j = \mu_1 + \mu_2 + \cdots + \mu_j$ for $1 \leq j <k$ and $\la_1 + \la_2 + \cdots + \la_k < \mu_1 + \mu_2 + \cdots + \mu_k$.  The {\it descent set} $\Des(\a)$ of a word $a_1a_2\cdots a_n$ is given by $\Des(\a) = \{i \in [n-1] \mid a_i > a_{i+1}\}$.

\subsection{Young tableaux and Schur functions}
We shall assume the reader has some familiarity with symmetric functions and Young tableaux see Chapter \ref{chapter.k schur primer}, Section \ref{section.background} or \cite{Mac:1995} \cite[Ch. 7]{EC2}.  We write $\La$ for the ring of symmetric functions.  We let $m_\la$, where $\la$ is a partition, denote the monomial symmetric function, and let $h_k$ and $e_k$,  for integers $k \geq 1$, denote the homogeneous and elementary symmetric functions respectively.  For a partition $\la = (\la_1,\la_2,\ldots,\la_\ell)$, we define $h_\la:= h_{\la_1}h_{\la_2}\cdots h_{\la_\ell}$, and similarly for $e_\la$.  We let $\ip{.,.}$ denote the Hall inner product of symmetric functions.  Thus $\ip{h_\la,m_\mu} = \ip{m_\la,h_\mu}=\ip{s_\la,s_\mu} = \delta_{\la\mu}$.

We shall draw Young diagrams in French notation.  A tableau of shape $\la$ is a filling of the Young diagram of $\la$ with integers.  A tableau is {\it column-strict} (resp. {\it row-strict}) if it is increasing along columns (resp. rows).  A tableau is {\it standard} if it is column-strict and row-strict, and uses each number $1,2,\ldots,|\la|$ exactly once.  A tableau is {\it semi-standard} if it is column-strict, and weakly increasing along rows.  Thus the tableaux
$$
\tableau[sY]{7\\3&6\\1&2&4&5} \qquad \tableau[sY]{6\\4&4\\1&1&2&3}
$$
are standard and semistandard respectively.  The weight $\wt(T)$ of a tableau $T$ is the composition $(\alpha_1,\alpha_2,\ldots)$ where $\alpha_i$ is equal to the number of $i$'s in $T$.  The Schur function $s_\la$ is given by
$$
s_\la(x_1,x_2,\ldots) = \sum_T x^{\wt(T)}
$$
where the summation is over semistandard tableaux of shape $\la$, and for a composition $\alpha$, we define $x^\alpha:=x_1^{\alpha_1}x_2^{\alpha_2} \cdots$.  For a standard Young tableau $T$ we define $\Des(T) = \{i \mid \text{$i+1$ is in a lower row than $i$}\}$.  We also write $f^\la$ for the number of standard Young tableaux of shape $\la$.  Similar definitions hold for skew shapes $\la/\mu$.

We shall often use the Jacobi-Trudi formula for Schur functions (see \cite{Mac:1995,EC2}).
\begin{theorem}\label{thm:JT} 
Let $\la = (\la_1\geq \la_2 \geq \cdots \geq \la_\ell >0)$ be a partition.  Then
$$
s_\la = {\rm det}(h_{\la_i+j-i})_{i,j=1}^{\ell}.
$$
\end{theorem}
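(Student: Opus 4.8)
The plan is to give a bijective proof via the \emph{Lindström--Gessel--Viennot} (LGV) lemma on families of non-intersecting lattice paths, which keeps the argument entirely within the tableau framework already used to define $s_\lambda$. The first step is to reinterpret a single complete homogeneous symmetric function as a path generating function. Consider lattice paths in $\mathbb{Z}^2$ built from unit East and North steps, and weight such a path by the product of $x_r$ over all of its East steps, where $r$ records the height at which each East step occurs. A path with exactly $m$ East steps then encodes a weakly increasing word of length $m$ (read off the heights of its East steps), so the sum of the weights of all paths with $m$ East steps between two fixed horizontal levels is precisely $h_m$, under the conventions $h_0 = 1$ and $h_m = 0$ for $m < 0$ that the determinant entries require.

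Next I would place the starting points at $A_i = (i - \lambda_i,\,0)$ and the ending points at $B_j = (j,\,N)$ for $N$ sufficiently large. Since $\lambda$ is a partition, the sequence $i - \lambda_i$ is strictly increasing, so $A_1,\dots,A_\ell$ and $B_1,\dots,B_\ell$ are each sorted left to right; this is exactly the configuration for which the only permutation admitting a non-intersecting path family is the identity. The horizontal displacement from $A_i$ to $B_j$ is $\lambda_i - i + j$, so the generating function $w(A_i,B_j)$ of all paths from $A_i$ to $B_j$ equals $h_{\lambda_i - i + j}$, the $(i,j)$ entry of the Jacobi--Trudi matrix. Expanding the determinant and applying LGV then yields
\[
\det(h_{\lambda_i - i + j})_{i,j=1}^{\ell}
= \sum_{\sigma \in S_\ell} \operatorname{sgn}(\sigma) \sum_{(P_1,\dots,P_\ell)} \prod_{i=1}^{\ell} \wt(P_i),
\]
where the inner sum ranges over path families with $P_i \colon A_i \to B_{\sigma(i)}$. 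The standard tail-swapping involution --- locate the first pair of crossing paths and exchange their portions beyond the first intersection --- is weight-preserving and changes $\sigma$ by a transposition, hence cancels all families containing an intersection in sign-reversing pairs. What survives are the non-intersecting families, which all have $\sigma = \mathrm{id}$ and positive sign.

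It then remains to identify non-intersecting families $(P_1,\dots,P_\ell)$ with $P_i \colon A_i \to B_i$ with semistandard tableaux of shape $\lambda$ in a weight-preserving way: the $i$-th path has $\lambda_i$ East steps whose heights, read in order, give row $i$ of the tableau, the weak increase along each path gives weak increase along rows, and the non-intersecting condition translates into strict increase up columns. Under this bijection $\prod_i \wt(P_i)$ matches $x^{\wt(T)}$, so the surviving sum is $\sum_T x^{\wt(T)} = s_\lambda$. I expect the main obstacle to be bookkeeping rather than conceptual: getting the index shift by $-i$ and the height-versus-entry conventions exactly consistent, and checking carefully that ``non-intersecting'' is genuinely equivalent to column-strictness (including the boundary behaviour forced by the staggered starting points). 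As an alternative I could run a purely algebraic argument using the Bernstein creation operators $\Sop_m$ of Chapter~\ref{chapter.k schur primer}, expanding the determinant by cofactors along the last row and inducting via the Pieri rule; this avoids lattice paths but trades the clean combinatorics for a more delicate inductive cancellation.
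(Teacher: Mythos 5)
The paper does not actually prove this theorem---it is stated as a standard fact with a citation to Macdonald and to Stanley's \emph{Enumerative Combinatorics}---and your Lindstr\"om--Gessel--Viennot lattice-path argument is precisely the proof given in the latter reference (\S 7.16), so you are in effect reproducing the standard cited proof rather than diverging from anything in the paper. The outline is correct, with the determinant entries, endpoint staggering, and sign-reversing tail-swap all set up properly; the only points that genuinely need care are the ones you already flag, namely the height-versus-entry conventions in the path-to-tableau bijection and the canonical choice of crossing point that makes tail-swapping a well-defined involution.
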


\subsection{Permutations and reduced words}\label{sec:perm}
Let $S_n$ denote the symmetric group of permutations on the letters $[n]$.  We think of permutations $w,v \in S_n$ as bijections $[n] \to [n]$, so that the product $w \, v \in S_n$ is the composition $w \circ v$ as functions.  The simple transposition $s_i \in S_n$, $i \in \{1,2,\ldots,n-1\}$ swaps the letters $i$ and $i+1$, keeping the other letters fixed.  The symmetric group is generated by the $s_i$, with the relations
\begin{align*}
s_i^2 &= 1 & \mbox{for $1 \leq i \leq n-1$}\\
s_i s_{i+1} s_i &=s_{i+1} s_i s_{i+1} &\mbox{for $1 \leq i \leq n-2$}\\
s_i s_j &= s_j s_i &\mbox{for $|i-j|>1$}
\end{align*}
The {\it length} $\ell(w)$ of a permutation $w \in S_n$ is the length of the shortest expression $w = s_{i_1} \cdots s_{i_\ell}$ for $w$ as a product of simple generators.  Such a shortest expression is called a {\it reduced expression} for $w$, and the word $i_1 i_2 \cdots i_\ell$ is a {\it reduced word} for $w$.  Let $R(w)$ denote the set of reduced words of $w \in S_n$.  We usually write permutations in one-line notation, or alternatively give reduced words.  For example $3421 \in S_4$ has reduced word $23123$.

There is a natural embedding $S_n \hookrightarrow S_{n+1}$ and we will sometimes not distinguish between $w \in S_n$ and its image in $S_{n+1}$ under this embedding.  

\subsection{Reduced words for the longest permutation}
The longest permutation $w_0 \in S_n$ is $w_0 = n\, (n-1)\, \cdots \,2\,1$ in one-line notation.  Stanley \cite{Sta} conjectured the following formula for the number of reduced words of $w_0$, which he then later proved using the theory of Stanley symmetric functions.  Let $\delta_n = (n,n-1,\ldots,1)$ denote the staircase of size $n$.
\begin{theorem}[\cite{Sta}]\label{thm:longestword}
The number $R(w_0)$ of reduced words for $w_0$ is equal to the number $f^{\delta_{n-1}}$ of staircase shaped standard Young tableaux.
\end{theorem}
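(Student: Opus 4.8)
The plan is to identify the Stanley symmetric function $F_{w_0}$ explicitly and then read off $|R(w_0)|$ as a single coefficient. The first ingredient is the standard fact that $F_w$ refines the count of reduced words: from the definition (given later in this chapter) the coefficient of the squarefree monomial $x_1 x_2 \cdots x_{\ell(w)}$ in $F_w$ equals $|R(w)|$, since the strictly increasing compatible sequence $(1,2,\dots,\ell(w))$ pairs with each reduced word exactly once and with no others. Equivalently $|R(w)| = \langle F_w, h_{1^{\ell(w)}} \rangle$. Feeding in Schur positivity, $F_w = \sum_\lambda a_{w\lambda}\, s_\lambda$ with $a_{w\lambda}\in\mathbb{Z}_{\ge 0}$, together with $\langle s_\lambda, h_{1^{\ell}}\rangle = f^\lambda$, gives $|R(w)| = \sum_\lambda a_{w\lambda}\, f^\lambda$. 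Thus the theorem is equivalent to the single identity $F_{w_0} = s_{\delta_{n-1}}$.

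To prove $F_{w_0} = s_{\delta_{n-1}}$ I would invoke the Edelman--Greene correspondence. This is a weight-preserving bijection sending a reduced word $\a\in R(w)$ together with a compatible sequence to a pair $(P,Q)$, where $P$ is an increasing (Edelman--Greene) tableau whose reading word is a reduced word for $w$ and $Q$ is a semistandard tableau of the same shape; summing over compatible sequences it yields $a_{w\lambda} = \#\{\text{EG tableaux of shape } \lambda \text{ with reading word in } R(w)\}$. Hence the desired identity is equivalent to the assertion that $w_0$ has exactly one Edelman--Greene tableau, namely the staircase tableau $T_0$ of shape $\delta_{n-1}$ whose $i$-th row from the bottom reads $i, i+1, \dots, n-1$; one checks directly that the column reading word of $T_0$ is a reduced word for $w_0$. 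Granting this, $a_{w_0,\lambda} = \delta_{\lambda,\delta_{n-1}}$, so $F_{w_0} = s_{\delta_{n-1}}$ and therefore $|R(w_0)| = f^{\delta_{n-1}}$.

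The main obstacle is precisely this uniqueness claim. The naive bounds --- entries lie in $\{1,\dots,n-1\}$ and strict increase along rows and columns forces $\lambda_1 \le n-1$ and $\ell(\lambda)\le n-1$ --- do not single out the staircase once $n \ge 4$ (for instance $(3,3)$ and $(2,2,2)$ also fit in the $3\times 3$ box and have size $\binom{4}{2}$), so the argument must genuinely use that the reading word is reduced for the longest element. I would establish uniqueness by induction on $n$: given an EG tableau $P$ for the longest element of $S_n$, the subtableau formed by the entries $\le n-2$ should be an EG tableau for the embedded longest element of $S_{n-1}$, which by induction equals $T_0$ at level $n-1$, and the cells carrying the letter $n-1$ are then forced by column-strictness together with the reducedness (Coxeter--Knuth) constraints to complete the staircase. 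The base case $n=3$ is immediate, where $R(w_0)=\{121,212\}$ has size $2 = f^{(2,1)}$.

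As corroboration one may note that $F_{w_0}$ is $\omega$-invariant (because $w_0 = w_0^{-1}$ and $F_{w^{-1}} = \omega F_w$), matching the self-conjugacy of the shape $\delta_{n-1}$, and that the Jacobi--Trudi formula (Theorem~\ref{thm:JT}) writes the candidate $s_{\delta_{n-1}}$ in the $h$-basis for explicit low-$n$ verification. Neither observation, however, removes the need for the inductive uniqueness step, which is where the real work lies; I expect the delicate point to be controlling exactly how the letters $n-1$ attach under Edelman--Greene insertion, i.e. showing that no non-staircase increasing tableau can have a reading word that is reduced for $w_0$.
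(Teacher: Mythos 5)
Your overall route is the same as the paper's: reduce via the Edelman--Greene correspondence to showing that $w_0$ has a unique EG-tableau, necessarily of staircase shape. But there is a genuine gap exactly where you locate ``the real work'': your proposed induction on $n$ is only sketched (the claim that the cells carrying the letter $n-1$ are ``forced by column-strictness together with the reducedness constraints'' is not an argument), and more importantly you have overlooked the elementary observation that makes the induction unnecessary. A tableau that is strictly increasing along rows and columns has its entry in row $i$, column $j$ bounded below by $i+j-1$; since an EG-tableau for an element of $S_n$ uses only the letters $1,\dots,n-1$, a cell $(i,j)$ can be occupied only when $i+j-1\le n-1$. Hence the shape is contained in the staircase $\delta_{n-1}$ itself, not merely in the $(n-1)\times(n-1)$ box. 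This is the paper's Lemma~\ref{lem:EGshape}, and it kills your purported obstructions: for $n=4$ the shapes $(3,3)$ and $(2,2,2)$ are \emph{not} contained in $\delta_3=(3,2,1)$, so they are already excluded. Since $\ell(w_0)=\binom{n}{2}=|\delta_{n-1}|$, the shape must equal the staircase, and then the filling is forced with no further input: the last cell $(i,n-i)$ of row $i$ must contain $n-1$, and strict increase along the row pins the entry of $(i,j)$ between $i+j-1$ and $(n-1)-(n-i-j)=i+j-1$, so $T(i,j)=i+j-1$.

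In other words, your assertion that ``the argument must genuinely use that the reading word is reduced for the longest element'' is false: reducedness enters only through the Edelman--Greene theorem itself (to guarantee the bijection and that at least one EG-tableau for $w_0$ exists), while the uniqueness of the tableau is purely a consequence of row- and column-strictness, the entry bound $n-1$, and the count $\binom{n}{2}$. Replacing your incomplete induction by this two-line shape argument closes the gap and recovers the paper's proof. The preliminary reductions in your first paragraph (reading $|R(w)|$ off the coefficient of $x_1\cdots x_{\ell(w)}$, and $|R(w)|=\sum_\lambda a_{w\lambda}f^\lambda$) are correct and match Definition~\ref{def:storig} and Corollary~\ref{cor:stpos}.
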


\subsection{The Stanley symmetric function}
\begin{definition}[Original definition] \label{def:storig}
Let $w \in S_n$.  Define the Stanley symmetric function\footnote{Our conventions differ from Stanley's original definitions by $w \leftrightarrow w^{-1}$.} $F_w$ by
$$
F_w(x_1,x_2,\ldots) = \sum_{\stackrel{a_1 a_2 \ldots a_\ell \in  R(w) \ \  1 \leq b_1 \leq b_2 \leq \cdots \leq b_\ell}{a_i < a_{i+1} \implies b_i < b_{i+1}}} x_{b_1} x_{b_2} \cdots x_{b_\ell}.
$$
\end{definition}

We shall establish the following fundamental result \cite{Sta} in two different ways in Sections \ref{sec:EG} and \ref{sec:algebra}, but shall assume it for the remainder of this section.
\begin{theorem}[\cite{Sta}]\label{thm:stsymm}
The generating function $F_w$ is a symmetric function.
\end{theorem}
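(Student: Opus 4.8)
The plan is to deduce the symmetry of $F_w$ from a commutativity statement in the nilCoxeter algebra, which I find cleaner than arguing directly with compatible sequences. Since $F_w$ has bounded degree $\ell(w)$ and the group permuting the variables is generated by the adjacent transpositions $x_i\leftrightarrow x_{i+1}$, it suffices to prove invariance under each such swap. To package this, let $\mathcal A$ be the nilCoxeter algebra of $S_n$: the algebra with generators $u_1,\dots,u_{n-1}$ subject to $u_i^2=0$, the braid relations $u_iu_{i+1}u_i=u_{i+1}u_iu_{i+1}$, and $u_iu_j=u_ju_i$ for $|i-j|>1$. It has a linear basis $\{u_w\mid w\in S_n\}$, where $u_w=u_{i_1}\cdots u_{i_\ell}$ for any reduced word $i_1\cdots i_\ell$ of $w$ (a product of generators equals $u_w$ when its word is reduced, and $0$ otherwise). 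For a scalar $x$ commuting with the $u_i$, set
\[
\mathbf h(x)=(1+x\,u_{n-1})(1+x\,u_{n-2})\cdots(1+x\,u_1)=\sum_{k\geq 0}x^k h_k,
\qquad
h_k=\!\!\sum_{n-1\geq i_1>\cdots>i_k\geq 1}\!\!\!u_{i_1}\cdots u_{i_k},
\]
so the degree-$k$ component $h_k$ is the sum of all strictly decreasing products of $k$ generators.

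First I would record the dictionary between $\prod_{j\geq1}\mathbf h(x_j)$ and $F_w$. Expanding, a term contributing to $u_w$ is a choice, for each $j$, of a strictly decreasing factor whose length is the exponent of $x_j$, such that the concatenation of these factors is a reduced word for $w$. Assigning the label $j$ to the letters drawn from the $j$-th factor produces a weakly increasing sequence $\mathbf b$ that strictly increases across factor boundaries and along which $\mathbf a$ strictly decreases inside each constant block; this is exactly the compatibility condition $a_t<a_{t+1}\Rightarrow b_t<b_{t+1}$ of Definition~\ref{def:storig}, and the correspondence is visibly reversible. Hence
\[
\prod_{j\geq1}\mathbf h(x_j)=\sum_{w\in S_n}F_w(x_1,x_2,\dots)\,u_w,
\]
which makes sense coefficientwise since each $u_w$ receives only contributions of total degree $\ell(w)$.

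The whole argument then rests on the commutation lemma $\mathbf h(x)\,\mathbf h(y)=\mathbf h(y)\,\mathbf h(x)$, equivalently the commutativity $h_a h_b=h_b h_a$ of the homogeneous components. Granting it, all factors $\mathbf h(x_j)$ commute, so $\prod_{j}\mathbf h(x_j)$ is invariant under interchanging any $x_i$ and $x_{i+1}$; comparing coefficients of $u_w$ shows each $F_w$ is invariant under that swap, and Theorem~\ref{thm:stsymm} follows. The main obstacle is proving the commutation lemma itself. I would reduce it to a finite, local computation: the $|i-j|>1$ commutations let one discard far-apart generators, so the identity collapses to a check involving only a bounded number of consecutive indices, where it follows from $u_i^2=0$ together with the braid relation (a Yang--Baxter-type ``switch'' for the factors $1+x\,u_i$). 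Carrying out this verification cleanly — choosing the right induction on $n$, or isolating the right rank-three identity — is where the real work lies.

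As an alternative I would keep in reserve the purely combinatorial route, which also yields more: via the Edelman--Greene correspondence (Section~\ref{sec:EG}) one shows $F_w=\sum_\lambda a_{w\lambda}s_\lambda$ with $a_{w\lambda}\in\mathbb Z_{\geq 0}$, whence symmetry is immediate from the symmetry of the Schur functions. A more hands-on version replaces Edelman--Greene by a Bender--Knuth-style involution on compatible sequences: fixing $i$ and all labels $b_t\notin\{i,i+1\}$, one localizes to the contiguous block of positions carrying labels in $\{i,i+1\}$ and builds an involution swapping the number of $i$'s with the number of $(i+1)$'s while preserving membership of $\mathbf a$ in $R(w)$. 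Here the delicate point, and the reason I prefer the algebraic proof, is that the forced ascents inside the block must be matched and frozen, so the involution is obliged to modify $\mathbf a$ within its reduced-word class for $w$ rather than merely relabel $\mathbf b$.
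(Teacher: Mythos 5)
Your proposal follows exactly the paper's algebraic proof (Section \ref{sec:algebra}, after Fomin and Stanley): realize $\sum_w F_w A_w$ as the product $\mathbf{h}(x_1)\mathbf{h}(x_2)\cdots$ in the nilCoxeter algebra, where $\mathbf{h}(x)=(1+xA_{n-1})\cdots(1+xA_1)$, and deduce the symmetry of each $F_w$ from the commutation $\mathbf{h}(x)\mathbf{h}(y)=\mathbf{h}(y)\mathbf{h}(x)$. Your dictionary between the coefficient of $A_w$ in this product and the compatible-sequence definition of $F_w$ is correct (it passes through the decreasing-factorization form, Definition \ref{def:stdec}), and your reserve argument via Edelman--Greene is precisely the paper's second proof in Section \ref{sec:EG}.

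The gap is that the commutation lemma, which you correctly identify as the entire content of the argument, is not proved, and the reduction you sketch for it would not go through as stated. The far-commutation relations do not let you ``collapse'' $\mathbf{h}(x)\mathbf{h}(y)=\mathbf{h}(y)\mathbf{h}(x)$ to a bounded-rank check: each $\mathbf{h}(x)$ is a single ordered product over all indices, and its expansion interleaves generators of every index, so there is no a priori localization. What the paper actually uses is the specific three-factor identity
\[
(1+xA_{i+1})(1+xA_i)(1+yA_{i+1})=(1+yA_{i+1})(1+yA_i)(1+xA_{i+1})(1-yA_i)(1+xA_i),
\]
verified from $A_j^2=0$, the braid relation $A_iA_{i+1}A_i=A_{i+1}A_iA_{i+1}$, and the vanishing of the non-reduced product $A_{i+1}A_iA_{i+1}A_i$; note the correction factor $(1-yA_i)$, which a naive Yang--Baxter ansatz for the factors $(1+xA_i)$ would miss. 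The induction on $n$ then slides $(1+yA_{n-1})$ leftward past the factors of index at most $n-3$, applies this identity at $i=n-2$, invokes the $S_{n-1}$ case, and cancels $(1-yA_{n-2})(1+yA_{n-2})=1$. Until this identity (or an equivalent one) is exhibited and checked, your argument reduces the theorem to an unproved lemma rather than proving it.
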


A word $a_1 a_2 \cdots a_\ell$ is {\it decreasing} if $a_1 > a_2 > \cdots > a_\ell$.  A permutation $w \in S_n$ is decreasing if it has a (necessarily unique) decreasing reduced word.  The identity $\id \in S_n$ is considered decreasing.  A {\it decreasing factorization} of $w \in S_n$ is an expression $w = v_1 v_2 \cdots v_r$ such that $v_i \in S_n$ are decreasing, and $\ell(w) = \sum_{i=1}^r \ell(v_i)$.

\begin{definition}[Decreasing factorizations] \label{def:stdec}
Let $w \in S_n$.  Then
$$
F_w(x_1,x_2,\ldots) = \sum_{w = v_1 v_2 \cdots v_r} x_1^{\ell(v_1)} \cdots x_r^{\ell(v_r)}.
$$
\end{definition}

\begin{example}\label{ex:1323}
Consider $w = s_1s_3s_2s_3 \in S_4$.  Then $R(w) = \{1323,3123,1232\}$.  Thus
$$
F_w = m_{211}+3m_{1111} = s_{211}.
$$
The decreasing factorizations which give $m_{211}$ are $\overline{31}\;\overline{2}\;\overline{3}, \overline{1}\;\overline{32}\;\overline{3}, \overline{1}\;\overline{2}\;\overline{32}$.
\end{example}

\subsection{The code of a permutation}
Let $w \in S_n$.  The {\it code} $c(w)$ is the sequence $c(w) = (c_1, c_2, \ldots )$ of nonnegative integers given by $c_i = \#\{j\in [n]\mid j > i \text{ and } w(j) < w(i)\}$ for $i \in [n]$, and $c_i = 0$ for $i > n$.  Note that the code of $w$ is the same regardless of which symmetric group it is considered an element of.

Let $\la(w)$ be the partition conjugate to the partition obtained from rearranging the parts of $c(w^{-1})$ in decreasing order.

\begin{example}
Let $w = 216534 \in S_6$.  Then $c(w) = (1,0,3,2,0,0,\ldots)$, and $c(w^{-1}) = (1,0,2,2,1,0,\ldots)$.  Thus $\la(w) = (4,2)$.
\end{example}

For a symmetric function $f \in \La$, let $[m_\la]f$ denote the coefficient of $m_\la$ in $f$.

\begin{prop}[\cite{Sta}] \label{prop:dominant} Let $w \in S_n$.
\begin{enumerate}
\item
Suppose $[m_\la]F_w \neq 0$.  Then $\la \prec \la(w)$.
\item
$[m_{\la(w)}]F_w = 1$.
\end{enumerate}
\end{prop}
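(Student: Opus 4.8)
The plan is to prove both parts directly from the decreasing-factorization description (Definition \ref{def:stdec}). Since $F_w$ is symmetric (Theorem \ref{thm:stsymm}), $[m_\la]F_w = [x^\la]F_w$, which by Definition \ref{def:stdec} is the number of decreasing factorizations $w = v_1\cdots v_{\ell(\la)}$ with $\ell(v_i)=\la_i$ for all $i$ --- equivalently, the number of decreasing factorizations of $w$ whose weakly decreasing sequence of block sizes is the partition $\la$. Thus part (1) asserts that the sorted block-size sequence of any decreasing factorization is $\preceq\la(w)$, while part (2) asserts that exactly one such factorization has block-size sequence equal to $\la(w)$.

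Next I would record the relevant combinatorics of $\la(w)$. Write $c=c(w^{-1})$ with entries $c_i$. As $\la(w)$ is the conjugate of the decreasing rearrangement of $c$, we have $\la(w)_k=\#\{i:c_i\ge k\}$, and hence the partial-sum formula $\la(w)_1+\cdots+\la(w)_j=\sum_i\min(c_i,j)$. Here $c_i$ is the number of inversions of $w$ whose smaller value is $i$ (equivalently, the number of entries exceeding $i$ and lying to the left of $i$ in the one-line notation of $w$), so $\sum_i c_i=\ell(w)$. Let $D$ be the diagram consisting of disjoint columns, one of height $c_i$ for each value $i$, and call a filling of $D$ by positive integers \emph{column-strict} if the entries within each column are pairwise distinct.

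The crux is an injective, block-size-preserving map $\Phi$ from decreasing factorizations of $w$ to column-strict fillings of $D$. Given $w=v_1\cdots v_r$, concatenating decreasing reduced words for the factors yields a reduced word for $w$ whose letters are colored by the index of the factor they came from; under the standard labeling of the letters of a reduced word by the inversions of $w$, I would place each color into the column of $D$ indexed by the smaller value of the corresponding inversion, so that the color multiplicities are exactly the block sizes. The decreasing property of a single factor forces its contributed inversions to have pairwise distinct smaller values, which makes the filling column-strict. Injectivity of $\Phi$ is the substantive point: one must be able to reconstruct the factorization from its filling, and this is where the specific structure of reduced words in $S_n$ is used.

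Granting $\Phi$, both conclusions follow from a standard dominance bound for column-strict fillings: among any fixed set of $j$ colors, a column of height $c_i$ can hold at most $\min(c_i,j)$ cells of those colors, so the $j$ most frequent colors occupy at most $\sum_i\min(c_i,j)=\la(w)_1+\cdots+\la(w)_j$ cells; hence the block-size partition of any decreasing factorization is $\preceq\la(w)$, giving (1). Moreover equality for all $j$ forces column $i$ to carry exactly the colors $1,\ldots,c_i$, so $D$ has a unique column-strict filling of content $\la(w)$; by injectivity of $\Phi$ there is at most one decreasing factorization of block-size type $\la(w)$, and producing one --- by iterating the operation ``peel off the longest decreasing left factor'', which I would check has length $\la(w)_1$ and leaves a permutation $u$ with $\la(u)=(\la(w)_2,\la(w)_3,\ldots)$ --- yields exactly one, giving (2). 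The main obstacle is the construction of $\Phi$ and the proof of its injectivity: fixing the inversion-labeling, verifying that decreasingness of a factor is precisely column-strictness, and recovering the factors from the filling. By contrast, the partial-sum identity for $\la(w)$, the dominance bound for column-strict fillings, and the greedy construction of the extremal factorization (by induction on $\ell(w)$) should be comparatively routine.
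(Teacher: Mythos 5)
Your proof is correct and is essentially the paper's argument in different clothing: the column-strictness of your filling $\Phi$ is precisely the paper's observation that factoring a decreasing permutation out of $w$ on the left decrements $\ell(v)$ \emph{distinct} entries of $c(w^{-1})$, and both proofs then finish with the same dominance bound and the same greedy extremal factorization. The step you defer as the ``main obstacle'' is in fact the paper's one-line lemma --- length-decreasing left multiplication by $s_i$ sends $(c_i,c_{i+1})\mapsto(c_{i+1},c_i-1)$ in $c(w^{-1})$ --- from which column-strictness is immediate and injectivity of $\Phi$ is nearly free, since the set of columns receiving color $m$ is exactly the support $\{a_1>\cdots>a_k\}$ of the decreasing reduced word $s_{a_1}\cdots s_{a_k}$ of $v_m$, so the factors can be read off the filling one at a time.
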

\begin{proof}
Left multiplication of $w$ by $s_i$ acts on $c(w^{-1})$ by
$$
(c_1,\ldots , c_i,c_{i+1},\ldots)\longmapsto
(c_1,\ldots,c_{i+1},c_i-1,\ldots)$$
whenever $\ell(s_iw) < \ell(w)$.  Thus factorizing a decreasing permutation $v$ out of $w$ from the left will decrease $\ell(v)$ different entries of $c(w^{-1})$ each by $1$.  (1) follows easily from this observation.

To obtain (2), one notes that there is a unique decreasing permutation $v$ of length $\mu_1(w)$ such that $\ell(w) = \ell(v^{-1}w) + \ell(v)$.
\end{proof}

\begin{example}
Continuing Example \ref{ex:1323}, one has $w = 2431$ in one-line notation.  Thus $\la(w) =  (2,1,1)$, agreeing with Proposition \ref{prop:dominant} and our previous calculation.
\end{example}

\subsection{Fundamental Quasi-symmetric functions}
Let $D \subset [n-1]$.  Define the (Gessel) {\it fundamental quasi-symmetric function} $L_D$ by 
$$
L_D(x_1,x_2,\ldots) = \sum_{\stackrel{1 \leq b_1 \leq b_2 \cdots \leq b_n}{i \in D \implies b_{i+1} > b_i}} x_{b_1}x_{b_2} \cdots x_{b_n}.
$$
Note that $L_D$ depends not just on the set $D$ but also on $n$.

The {\it descent set} $\Des(T)$ of a standard Young tableau $T$ is the set of labels $i$, such that $i+1$ occurs in a higher row than $i$.  A basic fact relating Schur functions and fundamental quasi-symmetric functions is:
\begin{prop}\label{prop:Schurquasi}
Let $\la$ be  a partition.  Then
$$
s_\lambda = \sum_T L_{\Des(T)}
$$
where the summation is over all standard Young tableaux of shape $\lambda$.
\end{prop}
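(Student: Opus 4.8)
The plan is to prove the identity by setting up a weight-preserving bijection via the standardization of semistandard tableaux. Recall from the previous subsections that $s_\lambda = \sum_S x^{\wt(S)}$, the sum over semistandard tableaux $S$ of shape $\lambda$, and that, with $n = |\lambda|$,
$$
L_D = \sum_{\substack{1 \le b_1 \le \cdots \le b_n \\ i \in D \implies b_i < b_{i+1}}} x_{b_1} \cdots x_{b_n}.
$$
Both sides of the proposition are therefore generating functions for degree-$n$ monomials, so it suffices to group the semistandard tableaux of shape $\lambda$ according to their standardizations and match the resulting contributions with the admissible monomials of each $L_{\Des(T)}$.

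First I would define, for a semistandard tableau $S$, its standardization $T = \operatorname{std}(S)$: the standard tableau of shape $\lambda$ obtained by relabeling the cells of $S$ with $1,2,\ldots,n$ so that cells carrying smaller entries receive smaller labels, with ties among cells sharing a common entry broken by reading those cells from left to right. Given this, I would record for each $i$ the entry $b_i$ of $S$ in the cell that $T$ labels $i$. This produces a weakly increasing sequence $\mathbf b = (b_1 \le \cdots \le b_n)$ with $x^{\wt(S)} = x_{b_1} \cdots x_{b_n}$, so the map $S \mapsto (T, \mathbf b)$ manifestly preserves the associated monomial.

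The heart of the argument is to show that $(T, \mathbf b)$ ranges exactly over the pairs consisting of a standard tableau $T$ of shape $\lambda$ and a sequence $\mathbf b$ admissible for $L_{\Des(T)}$, i.e.\ that $b_i < b_{i+1}$ is forced precisely when $i \in \Des(T)$. The key technical lemma I would establish is that two cells of $S$ carrying equal entries must strictly descend in row as the column increases; indeed, if $v$ occupied both $(r_1,c_1)$ and $(r_2,c_2)$ with $c_1 < c_2$ and $r_1 < r_2$, then column-strictness in column $c_1$ and row weak-monotonicity in row $r_2$ would together force $v > v$, a contradiction. Consequently equal entries form an antichain, the left-to-right tie-break is a legitimate linear extension making $T$ standard, and any two cells with equal entries that receive consecutive labels $i, i+1$ have $i+1$ lying in a strictly lower row, so $i \notin \Des(T)$. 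This is exactly the descent-compatibility, which I expect to be the main obstacle: it shows (i) if $i \in \Des(T)$ then the two relevant cells of $S$ carry distinct entries, whence $b_i < b_{i+1}$, and (ii) if $i \notin \Des(T)$ then $b_i = b_{i+1}$ is permitted. Invertibility is then routine: given $(T,\mathbf b)$ with $\mathbf b$ admissible, one reconstructs $S$ by placing $b_i$ in the cell labeled $i$ by $T$, and the descent condition guarantees the result is column-strict and row-weakly-increasing. Summing over all $S$ of shape $\lambda$ and regrouping by $T = \operatorname{std}(S)$ then yields $s_\lambda = \sum_T L_{\Des(T)}$, completing the proof.
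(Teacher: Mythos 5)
The paper states this proposition without proof (it is offered as ``a basic fact''), so there is no argument to compare against; your standardization bijection is the standard way to prove it, and it is essentially correct. One small imprecision: your key lemma claims that equal entries of $S$ must \emph{strictly} descend in row as the column increases, but two equal entries can sit in the same row, and your own argument only rules out the case $r_1<r_2$, i.e.\ it establishes \emph{weak} descent ($r_1\ge r_2$). Likewise the later assertion that $i+1$ lies in a ``strictly lower row'' should read ``not in a higher row.'' This costs you nothing, since the conclusion you actually need --- that $i\notin\Des(T)$ when $i$ and $i+1$ label cells with equal entries --- follows already from the weak version, given that the paper's descent condition requires $i+1$ to occur in a strictly higher row. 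The remaining steps (that a descent of $T$ forces $b_i<b_{i+1}$, and that an admissible sequence $\mathbf b$ reconstructs a semistandard tableau standardizing to $T$) are correctly identified and, as you say, routine, though for completeness one should note that column-strictness of the reconstructed tableau uses the fact that between the labels of two vertically adjacent cells of $T$ there is always at least one descent.
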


\begin{definition}[Using quasi symmetric functions] \label{def:stquasi}
Let $w \in S_n$.  Then
$$
F_w(x_1,x_2,\ldots) = \sum_{\a \in R(w^{-1})} L_{\Des(\a)}.
$$
\end{definition}

\begin{example}
Continuing Example \ref{ex:1323}, we have $F_w = L_{2} + L_{1}+L_{3}$, where all subsets are considered subsets of $[3]$.  Note that these are exactly the descent sets of the
tableaux
$$
\tableau[sY]{4\\1&2&3} \qquad \tableau[sY]{3\\1&2&4} \qquad \tableau[sY]{2\\1&3&4}
$$
\end{example}

%We shall relate Proposition \ref{prop:Schurquasi} and Definition \ref{def:stquasi} in Theorem \ref{thm:EG} below.

\subsection{Exercises}
\label{ssec:combinprob}
\begin{enumerate}
\item 
Prove that $|c(w)|:=\sum_i c_i(w)$ is equal to $\ell(w)$.
\item
Let $S_\infty = \cup_{n \geq 1} S_n$, where permutations are identified under the embeddings $S_1 \hookrightarrow S_2 \hookrightarrow S_3 \cdots$.  
Prove that $w \longmapsto c(w)$ is a bijection between $S_\infty$ and nonnegative integer sequences with finitely many non-zero entries.
\item
Prove the equivalence of Definitions \ref{def:storig}, \ref{def:stdec}, and \ref{def:stquasi}.  
\item
What happens if we replace decreasing factorizations by increasing factorizations in Definition \ref{def:stdec}?
\item
What is the relationship between $F_w$ and $F_{w^{-1}}$?
\item 
(Grassmannian permutations)
A permutation $w \in S_n$ is {\it Grassmannian} if it has at most one descent.
\begin{enumerate}
\item  Characterize the codes of Grassmannian permutations.
\item
Show that if $w$ is Grassmannian then $F_w$ is a Schur function. 
\item
Which Schur functions are equal to $F_w$ for some Grassmannian permutation $w \in S_n$?
\end{enumerate}
\item 
(321-avoiding permutations \cite{BJS})
A permutation $w \in S_n$ is {\it 321-avoiding} if there does not exist $a < b <c$ such that $w(a) > w(b) > w(c)$.  Show that $w$ is 321-avoiding if and only if no reduced word $\i \in R(w)$ contains a consecutive subsequence of the form $j (j+1) j$.  If $w$ is 321-avoiding, show directly from the definition that $F_w$ is a skew Schur function.
\end{enumerate}

\section{Edelman-Greene insertion}\label{sec:EG}
\subsection{Insertion for reduced words}
We now describe an insertion algorithm for reduced words, due to Edelman and Greene \cite{EG}, which establishes Theorem \ref{thm:stsymm}, and in addition stronger positivity properties.  Related bijections were studied by Lascoux-Sch\"utzenberger \cite{LS} and by Haiman \cite{Hai}.

Let $T$ be a column and row strict Young tableau.  The {\it reading word} $r(T)$ is the word obtained by reading the rows of $T$ from left to right, starting with the top row.

Let $w \in S_n$.  We say that a tableau $T$ is a {\it EG-tableau} for $w$ if $r(T)$ is a reduced word for $w$.  For example,
$$
T = \tableau[sY]{2&3\\1&2&3}
$$
has reading word $r(T) = 23123$, and is an EG-tableau for $3421 \in S_4$.

\begin{theorem}[\cite{EG}]\label{thm:EG}
Let $w \in S_n$.  There is a bijection between $R(w)$ and the set of pairs $(P,Q)$, where $P$ is an EG-tableau for $w$, and $Q$ is a standard Young tableau with the same shape as $P$.  Furthermore, under the bijection $\i \leftrightarrow (P(\i),Q(\i))$ we have $\Des(\i) = \Des(Q)$.
\end{theorem}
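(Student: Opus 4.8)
The plan is to construct the bijection explicitly via the Edelman–Greene insertion algorithm, which is a variant of Robinson–Schensted–Knuth (RSK) insertion adapted to reduced words. First I would define the EG row-insertion procedure on a reduced word $\i = a_1 a_2 \cdots a_\ell \in R(w)$: one inserts the letters $a_1, a_2, \ldots, a_\ell$ successively into an initially empty tableau $P$, at each stage obtaining a column-strict, row-strict tableau, and records the sequence of newly added cells in a standard Young tableau $Q$. The key modification from ordinary RSK is the bumping rule: when inserting a letter $x$ into a row, if $x$ is already present and $x+1$ also appears in that row, then instead of $x$ bumping $x+1$, the pair $\{x, x+1\}$ is left unchanged in the row and the value $x+1$ is bumped into the next row. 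This special rule is precisely what guarantees that the resulting $P=P(\i)$ is an EG-tableau, i.e.\ that $r(P(\i)) \in R(w)$ with the \emph{same} underlying permutation $w$.

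The core of the argument has three parts. First I would show that each insertion step preserves the property of being a column-strict, row-strict Young tableau whose reading word is a reduced word Knuth-equivalent to the word inserted so far; the invariant $r(P) \in R(w)$ after inserting all of $\i$ is maintained because each bump corresponds either to a trivial move or to an application of the braid relation $s_i s_{i+1} s_i = s_{i+1} s_i s_{i+1}$ (in the special bumping case) or a commutation $s_i s_j = s_j s_i$. Second, I would verify that the map $\i \mapsto (P(\i), Q(\i))$ is a bijection onto pairs $(P,Q)$ with $P$ an EG-tableau for $w$ and $Q$ standard of the same shape; this is done, as in classical RSK, by exhibiting the reverse (uninsertion) algorithm and checking it inverts the forward map step by step. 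The invertibility of the special bumping rule is the delicate point here, since one must be able to determine, at each reverse step, whether the ordinary or the special rule was applied.

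The main obstacle—and the part requiring the most care—is the descent-set compatibility claim $\Des(\i) = \Des(Q(\i))$. I would prove this by induction on $\ell$, tracking how the position of the cell added at step $r$ (containing the label $r$ in $Q$) relates to the cell added at step $r+1$. The statement $i \in \Des(\i)$ means $a_i > a_{i+1}$, and I must show this holds exactly when, in $Q$, the entry $i+1$ lies in a strictly higher row than $i$. The essential lemma is a local one: inserting a smaller letter immediately after a larger letter forces the new cell to appear weakly to the left and hence (by column-strictness and the shape-growth rules) in a higher row, while inserting a larger letter after a smaller one pushes the new cell strictly to the right in the same or a lower row. Establishing this row-comparison precisely, accounting for the special bumping rule, is where the real work lies; once it is in hand, the descent statement follows immediately, and combining it with Proposition~\ref{prop:Schurquasi} and Definition~\ref{def:stquasi} yields the Schur positivity $F_w = \sum_P s_{\shape(P)}$, summed over EG-tableaux $P$ for $w^{-1}$, as a corollary.
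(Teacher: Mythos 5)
Your proposal is correct and follows essentially the same route as the paper, which likewise treats Theorem \ref{thm:EG} by describing the Edelman--Greene insertion algorithm (with the special bumping rule when both $a$ and $a+1$ occupy the row, so that the row is left unchanged and $a+1$ is bumped) and citing \cite{EG} for the verification of well-definedness, invertibility, and the descent statement. One small caution if you carry out the details: the invariant you want is that $r(P)$ is \emph{Coxeter--Knuth} equivalent to the prefix inserted so far (the relations of Section \ref{sec:CK}, i.e.\ the braid move $a\,(a+1)\,a \sim (a+1)\,a\,(a+1)$ together with the two restricted Knuth moves), not ordinary Knuth equivalence, which does not preserve the permutation represented by a reduced word --- your parenthetical appeal to braid and commutation relations shows you mean the right thing, but the terminology should be fixed.
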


Combining Theorem \ref{thm:EG} with Proposition \ref{prop:Schurquasi} and Definition \ref{def:stquasi}, we obtain:
\begin{corollary}\label{cor:stpos}
Let $w \in S_n$.  Then $F_w = \sum_\lambda \al_{w \la} s_\la$, where 
$\al_{w\la}$ is equal to the number of EG-tableau for $w^{-1}$.  In particular, $F_w$ is Schur positive.
\end{corollary}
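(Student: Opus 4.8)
The plan is to read off the corollary as a direct synthesis of three facts already in hand: the quasisymmetric expansion of $F_w$ (Definition \ref{def:stquasi}), the Edelman--Greene bijection (Theorem \ref{thm:EG}), and the expansion of a Schur function into fundamental quasi-symmetric functions indexed by standard tableaux (Proposition \ref{prop:Schurquasi}). No new combinatorics is required; the entire content sits in these inputs, and the remaining work is the bookkeeping of regrouping a single sum.

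First I would start from Definition \ref{def:stquasi}, writing
\[
F_w = \sum_{\mathbf{a} \in R(w^{-1})} L_{\Des(\mathbf{a})}.
\]
Then I would apply Theorem \ref{thm:EG} to the element $w^{-1}$: it supplies a bijection $\mathbf{a} \leftrightarrow (P(\mathbf{a}), Q(\mathbf{a}))$ between reduced words $\mathbf{a} \in R(w^{-1})$ and pairs consisting of an EG-tableau $P$ for $w^{-1}$ together with a standard Young tableau $Q$ of the same shape, and crucially it preserves descent sets, $\Des(\mathbf{a}) = \Des(Q(\mathbf{a}))$. Substituting this into the sum and reindexing over pairs gives
\[
F_w = \sum_{(P,Q)} L_{\Des(Q)},
\]
the sum running over all such pairs.

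Next I would regroup by first fixing the EG-tableau $P$ and then summing over the admissible recording tableaux $Q$, which range over all standard Young tableaux whose shape equals $\shape(P)$. For each fixed $P$ of shape $\lambda = \shape(P)$, the inner sum $\sum_{Q : \shape(Q) = \lambda} L_{\Des(Q)}$ is exactly $s_\lambda$ by Proposition \ref{prop:Schurquasi}. Hence
\[
F_w = \sum_P s_{\shape(P)} = \sum_\lambda \alpha_{w\lambda}\, s_\lambda,
\]
where $\alpha_{w\lambda}$ counts the EG-tableaux $P$ for $w^{-1}$ of shape $\lambda$. Since each coefficient $\alpha_{w\lambda}$ is a nonnegative integer, Schur positivity is immediate.

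Essentially no obstacle remains at this stage: all the difficulty has been front-loaded into Theorem \ref{thm:EG}, which is being assumed, and in particular into its nontrivial descent-compatibility assertion $\Des(\mathbf{a}) = \Des(Q)$. The one point deserving care is the regrouping step: one must confirm that as $P$ ranges over EG-tableaux for $w^{-1}$ and $Q$ over standard tableaux of matching shape, every pair in the image of the bijection is hit exactly once, so that the double sum factors cleanly into a sum of Schur functions. This is precisely the bijectivity guaranteed by Theorem \ref{thm:EG}, so the argument closes.
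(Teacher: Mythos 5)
Your proof is correct and follows exactly the route the paper indicates: it combines Definition \ref{def:stquasi}, the descent-preserving Edelman--Greene bijection of Theorem \ref{thm:EG} applied to $w^{-1}$, and Proposition \ref{prop:Schurquasi} to regroup the sum over recording tableaux into Schur functions. The paper states the corollary as an immediate consequence of these same three ingredients, so your write-up is simply a fuller account of the same argument.
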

As a consequence we obtain Theorem \ref{thm:stsymm}.

\begin{lemma}\label{lem:EGshape}
Suppose $T$ is an EG-tableau for $S_n$.  Then the shape of $T$ is contained in the staircase $\delta_{n-1}$.
\end{lemma}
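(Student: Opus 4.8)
The plan is to extract a lower bound on the entries of $T$ from the strictness conditions and compare it against the upper bound forced by working inside $S_n$. First I would record the two inputs. Since $T$ is an EG-tableau for some $w \in S_n$, its reading word $r(T)$ is a reduced word for $w$, and every letter of a reduced word in $S_n$ is the index of a simple reflection $s_1, \ldots, s_{n-1}$; hence every entry of $T$ lies in $\{1,2,\ldots,n-1\}$. On the other hand $T$ is both row-strict and column-strict, so its entries increase strictly along rows (left to right) and strictly up columns.

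Next I would prove the cell estimate $T(i,j) \ge i+j-1$, where $(i,j)$ denotes the cell in the $i$th row and $j$th column, with $(1,1)$ the corner cell. The argument is a short two-step chain: climbing column $1$ from $(1,1)$ to $(i,1)$ passes through $i$ strictly increasing positive integers, giving $T(i,1) \ge T(1,1) + (i-1) \ge i$; then moving right along row $i$ from $(i,1)$ to $(i,j)$ passes through $j$ strictly increasing entries, giving $T(i,j) \ge T(i,1) + (j-1) \ge i+j-1$.

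Combining the two bounds, I would conclude that a cell $(i,j)$ of $T$ can exist only if $i+j-1 \le n-1$, i.e.\ $j \le n-i$. Thus the $i$th row of the shape $\lambda$ of $T$ satisfies $\lambda_i \le n-i = (\delta_{n-1})_i$ for every $i$, which is exactly the statement $\lambda \subseteq \delta_{n-1}$.

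I do not expect a genuine obstacle here: the content is entirely the elementary entry bound, and reducedness of $r(T)$ enters only to guarantee that the entries are capped at $n-1$. The one point to state carefully is the drawing convention (French notation, entries increasing up columns), so that the chain from $(1,1)$ realizes the minimal possible entry at $(i,j)$; once the convention is fixed, the inequalities are immediate, and the example $T$ with shape $(3,2) \subseteq \delta_3$ for $w = 3421 \in S_4$ can serve as a sanity check.
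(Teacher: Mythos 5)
Your proposal is correct and is essentially identical to the paper's argument: the paper also combines the bound $T(i,j)\ge i+j-1$ (from row- and column-strictness) with the fact that EG-tableaux are filled with letters in $\{1,\ldots,n-1\}$ to conclude $\lambda_i\le n-i$. You have merely spelled out the chain of inequalities and the French-notation convention more explicitly.
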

\begin{proof}
Since $T$ is row-strict and column-strict, the entry in the $i$-th row and $j$-th column is greater than or equal to $i+j-1$.  But EG-tableaux can only be filled with the numbers $1,2,\ldots,n-1$, so the shape of $T$ is contained inside $\delta_{n-1}$.
\end{proof}

\begin{proof}[Proof of Theorem \ref{thm:longestword}]
The longest word $w_0$ has length $\binom{n}{2}$.  Suppose $T$ is an EG-tableau for $w_0$.  Since the staircase $\delta_{n-1}$ has exactly $\binom{n}{2}$ boxes, Lemma \ref{lem:EGshape} shows that $T$ must have shape $\delta_{n-1}$.  But then it is easy to see that the only possibility for $T$ is the tableau
$$
\tableau[mbY]{n\!-\!1\\ \vdots &\vdots \\3&\cdots&n\!-\!1 \\2 & 3 & \cdots & n\!-\!1\\ 1 & 2 & 3& \cdots & n\!-\!1}
$$
Thus it follows from Theorem \ref{thm:EG} that $R(w_0) = f^{\delta_{n-1}}$.
\end{proof}

The proof of Theorem \ref{thm:EG} is via an explicit insertion algorithm.  Suppose $T$ is an EG-tableau.  We describe the insertion of a letter $a$ into $T$.  If the largest letter in the first (that is, bottom) row of $T$ is less than $a$, then we add $a$ to the end of the first row, and the insertion is complete.  Otherwise, we find the smallest letter $a'$ in $T$ greater than $a$, and bump $a'$ to the second row, where the insertion algorithm is recursively performed.  The first row $R$ of $T$ changes as follows: if both $a$ and $a+1$ were present in $R$ (and thus $a' = a+1$) then the row remains unchanged; otherwise, we replace $a'$ by $a$ in $R$.

For a reduced word $\i = i_1\,i_2\, \cdots \, i_\ell$, we obtain $P(\i)$ by inserting $i_1$, then $i_2$, and so on, into the empty tableau.  The tableau $Q(\i)$ is the standard Young tableau which records the changes in shape of the EG-tableau as this insertion is performed.  

\begin{example}
Let $\i = 21232$.  Then the successive EG-tableau are
$$
\tableau[sY]{2} \qquad \tableau[sY]{2\\1} \qquad \tableau[sY]{2\\1&2} \qquad \tableau[sY]{2\\1&2&3} \qquad \tableau[sY]{2&3\\1&2&3}
$$
so that 
$$
Q(\i) = \tableau[sY]{2&5\\1&3&4}.
$$
\end{example}

\subsection{Coxeter-Knuth relations}
\label{sec:CK}
Let $\i$ be a reduced word.  A {\it Coxeter-Knuth relation} on $\i$ is one of the following transformations on three consecutive letters of $\i$:
\begin{enumerate}
\item
$a\, (a+1) \,a \sim (a+1)\, a \, (a+1)$
\item
$a \, b \, c \sim a \, c \, b$ when $b < a < c$
\item
$a \, b \, c \sim b \, a \, c$ when $b < c < a$
\end{enumerate}
Since Coxeter-Knuth relations are in particular Coxeter relations for the symmetric group, it follows that if two words are related by Coxeter-Knuth relations then they represent the same permutation in $S_n$.  The following result of Edelman and Greene states that Coxeter-Knuth equivalence is an analogue of Knuth-equivalence for reduced words.

\begin{theorem}[\cite{EG}]\label{thm:CK}
Suppose $\i,\i' \in R(w)$.  Then $P(\i)=P(\i')$ if and only if $\i$ and $\i'$ are Coxeter-Knuth equivalent.
\end{theorem}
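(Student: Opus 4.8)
The plan is to establish the two implications through a pair of lemmas, in close parallel with the proof of the analogous Knuth-equivalence theorem for the ordinary Robinson--Schensted correspondence as presented in \cite[Section 5.2]{Sagan}, modified to account for the Edelman--Greene bumping rule. Throughout, write $\i \sim \i'$ for a single elementary Coxeter--Knuth move, and call $\i$ and $\i'$ \emph{Coxeter--Knuth equivalent} if they are joined by a chain of such moves; let $T \leftarrow a$ denote the EG-tableau obtained by inserting $a$ into the EG-tableau $T$. The two key statements are: (Lemma~1) every reduced word $\i$ is Coxeter--Knuth equivalent to the reading word $r(P(\i))$ of its insertion tableau; and (Lemma~2) if $\i \sim \i'$ by a single move, then $P(\i) = P(\i')$. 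Granting these, both directions of Theorem~\ref{thm:CK} follow at once. If $\i$ and $\i'$ are Coxeter--Knuth equivalent, a chain of moves connects them and Lemma~2 shows $P$ is unchanged at each step, so $P(\i) = P(\i')$; conversely, if $P(\i) = P(\i')$, then Lemma~1 shows that $\i$ and $\i'$ are each Coxeter--Knuth equivalent to the common word $r(P(\i)) = r(P(\i'))$, hence to one another.

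For Lemma~1 I would induct on $\ell(w)$. Writing $\i = \j\, a$ with $a$ the last letter and $\j$ a reduced word for a shorter permutation, the inductive hypothesis gives $\j$ Coxeter--Knuth equivalent to $r(P(\j))$, whence $\i = \j\, a$ is equivalent to $r(P(\j))\, a$. It then remains to prove the \emph{append-versus-insert} identity, namely that $r(T)\, a$ is Coxeter--Knuth equivalent to $r(T \leftarrow a)$ for an arbitrary EG-tableau $T$. This is the technical core of the lemma and is handled by a secondary induction on the number of rows of $T$: one analyzes a single bumping step in the bottom row — the insertion of $a$ either appends to the row, replaces the smallest entry exceeding $a$, or (in the Edelman--Greene special case where both $a$ and $a+1$ are already present) leaves the row fixed while bumping $a+1$ — and checks in each case that passing the bumped letter up to the next row can be realized by elementary Coxeter--Knuth moves on the reading word.

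For Lemma~2 it suffices to prove the local claim that, for any EG-tableau $T$ and any Coxeter--Knuth related three-letter words $xyz \sim x'y'z'$, one has $((T \leftarrow x)\leftarrow y)\leftarrow z = ((T\leftarrow x')\leftarrow y')\leftarrow z'$. Indeed, if $\i = u\,xyz\,v \sim u\,x'y'z'\,v = \i'$, then inserting the common prefix $u$ produces a single intermediate tableau $T$, the local claim makes the two tableaux agree after the triple is inserted, and inserting the common suffix $v$ afterward then yields $P(\i) = P(\i')$. The local claim is proved by direct case analysis over the three types of Coxeter--Knuth relation and over the possible ways in which $x,y,z$ act on the first row during insertion, tracking the resulting bumping paths into the higher rows.

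The main obstacle is precisely this case analysis in Lemma~2, together with the single-bump analysis feeding Lemma~1. The Edelman--Greene rule, in which inserting $a$ into a row already containing both $a$ and $a+1$ bumps $a+1$ without altering the row, produces several configurations with no counterpart in ordinary Knuth insertion, and the tableau equality must be verified in each of them. Once this is done, no separate argument about the recording tableaux $Q(\i)$ is required: the content of Theorem~\ref{thm:CK} concerns only the shapes and entries recorded in $P$, while the compatibility of descent sets is already supplied by Theorem~\ref{thm:EG}.
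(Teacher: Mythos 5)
The paper does not prove Theorem~\ref{thm:CK}: it is stated as a cited result of Edelman and Greene \cite{EG}, so there is no in-paper proof to compare against. Your two-lemma outline --- (i) every reduced word is Coxeter--Knuth equivalent to the reading word $r(P(\i))$ of its insertion tableau, and (ii) a single Coxeter--Knuth move leaves $P$ unchanged, with both implications of the theorem then following formally --- is exactly the structure of the original argument in \cite{EG} (and of the classical Knuth-equivalence proof for RSK that it imitates), and is sound as a plan; the content you defer, namely the bumping-step case analysis under the modified Edelman--Greene rule (the case where a row containing both $a$ and $a+1$ is left unchanged while $a+1$ is bumped), is precisely where all the real work of that proof lies.
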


\subsection{Exercises and Problems}\label{ssec:EGprob}
\begin{enumerate}
\item
For $w \in S_n$ let $1 \times w \in S_{n+1}$ denote the permutation obtained from $w$ by adding $1$ to every letter in the one-line notation, and putting a $1$ in front.  Thus if $w = 24135$, we have $1 \times w = 135246$.  Show that $F_w = F_{1 \times w}$.
\item
Suppose $w \in S_n$ is 321-avoiding (see Section \ref{ssec:combinprob}).  Show that Edelman-Greene insertion of $\i \in R(w)$ is the usual Robinson-Schensted insertion of $\i$.
\item (Vexillary permutations \cite{BJS})
A permutation $w \in S_n$ is {\it vexillary} if it avoids the pattern 2143.  That is, there do not exist $a < b < c < d$ such that $w(b) < w(a) < w(d) < w(c)$.  In particular, $w_0$ is vexillary.  

The Stanley symmetric function $\tF_w$ is equal to a Schur function $s_\la$ if and only if $w$ is vexillary \cite[p.367]{BJS}.  Is there a direct proof using Edelman-Greene insertion? 

\item (Shape of a reduced word)
The shape $\la(\i)$ of a reduced word $\i \in R(w)$ is the shape of the tableau $P(\i)$ or $Q(\i)$ under Edelman-Greene insertion.  Is there a direct way to read off the shape of a reduced word?  (See \cite{TY1} for a description of $\la_1(\i)$.)

For example, Greene's invariants (see for example \cite[Ch. 7]{EC2}) describe the shape of a word under Robinson-Schensted insertion.

\item (Coxeter-Knuth relations and dual equivalence (graphs))
%\begin{remark}
\label{rem:CK}
Show that Coxeter-Knuth relations on reduced words correspond exactly to elementary dual equivalences on the recording tableau (see \cite{Hai}).  They thus give a structure of a dual equivalence graph \cite{Assaf:2010} on $R(w)$. 

An independent proof  of this (in particular not using EG-insertion), together with the technology of \cite{Assaf:2010}, would give a new proof of the Schur positivity of Stanley symmetric functions.

\item (Lascoux-Sch\"{u}tzenberger transition)
Let $(i,j) \in S_n$ denote the transposition which swaps $i$ and $j$.  Fix $r \in [n]$ and $w \in S_n$.  The Stanley symmetric functions satisfy \cite{LS} the equality
\begin{equation}\label{eq:transition}
\sum_{u=w\,(r,s): \,\,
\ell(u)=\ell(w)+1 \,\, r < s} F_u = 
\left(\sum_{v=w\,(s',r): \,\,
\ell(v)=\ell(w)+1 \,\, s' < r} F_v\right)\ \ (+F_x)
\end{equation}
where the last term with $x = (1 \times w)(1,r)$ is only present if $\ell(x) = \ell(w)+1$.

One obtains another proof of the Schur positivity of $F_u$ as follows.  Let $r$ be the last descent of $u$, and let $k$ be the largest index such that $u(r) > u(k)$.  Set $w = u(r,k)$.  Then the left hand side of \eqref{eq:transition} has only one term $F_u$.  Recursively repeating this procedure for the terms $F_v$ on the right hand side one obtains a positive expression for $F_u$ in terms of Schur functions.

\item (Little's bijection)
Little \cite{Lit} described an algorithm to  establish \eqref{eq:transition}, which we formulate in the manner of \cite{LS:affineLittle}.  A {\it $v$-marked nearly reduced word} is a pair $(\i,a)$ where $\i = i_1 i_2 \cdots i_\ell$ is a word with letters in $\Z_{>0}$ and $a$ is an index such that $\j = i_1 i_2 \cdots \hat i_a \cdots i_\ell$ is a reduced word for $v$, where $\hat i_a$ denotes omission.  We say that $(\i,a)$ is a marked nearly reduced word if it is a $v$-marked nearly reduced word for some $v$.  A marked nearly reduced word is a marked reduced word if $\i$ is reduced.

Define the directed {\it Little graph} on marked nearly reduced words, where each vertex has a unique outgoing edge $(\i,a) \to (\i',a')$ as follows: $\i'$ is obtained from $\i$ by changing $i_a$ to $i_a-1$.  If $i_a-1 = 0$, then we also increase every letter in $\i$ by $1$.  If $\i$ is reduced then $a'=a$.  If $\i$ is not reduced then $a'$ is the unique index not equal to $a$ such that $i_1 i_2 \cdots \hat i_{a'} \cdots i_\ell$ is reduced.  (Check that this is well-defined.)

For a marked reduced word $(\i,a)$ such that $\i$ is reduced, the {\it forward Little move} sends $(\i,a)$ to $(\j,b)$ where $(\j,b)$ is the first marked reduced word encountered by traversing the Little graph.  
\begin{example} \label{ex:Little} Beginning with $\i = 2134323$ and $a = 5$ one has
$$
2134{\bf 3}21 \to 21342{\bf 2} 1 \to 213421{\bf 1} \to 324532{\bf 1}.
$$
Note that $\i$ is a reduced word for $u = 53142$ which covers $w = 43152$.  The word $3245321$ is a reduced word for $514263 = (1 \times w)(1,2)$.  \end{example}
Check that if you apply the forward Little move to a $w$-marked reduced word $(\i,a)$ where $\i \in R(u)$ for some $u$ on the left hand side of \eqref{eq:transition}, you will get a ($w$ or $1\times w$)-marked reduced word $(\j,b)$ where $\j \in R(v)$ for some $v$ on the right hand side of \eqref{eq:transition}.  This can then be used to prove \eqref{eq:transition}.

\item (Dual Edelman-Greene equivalence)
Let $R(\infty)$ denote the set of all reduced words of permutations.  We say that $\i,\i' \in R(\infty)$ are dual EG-equivalent if the recording tableaux under EG-insertion are the same: $Q(\i) = Q(\i')$.

\begin{conjecture}\label{conj:EG}
Two reduced words are dual $EG$-equivalent if and only if they are connected by forward and backwards Little moves.
\end{conjecture}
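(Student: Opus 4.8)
The statement to prove is the biconditional of Conjecture \ref{conj:EG}: for $\i,\i' \in R(\infty)$ the recording tableaux agree, $Q(\i) = Q(\i')$, if and only if $\i$ and $\i'$ lie in the same orbit of the group generated by forward and backward Little moves. The plan is to prove the two implications by rather different means, in a way that mirrors Theorem \ref{thm:CK}: there $P(\i) = P(\i')$ is characterized by Coxeter--Knuth equivalence, and the present statement is its exact dual, with the Little moves playing the role for $Q$ that Coxeter--Knuth moves play for $P$.

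First I would treat the implication ``Little-connected $\Rightarrow$ dual EG-equivalent'', that is, that a single forward Little move preserves the recording tableau. Since a forward move is by definition a maximal traversal of elementary steps in the Little graph, it suffices to control how EG-insertion interacts with one elementary step $(\i,a)\to(\i',a')$, which merely decrements the marked letter $i_a \mapsto i_a-1$ (with a global $+1$ shift when $i_a = 1$). The difficulty is that the intermediate words are only nearly reduced, so $Q$ is not literally defined at each step. The key step is therefore to install an invariant valid on nearly reduced words --- essentially the sequence of cell positions added during insertion, together with the location of the single defect --- and to show, by a local wiring-diagram (heap) analysis, that decrementing one crossing slides the corresponding bump without altering the shape-growth sequence recorded by $Q$. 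Concretely, I would prove a commutation lemma: EG-inserting and then performing the elementary step produces the same recording data as performing the step and then EG-inserting.

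For the converse, ``dual EG-equivalent $\Rightarrow$ Little-connected'', I would exploit the fact, built into Little's construction, that a forward Little move realizes one branch of the Lascoux--Sch\"utzenberger transition \eqref{eq:transition} at the level of individual reduced words. Fix a dual EG class, indexed by a standard tableau $Q$ of shape $\la$. By the bijection $\i \leftrightarrow (P,Q)$ of Theorem \ref{thm:EG}, the words of this class correspond exactly to EG-tableaux $P$ of shape $\la$, each of which is (via its reading word) a reduced word of a definite permutation. By Corollary \ref{cor:stpos}, a Grassmannian permutation $w_\la$ with $F_{w_\la} = s_\la$ has a single EG-tableau of shape $\la$, hence $|R(w_\la)| = f^\la$, with its reduced words in bijection with the standard tableaux $Q$ of shape $\la$; thus each dual EG class of shape $\la$ contains exactly one reduced word of $w_\la$. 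The plan is then to show that iterating forward Little moves drives \emph{every} word of the class to this distinguished Grassmannian word: each forward move performs one transition step, the transition tree for $F_u$ terminates at Grassmannian leaves, and confluence toward the unique representative yields transitivity of the move on the class. Backward moves then connect arbitrary pairs through this common representative.

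The main obstacle is the first implication --- the local analysis showing that an elementary Little step, passing through nearly reduced words, leaves the recording tableau intact. This needs a robust insertion-theoretic invariant on nearly reduced words together with a careful case analysis of bump propagation under the decrement, and establishing the precise ``Little move commutes with EG-insertion'' lemma is where essentially all of the technical work lies. The converse is comparatively soft once reachability of the Grassmannian representative is known; the one delicate point there is the bookkeeping of the extra $1 \times w$ term in \eqref{eq:transition} (as in Example \ref{ex:Little}), which must be matched with the global $+1$ shift in the Little graph.
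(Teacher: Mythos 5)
You should first note that the paper contains no proof of this statement: it appears only as Conjecture \ref{conj:EG}, an open problem in the exercise list, together with the remark that Hamaker and Young \cite{HY} have announced a proof. There is therefore no argument in the text to compare yours against, and your submission has to be judged as a standalone attempt. As such it is a plausible strategy outline rather than a proof, and both halves have genuine gaps.

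For the forward implication, everything rests on the ``commutation lemma'' asserting that an elementary step of the Little graph, carried through the intermediate nearly reduced words, leaves the recording tableau unchanged. You correctly identify this as the crux and then do not prove it: no invariant on nearly reduced words is actually defined, and no analysis of how the EG bumping paths deform when a single letter is decremented is carried out. Since this is essentially the entire content of the conjecture (the paper's only evidence is one worked example), deferring it leaves the main claim unestablished. For the converse, your plan is to drive every word of a dual EG class to a unique Grassmannian representative and invoke confluence. But a dual EG class of shape $\lambda$ is in bijection with the set of all EG-tableaux of shape $\lambda$, and hence contains one reduced word for \emph{each} vexillary (in particular each Grassmannian) permutation $v$ with $F_v = s_\lambda$ --- and there are many such $v$, not one. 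Iterating forward Little moves along the transition recursion \eqref{eq:transition} sends a given word down one branch of the transition tree to a reduced word of \emph{some} vexillary permutation, but two words with the same $Q$ may terminate at words of different vexillary permutations; connecting those requires travelling back up the tree and down another branch, which is exactly the connectivity statement you are trying to prove. So ``confluence toward the unique representative'' is not available as stated. A repair would need either a genuine global confluence theorem for the forward dynamics on each class, or a direct comparison of the two equivalence relations, e.g.\ by relating Little moves to elementary dual equivalences on $Q$ in the spirit of Theorem \ref{thm:CK} and exercise (5) of that section.
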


For example, both $2134321$ and $3245321$ of Example \ref{ex:Little} Edelman-Greene insert to give recording tableau $$\tableau[sbY]{7\\6\\2&5\\1&3&4}$$

Hamaker and Young \cite{HY} have announced a proof of Conjecture \ref{conj:EG}.

\item
\label{prob:enumerateEG}
Fix a symmetric group $S_n$.  Is there a formula for the number of EG-tableau of a fixed shape $\la$?  (See also Section \ref{ssec:algebraprob} and compare with formulae for the number of (semi)standard tableaux \cite{EC2}.)

\item There are two common bijections which demonstrate the symmetry of Schur functions: the Bender-Knuth involution \cite{BK}, and the Lascoux-Sch\"{u}tzenberger/crystal operators (see for example \cite{LLT}).

Combine this with Edelman-Greene insertion to obtain an explicit weight-changing bijection on the monomials of a Stanley symmetric function, which exhibits the symmetry of a Stanley symmetric function.  Compare with Stanley's original bijection \cite{Sta}.

\item (Jeu-de-taquin for reduced words)
There are Jeu-de-taquin theories for skew EG-tableaux.  This was developed for two-row tableaux by Bergeron and Sottile \cite{BS}, and by Thomas and Yong \cite{TY,TY1} in the context of Hecke insertion.  For example one possible slide is
$$
\tableau[mbY]{ i & i\!+\!1\\\bl & i } \qquad \raisebox{25pt}{$\longleftrightarrow$} \qquad \tableau[mbY]{ i\!+\!1\\i& i\!+\!1}
$$
\end{enumerate}

\section{Affine Stanley symmetric functions}
\label{sec:affine}
\subsection{Affine symmetric group}
For basic facts concerning the affine symmetric group, we refer the reader to Chapter \ref{chapter.k schur primer}, Section \ref{section.background} and to \cite{BB}.

Let $n > 2$ be a positive integer.  Let $\tS_n$ denote the affine symmetric group with simple
generators $s_0,s_1,\ldots,s_{n-1}$ satisfying the relations
\begin{align*}
s_i^2 &= 1 &\mbox{for all $i$} \\
s_i s_{i+1} s_i &= s_{i+1} s_i s_{i+1} &  \mbox{for all $i$} \\
  s_is_j &= s_j s_i &\mbox{for $|i-j| \geq 2$}.
\end{align*}
Here and elsewhere, the indices will be taken modulo $n$ without
further mention.  The length $\ell(w)$ and reduced words $R(w)$ for affine permutations $w \in \tS_n$ are defined in an analogous manner to Section \ref{sec:perm}.  The symmetric group $S_n$ embeds in $\tS_n$ as the subgroup
generated by $s_1,s_2, \ldots, s_{n-1}$.  

One may realize $\tS_n$ as the set of all
bijections $w:\Z\rightarrow\Z$ such that $w(i+n)=w(i)+n$ for all
$i$ and $\sum_{i=1}^n w(i) = \sum_{i=1}^n i$. In this realization,
to specify an element $w \in \tS_n$ it suffices to give the
``window" $[w(1),w(2),\dotsc,w(n)]$.  The product $w\cdot v$ of
two affine permutations is then the composed bijection $w \circ v:
\Z \rightarrow \Z$.  Thus $ws_i$ is obtained from $w$ by swapping
the values of $w(i+kn)$ and $w(i+kn+1)$ for every $k \in \Z$.  An affine permutation $w \in \tS_n$ is Grassmannian if $w(1) < w(2) < \cdots < w(n)$.  For example, the affine Grassmannian permutation $[-2,2,6] \in \tS_3$ has reduced words $2120$ and $1210$.  We denote by $\tS_n^0 \subset \tS_n$ the subset of affine Grassmannian permutations.

\subsection{Definition}
A word $a_1a_2 \cdots a_\ell$ with letters in $\Z/n\Z$ is called {\it cyclically decreasing} if (1) each letter occurs at most once, and (2) whenever $i$ and $i+1$ both occur in the word, $i+1$ precedes $i$.

An affine permutation $w \in \tS_n$ is called cyclically decreasing if it has a cyclically decreasing reduced word.  Note that such a reduced word may not be unique.  

\begin{lemma}
There is a bijection between strict subsets of $\Z/n\Z$ and cyclically decreasing affine permutations $w \in \tS_n$, sending a subset $S$ to the unique cyclically decreasing affine permutation which has reduced word using exactly the simple generators $\{s_i \mid i \in S\}$.
\end{lemma}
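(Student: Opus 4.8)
The plan is to attach to each proper subset $S\subsetneq\Z/n\Z$ an explicit cyclically decreasing permutation $w_S$, and then to check that $S\mapsto w_S$ is well defined, injective, and surjective onto the cyclically decreasing elements of $\tS_n$. The first step is to see why $S$ must be a \emph{strict} subset. Condition (2) of the definition demands that whenever $i$ and $i+1$ both occur in a cyclically decreasing word, $i+1$ precedes $i$. If all of $0,1,\dots,n-1$ occurred, then reading the adjacencies cyclically (recall $0$ and $n-1$ are adjacent mod $n$) would force the cyclic chain of precedences ``$0$ before $n-1$ before $n-2$ before $\cdots$ before $1$ before $0$'', which is impossible. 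Hence the set of letters occurring in any cyclically decreasing word, and in particular the support of any cyclically decreasing permutation, is a proper subset of $\Z/n\Z$.

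Next I would construct $w_S$ from the arc decomposition of $S$. Since $S$ is proper, its complement is nonempty, so $S$ is a disjoint union of genuine intervals $[a_t,b_t]=\{a_t,a_t+1,\dots,b_t\}$ of consecutive residues, each flanked by residues not in $S$. Within a single arc $[a,b]$, condition (2) forces the letters to appear in the strictly decreasing order $b,b-1,\dots,a$, contributing the element $s_b s_{b-1}\cdots s_a$. Because distinct arcs are separated by gaps, no letter of one arc is cyclically adjacent to a letter of another, so generators from distinct arcs commute; I define $w_S$ to be the product of the arc-elements $s_{b_t}s_{b_t-1}\cdots s_{a_t}$ taken in any order, the commutations making this independent of the order chosen. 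To see the resulting word is reduced, note that by the arc decomposition the standard parabolic subgroup $\langle s_i : i\in S\rangle$ is the direct product $\prod_t\langle s_{a_t},\dots,s_{b_t}\rangle$ of finite symmetric groups, so lengths add; each arc-word $s_{b_t}\cdots s_{a_t}$ is a standard reduced word of length $b_t-a_t+1$, whence $\ell(w_S)=|S|$ and $w_S$ is cyclically decreasing with support exactly $S$.

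Finally I would assemble the bijection. For surjectivity, any cyclically decreasing $w$ has by definition a cyclically decreasing reduced word; its letter set $S$ is proper by the first step, and the rigidity just described (arc order forced by (2), cross-arc order absorbed by commutation) shows this word is Coxeter-equivalent to the canonical one, so $w=w_S$. For injectivity I would invoke the standard fact that in a Coxeter group the set of simple generators appearing in a reduced word is an invariant of the element (all reduced words are connected by braid moves, which preserve that set); thus $S$ is recovered as the support of $w_S$, and $S\mapsto w_S$ is injective. Combining the three properties yields the claimed bijection.

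The only genuinely delicate point is the uniqueness, i.e. the well-definedness of $w_S$ together with the $w=w_S$ step of surjectivity: a cyclically decreasing word is far from unique as a word, yet all such words with a fixed support must represent the \emph{same} permutation. This is exactly where the arc decomposition does the work, since condition (2) totally orders each arc while the commutation relations between non-adjacent generators absorb every interleaving of distinct arcs. By contrast the reducedness (via length-additivity across the parabolic factors) and the recovery of $S$ as the support are routine, so I expect the uniqueness argument to be the main obstacle to write out cleanly.
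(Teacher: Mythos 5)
Your proof is correct. The paper states this lemma without proof, and your argument --- decomposing the proper subset $S\subsetneq\Z/n\Z$ into cyclic arcs, noting that condition (2) forces the strictly decreasing order within each arc while generators from distinct arcs commute, getting reducedness from length-additivity over the disconnected parabolic factors, and recovering $S$ as the support of $w_S$ (invariant under braid moves) for injectivity --- is exactly the standard argument; you have also correctly identified the one genuinely delicate point, namely that every cyclically decreasing word with a given support is a shuffle of the forced arc-words and hence represents the same group element.
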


We define cyclically decreasing factorizations of $w \in \tS_n$ in the same way as decreasing factorizations in $S_n$.  See also Chapter \ref{chapter.k schur primer}, Section \ref{subsection.weak order} where cyclically decreasing factorizations are discussed from the point of view of $k$-bounded partitions and $(k+1)$-cores.

\begin{definition}%[\cite{Lam:2006}]
\label{definition.affine Stanley}
Let $w \in \tS_n$.  The affine Stanley symmetric function $\tF_w$ is given by
$$
\tF_w = \sum_{w = v_1v_2\cdots v_r} x_1^{\ell(v_1)} x_2^{\ell(v_2)} \cdots x_r^{\ell(v_r)}
$$
where the summation is over cyclically decreasing factorizations of $w$.
\end{definition}

\begin{theorem}[\cite{Lam:2006}]\label{thm:afstsymm}
The generating function $\tF_w$ is a symmetric function.
\end{theorem}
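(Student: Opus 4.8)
The plan is to prove symmetry by reducing it to the commutativity of a single family of elements, and the natural setting for that family is the affine nilCoxeter algebra. First, recall the basic criterion: a homogeneous formal power series that is a sum of monomials $x^\alpha$ over compositions is a symmetric function precisely when its coefficients are invariant under each adjacent transposition of the weight, i.e. $[x^\alpha]\tF_w = [x^{s_i\cdot\alpha}]\tF_w$ for all $i$, where $s_i\cdot\alpha$ swaps $\alpha_i$ and $\alpha_{i+1}$. Since a cyclically decreasing factorization $w = v_1\cdots v_r$ contributes $x_1^{\ell(v_1)}\cdots x_r^{\ell(v_r)}$ (and $\tF_w$ is homogeneous of degree $\ell(w)$, so there are no convergence issues), fixing every factor except $v_i$ and $v_{i+1}$ reduces the claim to a purely local statement: for every $u \in \tS_n$ and all $a,b \ge 0$, the number of factorizations $u = v\,v'$ into cyclically decreasing $v,v'$ with $\ell(v)=a$, $\ell(v')=b$ and $\ell(u)=a+b$ equals the number of such factorizations with $\ell(v)=b$, $\ell(v')=a$.

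Next I would package this local statement algebraically. Work in the affine nilCoxeter algebra $\mathbb{A}_n$, with generators $u_0,\ldots,u_{n-1}$ satisfying $u_i^2=0$ together with the braid and commutation relations of $\tS_n$, so that $u_v u_{v'} = u_{vv'}$ when the product is length-additive and $0$ otherwise. For $k\ge 0$ set $h_k = \sum u_w$, summed over all cyclically decreasing $w \in \tS_n$ with $\ell(w)=k$; this is a finite sum, since such $w$ are in bijection with the strict subsets of $\Z/n\Z$ (and $h_k=0$ for $k\ge n$). Then the coefficient of $u_u$ in $h_a h_b$ is exactly the number of length-additive cyclically decreasing factorizations $u = v\,v'$ with $(\ell(v),\ell(v'))=(a,b)$, and likewise for $h_b h_a$. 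Hence the local statement is equivalent to $h_a h_b = h_b h_a$. Moreover $\tF_w = \sum_\alpha \langle u_w,\, h_{\alpha_1}\cdots h_{\alpha_r}\rangle\, x^\alpha$, where $\langle u_w,\cdot\rangle$ extracts the coefficient of $u_w$; so the mutual commutativity of all the $h_k$ yields the full symmetry of $\tF_w$ at once.

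The main obstacle is therefore proving $h_a h_b = h_b h_a$. My plan is to introduce the generating function $\mathbf{h}(t) = \sum_{k\ge 0} t^k h_k$, with $t$ a commuting formal parameter, and to show $\mathbf{h}(s)\mathbf{h}(t) = \mathbf{h}(t)\mathbf{h}(s)$, which is equivalent to commutativity of all $h_k$. In the finite symmetric group this is the Fomin--Stanley result, proved by factoring $\mathbf{h}(t) = (1+t\,u_{n-1})(1+t\,u_{n-2})\cdots(1+t\,u_1)$ and verifying the requisite commutations of the linear factors; in the affine case the cyclic symmetry of the Dynkin diagram obstructs any single linear ordering of the factors, so this argument must be adapted. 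I would establish the needed relation either (i) by a cyclic version of the Fomin--Stanley factorization, splitting $\mathbf{h}(t)$ according to whether a fixed residue is used and then using the relations $u_i^2=0$ together with the braid relations to move factors past one another, or (ii) directly, by exhibiting an explicit weight-swapping involution on the local two-factor factorizations $u = v\,v'$ — the combinatorial heart being a bumping procedure that rebalances the lengths of $v$ and $v'$ while preserving both their product and the cyclically decreasing property.

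Establishing this commutation is the crux of the proof; once it is in hand, the symmetry of $\tF_w$ follows formally from the two reductions above. I expect step (i)/(ii) — the commutativity $h_a h_b = h_b h_a$ — to be substantially harder than the reductions, since the reductions are purely formal manipulations whereas the commutativity encodes the genuine combinatorial content of cyclically decreasing factorizations in the affine setting.
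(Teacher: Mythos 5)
Your two reductions are correct and match the paper's framework exactly: the symmetry of $\tF_w$ is equivalent to the mutual commutativity of the elements $\ah_k=\sum A_w$ (sum over cyclically decreasing $w$ with $\ell(w)=k$) in the affine nilCoxeter algebra — this is precisely Definition \ref{def:afstalgebra} together with Theorem \ref{thm:ahcommute}. The difficulty is that you stop exactly where the theorem begins: neither of your two proposed strategies for $\ah_a\ah_b=\ah_b\ah_a$ is carried out, and strategy (i) fails outright. There is no factorization of $\sum_k t^k\ah_k$ into linear factors $(1+tA_i)$ in the affine setting. Already for $n=3$ one has $\ah_2=A_1A_0+A_2A_1+A_0A_2$, while any linear ordering of the three factors produces at degree two only the products compatible with that order: for instance $(1+tA_2)(1+tA_1)(1+tA_0)$ gives $A_2A_1+A_2A_0+A_1A_0$, which contains the non-cyclically-decreasing term $A_2A_0\ne A_0A_2$ and misses $A_0A_2$. (Its degree-$3$ coefficient $A_2A_1A_0$ is also nonzero, whereas $\ah_3=0$.) The cyclic symmetry of the affine Dynkin diagram is a genuine obstruction to any product of binomials, not a technicality one can adapt around. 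Strategy (ii), a length-rebalancing involution on two-factor cyclically decreasing factorizations, is essentially how Lam's original paper proves the result, but without the actual bijection it is a restatement of the claim rather than a proof; that bijection is the hard combinatorial content.

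For comparison, the paper sidesteps the bijection by enlarging the nilCoxeter algebra to the nilHecke ring $\afA$ (adjoining the polynomial ring $S$). The Peterson centralizer $\Pet=Z_{\afA}(S)$ is commutative for a soft reason: its elements are $\Frac(S)$-linear combinations of the translation elements $t_\la$, which commute with each other and with $S$ (Lemmas \ref{lem:Ptrans} and \ref{lem:Petcomm}). Its image $\aB=\phi_0(\Pet)$ is characterized by the checkable condition $\phi_0(as)=\phi_0(s)a$ for all $s\in S$ (Proposition \ref{prop:affStan}), and one verifies $\phi_0(\ah_k\,x_1)=0$ by an explicit cancellation over subsets of $\Z/n\Z$ in the proof of Theorem \ref{thm:aB}; the cyclic symmetry of the definition of $\ah_k$ then handles all $x_i$ at once, so $\ah_k\in\aB$ and the commutativity follows. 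To complete your outline you must either construct the involution of your strategy (ii) or adopt an argument of this centralizer type.
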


Theorem \ref{thm:afstsymm} can be proved directly, as was done in \cite{Lam:2006}.  We shall establish Theorem \ref{thm:afstsymm} using the technology of the affine nilHecke algebra in Sections \ref{sec:affinenilHecke}-\ref{sec:affineFS}.  Some immediate observations:
\begin{enumerate}
\item
$\tF_w$ is a homogeneous of degree $\ell(w)$.
\item
If $w \in S_n$, then a cyclically decreasing factorization of $w$ is just a decreasing factorization of $w$, so $\tF_w = F_w$.
\item
The coefficient of $x_1 x_2 \cdots x_{\ell(w)}$ in $\tF_w$ is equal to $|R(w)|$.
\end{enumerate}

\begin{example}\label{ex:21202}
Consider the affine permutation $w = s_2 s_1 s_2 s_0 s_2$.  The reduced words are $R(w) = \{21202, 12102, 21020\}$.  The other cyclically decreasing factorizations are $$\os{21}\os{2}\os{0}\overline{2}, \os{2}\os{1}\os{2}\overline{02},
\os{1}\os{21}\os{0}\overline{2}, \os{1}\os{2}\os{1}\overline{02},\os{1}\os{2}\os{10}\overline{2},\os{21}\os{0}\os{2}\overline{0},\os{2}\os{1}\os{02}\overline{0},\os{2}\os{10}\os{2}\overline{0}$$
$$\os{21}\os{2}\overline{02}, \os{1}\os{21}\overline{02},\os{21}\os{02}\overline{0}$$
Thus
$$
\tF_w = m_{221}+2m_{2111}+3m_{11111}.
$$
\end{example}
\subsection{Codes}
Let $w \in \tS_n$.  The {\it code} $c(w)$ is a vector $c(w) =
(c_1,c_2,\ldots,c_{n}) \in \Z_{\geq 0}^n - \Z_{>0}^n$ of non-negative entries
with at least one 0.  The entries are given by $c_i = \#\{j \in
\Z \, \mid \, j > i \,\, \text{and} \,\, w(j) < w(i)\}$.

It is shown in \cite{BB} that there is a bijection between codes and
affine permutations and that $\ell(w) = |c(w)| := \sum_{i=1}^n c_i$.  We define $\la(w)$ as for usual permutations (see Section \ref{sec:combin}).  For example, for $w = s_2s_0s_1s_2s_1s_0 = [ -4,3,7] \in \tS_3$, one has $c(w^{-1})=(5,1,0)$ and $\la=(2,1,1,1,1)$.

Let $\Bo^n$ denote the set of partitions $\la$ satisfying $\la_1 < n$, called the set of $(n-1)$-bounded partitions.

\begin{lemma}[{\cite{BB}}]
The map $w \mapsto \la(w)$ is a bijection between $\tS_n^0$ and $\Bo^n$.
\end{lemma}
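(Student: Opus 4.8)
The plan is to deduce the statement from the code bijection of Björner--Brenti together with the counting already available from Chapter~\ref{chapter.k schur primer}. Recall from \cite{BB} that $w \mapsto c(w)$ is a bijection between $\tS_n$ and the set of codes (vectors in $\Z_{\geq 0}^n$ with at least one zero entry), and that $\ell(w) = |c(w)|$. Since $\la(w)$ depends only on the multiset of entries of $c(w^{-1})$, the first observation is that the map always lands in the target: $c(w^{-1})$ has at least one zero entry, so the partition $\mu = \mathrm{sort}_\downarrow c(w^{-1})$ has at most $n-1$ nonzero parts, whence $\la(w) = \mu'$ satisfies $\la(w)_1 \leq n-1 < n$, i.e. $\la(w) \in \Bo^n$. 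Moreover $|\la(w)| = |\mu| = |c(w^{-1})| = \ell(w^{-1}) = \ell(w)$, so the map is graded by length.

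Next I would reduce bijectivity to injectivity. By Proposition~\ref{prop:affgrass2core} (with $k=n-1$) the number of $w \in \tS_n^0$ with $\ell(w) = m$ equals the number of $n$-cores of length $m$, and by Proposition~\ref{bijcoresparts} this in turn equals the number of $(n-1)$-bounded partitions of $m$, i.e. the number of $\la \in \Bo^n$ with $|\la| = m$. Thus in each degree $m$ the source and target are finite sets of equal size, and $w \mapsto \la(w)$ preserves degree; hence it is a bijection as soon as it is shown to be injective (equivalently, surjective) in each degree.

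The remaining, and main, task is injectivity, and this is where the real work lies. I would recover $w$ from $\la(w)$ as follows. For $w \in \tS_n^0$ the window $[w(1),\dots,w(n)]$ is strictly increasing, and a short computation rewrites the unsorted code as $c(w^{-1})_i = \#\{x < w^{-1}(i) : w(x) > i\}$; expanding each value $w(x)$ in the periodic form $b_{j'} + k'n$ with $b_j = w(j)$ then expresses $c(w^{-1})_i$ as a count of integers $t$ with $b_j - b_{j'} < tn < j - j'$. The crux is to show that the increasing-window condition forces the entries of $c(w^{-1})$ into a canonical arrangement, so that $w$ is recoverable from the \emph{multiset} $\{c(w^{-1})_i\}$, hence from $\la(w)$. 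I expect this to be the hardest step, since simple examples show that $c(w^{-1})$ itself is not monotone and $\mathrm{sort}_\downarrow c$ is not constant on cosets, so the reconstruction genuinely uses that $w$ is Grassmannian. An alternative, and probably cleaner, route is to verify that $w \mapsto \la(w)$ is inverse to the composite bijection $\la \mapsto \mfa(\mfc_{n-1}(\la))$ of Chapter~\ref{chapter.k schur primer}: one sets $w_\la := \mfa(\mfc_{n-1}(\la))$ and checks directly that the sorted code of $w_\la^{-1}$ conjugates back to $\la$, whereupon surjectivity follows from the construction and bijectivity from the cardinality count above. Either way the essential content is a single explicit matching of the code statistic with the core/bounded-partition dictionary.
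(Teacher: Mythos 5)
The paper itself gives no proof of this lemma --- it is stated with a citation to Bj\"orner--Brenti --- so there is no in-text argument to compare yours against; I can only assess your proposal on its own terms. Your framework is sound as far as it goes: the verification that $\la(w)\in\Bo^n$ (at least one entry of $c(w^{-1})$ vanishes, so the conjugate of the sorted code has largest part at most $n-1$) is correct, as is the observation that $|\la(w)|=\ell(w)$, and the reduction of bijectivity to injectivity in each degree via the chain of equicardinalities $\#\{w\in\tS_n^0:\ell(w)=m\}=\#\{\text{$n$-cores of length }m\}=\#\{\la\in\Bo^n:|\la|=m\}$ supplied by Propositions~\ref{prop:affgrass2core} and~\ref{bijcoresparts} is a legitimate and economical use of material already in these notes.

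The genuine gap is that the one step carrying all the combinatorial content --- injectivity of $w\mapsto\la(w)$ on $\tS_n^0$, equivalently the fact that a Grassmannian $w$ is recoverable from the \emph{multiset} of entries of $c(w^{-1})$ --- is never carried out. You correctly diagnose it as the hard step and you correctly note that neither $c(w^{-1})$ being monotone nor $\mathrm{sort}_\downarrow c$ being a class function can be taken for granted, but you then offer only two unexecuted strategies: a direct reconstruction of $w$ from the sorted code (for which you set up the identity $c(w^{-1})_i=\#\{x<w^{-1}(i):w(x)>i\}$ but stop before extracting any canonical ordering of the entries forced by $w(1)<\dots<w(n)$), and a verification that $\la(\mfa(\mfc_{n-1}(\la)))=\la$, which is precisely the nontrivial dictionary between codes, $n$-cores, and $(n-1)$-bounded partitions and not a routine check. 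Either route would finish the proof, and your cardinality argument means you only need one of them in one direction, but as written the proposal asserts the key identification rather than proving it.
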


The analogue of Proposition \ref{prop:dominant} has a similar proof.
\begin{prop}[{\cite{Lam:2006}}] \label{prop:affdominant} Let $w \in \tS_n$.
\begin{enumerate}
\item
Suppose $[m_\la]F_w \neq 0$.  Then $\la \prec \la(w)$.
\item
$[m_{\la(w)}]F_w = 1$.
\end{enumerate}
\end{prop}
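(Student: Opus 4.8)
The plan is to follow the proof of Proposition~\ref{prop:dominant} line by line, replacing decreasing permutations by cyclically decreasing affine permutations and the finite code computation by its cyclic counterpart. As in the finite case, everything is controlled by how the code $c(w^{-1})$ changes when a (cyclically) decreasing element is peeled off $w$ on the left, so the first step is to establish the affine analogue of the one-generator code rule used there. First I would show: if $\ell(s_iw)<\ell(w)$, then passing from $w$ to $s_iw$ transforms $c(w^{-1})$ by the \emph{cyclic} local move $(\dots,c_i,c_{i+1},\dots)\mapsto(\dots,c_{i+1},c_i-1,\dots)$ with all indices read modulo $n$. This is a direct computation from $c_i=\#\{\,j>i:w^{-1}(j)<w^{-1}(i)\,\}$ together with the periodicity $w^{-1}(j+n)=w^{-1}(j)+n$; the only new feature relative to Proposition~\ref{prop:dominant} is the wraparound identifying index $n$ with index $0$. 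Iterating this rule along a cyclically decreasing factor $v$, whose support is a \emph{proper} subset of $\Z/n\Z$, then shows that factoring $v$ out of $w$ on the left decrements exactly $\ell(v)$ of the code entries, each by $1$, while the remaining entries are only cyclically relabeled.

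For part (1), I would take a cyclically decreasing factorization $w=v_1v_2\cdots v_r$ contributing to $[m_\la]\tF_w$ and record, for each factor, which code entries it decrements, tracking the identity of each entry through the cyclic relabelings. Since within a single factor no entry is decremented more than once, while over the whole factorization the entry originally equal to $c_i(w^{-1})$ is decremented exactly $c_i(w^{-1})$ times, this data is precisely a $0$--$1$ matrix whose column-sum multiset is $c(w^{-1})$ (sorted, the partition $\la(w)'$) and whose row sums are $\ell(v_1),\dots,\ell(v_r)$ (sorted, the partition $\la$). The elementary necessity direction of the Gale--Ryser theorem gives $\la\preceq(\la(w)')'=\la(w)$, which is statement (1); in particular $[m_\la]\tF_w=0$ unless $\la\preceq\la(w)$.

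For part (2), I would show that the factorization attaining $\la=\la(w)$ is forced. The largest part $\la(w)_1$ equals the number of nonzero entries of $c(w^{-1})$, and because the code always has a zero entry (it lies in $\Z_{\geq 0}^n\setminus\Z_{>0}^n$), the set $S$ of positions carrying a nonzero code entry is a proper subset of $\Z/n\Z$ and hence supports a unique cyclically decreasing element $v_1$ of length $|S|=\la(w)_1$. Using the code-action rule I would verify that $v_1$ left-divides $w$ with $\ell(w)=\ell(v_1)+\ell(v_1^{-1}w)$ and that it is the only cyclically decreasing left factor of this maximal length. Any factorization realizing the dominance-maximal shape $\la(w)$ must begin with a factor of length $\la(w)_1$, hence with $v_1$; recursing on $v_1^{-1}w$ forces every subsequent factor, so there is exactly one such factorization and $[m_{\la(w)}]\tF_w=1$.

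The hard part will be the first step, namely proving the cyclic code-action rule and extracting from it the clean statement ``factoring $v$ out decrements exactly $\ell(v)$ distinct entries by $1$'' despite the wraparound between indices $n$ and $0$ and the accompanying cyclic relabeling of entries; this is exactly where the affine argument genuinely differs from the finite one. A secondary technical point, needed for part (2), is confirming that the maximal cyclically decreasing element read off the code is really a left factor with additive length and is unique; both should follow from the weak-order characterization of left division together with the fact, specific to $\tS_n$, that descent sets are always proper subsets since there is no longest element. Once these two lemmas are in place, the Gale--Ryser bookkeeping and the greedy uniqueness argument proceed just as in Proposition~\ref{prop:dominant}.
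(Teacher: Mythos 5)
Your proposal is correct and follows essentially the same route as the paper, which proves this proposition by adapting the proof of Proposition~\ref{prop:dominant} verbatim: the cyclic code-action rule for left multiplication by $s_i$, the observation that peeling off a cyclically decreasing factor decrements $\ell(v)$ distinct entries of $c(w^{-1})$ by one each, and the uniqueness of the maximal-length cyclically decreasing left factor (which exists because the code always has a zero entry, so its support is a proper subset of $\Z/n\Z$). Your explicit Gale--Ryser bookkeeping is just a careful unpacking of the paper's ``(1) follows easily from this observation.''
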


\subsection{$\La_{(n)}$ and $\La^{(n)}$}
Let $\La_{(n)} \subset \La$ be the subalgebra generated by $h_1,h_2,\ldots,h_{n-1}$, and let $\La^{(n)}:= \La/I_{(n)}$ where $I_{(n)}$ is the ideal generated by $m_\mu$ for $\mu \notin \Bo^n$.  A basis for $\La_{(n)}$ is given by $\{h_\la \mid \la \in \Bo^n\}$.  A basis for $\La^{(n)}$ is given by $\{m_\la \mid \la \in \Bo^n\}$.  Note that the rings $\La_{(n)}$ and $\La^{(n)}$ are denoted $\La_{(k)}$ and $\La^{(k)}$ in Chapter \ref{chapter.k schur primer}, where $k = n-1$.

The ring of symmetric functions $\La$ is a Hopf algebra, with coproduct 
given by $\Delta(h_k) = \sum_{j=0}^k h_j \otimes h_{k-j}$.  Equivalently, the coproduct of $f(x_1,x_2,\ldots) \in \La$ can be obtained by writing $f(x_1,x_2,\ldots,y_1,y_2,\ldots)$ in the form $\sum_i f_i(x_1,x_2,\ldots) \otimes g_i(y_1,y_2,\ldots)$ where $f_i$ and $g_i$ are symmetric in $x$'s and $y$'s respectively.  Then $\Delta(f)= \sum_i f_i \otimes g_i$.

The ring $\La$ is self Hopf-dual under the Hall inner product.  That is, one has $\ip{\Delta f, g\otimes h}  = \ip{f,gh}$ for $f,g,h\in \La$.  Here the Hall inner product is extended to $\La \otimes \La$ in the obvious way.  The rings $\La_{(n)}$ and $\La^{(n)}$ are in fact Hopf algebras, which are dual to each other under the same inner product.  We refer the reader to \cite{Mac:1995} for further details.

\subsection{Affine Schur functions}
Stanley symmetric functions expand positively in terms of the basis of Schur functions (Corollary \ref{cor:stpos}).  We now describe the analogue of Schur functions for the affine setting.

For $\la \in \Bo^n$, we let $\tF_\la:=\tF_w$ where $w \in \tS_n^0$ is the unique affine Grassmannian permutation with $\la(w) = \la$.  These functions $\tF_\la$ are called {\it affine Schur functions} (or dual $k$-Schur functions, or weak Schur functions).

\begin{theorem}[\cite{LM:2007,Lam:2006}]\label{thm:affineSchurbasis}
The affine Schur functions $\{\tF_\la\mid \la \in \Bo^n\}$ form a basis of $\La^{(n)}$.
\end{theorem}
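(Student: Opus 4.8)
The plan is to prove the statement by a unitriangularity argument with respect to the monomial symmetric functions, in direct parallel with the classical fact that Schur functions form a basis of $\La$. The only external inputs are results already available in the excerpt: the symmetry of $\tF_w$ (Theorem \ref{thm:afstsymm}), the dominance triangularity of Proposition \ref{prop:affdominant}, and the fact that $\{m_\la \mid \la \in \Bo^n\}$ is a basis of $\La^{(n)}$.

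First I would reduce to a graded statement. By Theorem \ref{thm:afstsymm} each $\tF_\la$ is a genuine symmetric function, and it is homogeneous of degree $|\la|$, since for the affine Grassmannian $w$ with $\la(w)=\la$ one has $\deg \tF_w = \ell(w) = |c(w)| = |\la|$. Hence it suffices to fix a degree $d$ and show that $\{\tF_\la \mid \la \in \Bo^n,\ |\la|=d\}$ descends to a basis of the degree-$d$ component of $\La^{(n)}$, whose dimension is exactly the number of such $\la$.

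The key step is to apply Proposition \ref{prop:affdominant} to write, in $\La$,
$$
\tF_\la = m_\la + \sum_{\mu \prec \la} c_{\la\mu}\, m_\mu ,
$$
so that the monomial expansion is unitriangular for dominance order. The crucial observation — the one place where the hypothesis $\la \in \Bo^n$ is used — is that every $\mu$ occurring satisfies $\mu \preceq \la$, and taking $j=1$ in the definition of dominance gives $\mu_1 \le \la_1 \le n-1$, so that $\mu \in \Bo^n$ as well. Thus no monomial indexed outside $\Bo^n$ appears in $\tF_\la$. Consequently, under the projection $\pi : \La \to \La^{(n)} = \La/I_{(n)}$, whose kernel $I_{(n)}$ is spanned by $\{m_\mu \mid \mu \notin \Bo^n\}$, the image $\pi(\tF_\la)$ has the very same $\{m_\mu\}_{\mu\in\Bo^n}$-expansion as $\tF_\la$. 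Choosing any linear extension of dominance order on the partitions of $d$ in $\Bo^n$, the matrix expressing $(\pi(\tF_\la))$ in the basis $(m_\mu)_{\mu\in\Bo^n}$ is lower unitriangular, hence invertible over $\QQ$; since $\{m_\mu\mid\mu\in\Bo^n\}$ is a basis of $\La^{(n)}$, so is $\{\tF_\la\}$. Summing over $d$ finishes the argument.

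The argument is short once the cited results are granted, so there is no deep obstacle at this stage; the real weight rests on Theorem \ref{thm:afstsymm}, whose proof is deferred to the affine nilHecke development, and on Proposition \ref{prop:affdominant}. The one genuine subtlety I would verify most carefully is the containment $\mu \in \Bo^n$ for all monomials appearing in $\tF_\la$: this is precisely what guarantees that $\pi$ neither annihilates the leading term $m_\la$ nor introduces new linear dependencies, and hence that passing to the quotient preserves the unitriangularity that makes the transition matrix invertible.
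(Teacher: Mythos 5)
Your proof is correct and follows essentially the same route as the paper: the paper's argument is exactly the unitriangularity of $\{\tF_\la\}$ with respect to $\{m_\la\mid\la\in\Bo^n\}$ via Proposition \ref{prop:affdominant}. Your additional check that every $\mu$ with $[m_\mu]\tF_\la\neq 0$ satisfies $\mu_1\le\la_1\le n-1$, so that passing to the quotient $\La^{(n)}$ preserves the triangular expansion, is a worthwhile detail that the paper leaves implicit.
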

\begin{proof}
By Proposition \ref{prop:affdominant}, the leading monomial term of $\tF_\la$ is $m_\la$.  Thus $\{\tF_\la\mid \la \in \Bo^n\}$ is triangular with respect to the basis $\{m_\la\mid \la \in \Bo^n\}$, so that it is also a basis.
\end{proof}

We let $\{s^{(k)}_{\la}\} \subset \La_{(n)}$ denote the dual basis to $\tF_\la$.  These are the (ungraded) {\it $k$-Schur functions}, where $k = n-1$.  It turns out that the $k$-Schur functions are Schur positive.  However, affine Stanley symmetric functions are not.  Instead, one has:

\begin{theorem}[\cite{Lam:2008}]\label{thm:affstpos}
The affine Stanley symmetric functions $\tF_w$ expand positively in terms of the affine Schur functions $\tF_\la$.
\end{theorem}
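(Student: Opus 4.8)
The plan is to prove positivity not from the combinatorial Definition \ref{definition.affine Stanley} directly, but by transporting the question into the affine nilCoxeter algebra and then importing nonnegativity from affine Schubert calculus. By Theorem \ref{thm:affineSchurbasis} we may write $\tF_w = \sum_{\la \in \Bo^n} c_{w\la}\, \tF_\la$; moreover, since $\{\tF_\la\}$ is unitriangular to $\{m_\la\}$ by Proposition \ref{prop:affdominant} and $\tF_w$ has integer monomial coefficients, the transition is over $\Z$ and the $c_{w\la}$ are well-defined integers. The entire content of the theorem is thus that $c_{w\la} \ge 0$, and the first step is to record the two reformulations of $c_{w\la}$ that make this tractable.

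Since $\{\tF_\la\}$ and $\{s^{(k)}_\la\}$ are dual bases of the dual Hopf algebras $\La^{(n)}$ and $\La_{(n)}$, pairing the expansion against $s^{(k)}_\la$ gives $c_{w\la} = \langle \tF_w, s^{(k)}_\la\rangle$. Next I would realize this pairing inside the affine nilCoxeter algebra. Using the isomorphism $\aB \cong \La_{(n)}$ of Theorem \ref{thm:aB}, together with its Hopf-algebraic refinement Theorem \ref{thm:aBHopf}, the function $s^{(k)}_\la$ corresponds to the noncommutative $k$-Schur element ${\bf s}^{(k)}_\la \in \aB$; comparing Definition \ref{definition.affine Stanley} with the expansion \eqref{eq:nilcoxeterexpansion} of Chapter \ref{chapter.k schur primer} then identifies $c_{w\la}$ with the coefficient of the generator $\uu_w$ in ${\bf s}^{(k)}_\la$. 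In this way the theorem becomes the assertion that each basis element ${\bf s}^{(k)}_\la$ of the Fomin--Stanley subalgebra has nonnegative coordinates in the monomial basis $\{\uu_v\}$ of the affine nilCoxeter algebra. This is a purely algebraic statement, but not one I expect to settle by algebra alone.

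The nonnegativity itself is where geometry enters, and I expect it to be the main obstacle. The plan here is to invoke the identification (the Morse--Shimozono conjecture, proved in \cite{Lam:2008} and developed via the nilHecke ring and the Peterson subalgebra in Chapter \ref{chapter.affineSchubert}) of the elements ${\bf s}^{(k)}_\la$ with the Schubert classes of $H_*(\Gr)$ sitting inside the nilHecke ring. Under this dictionary the integers $c_{w\la}$ are structure constants of affine Schubert calculus, and they are nonnegative because the Schubert classes are represented by effective cycles. Concretely I would route this through Peterson's ``quantum equals affine'' theorem (Section \ref{SS:quantumequalsaffine}, cf. \cite{Pet,LS:QH,LL}), which matches the relevant constants with three-point genus-zero Gromov--Witten invariants of the finite flag variety $G/B$; since those invariants enumerate rational curves they are manifestly $\ge 0$, which is exactly the bound required.

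Everything preceding this geometric input is formal and lives entirely within Sections \ref{sec:affinenilHecke}--\ref{sec:affineFS} and Section \ref{sec:geom}: the symmetry of $\tF_w$ (Theorem \ref{thm:afstsymm}), the duality identity $c_{w\la} = \langle \tF_w, s^{(k)}_\la\rangle$, and the passage to the $\uu_v$-basis all follow from the commutativity and Hopf structure of the Fomin--Stanley and Peterson algebras. The genuine difficulty, as the introduction warns, is that the final nonnegativity leaves the combinatorial world entirely. A self-contained combinatorial proof would instead require an affine analogue of Edelman--Greene insertion (cf. Corollary \ref{cor:stpos}) exhibiting the $c_{w\la}$ as cardinalities of explicit sets of tableaux; since no such insertion is known in general, the geometric argument above is the one I would actually carry out.
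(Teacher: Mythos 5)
Your proposal is correct and follows essentially the same route the paper indicates: it reduces the coefficient of $\tF_\la$ in $\tF_w$ to the coefficient of $A_w$ in the noncommutative $k$-Schur function $\s^{(k)}_\la=j^0_v$ (via Proposition \ref{prop:noncommCauchy} and Theorem \ref{thm:aB}), identifies these with Schubert structure constants of $H_*(\Gr)$, and imports nonnegativity from Peterson's quantum-equals-affine theorem and the enumerative positivity of Gromov--Witten invariants, exactly as in \cite{Lam:2008,LS:QH,LL} and Section \ref{sec:geom}. The only caveat is that effectivity of Schubert cycles alone does not give positivity for the Pontryagin (homology) product, so the quantum-equals-affine step you invoke is genuinely needed rather than optional.
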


Theorem \ref{thm:affstpos} was established using geometric methods.  See Section \ref{sec:geom} and \cite{Lam:2008}.  It is an open problem to give a combinatorial interpretation of the affine Stanley coefficients, expressing affine Stanley symmetric functions in terms of affine Schur functions.

\subsection{Example: The case of $\tS_3$}
To illustrate Theorem \ref{thm:affineSchurbasis}, we completely describe the affine Schur functions for $\tS_3$.  

\begin{prop}\label{prop:redts3}
Let $w \in \tS_n$ be the affine Grassmannian permutation corresponding to the partition $(2^a1^b)$.  Then $|R(w)| = \binom{\lfloor b/2+a\rfloor}{a}$.
\end{prop}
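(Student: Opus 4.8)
The plan is to count reduced words by induction through the weak order. First I would invoke the dictionary, set up earlier in the excerpt (Proposition~\ref{prop:affgrass2core} and the surrounding discussion of weak order on cores), between reduced words of an affine Grassmannian element $w\in\tS_3^0$, saturated chains in the weak order on $3$-cores from $\emptyset$ to $\mfc(\la(w))$, and standard weak $2$-tableaux of that shape; all are counted by the weak Kostka number $K^{(2)}_{\la,(1^m)}$ with $m=\ell(w)=|\la|$. The one point needing care is that a reduced word of $w$ yields such a chain only if every right factor stays Grassmannian. This does hold: if $w$ is Grassmannian and $w=uv$ is reduced with $v$ non-Grassmannian, say $\ell(vs_r)<\ell(v)$ for some $r\in\{1,\dots,n-1\}$, then $\ell(ws_r)\le\ell(u)+\ell(vs_r)<\ell(w)$, contradicting that $s_0$ is the only right descent of a Grassmannian element. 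Thus, writing $R(a,b):=|R(w_{(2^a1^b)})|$, I obtain $R(a,b)=\sum_{\mu\lessdot\la}R(\mu)$, the sum over the weak-order lower covers $\mu$ of $\la=(2^a1^b)$ among $2$-bounded partitions.

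The heart of the argument is to identify these lower covers. On the level of $3$-cores, a lower cover of $\kappa=\mfc(2^a1^b)$ arises by applying a generator $s_i$ that removes corners, so the lower covers are indexed by the distinct residues occurring among the removable corners of $\kappa$ (Definition~\ref{def:siactioncore}). I would establish the Key Lemma: the removable corners of $\mfc(2^a1^b)$ carry a single residue when $b$ is odd or $b=0$, and exactly two residues when $b\ge2$ is even; correspondingly the lower covers of $(2^a1^b)$ are $(2^a1^{b-1})$ (present iff $b\ge1$) and $(2^{a-1}1^{b+1})$ (present iff $a\ge1$ and $b$ is even). Equivalently, one can argue with the criterion $\nu\to_k\la\iff\nu\subseteq\la,\ \nu^{\omega_k}\subseteq\la^{\omega_k},\ |\nu|+1=|\la|$: of the two partitions obtainable from $(2^a1^b)$ by deleting one corner cell, the shape $(2^a1^{b-1})$ (when $b\ge1$) always passes the $\omega_2$-test, whereas $(2^{a-1}1^{b+1})$ passes it precisely when $b$ is even. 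This is the step I expect to be the main obstacle, as it needs an explicit enough hold on the core $\mfc(2^a1^b)$, or on $\la^{\omega_2}=\mfp(\mfc(\la)')$ via the M\'esz\'aros string description, to pin down the parity dependence; small cases such as $\mfc(2,2,1)=(5,3,1)$ (all removable corners of residue $1$) versus $\mfc(2,1,1)=(3,1,1)$ (removable corners of residues $1$ and $2$) indicate the mechanism.

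Granting the Key Lemma, the recurrence reads $R(a,b)=R(a,b-1)+[\,b\text{ even}\,]\,R(a-1,b+1)$, with base values $R(0,b)=1$ and $R(a,0)=R(a-1,1)$ and $R(0,0)=1$. I would finish by verifying that $f(a,b):=\binom{\lfloor b/2\rfloor+a}{a}$ obeys the same recurrence and base cases, whence $R(a,b)=f(a,b)$. For $b$ odd this is the floor identity $\lfloor b/2\rfloor=\lfloor(b-1)/2\rfloor$, giving $f(a,b)=f(a,b-1)$; for $b\ge2$ even it is exactly Pascal's rule $\binom{b/2+a}{a}=\binom{b/2+a-1}{a}+\binom{b/2+a-1}{a-1}$, whose summands are $f(a,b-1)$ and $f(a-1,b+1)$; and the cases $a=0$ and $b=0$ are immediate. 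Since $\lfloor b/2\rfloor+a=\lfloor b/2+a\rfloor$, this gives $|R(w)|=\binom{\lfloor b/2+a\rfloor}{a}$, as claimed.
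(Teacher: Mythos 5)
The paper states this proposition without proof --- it appears as an unproved assertion inside the illustrative $\tilde{S}_3$ subsection --- so there is no argument of the paper's to compare yours against; I am assessing it on its own terms. Your strategy is correct and is the natural one. The reduction of $|R(w)|$ to saturated weak-order chains is properly justified: the observation that every right factor of a reduced factorization of a Grassmannian element is again Grassmannian is exactly what is needed to identify first letters of reduced words with weak-order lower covers in the Grassmannian quotient, and since the cells of $\kappa/\tau$ in a weak cover all share one residue, distinct left descents give distinct lower covers and vice versa, so $R(a,b)=\sum_{\mu\lessdot\la}R(\mu)$ holds. The concluding recurrence and the Pascal/floor verification are also correct.

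The Key Lemma you flag is true, and the first route you propose does close it with a short computation, so the only real criticism is that you left this step as a plan rather than executing it. One small imprecision: when $a=0$ and $b\ge2$ is even, the removable corners of $\mathfrak{c}(1^b)$ carry a single residue, so the clean count of distinct removable-corner residues is $[\,b\ge1\,]+[\,a\ge1\text{ and }b\text{ even}\,]$; your ``correspondingly'' clause already carries the correct conditions, so the recurrence is unaffected. To finish: with $c=\lceil b/2\rceil$ and $a\ge 1$, the string description gives
$$\mathfrak{c}(2^a1^b)=\bigl(2a+c,\;2a-2+c,\;\dots,\;2+c,\;c,\;\lfloor b/2\rfloor,\;c-1,\;\lfloor b/2\rfloor-1,\;\dots\bigr).$$
The removable corners in rows $1,\dots,a$ sit at $(i,2(a-i+1)+c)$ and all have content $\equiv 2a+2+c\pmod{3}$; the removable corners in the tail all have content $\equiv c-a-2\pmod{3}$ when $b$ is even (rows $a+2,a+4,\dots$) and $\equiv c-a-1\pmod{3}$ when $b$ is odd (rows $a+1,a+3,\dots$). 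The two residues differ by $3a+4\equiv 1$ in the first case and by $3a+3\equiv 0$ in the second, which is precisely your parity dichotomy, and removing each residue class visibly yields $\mathfrak{c}(2^{a-1}1^{b+1})$ and $\mathfrak{c}(2^a1^{b-1})$ respectively. With that computation supplied, your proof is complete.
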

\begin{prop}\label{prop:ts3}
The affine Schur function $\tF_{2^a,1^b}$ is given by
$$
\tF_{2^a,1^b} = \sum_{j = 0}^a \binom{\lfloor b/2+a-j\rfloor}{a-j} m_{2^j1^{b+2a-2j}}.
$$
The $k$-Schur function $s^{(2)}_{2^a,1^b}$ is given by
$$
s^{(2)}_{2^a,1^b}=h_2^a e_2^{\lfloor b/2 \rfloor} h_1^{b- 2\lfloor b/2 \rfloor}.
$$
\end{prop}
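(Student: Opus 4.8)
The plan is to prove the two displayed formulas essentially independently: first the $k$-Schur formula, which follows cleanly from the $k$-rectangle property, and then the affine Schur formula, which I would extract from the first by duality. Throughout, set $n=3$, $k=2$, so that $\La_{(3)}=\QQ[h_1,h_2]$ and $e_2 = h_1^2 - h_2 \in \La_{(3)}$, and recall that $\{s^{(2)}_\la\}_{\la \in \Bo^3}$ and $\{\tF_\la\}_{\la\in\Bo^3}$ are dual bases of $\La_{(3)}$ and $\La^{(3)}$.

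For the $k$-Schur formula I would first record the base cases. Since $s^{(k)}_{(r)} = h_r$ for $r\le k$ (the $\mu=(r)$ instance of \eqref{weakKostka}), we have $s^{(2)}_{(1)} = h_1$ and $s^{(2)}_{(2)} = h_2$. Applying the weak Pieri rule \eqref{eq:kweakpieri} to the $3$-core $\mfc(1)=(1)$ — whose two addable corners, read off Figure~\ref{partkeq2Hasse}, produce the cores $\mfc(2)=(2)$ and $\mfc(1,1)=(1,1)$ — gives $h_1^2 = h_1\, s^{(2)}_{(1)} = s^{(2)}_{(2)} + s^{(2)}_{(1,1)}$, whence $s^{(2)}_{(1,1)} = h_1^2 - h_2 = e_2$. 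Now observe that the two $2$-rectangles $(\ell^{k+1-\ell})$ are exactly $(2)$ (for $\ell=2$) and $(1,1)$ (for $\ell=1$), and that $(2^a1^b) = (2)^{\cup a}\cup(1,1)^{\cup\lfloor b/2\rfloor}\cup(1)^{\cup(b\bmod 2)}$, where the final tail is the irreducible partition $\emptyset$ or $(1)$. Repeatedly applying the $t=1$ $k$-rectangle property (Chapter~\ref{chapter.k schur primer}, Section~\ref{sec:rectprop}), namely $s^{(2)}_{R}\,s^{(2)}_\nu = s^{(2)}_{\nu\cup R}$ for $R$ a $2$-rectangle, to peel off these rectangles yields $s^{(2)}_{2^a1^b} = h_2^a\, e_2^{\lfloor b/2\rfloor}\, h_1^{\,b-2\lfloor b/2\rfloor}$, which is the claimed formula.

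For the affine Schur formula I would use the duality $\langle h_\mu, \tF_\la\rangle = [m_\mu]\tF_\la$ together with \eqref{weakKostka}, which gives $[m_\mu]\tF_\la = \langle h_\mu,\tF_\la\rangle = [s^{(2)}_\la]\,h_\mu$: the coefficient we want is simply the coefficient of $s^{(2)}_\la$ when $h_\mu$ is expanded in the $k$-Schur basis. Writing $\mu = (2^j1^c)$ and using the already-proven $k$-Schur formula, this becomes a change-of-basis computation in the polynomial ring $\QQ[h_1,h_2]$: expand the monomial $h_2^j h_1^c$ in the basis $\{h_2^a e_2^{\lfloor b/2\rfloor} h_1^{\,b\bmod 2}\}$. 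Since $2a+b=2j+c$ forces $b\equiv c \pmod 2$, I would set $P=h_1^2$ and $Q = e_2 = P - h_2$, reduce to expanding $h_2^j P^{\lfloor c/2\rfloor}$ (after dividing out the common factor $h_1$ when $c$ is odd), and substitute $P = Q + h_2$ to obtain $h_2^j P^{d} = \sum_{e}\binom{d}{e}\, h_2^{\,j+d-e}\,Q^{e}$ with $d=\lfloor c/2\rfloor = \lfloor b/2\rfloor + a - j$. Matching the term $h_2^a Q^{\lfloor b/2\rfloor}$ against $\la=(2^a1^b)$ identifies the coefficient as $\binom{\lfloor b/2\rfloor + a - j}{\lfloor b/2\rfloor} = \binom{\lfloor b/2 + a - j\rfloor}{a-j}$, exactly the asserted formula (and specializing $j=0$ recovers Proposition~\ref{prop:redts3}).

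The argument has no deep obstacle: the structural inputs (the weak Pieri rule and the $k$-rectangle property at $t=1$) are already established, so most of the work is bookkeeping. The step needing the most care is the final change-of-basis computation — in particular tracking the parity of $c$ and $b$ uniformly, verifying that the rectangle decomposition of $(2^a1^b)$ terminates at a genuine irreducible partition, and checking that the substitution $Q = h_1^2 - h_2$ really converts the monomial expansion into the claimed binomial. As an alternative to the duality route for the affine Schur formula, one could instead count cyclically decreasing factorizations of $\mfa(2^a1^b)$ directly (the coefficient of $m_{2^j1^{b+2a-2j}}$ counts factorizations into $j$ length-two and $b+2a-2j$ length-one cyclically decreasing factors), but the algebraic route above is shorter given the tools at hand.
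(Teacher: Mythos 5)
The paper states Proposition~\ref{prop:ts3} without proof (it appears as an illustrative example in the $\tilde S_3$ subsection), so there is no argument of record to compare against; your proposal supplies a complete and correct one. Both halves check out. For the $k$-Schur formula, the inputs you use are all established in the text: $s^{(2)}_{(r)}=h_r$ for $r\le 2$ from the unitriangular system \eqref{weakKostka}, $s^{(2)}_{11}=h_1^2-h_2=e_2$ from the weak Pieri rule, and the $t=1$ $k$-rectangle property $s^{(k)}_R s^{(k)}_\nu = s^{(k)}_{R\cup\nu}$ (proven in \cite{LM:2007}, quoted in Section~\ref{sec:rectprop} and again in Section~\ref{sec:nilcoxeter}); the decomposition of $(2^a1^b)$ into the two $2$-rectangles $(2)$ and $(1,1)$ plus an irreducible tail in $\{\emptyset,(1)\}$ is exactly right, since the irreducibles for $k=2$ are those two partitions. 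For the affine Schur formula, the identity $[m_\mu]\tF_\la = \langle h_\mu,\tF_\la\rangle = K^{(2)}_{\la\mu} = [s^{(2)}_\la]h_\mu$ is precisely \eqref{eq:dualkschurmexp}, and your change of basis in $\QQ[h_1,h_2]$ is clean: $\{h_2^{a}e_2^{m}h_1^{\epsilon}\}$ is a basis because $h_2,e_2$ are algebraically independent and $h_1^2=h_2+e_2$, the binomial expansion of $h_2^j(h_2+e_2)^d$ gives $\binom{d}{e}$ with $d=\lfloor b/2\rfloor+a-j$ and $e=\lfloor b/2\rfloor$, and $\binom{d}{e}=\binom{\lfloor b/2+a-j\rfloor}{a-j}$ since $a-j\in\Z$. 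The vanishing for $j>a$ is also automatic from your expansion, consistent with Proposition~\ref{prop:affdominant}, and the $j=0$ specialization recovering Proposition~\ref{prop:redts3} is a good sanity check. The only thing I would add is a one-line remark that the Chapter~\ref{chapter.stanley symmetric functions} $k$-Schur functions (defined by duality with $\tF_\la$) agree with the Pieri-rule/weak-tableau $k$-Schur functions of Chapter~\ref{chapter.k schur primer} to which the rectangle property applies; this is the content of \eqref{eq:dualkschurmexp} and Theorem~\ref{thm:affineSchurbasis}, but it is worth making the bridge explicit since you import a Chapter~\ref{chapter.k schur primer} theorem into a Chapter~\ref{chapter.stanley symmetric functions} statement.
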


\begin{example}
For $w = 1210$, we have $a = 1$ and $b = 2$.  Thus $R(w) = \{1210,2120\}$ has cardinality $\binom{2}{1} = 2$, and $\tF_{2,1^2} = m_{211}+2m_{1111}$.
\end{example}
\begin{example}
The affine Stanley symmetric function of Example \ref{ex:21202} expands as
$$
\tF_w = \tF_{2^2,1}+\tF_{2,1^3}+\tF_{1^5}
$$
agreeing with Theorem \ref{thm:affstpos}.
\end{example}

\subsection{Exercises and problems}
\begin{enumerate}
\item (Coproduct formula \cite{Lam:2006})
Show that $\Delta \tF_w = \sum_{uv = w: \ell(w) = \ell(u)+\ell(v)} \tF_u \otimes \tF_v$.
\item (321-avoiding affine permutations \cite{Lam:2006})
Extend the results in Section \ref{ssec:combinprob} on 321-avoiding permutations to the affine case.  
\item (Affine vexillary permutations)
For which $w \in \tS_n$ is $\tF_w$ equal to an affine Schur function $\tF_\la$?  See the discussion of vexillary permutations in Section \ref{ssec:EGprob} and also \cite[Problem 1]{Lam:2006}.
\item \label{prob:limit} ($n \to \infty$ limit)
Show that for a fixed partition $\la$, we have $\lim_{n \to \infty} \tF^{(n)}_\la = s_\la$, where $\tF^{(n)}$ denotes the affine Schur function for $\tS_n$.
\item
Extend Proposition \ref{prop:ts3} to all affine Stanley symmetric functions in $\tS_3$, and thus give a formula for the affine Stanley coefficients.
\item
Is there an affine analogue of the fundamental quasi-symmetric functions?  For example, one might ask that affine Stanley symmetric functions expand positively in terms of such a family of quasi-symmetric functions.  Affine Stanley symmetric functions do not in general expand positively in terms of fundamental quasi-symmetric functions (see \cite[Theorem 5.7]{McN}).
\item
Find closed formulae for numbers of reduced words in the affine symmetric groups $\tS_n$, $n > 3$, extending Proposition \ref{prop:redts3}.  Are there formulae similar to the determinantal formula, or hook-length formula for the number of standard Young tableaux?
\item ($n$-cores)
A skew shape $\la/\mu$ is a $n$-ribbon if it is connected, contains $n$ squares, and does not contain a $2\times 2$ square.
An $n$-core $\la$ is a partition such that there does not exist $\mu$ so that $\la/\mu$ is a $n$-ribbon.
There is a bijection between the set of $n$-cores and the affine Grassmannian permutaitons $\tS_n^0$.  Affine Schur functions can be described in terms of tableaux on $n$-cores, called $k$-tableau \cite{LM:2005} (or weak tableau in \cite{LLMS:2006}).  See Chapter \ref{chapter.k schur primer} for further details.

\item (Cylindric Schur functions \cite{Pos,McN})
Let $C(k,n)$ denote the set of lattice paths $p$ in $\Z^2$ where every step either goes up or to the right, and which is invariant under the translation $(x,y) \mapsto (x+n-k,y+k)$.  Such lattice paths can be thought of as the boundary of an infinite periodic Young diagram, or equivalently of a Young diagram on a cylinder.  We write $p \subset q$ if $p$ lies completely to the left of $q$.  A cylindric skew semistandard tableau is a sequence $p_0 \subset p_1 \subset \cdots \subset p_k$ of $p_i \in C(k,n)$ where the region between $p_i$ and $p_{i+1}$ does not contain two squares in the same column.  One obtains \cite{Pos} a natural notion of a cylindric (skew) Schur function.  Show that every cylindric Schur function is an affine Stanley symmetric function, and every affine Stanley symmetric function of a 321-avoiding permutation is a cylindric Schur function (\cite{Lam:2006}).

\item (Kashiwara-Shimozono affine Grothendieck polynomials)
The usual Stanley symmetric functions can be expressed as stable limits of Schubert polynomials \cite{BJS}.  What is the relationship between affine Stanley symmetric functions and the affine Grothendieck polynomials of Kashiwara and Shimozono \cite{KS}?
\item
Is there a good notion of Coxeter-Knuth equivalence for reduced words of affine permutations?  This may have an application to the affine Schur positivity of affine Stanley symmetric functions (Theorem \ref{thm:affstpos}).  See also Section \ref{ssec:EGprob} (\ref{rem:CK}).
\item (Affine Little bijection \cite{LS:affineLittle})
There is an affine analogue of Little's bijection (Section \ref{ssec:EGprob}) developed in \cite{LS:affineLittle}.  It gives a combinatorial proof of the affine analogue of the transition formula \eqref{eq:transition}.  Can the affine Little bijection, or the affine transition formula lead to a proof of Theorem \ref{thm:affstpos}?  Can one define a notion of dual EG-equivalence using the affine Little bijection?
\item (Branching positivity \cite{LLMS:2010,Lam:ASP}) Let $\tF_\la^{(n)}$ denote the affine Schur functions for $\tS_n$.  Then $\tF_\la^{(n+1)}$ expands positively in terms of $\tF_\mu^{(n)}$ modulo the ideal in symmetric functions generated by $m_\nu$ with $\nu_1 \geq n$.  Deduce using (\ref{prob:limit}) that $k$-Schur functions are Schur positive.
\end{enumerate}

\section{Root systems and Weyl groups}
\label{sec:Weyl}
In this section, we let $\fW$ be a finite Weyl group and $\aW$ denote the corresponding affine Weyl group.
We shall assume basic familiarity with Weyl groups, root systems, and weights \cite{Hum,Kac}.

\subsection{Notation for root systems and Weyl groups}
Let $A = (a_{ij})_{i,j \in \aI}$ denote an affine Cartan matrix, where $\aI = \fI \cup\{0\}$, so that $(a_{ij})_{i,j \in \fI}$ is the corresponding finite Cartan matrix.  For example, for type $\tilde A_{n-1}$ (corresponding to $\tS_n$) and $n > 2$ we have $\aI = \Z/n\Z$ and 
$$
a_{ij} = \begin{cases} 2 &\mbox{if $i = j$} \\
-1 &\mbox{if $j = i\pm 1$} \\
0 & \mbox{otherwise.}
\end{cases}
$$

The affine Weyl group $\aW$ is generated by  involutions $\{s_i \mid i \in \aI\}$ satisfying the relations $(s_is_j)^{m_{ij}} = \id$, where for $i \neq j$, one defines $m_{ij}$ to be $2,3,4,6,\infty$ according as $a_{ij}a_{ji}$ is $0, 1, 2, 3, \ge4$.  The finite Weyl group $\fW$ is generated by $\{s_i \mid i \in \fI\}$.  For the symmetric group $W = S_n$, we have $I = [n-1]$, $m_{i,i+1} = 3$, and $m_{ij} = 2$ for $|i-j| \geq 2$.

Let $R$ be the root system
for $W$.  Let $R^+$ denote the positive roots, $R^-$ denote the
negative roots and $\{ \alpha_i \mid i \in \fI\}$ denote the simple
roots.   Let $\theta$ denote the highest root of $R^+$.  Let $\rho = \frac{1}{2}\sum_{\alpha \in R^+} \alpha$ denote the half sum of positive roots.  Also let $\{\alpha_i^\vee \mid i \in \fI\}$ denote the simple coroots.  

We write $R_\af$ and $R^+_\af$ for the affine root system, and positive affine roots.  The positive simple affine roots (resp. coroots) are $\{\alpha_i \mid i \in \aI\}$ (resp. $\{\alpha^\vee_i \mid i \in \aI\}$).  The null root $\delta$ is given by $\delta = \alpha_0 + \theta$.  A root $\alpha$ is {\it real} if it is in the $\aW$-orbit of the simple affine roots, and {\it imaginary} otherwise.  The imaginary roots are exactly $\{k\delta \mid k \in \Z \setminus \{0\}\}$.  Every real affine root is of the form $\alpha + k\delta$, where $\alpha \in R$.   The root $\alpha+k\delta$ is positive if $k > 0$, or if $k = 0$ and $\alpha \in R^+$.  

Let $Q = \oplus_{i \in \fI} \Z \cdot \alpha_i$ denote the root lattice and let $Q^\vee = \oplus_{i \in I} \Z \cdot \alpha_i^\vee$ denote the co-root
lattice.  Let $P$ and $P^\vee$ denote the weight lattice and
co-weight lattice respectively.  Thus $Q \subset P$ and $Q^\vee \subset P^\vee$.  We also have a map $Q_\af = \oplus_{i \in \aI} \Z \cdot \alpha_i \to P$ given by sending $\alpha_0$ to $-\theta$ (or equivalently, by sending $\delta$ to $0$).  Let $\ip{.,.}$ denote
the pairing between $P$ and $P^\vee$.  In particular, one requires that $\ip{\al_i^\vee,\al_j}=a_{ij}$.  

The Weyl group acts on weights via $s_i \cdot \la = \la - \ip{\alpha_i^\vee,\la}\alpha_i$ (and via the same formula on $Q$ or $Q_\af$), and on coweights via $s_i \cdot \mu = \mu - \ip{\mu,\alpha_i}\alpha^\vee_i$ (and via the same formula on $Q^\vee$).  For a real root $\alpha$ (resp. coroot $\alpha^\vee$), we let $s_\alpha$ 
(resp. $s_{\alpha^\vee}$) denote the corresponding reflection, defined by $s_\alpha = wr_i w^{-1}$ if $\alpha = w \cdot \alpha_i$.  The reflection $s_\alpha$ acts on weights by $s_\alpha \cdot \la = \la - \ip{\alpha^\vee,\la}\alpha$.

\begin{example}
Suppose $W = S_n$ and $\aW = \tS_n$.  We have positive simple roots $\alpha_1,\alpha_2,\ldots,\alpha_{n-1}$ and an affine simple root $\alpha_0$.
The finite positive roots are $R^+=\{ \alpha_{i,j} := \alpha_i + \alpha_{i+1} \cdots + \alpha_{j-1} \mid 1 \leq i < j \leq n\}$.  The reflection $s_{\alpha_{i,j}}$ is the transposition $(i,j)$.  The highest root is $\theta = \alpha_1 + \cdots + \alpha_{n-1}$.  The affine positive roots are $R^+_\af = \{\alpha_{i,j} \mid i < j \}$, where for simple roots the index is taken modulo $n$.  Note that one has 
$\alpha_{i,j} = \alpha_{i+n,j+n}$.  The imaginary roots are of the form $\alpha_{i,i+kn}$.  For a real root $\alpha_{i,j}$, the reflection $s_{\alpha_{i,j}}$ is the affine transposition $(i,j)$.

The weight lattice can be taken to be $P = \Z^n/(1,1,\ldots,1)$, and the coweight lattice to be $P^\vee = \{(x_1,x_2,\ldots,x_n) \in \Z^n \mid \sum_i x_i = 0\}$.  The roots and coroots are then $\alpha_{i,j} = e_i-e_j = \alpha^\vee_{i,j}$ (though the former is in $P$ and the latter is in $P^\vee$).  The inner product $P^\vee \times P \to \Z$ is induced by the obvious one on $\Z^n$.
\end{example}

\subsection{Affine Weyl group and translations}
The affine Weyl group can be expressed as the semi-direct product $\aW= W \ltimes Q^\vee$, as follows.  For each $\la \in Q^\vee$, one has a translation element $t_\la \in \aW$.  Translations are multiplicative, so that $t_\lambda \cdot t_\mu$ =
$t_{\lambda+\mu}$.  We also have the conjugation formula $w\, t_\lambda
w^{-1} = t_{w \cdot \lambda}$ for $w \in W$ and $\la \in Q^\vee$.
Let
$s_0$ denote the additional simple generator of $\aW$.  Then translation elements are related to the simple generators via the formula$$s_0 = s_{\theta^\vee} t_{-\theta^\vee}.$$ 

\begin{example}
For $\aW = \tS_n$, and $\la = (\la_1,\la_2,\ldots,\la_n) \in Q^\vee$, we have
$$
t_\la= [1+n\la_1,2+n\la_2,\ldots,n+n\la_n].
$$
Thus $t_{-\theta^\vee} = [1-n,2,\ldots,n-1,2n]$ and $s_0=s_{\theta^\vee}t_{-\theta^\vee}$ is the equality
$$
[0,2,\ldots,n-1,n+1]=[n,2,\ldots,n-1,1]\cdot[1-n,2,\ldots,n-1,2n].
$$
\end{example}

The element $wt_\la$ acts on $\mu \in P$ via
\begin{equation}\label{eq:affinewaction}
wt_\la \cdot \mu = w \cdot \mu.
\end{equation}
In other words, the translations act trivially on the {\it finite} weight lattice.  This action is called the {\it level zero action}.  

Let $\ell: \aW \rightarrow \Z_{\geq 0}$ denote the length function of
$\aW$.  Thus $\ell(w)$ is the length of the shortest reduced factorization of $w$.  

\begin{exercise}
For $w\, t_\la \in \aW$, we have
\begin{equation}
\label{eq:length} \ell(w\,t_\lambda) = \sum_{\alpha \in R^+} |
\ip{\lambda,\alpha} + \chi(w\cdot\alpha)|,
\end{equation}
where $\chi(\alpha) = 0$ if $\alpha \in R^+$ and $\chi(\alpha) = 1$
otherwise.
\end{exercise}

A coweight $\la$ is {\it dominant} (resp. {\it anti-dominant}) if $\ip{\la,\alpha} \geq 0$ (resp. $\leq 0$) for every root $\alpha \in R^+$.

\begin{exercise}
Suppose $\la \in Q^\vee$ is dominant.  Then $\ell(t_{w\la}) = 2\ip{\la,\rho}$.
\end{exercise}

Let $\aW^0$ denote the minimal length coset representatives of
$\aW/W$, which we call {\it Grassmannian} elements. There is a
natural bijection between $\aW^0$ and $Q^\vee$: each coset $\aW/W$
contains one element from each set.  

\begin{exercise}
We have $\aW^0 \cap Q^\vee = Q^-$, the elements of the coroot lattice which are anti-dominant.
\end{exercise}

In fact an element $w t_\lambda$ lies in $\aW^0$ if and only if $t_\lambda \in Q^-$ and $w \in W^\lambda$ where
$W^\lambda$ is the set of minimal length representatives of $W/W_\lambda$ and $W_\lambda$ is the stabilizer subgroup of
$\lambda$.

\section{NilCoxeter algebra and Fomin-Stanley construction}
\label{sec:algebra}
Let $W$ be a Weyl group and $\aW$ be the corresponding affine Weyl group.
\subsection{The nilCoxeter algebra}
The {\it nilCoxeter algebra} $\A_0$ is the algebra over $\Z$ generated by $\{A_i \mid i \in I\}$ with relations
\begin{align*}
A_i^2 &= 0 \\ 
(A_iA_j)^b &= (A_jA_i)^b & \text{if } (s_is_j)^b = (s_js_i)^b \\ A_j(A_iA_j)^b &= (A_iA_j)^bA_i &\text{if } s_j(s_is_j)^b = (s_is_j)^bs_i
\end{align*}
The algebra $\A_0$ is graded, where $A_i$ is given degree $1$.

The corresponding algebra for the affine Weyl group will be denoted $\aNC$.  This algebra is also discussed in Chapter \ref{chapter.k schur primer}, Section \ref{sec:nilcoxeter}.

\begin{proposition}
The nilCoxeter algebra $\A_0$ has basis $\{A_w \mid w \in W\}$, where $A_w = A_{i_1}A_{i_2} \cdots A_{i_\ell}$ for any reduced word $i_1 i_2 \cdots i_\ell$ of $w$.  The mulitplication is given by
$$
A_w \,A_v = \begin{cases} A_{wv} & \mbox{if $\ell(wv) = \ell(w) + \ell(v)$} \\
0 & \mbox{otherwise.}
\end{cases}
$$
\end{proposition}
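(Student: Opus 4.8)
The plan is to establish three facts about $\A_0$: that the element $A_w$ is independent of the chosen reduced word (so that the notation is meaningful), that the $A_w$ span $\A_0$, and that they are linearly independent; the product formula will then drop out almost immediately. The central tool for the first two facts is the word property for Coxeter groups (the Matsumoto--Tits theorem, see \cite{BB}): any two reduced words for $w \in W$ are related by a sequence of braid moves, and any non-reduced word can be transformed, using braid moves alone, into a word having two equal adjacent letters. For well-definedness, I would observe that the defining relations $(A_iA_j)^b = (A_jA_i)^b$ and their variants are precisely the braid relations of $W$ rewritten in the generators $A_i$, involving no occurrence of the nilpotent relation $A_i^2=0$. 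Hence if $i_1 \cdots i_\ell$ and $j_1 \cdots j_\ell$ are two reduced words for $w$, the braid moves connecting them translate into the equality $A_{i_1}\cdots A_{i_\ell} = A_{j_1} \cdots A_{j_\ell}$ in $\A_0$, so $A_w$ is unambiguous.

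For spanning, every monomial $A_{i_1}\cdots A_{i_m}$ is indexed by a word $i_1 \cdots i_m$. If that word is reduced, the monomial equals $A_w$ for the corresponding $w$. If it is not reduced, the word property lets me apply braid moves (valid relations in $\A_0$) to produce a word containing a factor $A_iA_i$, whereupon $A_i^2=0$ forces the monomial to vanish. Since such monomials span $\A_0$, the set $\{A_w \mid w \in W\}$ spans.

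Linear independence is the real obstacle, since it amounts to showing that no relations are hidden beyond those already imposed. For this I would construct a faithful representation of $\A_0$ on the free $\Z$-module $V = \bigoplus_{w \in W} \Z\, v_w$, by the rule
$$
A_i \cdot v_w = \begin{cases} v_{s_i w} & \text{if } \ell(s_i w) > \ell(w), \\ 0 & \text{otherwise.}\end{cases}
$$
Granting that this is a representation, one computes $A_w \cdot v_{\id} = v_w$ for every $w$ by peeling a reduced word off from the right, each intermediate element being a suffix of the reduced word and hence of strictly increasing length. Thus the $A_w$ send $v_{\id}$ to distinct basis vectors of $V$, so they are linearly independent over $\Z$. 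The technical heart is verifying that these operators satisfy the defining relations: $A_i^2=0$ is immediate, while each braid relation, applied to a fixed $v_w$, alters $w$ only by left multiplication by elements of the dihedral group $D = \langle s_i, s_j\rangle$. Writing $w = d\,m$ with $m$ the minimal-length representative of the coset $Dw$ and $\ell(w) = \ell(d)+\ell(m)$, the check collapses to a finite computation inside the rank-two nilCoxeter algebra of $D$, where both $(A_iA_j)^b$ and $(A_jA_i)^b$ send $v_d$ either to $v_{w_0(D)\,m}$ (with $w_0(D)$ the longest element of $D$, when the chain of left multiplications stays length-increasing) or to $0$. This dihedral verification is where the bulk of the care is required.

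Finally, with $\{A_w \mid w \in W\}$ established as a $\Z$-basis, the multiplication formula follows directly. Concatenating reduced words for $w$ and $v$ gives $A_w A_v = A_{i_1}\cdots A_{i_\ell}A_{j_1}\cdots A_{j_m}$. When $\ell(wv) = \ell(w)+\ell(v)$ the concatenated word is reduced for $wv$, so the product is $A_{wv}$; otherwise the concatenation is non-reduced, and the spanning argument above forces the product to be $0$.
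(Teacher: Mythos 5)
Your proof is correct, and it is worth noting that the paper itself states this proposition without proof; the argument it implicitly has in mind surfaces only later (Chapter~\ref{chapter.affineSchubert}, Lemmas~\ref{L:ctri} and~\ref{L:A0basis}, and the divided-difference exercise in Section~\ref{ssec:algebraprob}). There the generators are realized concretely as $A_i=\alpha_i^{-1}(1-s_i)$ inside the twisted group algebra $\bigoplus_{w\in W}\Frac(S)\,w$ (equivalently, as divided difference operators on polynomials), the relations are checked in that model, and linear independence of the $A_w$ follows from the unitriangularity of the transition matrix $c_{w,v}$ with respect to Bruhat order, $A_w=\sum_v c_{w,v}\,v$ with $c_{w,w}\neq 0$. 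You replace that step by a purely combinatorial faithful representation on the free $\Z$-module $\bigoplus_{w\in W}\Z\,v_w$, with the braid relations verified by reduction to the parabolic dihedral subgroup $D=\langle s_i,s_j\rangle$ via the decomposition $w=dm$, $\ell(w)=\ell(d)+\ell(m)$. Both routes share the same well-definedness and spanning arguments (Matsumoto--Tits plus the observation that a non-reduced word braid-moves to one with an adjacent repeated letter), and the product formula falls out identically. What your approach buys is a self-contained proof over $\Z$ for an arbitrary Coxeter group, with no root datum, no fraction field, and no appeal to a geometric or operator-theoretic realization; what the paper's approach buys is the explicit coefficients $c_{w,v}$ and the embedding $\A_0\subset\A$, which are needed anyway for the nilHecke ring. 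The one place in your write-up that deserves to be made fully explicit is the dihedral check itself: for $d'\in D$ both sides of the braid relation send $v_{d'm}$ to $v_{w_0(D)m}$ when $d'=\id$ and to $0$ otherwise, because a chain of $m_{ij}$ length-increasing left multiplications inside $D$ starting above the identity would exceed the length of the longest element of $D$. With that spelled out the argument is complete.
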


\subsection{Fomin and Stanley's construction}
Suppose $W = S_n$.  We describe the construction of Stanley symmetric functions of Fomin and Stanley \cite{FS}.  Define
$$
\h_k = \sum_{w \text{ decreasing: } \ell(w) = k} A_w.
$$
For example, when $n = 4$, we have
\begin{align*}
\h_0 &= \id \\
\h_1&=A_1 + A_2 + A_3 \\
\h_2&=A_{21} + A_{31} + A_{32} \\
\h_3&=A_{321}
\end{align*}
where $A_{i_1i_2\cdots i_\ell}:= A_{i_1}A_{i_2} \cdots A_{i_\ell}$.  

\begin{lemma}[\cite{FS}]\label{lem:hproduct}
The generating function $\h(t) = \sum_k \h_k t^k$ has the product expansion
$$
\h(t) = (1+t\,A_{n-1})(1+t\,A_{n-2}) \cdots (1+t\,A_1).
$$
\end{lemma}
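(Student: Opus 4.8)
The plan is to expand the right-hand side over subsets of the generator indices and match it against the left-hand side term-by-term in the basis $\{A_w\}$ of $\A_0$. Expanding $(1+tA_{n-1})(1+tA_{n-2})\cdots(1+tA_1)$ by selecting from each factor either $1$ or $tA_i$ gives
$$
(1+tA_{n-1})\cdots(1+tA_1) = \sum_{S \subseteq [n-1]} t^{|S|}\, A_{i_1}A_{i_2}\cdots A_{i_m},
$$
where for each subset $S$ the indices are listed in the order $i_1 > i_2 > \cdots > i_m$ forced by the order of the factors. By the multiplication rule $A_wA_v = A_{wv}$ when $\ell(wv)=\ell(w)+\ell(v)$ and $0$ otherwise, the monomial $A_{i_1}\cdots A_{i_m}$ equals $A_{w_S}$, with $w_S := s_{i_1}\cdots s_{i_m}$, exactly when the strictly decreasing word $i_1 i_2 \cdots i_m$ is reduced, and equals $0$ otherwise. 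Thus the lemma reduces to two claims: (i) every strictly decreasing word is reduced; and (ii) $S \mapsto w_S$ is a bijection from subsets of $[n-1]$ onto the decreasing permutations of $S_n$, with $\ell(w_S)=|S|$.

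The crux is (i), and I expect it to be the main obstacle; I would prove it by building the word from the right. Writing $u_j := s_{i_j}s_{i_{j+1}}\cdots s_{i_m}$, I would show inductively (downward in $j$) that $u_j$ is reduced with $\ell(u_j)=m-j+1$. The base case $u_m = s_{i_m}$ is clear. For the inductive step, the key observation is: if $v$ lies in the parabolic subgroup $\langle s_1,\ldots,s_{i-1}\rangle$, then $\ell(s_i v)=\ell(v)+1$. Indeed, that parabolic subgroup permutes $\{1,\ldots,i\}$ and fixes $i+1,\ldots,n$, so $v^{-1}(i)\leq i < i+1 = v^{-1}(i+1)$, which is precisely the condition for left multiplication by $s_i$ to be length-increasing. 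Since $i_{j+1} > \cdots > i_m$ are all $\leq i_{j+1} \leq i_j - 1$, the element $u_{j+1}$ lies in $\langle s_1,\ldots,s_{i_j-1}\rangle$, so applying the observation with $i=i_j$ and $v=u_{j+1}$ gives $\ell(u_j)=\ell(u_{j+1})+1=m-j+1$. Taking $j=1$ shows $i_1\cdots i_m$ is reduced of length $m$, which is (i); it also yields $\ell(w_S)=|S|$.

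For (ii), I would use (i) together with the uniqueness already noted in the text, namely that a decreasing permutation has a unique decreasing reduced word (which, being reduced, has distinct strictly decreasing letters and so is determined by its set of letters). Injectivity of $S\mapsto w_S$ follows because $S^{\downarrow}$ and $S'^{\downarrow}$ are then both decreasing reduced words for $w_S=w_{S'}$, hence equal, forcing $S=S'$; surjectivity onto decreasing permutations is immediate by reading off the letter set of the decreasing reduced word. Having established (i) and (ii), I would conclude by regrouping:
$$
(1+tA_{n-1})\cdots(1+tA_1) = \sum_{S\subseteq[n-1]} t^{|S|} A_{w_S} = \sum_{w\text{ decreasing}} t^{\ell(w)} A_w = \sum_k t^k \h_k = \h(t),
$$
which is the claimed product expansion. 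The remaining work is purely bookkeeping; no delicate estimates are needed beyond the parabolic-subgroup length computation above.
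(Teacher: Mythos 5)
Your proof is correct. The paper itself states this lemma with only a citation to Fomin--Stanley and gives no proof, so there is nothing to compare against; your argument is the standard one, and the only point requiring real care --- that every strictly decreasing word $i_1>i_2>\cdots>i_m$ is reduced --- is handled correctly by your parabolic-subgroup observation that $v\in\langle s_1,\ldots,s_{i-1}\rangle$ forces $v^{-1}(i)\le i<i+1=v^{-1}(i+1)$ and hence $\ell(s_iv)=\ell(v)+1$.
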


\begin{definition}[NilCoxeter algebra] \label{def:stalg}
The Stanley symmetric function $F_w$ is the coefficient of $A_w$ in the product
$$
\h(x_1)\h(x_2)\cdots
$$
\end{definition}

\begin{lemma}[\cite{FS}]\label{lem:finstcommute}
We have 
$$
\h(x) \h(y) = \h(y) \h(x).
$$
Thus for every $k, l$ we have
$$
\h_k \h_l = \h_l \h_k.
$$
\end{lemma}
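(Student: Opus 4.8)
The plan is to prove the stronger generating-function statement $\h(x)\h(y) = \h(y)\h(x)$, from which $\h_k \h_l = \h_l \h_k$ follows by extracting the coefficient of $x^k y^l$. By Lemma \ref{lem:hproduct} I would work with the factored form $\h(x) = (1+xA_{n-1})(1+xA_{n-2})\cdots(1+xA_1)$, so that the task becomes a purely algebraic identity in the nilCoxeter algebra $\A_0$: the product of two ``staircases,'' one in the variable $x$ and one in $y$, is unchanged when the roles of $x$ and $y$ are swapped.

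First I would record the two local relations available in $\A_0$. Non-adjacent generators commute, so $(1+sA_i)(1+tA_j) = (1+tA_j)(1+sA_i)$ whenever $|i-j|\ge 2$. For adjacent generators I would establish the Yang--Baxter-type relation
$$(1+xA_i)(1+(x+y)A_{i+1})(1+yA_i) = (1+yA_{i+1})(1+(x+y)A_i)(1+xA_{i+1}),$$
which is checked by expanding both sides and using $A_i^2 = A_{i+1}^2 = 0$ together with the braid relation $A_iA_{i+1}A_i = A_{i+1}A_iA_{i+1}$; every term matches, the top-degree terms agreeing precisely because of the braid relation. With these in hand, the commutation $\h(x)\h(y)=\h(y)\h(x)$ is obtained by a wiring-diagram (transfer-matrix) argument: one slides the $y$-staircase leftward through the $x$-staircase, crossing one strand at a time, applying non-adjacent commutation at distant crossings and the Yang--Baxter relation at adjacent ones, until the two staircases have exchanged order.

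The main obstacle is the bookkeeping in this sliding argument: because the Yang--Baxter relation holds only with the specific middle parameter $x+y$, one must organize the induction so that the parameters recombine correctly at each crossing and verify that no stray factors are produced. This is where essentially all of the content lies, and it is cleanest to phrase it as an induction on $n$ after isolating how the factor $(1+yA_{n-1})$ propagates past the remaining factors of $\h(x)$. Alternatively, and much more quickly, one may invoke the already-established symmetry of Stanley symmetric functions: by Definition \ref{def:stdec} the coefficient of $x^k y^l$ in $F_w(x,y,0,0,\ldots)$ is the number of decreasing factorizations $w = uv$ with $\ell(u)=k$ and $\ell(v)=l$, which is exactly the coefficient of $A_w$ in $\h_k \h_l$. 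Since $F_w$ is symmetric (Theorem \ref{thm:stsymm}, proved independently in Section \ref{sec:EG}), this coefficient is unchanged under $k \leftrightarrow l$, so the coefficient of $A_w$ in $\h_k\h_l$ and in $\h_l\h_k$ agree for every $w$; as the $A_w$ form a basis of $\A_0$, we conclude $\h_k\h_l = \h_l\h_k$.
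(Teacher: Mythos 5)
Your first route does not go through as written, and the difficulty you defer to ``bookkeeping'' is in fact the whole obstruction. The Yang--Baxter relation $(1+xA_i)(1+(x+y)A_{i+1})(1+yA_i)=(1+yA_{i+1})(1+(x+y)A_i)(1+xA_{i+1})$ is correct, but it applies only to three consecutive factors with alternating indices whose \emph{middle} parameter is the sum of the outer two. No three consecutive factors of $\h(x)\h(y)=(1+xA_{n-1})\cdots(1+xA_1)(1+yA_{n-1})\cdots(1+yA_1)$ ever have this form (inside each staircase the indices strictly decrease, and at the junction the parameters are $x,x,y$ rather than $a,a+b,b$), so there is no crossing at which the relation can be applied and the sliding never gets started. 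The paper's proof, following Fomin--Stanley, uses a different and asymmetric local identity,
$$
(1+xA_{i+1})(1+xA_i)(1+yA_{i+1})=(1+yA_{i+1})(1+yA_i)(1+xA_{i+1})(1-yA_i)(1+xA_i),
$$
together with induction on $n$: after applying it with $i=n-2$ and commuting non-adjacent factors, the stray factor $(1-yA_{n-2})$ is cancelled against $(1+yA_{n-2})$ once the inductive hypothesis for $S_{n-1}$ is used to reorder the two truncated staircases. If you wish to pursue a Yang--Baxter argument you must supply a substitute for this identity; it is not a routine verification.

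Your second route, by contrast, is complete, correct, and genuinely different from the paper's. The coefficient of $A_w$ in $\h_k\h_l$ is the number of decreasing factorizations $w=v_1v_2$ with $\ell(v_1)=k$ and $\ell(v_2)=l$, which is the coefficient of $x^ky^l$ in $F_w(x,y,0,0,\dots)$; symmetry of $F_w$, available from the Edelman--Greene proof of Theorem \ref{thm:stsymm} in Section \ref{sec:EG}, then gives $\h_k\h_l=\h_l\h_k$ upon comparing coefficients in the basis $\{A_w\}$ of $\A_0$, and this is equivalent to the generating-function statement. You are right that this is not circular. What it buys is brevity; what it costs is that the nilCoxeter construction is meant to furnish a second, self-contained algebraic proof of the symmetry of $F_w$ (the paper deduces Theorem \ref{thm:stsymm} from this very lemma in Section \ref{sec:algebra}), and your argument inverts that logical order, so the algebraic development no longer stands on its own.
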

\begin{proof}
We observe that $(1+xA_i)$ and $(1+yA_j)$ commute whenever $|i-j| \geq 2$ and that
$$
(1+ xA_{i+1})(1+ xA_{i})(1+yA_{i+1})
=(1+yA_{i+1})(1+ yA_{i})(1+ xA_{i+1})(1-yA_i)(1+xA_i).
$$
Assuming by induction that the result is true for $S_{n-1}$ we calculate
\begin{align*}
&(1+xA_{n-1})\cdots (1+xA_1)(1+yA_{n-1})\cdots (1+yA_1) \\
&= \left[(1+yA_{n-1})(1+yA_{n-2})(1+xA_{n-1})(1-yA_{n-2})(1+xA_{n-2})\right]\\&(1+xA_{n-3})\cdots (1+xA_1)(1+yA_{n-2})\cdots (1+yA_1) \\
&= (1+yA_{n-1})(1+yA_{n-2})(1+xA_{n-1})(1-yA_{n-2})\left[(1+yA_{n-2})\cdots (1+yA_1)\right]\\& \left[ (1+xA_{n-2})\cdots (1+xA_1) \right]\\
&=(1+yA_{n-1})\cdots (1+yA_1)(1+xA_{n-1})\cdots (1+xA_1).
\end{align*}
\end{proof}

\begin{proof}[Proof of Theorem \ref{thm:stsymm}]
Follows immediately from Definition \ref{def:stalg} and Lemma \ref{lem:finstcommute}.
\end{proof}

The following corollary of Lemma \ref{lem:finstcommute} suggests a way to generalize these constructions to other finite and affine Weyl groups.

\begin{corollary}\label{cor:finFS}
The elements $\h_k$ generate a commutative subalgebra of $\A_0$.
\end{corollary}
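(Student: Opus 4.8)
The plan is to deduce this immediately from Lemma \ref{lem:finstcommute}, since the entire substance of the commutativity has already been packaged there; what remains is only the elementary passage from ``the generators pairwise commute'' to ``the subalgebra they generate is commutative.''

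First I would fix notation by letting $\B \subseteq \A_0$ denote the subalgebra generated by the elements $\{\h_k \mid k \geq 0\}$ together with the unit. By definition of the generated subalgebra, $\B$ is the $\Z$-span of all monomials $\h_{k_1}\h_{k_2}\cdots \h_{k_m}$ with $m \geq 0$ and $k_1,\ldots,k_m \geq 0$ (the empty product being $\id$). To check that $\B$ is commutative it then suffices to check that any two such monomials commute, after which one extends the identity bilinearly to arbitrary $\Z$-linear combinations.

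Next I would establish that any two monomials commute by a standard reordering argument. By Lemma \ref{lem:finstcommute} we have $\h_k \h_l = \h_l \h_k$ for all $k,l$, so any monomial $\h_{k_1}\cdots\h_{k_m}$ can be transformed into any prescribed rearrangement of its factors by a finite sequence of adjacent transpositions, each of which is an instance of the relation $\h_k\h_l = \h_l\h_k$. Consequently the products $\h_{k_1}\cdots\h_{k_m}\,\h_{l_1}\cdots\h_{l_p}$ and $\h_{l_1}\cdots\h_{l_p}\,\h_{k_1}\cdots\h_{k_m}$ coincide, being two rearrangements of the same multiset of factors. Bilinearity of multiplication then yields $b\,b' = b'\,b$ for all $b, b' \in \B$, so $\B$ is a commutative subalgebra of $\A_0$, as claimed.

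Since all of the genuine work is contained in Lemma \ref{lem:finstcommute} (which itself rests on the product expansion of Lemma \ref{lem:hproduct} and the braid-type identity verified in its proof), there is no real obstacle at this stage: the only point requiring care is the purely formal fact that a subalgebra generated by pairwise-commuting elements is itself commutative, and the reordering argument above makes this precise. I would also remark that this corollary is exactly the structural feature one wishes to generalize to arbitrary finite and affine Weyl groups, which motivates the later constructions of the Fomin--Stanley subalgebra.
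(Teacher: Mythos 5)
Your proposal is correct and matches the paper, which states this as an immediate consequence of Lemma \ref{lem:finstcommute} with no further argument: once $\h_k\h_l = \h_l\h_k$ is known for all $k,l$, the commutativity of the generated subalgebra is the standard formal fact you spell out. Your reordering argument just makes explicit what the paper leaves implicit, so there is nothing to add or correct.
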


We call the subalgebra of Corollary \ref{cor:finFS} the {\it Fomin-Stanley} sublagebra of $\A_0$, and denote it by $\B$.  As we shall explain, the combinatorics of Stanley symmetric functions    is captured by the algebra $\B$, and the information can be extracted by ``picking a basis''.

\subsection{A conjecture}
We take $W$ to be an arbitrary Weyl group.  For basic facts concerning the exponents of $W$, we refer the reader to \cite{Hum}.
The following conjecture was made by the author and Alex Postnikov.  Let $(R^+, \prec)$ denote the partial order on the positive roots of $W$ given by $\alpha \prec \beta$ if $\beta - \alpha$ is a positive sum of simple roots, and let $J(R^+,\prec)$ denote the set of upper order ideals of $(R^+,\prec)$.

\begin{conjecture}\label{conj:LP}
The (finite) nilCoxeter algebra $\A_0$ contains a graded commutative subalgebra $\B'$ satisfying:
\begin{enumerate}
\item
Over the rationals, the algebra $\B' \otimes_\Z \Q$ is generated by homogeneous elements $\h_{i_1}, \h_{i_2}, \ldots, \h_{i_r} \in \A_0$ with degrees $\deg(\h_{i_j}) = i_j$ given by the exponents of $W$.
\item
The Hilbert series $P(t)$ of $\B'$ is given by 
$$
P(t) = \sum_{I \in J(R^+,\prec)} t^{|I|}
$$
In particular, the dimension of $\B'$ is a the generalized Catalan number for $W$ (see for example \cite{FR}).
\item 
The set $\B'$ has a homogeneous basis $\{b_I \mid I \in J(R^+,\prec)\}$ consisting of elements which are nonnegative linear combinations of the $A_w$.
\item
The structure constants of the basis $\{b_I\}$ are positive.
\end{enumerate}
\end{conjecture}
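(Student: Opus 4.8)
The plan is to attack the four parts of Conjecture~\ref{conj:LP} in the order (1),(2),(3),(4), using the faithful action of $\A_0$ on the coinvariant algebra $R = H^*(G/B) \cong \mathrm{Sym}(P)/(\mathrm{Sym}(P)^W_+)$ by divided difference operators $A_i \mapsto \partial_i$ as the main computational device: since the $\partial_w$ are linearly independent in $\mathrm{End}(R)$, any identity in $\A_0$, and in particular the commutativity of two candidate generators, may be checked after applying the operators to Schubert classes. Type $A$ is essentially known, the commutativity being the Fomin--Stanley theorem (Lemma~\ref{lem:finstcommute} and Corollary~\ref{cor:finFS}), so the real content is a uniform, or at least a type-by-type, treatment of the remaining types. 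For the classical types one expects to import the symplectic and orthogonal analogues of the Fomin--Stanley construction from \cite{FK:C,BH,BL,Pon}, leaving the exceptional types to a finite computer verification once the generators and basis are pinned down.

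First I would fix the generators $\h_{e_1},\ldots,\h_{e_r}$, of degrees equal to the exponents of $W$. The most promising uniform description is a Peterson-style one: characterize $\B'$ as (the degree-reversed image of) a centralizer, namely the elements of $\A_0$ whose divided-difference action commutes with multiplication by the degree-one part of $R$, and take $\h_{e_i}$ to be the distinguished lifts of the generators of $R^W$ dictated by the exponents. Proving \emph{commutativity} (part~(1)) is then the first real obstacle, because the Fomin--Stanley proof rests on the explicit factorization $\h(t) = (1+tA_{n-1})\cdots(1+tA_1)$ of Lemma~\ref{lem:hproduct}, which is special to type $A$. I would instead try to realize the $\h_{e_i}$ as the image of a genuinely commutative ring, a polynomial ring on the exponent generators, under an algebra homomorphism into $\A_0$, so that commutativity becomes automatic; the difficulty is that no such homomorphism is known outside type $A$, and constructing one is essentially equivalent to constructing $\B'$ itself.

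Granting commuting generators, I would next establish the Hilbert series in part~(2). The strategy is to produce the candidate basis $\{b_I \mid I \in J(R^+,\prec)\}$ directly and show that it is triangular with respect to the length basis $\{A_w\}$, exactly as the affine Schur functions were shown to form a basis by triangularity against the monomials in Theorem~\ref{thm:affineSchurbasis}. Concretely, to each order ideal $I$ one attaches the antichain of its minimal roots, hence a nonnesting partition and a distinguished element $b_I$ which is a nonnegative combination of the $A_w$ with a well-defined leading term $A_{w(I)}$; linear independence and the count $\dim \B' = |J(R^+,\prec)| = \mathrm{Cat}(W)$ (see \cite{FR}) then follow from the bijection between order ideals and their leading terms, and the graded refinement yields $P(t) = \sum_{I} t^{|I|}$. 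Part~(3), that the $b_I$ are nonnegative in the $A_w$, would be built into this construction.

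The hard part will be part~(4), positivity of the structure constants of $\{b_I\}$. In type $A$ this positivity is ultimately geometric, mirroring the way Lam's proof that affine Stanley symmetric functions are affine-Schur positive (Theorem~\ref{thm:affstpos}) flows from the homology of the affine Grassmannian; but there is no evident finite-type space whose Schubert basis is $\{b_I\}$, so a direct analogue is unavailable. I would therefore pursue two fallbacks: first, seek a combinatorial Littlewood--Richardson rule for the $b_I$ by analysing how the Edelman--Greene insertion and Coxeter--Knuth machinery of Section~\ref{sec:EG} deforms away from type $A$; second, if a uniform proof resists, reduce positivity to a finite check in each type once the basis $\{b_I\}$ is determined. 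Establishing either a geometric model for $\B'$ or such a combinatorial rule is, I expect, the crux of the entire conjecture.
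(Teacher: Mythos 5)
You should first be aware that the statement in question is a \emph{conjecture} in the paper (due to the author and Postnikov), and the paper offers no proof of it; there is nothing to compare your argument against except the partial evidence the paper assembles. That evidence consists of: a complete verification in type $A$ (Proposition~\ref{prop:LP}, where $\B'=\B$ and the basis $\{b_I\}$ is the set of noncommutative Schur functions $\s_\la$ for $\la\subseteq\delta_{n-1}$, order ideals of $(R^+,\prec)$ being exactly the staircase-bounded partitions); a proof of part~(1) in \emph{all} types (Proposition~\ref{prop:gen}); and an explicit $G_2$ computation. Your text is a research plan rather than a proof, and you say so yourself at the key junctures (``no such homomorphism is known outside type $A$,'' ``the crux of the entire conjecture''), so as a proof it has gaps precisely where the conjecture is actually open: parts (2), (3), (4), and the construction of distinguished generators in (1).

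The one place where your plan is strictly behind what the paper already knows is the construction of the commutative candidate. You treat commutativity outside type $A$ as an obstacle and wish for a genuinely commutative ring mapping into $\A_0$; the paper has this in every type. The finite Fomin--Stanley algebra is defined in Section~\ref{sec:finiteFS} as $\B=\kappa(\Pet)$, the image of the commutative Peterson subalgebra, equivalently $\B=\kappa(\aB)$ with $\aB\cong H_*(\Gr_G)$; since $\kappa$ restricted to the affine nilCoxeter algebra is an algebra homomorphism (a length-additive product $A_uA_v=A_{uv}$ lands in the finite nilCoxeter algebra only when $u,v\in W$), commutativity of $\B$ is automatic, and Proposition~\ref{prop:gen} then deduces the exponent-degree generators of part~(1) from the corresponding generation statement for $H_*(\Gr_G,\Q)$. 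Your proposed ``Peterson-style centralizer characterization'' of $\B'$ inside $\A_0$ is essentially Proposition~\ref{prop:affStan} transported to finite type, which the paper lists as an open problem rather than a known fact. Finally, your route to parts (2)--(3) via triangularity against leading terms $A_{w(I)}$ presupposes a map $I\mapsto w(I)$ from order ideals to Weyl group elements realizing the leading terms of the $b_I$; in type $A$ this is exactly the code/Edelman--Greene argument in the proof of Proposition~\ref{prop:LP}, but no analogue is known in other types, and producing one (together with any proof of positivity in part~(4)) is the content of the conjecture, not a step toward it.
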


In the sequel we shall give an explicit construction of a commutative subalgebra $\B$ and provide evidence that it satisfies Conjecture \ref{conj:LP}.  

Suppose $W = S_n$.  We show that $\B'=\B$ satisfies Conjecture \ref{conj:LP}.  The upper order ideals $I$ of $(R^+,\prec)$ are naturally in bijection with Young diagrams fitting inside the staircase $\delta_{n-1}$.  For each partition $\la$ we define the noncommutative Schur functions $\s_\la \in \B$, following Fomin and Greene \cite{FG:1998}, by writing $s_\la$ as a polynomial in the $h_i$, and then replacing $h_i$ by $\h_i$.  (We set $\h_k = 0$ if $k \geq n$, and $\h_0 = 1$.)  Fomin and Greene show that $\s_\la$ is a nonnegative linear combination of the $A_w$'s (but it will also follow from our use of the Edelman-Greene correspondence below).

\begin{prop}[Lam - Postnikov]\label{prop:LP}
The set $\{\s_\la \mid \la \subseteq \delta_{n-1}\} \subset \B$ is a basis for $\B$.
\end{prop}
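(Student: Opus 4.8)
The plan is to realize $\B$ as a quotient of $\La$ and then use the Edelman--Greene correspondence to read off which noncommutative Schur functions survive and to prove their independence. First I would observe that, since $\B$ is commutative (Corollary \ref{cor:finFS}) and $\La = \QQ[h_1,h_2,\dots]$ is a free polynomial algebra on the $h_i$, the assignment $h_i \mapsto \h_i$ for $1 \leq i \leq n-1$ and $h_i \mapsto 0$ for $i \geq n$ extends to a unique algebra homomorphism $\psi\colon \La \to \B$. Its image is the subalgebra generated by the $\h_i$, namely $\B$, so $\psi$ is surjective; and by the Jacobi--Trudi formula (Theorem \ref{thm:JT}) together with the definition of $\s_\la$ one has $\psi(s_\la) = \s_\la$ and $\psi(h_\la) = \h_\la$. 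Since $\{s_\la\}$ is a basis of $\La$, the family $\{\s_\la\}$ already spans $\B$; the content of the proposition is thus to identify exactly which $\s_\la$ are nonzero and to establish their linear independence.

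The key identity I would extract is
$$
\s_\mu = \sum_{w \in S_n} \alpha_{w\mu}\, A_w,
$$
where $\alpha_{w\mu}$ is the number of EG-tableaux of shape $\mu$ for $w^{-1}$, as in Corollary \ref{cor:stpos}. To prove it, consider $\Omega := \prod_{i\geq 1}\h(x_i)$, an element of $\A_0 \otimes \La$ completed in the symmetric-function degree. Expanding monomial by monomial and using commutativity of the $\h_i$ gives $\Omega = \sum_\la \h_\la \otimes m_\la$, while Definition \ref{def:stalg} gives $\Omega = \sum_w A_w \otimes F_w$. Applying $\psi \otimes \mathrm{id}$ to the Cauchy identity $\sum_\la h_\la \otimes m_\la = \sum_\la s_\la \otimes s_\la$ of \eqref{eq:cauchykernel} turns the first expansion into $\Omega = \sum_\la \s_\la \otimes s_\la$. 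Pairing the right tensor factor of both expansions against $s_\mu$ with the Hall inner product, and using $\langle s_\la, s_\mu\rangle = \delta_{\la\mu}$ and $\langle F_w, s_\mu\rangle = \alpha_{w\mu}$, yields the displayed formula.

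Two consequences follow. By Lemma \ref{lem:EGshape}, every EG-tableau for an element of $S_n$ has shape contained in $\delta_{n-1}$, so $\alpha_{w\mu} = 0$ for all $w$ whenever $\mu \not\subseteq \delta_{n-1}$; hence $\s_\mu = 0$ in that case, and the spanning set reduces to $\{\s_\mu : \mu \subseteq \delta_{n-1}\}$. For independence, since the $A_w$ are linearly independent it suffices to show that the columns of $(\alpha_{w\mu})$ indexed by $\mu \subseteq \delta_{n-1}$ are independent, which I would establish by a dominance-triangularity argument. For fixed $x \in S_n$ one has $F_{x^{-1}} = \sum_\mu \alpha_{x^{-1}\mu} s_\mu = \sum_\mu g(x,\mu)\, s_\mu$, where $g(x,\mu)$ is the number of EG-tableaux of shape $\mu$ for $x$; Proposition \ref{prop:dominant} then shows that the dominance-maximal $\mu$ with $g(x,\mu)\neq 0$ is $\la(x^{-1})$, occurring with coefficient $1$, and that $g(x,\mu)=0$ unless $\mu \preceq \la(x^{-1})$. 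Choosing, for each $\mu \subseteq \delta_{n-1}$, a permutation $w_\mu$ with $\la(w_\mu) = \mu$ makes the square submatrix of $(\alpha_{w\nu})$ on rows $\{w_\mu\}$ and columns $\{\nu \subseteq \delta_{n-1}\}$ lower unitriangular with respect to dominance order, hence invertible, which forces the $\s_\mu$ to be linearly independent.

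The one combinatorial input still needed is that every $\mu \subseteq \delta_{n-1}$ arises as $\la(w)$ for some $w \in S_n$. I would verify this directly from codes: the codes of $S_n$ are exactly the sequences $(c_1,\dots,c_n)$ with $0 \leq c_i \leq n-i$, so their decreasing rearrangements are precisely the partitions $\nu \subseteq \delta_{n-1}$; since $\la(w)$ is the conjugate of the sorted code of $w^{-1}$ and $\delta_{n-1}$ is self-conjugate, $\{\la(w) : w \in S_n\} = \{\mu : \mu \subseteq \delta_{n-1}\}$. I expect the main obstacle to be the bookkeeping in the key identity — keeping the inverses in $\alpha_{w\mu}$ straight and confirming the compatibility of the algebraic map $\psi$ with the Edelman--Greene expansion — rather than any single deep step; once that identity is in hand, both the vanishing $\s_\mu = 0$ for $\mu \not\subseteq \delta_{n-1}$ and the triangular independence are routine. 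Combining spanning with independence then proves that $\{\s_\mu : \mu \subseteq \delta_{n-1}\}$ is a basis of $\B$.
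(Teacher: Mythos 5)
Your proof is correct and follows essentially the same route as the paper's: the Cauchy-identity computation showing that the coefficient of $A_w$ in $\s_\la$ equals $[s_\la]F_w$, Lemma \ref{lem:EGshape} for the vanishing of $\s_\la$ when $\la \not\subseteq \delta_{n-1}$, and dominance-triangularity of $F_w$ via a permutation in $S_n$ realizing each staircase partition. The only cosmetic difference is that you characterize the codes of $S_n$ abstractly where the paper constructs the permutation with a prescribed code explicitly.
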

\begin{proof}
Let $\ip{.,.}:\A_0 \otimes \A_0 \to \Z$ be the inner product defined by extending bilinearly $\ip{A_w,A_v} = \delta_{wv}$.
Rewriting the definition of Stanley symmetric functions and using the Cauchy identity, one has
\begin{align*}
F_w &= \sum_\la \ip{\h_{\la_1}\h_{\la_2}\cdots, A_w}m_\la\\
&=\sum_\la \ip{\s_\la, A_w}s_\la.
\end{align*}
It follows that the coefficient of $A_w$ in $\s_\la$ is $\alpha_{w\la}$, the coefficient of $s_\la$ in $F_w$.  By Lemma \ref{lem:EGshape}, we have $\s_\la = 0$ unless $\la \subseteq \delta_{n-1}$.  It remains to show that this set of $\s_\la$ are linearly independent.  To demonstrate this, we shall find, for each $\la \in \delta_{n-1}$, some $w \in S_n$ such that $F_w = s_\la + \sum_{\mu \prec \la} \alpha_{w\mu} s_\mu$.

Since $s_\la = m_\la + \sum_{\mu \prec \la} K_{\la\mu} m_\mu$ (where the $K_{\la\mu}$ are the Kostka numbers), by Proposition \ref{prop:dominant} it suffices to show that the permutation $w$ with code $c(w) = \la$ lies in $S_n$.  Let $\la = (\la_1\geq\la_2\geq \ldots \geq\la_{\ell} > 0)$.  Then define $w$ recursively by $w(i) = \min\{j > \la_i \mid j \notin \{w(1),w(2),\ldots,w(i-1)\}$.  Since $\la_i \leq n-i$, we have $w(i) \leq n$.  By construction $w \in S_n$ and has code $c(w) = \la$.
\end{proof}

\begin{example}\label{ex:S4}
Let $W = S_4$.  Write $A_{i_1i_2\cdots i_k}$ for $A_{s_{i_1}s_{i_2}\cdots s_{i_k}}$.

Then we have
\begin{align*}
\begin{array}{ll}
\s_\emptyset = 1 &
\s_1 = A_1 + A_2 + A_3 \\
\s_{11} = A_{12} + A_{23} + A_{13} & \s_2 = A_{21}+A_{32}+A_{31} \\
\s_{111} = A_{123}  & \s_3 = A_{321} \\
\s_{21} = A_{213}+A_{212}+A_{323}+A_{312}&
\s_{211}= A_{1323} + A_{1213} \\ 
\s_{22}= A_{2132} 
&\s_{31}= A_{3231}+A_{3121} \\
\s_{221}=A_{23123} & \s_{311}=A_{32123} \\
\s_{32}=A_{32132}&
\s_{321}=A_{321323}
\end{array}
\end{align*}
\end{example}

\subsection{Exercises and Problems}
\label{ssec:algebraprob}
\begin{enumerate}
\item 
In Example \ref{ex:S4} every $A_w$ occurs exactly once, except for $w = s_1s_3$.  Explain this using Theorem \ref{thm:CK}.
\item (Divided difference operators)
Let $\partial_i$, $1 \leq i \leq n-1$ denote the divided difference operator acting on polynomials in $x_1,x_2,\ldots,x_n$ by
$$
\partial_i \cdot f(x_1,\ldots,x_n) = \frac{f(x_1,x_2,\ldots,x_n) - f(x_1,\ldots,x_{i-1},x_{i+1},x_i,x_{i+2},\ldots,x_n)}{x_i - x_{i+1}}.
$$
Show that $A_i \mapsto \partial_i$ generates an action of the nilCoxeter algebra $\A_0$ for $S_n$ on polynomials.  Is this action faithful?
\item (Center)
What is the center of $\A_0$ and of $\aNC$?  See \cite{Bri}.
%\item
%The representation theory of $\A_0$ for $W = S_n$ is described in \cite{Kho}.  What is the situation in other types, and in the affine case?
\item
Does an analogue of Conjecture \ref{conj:LP} also hold for finite Coxeter groups which are not Weyl groups?
\item 
How many terms (counted with multiplicity) are there in the elements $\s_\la$ of Proposition \ref{prop:LP}?  This is essentially the same problem as (\ref{prob:enumerateEG}) in Section \ref{ssec:EGprob} (why?).
\end{enumerate}

\section{The affine nilHecke ring}\label{sec:affinenilHecke}
Kostant and Kumar \cite{KK} introduced a nilHecke ring to study the topology of Kac-Moody flag varieties.  This ring is studied in detail in Chapter \ref{chapter.affineSchubert}, Section \ref{sec:MarknilHecke}.  We give a brief development here.  Let $W$ be a Weyl group and $\aW$ be the corresponding affine Weyl group.

\subsection{Definition of affine nilHecke ring}
\label{chapter3.section.affine nilHecke}
In this section we study the affine nilHecke ring $\afA$ of Kostant and Kumar \cite{KK}.  Kostant and Kumar define the nilHecke ring in the Kac-Moody setting.  The ring $\afA$ below is a ``small torus'' variant of their construction for the affine Kac-Moody case.  

The affine nilHecke ring $\afA$ is the ring with a $1$
given by generators $\{A_i \mid i \in \aI \} \cup \{\lambda
\mid \lambda \in P \}$ and the relations
\begin{align}\label{eq:aila}
A_i \,\lambda &= (s_i \cdot \lambda)\, A_i +
\ip{\alpha_i^\vee,\la}\cdot1 & \mbox{for $\la \in P$,} \\ 
A_i\, A_i &= 0, \\ 
\label{eq:aiaj}
(A_iA_j)^m &= (A_jA_i)^m & \mbox{if $(s_is_j)^m = (s_js_i)^m$.}
\end{align}
where the ``scalars'' $\la \in P$ commute with other
scalars.  Thus $\afA$ is obtained from the affine nilCoxeter algebra $\aNC$ by adding the scalars $P$.  The finite nilHecke ring is the subring $\A$ of $\afA$
generated by $\{A_i \mid i \in \fI \} \cup \{\lambda \mid \lambda \in
P\}$. 

Let $w \in \aW$ and let $w = s_{i_1} \cdots s_{i_l}$ be a reduced
decomposition of $w$.  Then $A_w := A_{i_1} \cdots A_{i_l}$ is a
well defined element of $\afA$, where $A_\id = 1$.

Let $S = \Sym(P)$ denote the symmetric algebra of $P$.  The following 
basic result follows from \cite[Theorem 4.6]{KK}, and can be proved directly from the definitions.

\begin{lemma}[See Chapter \ref{chapter.affineSchubert}, Lemma \ref{L:Abasis}]\label{lem:Abasis}
The set $\{A_w \mid w \in \aW\}$ is an
$S$-basis of $\afA$.
\end{lemma}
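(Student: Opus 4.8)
The plan is to prove the two halves of the basis statement separately: that $\{A_w \mid w \in \aW\}$ spans $\afA$ as a left $S$-module, and that these elements are linearly independent over $S$. Spanning is the easier half. Any element of $\afA$ is a $\Z$-linear combination of words alternating between the generators $A_i$ and scalars $\lambda \in P$. First I would use the commutation relation \eqref{eq:aila} repeatedly to push every scalar to the left of every $A_i$; each application of \eqref{eq:aila} either transports a scalar past an $A_i$ (replacing $\lambda$ by $s_i\cdot\lambda$) or produces a term with one fewer $A_i$ factor (the $\ip{\alpha_i^\vee,\lambda}$ summand). Inducting on the number of $A_i$ factors, this rewrites any element as a left $S$-linear combination of pure words $A_{i_1}\cdots A_{i_r}$ in the nilCoxeter generators. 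Finally, the relations $A_iA_i=0$ and the braid relations \eqref{eq:aiaj} are exactly those of $\aNC$, so each such word collapses either to $0$ or to a single $A_w$; this is the spanning statement for $\aNC$ (the affine analogue of the nilCoxeter basis proposition of Section \ref{sec:algebra}). Hence $\{A_w\}$ spans $\afA$ over $S$.

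The substance is linear independence, which I would obtain by exhibiting a faithful ``divided difference'' representation, following Kostant and Kumar \cite{KK}. Let $L = \Frac(S)$, on which $\aW$ acts through the level zero action \eqref{eq:affinewaction} (so that the action factors through the finite Weyl group and $\alpha_0 = -\theta \in P$), and form the smash product $L \rtimes \aW$: the $L$-vector space with basis $\{w \mid w \in \aW\}$ and product $(p\,w)(q\,v) = p\,(w\cdot q)\,wv$. I would define a map $\afA \to L \rtimes \aW$ by $\lambda \mapsto \lambda$ and $A_i \mapsto \alpha_i^{-1}(1 - s_i)$, and check that it respects all defining relations of $\afA$: the relation $A_iA_i=0$ follows from $s_i\cdot\alpha_i^{-1} = -\alpha_i^{-1}$; the commutation relation \eqref{eq:aila} is a direct computation using $s_i\lambda = \lambda - \ip{\alpha_i^\vee,\lambda}\alpha_i$; and the braid relations \eqref{eq:aiaj} hold because the $\alpha_i^{-1}(1-s_i)$ are the usual Demazure operators, for which $A_w$ is well known to be independent of the chosen reduced word. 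Thus the assignment is a ring homomorphism.

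It then remains to show that the images of the $A_w$ are $L$-linearly independent in $L\rtimes\aW$; since $S \subset L$, independence over $L$ forces independence over $S$ back in $\afA$, which completes the proof. Here I would establish the triangular expansion $A_w = \sum_{v \le w} b_{w,v}\, v$ with respect to Bruhat order, by induction on $\ell(w)$: multiplying $A_{w'}$ (for $w = s_i w'$ with $\ell(w) = \ell(w')+1$) by $\alpha_i^{-1}(1 - s_i)$ only produces group elements $v$ and $s_i v$ with $v \le w'$, all of which are $\le w$, and the leading coefficient $b_{w,w}$ is a nonzero product of inverses of real roots. Nonvanishing diagonal coefficients against a Bruhat-triangular support gives the required independence at once. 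The main obstacle I anticipate is the careful verification of this representation: checking the braid relations \eqref{eq:aiaj} (equivalently, that the Demazure operators satisfy them) and pinning down the precise form and nonvanishing of the leading coefficients $b_{w,w}$, which requires tracking how the affine roots transform under the $\aW$-action in the small-torus setting; everything else is routine rewriting.
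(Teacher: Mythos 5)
Your proposal is correct and follows essentially the same route as the paper: spanning comes from iterating the commutation relation \eqref{eq:aila}, and independence comes from the Bruhat-triangular expansion $A_w=\sum_{v\le w}b_{w,v}\,v$ with nonvanishing diagonal inside the twisted group algebra $\Frac(S)\,\aW$ (the paper's Lemma on the coefficients $c_{w,v}$ in Chapter~\ref{chapter.affineSchubert}). The only packaging difference is that the paper realizes the nilHecke ring directly as a subring of that twisted group algebra, so the divided-difference representation you construct is built into the definition; since Chapter~\ref{chapter.stanley symmetric functions} presents $\afA$ by generators and relations, your explicit verification that the operators $\alpha_i^{-1}(1-s_i)$ satisfy \eqref{eq:aila}--\eqref{eq:aiaj} is the right (and necessary) bridge.
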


\begin{lemma}\label{lem:whomo}
The map $\aW \mapsto \afA$ given by 
$s_i \mapsto 1 - \alpha_i A_i \in \afA$
is a homomorphism.
\end{lemma}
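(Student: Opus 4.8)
The plan is to show that the elements $\sigma_i := 1 - \alpha_i A_i \in \afA$ satisfy the defining Coxeter relations of $\aW$, namely $\sigma_i^2 = 1$ and $(\sigma_i\sigma_j)^{m} = (\sigma_j\sigma_i)^m$ whenever $(s_is_j)^m = (s_js_i)^m$. Once this is established, the universal property of the Coxeter presentation of $\aW$ immediately produces the homomorphism $\aW \to \afA$, $w\mapsto \sigma_w$; note each $\sigma_i$ is a unit since $\sigma_i^2 = 1$, so the image lands in $\afA^\times$ and the monoid map is a group homomorphism.

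The involution relation is a short direct computation. Applying \eqref{eq:aila} with $\la = \alpha_i$, together with $s_i\cdot\alpha_i = -\alpha_i$ and $\ip{\alpha_i^\vee,\alpha_i} = a_{ii} = 2$, gives $A_i\,\alpha_i = -\alpha_i A_i + 2$. Hence, using $A_iA_i = 0$,
\begin{align*}
\sigma_i^2 &= (1-\alpha_i A_i)(1-\alpha_i A_i) = 1 - 2\alpha_i A_i + \alpha_i(A_i\alpha_i)A_i \\
&= 1 - 2\alpha_i A_i + \alpha_i(-\alpha_i A_i + 2)A_i = 1 - 2\alpha_i A_i + 2\alpha_i A_i = 1.
\end{align*}

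For the braid relations the naive approach — letting $\afA$ act on $S = \Sym(P)$ by divided differences, where $\sigma_i$ acts as the reflection $s_i$ — does not suffice, because in the small-torus setting this action is \emph{not} faithful (translations act trivially on $P$, so $s_0$ and $s_\theta$ act identically). Instead I would pass to the localization $\afA_{\mathrm{loc}} := Q\otimes_S \afA$ at the fraction field $Q = \Frac(S)$. Since $\afA$ is free over $S$ with basis $\{A_w\}$ (Lemma \ref{lem:Abasis}) and $S$ is a domain, the canonical map $\afA \to \afA_{\mathrm{loc}}$ is injective, so it suffices to verify the relations in $\afA_{\mathrm{loc}}$. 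The key step is to identify $\afA_{\mathrm{loc}}$ with the twisted group algebra $Q \rtimes \aW$, whose free $Q$-basis $\{\delta_w \mid w \in \aW\}$ satisfies $\delta_w\,q = (w\cdot q)\,\delta_w$ and $\delta_w\delta_v = \delta_{wv}$. Concretely, define $\Phi\colon \afA_{\mathrm{loc}} \to Q\rtimes\aW$ by $q \mapsto q\,\delta_{\id}$ and $A_i \mapsto \alpha_i^{-1}(1 - \delta_{s_i})$. Granting that $\Phi$ is a well-defined isomorphism, one computes $\Phi(\sigma_i) = 1 - \alpha_i\cdot\alpha_i^{-1}(1-\delta_{s_i}) = \delta_{s_i}$; since $\delta_{s_i}\delta_{s_j}\cdots = \delta_{s_is_j\cdots}$, the $\delta_{s_i}$ manifestly satisfy every Coxeter relation of $\aW$, and transporting back through $\Phi$ yields the braid relations for the $\sigma_i$ in $\afA_{\mathrm{loc}}$, hence in $\afA$.

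The main obstacle is establishing that $\Phi$ is well defined and bijective. The scalar relations and \eqref{eq:aila} are quick: \eqref{eq:aila} reduces exactly to $\la - s_i\la = \ip{\alpha_i^\vee,\la}\alpha_i$, and $A_i^2=0$ follows from $(1-\delta_{s_i})\alpha_i^{-1} = \alpha_i^{-1}(1+\delta_{s_i})$, whence $\Phi(A_i)^2 = \alpha_i^{-2}(1+\delta_{s_i})(1-\delta_{s_i}) = 0$. The genuinely substantive point is that $\Phi$ respects the nilHecke braid relation \eqref{eq:aiaj}, i.e. that the divided-difference elements $y_i := \alpha_i^{-1}(1-\delta_{s_i})$ satisfy $(y_iy_j)^m = (y_jy_i)^m$; this is the classical Demazure--BGG computation, which can be confined to the rank-two parabolic subalgebra generated by $\alpha_i^{\pm1},\alpha_j^{\pm1}$ and $\delta_{s_i},\delta_{s_j}$. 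Bijectivity then follows by triangularity: expanding $\Phi(A_w)$ in the $\delta$-basis, its top-length term is $c_w\,\delta_w$ for some nonzero $c_w\in Q$, so $\{\Phi(A_w)\}$ is a $Q$-basis of $Q\rtimes\aW$. (Alternatively, one may bypass $\Phi$ and check the braid relations for the $\sigma_i$ by a direct rank-two computation, but the twisted-group-algebra identification is cleaner and reusable elsewhere in the chapter.)
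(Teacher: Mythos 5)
Your proposal is correct, and for the braid relations it takes a genuinely different route from the paper. The involution check $\sigma_i^2=1$ is word-for-word the paper's computation. For the braid relations the paper stays inside $\afA$: it expands $(1-\alpha_iA_i)(1-\alpha_jA_j)(1-\alpha_iA_i)$ using \eqref{eq:aila} and \eqref{eq:aiaj}, observes that the result is symmetric in $i$ and $j$, and treats only $m_{ij}=2,3$ explicitly (the cases $m_{ij}=4,6$ are left as the exercise that follows the lemma). You instead embed $\afA$ into the smash product $\mathrm{Frac}(S)\rtimes\aW$ and read off all Coxeter relations at once from $\Phi(\sigma_i)=\delta_{s_i}$; this is essentially the Kostant--Kumar point of view adopted in Chapter \ref{chapter.affineSchubert}, where the nilHecke ring is \emph{defined} inside $\bigoplus_{w}\mathrm{Frac}(S)\,w$ and the braid relations for the $A_i$ are a theorem rather than an axiom. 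Your route is uniform in $m_{ij}$ and yields as a by-product the identification of $\mathrm{Frac}(S)\otimes_S\afA$ with the twisted group algebra, which the paper asserts immediately after the lemma anyway; you are also right that the representation on $S$ cannot be used here, since in the small-torus setting translations act trivially. Two caveats. First, you have not eliminated the rank-two computation, only relocated it: well-definedness of $\Phi$ on the relation \eqref{eq:aiaj} requires the braid relations for $y_i=\alpha_i^{-1}(1-\delta_{s_i})$ in the abstract smash product, a calculation of the same weight as the one the paper performs; citing it as classical is legitimate, but note that the standard references assume linearly independent simple roots, whereas here $\alpha_0=-\theta$ --- harmless, because for a pair with $m_{ij}<\infty$ the roots $\alpha_i,\alpha_j$ are still independent (dependence would force $a_{ij}a_{ji}=4$) and the $\delta_w$ for $w$ in the finite dihedral group $\langle s_i,s_j\rangle$ remain a free family. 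Second, your argument uses Lemma \ref{lem:Abasis} twice, for the injectivity of $\afA\to\mathrm{Frac}(S)\otimes_S\afA$ and for the spanning half of the triangularity argument; this is fine, since that lemma precedes the present one.
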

\begin{proof}
We calculate that
\begin{align*}
s_i^2 &= 1 -2\al_i A_i + \al_i A_i \al_i A_i \\
&= 1 - 2\al_i A_i + \al_i (-\al_iA_i + 2)A_i &\mbox{using \eqref{eq:aila}}\\
&= 1 & \mbox{using $A_i^2 = 0$.}
\end{align*}
If $A_i A_j = A_i A_j$ then $\al_i A_j = A_j \al_i$ so it is easy to see that $s_is_j = s_js_i$.  Suppose $A_i A_j A_i = A_j A_i A_j$.  Then $a_{ij} = a_{ji} = -1$ and we calculate that
\begin{align*}
s_i s_j s_i & = (1-\al_iA_i)(1-\al_j A_j)(1-\al_i A_i)  \\
&=  1 - (2\al_iA_i+\al_jA_j) + (\al_iA_i \al_j A_j +\al_i A_i\al_i A_i + \al_jA_j \al_iA_i)
- \al_iA_i \al_j A_j \al_i A_i \\
&= 1 - (2\al_iA_i+\al_jA_j )+ (2\al_iA_j+2\al_iA_i + 2\al_j A_i)  + \al_i(\al_i+\al_j)A_i A_j + \\ &\;\;\;\;\;\al_j(\al_i+\al_j)A_j A_i - (\al_i A_i + \al_i (\al_i + \al_j) \al_jA_i A_j A_i) \\
&= 1 + (2 \al_i A_j + 2 \al_j A_i -\al_iA_i -\al_j A_j ) + \al_i(\al_i+\al_j)A_i A_j +\\ &\;\;\;\;\;\al_j(\al_i+\al_j)A_j A_i -  \al_i (\al_i + \al_j) \al_j A_i A_j A_i.
\end{align*}
Since the above expression is symmetric in $i$ and $j$, we conclude that $s_is_js_i = s_js_is_j$.
\end{proof}

\begin{exercise}
Complete the proof of Lemma \ref{lem:whomo} for $(A_iA_j)^2= (A_jA_i)^2$ and $(A_iA_j)^3= (A_jA_i)^3$.
\end{exercise}

It follows from Lemma \ref{lem:Abasis} that the map of Lemma \ref{lem:whomo} is an isomorphism onto its image.  Abusing notation, we write $w \in \afA$ for the element in the
nilHecke ring corresponding to $w \in \aW$ under the map of Lemma \ref{lem:whomo}.  Then
$\aW$ is a basis for $\afA \otimes_S \Frac(S)$ over $\Frac(S)$ (not over $S$
since $A_i = \frac{1}{\alpha_i}(1-s_i)$).

\begin{lemma}\label{lem:waction}
Suppose $w \in \aW$ and $s \in S$.  Then $w s = (w\cdot s) w$.
\end{lemma}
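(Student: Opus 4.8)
Suppose $w \in \aW$ and $s \in S$. Then $ws = (w\cdot s)\,w$.

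The plan is to reduce the claim to the defining commutation relation \eqref{eq:aila} via the isomorphism of Lemma~\ref{lem:whomo}, and then proceed by induction on $\ell(w)$. First I would observe that since $S = \Sym(P)$ is generated as an algebra by the weights $\la \in P$, and since both sides of the asserted identity $ws = (w\cdot s)w$ are multiplicative in $s$ (if $ws = (w\cdot s)w$ and $ws' = (w\cdot s')w$, then $wss' = (w\cdot s)ws' = (w\cdot s)(w\cdot s')w = (w\cdot(ss'))w$, using that $w$ acts on $S$ by algebra automorphisms), it suffices to prove the statement for $s = \la$ a single weight. Likewise, the identity is compatible with products in $w$: if $vs = (v\cdot s)v$ for all $s$ and $s_i s = (s_i\cdot s)s_i$ for all $s$, then $(s_i v)s = s_i(v\cdot s)v = (s_i\cdot(v\cdot s))s_i v = ((s_iv)\cdot s)(s_iv)$. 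So by taking a reduced decomposition $w = s_{i_1}\cdots s_{i_\ell}$ it is enough to establish the base case where $w = s_i$ is a simple reflection and $s = \la \in P$.

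For that base case I would unwind the definition of $s_i$ inside $\afA$. Under the map of Lemma~\ref{lem:whomo} we have $s_i = 1 - \alpha_i A_i$, so
\[
s_i\,\la = \la - \alpha_i A_i \la.
\]
Applying the relation \eqref{eq:aila}, namely $A_i\la = (s_i\cdot\la)A_i + \ip{\alpha_i^\vee,\la}$, gives
\[
s_i\,\la = \la - \alpha_i\bigl((s_i\cdot\la)A_i + \ip{\alpha_i^\vee,\la}\bigr)
= \la - \ip{\alpha_i^\vee,\la}\alpha_i - \alpha_i(s_i\cdot\la)A_i.
\]
Now $\la - \ip{\alpha_i^\vee,\la}\alpha_i = s_i\cdot\la$ by the definition of the Weyl group action on weights, and the scalars $s_i\cdot\la$ and $\alpha_i$ commute in $S$, so the right-hand side equals $(s_i\cdot\la)\bigl(1 - \alpha_i A_i\bigr) = (s_i\cdot\la)\,s_i$. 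This is exactly $(s_i\cdot\la)s_i$, establishing the base case.

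I do not anticipate a serious obstacle here: the content of the lemma is essentially a repackaging of the defining relation \eqref{eq:aila}, and the only care required is in the bookkeeping of the two inductions (one over the generators of $S$, one over a reduced word for $w$) and in checking that everything is well defined independent of the choice of reduced word, which is guaranteed by Lemma~\ref{lem:whomo} asserting that $s_i \mapsto 1-\alpha_i A_i$ is a genuine homomorphism from $\aW$. The mildly delicate point worth stating explicitly is that the two reduction steps interact correctly, i.e.\ that reducing $w$ to its simple factors and reducing $s$ to its linear generators can be carried out in either order; but this follows formally once one notes that $w$ acts on $S$ as an algebra automorphism, so no genuine difficulty arises.
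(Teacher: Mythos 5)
Your proof is correct and follows essentially the same route as the paper: reduce to the case $w=s_i$, $s=\la\in P$, then expand $s_i\la=(1-\al_i A_i)\la$ using \eqref{eq:aila} and regroup to get $(s_i\cdot\la)(1-\al_iA_i)=(s_i\cdot\la)s_i$. The paper simply asserts the reduction with "it suffices," whereas you spell out the two multiplicativity checks, but the substance is identical.
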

\begin{proof}
It suffices to establish this for $w = s_i$, and $s = \la \in P$.  We calculate
\begin{align*}
s_i \la &= (1-\al_i A_i)\la \\
&= \la - \al_i (s_i \cdot \la) A_i - \al_i \ip{\al_i^\vee,\la}\\
&= (s_i \cdot \la) - (s_i\cdot \la) \al_i A_i \\
& = (s_i \cdot \la) s_i.
\end{align*}
\end{proof}

%For later use, we note the following straightforward result, whose
%proof we omit; see~\cite[Proposition 4.30]{KK}.
%\begin{lemma}
%\label{lem:chev} Let $w \in \aW$ and $\lambda \in S$ be of degree
%1. Then
%\[
%A_w \lambda = (w \cdot \lambda)A_w + \sum_{w\,r_\alpha  \lessdot
%w}\ip{\lambda, \alpha^\vee} A_{w\,r_\alpha},
%\]
%where $\alpha$ is always taken to be a positive root of $\aW$.
%Here $\lessdot$ denotes a cover in strong Bruhat order.
%\end{lemma}
%The coefficients $\ip{\al^\vee,\la}$ are known as {\it
%Chevalley coefficients}.

\subsection{Coproduct}
\label{sec:nilHeckecoproduct}
The following Proposition is established in Chapter \ref{chapter.affineSchubert}, Section \ref{sec:coproduct}.   Note that the Weyl group $W$ there should be taken to be the affine Weyl group $\aW$ here.

\begin{prop} \label{P:coprodaction} Let $M$ and $N$ be left $\afA$-modules.
Define
\begin{equation*}
  M \otimes_{S} N = (M\otimes_\Z N)/\langle sm\otimes n - m \otimes sn \mid
  \text{$s\in S$, $m\in M$, $n\in N$} \rangle.
\end{equation*}
Then $\afA$ acts on $M \otimes_{S} N$ by
\begin{align*}
s \cdot (m\otimes n) &= sm \otimes n \\
 A_i \cdot (m\otimes n) &= A_i \cdot m \otimes n + m \otimes A_i\cdot
  n - \al_i A_i\cdot m\otimes A_i\cdot n.
\end{align*}
Under this action we have
\begin{equation} \label{E:tensorWeyl}
  w \cdot (m\otimes n) = wm \otimes wn
\end{equation}
for any $w \in \aW$.
\end{prop}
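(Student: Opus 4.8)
The plan is to exhibit the prescribed formulas as a single algebra homomorphism $\rho:\afA\to\mathrm{End}_{\mathbb Z}(M\otimes_S N)$, where $\rho(s)$ is multiplication by $s\in S$ and $\rho(A_i)$ is the operator $m\otimes n\mapsto A_im\otimes n + m\otimes A_in - \alpha_i A_im\otimes A_in$; the closing formula $w\cdot(m\otimes n)=wm\otimes wn$ will then fall out at the end. First I would check that $\rho(A_i)$ and $\rho(s)$ descend to the quotient $M\otimes_S N$, i.e. that they respect the relations $\la m\otimes n - m\otimes \la n$. Since these relations for general $s\in S$ are $\Z$-combinations of those with $s=\la\in P$, it suffices to expand $\rho(A_i)(\la m\otimes n)$ and $\rho(A_i)(m\otimes \la n)$ using the defining relation $A_i\la=(s_i\cdot\la)A_i+\ip{\alpha_i^\vee,\la}$ of $\afA$ together with $\la=(s_i\cdot\la)+\ip{\alpha_i^\vee,\la}\alpha_i$; the two expansions then agree term by term.

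Next I would verify that $\rho$ respects the defining relations of $\afA$. The relations $A_i^2=0$ and $A_i\la=(s_i\cdot\la)A_i+\ip{\alpha_i^\vee,\la}$ are short direct operator computations. The braid relations $(A_iA_j)^m=(A_jA_i)^m$ are the crux, and I expect them to be the main obstacle. To handle them cleanly I would first compute the operator attached to the group element $s_i=1-\alpha_iA_i$ of Lemma \ref{lem:whomo}: a direct expansion (moving scalars freely across $\otimes_S$) gives the group-like identity
$$
\rho(s_i)(m\otimes n)=s_i m\otimes s_i n .
$$
From this identity, the genuine $\aW$-action on $M$ and $N$, and the braid relations $s_is_js_i=s_js_is_j$ in $\aW$, the operators $t_i:=\rho(s_i)$ immediately satisfy the Coxeter braid relations on any $M\otimes_S N$; and from $A_i\la=\cdots$ one obtains the operator identity $t_i\,\beta=(s_i\cdot\beta)\,t_i$ for $\beta\in S$ (the operator form of Lemma \ref{lem:waction}).

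To convert braid relations for the $t_i$ into braid relations for the $\rho(A_i)$ — the step where a naive division by $\alpha_i$ would fail in the presence of $S$-torsion — I would pass to the universal case and localize. On the universal module $U:=\afA\otimes_S\afA$ (the case $M=N=\afA$), which is free, hence $S$-torsion-free, over the integral domain $S=\Sym(P)$ by Lemma \ref{lem:Abasis}, I would localize to $U_{\Frac(S)}=\Frac(S)\otimes_S U$. There $t_i$ and multiplication by $\Frac(S)$ satisfy precisely the relations of the skew group algebra $\Frac(S)[\aW]\cong\afA\otimes_S\Frac(S)$, so $U_{\Frac(S)}$ is a module over this algebra; since $\rho(A_i)=\alpha_i^{-1}(\mathrm{id}-t_i)$ holds after inverting $\alpha_i$, the operator $\rho(A_i)$ automatically satisfies every relation of $\afA$, in particular the braid relations. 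Restricting along the inclusion $U\hookrightarrow U_{\Frac(S)}$ (injective by torsion-freeness) then yields the braid relations for $\rho(A_i)$ on $U$ itself.

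Finally I would transport the result to arbitrary $M,N$ by naturality. For $m\in M$, $n\in N$ the maps $\afA\to M$, $x\mapsto xm$, and $\afA\to N$, $y\mapsto yn$, are honest left $\afA$-module maps, and they induce $\phi:U\to M\otimes_S N$, $x\otimes y\mapsto xm\otimes yn$, which one checks intertwines the generator-operators $\rho(A_i)$ and $\rho(s)$ directly from their formulas (without yet knowing $\rho$ is a homomorphism). Applying $\phi$ to the braid identity evaluated at $1\otimes1$ transports it to $m\otimes n$, and such simple tensors span $M\otimes_S N$; hence the braid relations hold on every $M\otimes_S N$. This completes the verification that $\rho$ is an algebra homomorphism, so $M\otimes_S N$ is a left $\afA$-module. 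The concluding identity is then immediate: writing $w=\prod_j(1-\alpha_{i_j}A_{i_j})$ in $\afA$ via Lemma \ref{lem:whomo} and applying $\rho(s_i)(x\otimes y)=s_ix\otimes s_iy$ repeatedly gives $w\cdot(m\otimes n)=wm\otimes wn$.
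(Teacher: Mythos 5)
Your proof is correct, but it takes a genuinely different route from the paper's. The paper (Appendix on coalgebra properties, Propositions on well-definedness and the coalgebra structure) works over the fraction field $\mathrm{Frac}(S)$ from the outset: it defines the coproduct on the group-like basis by $\Delta(w)=w\otimes w$, checks that the componentwise action of $\Delta(a)$ kills the relations $qm\otimes n-m\otimes qn$ by expanding $a$ in the $W$-basis and using the vanishing of the off-diagonal coefficients, obtains the homomorphism property for free from group-likeness, and only then computes $\Delta(A_i)=A_i\otimes 1+s_i\otimes A_i$ and observes that the formula makes sense integrally; the descent to arbitrary $S$-integral modules is dispatched with ``all other assertions follow directly.'' You instead verify the presentation relations of $\mathbb{A}_{\mathrm{af}}$ for the candidate operators directly: well-definedness and the commutation relation with $\lambda\in P$ by hand, $A_i^2=0$ by a short computation, and the braid relations by first establishing the group-like identity $\rho(s_i)(m\otimes n)=s_im\otimes s_in$ (which also delivers the final claim $w\cdot(m\otimes n)=wm\otimes wn$, exactly as in the paper) and then converting braid relations for the $t_i$ into braid relations for the $\rho(A_i)$ via localization on the torsion-free universal module $\mathbb{A}_{\mathrm{af}}\otimes_S\mathbb{A}_{\mathrm{af}}$ followed by naturality along $x\otimes y\mapsto xm\otimes yn$. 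The paper's argument is shorter and makes coassociativity and multiplicativity automatic; yours is more self-contained on precisely the point the paper leaves implicit, namely why the relations verified over $\mathrm{Frac}(S)$ persist for modules $M,N$ that need not embed into their localizations. The universal-module-plus-naturality step is the right device for that, and the identity $\lambda-s_i\cdot\lambda=\langle\alpha_i^\vee,\lambda\rangle\alpha_i$ is indeed exactly what makes the well-definedness computation close up.
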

%\begin{proof}
%By $s_i = 1 - \al_i A_i$, we see that
%\begin{align*}
%s_i \cdot (m \otimes n) & = m \otimes n - \al_iA_i \cdot m \otimes n - m \otimes \al_iA_i\cdot
%  n + \al_i A_i\cdot m\otimes \al_iA_i\cdot n \\
%&= s_i \cdot m \otimes s_i \cdot n.
%\end{align*}
%The formula \eqref{E:tensorWeyl} then follows.  Also
%$$
%(s_i s) \cdot (m \otimes n) = (s_i s) \cdot m \otimes s_i \cdot n = (s_i \cdot s)s_i \cdot (m \otimes n)
%$$
%agreeing with Lemma \ref{lem:waction}.
%Since $\aW$ forms a basis of $\afA$ over $\Frac(S)$, this shows that one has an action of $\afA \otimes_S \Frac(S)$, assuming that scalars can be extended to $\Frac(S)$.   This would be the case if $M \otimes_S N$ is a free $S$-module, which is the case if we take $M = \afA = N$.  Since $\afA \otimes_S \afA$ is the universal case, it follows that one obtains an action 
%of $\afA$ in general.
%\end{proof}

Consider the case $M=\afA = N$. By Proposition \ref{P:coprodaction}
there is a left $S$-module homomorphism $\Delta:\afA \to \afA
\otimes_{S} \afA$ defined by $\Delta(a) = a \cdot (1 \otimes 1)$. It satisfies
\begin{align}
  \Delta(q) &= s \otimes 1 &\qquad&\text{for $s\in S$} \\
\label{E:DeltaT}
  \Delta(A_i) &= 1 \otimes A_i + A_i \otimes 1 - \al_i A_i \otimes A_i&\qquad&\text{for $i\in I$.}
\end{align}

Let $a\in\afA$ and $\Delta(a)=\sum_{v,w} a_{v,w} A_v \otimes A_w$ with
$a_{v,w}\in S$. In particular if
$b\in\afA$ and $\Delta(b)=\sum_{v',w'} b_{v',w'} A_{v'} \otimes A_{w'}$ then
\begin{equation}\label{E:Deltamult}
\Delta(ab) =  \Delta(a)\cdot \Delta(b) := \sum_{v,w,v',w'} a_{v,w}
b_{v',w'} A_v A_{v'} \otimes A_wA_{w'}.
\end{equation}

\begin{remark}
We caution that $\afA \otimes _S \afA$ is not a well-defined ring with the obvious multiplication.
\end{remark}

\subsection{Exercises and Problems}
The theory of nilHecke rings in the Kac-Moody setting is well-developed \cite{KK,Kum}.  See Chapter \ref{chapter.affineSchubert}, Section \ref{sec:MarknilHecke}.
\begin{enumerate}
\item
The following result is \cite[Proposition 4.30]{KK}.  Let $w \in \aW$ and $\la \in P$. Then
\[
A_w \lambda = (w \cdot \lambda)A_w + \sum_{w\,s_\alpha: \; \ell(ws_\alpha) = \ell(w) - 1}\ip{\al^\vee,\lambda} A_{w\,s_\alpha},
\]
where $\alpha$ is always taken to be a positive root of $\aW$.  The coefficients $\ip{\al^\vee,\la}$ are known as {\it
Chevalley coefficients}.
\item
(Center) What is the center of $\afA$?  (See \cite[Section 9]{Lam:2008} for related discussion.)
\end{enumerate}

\section{Peterson's centralizer algebras}\label{sec:Pet}
Peterson studied a subalgebra of the affine nilHecke ring in his work \cite{Pet} on the homology of the affine Grassmannian.

\subsection{Peterson algebra and $j$-basis}
The Peterson centralizer subalgebra $\Pet$ is the centralizer $Z_{\afA}(S)$
of the scalars $S$ in the affine nilHecke ring $\afA$.  In this section we establish some basic
properties of this subalgebra.  The results here are unpublished works of Peterson.

\begin{lemma}\label{lem:Ptrans}
Suppose $a \in \afA$.  Write $a = \sum_{w \in \aW} a_w w$, where $a_w \in \Frac(S)$.  Then
$a \in \Pet$ if and only if $a_w = 0$ for all non-translation elements $w \neq t_\la$.
\end{lemma}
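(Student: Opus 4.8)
The plan is to read off the commutation condition coefficient by coefficient in the $\Frac(S)$-basis $\aW$ of $\afA \otimes_S \Frac(S)$, and then to identify precisely which group elements act trivially on the scalars $S = \Sym(P)$. Since $\aW$ is a $\Frac(S)$-basis, the expansion $a = \sum_{w \in \aW} a_w\, w$ with $a_w \in \Frac(S)$ is unique, and this already covers every $a \in \afA$ because $\afA \subset \afA \otimes_S \Frac(S)$. First I would fix $s \in S$ and apply Lemma \ref{lem:waction}, which gives $w s = (w\cdot s)\,w$, to compute
\[
a\,s = \sum_{w} a_w\,(w\cdot s)\,w, \qquad s\,a = \sum_w s\,a_w\,w = \sum_w a_w\,s\,w,
\]
where the second equality uses that $\Frac(S)$ is commutative. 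Comparing the $\Frac(S)$-coefficients of each basis element $w$, the element $a$ commutes with $s$ exactly when $a_w\,(w\cdot s - s) = 0$ for every $w$. Hence $a \in \Pet = Z_{\afA}(S)$ if and only if this identity holds for all $s \in S$ simultaneously.

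Next I would localize the condition to a single $w$. Because $S = \Sym(P)$ is an integral domain, $\Frac(S)$ is a field, so for each fixed $w$ the vanishing $a_w\,(w\cdot s - s) = 0$ for all $s$ forces either $a_w = 0$ or $w\cdot s = s$ for every $s \in S$. Since the $\aW$-action on $S$ is by algebra automorphisms and $P$ generates $S$, the latter alternative is equivalent to $w$ fixing $P$ pointwise under the level-zero action.

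It then remains to show that the elements of $\aW$ acting trivially on $P$ are exactly the translations. I would write $w = v\,t_\lambda$ using the decomposition $\aW = W \ltimes Q^\vee$; by the level-zero action \eqref{eq:affinewaction} one has $w\cdot \mu = v\cdot \mu$ for all $\mu \in P$, so $w$ fixes $P$ pointwise if and only if $v$ does. Invoking the faithfulness of the finite Weyl group $W$ on its weight lattice $P$, this happens if and only if $v = \id$, i.e.\ $w = t_\lambda$ is a translation. Combining the three steps yields that $a \in \Pet$ if and only if $a_w = 0$ for every non-translation $w \neq t_\lambda$. The only genuinely nonformal input is this last faithfulness statement, which is the step I expect to require the most care to state cleanly, since everything else reduces to the basis relation of Lemma \ref{lem:waction} and commutativity in $\Frac(S)$.
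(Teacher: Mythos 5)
Your argument is correct and follows essentially the same route as the paper: expand $a$ in the $\Frac(S)$-basis $\aW$, use Lemma \ref{lem:waction} to reduce commutation with $S$ to $a_w(w\cdot s - s)=0$, and conclude via integrality of $S$ that $a_w=0$ unless $w$ acts trivially, which by the level-zero action \eqref{eq:affinewaction} and faithfulness of $W$ on $P$ happens exactly for translations. The paper's proof is identical in substance, merely asserting the last step ("every $w\in\aW$ acts non-trivially on $S$ except the translations") where you spell it out via the decomposition $w=v\,t_\la$.
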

\begin{proof}
By Lemma \ref{lem:waction}, we have for $s \in S$
$$
(\sum_w a_w w)s = \sum_w a_w (w\cdot s) w
$$
and so $a \in \Pet$ implies $a_w(w \cdot s) = a_w s$ for each $s$.  But using \eqref{eq:affinewaction}, one sees that every $w \in \aW$ acts non-trivially on $S$ except for the translation elements $t_\la$.  Since $S$ is an integral domain, this implies that $a_w = 0$ for all non-translation elements.
\end{proof}

\begin{lemma}\label{lem:Petcomm}
The subalgebra $\Pet$ is commutative.
\end{lemma}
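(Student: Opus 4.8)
The plan is to exploit Lemma \ref{lem:Ptrans}, which tells us that any $a \in \Pet$, written in the $\Frac(S)$-basis $\{w \mid w \in \aW\}$ of $\afA \otimes_S \Frac(S)$, is an $\Frac(S)$-linear combination of translation elements alone: $a = \sum_{\la \in Q^\vee} a_\la\, t_\la$ with $a_\la \in \Frac(S)$. Thus it suffices to show that any two translation-supported elements commute, and the whole problem reduces to understanding how the $t_\la$ interact with the scalars and with each other.

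First I would record the two facts that make the computation collapse. The translations form an abelian subgroup of $\aW$, so $t_\la\, t_\mu = t_{\la+\mu} = t_\mu\, t_\la$. More importantly, in this small-torus nilHecke ring the scalars are $S = \Sym(P)$, and $\aW$ acts on $P$ through the level-zero action \eqref{eq:affinewaction}, under which translations act trivially: $t_\la \cdot \mu = \mu$ for all $\mu \in P$, and hence $t_\la$ fixes every element of $S$ and of $\Frac(S)$. Combined with the commutation rule $w\,s = (w\cdot s)\,w$ of Lemma \ref{lem:waction}, this gives $t_\la\, s = s\, t_\la$ for every $s \in \Frac(S)$.

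Then for $a = \sum_\la a_\la\, t_\la$ and $b = \sum_\mu b_\mu\, t_\mu$ in $\Pet$ I would compute directly. Moving each $b_\mu$ past $t_\la$ costs nothing, since translations fix scalars, so $ab = \sum_{\la,\mu} a_\la\, b_\mu\, t_{\la+\mu}$, and the identical computation gives $ba = \sum_{\la,\mu} b_\mu\, a_\la\, t_{\mu+\la}$. Because $\Frac(S)$ is commutative and $t_{\la+\mu} = t_{\mu+\la}$, the two sums coincide term by term, so $ab = ba$.

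I do not expect a serious obstacle here; the single point that must not be skipped is the triviality of the level-zero action of translations on the scalars $S$ — this is precisely the feature of the small-torus variant of the affine nilHecke ring that forces the centralizer of $S$ to consist of commuting translation sums. (Were $\aW$ instead to act on the scalars through a faithful action of the translation lattice, as happens for the full affine torus, the argument would break down, and one would not expect $\Pet$ to be commutative.)
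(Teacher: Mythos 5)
Your proof is correct and follows exactly the paper's own route: the paper's proof of this lemma likewise cites Lemma \ref{lem:Ptrans} together with the facts that the translation elements commute with one another and with $S$ (via the trivial level-zero action), which is precisely the computation you carry out in detail.
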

\begin{proof}
Follows from Lemma \ref{lem:Ptrans} and the fact that the elements $t_\la$ commute, and commute with $S$.
\end{proof}

The following important result is the basis of Peterson's approach to affine Schubert calculus via the affine nilHecke ring.

\begin{theorem}[\cite{Pet,Lam:2008}]\label{thm:jbasis}
The subalgebra $\Pet$ has a basis $\{j_w \mid w \in \aW^0\}$ where
$$
j_w = A_w + \sum_{x \in \aW - \aW^0} j_w^x A_x
$$
for $j_w^x \in S$.
\end{theorem}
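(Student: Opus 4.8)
My plan is to deduce the theorem from the translation description of $\Pet$ in Lemma~\ref{lem:Ptrans}, together with a triangularity analysis of the change of basis between the group elements $\{w\}$ and the divided-difference basis $\{A_w\}$, deferring the integrality of the coefficients to a separate argument. First I would record two structural facts. By Lemma~\ref{lem:Ptrans}, after extending scalars the algebra $\Pet_{\Frac}:=\Pet\otimes_S\Frac(S)$ is precisely the $\Frac(S)$-span of the translations $\{t_\la\mid\la\in Q^\vee\}$; via the bijection $Q^\vee\leftrightarrow\aW^0$ these are indexed by the Grassmannian elements through $\la\mapsto w(\la)$, where $w(\la)$ is the minimal length coset representative of $t_\la W$. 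Second, the defining relation $s_i=1-\alpha_i A_i$ of Lemma~\ref{lem:whomo} shows, by expanding a reduced word and using Lemma~\ref{lem:waction}, that each group element has an expansion $w=\sum_{v\le w}c_w^v A_v$ with $c_w^v\in S$ that is triangular with respect to Bruhat order and whose diagonal coefficient $c_w^w$ is, up to sign, the product of the inversion roots of $w$, hence a nonzero element of $S$. Thus $\{w\}$ and $\{A_v\}$ are related by a Bruhat-triangular matrix invertible over $\Frac(S)$.

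The heart of the argument is the Grassmannian projection $\pi:\afA\otimes_S\Frac(S)\to\bigoplus_{y\in\aW^0}\Frac(S)\,A_y$ that discards all $A_x$ with $x\notin\aW^0$. I would show that $\pi$ restricts to an isomorphism on $\Pet_{\Frac}$. Writing $t_\la=\sum_{x\le t_\la}c^x A_x$, the Grassmannian elements in its support are exactly the $y\in\aW^0$ with $y\le t_\la$; since taking minimal coset representatives $\lfloor\cdot\rfloor$ is order-preserving for Bruhat order, every such $y=\lfloor y\rfloor\le\lfloor t_\la\rfloor=w(\la)$, so $w(\la)$ is the unique Bruhat-maximal Grassmannian element in the support. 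Consequently, ordering $\aW^0$ by a linear refinement of Bruhat order, the matrix $(\,[A_y]\,t_\la\,)_{y,\la}$ of the $\pi(t_\la)$ is triangular with diagonal entry $[A_{w(\la)}]\,t_\la$. I would then check this leading coefficient is a nonzero element of $S$ (a signed product of roots attached to a Bruhat chain from $w(\la)$ up to $t_\la$), so the matrix is invertible over $\Frac(S)$ and $\pi|_{\Pet_{\Frac}}$ is an isomorphism onto $\bigoplus_{y\in\aW^0}\Frac(S)\,A_y$. For each $w\in\aW^0$ this produces a unique $j_w\in\Pet_{\Frac}$ with $\pi(j_w)=A_w$; unwinding, $j_w=A_w+\sum_{x\notin\aW^0}j_w^x A_x$ with a priori $j_w^x\in\Frac(S)$, and the distinct leading terms make $\{j_w\}$ linearly independent and spanning over $\Frac(S)$.

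The main obstacle, and the real content beyond formal linear algebra, is the integrality claim $j_w^x\in S$: inverting the triangular matrix above divides by the leading coefficients $[A_{w(\la)}]\,t_\la$, which are products of roots and not units in the polynomial ring $S$, so integrality cannot be read off directly. To prove it I would use the commutation relation $A_x\lambda=(x\cdot\lambda)A_x+\sum_{xs_\alpha}\ip{\alpha^\vee,\lambda}A_{xs_\alpha}$ of \cite[Proposition 4.30]{KK}: membership $j_w\in\Pet$ is equivalent to $j_w\lambda=\lambda j_w$ for all $\lambda\in P$, and comparing coefficients of each $A_y$ turns this into relations $j_w^y(\lambda-y\cdot\lambda)=\sum_{x}j_w^x\ip{\alpha^\vee,\lambda}$ summed over covers $x$ of $y$. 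For non-translation $y$ the factor $\lambda-y\cdot\lambda$ is nonzero, so this is a downward recursion determining each coefficient from longer ones; the crux is to show the recursion preserves membership in $S$, i.e. that the required divisibilities hold. I expect this divisibility step — equivalently, showing the constructed element genuinely lies in the integral form $\afA=\bigoplus_w S\,A_w$ rather than only in $\afA\otimes_S\Frac(S)$ — to be the hardest part, and I would attack it by induction on length using the consistency of the recursion in $\lambda$ (the right-hand side must be divisible by $\lambda-y\cdot\lambda$ for all $\lambda$ simultaneously), possibly supplemented by the coproduct structure of Section~\ref{sec:nilHeckecoproduct} to control the coefficients. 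Granting integrality, $\{j_w\mid w\in\aW^0\}\subset\Pet$ is $S$-linearly independent with the stated triangular shape and, by the isomorphism $\pi|_{\Pet_{\Frac}}$, spans $\Pet$, which completes the proof.
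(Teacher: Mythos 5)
Your treatment of the statement over $\Frac(S)$ is sound and is essentially the paper's own setup seen from the other side: the paper also starts from Lemma \ref{lem:Ptrans}, and the observation that an element of $\aPsi^0$ is determined by its values on the translations $t_\la$ is the dual formulation of your claim that the Grassmannian projection is an isomorphism on $\Pet\otimes_S\Frac(S)$. But everything up to that point is formal linear algebra; the theorem's content is $j_w^x\in S$, and your proposed mechanism for it breaks down. In the small-torus ring the translations act trivially on the weight lattice, so in your relation $j_w^y(\la-y\cdot\la)=\sum_{x\gtrdot y}j_w^x\ip{\al^\vee,\la}$ the left-hand side vanishes identically for \emph{every} $y=t_\mu$, not just the Grassmannian ones: the relation degenerates into a constraint on the coefficients at covers of $t_\mu$ and says nothing about $j_w^{t_\mu}$ itself. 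Since the Bruhat-maximal elements of the $A$-support of $j_w$ are forced by this same relation to be translations, your downward induction has no base it can certify, and at every length level the translation coefficients (most of which index terms $A_x$ with $x\notin\aW^0$ that you must control) remain undetermined by the recursion; without them you cannot even form the right-hand side for the non-translation elements one step below. Determining them instead from the projection condition runs you back into inverting a triangular matrix whose diagonal entries are products of roots, i.e.\ non-units — the very obstruction you flagged.

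Even for non-translation $y$, the required divisibility of $\sum_{x\gtrdot y}j_w^x\ip{\al^\vee,\la}$ by $\la-y\cdot\la$, uniformly in $\la$, is precisely the hard content, and the first-order Chevalley commutation you invoke is too weak to supply it. What is actually needed is the family of small-torus GKM conditions — divisibility by $\al^d$ of the values of dual classes on elements of the form $w(1-t_{\al^\vee})^{d-1}$ and $w(1-t_{\al^\vee})^{d-1}(1-s_\al)$ — which are higher-order statements not implied by commuting a single linear $\la$ past $j_w$. This is exactly what the paper isolates as Lemma \ref{lem:wrongway}: integrality is proved on the dual side by characterizing $\aPsi$ and $\aPsi^0$ via these divisibility conditions (reduced, in the full proof in Chapter \ref{chapter.affineSchubert}, to a rank-one $\widehat{\mathfrak{sl}}_2$ computation), checking that the restriction-to-translations map $\om$ preserves them, and then reading off $j_w^x\in S$ from the freeness of $\aPsi^0$ over $S$. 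Some input of this kind is unavoidable, so your argument has a genuine gap at its central step.
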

Peterson constructs the basis of Theorem \ref{thm:jbasis} using the geometry of
based loop spaces (see \cite{Pet,Lam:2008}).  We sketch a purely algebraic proof of this theorem, following the ideas of Lam, Schilling, and Shimozono \cite{LSS}.  A full proof of this theorem is given in Chapter \ref{chapter.affineSchubert}.

\subsection{Sketch proof of Theorem \ref{thm:jbasis}}
\label{chapter3.section.sketch}
Let $\Fun(\aW, S)$ denote the set of functions $\xi: \aW \to S$.  We may think of functions $\xi \in \Fun(\aW,S)$ as functions on $\afA$, by the formula $\xi(\sum_{w \in \aW} a_w \, w) = \sum_{w \in \aW} \xi(w) a_w$.  Note that if $a = \sum_{w \in \aW} a_w \, w \in \afA$, the $a_w$ may lie in $\Frac(S)$ rather than $S$, so that in general $\xi(a) \in \Frac(S)$.  Define 
$$
\aPsi:= \{\xi \in \Fun(\aW,S) \mid \xi(a) \in S \text{ for all $a \in \afA$}\}.
$$
(The ring $\aPsi$ is denoted $\Lambda_{\af}$ in Chapter \ref{chapter.affineSchubert}.)
Let 
$$
\aPsi^0 = \{\xi \in \aPsi \mid \xi(w) = \xi(v)  \text{ whenever $wW = vW$}\}.
$$
It follows easily that $\aPsi$ has a basis over $S$ given by $\{\xi^w \mid w \in \aW\}$ where $\xi^w(A_v) = \delta_{vw}$ for every $ v\in \aW$.  Similarly, $\aPsi^0$ has a basis given by  $\{\xi^w_0 \mid w \in \aW^0\}$ where $\xi^w_0(A_v) = \delta_{vw}$ for every $v \in \aW^0$.

The most difficult step is the following statement:
\begin{lem}\label{lem:wrongway}
There is a map $\om: \aPsi \to \aPsi^0$ defined by $\om(\xi)(t_\la) = \xi(t_\la)$.
\end{lem}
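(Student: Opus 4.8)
The plan is to separate the purely formal part (that $\om(\xi)$ is a well-defined coset-constant $S$-valued function) from the only substantial point (that $\om(\xi)$ actually lies in $\aPsi$, hence in $\aPsi^0 = \aPsi \cap \{\text{coset-constant functions}\}$). First I would establish well-definedness using the decomposition $\aW = \fW \ltimes Q^\vee$. The only translation lying in $\fW$ is the identity, so each coset of $\aW/\fW$ contains exactly one translation element: from the conjugation formula $w\,t_\la = t_{w\cdot\la}\,w$ of Section~\ref{sec:Weyl} we get $w\,t_\la\,\fW = t_{w\cdot\la}\,\fW$, and $t_\mu \fW = t_{\mu'}\fW$ forces $\mu = \mu'$. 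Thus the rule $\om(\xi)(t_\mu) = \xi(t_\mu)$ together with right $\fW$-invariance determines a unique function $\om(\xi)\colon \aW \to S$, namely $\om(\xi)(v) = \xi(t_{\mu(v)})$ where $t_{\mu(v)}$ is the translation in the coset $v\fW$. This is $S$-valued and constant on cosets by construction, so the entire content of the lemma reduces to the integrality statement $\om(\xi)\in\aPsi$.

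To prove integrality I would invoke the small-torus GKM description of $\aPsi$ (from Chapter~\ref{chapter.affineSchubert} and \cite{LSS}), which reduces $\om(\xi)\in\aPsi$ to the edge divisibilities $\om(\xi)(v) - \om(\xi)(s_i v) \in \overline{\al_i}\,S$ for all $v \in \aW$ and $i \in \aI$; here $\overline{\phantom{x}}\colon \aQ \to P$ is the projection with $\overline{\al_0} = -\theta$ (so $\delta \mapsto 0$). The companion divisibilities for $\xi$ itself come for free from Lemma~\ref{lem:Abasis}: since $A_i v$, $v A_i$, and $v B_\theta$ all lie in $\afA$ — where $B_\theta := u A_i u^{-1} = \tfrac{1}{\theta}(1 - s_\theta)$ for $u \in \fW$, $i \in \fI$ with $u\cdot \al_i = \theta$ — applying $\xi$ and clearing the (invertible) scalar via Lemma~\ref{lem:waction} and $s_i = 1 - \al_i A_i$ yields $\xi(v) - \xi(s_i v) \in \overline{\al_i}\,S$, together with $\xi(t_\nu) - \xi(t_\nu s_i) \in \al_i\,S$ and $\xi(t_\nu) - \xi(t_\nu s_\theta) \in \theta\,S$.

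The heart of the argument is then a short coset computation. Writing $v = t_\mu w'$ with $w' \in \fW$: for a finite generator $i \in \fI$ one has $s_i v = t_{s_i\mu}(s_i w')$, so $\om(\xi)(v) - \om(\xi)(s_i v) = \xi(t_\mu) - \xi(t_{s_i\mu})$, which is $\al_i$-divisible after telescoping through $\xi(s_i t_\mu) = \xi(t_{s_i\mu} s_i)$ using the left condition at $s_i$ and then the right condition at $s_i$. For $i = 0$, using $s_0 = s_\theta t_{-\theta^\vee}$ one computes $s_0 t_\mu = t_{s_\theta\mu + \theta^\vee}\,s_\theta$, so that $\om(\xi)(s_0 v) = \xi(t_{s_\theta\mu + \theta^\vee})$ and I must show $\xi(t_\mu) - \xi(t_{s_\theta\mu+\theta^\vee}) \in \theta S$; this follows by telescoping through $\xi(s_0 t_\mu) = \xi(t_{s_\theta\mu+\theta^\vee}s_\theta)$, combining the left $s_0$-condition (divisibility by $\overline{\al_0} = -\theta$) with the right $s_\theta$-reflection condition (divisibility by $\theta$) supplied by $B_\theta \in \afA$ above.

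I expect the main obstacle to be twofold. The affine case $i=0$ requires carefully tracking how the translation and reflection parts recombine under $s_0 = s_\theta t_{-\theta^\vee}$, and in particular the use of the non-simple reflection $s_\theta$, which is handled through $B_\theta = u A_i u^{-1}$. More fundamentally, the genuinely hard input is the small-torus GKM characterization of $\aPsi$ on which the reduction rests: because $\aQ \to P$ kills $\delta$, the infinitely many affine real roots $\beta + k\delta$ collapse to a single finite label $\overline{\beta}$, so proving that the edge divisibilities really cut out $\{\xi : \xi(A_v) \in S \text{ for all } v\}$ — the converse direction — is the delicate point, and is exactly why this is the most difficult step in the proof of Theorem~\ref{thm:jbasis}.
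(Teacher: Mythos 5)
Your reduction rests on a misstatement of the key characterization, and this is a genuine gap rather than a cosmetic one. You assert that the small-torus GKM description reduces $\om(\xi)\in\aPsi$ to the first-order edge divisibilities $\om(\xi)(v)-\om(\xi)(s_iv)\in\overline{\al_i}\,S$ for $i\in\aI$. That is not the characterization: precisely because the projection to $P$ kills $\delta$ and collapses the infinitely many affine roots $\beta+k\delta$ onto the single finite label $\overline{\beta}$, the naive one-condition-per-edge criterion is necessary but far from sufficient to cut out $\aPsi$ --- this insufficiency is the entire reason the ``small torus'' theory exists. The actual description (the proposition stated immediately after the lemma, due to Goresky--Kottwitz--MacPherson \cite{GKM} and proved algebraically in \cite{LSS}) is an infinite family of higher-order conditions: for every finite root $\al$, every $w\in\aW$, and every integer $d>0$, one needs $\xi\bigl(w(1-t_{\al^\vee})^{d-1}\bigr)$ divisible by $\al^{d-1}$ and $\xi\bigl(w(1-t_{\al^\vee})^{d-1}(1-s_\al)\bigr)$ divisible by $\al^{d}$. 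Your edge conditions are essentially only the $d=1$ instance, so your telescoping computation through $s_0=s_\theta t_{-\theta^\vee}$ verifies none of the conditions with $d\ge 2$, which is exactly where the content lies; the integrality $\om(\xi)\in\aPsi^0$ is therefore not established. Your closing claim that the hard converse is ``proving that the edge divisibilities really cut out'' $\aPsi$ cannot be repaired as stated, since the edge divisibilities do not cut out $\aPsi$ at all.

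Once the correct characterization is in hand, the lemma is in fact much easier than your case analysis suggests. For a function that is constant on cosets of $\aW/W$, membership in $\aPsi^0$ requires only the first family of conditions \eqref{eq:smallGKM} (this is the final assertion of the cited proposition, and is the analogue of Lemma~\ref{L:smallGKMtthens} of Chapter~\ref{chapter.affineSchubert}). Expanding $(1-t_{\al^\vee})^{d-1}$ as a $\Z$-combination of translations and reading off the translation part of each group element in the product, one sees that $\om(\xi)$ evaluated on such a product equals $\xi$ evaluated on a product of the same shape supported entirely on translation elements, so the required divisibility by $\al^{d-1}$ is inherited verbatim from the hypothesis on $\xi$ (compare the proof of Proposition~\ref{P:wrongway} and Lemma~\ref{L:smallGKM} in Chapter~\ref{chapter.affineSchubert}). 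In other words, all of the difficulty is concentrated in proving the characterization of $\aPsi$ and $\aPsi^0$, which you would need to quote correctly from \cite{GKM,LSS}; the verification for $\om(\xi)$ is then nearly tautological, and no separate treatment of the node $i=0$ is needed.
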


In other words, $\om(\xi)$ remembers only the values of $\xi$ on translation elements, and one notes that $\om(\xi^w) = \xi^w_0$ for $w \in \aW^0$.  By Lemma \ref{lem:Ptrans}, $\xi(a) = \om(\xi)(a)$ for $\xi \in \aPsi$ and $a \in \Pet$.  We define $\{j_w \in \Pet \otimes \Frac(S) \mid w \in \aW^0\}$ by the equation
$$
\xi_0^v(j_w) = \delta_{wv}
$$
for $w,v \in \aW^0$.  Such elements $j_w$ exist (and span $\Pet \otimes \Frac(S)$ over $\Frac(S)$) since each $\xi \in \aPsi^0$ is determined by its values on the translations $t_\la$.  The coefficient $j_w^x$ of $A_x$ in $j_w$ is equal to the coefficient of $\xi^w_0$ in $\om(\xi^x)$, which by Lemma \ref{lem:wrongway}, must lie in $S$.  This proves Theorem \ref{thm:jbasis}.

Finally, Lemma \ref{lem:wrongway} follows from the following description of $\aPsi$ and $\aPsi^0$, the latter due to Goresky-Kottwitz-Macpherson \cite[Theorem 9.2]{GKM}.  See \cite{LSS} for an algebraic proof in a slightly more general situation.
\begin{proposition}
Let $\xi \in \Fun(\aW,S)$.  Then $\xi \in \aPsi$ if and only if for each $\alpha \in R$, $w \in \aW$, and each integer $d > 0$ we have
\begin{equation}\label{eq:smallGKM}
\xi(w(1-t_{\alpha^\vee})^{d-1}) \qquad \mbox{is divisible by $\alpha^{d-1}$}
\end{equation}
and
\begin{equation}
\xi(w(1-t_{\alpha^\vee})^{d-1}(1-s_\alpha)) \qquad \mbox{is divisible by $\alpha^d$.}
\end{equation}
Let $\xi \in \Fun(\aW,S)$ satisfy $\xi(w) = \xi(v)$ whenever $wW = vW$.  Then $\xi \in \aPsi^0$ if it satisfies \eqref{eq:smallGKM}.
\end{proposition}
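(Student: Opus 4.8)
The plan is to reduce membership in $\aPsi$ to the single requirement that $\xi(A_w)\in S$ for every $w\in\aW$. This is legitimate because $\{A_w\mid w\in\aW\}$ is an $S$-basis of $\afA$ (Lemma \ref{lem:Abasis}) and $\xi$ is left $S$-linear by Lemma \ref{lem:waction}, so $\xi(\sum_w c_w A_w)\in S$ for all $c_w\in S$ exactly when each $\xi(A_w)\in S$. With this reformulation the two directions become: (i) certain ``divided'' elements of the group algebra actually lie in $\afA$ (necessity), and (ii) the divisibility conditions force $\xi(A_w)\in S$ (sufficiency). Throughout, the decisive structural input is that translations act trivially on $S$ by the level-zero action \eqref{eq:affinewaction}, so that $1-t_{\al^\vee}$ commutes with $S$ and hence lies in $\Pet$ by Lemma \ref{lem:Ptrans}.

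For necessity I would first record two rank-one facts. For any real affine root $\beta$ one has $A_\beta:=\tfrac1{\bar\beta}(1-s_\beta)=wA_iw^{-1}\in\afA$, where $s_\beta=ws_iw^{-1}$ and $\bar\beta\in P$ is the finite part of $\beta$ (this uses Lemma \ref{lem:whomo} together with $w,w^{-1}\in\afA$). Next, writing $t_{\al^\vee}=s_\al s_\beta$ for a suitable affine reflection $s_\beta$ with $\bar\beta=\al$, and expanding $s_\al=1-\al A_\al$, $s_\beta=1-\al A_\beta$ via the relation \eqref{eq:aila}, a short computation gives an explicit formula $\tfrac1\al(1-t_{\al^\vee})=A_\al-A_\beta+\al A_\al A_\beta\in\afA$. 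Because $1-t_{\al^\vee}\in\Pet$ commutes with $S$, taking powers yields $a_{\al,d}:=\tfrac1{\al^{d-1}}(1-t_{\al^\vee})^{d-1}\in\afA$, and then $b_{\al,d}:=a_{\al,d}\,A_\al=\tfrac1{\al^{d}}(1-t_{\al^\vee})^{d-1}(1-s_\al)\in\afA$. Left multiplication by $w\in\afA$ stays in $\afA$, so applying $\xi\in\aPsi$ to $w\,a_{\al,d}$ and $w\,b_{\al,d}$ produces elements of $S$; extracting the powers of $\al$ by left $S$-linearity yields \eqref{eq:smallGKM} and its companion (the relevant root being $w\cdot\al$, which as $w,\al$ vary ranges over all edge weights, so writing $\al$ in the statement is harmless).

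For sufficiency I would argue by induction on $\ell(w)$ that the conditions force $\xi(A_w)\in S$, the base case $w=\id$ being immediate. The essential feature of the two families is that they are local: each involves only one root $\al$, the translation line $\{w\,t_{j\al^\vee}\}$, and the reflection $s_\al$. One therefore reduces the integrality of $\xi(A_w)$ to a rank-one ($\widetilde{A}_1$) computation in which $\xi(A_w)$ is expressed through strictly lower-length values together with a division by a power of $\al$, essentially by inverting the constructions of the previous paragraph. The higher-order ($d>1$) divisibilities are exactly what is needed here, because under $\delta\mapsto0$ the infinitely many affine reflections $s_{\al+k\delta}$ collapse onto the single finite reflection $s_\al$, so the relevant local ring is $S/(\al^d)$ rather than $S/(\al)$; the finite-difference conditions along the $\al^\vee$-line then guarantee that each division lands in $S$. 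Making this rank-one reduction precise and tracking the multiplicities correctly is the main obstacle, and is where I would follow the algebraic argument of \cite{LSS}.

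Finally, the $\aPsi^0$ statement follows formally from the main equivalence. If $\xi$ is $W$-invariant, then since $s_\al\in W$ the elements $w\,t_{j\al^\vee}$ and $w\,t_{j\al^\vee}s_\al$ lie in the same right coset, so $\xi(w\,t_{j\al^\vee}s_\al)=\xi(w\,t_{j\al^\vee})$ for all $j$; expanding $(1-t_{\al^\vee})^{d-1}$ as a finite-difference combination of the $t_{j\al^\vee}$ then gives $\xi\bigl(w(1-t_{\al^\vee})^{d-1}(1-s_\al)\bigr)=0$, so the second family holds trivially. Thus for a $W$-invariant $\xi$ the family \eqref{eq:smallGKM} alone implies both families, whence $\xi\in\aPsi$ by the main part, and therefore $\xi\in\aPsi^0$.
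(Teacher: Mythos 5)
Your necessity argument is correct in substance and takes a genuinely different route from anything proved in these notes: at the point where this proposition appears the paper simply cites \cite{GKM} and \cite{LSS}, and the only complete in-paper proof of a statement of this type is that of Theorem \ref{T:smallGKM} in Appendix \ref{A:smalltorusGKM}, whose necessity half restricts to the rank-one subgroup generated by $t_{\al^\vee}$ and $s_\al$ and verifies the divisibilities by explicit binomial-coefficient computations with localized $\widehat{\mathfrak{sl}}_2$ Schubert classes. Your alternative --- exhibiting $\tfrac{1}{\al^{d-1}}(1-t_{\al^\vee})^{d-1}$ and $\tfrac{1}{\al^{d}}(1-t_{\al^\vee})^{d-1}(1-s_\al)$ as elements of $\afA$ and applying $\xi$ --- is cleaner and is essentially the \cite{LSS} argument. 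However, your parenthetical claim that the discrepancy between $\al$ and $w\cdot\al$ is ``harmless'' is not justified and in fact is false term by term. Since the scalar must be commuted past $w$, what your computation establishes is that $\xi\bigl(w(1-t_{\al^\vee})^{d-1}(1-s_\al)\bigr)$ is divisible by $(w\cdot\al)^d$ (level-zero action), and the two families of conditions assert different divisibilities of the \emph{same} quantity: already for $d=1$, $W=S_3$, $\xi=\xi^{s_1}$, $w=s_2$, $\al=\al_1$, the recursion \eqref{E:xirec} gives $\xi^{s_1}(s_2)-\xi^{s_1}(s_2s_1)=\al_1+\al_2$, which is divisible by $s_2\cdot\al_1$ but not by $\al_1$. (This shows the proposition as printed has its left/right conventions crossed relative to \cite{GKM}, \cite{LSS}, and the form \eqref{E:smallGKMt} used in Chapter \ref{chapter.affineSchubert}, where the fixed group element multiplies on the other side. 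Your version is the correct one, but you must say explicitly which statement you are proving instead of waving the difference away.)

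The genuine gap is the sufficiency direction, which is the hard half of the proposition and which you do not prove: ``making this rank-one reduction precise and tracking the multiplicities correctly is the main obstacle, and is where I would follow the algebraic argument of \cite{LSS}'' is a citation, not an argument, and your proposed induction on $\ell(w)$ showing $\xi(A_w)\in S$ is not set up in enough detail to see that it closes. A complete argument along the lines the paper actually carries out (proof of Theorem \ref{T:smallGKM} in Appendix \ref{A:smalltorusGKM}) runs as follows: induct on the lower order ideal of $\aW$ on which $\xi$ vanishes; take $x$ of minimal length with $\xi(x)\neq 0$; show that $\xi(x)$ is divisible by $\xi^x(x)$, which under the small-torus collapse $\delta\mapsto 0$ becomes $\pm\prod_{\al}\al^{d_\al}$ with $d_\al$ the number of inversions of $x^{-1}$ lying over $\pm\al$ (so one needs divisibility by a genuine \emph{power} of each finite root, extracted from the $d>1$ conditions via the explicit description of those inversions as a consecutive string $\pm\al+k\delta$ and the vanishing of $\xi$ below $x$, exactly as in \eqref{E:xidivdiag} of the appendix); then replace $\xi$ by $\xi-(\xi(x)/\xi^x(x))\xi^x$ and recurse. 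None of this is routine, and it is precisely the content you have omitted. By contrast, your final paragraph on $\aPsi^0$ is fine as stated (with the right-multiplication convention, $wt_{j\al^\vee}$ and $wt_{j\al^\vee}s_\al$ lie in the same coset $wW$, so the second family vanishes identically), though if the conventions are corrected to match Chapter \ref{chapter.affineSchubert} you would instead need the more careful congruence argument of Lemma \ref{L:smallGKMtthens}.
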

\begin{remark}
The ring $\aPsi$ is studied in detail by Kostant and Kumar \cite{KK} in the Kac-Moody setting.  See Chapter \ref{chapter.affineSchubert}.
\end{remark}

\subsection{Exercises and Problems}
\begin{enumerate}
\item
Show that $\Delta$ sends $\Pet$ to $\Pet \otimes \Pet$.  Show that the 
coproduct structure constants of $\Pet$ in the $j$-basis are special cases of the coproduct structure constants of $\afA$ in the
$A_w$-basis.

\item ($j$-basis for translations \cite{Pet, Lam:2008})
Prove using Theorem \ref{thm:jbasis} that $$j_{t_\la} = \sum_{\mu \in W \cdot \la} A_{t_{\mu}}.$$

\item ($j$-basis is self-describing)
Show that the coefficients $j_w^x$ directly determine the structure constants of the $\{j_w\}$ basis.

\item
Find a formula for $j_{s_i t_\la}$ (see \cite[Proposition 8.5]{LS:QH} for a special case).

\item
Extend the construction of $\Pet$, and Theorem \ref{thm:jbasis} to extended affine Weyl groups.  See \cite{CMP} and \cite[Theorem 12]{LS:EQToda}.

%\item 
%Can the constructions of this section be extended to the case of twisted affine Lie algebras?

\item (Generators)
Find generators and relations for $\Pet$.  This does not appear to be known even in type $A$.

\item 
Find general formulae for $j_w$ in terms of $A_x$.  See \cite{LS:QH} for a formula in terms of quantum Schubert polynomials, 
which however is not very explicit.
\end{enumerate}

\section{(Affine) Fomin-Stanley algebras}\label{sec:affineFS}
Let $\phi_0: S \to \Z$ denote the map which sends a polynomial to its constant term.  For example, $\phi_0(3\alpha^2_1\alpha_2 + \alpha_2 +5) = 5$.

\subsection{Commutation definition of affine Fomin-Stanley algebra}
\label{chapter3.section.commutation}
We write $\aNC$ for the affine nilCoxeter algebra.  There is an {\it evaluation at 0} map $\phi_0: \afA \to \aNC$ given by $\phi_0(\sum_w a_w\, A_w) = \sum_w \phi_0(a_w)\,A_w$.  We define the {\it affine Fomin-Stanley subalgebra} to be $\aB = \phi_0(\Pet) \subset \aNC$.  The following results follow from Lemma \ref{lem:Petcomm} and Theorem \ref{thm:jbasis}.

\begin{lemma}
The set $\aB \subset \aNC$ is a commutative subalgebra of $\aNC$.
\end{lemma}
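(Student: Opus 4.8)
The plan is to deduce everything from the single assertion that the restriction $\phi_0|_\Pet : \Pet \to \aNC$ is a ring homomorphism. Granting this, since $\Pet$ is commutative by Lemma \ref{lem:Petcomm} and $\phi_0(1)=1$, its image $\aB=\phi_0(\Pet)$ is automatically a commutative unital subalgebra of $\aNC$, and Theorem \ref{thm:jbasis} pins down a basis. First I would grade $\afA$ by declaring $\deg(A_i)=-1$ and $\deg(\mu)=+1$ for $\mu\in P$; one checks the defining relations \eqref{eq:aila}--\eqref{eq:aiaj} are homogeneous, so $\afA=\bigoplus_d\afA_d$ is a graded ring, $S$ is a graded subring, $A_w$ is homogeneous of degree $-\ell(w)$, and (the centralizer of a graded subalgebra being graded) $\Pet$ is a graded subalgebra. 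On a homogeneous $a=\sum_w a_w A_w\in\afA_{-m}$ one has $a_w\in S_{\ell(w)-m}$, so $\phi_0(a)=\sum_{\ell(w)=m}a_w A_w$ is exactly the lowest-length component of $a$; in particular $\phi_0$ annihilates the augmentation ideal $\mathfrak{m}=S_{>0}$, restricts to the augmentation $S\to\Z$ (a ring map), satisfies $\phi_0(w\cdot s)=\phi_0(s)$ for $w\in\aW$, and obeys $\phi_0(s\,a)=\phi_0(s)\phi_0(a)$ for $s\in S$. Combining the last identity with Theorem \ref{thm:jbasis} one gets $\ker(\phi_0|_\Pet)=\mathfrak{m}\Pet$ and hence $\aB=\bigoplus_{w\in\aW^0}\Z\,\phi_0(j_w)$, since each $\phi_0(j_w)=A_w+\sum_{x\notin\aW^0}\phi_0(j_w^x)A_x$ has distinct leading term $A_w$.

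It then remains to prove $\phi_0(ab)=\phi_0(a)\phi_0(b)$ for $a,b\in\Pet$. I would first record that the defect $\Delta(a,b):=\phi_0(ab)-\phi_0(a)\phi_0(b)$ is $\Z$-bilinear, and that it vanishes as soon as $a$ or $b$ lies in $\mathfrak{m}\Pet$: indeed $\mathfrak{m}\Pet=\Pet\cap\ker\phi_0$ is a genuine two-sided ideal of $\Pet$ because $\Pet$ is commutative, so if, say, $a\in\mathfrak{m}\Pet$ then $ab\in\mathfrak{m}\Pet\subseteq\ker\phi_0$ and also $\phi_0(a)=0$, whence both sides vanish. Since $\Pet=\big(\bigoplus_{w\in\aW^0}\Z\,j_w\big)\oplus\mathfrak{m}\Pet$ by Theorem \ref{thm:jbasis}, bilinearity reduces the whole statement to the finitely-supported check $\Delta(j_u,j_v)=0$ for $u,v\in\aW^0$.

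The engine for that check is the identity, valid for every $b\in\Pet$ and every simple generator,
\begin{equation*}
A_i\,b=\partial_i(b)+(s_i b s_i)\,A_i,\qquad \partial_i(b):=\tfrac{1}{\alpha_i}\big(b-s_i b s_i\big),
\end{equation*}
which I would prove using $s_i b=(s_i b s_i)s_i$ (a consequence of the conjugation formula $s_i t_\lambda s_i=t_{s_i\cdot\lambda}$ and the translation description of $\Pet$ in Lemma \ref{lem:Ptrans}) together with $A_i=\tfrac1{\alpha_i}(1-s_i)$; a short computation with relation \eqref{eq:aila} shows that both $s_i b s_i$ and $\partial_i(b)$ again commute with $S$, i.e. lie in $\Pet$, and that $\partial_i$ raises the lowest length by one. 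Applying this identity repeatedly one computes $\phi_0(A_w b)$ by induction on $\ell(w)$, the point being that the correction terms $\partial_i(b)$—which are precisely the contributions in which $A_w$ lowers the degree of a coefficient—reassemble, thanks to $\partial_i(b)\in\Pet$, into lowest-length data that cancels against the length-preserving part. Feeding $a=j_u$ through this computation yields $\phi_0(j_u j_v)=\phi_0(j_u)\phi_0(j_v)$.

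The hard part is exactly this last cancellation. The difficulty is structural: $\phi_0$ is \emph{not} multiplicative on all of $\afA$ (for instance $\phi_0(A_i\cdot\mu A_j)=\langle\alpha_i^\vee,\mu\rangle A_j\neq 0=\phi_0(A_i)\phi_0(\mu A_j)$), because $A_i$ can convert a coefficient in $\mathfrak m$ into a constant through its Chevalley term. The hypothesis $a,b\in\Pet=Z_{\afA}(S)$ is exactly what forces these spurious constants to cancel in aggregate, and making that cancellation precise—most cleanly by carrying the divided-difference operators $\partial_i$ through the induction and tracking degrees via the grading above—is where essentially all of the work lies.
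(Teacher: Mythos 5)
Your overall strategy --- reduce everything to the single claim that $\phi_0$ restricted to $\Pet$ is multiplicative --- is the right one, and your preliminary reductions (the grading, $\ker(\phi_0|_\Pet)=\mathfrak{m}\Pet$, bilinearity of the defect $\Delta(a,b)$) are all correct, though they turn out to be unnecessary. The problem is that the one claim on which everything rests is never actually proved. You set up the identity $A_i b=\partial_i(b)+(s_ibs_i)A_i$ and propose an induction on $\ell(w)$ for $\phi_0(A_wb)$, but the asserted cancellation of the Chevalley-type correction terms ``against the length-preserving part'' is nowhere exhibited --- no induction hypothesis, no closed form for $\phi_0(A_wb)$ --- and you say yourself that this is ``where essentially all of the work lies.'' As written, the proposal is a plan for a proof of the key lemma rather than a proof, so there is a genuine gap.

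The gap is real but also entirely avoidable: the difficulty is an artifact of pushing the $A_i$'s \emph{rightward} through the scalar coefficients of $b$, which is exactly what generates the spurious constants you worry about. Instead, use the hypothesis $a\in\Pet=Z_{\afA}(S)$ to move the scalars of $b$ \emph{leftward} past $a$. Writing $b=\sum_w b_wA_w$ with $b_w\in S$, one has
$$ab=\sum_w (a\,b_w)A_w=\sum_w (b_w\,a)A_w,$$
and since $\phi_0(cA_w)=\phi_0(c)A_w$ for any $c\in\afA$ (right multiplication by $A_w$ only permutes or kills basis elements, leaving coefficients untouched) while $\phi_0(b_w c)=\phi_0(b_w)\phi_0(c)$ (left multiplication by a scalar just multiplies every coefficient), this gives $\phi_0(ab)=\sum_w\phi_0(b_w)\phi_0(a)A_w=\phi_0(a)\phi_0(b)$ for every $a\in\Pet$ and $b\in\afA$ --- no induction, no divided differences, no cancellation. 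Commutativity and multiplicative closure of $\aB=\phi_0(\Pet)$ then follow exactly as you say from Lemma \ref{lem:Petcomm}. This is essentially the route taken in \cite{Lam:2008}, where the relevant consequence of $a\in\Pet$ is packaged as the commutation property $\phi_0(as)=\phi_0(s)a$ appearing in Proposition \ref{prop:affStan}; your divided-difference machinery is not wrong, but it attacks the computation from the side on which the centralizer hypothesis is hardest to exploit.
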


\begin{theorem}[\cite{Lam:2008}]\label{thm:j0basis}
The algebra $\aB$ has a basis $\{j_w^0 \mid w \in \aW^0\}$ satisfying
$$
j_w^0 = A_w + \sum_{\stackrel{x \notin \aW^0}{\ell(x) = \ell(w)}} j_w^x  A_x
$$
and $j_w^0$ is the unique element in $\aB$ with unique Grassmanian term $A_w$.
\end{theorem}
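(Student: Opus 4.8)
The plan is to realize the claimed basis $\{j_w^0\}$ as the image under $\phi_0$ of the Peterson $j$-basis $\{j_w\}$ of $\Pet$ from Theorem~\ref{thm:jbasis}, and the whole argument turns on understanding how $\phi_0$ interacts with a grading. First I would equip $\afA$ with the grading in which $\deg(A_i) = -1$ for $i \in \aI$ and $\deg(\lambda) = +1$ for $\lambda \in P$; the defining relations \eqref{eq:aila}--\eqref{eq:aiaj} are homogeneous of degree $0$ under this assignment, so $\afA$ is a graded ring, $S = \Sym(P)$ is a graded subalgebra concentrated in nonnegative degrees, and $A_w$ is homogeneous of degree $-\ell(w)$.

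The key preliminary step is to show that each $j_w$ is homogeneous of degree $-\ell(w)$. Since $S$ is generated by the degree-one space $P$, comparing homogeneous components of the centralizer condition $a\lambda = \lambda a$ for $\lambda \in P$ shows that every homogeneous component of an element of $\Pet = Z_{\afA}(S)$ again lies in $\Pet$; thus $\Pet$ is a graded subalgebra. Now decompose $j_w = \sum_d (j_w)_d$ into homogeneous pieces. Because $\{j_w\}_{w \in \aW^0}$ is an $S$-basis of $\Pet$ and each $j_w$ has $A_w$ as its only Grassmannian term, the projection of $\Pet$ onto $\bigoplus_{x \in \aW^0} S \cdot A_x$ is injective; this gives the uniqueness principle that an element of $\Pet$ whose Grassmannian part is exactly $A_w$ must equal $j_w$. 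The component $(j_w)_{-\ell(w)}$ lies in $\Pet$ and has Grassmannian part precisely $A_w$, since every other term of $j_w$ is a non-Grassmannian $A_x$. Hence $(j_w)_{-\ell(w)} = j_w$, so $j_w$ is homogeneous, which forces $\deg(j_w^x) = \ell(x) - \ell(w)$ and in particular $j_w^x = 0$ whenever $\ell(x) < \ell(w)$.

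Next I would apply $\phi_0$ and set $j_w^0 := \phi_0(j_w)$. The coefficient of $A_w$ is $\phi_0(1) = 1$; for $x \notin \aW^0$ with $\ell(x) > \ell(w)$ the scalar $j_w^x$ is homogeneous of strictly positive degree, hence has vanishing constant term, so $\phi_0(j_w^x) = 0$; and for $\ell(x) < \ell(w)$ we already have $j_w^x = 0$. Only the terms with $\ell(x) = \ell(w)$ survive, producing exactly the stated form of $j_w^0$. To see that the $j_w^0$ span $\aB = \phi_0(\Pet)$, I would use that $\phi_0 : S \to \Z$ is a ring homomorphism: for $c \in S$ one has $\phi_0(c\,j_w) = \sum_x \phi_0(c\,j_w^x) A_x = \phi_0(c) \sum_x \phi_0(j_w^x) A_x = \phi_0(c)\, j_w^0$, so the image of an arbitrary $\sum_w c_w j_w \in \Pet$ is $\sum_w \phi_0(c_w) j_w^0$ with $\phi_0(c_w) \in \Z$. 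Thus $\aB = \sum_{w \in \aW^0} \Z\, j_w^0$, and since distinct $j_w^0$ carry distinct Grassmannian leading terms $A_w$ while the $\{A_x\}$ are linearly independent in $\aNC$, the set $\{j_w^0\}$ is a $\Z$-basis of $\aB$. The final uniqueness assertion is then immediate: any $\sum_w d_w j_w^0$ has Grassmannian part $\sum_w d_w A_w$, which equals $A_w$ alone precisely when $d_w = 1$ and all other $d_{w'} = 0$.

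The main obstacle I anticipate is the homogeneity step: Theorem~\ref{thm:jbasis} as stated does not record the degrees of the coefficients $j_w^x$, so I must extract this from the grading structure together with the uniqueness of the $j$-basis rather than assume it. Everything afterward is a bookkeeping consequence of $\phi_0$ being a ring homomorphism on scalars that annihilates positive-degree pieces. The one point to verify carefully is that the grading convention is consistent throughout, but since only the relative degrees $\ell(x)-\ell(w)$ and the sign of positive-degree scalars enter, the argument is insensitive to the overall normalization.
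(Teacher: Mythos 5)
Your proof is correct and follows the route the paper intends (and that Lam's original argument takes): push the Peterson $j$-basis of Theorem~\ref{thm:jbasis} through $\phi_0$ and use a grading on $\afA$ to see that exactly the coefficients $j_w^x$ with $\ell(x)=\ell(w)$ survive. The one ingredient not recorded in Theorem~\ref{thm:jbasis} --- that each $j_w$ is homogeneous, so that $j_w^x$ has degree $\ell(x)-\ell(w)$ and in particular vanishes for $\ell(x)<\ell(w)$ and has zero constant term for $\ell(x)>\ell(w)$ --- you supply cleanly by noting that $\Pet$ is a graded subalgebra (since $S$ is generated in degree one) and invoking the injectivity of the projection of $\Pet$ onto its Grassmannian part.
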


\begin{prop}[\cite{Lam:2008}]\label{prop:affStan}
The subalgebra $\aB \subset \aNC$ is given by
$$
\aB = \{a\in \aNC | \phi_0(as) = \phi_0(s)a \text{ for all } s \in S\}.
$$
\end{prop}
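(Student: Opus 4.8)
The plan is to prove the two inclusions $\aB\subseteq\aB'$ and $\aB'\subseteq\aB$ separately, where I write $\aB'=\{a\in\aNC\mid \phi_0(as)=\phi_0(s)\,a\text{ for all }s\in S\}$ for the right-hand side. The whole argument rests on one elementary observation: since $\phi_0\colon S\to\Z$ is evaluation at $0$, it is a \emph{ring} homomorphism, so $\phi_0(fg)=\phi_0(f)\phi_0(g)$ for $f,g\in S$. First I would record two consequences. Expanding $A_w s=\sum_x c_{w,x}(s)\,A_x$ in the $A_x$-basis (with $c_{w,w}(s)=w\cdot s$ and lower Bruhat terms, as in the Kostant--Kumar commutation formula of Section~\ref{sec:affinenilHecke}), the ring-homomorphism property gives, for every $a\in\afA$ and $s\in S$,
\[
\phi_0(as)=\phi_0\bigl(\phi_0(a)\,s\bigr)\qquad\text{and}\qquad \phi_0(sa)=\phi_0(s)\,\phi_0(a).
\]
The first identity follows by reducing to $a=fA_w$ and comparing $\phi_0\bigl(f\,c_{w,x}(s)\bigr)=\phi_0(f)\phi_0\bigl(c_{w,x}(s)\bigr)$ termwise; the second is immediate from $sa=\sum_w (sa_w)A_w$.

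With these in hand the inclusion $\aB\subseteq\aB'$ is immediate and can be written in full. If $b=\phi_0(p)$ with $p\in\Pet=Z_{\afA}(S)$, then for any $s\in S$,
\[
\phi_0(bs)=\phi_0\bigl(\phi_0(p)\,s\bigr)=\phi_0(ps)=\phi_0(sp)=\phi_0(s)\,\phi_0(p)=\phi_0(s)\,b,
\]
using the first identity, then $ps=sp$ (Lemma~\ref{lem:Petcomm}'s defining property of $\Pet$), then the second identity. Hence $b\in\aB'$.

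For the reverse inclusion I would first reduce to a statement about leading terms. Everything is graded by length (with $\deg A_i=-1$, $\deg\lambda=1$ the product $A_w s$ is homogeneous), and checking that the defining condition of $\aB'$ respects this grading shows $\aB'=\bigoplus_d\aB'_d$; moreover, since $\phi_0(s)=0$ for $s$ in the augmentation ideal $S_+=\ker\phi_0$, membership in $\aB'$ is equivalent to $\phi_0(as)=0$ for all $s\in S_+$. Fix a degree $d$, let $V^0:=\mathrm{span}\{A_w\mid w\in\aW^0,\ \ell(w)=d\}$, and let $\pi$ be the projection of $(\aNC)_d$ onto $V^0$. Theorem~\ref{thm:j0basis} says exactly that $\pi$ restricts to an isomorphism on $\aB_d$ (the $j^0_w$ are characterised by their unique Grassmannian term $A_w$). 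Granting the inclusion already proved, $\aB_d\subseteq\aB'_d$, so the proof reduces to the following Key Lemma: \emph{if $a\in\aB'$ is homogeneous with $\pi(a)=0$, then $a=0$}. Indeed, given this, $\pi|_{\aB'_d}$ is injective with image containing $\pi(\aB_d)=V^0$, hence a bijection, and for $a\in\aB'_d$ one subtracts the unique $b\in\aB_d$ with $\pi(b)=\pi(a)$ to get $a-b\in\ker(\pi|_{\aB'_d})=0$, so $a=b\in\aB$.

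The Key Lemma is the main obstacle. Its first-order part is easy: taking $s=\lambda\in P$ and using $A_w\lambda=(w\cdot\lambda)A_w+\sum_{\alpha}\ip{\alpha^\vee,\lambda}A_{ws_\alpha}$ (the Chevalley terms), the condition $\phi_0(a\lambda)=0$ for all $\lambda$ says that for each $x$ the coweight $\sum_{\alpha:\,xs_\alpha\gtrdot x} a_{xs_\alpha}\,\alpha^\vee$ vanishes. This alone does not force $a=0$, because in the level-zero (small torus) action distinct affine roots $\alpha+k\delta$ share the finite coroot $\alpha^\vee$, so only certain sums of coefficients are constrained. To finish one must exploit the higher-degree conditions $\phi_0(as)=0$ for $s\in S_+$ of all degrees simultaneously. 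My plan is to package these exactly as in the proof of Theorem~\ref{thm:jbasis}: reinterpret the family $\{\phi_0(as)\}_{s\in S}$ as the assertion that a dual function attached to $a$ lies in the Kostant--Kumar function ring $\aPsi$ of Section~\ref{chapter3.section.sketch}, and then apply the Goresky--Kottwitz--MacPherson divisibility characterisation of $\aPsi$ (together with $\pi(a)=0$, i.e.\ the $\aPsi^0$ description) to conclude that this function, and hence $a$, is zero. Equivalently, one shows $\dim\aB'_d=\#\{w\in\aW^0:\ell(w)=d\}=\dim\aB_d$; whichever way it is phrased, the hard content is the iterated/GKM divisibility bookkeeping that the single first-order relation above does not see.
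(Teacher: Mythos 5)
Your easy inclusion $\aB\subseteq\aB'$ is correct and complete, and your reduction of the reverse inclusion to the Key Lemma (a homogeneous element of $\aB'$ with no affine Grassmannian term must vanish) via Theorem~\ref{thm:j0basis} is exactly the skeleton the paper uses. The genuine gap is that the Key Lemma itself is never proved. You correctly diagnose that the first-order conditions $\phi_0(a\lambda)=0$ for $\lambda\in P$ (the Chevalley terms, which in the level-zero action only see the finite coroot $\alpha^\vee$ of each affine root $\alpha+k\delta$) do not suffice, but the proposed remedy --- encoding the higher-degree conditions as membership of a dual function in $\aPsi$ and invoking the GKM divisibility criteria --- is left entirely as a plan, and that is precisely where all of the remaining work lies. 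As written, your argument establishes one inclusion plus a formal reduction, not the proposition.

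The paper closes this gap by a different and more elementary mechanism, flagged in the exercise immediately following the statement. Step one is a purely \emph{finite} lemma: if $a=\sum_{w\in W}a_wA_w$ lies in the finite nilCoxeter algebra $\A_0$ and satisfies $\phi_0(as)=\phi_0(s)a$ for all $s\in S$, then $a\in\Z A_{\id}$. This uses the faithfulness of the divided-difference action of $\A_0$ on $S$: the coefficient of $A_{\id}$ in $\phi_0(as)$ is $\sum_w a_w\,\phi_0(A_w\cdot s)$, and the nondegeneracy of the pairing $(A_w,s)\mapsto\phi_0(A_w\cdot s)$ (Schubert polynomials furnish dual elements) forces $a_w=0$ for all $w\neq\id$. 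Step two transports this to $\aNC$ via the length-additive parabolic factorization $x=x^0x_0$ with $x^0\in\aW^0$ and $x_0\in W$: a nonzero $a\in\aB'$ supported entirely on non-Grassmannian elements would, after isolating the terms lying over a suitable coset $x^0W$, yield a nonzero element of $\A_0$ satisfying the commutation condition but not proportional to $A_{\id}$, contradicting step one. Hence every nonzero homogeneous element of $\aB'$ has a Grassmannian term, which is your Key Lemma. I would recommend replacing the GKM plan with this route; if you insist on the $\aPsi$ approach you must actually carry out the divisibility bookkeeping, and it will not be shorter than the divided-difference argument.
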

Proposition \ref{prop:affStan} is proved in the following exercises (also see \cite[Propositions 5.1,5.3,5.4]{Lam:2008}).
\begin{exercise} \
\begin{enumerate}
\item
Check that $a \in \aB$ satisfies $\phi_0(as) = \phi_0(s)a$ for all $s \in S$, thus obtaining one inclusion.
\item
Show that if $a = \sum_{w \in W} a_w A_w \in \A_0$ lies in $\aB$, then $a$ is a multiple of $A_{\id}$.  (Hint: the action of $\A_0$ on $S$ via divided difference operators is faithful.  See Section \ref{ssec:algebraprob}.)
\item
Suppose that $a \in \aNC$ satisfies the condition $\phi_0(as) = \phi_0(s)a$ for all $s \in S$.  Use (2) to show that $a$ must contain some affine Grassmannian term $A_w$, $w \in \aW^0$.  Conclude by Theorem \ref{thm:j0basis} that $a \in \aB$.
\end{enumerate}
\end{exercise}

A basic problem is to describe $\aB$ explicitly.  We shall do so in type $A$, following \cite{Lam:2008} and connecting to affine Stanley symmetric functions.  For other types, see \cite{LSS:C, Pon}.

\subsection{Noncommutative $k$-Schur functions} \label{sec:noncommkschur}
In the remainder of this section, we take $W = S_n$ and $\aW = \tS_n$.  Part of the material of this section is also presented in Chapter \ref{chapter.k schur primer}, Section \ref{sec:nilcoxeter}.  We define
$$
\ah_k = \sum_{w \text{ cyclically decreasing: } \ell(w) = k} A_w
$$
for $k = 1,2,\ldots,n-1$.  Introduce an inner product $\ip{.,.}:\aNC \times \aNC \to \Z$ given by extending linearly $\ip{A_w,A_v}=\delta_{wv}$.

\begin{definition}\label{def:afstalgebra}
The affine Stanley symmetric function $\tF_w$ is given by 
$$
\tF_w = \sum_\alpha \ip{h_{\alpha_{\ell}} \cdots h_{\alpha_1}, A_w} x^\alpha
$$
where the sum is over compositions $\alpha = (\alpha_1,\alpha_2,\ldots,\alpha_\ell)$.
\end{definition}

Below we shall show that
\begin{theorem}[\cite{Lam:2008}]\label{thm:ahcommute}
The elements $\ah_1,\ah_2,\ldots,\ah_{n-1} \in \aNC$ commute.
\end{theorem}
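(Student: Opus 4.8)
The plan is to deduce the theorem from the theory of the affine Fomin--Stanley subalgebra $\aB\subset\aNC$ developed above, rather than by a direct manipulation of cyclically decreasing elements. Recall that $\aB=\phi_0(\Pet)$ is a commutative subalgebra of $\aNC$ (this follows from Lemma~\ref{lem:Petcomm} and Theorem~\ref{thm:jbasis}). Consequently it suffices to prove the membership $\ah_k\in\aB$ for each $k$; once $\ah_1,\dots,\ah_{n-1}\in\aB$, their pairwise commutativity is automatic. So the whole problem collapses to showing that each $\ah_k$ lies in $\aB$.

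To establish membership I would use the intrinsic characterization of Proposition~\ref{prop:affStan}, namely $\aB=\{a\in\aNC\mid \phi_0(as)=\phi_0(s)\,a \text{ for all }s\in S\}$, and first perform a degree reduction collapsing this infinite family of conditions to a single family indexed by $\lambda\in P$. Since $\ah_k$ is length-homogeneous it suffices to treat $s\in S$ homogeneous of degree $d$: for $d=0$ the identity is trivial, and for $d\geq 1$ the right-hand side vanishes, so one must show $\phi_0(\ah_k s)=0$. Writing $s=\lambda s'$ with $\lambda\in P$ and $\deg s'=d-1$, one has $\ah_k s=(\ah_k\lambda)s'$. By the commutation rule $A_w\lambda=(w\cdot\lambda)A_w+\sum_{\ell(ws_\al)=\ell(w)-1}\ip{\al^\vee,\lambda}A_{ws_\al}$ of \cite[Proposition~4.30]{KK}, the element $\ah_k\lambda$ has $S$-coefficients (in the $S$-basis $\{A_w\}$) only in degrees $0$ and $1$, its degree-$0$ part being exactly $\phi_0(\ah_k\lambda)$. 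Granting the vanishing $\phi_0(\ah_k\lambda)=0$, the element $\ah_k\lambda$ is purely of coefficient-degree $1$; since such left coefficients only raise degree under further right multiplication, $(\ah_k\lambda)s'$ has no constant term and $\phi_0(\ah_k s)=0$. Thus everything reduces to the single identity $\phi_0(\ah_k\lambda)=0$ for all $\lambda\in P$.

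The remaining Key Lemma $\phi_0(\ah_k\lambda)=0$ is where I expect the real difficulty. By the same commutation formula, and because $\phi_0$ kills the degree-$1$ contribution $(w\cdot\lambda)A_w$, the coefficient of a fixed $A_v$ with $\ell(v)=k-1$ in $\phi_0(\ah_k\lambda)$ equals $\sum_w\ip{\beta_w^\vee,\lambda}$, summed over cyclically decreasing $w$ of length $k$ covering $v$ (so $v=ws_{\beta_w}$ with $\ell(v)=\ell(w)-1$), where $\beta_w$ is the positive affine root removed. As $\lambda\in P$ is arbitrary and the relevant pairing is the level-zero one, vanishing of this coefficient for every $\lambda$ is equivalent to the signed coroot identity $\sum_w\beta_w^\vee=0$ in $P^\vee$, for each fixed $v$. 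The hard part is proving precisely this cancellation: one must describe exactly which cyclically decreasing $w$ cover a given $v$ and see that their coroots annihilate. I would attack this via the combinatorics of cyclically decreasing elements — their bijection with proper subsets $S\subsetneq\Z/n\Z$ — to enumerate the covers $v\lessdot w$ explicitly, and then build a sign-reversing involution on this set of $w$ that negates $\beta_w^\vee$, with the cyclic $\Z/n\Z$ symmetry (which fixes $\ah_k$) organizing the pairing. An alternative, perhaps less structural route is to verify $\phi_0(\ah_k\lambda)=0$ on the faithful divided-difference action of $\aNC$ on $S$ (cf.\ Section~\ref{ssec:algebraprob}), turning it into a polynomial identity; but the involutive argument seems more robust and more likely to reveal the underlying mechanism.
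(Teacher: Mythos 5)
Your proposal follows essentially the same route as the paper: the actual proof (embedded in the proof of Theorem~\ref{thm:aB}) also deduces commutativity of the $\ah_k$ from the membership $\ah_k\in\aB$ together with the commutativity of $\aB$, and establishes membership via the criterion of Proposition~\ref{prop:affStan} reduced to linear elements of $S$. Your degree-reduction argument is sound and is what lies behind the paper's one-line assertion that it suffices to check $\phi_0(\ah_k\,x_i)=0$ for each $i$. The one step you defer --- the cancellation among cyclically decreasing covers of a fixed $v$ --- is exactly where the paper does the work, and it goes through in the form you predict: using the $\Z/n\Z$-symmetry of the set of cyclically decreasing elements one reduces to $\lambda=x_1$, then computes, for each $k$-subset $I\subsetneq\Z/n\Z$, that $\phi_0(A_I x_1)$ equals $A_{I\setminus\{1\}}$ when $I$ contains a cyclic run $r,r+1,\ldots,0,1$ with $r-1\notin I$, equals $-\sum_{i=r}^{0}A_{I\setminus\{i\}}$ when $I$ contains $r,\ldots,n-1,0$ but not $r-1$ or $1$, and vanishes otherwise. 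The cancellation is then the pairing you were looking for: each target $A_J$ (with $1\notin J$, $|J|=k-1$) receives its unique positive contribution from $I=J\cup\{1\}$ and its unique negative contribution from $I=J\cup\{r'\}$ for the $r'$ determined by the maximal run $r'+1,\ldots,n-1,0$ inside $J$. So your plan is correct and matches the paper; what remains is only to write down this explicit bijection rather than an abstract involution.
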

Assuming Theorem \ref{thm:ahcommute}, Theorem \ref{thm:afstsymm} follows.

Define the noncommutative $k$-Schur functions $\s^{(k)}_\la \in \aNC$ by writing the $k$-Schur functions $s^{(k)}_\la$ (see Section \ref{sec:affine}) as a polynomial in $h_i$, and replacing $h_i$ by $\ah_i$.

\begin{proposition}[\cite{Lam:2008}]\label{prop:noncommCauchy}
Inside an appropriate completion of $\aNC \otimes \La^{(n)}$, we have
$$
\sum_{\alpha} \ah_\alpha x^\alpha = \sum_{\la \in \Bo^n} \s^{(k)}_\la \tF_\la
$$
where the sum on the left hand side is over all compositions.
\end{proposition}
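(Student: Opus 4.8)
The plan is to recognize both sides as the image, under the algebra homomorphism $h_i \mapsto \ah_i$, of the Cauchy-type expansion of the reproducing kernel of the dual pair $\La_{(n)},\La^{(n)}$. First I would record that $\La_{(n)}=\Q[h_1,\dots,h_{n-1}]$ is a polynomial algebra on the $h_i$ (its basis is $\{h_\la \mid \la \in \Bo^n\}$), so that by the commutativity of the $\ah_i$ (Theorem \ref{thm:ahcommute}) the assignment $h_i \mapsto \ah_i$, with $\ah_i:=0$ for $i\geq n$ and $\ah_0:=1$, extends to a well-defined algebra homomorphism $\Phi:\La_{(n)}\to\aNC$ whose image lies in the commutative subalgebra $\aB$. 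By the very definition of the noncommutative $k$-Schur functions one has $\Phi(s^{(k)}_\la)=\s^{(k)}_\la$, and of course $\Phi(h_\la)=\ah_\la$.

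Next I would compute each side separately in the completed tensor product, expanding the $\La^{(n)}$-factor in the monomial basis $\{m_\la \mid \la \in \Bo^n\}$. For the left-hand side, commutativity of the $\ah_i$ forces $\ah_\alpha=\ah_{\alpha_1}\ah_{\alpha_2}\cdots$ to depend only on the partition $\sort(\alpha)$ obtained by reordering the parts of $\alpha$; summing $x^\alpha$ over all weak compositions with a fixed sort $\la$ yields $m_\la$, so that
\begin{equation*}
\sum_\alpha \ah_\alpha\, x^\alpha = \sum_{\la \in \Bo^n} \ah_\la\, m_\la = \sum_{\la \in \Bo^n} \Phi(h_\la)\, m_\la .
\end{equation*}
For the right-hand side I would substitute the monomial expansion $\tF_\la=\sum_\mu K^{(k)}_{\la\mu} m_\mu$ of the affine (dual $k$-)Schur functions from Equation~\eqref{eq:dualkschurmexp}, interchange the order of summation, and apply the defining relation $h_\mu=\sum_\la K^{(k)}_{\la\mu}\, s^{(k)}_\la$ of Equation~\eqref{weakKostka} inside $\Phi$:
\begin{equation*}
\sum_{\la} \s^{(k)}_\la \tF_\la = \sum_\mu \Phi\!\Big(\sum_\la K^{(k)}_{\la\mu}\, s^{(k)}_\la\Big) m_\mu = \sum_{\mu \in \Bo^n} \Phi(h_\mu)\, m_\mu .
\end{equation*}
The two displays agree, which is the assertion. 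Equivalently, the identity is the image under $\Phi \otimes \id$ of the basis-independence of the canonical element $\sum_\la h_\la \otimes m_\la = \sum_\la s^{(k)}_\la \otimes \tF_\la$ attached to the two pairs of dual bases for the pairing $\La_{(n)} \times \La^{(n)} \to \Q$.

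The genuinely delicate point is not the algebra but the bookkeeping of infinite sums, i.e.\ giving precise meaning to ``an appropriate completion of $\aNC \otimes \La^{(n)}$'' and justifying the rearrangements above. Here I would grade $\aNC$ by length and $\La^{(n)}$ by degree; since $\ah_\alpha x^\alpha$ lives in bidegree $(|\alpha|,|\alpha|)$ and each bidegree $(d,d)$ contains only finitely many monomials $m_\la$ and finitely many basis elements $A_w$ (those with $\ell(w)=d$), every manipulation takes place in a finite-dimensional space once the total degree is fixed. Working degree by degree makes the reindexing and the interchange of summation rigorous, and confines the completion to the formal-power-series direction of the $x$-variables. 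The one input that is not formal, and on which everything rests, is Theorem~\ref{thm:ahcommute}: it is what makes $\Phi$ a homomorphism and what collapses the left-hand side to $\sum_\la \ah_\la m_\la$. I would therefore regard its proof, via the nilHecke computation indicated around Theorem~\ref{thm:ahcommute}, as the real crux, with the present Cauchy identity being a formal consequence of duality once that commutativity is in hand.
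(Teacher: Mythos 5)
Your proposal is correct and follows essentially the same route as the paper: the paper likewise writes the Cauchy kernel identity $\sum_\alpha h_\alpha(y)x^\alpha=\sum_{\la\in\Bo^n} s^{(k)}_\la(y)\tF_\la(x)$ for the dual pair $\La_{(n)},\La^{(n)}$ and applies the map $h_i\mapsto\ah_i$. Your additional verification via the monomial basis and your remarks on the bigraded completion and on Theorem~\ref{thm:ahcommute} being the real input simply make explicit what the paper leaves to ``standard results in symmetric functions.''
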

\begin{proof}
Since $\{s^{(k)}_\la\}$ and $\{\tF_\la\}$ are dual bases, we have by standard results in symmetric functions \cite{EC2,Mac:1995} that
$$
\sum_{\alpha} h_\alpha(y) x^\alpha = \sum_{\la \in \Bo^n} s^{(k)}_\la(y) \tF_\la(x)
$$
inside $\La_{(n)} \otimes \La^{(n)}$.  Now take the image of this equation under the map $\La_{(n)} \to \aNC$, given by $h_i \mapsto \ah_i$.
\end{proof}

It follows from Definition \ref{def:afstalgebra} and Proposition \ref{prop:noncommCauchy} that
$$
\tF_w = \sum_{\la} \ip{\s^{(k)}_\la,A_w} \tF_\la.
$$
Thus the coefficient of $A_w$ in $\s^{(k)}_\la$ is equal to the coefficient of $\tF_\la$ in $\tF_w$.  By Theorem \ref{thm:affineSchurbasis}, it follows that
\begin{equation}\label{eq:noncommkSchur}
\s^{(k)}_\la = A_v + \sum_{w \notin \aW^0} \alpha_{w\la} A_w.
\end{equation}
In particular, 
\begin{proposition}[\cite{Lam:2008}]\label{prop:aB}
The subalgebra of $\aNC$ generated by $\ah_1,\ldots,\ah_{n-1}$ is isomorphic to $\La_{(n)}$, with basis given by $\s^{(k)}_\la$.
\end{proposition}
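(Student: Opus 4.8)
The plan is to produce an explicit algebra isomorphism $\psi \colon \La_{(n)} \to B$, where $B \subseteq \aNC$ denotes the subalgebra generated by $\ah_1,\ldots,\ah_{n-1}$, and then to read off from it that the noncommutative $k$-Schur functions $\s^{(k)}_\la$ form a basis. The whole argument is a short formal consequence of two results established earlier, namely the commutativity Theorem~\ref{thm:ahcommute} and the unitriangularity \eqref{eq:noncommkSchur}; the substantive work lives in those upstream statements.

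First I would construct $\psi$. Since $h_1,h_2,\ldots$ are algebraically independent, $\La_{(n)}$ is the free commutative algebra (polynomial ring) on the generators $h_1,\ldots,h_{n-1}$, so there is a unique algebra homomorphism $\psi$ with $\psi(h_i)=\ah_i$ as soon as the images pairwise commute. This commutativity is precisely Theorem~\ref{thm:ahcommute}, so $\psi$ is well defined, and by construction its image is exactly $B$, which gives surjectivity onto $B$. Moreover, since the noncommutative $k$-Schur function is obtained by writing $s^{(k)}_\la$ as a polynomial in the $h_i$ and substituting $\ah_i$ for $h_i$, we have $\psi(s^{(k)}_\la)=\s^{(k)}_\la$ for every $k$-bounded partition $\la$.

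Next I would prove injectivity. The $k$-Schur functions $\{s^{(k)}_\la\}_{\la\in\Bo^n}$ are a basis of $\La_{(n)}$, being the dual basis to the affine Schur basis $\{\tF_\la\}$ of $\La^{(n)}$ from Theorem~\ref{thm:affineSchurbasis} under the duality pairing between $\La_{(n)}$ and $\La^{(n)}$. It therefore suffices to show that their images $\{\s^{(k)}_\la\}$ are linearly independent in $\aNC$, and for this I would invoke \eqref{eq:noncommkSchur}: each $\s^{(k)}_\la$ has the form $A_{v_\la}+\sum_{w\notin\aW^0}\alpha_{w\la}A_w$, where $v_\la\in\aW^0$ is the affine Grassmannian element with $\la(v_\la)=\la$. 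Because $\la\mapsto v_\la$ is a bijection $\Bo^n\to\aW^0$ and $\{A_w\mid w\in\aW\}$ is a $\ZZ$-basis of $\aNC$, the unique Grassmannian term of $\s^{(k)}_\la$ is $A_{v_\la}$, with coefficient $1$, and no other $\s^{(k)}_\mu$ contributes a Grassmannian term at $v_\la$. Projecting any relation $\sum_\la c_\la\,\s^{(k)}_\la=0$ onto the $\ZZ$-span of $\{A_w\mid w\in\aW^0\}$ then yields $\sum_\la c_\la A_{v_\la}=0$, and distinctness of the $v_\la$ forces every $c_\la=0$. Hence $\psi$ is injective, so $\psi\colon\La_{(n)}\xrightarrow{\sim}B$ is an isomorphism carrying the basis $\{s^{(k)}_\la\}$ to $\{\s^{(k)}_\la\}$, which is thus a basis of $B$.

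The genuinely hard content is not in this proposition itself but in its two inputs: the well-definedness of $\psi$ rests entirely on Theorem~\ref{thm:ahcommute}, and the injectivity rests on the leading-Grassmannian-term description \eqref{eq:noncommkSchur}, whose own proof uses the duality of $s^{(k)}_\la$ and $\tF_\la$ together with the triangularity of affine Stanley symmetric functions (Proposition~\ref{prop:affdominant}) and Theorem~\ref{thm:affineSchurbasis}. Granting those, the only point requiring care in the present argument is the bookkeeping that isolates $A_{v_\la}$ as the sole Grassmannian term of $\s^{(k)}_\la$, which is immediate from \eqref{eq:noncommkSchur} since all remaining terms there are indexed by $w\notin\aW^0$.
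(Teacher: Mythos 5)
Your proof is correct and follows essentially the same route as the paper: Proposition~\ref{prop:aB} is stated there as an immediate consequence (``In particular'') of the unitriangularity \eqref{eq:noncommkSchur} combined with Theorem~\ref{thm:ahcommute}, which is exactly the two-step argument (well-definedness of $h_i\mapsto\ah_i$ plus injectivity via distinct leading Grassmannian terms $A_{v_\la}$) that you spell out. Your write-up just makes explicit the bookkeeping the paper leaves implicit.
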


\subsection{Cyclically decreasing elements}
For convenience, for $I \subsetneq \Z/n\Z$ we define $A_I:=A_w$, where $w$ is the unique cyclically decreasing affine permutation which uses exactly the simple generators in $I$.

\begin{theorem}[\cite{Lam:2008}]\label{thm:aB}
The affine Fomin-Stanley subalgebra $\aB$ is generated by the elements $\ah_k$, and we have $j_w^0 = \s^{(k)}_\la$ where $w \in \aW^0$ satisfies $\la(w) = \la$.
\end{theorem}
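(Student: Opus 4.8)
The plan is to reduce both assertions to a single key lemma, namely that each generator $\ah_k$ lies in $\aB$. Granting this, the commutative subalgebra generated by the $\ah_k$ is contained in $\aB$, and Proposition~\ref{prop:aB} tells us it has $\Z$-basis $\{\s^{(k)}_\la \mid \la \in \Bo^n\}$; I will then identify this basis with the $j$-basis $\{j^0_w \mid w \in \aW^0\}$ of $\aB$ from Theorem~\ref{thm:j0basis}, which gives the identity $j^0_w = \s^{(k)}_\la$ and, at the same time, forces the two subalgebras to coincide.

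First I would prove the key lemma $\ah_k \in \aB$ via the characterization $\aB = \{a\in\aNC \mid \phi_0(as)=\phi_0(s)a \text{ for all }s\in S\}$ of Proposition~\ref{prop:affStan}. Since $\phi_0(s)=0$ for every positive-degree $s\in S$, the condition becomes $\phi_0(\ah_k s)=0$ for all such $s$. Reading $\phi_0(x)=0$ as the statement that every coefficient of $x$ in the $A_w$-basis has vanishing constant term, one checks that if all coefficients of $\ah_k s'$ have vanishing constant term then so do those of $\ah_k s'\mu$ for any $\mu\in P$ (right multiplication by $\mu$ produces coefficients that are either $(w\cdot\mu)$-multiples or integer multiples of the old ones), so by induction on degree it suffices to treat $s=\lambda\in P$. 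For $s=\lambda$ I would expand $\ah_k\lambda=\sum_{w}A_w\lambda$ over cyclically decreasing $w$ of length $k$ and apply the commutation formula
\[
A_w\lambda = (w\cdot\lambda)A_w + \sum_{ws_\alpha:\,\ell(ws_\alpha)=\ell(w)-1}\ip{\alpha^\vee,\lambda}\,A_{ws_\alpha}
\]
(\cite[Proposition 4.30]{KK}; see the exercises after Section~\ref{sec:affinenilHecke}). The term $(w\cdot\lambda)A_w$ is killed by $\phi_0$, so the coefficient of a fixed $A_v$ with $\ell(v)=k-1$ in $\phi_0(\ah_k\lambda)$ equals $\ip{\left(\sum\alpha^\vee\right),\lambda}$, the sum running over cyclically decreasing $w$ of length $k$ covering $v$, with $v=ws_\alpha$. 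The lemma thus amounts to the combinatorial identity that this sum of coroots vanishes in $P^\vee$ for every $v$. Because $\aB$ is commutative (the lemma established just before Theorem~\ref{thm:j0basis}), this step simultaneously yields the commutativity of the $\ah_k$, that is Theorem~\ref{thm:ahcommute}.

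With the key lemma in hand, each $\s^{(k)}_\la$ is a polynomial in the $\ah_k$ and hence lies in $\aB$. By \eqref{eq:noncommkSchur} we have $\s^{(k)}_\la = A_w + \sum_{x\notin\aW^0}\alpha_{x\la}A_x$, where $w\in\aW^0$ is the element with $\la(w)=\la$, so $A_w$ is the unique Grassmannian term of $\s^{(k)}_\la$. The uniqueness clause of Theorem~\ref{thm:j0basis} says $j^0_w$ is the only element of $\aB$ whose unique Grassmannian term is $A_w$; therefore $\s^{(k)}_\la = j^0_w$, which is the second assertion. Finally, as $\la$ ranges over $\Bo^n$ the elements $j^0_w=\s^{(k)}_\la$ exhaust the whole basis $\{j^0_w \mid w\in\aW^0\}$ of $\aB$, so the subalgebra generated by the $\ah_k$ equals $\aB$, giving the first assertion.

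The main obstacle is precisely the arithmetic identity isolated above: for each $v$ of length $k-1$, the coroots $\alpha^\vee$ attached to the cyclically decreasing covers $v\lessdot w$ of length $k$ sum to zero in $P^\vee$. Everything else is either a bounded computation in the nilHecke ring or a formal consequence of earlier structure theorems, whereas this vanishing is the genuine input. I would attack it inside the window/$n$-core description of $\tS_n$, determining exactly which cyclically decreasing permutations cover a given $v$ and pairing their reflections so that the associated coroots cancel in $P^\vee$.
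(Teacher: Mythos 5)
Your proof has the same skeleton as the paper's: everything rests on the single lemma $\ah_k\in\aB$, checked through the characterization of Proposition~\ref{prop:affStan}, after which \eqref{eq:noncommkSchur} together with the uniqueness clause of Theorem~\ref{thm:j0basis} forces $j_w^0=\s^{(k)}_\la$ and hence generation. That concluding deduction is correct and identical to the paper's, and your reduction of the membership test to linear $s=\la\in P$ (by induction on degree using the Chevalley commutation) is also sound.

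The gap is at the central cancellation itself. You reduce $\phi_0(\ah_k\la)=0$ to the claim that, for each $v$ of length $k-1$, the coroots $\al^\vee$ attached to the covers $v=ws_\al\lessdot w$ by cyclically decreasing $w$ of length $k$ sum to zero, and then you defer it (``I would attack it inside the window/$n$-core description\dots''). That identity is where the entire content of the theorem lives, and it is not a formality: one must determine, for each pair of subsets $J\subset I$ of $\Z/n\Z$ with $|I|=|J|+1$, which positive affine root $\al$ realizes the cover $w_J\lessdot w_I$, and these roots are typically non-simple (already for $n=3$, $k=2$, $v=s_0$, the two contributions are $\al_2^\vee$ from $w=s_0s_2$ and the image $-\al_2^\vee$ of $(\al_0+\al_1)^\vee$ from $w=s_1s_0$). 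The paper closes exactly this step by extending scalars to $\Z[x_1,\dots,x_n]$, using the $\Z/n\Z$ Dynkin symmetry (which fixes $\ah_k$) to reduce to the single weight $x_1$, computing $\phi_0(A_I x_1)$ from the local relations $A_j x_i$, and exhibiting the pairwise cancellation among the surviving terms $A_{I\setminus\{i\}}$. Your coroot identity is equivalent to that computation --- the coefficient $\langle\al^\vee,x_1\rangle$ is precisely what selects the surviving covers --- so to finish you must either carry out the analogous case analysis of the roots $\al_{I,J}$ or import the paper's calculation, at which point the two arguments coincide. Note also that the cyclic symmetry trick would let you verify your pairing for a single generator of $P$ rather than for all $\la$.
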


\begin{example}\label{ex:js3}
Let $W = S_3$.  A part of the $j$-basis for $\aB$ is 
\begin{align*}
j^0_{\id}&=1\\
j^0_{s_0} &= A_0+A_1 + A_2 \\
j^0_{s_1s_0}&= A_{10}+A_{21}+A_{02} \\
j^0_{s_2s_0}&=A_{01}+A_{12}+A_{20} \\
j^0_{s_2s_1s_0}&=A_{101}+A_{102}+A_{210}+A_{212}+A_{020}+A_{021} \\
j^0_{s_1s_2s_0}&=A_{101}+A_{201}+A_{012}+A_{212}+A_{020}+A_{120}
\end{align*}
\end{example}

We give a slightly different proof to the one in \cite{Lam:2008}.
\begin{proof}
We begin by showing that $\ah_k \in \aB$.  We will view $S$ as sitting inside the polynomial ring $\Z[x_1,x_2,\ldots,x_n]$; the commutation relations of $\afA$ can easily be extended to include all such polynomials.  To show that $\ah_k \in \aB$ it suffices to check that $\phi_0(\ah_k \,x_i) = 0$ for each $i$.  But by the $\Z/n\Z$-symmetry of the definition of cyclically decreasing, we may assume $i = 1$.  

We note that 
$$
A_j\, x_i = \begin{cases}
x_{i+1}A_i  + 1& \mbox{$j = i$}\\
x_{i-1}A_i - 1&\mbox{$j = i-1$} \\
x_i A_j & \mbox{otherwise.} 
\end{cases}
$$
Now let $I \subsetneq \Z/n\Z$ be a subset of size $k$.  Then 
$$
\phi_0(A_I x_1) = \begin{cases}
A_{I \setminus\{1\}}& \mbox{$r,r+1,\ldots,n-1,0,1 \in I$ but $r-1 \notin I$} \\
-\sum_{i =r}^{i = 0} A_{I \setminus \{i\}}& \mbox{$r,r+1,\ldots,n-1,0 \in I$ but $r-1,1 \notin I$}\\
%A_{I \setminus\{1\}}& \mbox{$1 \in I$ but $0 \notin I$}
0 &\mbox{otherwise.} 
\end{cases}
$$
Given a size $k-1$ subset $J \subset \Z/n\Z$ not containing $1$, we see that the term $A_J$ comes up in two ways: from $\phi_0(A_{J \cup \{1\}}x_1)$ with a positive sign, and from $\phi_0(A_{J \cup \{r'\}}x_1)$ with a negative sign, where $r+1,\ldots,n-1,0$ all lie in $J$.

Thus $\ah_k \in \aB$ for each $k$.  It follows that the $\ah_k$ commute, and by \eqref{eq:noncommkSchur}, it follows that $j_w^0 = \s^{(k)}_\la$ and in particular $\ah_i$ generate $\aB$.
\end{proof}

\subsection{Coproduct}
The map $\Delta: \afA \to \afA \otimes_S \afA$ equips $\Pet$ with the structure of a Hopf-algebra over $S$: to see that $\Delta$ sends $\Pet$ to $\Pet \otimes_S \Pet$, one uses $\Delta(t_\la) = t_\la \otimes t_\la$ and Lemma \ref{lem:Ptrans}.  Applying $\phi_0$, the affine Fomin-Stanley algebra $\aB$ obtains a structure of a Hopf algebra over $\Z$.

\begin{theorem}[\cite{Lam:2008}]\label{thm:aBHopf}
The map $\La_{(n)} \to \aB$ given by $h_i \mapsto \ah_i$ is an isomorphism of Hopf algebras.
\end{theorem}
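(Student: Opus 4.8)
The plan is to leverage the algebra isomorphism $\eta:\La_{(n)}\to\aB$, $h_i\mapsto\ah_i$, already furnished by Proposition \ref{prop:aB} together with Theorem \ref{thm:aB}, and to upgrade it to a Hopf isomorphism by checking compatibility of the two coproducts. First I would record two structural reductions. Both $\La_{(n)}$ and $\aB$ are graded connected cocommutative bialgebras: $\aB$ is cocommutative because its coproduct is induced through $\phi_0$ from the $S$-coproduct on $\Pet$, and the latter is determined on the translation elements that span $\Pet$ over $\Frac(S)$ (Lemma \ref{lem:Ptrans}) by the manifestly symmetric rule $\Delta(t_\la)=t_\la\otimes t_\la$. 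Since a bialgebra morphism between Hopf algebras automatically commutes with antipodes, and $\eta$ is graded and unital so that it trivially respects counits, it suffices to show $\eta$ is a coalgebra morphism. Moreover $\Delta_{\aB}\circ\eta$ and $(\eta\otimes\eta)\circ\Delta_{\La_{(n)}}$ are both algebra homomorphisms $\La_{(n)}\to\aB\otimes\aB$, and $\La_{(n)}$ is generated as an algebra by $h_1,\dots,h_{n-1}$; hence it is enough to verify the single family of identities
$$\Delta_{\aB}(\ah_k)=\sum_{i+j=k}\ah_i\otimes\ah_j,\qquad 1\le k\le n-1,\ \ah_0=1.$$

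Rather than computing these coproducts directly by lifting $\ah_k$ into $\Pet$, I would argue by duality. Transport the Hall pairing along $\eta$ to a perfect pairing $\langle\cdot,\cdot\rangle:\aB\times\La^{(n)}\to\Q$ determined by $\langle\s^{(k)}_\la,\tF_\mu\rangle=\delta_{\la\mu}$, using that $\{\s^{(k)}_\la\}$ and $\{\tF_\mu\}$ are the $\eta$-images and the standard duals of the dual bases $\{s^{(k)}_\la\}\subset\La_{(n)}$ and $\{\tF_\mu\}\subset\La^{(n)}$. The self-duality of symmetric functions, $\langle\Delta f,g\otimes h\rangle=\langle f,gh\rangle$, says that the coproduct on $\La_{(n)}$ is dual to the product on $\La^{(n)}$. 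Thus the identities above are equivalent to the single
$$\langle\Delta_{\aB}(a),\,g\otimes h\rangle=\langle a,\,g\,h\rangle\qquad(a\in\aB,\ g,h\in\La^{(n)}),$$
which I will call the Key Lemma: granting it, $\langle\Delta_{\aB}(\ah_k),g\otimes h\rangle=\langle\ah_k,gh\rangle=\langle h_k,gh\rangle=\langle\Delta_{\La_{(n)}}h_k,g\otimes h\rangle=\langle\sum_{i+j=k}\ah_i\otimes\ah_j,\,g\otimes h\rangle$, and nondegeneracy of the pairing finishes the verification.

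The Key Lemma is where the real work lies, and it is the main obstacle. The cleanest route is to recognize it as a shadow of the Kostant--Kumar duality between the affine nilHecke ring $\afA$ and its dual ring of functions $\aPsi$ from Section \ref{chapter3.section.sketch}: under that duality the nilHecke coproduct, given on generators by $\Delta(A_i)=1\otimes A_i+A_i\otimes 1-\alpha_i A_i\otimes A_i$, is dual to the pointwise product of functions. Restricting to the centralizer $\Pet$, where $\Delta$ takes values in $\Pet\otimes_S\Pet$, and then pushing through the evaluation $\phi_0$ by means of the map $\om$ of Lemma \ref{lem:wrongway} that remembers only values on translations, the pointwise product of functions descends to a product on the quotient of $\aPsi^0$ that one identifies with $\La^{(n)}$, under which the affine Schur functions $\tF_\la$ are exactly the images of the basis dual to $\{j^0_w\}=\{\s^{(k)}_\la\}$. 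Assembling these identifications yields the Key Lemma. An equivalent and more hands-on verification uses the noncommutative Cauchy kernel $\sum_\alpha\ah_\alpha x^\alpha=\sum_\la\s^{(k)}_\la\tF_\la$ of Proposition \ref{prop:noncommCauchy} as the reproducing kernel for $\langle\cdot,\cdot\rangle$: applying $\Delta_{\aB}$ in the $\aB$-factor and matching coefficients against $\tF_\la(x)\tF_\mu(y)$ reduces the Key Lemma to the product rule for affine Schur functions, itself dual to the affine Stanley coproduct formula $\Delta\tF_w=\sum_{uv=w,\ \ell(w)=\ell(u)+\ell(v)}\tF_u\otimes\tF_v$. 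The delicate point throughout is controlling the interaction of $\phi_0$ with the $S$-linear coproduct on $\Pet$: the coefficients discarded by $\phi_0$ are precisely those forced into the augmentation ideal of $S$ by the GKM-type divisibility conditions characterizing $\aPsi$, so I would devote the bulk of the effort to making this descent through $\phi_0$ and $\om$ rigorous.
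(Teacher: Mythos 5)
Your reduction is exactly the paper's: by Proposition \ref{prop:aB} everything comes down to the single family of identities $\Delta(\ah_k)=\sum_{j=0}^k \ah_j\otimes\ah_{k-j}$, and your preliminary remarks (gradedness, generation by the $\ah_k$, automatic compatibility with antipodes) are fine. The problem is that you then decline to prove this identity directly and instead route it through a ``Key Lemma'' whose two proposed proofs both fail. The first route assumes that the pointwise product on the dual of $\aB$ (i.e.\ the cup product on $H^*(\Gr)$, in the basis dual to $\{j_w^0\}$) coincides with the product of the affine Schur functions $\tF_\la$ in $\La^{(n)}$. That identification is precisely \cite[Theorem 7.1]{Lam:2008} (item (2) of Section \ref{sec:geom}), and in Lam's development it is \emph{deduced from} the Hopf isomorphism you are trying to prove; nothing in the material preceding Theorem \ref{thm:aBHopf} gives it to you independently, so this route is circular.

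The second, ``hands-on'' route rests on a duality mix-up. Since $\La_{(n)}$ and $\La^{(n)}$ are dual Hopf algebras, the coproduct formula $\Delta\tF_w=\sum_{uv=w,\,\ell(u)+\ell(v)=\ell(w)}\tF_u\otimes\tF_v$ is dual to the \emph{product} in $\La_{(n)}\cong\aB$ --- which is already understood via \eqref{E:Deltamult}/Proposition \ref{prop:aB} and gives no new information --- whereas what you need is the \emph{product} of the $\tF_\la$ in $\La^{(n)}$, dual to the \emph{coproduct} of $\aB$. Expanding the two-alphabet Cauchy kernel only reproduces the first of these dualities, so ``matching coefficients against $\tF_\la(x)\tF_\mu(y)$'' cannot deliver the Key Lemma without already knowing $\Delta(\ah_k)$. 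The actual content of the theorem is the computation you skip: expand $\Delta(A_w)=\prod_j(A_{i_j}\otimes 1+s_{i_j}\otimes A_{i_j})$ for each cyclically decreasing $w$ of length $k$, sum over $w$, apply $\phi_0$, and match the surviving terms bijectively with pairs of cyclically decreasing elements of complementary lengths, keeping careful track of which $S$-coefficients avoid the augmentation ideal (the commutation $A_i\,q=(A_i\cdot q)+(s_i\cdot q)A_i$ is what produces nonzero constants). That bijective argument is the paper's proof, and no amount of formal duality substitutes for it.
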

By Proposition \ref{prop:aB}, to establish Theorem \ref{thm:aBHopf} it suffices to show that $\Delta(\ah_k) = \sum_{j=0}^k \ah_j \otimes \ah_{k-j}$.  This can be done bijectively, using the definition in Section \ref{sec:nilHeckecoproduct}.

\subsection{Exercises and Problems}
Let $W$ be of arbitrary type.
\begin{enumerate}
\item 
Find a formula for $j_{s_0}^0$.  See \cite[Proposition 2.17]{LS:DGG}.
\item
For $W = S_3$, use Proposition \ref{prop:ts3} to give an explicit formula for the noncommutative $k$-Schur functions.
\item (Dynkin automorphisms)
Let $\omega$ be an automorphism of the corresponding Dynkin diagram.  Then $\omega$ acts on $\aNC$, and it is easy to see that $\omega(\aB) = \aB$.  Is $\aB$ invariant under $\omega$?  (For $W = S_n$ this follows from Theorem \ref{thm:aB}, since the $\ah_k$ are invariant under cyclic symmetry.) \item (Generators)
Find generators and relations for $\aB$, preferably using a subset of the $j$-basis as generators.  See \cite{LSS:C, Pon} for the classical types.  See also the proof of Proposition \ref{prop:gen} below.
\item (Power sums \cite[Corollary 3.7]{BandlowSchillingZabrocki})
Define the noncommutative power sums $\mathbf{p}_k \in \aB$ as the image in $\aB$ of $p_k \in \La_{(n)}$ under the isomorphism of Proposition \ref{prop:aB}.  Find an explicit combinatorial formula for $\mathbf{p}_k$.  (See also \cite{FG:1998}.)
\item
Is there a nice formula for the number of terms in the expression of $j_w^0$ in terms of $\{A_x\}$?  This is an ``affine nilCoxeter'' analogue of asking for the number of terms $s_\la(1,1,\ldots,1)$ in a Schur polynomial.
\item 
Find a combinatorial formula for the coproduct structure constants in the $j$-basis.  These coefficients are known to be positive \cite{Kum}.
\item 
Find a combinatorial formula for the $j$-basis.  It follows from work of Peterson (see also \cite{Lam:2008, LS:QH}) that the coefficients $j_w^x$ are positive.
\end{enumerate}

\section{Finite Fomin-Stanley subalgebra}
\label{sec:finiteFS}
In this section we return to general type.

There is a linear map $\kappa:\A \to \aNC$ given by 
$$
\kappa(A_w) = \begin{cases} A_w & \mbox{$w \in W$} \\
0 & \mbox{otherwise.}
\end{cases}
$$
The {\it finite Fomin-Stanley algebra} $\B$ is the image of $\Pet$ under $\kappa$.  Since $\kappa(\ah_k) = \h_k$, this agrees with the definitions in Section \ref{sec:algebra}.

\begin{conjecture}\label{conj:finite}\
\begin{enumerate}
\item
The finite Fomin-Stanley algebra $\B$ satisfies Conjecture \ref{conj:LP}.
\item
The image $\kappa(j_w^0) \in \B$ of the $j$-basis element $j_w^0 \in \aB$ is a nonnegative integral linear combination of the $b_I$-basis.
\item
If $w \in \aW^0$ is such that there is a Dynkin diagram automorphism $\omega$ so that $\omega(w) \in W$, then $\kappa(j_w^0)$ belongs to the $b_I$-basis.
\end{enumerate}
\end{conjecture}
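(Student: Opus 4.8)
The plan is to establish part (1) first, since parts (2) and (3) both presuppose the existence of the $\{b_I\}$-basis furnished by Conjecture \ref{conj:LP}, and then to read off (2) and (3) from the generators and basis produced along the way. As type $A$ is already settled by Proposition \ref{prop:LP}, all of the work lies in general type, and I would attack (1) in four stages matching the four clauses of Conjecture \ref{conj:LP}: (i) exhibit a commutative subalgebra $\B \subseteq \A_0$ generated by homogeneous elements of degrees equal to the exponents of $W$; (ii) compute its Hilbert series; (iii) construct the nonnegative $A_w$-basis $\{b_I\}$; and (iv) prove positivity of the structure constants.

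For (i) the natural route is to keep track of the geometry behind the algebra: by Peterson's theorem $\Pet \cong H_T(\Gr_G)$ and $\aB = \phi_0(\Pet) \cong H_*(\Gr_G)$, so $\B = \kappa(\aB)$ should be identified with a finite-dimensional piece of this homology. The first technical point is that $\kappa$ is only linear, not an algebra map, so commutativity of $\B$ does not follow formally from Lemma \ref{lem:Petcomm}; instead I would seek a Fomin--Stanley-type commutation lemma generalizing Lemma \ref{lem:finstcommute}. Concretely, I would take as candidate generators the images under $\kappa$ of a distinguished family of $j$-basis elements (the general-type analogues of the $\ah_k$), prove they commute by a direct manipulation in $\A_0$, and in the classical types import the explicit descriptions of $\aB$ from \cite{LSS:C,Pon} and apply $\kappa$. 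For (ii), the predicted Hilbert series $\sum_{I\in J(R^+,\prec)} t^{|I|}$ specializes at $t=1$ to the $W$-Catalan number $\prod_{i=1}^{n}(h+d_i)/d_i$ (with $h$ the Coxeter number and $d_i$ the degrees), which I would match against a PBW-type spanning set in the candidate generators; the correct top degrees then force the generator degrees to be the exponents. Stages (iii) and (iv) feed directly into the main obstacle discussed below.

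For (2), recall that $j_w^0$ is already known to be a nonnegative integer combination of the $A_x$ (Peterson's positivity; in type $A$ this is Theorem \ref{thm:affstpos} via Theorem \ref{thm:aB}), and $\kappa$ merely discards the terms with $x \notin W$, so $\kappa(j_w^0)$ is automatically $A_x$-nonnegative. The real content is nonnegativity after re-expansion in $\{b_I\}$. In type $A$ this asks that $\kappa(\s^{(k)}_{\la})$ expand nonnegatively in the noncommutative Schur functions $\s_\mu$ with $\mu\subseteq\delta_{n-1}$, which I would prove by dualizing: using the pairing $\ip{A_w,A_v}=\delta_{wv}$ and the Cauchy-type identity in the proof of Proposition \ref{prop:LP}, the coefficient of $b_I=\s_\mu$ in $\kappa(j_w^0)$ becomes a coefficient of the finite Stanley symmetric function $F_v$ in a finitely-truncated affine Schur function $\tF_{\la(w)}$, and I would exhibit this as a manifestly nonnegative branching coefficient. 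For (3), the hypothesis that some Dynkin automorphism $\omega$ carries $w\in\aW^0$ into $W$ singles out the ``cyclically restricted'' Grassmannian elements --- in type $A$ these are exactly those whose $(n-1)$-bounded partition $\la(w)$ is a $k$-rectangle. Here I would combine the invariance of $\aB$ under $\omega$ (the $\ah_k$ are stable under the cyclic symmetry, as noted after Theorem \ref{thm:aB}), which relates $j_w^0$ to the $j$-element attached to the finite-type image and thereby limits which single $\s_\mu$ can survive $\kappa$, with the explicit nil-Coxeter expansion of rectangle-indexed $k$-Schur functions of Berg--Bergeron--Thomas--Zabrocki \cite{BBTZ:2011}; the cleanest formulation is to show $\kappa(j_w^0)$ has exactly one finite-Grassmannian leading term and then invoke the unitriangularity of $\{b_I\}$ to conclude $\kappa(j_w^0)=b_I$.

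The main obstacle is the positivity circle: clause (iv) of Conjecture \ref{conj:LP} and part (2) both demand nonnegativity in a basis we do not yet possess uniformly outside type $A$. Even granting the Hilbert series of stage (ii), producing the nonnegative basis $\{b_I\}$ together with a combinatorial model --- an Edelman--Greene / noncommutative-Schur analogue --- that renders products and the $\kappa$-projection transparent is precisely the missing ingredient. I expect finding this model, the general-type counterpart of the Fomin--Greene noncommutative Schur functions, to be the crux on which the entire proposal turns.
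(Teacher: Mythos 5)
This statement is labelled a \emph{conjecture} in the paper, and the paper offers no proof of it --- only supporting evidence: the explicit verifications for $W=S_4$ (where $\kappa(\s^{(k)}_{221})=\s_{221}+\s_{32}$) and for $W=G_2$ (where the $j$-basis is computed through length $5$ and both Conjectures \ref{conj:LP} and \ref{conj:finite} are checked against the root poset), together with Proposition \ref{prop:gen}, which establishes only the rational-generators clause of Conjecture \ref{conj:LP}(1) by identifying $\aB$ with $H_*(\Gr_G,\Z)$ and transgressing generators of $H^*(K,\Q)$. So there is no proof of the paper's to compare yours against, and your proposal, as you yourself concede in the final paragraph, is a research plan rather than a proof: the construction of the nonnegative basis $\{b_I\}$ outside type $A$, on which clauses (iii)--(iv) of Conjecture \ref{conj:LP} and both parts (2) and (3) of the present statement depend, is exactly the open problem. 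Where your plan does overlap with something the paper actually proves --- stage (i)/(ii), generators in exponent degrees --- the paper's route is the geometric one you gesture at (Proposition \ref{prop:gen}), and it only works over $\Q$; nothing in the paper upgrades this to the integral statement with the prescribed Hilbert series.

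Two more specific cautions. First, you treat the commutativity of $\B$ as something to be re-derived by a Fomin--Stanley commutation argument, but the issue is more basic: since $\kappa$ is only $\Z$-linear, it is not even clear that $\B=\kappa(\Pet)$ is closed under multiplication in general type, and the paper silently assumes this (in type $A$ it is rescued by identifying $\B$ with the subalgebra generated by the $\h_k=\kappa(\ah_k)$). Any serious attempt should address closure before commutativity. Second, your dualization strategy for part (2) --- converting the coefficient of $b_I$ in $\kappa(j_w^0)$ into a coefficient of a finite Stanley symmetric function in an affine Schur function --- is plausible in type $A$, where the Cauchy pairing of Proposition \ref{prop:LP} is available, but it presupposes the very Edelman--Greene/noncommutative-Schur machinery whose absence in other types you correctly identify as the crux; and your characterization in part (3) of the relevant $w$ as those with $\la(w)$ a $k$-rectangle is asserted without argument and should be checked (a Dynkin rotation carries $w$ into $W$ iff $w$ omits some single affine generator, and the translation of that condition into the bounded-partition language needs justification).
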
 

\begin{example}
Let $W = S_4$ and $\aW = \tS_4$.  Then $\B$ is described in Example \ref{ex:S4}.  One calculates that $s^{(k)}_{221} = s_{221}+s_{32}$ so that
$$
\kappa(\s^{(k)}_{221}) = \s_{221}+\s_{32} = A_{32132}+A_{23123} \in \B.
$$
This supports Conjecture \ref{conj:finite}(2), and shows that $\kappa(j_w^0)$ does not have to be equal to 0 or to some $b_I$.
\end{example}

In the following we allow ourselves some geometric arguments and explicit calculations in other types to provide evidence for Conjecture \ref{conj:finite}.
\begin{example}
Let $W = G_2$ with long root $\alpha_1$ and short root $\alpha_2$.  Thus $\alpha_0$ is also a long root for $\tilde G_2$.  As usual we shall write $A_{i_1 i_2 \cdots i_\ell}$ for $A_{s_{i_1}s_{i_2} \cdots s_{i_\ell}}$ and similarly for the $j$-basis.  The affine Grassmannian elements of length less than or equal to 5 are 
$$
\id, s_0, s_1s_0, s_2s_1s_0, s_1s_2s_1s_0, s_2s_1s_2s_1s_0,s_0s_1s_2s_1s_0.
$$
And the $j$-basis is given by
\begin{align*}
j_\id^0 &=1 \\
j_0^0 &= A_0 + 2A_1 + A_2 \\
j_{10}^0 &= \frac{1}{2}(j_0^0)^2 \\
j_{210}^0 &= \frac{1}{2}(j_0^0)^3 \\
j_{1210}^0 &= \frac{1}{4}(j_0^0)^4 \\
j_{21210}^0&= A_{21210} + A_{21201}+2A_{21012}+A_{12102}+3A_{12101}+2A_{12121}\\
&+A_{12012}+A_{02121}+3A_{01201}+A_{01212} \\
j_{01210}^0 &= A_{01210}+A_{01201}+A_{02121}+A_{12012}+A_{12101}+A_{12102}+A_{21012}\\ &+A_{21201}+A_{21212} 
\end{align*}
which can be verified by using Theorem \ref{thm:j0basis}.  Note that $j_{21210}^0+j_{01210}^0 = \frac{1}{4}(j_0^0)^5$.  Thus $\B$ has basis
$$
\id, \ 2A_1 + A_2,  \ A_{12}+A_{21}, \ 2A_{121}+A_{212}, \ A_{1212}+A_{2121}, \ 2A_{12121}, \ A_{21212}, \ ?A_{121212}
$$
where the coefficient of $A_{121212}$ depends on the $j$-basis in degree 6.
The root poset of $G_2$ is $\alpha_1,\alpha_2 \prec \alpha_1+\alpha_2 \prec \alpha_1 + 2\alpha_2 \prec \alpha_1 + 3\alpha_2 \prec 2\alpha_1+3\alpha_2$.  Both Conjectures \ref{conj:LP} and \ref{conj:finite} hold with this choice of basis.
\end{example}

\begin{prop}\label{prop:gen}
Over the rationals, the finite Fomin-Stanley subalgebra $\B \otimes_\Z \Q$ has a set of generators in degrees equal to the exponents of $W$.
\end{prop}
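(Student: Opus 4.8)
The plan is to realize $\B$ as the image of the affine Fomin-Stanley algebra $\aB$ under a graded algebra surjection, and then transport a known generating set of $\aB\otimes_\Z\Q$ across it. Recall that $\B=\kappa(\Pet)$, that $\aB=\phi_0(\Pet)$ is a commutative subalgebra of $\aNC$ (Proposition \ref{prop:aB}), and that on $\aNC$ the map $\kappa$ coincides with the linear projection $\pi\colon\aNC\to\A_0$ defined by $A_w\mapsto A_w$ for $w\in W$ and $A_w\mapsto 0$ for $w\notin W$ (consistent with $\kappa(\ah_k)=\h_k$). Writing $\kappa=\pi\circ\phi_0$ on $\Pet$, we get $\B=\pi(\aB)$. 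So once $\pi$ is known to be an algebra map, $\pi|_{\aB}\colon\aB\twoheadrightarrow\B$ is a surjection of graded algebras, and it suffices to produce generators of $\aB\otimes_\Z\Q$ in the exponent degrees.

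The key technical step I would carry out first is to show that $\pi\colon\aNC\to\A_0$ is a graded algebra homomorphism. This rests on the invariance of the support $\mathrm{supp}(w)=\{i : s_i \text{ occurs in some reduced word of } w\}$ of an element $w\in\aW$. On a basis product $A_uA_v$ one has $A_uA_v=A_{uv}$ when $\ell(uv)=\ell(u)+\ell(v)$ and $0$ otherwise; in the additive case $\mathrm{supp}(uv)=\mathrm{supp}(u)\cup\mathrm{supp}(v)$, so $uv\in W$ (equivalently $0\notin\mathrm{supp}(uv)$) forces $u,v\in W$. Hence the only nonzero contributions to $\pi(A_uA_v)$ come from $u,v\in W$, where $\pi(A_uA_v)=A_uA_v=\pi(A_u)\pi(A_v)$ because the length function of $\aW$ restricts to that of $W$; every other product maps to $0$ on both sides. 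Extending bilinearly gives the homomorphism property, and this is the only place demanding genuine combinatorial care.

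It then remains to exhibit generators of $\aB\otimes_\Z\Q$ in degrees equal to the exponents $e_1,\dots,e_r$ of $W$ and push them forward. Here I would invoke the geometry collected in Section \ref{sec:geom}: via Peterson's identification together with Quillen's homotopy equivalence $\Gr\simeq\Omega K$, one has $\aB\otimes_\Z\Q\cong H_*(\Gr;\Q)$, which by Bott's theorem is a polynomial ring $\Q[\xi_1,\dots,\xi_r]$ with $\deg\xi_i=e_i$ (the length grading on $\aB$ matching the halved topological grading). This is corroborated on the combinatorial side by the Hilbert series $\prod_{i=1}^r(1-t^{e_i})^{-1}=\sum_{w\in\aW^0}t^{\ell(w)}$, the latter counting the homogeneous $j^0$-basis of Theorem \ref{thm:j0basis}. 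Applying the surjection $\pi$, the elements $\pi(\xi_1),\dots,\pi(\xi_r)$ generate $\B\otimes_\Z\Q$ and lie in degrees $e_1,\dots,e_r$, which proves the proposition; in type $A$, where $\xi_i\leftrightarrow h_i$ and $\pi(h_i)=\h_i\neq0$, none of the images vanish and one recovers exactly $1,2,\dots,n-1$.

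The main obstacle is precisely the appeal to geometry for the polynomiality of $\aB\otimes_\Z\Q$ in arbitrary type: a matching Hilbert series does not by itself force a graded commutative algebra to be free, so the fact that $\aB\otimes_\Z\Q$ is genuinely a polynomial ring — equivalently that its minimal generators sit in the exponent degrees — is exactly what the identification with $H_*(\Gr;\Q)$ supplies. Note that this is also where the hypothesis ``over the rationals'' is essential, since $H_*(\Gr;\Z)$ may carry torsion. A secondary, less serious subtlety I would address is whether each $\pi(\xi_i)$ is nonzero, so that the degrees are \emph{exactly} the exponents rather than a sub-multiset; this is transparent in type $A$ and is consistent with Conjecture \ref{conj:finite}, but in general the clean statement the argument guarantees is that $\B\otimes_\Z\Q$ admits a generating set with all degrees among the exponents of $W$.
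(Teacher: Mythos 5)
Your proof is correct and follows essentially the same route as the paper's: identify $\aB\otimes_\Z\Q$ with $H_*(\Gr_G;\Q)$, use that the latter is generated in degrees equal to the exponents of $W$, and push the generators forward under $\kappa$. Your explicit verification (via the support argument) that $\kappa$ restricts to an algebra surjection $\aB\twoheadrightarrow\B$ is a correct and worthwhile detail that the paper leaves implicit.
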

\begin{proof}
Let $\Gr_G$ denote the affine Grassmannian of the simple simply-connected complex algebraic group with Weyl group $W$.
It is known \cite[Theorem 5.5]{Lam:2008} that $\aB \simeq H_*(\Gr_G,\Z)$.  But over the rationals the homology $H_*(\Gr_G,\Q)$ of the affine Grassmannian is known to be generated by elements in degrees equal to the exponents of $W$, see \cite{Gin}.  (In cohomology these generators are obtained by {\it transgressing} generators of $H^*(K,\Q)$ which are known to correspond to the degrees of $W$.)  Since these elements generate $\aB$, their image under $\kappa$ generate $\B$.
\end{proof}

\subsection{Problems}
\begin{enumerate}
\item
Find a geometric interpretation for the finite Fomin-Stanley algebras $\B$ and the conjectural basis $b_I$.
\item
Find an equivariant analogue of the finite Fomin Stanley algebra $\B$, with the same relationship to $\B$ as $\Pet$ has to $\aB$.  Extend Conjectures \ref{conj:LP} and \ref{conj:finite} to the equivariant setting.
\end{enumerate}

\section{Geometric interpretations}
\label{sec:geom}
In this section we list some geometric interpretations of the material we have discussed.  Let $G$ be the simple simply-connected complex algebraic group associated to $W$.  (Co)homologies are with $\Z$-coefficients.

\begin{enumerate}
\item 
The affine Fomin-Stanley subalgebra $\aB$ is Hopf-isomorphic to the homology $H_*(\Gr_G)$ of the affine Grassmannian associated to $G$.  The $j$-basis $\{j_w\}$ is identified with the Schubert basis.  \cite{Pet} \cite[Theorem 5.5]{Lam:2008}
\item
The affine Schur functions $\tF_\la$ represent Schubert classes in $H^*(\Gr_{SL(n)})$.  \cite[Theorem 7.1]{Lam:2008}
\item
The affine Stanley symmetric functions are the pullbacks of the cohomology Schubert classes from the affine flag variety to the affine Grassmannian. \cite[Remark 8.6]{Lam:2008}
\item
The positivity of the affine Stanley to affine Schur coefficients is established via the connection between the homology of the $\Gr_G$ and the quantum cohomology of $G/B$.  \cite{Pet} \cite{LS:QH} \cite{LL}
\item
The (positive) expansion of the affine Schur symmetric functions $\{\tF_\la\}$ for $\aW = \tS_n$ in terms of $\{\tF_\mu\}$ for $\aW = \tS_{n-1}$ has an interpretation in terms of the map $H^*(\Omega SU(n+1)) \to H^*(\Omega SU(n))$ induced by the inclusion $\Omega SU(n) \hookrightarrow \Omega SU(n+1)$ of based loop spaces.  \cite{Lam:ASP}
\item
Certain affine Stanley symmetric functions represent the classes of {\it positroid varieties} in the cohomology of the Grassmannian. \cite{KLS}
\item
The expansion coefficients of Stanley symmetric functions in terms of Schur functions are certain quiver coefficients. \cite{Buc2}
\end{enumerate}

 % Thomas' chapter
    %\documentclass{amsart}
%\usepackage{amsmath,amsthm,amsfonts,amssymb}
%\usepackage[all]{xy}
%\usepackage{pb-diagram}
%\usepackage{pb-xy}
%\usepackage{showkeys}
%\usepackage{verbatim}
%\usepackage{hyperref}

%\newtheorem{theorem}{Theorem}
%\newtheorem{thm}[theorem]{Theorem}
%\newtheorem{lem}[theorem]{Lemma}
%\newtheorem{lemma}[theorem]{Lemma}
%\newtheorem{prop}[theorem]{Proposition}
%\newtheorem{proposition}[theorem]{Proposition}
%\newtheorem{cor}[theorem]{Corollary}
%\newtheorem{corollary}[theorem]{Corollary}

%\theoremstyle{definition}
%\newtheorem{rem}[theorem]{Remark}
%\newtheorem{remark}[theorem]{Remark}
%\newtheorem{conj}[theorem]{Conjecture}
%\newtheorem{conjecture}[theorem]{Conjecture}
%\newtheorem{ex}[theorem]{Example}
%\newtheorem{example}{Example}
%\newtheorem{exer}[theorem]{Exercise}
%\newtheorem{exercise}{Exercise}
%\newtheorem{prob}[theorem]{Problem}
%\newtheorem{definition}{Definition}

\def\A{{\mathbb{A}}}
\renewcommand{\af}{\mathrm{af}}
\def\alv{\alpha^\vee}
\def\al{\alpha}
\def\AQ{\Fr_W}
\def\Aut{\mathrm{Aut}}
\def\C{\mathbb{C}}
\def\colev{\mathrm{colevel}}
\def\fin{f}
\renewcommand{\Fl}{\mathrm{Fl}}
\def\Frac{\mathrm{Frac}}
\def\ftimes{\,\bar{\otimes}\,}
\def\Fr{\mathrm{F}}
\def\Fun{\mathrm{Fun}}
\renewcommand{\Gr}{\mathrm{Gr}}
\def\hLa{\hat{\Lambda}}
\def\Hom{\mathrm{Hom}}
\def\hPhi{\hat{\Phi}}
\def\id{\mathrm{id}}
\def\II{\mathcal{I}}
\def\Image{\mathrm{Im}}
\def\Inv{\mathrm{Inv}}
\renewcommand{\ip}[2]{\langle #1\,,\,#2\rangle}
\def\La{\Lambda}
\def\la{\lambda}
\def\lev{\mathrm{level}}
\def\LL{\mathcal{L}}
\def\nullity{\mathrm{nullity}}
\def\omv{\omega^\vee}
\def\om{\omega}
\def\OO{\mathcal{O}}
\def\pb{\overline{\pi}}
\def\PD{\mathcal{P}}
\def\pit{\tilde{\pi}}
\def\pnt{\mathrm{pt}}
\def\Q{\mathbb{Q}}
\def\rank{\mathrm{rank}}
\def\res{\mathrm{res}}
\def\re{\mathrm{re}}
\def\R{\mathbb{R}}
\def\Schub{\mathfrak{S}}
\def\slh{\hat{\mathrm{sl}}}
\def\Supp{\mathrm{Supp}}
\def\Sym{\mathrm{Sym}}
\def\tFl{\tilde{\Fl}}
\def\tGr{\tilde{\mathrm{Gr}}}
\def\tLa{\tilde{\La}}
\def\xib{\bar{\xi}}
\def\Z{\mathbb{Z}}

%\newcommand{\remind}[1]{*** \texttt{#1} ***}

%\numberwithin{equation}{section}
%\numberwithin{theorem}{section}

%\begin{document}

%\title{Schubert calculus of the affine Grassmannian}
%\author{Mark Shimozono}
%\date{}
%\maketitle

%\begin{abstract} 
%We give detailed explanations of
%(1) Kostant and Kumar's explicit algebraic construction
%of the torus-equivariant cohomology ring of a Kac-Moody partial flag variety
%and its Schubert basis, and (2) Peterson's ``Quantum Equals Affine" Theorem, which
%asserts the equality of Schubert structure constants for the
%equivariant quantum cohomology rings of finite-dimensional homogeneous spaces,
%and the equivariant homology of the affine Grassmannian under Pontryagin product.
%These apply to the multiplication of Schur functions, Schubert polynomials, and
%$k$-Schur functions and their duals.
%\end{abstract}

\chapter{Affine Schubert calculus}
\label{chapter.affineSchubert}

\setcounter{section}{0}

\begin{center}
\textsc{Mark Shimozono}
\footnote{The author was supported by NSF grants DMS--0652641, DMS--0652648, and DMS--1200804.} \\
\texttt{mshimo@math.vt.edu}
\end{center}

\section{Introduction}
This chapter discusses how $k$-Schur and dual $k$-Schur functions can be defined for all types.
This is done via some combinatorial problems that come from 
the geometry of a very large family of generalized flag varieties. They apply to the expansion of products
of Schur functions, $k$-Schur functions and their dual basis, and Schubert polynomials.
Despite the geometric origin of these problems, concrete algebraic models will be given for
the relevant cohomology rings and their Schubert bases. 

Let $G\supset P\supset T$ be a Kac-Moody group over $\C$ \cite{Kum}, 
a parabolic subgroup, and a maximal algebraic torus. For example, we may take
$G=GL_n(\C)$, $P$ the block upper triangular matrices with diagonal blocks of sizes
$k$ and $n-k$, and $T\subset G$ the diagonal matrices. The group $G$
may be specified by a graph called a Dynkin diagram, plus a little more discrete data
called a root datum; see \S \ref{S:rootdata}. The subgroup $P$ is obtained by choosing a subset $J$
of the vertex set of the Dynkin diagram. Examples of $G/P$ are the Grassmannian $\Gr(k,n)$ of $k$-dimensional 
subspaces of $\C^n$, the variety $\Fl_n$ of full flags in $\C^n$, and the affine Grassmannian $\Gr_{SL_n}=SL_n(\C((t)))/SL_n(\C[[t]])$
where $\C[[t]]$ is the ring of formal power series and $\C((t))=\C[[t]][t^{-1}]$ is the field of formal
Laurent series. Given this modest amount of discrete data (root datum and Dynkin node subset)
one may build the $T$-equivariant cohomology ring $H^T(G/P)$ of $G/P$. Following Kostant and Kumar
\cite{KK} we shall give an explicit algebraic construction $\La^J$ of $H^T(G/P)$; see \S \ref{SS:parabolic}.
The ring $H^T(G/P)$ is a free module over the polynomial ring $S=H^T(pt)$,
and comes equipped with a distinguished basis $\{[X^w]\}$ called the Schubert basis,
which is indexed by a certain subset $W^J$ of the Weyl group $W$:
\begin{align}\label{E:basis}
  H^T(G/P) = \bigoplus_{w\in W^J} S [X^w].
\end{align}
For $u,v,w\in W^J$ define the ``structure constants" $p_{uv}^w\in S$ by
\begin{align}\label{E:struct}
  [X^u] [X^v] = \sum_{w\in W^J} p_{uv}^w [X^w].
\end{align}
For nontrivial geometric reasons it is known that the polynomial $p_{uv}^w$
is homogeneous of degree $\ell(u)+\ell(v)-\ell(w)$ and satisfies
a property called \textit{Graham positivity} \cite{Gr}; see Section \S \ref{SS:mult}.
\begin{prob} Find explicit combinatorial
formulas for $p_{uv}^w$ which are obviously Graham-positive.
\end{prob}
Using the algebraic model $\La^J$, there is an effective algorithm for expanding
any element of $H^T(G/P)$ into the Schubert basis, and particular
for computing the polynomials $p_{uv}^w$.

Ordinary cohomology $H^*(G/P)$ can be recovered by forgetting equivariance,
which means setting all the variables in the polynomial ring $S$ to zero.
Consequently the Schubert structure constants for $H^*(G/P)$
are merely the polynomials $p_{uv}^w$ which are of degree zero, in which
Graham-positivity reduces to the nonnegativity of an integer.
Special cases of these nonnegative integers are the structure constants for
the products of Schur functions (in the case of $\Gr(k,n)$),
dual $k$-Schur functions (for $\Gr_{SL_n}$), and Schubert polynomials (for $\Fl_n$).

To understand $H^T(G/P)$ and its Schubert calculus,
it suffices to consider the case that $P=B$ is the Borel subgroup.
Kostant and Kumar defined a ring of operators $\A$ on $H^T(G/B)$
called the nilHecke ring, which is a noncommutative ring which is linearly dual (over $S$)
with $H^T(G/B)$. Following Kostant and Kumar, we develop the algebraic construction $\La$
of $H^T(G/B)$ via the systematic use of $\A$.

The above machinery is then specialized to the case of the
affine Grassmannian $\Gr$. In this setting the equivariant cohomology ring
$H^T(\Gr)$ has the rich additional structure of a Hopf algebra,
with Hopf dual given by the equivariant \textit{homology}
ring $H_T(\Gr)$ under the Pontryagin product. In this context,
we explain Peterson's theory, which asserts the equality of the Schubert structure constants
$d_{uv}^w$ for equivariant homology $H_T(\Gr)$ with the Gromov-Witten invariants (quantum Schubert structure
constants) for finite-dimensional flag varieties $G/B$. As a special case we obtain
Lam's noncommutative $k$-Schur functions \cite{Lam:2008} and the relationship between
$k$-Schur functions and the quantum Schubert polynomials of Fomin, Gelfand, and Postnikov \cite{FGP}.
We give a general explicit formula (Proposition \ref{P:jcoefs}) for 
the coefficients $d_{uv}^w$ which involves only some computations in $H^T(\Gr)$ 
(localizing Schubert classes) that we already
know how to do explicitly from the general machinery for $H^T(G/P)$.

\section{Root Data}
\label{S:rootdata}

We specify the data with which one may construct a
Kac-Moody Lie algebra $\mathfrak{g}$ \cite{Kac}, Kac-Moody group $G$
\cite{Kum}, flag ind-variety $\Fl$ \cite{Kum}, thick flag scheme $\tFl$ \cite{Kas},
and nilHecke algebra \cite{KK}. Affine and finite root systems
are treated briefly in Chapter 3 Section~\ref{sec:Weyl}; see also Chapter 4 Section~\ref{chapter4.section.affine root}.

\subsection{Cartan Data and the Weyl group}
A \textit{Cartan datum} is a pair $(I,A)$ where $I$ is a
set and $A=(a_{ij})_{i,j\in I}$ is a \textit{generalized
Cartan matrix}, that is, an $I\times I$ matrix of integers such that
$a_{ii}=2$ for all $i\in I$ and if $i\ne j$ then $a_{ij}\le 0$ with
$a_{ij}<0$ if and only if $a_{ji}<0$. A Cartan datum may be specified
by a graph called a \textit{Dynkin diagram} which has node set $I$ and an edge
joining $i\in I$ and $j\in I$ if $a_{ij}<0$, in which case the edge
is labeled by the pair $(-a_{ij},-a_{ji})$. There are some
abbreviations. Suppose $a_{ij}=-1$.
If $a_{ji}=-1$ then the edge joining $i$ and $j$ is a plain single
edge. If $a_{ji}=-2$ then there is a double edge
pointing from $i$ to $j$. If $a_{ji}=-3$ then
there is a triple edge pointing from $i$ to $j$.

\begin{equation*}
\begin{array}{|c||c|c|c|}\hline
(a_{ij},a_{ji}) & (-1,-1) & (-1,-2) & (-1,-3) \\ \hline &&&
\\
\text{arrow} &
\xymatrix{
*+[F]{i} \ar@{-}[r] & *+[F]{j}
}
%\begin{picture}(30,20)(-5,-12)
%\multiput(0,0)(20,0){2}{\circle{6}}
%\multiput(2.85,0)(0,2){1}{\line(1,0){14.3}} %double line
%\put(0,-5){\makebox(0,0)[t]{$i$}}
%\put(20,-5){\makebox(0,0)[t]{$j$}}
%\end{picture}
&
\xymatrix{
*+[F]{i} \ar@{=>}[r] & *+[F]{j}
}
%\begin{picture}(30,20)(-5,-12)
%\multiput(0,0)(20,0){2}{\circle{6}}
%\multiput(2.85,-1)(0,2){2}{\line(1,0){14.3}} %double line
%\put(10,0){\makebox(0,0){$>$}}
%\put(0,-5){\makebox(0,0)[t]{$i$}}
%\put(20,-5){\makebox(0,0)[t]{$j$}}
%\end{picture}
&
\xymatrix{
*+[F]{i} \ar@3{->}[r] & *+[F]{j}
}
%\begin{picture}(30,20)(-5,-12)
%\multiput(0,0)(20,0){2}{\circle{6}}
%\multiput(2.85,-2)(0,2){3}{\line(1,0){14.3}} %triple line
%\put(10,0){\makebox(0,0){$>$}}
%\put(0,-5){\makebox(0,0)[t]{$i$}}
%\put(20,-5){\makebox(0,0)[t]{$j$}}
%\end{picture}
\\ &&& \\ \hline
\end{array}
\end{equation*}

A Cartan datum is \textit{finite} if $A$ is positive definite;
\textit{irreducible} if the Dynkin diagram is connected;
\textit{affine} if it is irreducible, $A$ has corank $1$,
and the restriction of $A$ to $I'\times I'$ for every proper subset $I'\subset I$,
yields a finite Cartan datum.

Given a Cartan datum $(I,A)$, the \textit{Weyl group} $W=W(I,A)$ is the group
by elements $\{s_i\mid i\in I\}$ called \textit{simple reflections} with
relations
\begin{align}
\label{E:sinvolution}
  s_i^2 &= 1 \\
\label{E:braid}
  \underbrace{s_is_js_i\dotsm}_{\text{$m_{ij}$ times}} &=
  \underbrace{s_js_is_j\dotsm}_{\text{$m_{ij}$ times}}.
\end{align}
for $i\ne j$ where $m_{ij}$ and $a_{ij}a_{ji}$ are related by:
\begin{align}\label{E:braidpowers}
\begin{array}{|c||c|c|c|c|c|} \hline
a_{ij}a_{ji} & 0 & 1 & 2 & 3 & >3 \\ \hline
m_{ij} & 2 & 3 & 4 & 6 & \infty \\  \hline
\end{array}
\end{align}
The \textit{length} $\ell(w)$ of $w\in W$ is the minimum $\ell$
such that $w=s_{i_1}s_{i_2}\dotsm s_{i_\ell}$ for
some indices $i_1,i_2,\dotsc,i_\ell\in I$. Such a factorization is
called a \textit{reduced decomposition} for $w$. A \textit{reflection} in $W$ is an
element of the form $w s_i w^{-1}$ for some $w\in W$ and $i\in I$.
The \textit{Bruhat order} $\le$ on $W$ is the partial order generated by
relations of the form $t w < w$ where $t$ is a reflection and $w\in
W$ are such that $\ell(t w)<\ell(w)$. It is also generated by
relations $wt<w$ where $\ell(wt)<\ell(w)$. Equivalently $v\le w$ if
every (or equivalently some) reduced decomposition of $w$ contains a subexpression that is
a reduced decomposition for $v$. We denote by $v\lessdot w$ the covering relation for $W$.
It is equivalent to saying that $v<w$ and $\ell(w)=\ell(v)+1$.

\begin{example} \label{X:ACartandatum} The Cartan datum of finite type $A_{n-1}$ is defined by
the Dynkin diagram
\begin{equation*}
\xymatrix{
*+[F]{1} \ar@{-}[r] &
*+[F]{2} \ar@{-}[r] &
\dotsm \ar@{-}[r] &
*+[F]{n-2} \ar@{-}[r] &
*+[F]{n-1}
}
\end{equation*}
That is, $I=\{1,2,\dotsc,n-1\}$, $a_{ii}=2$ for $1\le i\le n-1$,
$a_{i,i+1}=a_{i+1,i}=-1$ for $1\le i\le n-2$, with $a_{ij}=0$ for
other $(i,j)\in I\times I$. The Weyl group is the symmetric group $S_n$ with $s_i=(i,i+1)$
the adjacent transposition. The reflections are the transpositions $(i,j)$ for $1\le i<j\le n$.

\end{example}

%\begin{example}
%For $\mathfrak{sl}_n$, $P=\bigoplus_{i=1}^{n-1} \Z \al_i \subset \Z^n$ with $\al_i$ as above,
%$P^*\cong (\Z^n)^*/\Z z$ where $z\in(\Z^n)^*$ is defined by $z(e_i)=1$ for all $1\le i\le n$,
%and $\alv_i$ is the image of the corresponding element for $\mathfrak{gl}_n$
%under the projection $(\Z^n)^*\to P^*$.
%
%For $\mathfrak{pgl}_n$, $P \cong \Z^n/\Z(1^n)$ where $(1^n)\in\Z^n$ is the vector consisting of all ones,
%and $P^* = \bigoplus_{i=1}^{n-1} \Z \alv_i \subset (\Z^*)^n$ with $\alv_i$ defined as for $\mathfrak{gl}_n$.
%\end{example}

\begin{example} \label{X:CCartandatum} A Cartan datum of finite type $C_n$ is given by the Dynkin diagram
\begin{equation*}
\xymatrix{
%*+[F]{0}\ar@{=>}[r] &
*+[F]{1}\ar@{-}[r] & *+[F]{2}\ar@{-}[r] & \dotsm \ar@{-}[r] & *+[F]{n-1} \ar@{<=}[r]& *+[F]{n}
}
\end{equation*}
In particular $a_{n-1,n}=-2$ and $a_{n,n-1}=-1$. $W$ is the hyperoctahedral group,
which is isomorphic to the subgroup of $S_{2n}$
generated by $s_i = (i,i+1)(2n-i,2n-i+1)$ for $1\le i\le n-1$ and $s_n=(n,n+1)$.
\end{example}

\begin{example}\label{X:afACartandatum} A Cartan datum of affine type $A_{n-1}^{(1)}$ is given by the Dynkin diagram
\begin{equation*}
\xymatrix{
&& *+[F]{0} \ar@{-}[dll] \ar@{-}[drr] && \\
*+[F]{1} \ar@{-}[r] &
*+[F]{2} \ar@{-}[r] &
\dotsm \ar@{-}[r] &
*+[F]{n-2} \ar@{-}[r] &
*+[F]{n-1}
}\end{equation*}
with Dynkin node set $I_\af = \Z/n\Z$, $a_{ii}=2$ for all $i\in I_\af$, $a_{i,i+1}=a_{i+1,i}=-1$ for
all $i\in I_\af$, including $a_{n-1,0}=a_{0,n-1}=-1$. $W=\tilde{S}_n$ is the affine symmetric group.
There is an injective homomorphism from $\tilde{S}_n$ into the permutations of $\Z$
such that $s_i$ is sent to the permutation of $\Z$ that exchanges $(i+kn,i+kn+1)$ for all $k\in \Z$.
\end{example}

\begin{example}\label{X:afCCartandatum}
 A Cartan datum of affine type $C_n^{(1)}$ is given by the Dynkin diagram
\begin{equation*}
\xymatrix{
*+[F]{0}\ar@{=>}[r] &
*+[F]{1}\ar@{-}[r] & *+[F]{2}\ar@{-}[r] & \dotsm \ar@{-}[r] & *+[F]{n-1} \ar@{<=}[r]& *+[F]{n}
}
\end{equation*}
\end{example}

\subsection{Root data}
A \textit{root datum}
$(I,A,X,X^*,\{\al_i\},\{\alv_i\})$ consists of a Cartan datum
$(I,A)$ together with a free $\Z$-module $X$, the
dual lattice $X^*=\Hom_\Z(X,\Z)$, linearly independent elements
$\{\alpha_i\mid i\in I\} \subset X$ called \textit{simple roots}, linearly
independent elements $\{\alv_i\mid i\in I\} \subset X^*$ called \textit{simple
coroots}, such that
\begin{align}\label{E:Cartanmatrix}
\ip{\alv_i}{\al_j}=a_{ij}\qquad\text{for $i,j\in I$}
\end{align}
where $\ip{\cdot}{\cdot}:X^*\times X\to \Z$ is the evaluation pairing.

A root datum must satisfy
\begin{align}\label{E:rankXbound}
  \rank(X) \ge |I| + \nullity(A).
\end{align}
A root datum is \textit{centerless} if equality holds in \eqref{E:rankXbound}.

$W$ acts on $X$ and $X^*$ via
\begin{align}
\label{E:WonX}
s_i \cdot \la &= \la - \ip{\alv_i}{\la}\al_i &\qquad&\text{for $i\in I$, $\la\in X$} \\
\label{E:WonX*}
s_i \cdot \mu &= \mu - \ip{\mu}{\al_i} \alv_i &&\text{for $i\in I$, $\mu\in X^*$.}
\end{align}
These restrict to actions on the root lattice $Q$ and the coroot lattice $Q^\vee$:
\begin{align}
  Q &= \bigoplus_{i\in I} \Z \al_i \subset X \\
  Q^\vee &= \bigoplus_{i\in I} \Z \alv_i \subset X^*
\end{align}
One may show that
\begin{align}
\label{E:Winvariantpairing}
\ip{w\mu}{w\la}&=\ip{\mu}{\la}\qquad\text{for $w\in W$, $\la\in
X$, and $\mu\in X^*$.}
\end{align}
The set of \textit{real roots} is
\begin{align}
\label{E:Realroots}
\Phi_\re = W \cdot \{\al_i\mid i\in
I\}\subset X.
\end{align}
A real root
\begin{align}\label{E:arealroot}
\alpha =
w \cdot \al_i\in \Phi_\re
\end{align}
has an \textit{associated coroot}
\begin{align}\label{E:assoccoroot}
\alv = w\cdot
\alv_i\in X^*
\end{align}
and a \textit{reflection}
\begin{align}\label{E:rootreflection}
s_\alpha = w s_i w^{-1}\in W.
\end{align}
Both $\alv$ and $s_\al$ are independent of the choices of $i$ and $w$. The reflection
$s_\al$ acts on $X$ and $X^*$ by
\begin{align}\label{E:reflectionX}
s_\alpha\cdot\la &= \la - \ip{\alv}{\la}\al&\qquad&\text{for $\la\in X$} \\
s_\al \cdot \mu&=\mu-\ip{\mu}{\al}\alv&&\text{for $\mu\in X^*$.}
\end{align}
For any $\al\in \Phi_\re$ and $w\in W$, we have that
\begin{align}
\label{E:Weylrealroot}
  \beta &:= w\cdot \al \in \Phi_\re \\
\label{E:Weylassoccoroot}
  \beta^\vee &= w \cdot \alv \\
\label{E:Weylreflection}
  s_\beta &= w s_\al w^{-1}.
\end{align}
The set of \textit{positive real roots}
is $\Phi_\re^+ = \Phi_\re \cap \bigoplus_{i\in I} \Z_{\ge0} \al_i$.
One may show that
\begin{align}\label{E:rootsposneg}
\Phi_\re = \Phi_\re^+ \cup -
\Phi_\re^+.
\end{align}

The \textit{inversion set} of $w$ is
\begin{align}\label{E:Inv}
\Inv(w) = \Phi_\re^+ \cap w^{-1}(-\Phi_\re^+)\qquad\text{for $w\in W$.}
\end{align}
It is the set of positive roots which are sent to negative roots by $w$.

\begin{lem} \label{L:inversionset}
Let $w\in W$. Then
\begin{align}
\label{E:invset}
  \Inv(w) &= \{ \al\in \Phi_\re^+ \mid w s_\al < w\} \\
\label{E:invreducedword}
  &=  \{ \al^{(j)} = s_{i_1}s_{i_2}\dotsm s_{i_{j-1}}\cdot \al_{i_j} \mid 1\le j\le \ell\} \qquad\text{where} \\
\notag
  w &= s_{i_\ell}\dotsm s_{i_1}\qquad\text{is a reduced decomposition.}
\end{align}
Moreover none of the elements of $\Inv(w)$ is a scalar multiple of another.
\end{lem}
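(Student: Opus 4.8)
The plan is to reduce the whole statement to a single core fact about simple reflections and then obtain everything else by induction on $\ell(w)$ together with a short computation inside a reduced decomposition. The core fact I would isolate first is the length criterion for simple reflections: for $w\in W$ and $i\in I$ one has $w\cdot\al_i\in\Phi_\re^+$ if and only if $\ell(ws_i)>\ell(w)$. In the Kac--Moody setting this rests on the companion statement that $s_i$ sends $\al_i\mapsto-\al_i$ and permutes $\Phi_\re^+\setminus\{\al_i\}$ setwise, which, via \eqref{E:rootsposneg}, lets one track the sign of $w\cdot\al_i$ as letters are appended to a reduced word. I would cite these from the standard theory of real roots rather than redevelop them, since the surrounding text already takes the root datum and its real roots as given.

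Next I would prove the reduced-word description \eqref{E:invreducedword}. Writing $w=s_{i_\ell}\cdots s_{i_1}$ reduced (so that $w^{-1}=s_{i_1}\cdots s_{i_\ell}$ is also reduced), I would induct on $\ell$ using the factorization $w=u\,s_{i_1}$ with $u=s_{i_\ell}\cdots s_{i_2}$ and the cocycle identity for inversion sets, valid whenever $\ell(ab)=\ell(a)+\ell(b)$:
\[
\Inv(ab)=\Inv(b)\,\sqcup\,b^{-1}\cdot\Inv(a),
\]
a \emph{disjoint} union. Applied with $a=u$ and $b=s_{i_1}$ this gives $\Inv(w)=\{\al_{i_1}\}\sqcup s_{i_1}\cdot\Inv(u)$, and feeding in the inductive description of $\Inv(u)$ produces exactly $\{\al^{(j)}=s_{i_1}\cdots s_{i_{j-1}}\cdot\al_{i_j}\mid 1\le j\le\ell\}$. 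The disjointness of the union simultaneously yields $|\Inv(w)|=\ell(w)$ and the distinctness of the $\al^{(j)}$; that each $\al^{(j)}$ is genuinely a positive real root follows from the core fact applied to the reduced prefix $s_{i_1}\cdots s_{i_{j-1}}s_{i_j}$ of $w^{-1}$.

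To obtain the reflection description \eqref{E:invset}, I would first note that for a reflection $t=s_\al$ the Bruhat relation $ws_\al<w$ is equivalent to $\ell(ws_\al)<\ell(w)$, immediately from the generating relations of the order (if $\ell(ws_\al)>\ell(w)$ then $w<ws_\al$, contradicting antisymmetry). It then suffices to show, for every $w$ and every $\al\in\Phi_\re^+$, that $w\cdot\al\in-\Phi_\re^+$ iff $\ell(ws_\al)<\ell(w)$. One implication is the deletion computation: if $\al=\al^{(j)}\in\Inv(w)$, writing $s_\al=w's_{i_j}w'^{-1}$ with $w'=s_{i_1}\cdots s_{i_{j-1}}$, the telescoping identity $w\,w'=s_{i_\ell}\cdots s_{i_j}$ gives
\[
w s_\al = s_{i_\ell}\cdots s_{i_{j+1}}\,s_{i_{j-1}}\cdots s_{i_1},
\]
namely $w$ with the letter $s_{i_j}$ removed, so $\ell(ws_\al)<\ell(w)$. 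Since this holds for arbitrary $w$, I would apply it with $w$ replaced by $ws_\al$ and use $ws_\al\cdot\al=-\,w\cdot\al$ to deduce the converse implication, giving \eqref{E:invset}. Finally, the no-scalar-multiple claim follows from distinctness combined with the fact that the real roots of a generalized Cartan matrix form a reduced system (if $\al$ and $c\al$ are both real roots then $c=\pm1$): two distinct positive real roots, both lying in $\Phi_\re^+$, cannot be positive scalar multiples of one another.

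The main obstacle is entirely in the first step. The simple-reflection length criterion and the accompanying permutation property of $s_i$ on $\Phi_\re^+\setminus\{\al_i\}$ are where the actual structure of the Kac--Moody root system enters; the abstract Coxeter presentation \eqref{E:sinvolution}--\eqref{E:braid} alone does not supply them. Everything after that — the cocycle identity, the inductive extraction of the $\al^{(j)}$, the deletion computation, and the symmetry argument for the converse — is formal bookkeeping that I expect to go through routinely.
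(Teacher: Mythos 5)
Your proof is correct. The paper states Lemma \ref{L:inversionset} without proof, treating it as a standard fact about Kac--Moody root systems (cf.\ Kac, Ch.~3, or Kumar, \S1.3), so there is no in-text argument to compare against; what you have written is essentially the textbook proof. The one genuine input, as you correctly flag, is the simple-reflection length criterion ($w\cdot\al_i\in\Phi_\re^+$ iff $\ell(ws_i)>\ell(w)$, resting on $s_i$ permuting $\Phi_\re^+\setminus\{\al_i\}$); this cannot be extracted from the Coxeter presentation \eqref{E:sinvolution}--\eqref{E:braid} alone and is legitimately cited from the standard theory. Everything downstream checks out: the cocycle identity $\Inv(ab)=\Inv(b)\sqcup b^{-1}\cdot\Inv(a)$ under length-additivity gives \eqref{E:invreducedword} and the distinctness of the $\al^{(j)}$ by induction; the telescoping computation $ws_{\al^{(j)}}=s_{i_\ell}\dotsm s_{i_{j+1}}s_{i_{j-1}}\dotsm s_{i_1}$ gives one direction of \eqref{E:invset}; replacing $w$ by $ws_\al$ and using $ws_\al\cdot\al=-w\cdot\al$ gives the converse; and the reducedness of the real root system (only $\pm1$ multiples of a real root are roots) combined with distinctness gives the final clause.
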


Now for some details on root data. Let
\begin{align*}
  L(Q) &= \{ \la\in \Q\otimes_\Z X\mid \ip{Q^\vee}{\la}\subset \Z \}\\
  L(Q^\vee) &= \{ \mu\in \Q \otimes_\Z X^* \mid \ip{\mu}{Q} \subset \Z \}
\end{align*}
be the weight and coweight lattices. We have
\begin{align}
  Q &\subset X\subset L(Q) \\
  Q^\vee &\subset X^* \subset L(Q^\vee)
\end{align}
where $L(Q)/X$ and $L(Q^\vee)/X^*$ are finite.
A root datum is \textit{simply connected} if it is centerless and $L(Q)=X$.

Suppose the root datum is simply connected. Then $X$ has
a basis $\{\om_i\mid i\in I\} \cup \{\delta_i\mid 1\le i\le \nullity(A)\}$
and $X^*$ has a dual basis $\{\alv_i\mid i\in I\} \cup \{d_i \mid 1\le i\le\nullity(A)\}$.
The $\om_i$ are called fundamental weights and are uniquely defined. We have
\begin{align}
  \bigoplus_{i=1}^{\nullity(A)} \Z \delta_i &= \{ \la\in X\mid \ip{Q^\vee}{\la}=0\} = X^W \\
  \bigoplus_{i=1}^{\nullity(A)} \Z d_i &= \{ \mu\in X^* \mid \ip{\mu}{Q}=0\} = X^{*W}.
\end{align}
Since $\al_k\in X$ we have
\begin{align}
  \al_k = \sum_{j\in I} a_{jk} \,\om_j \mod X^W.
\end{align}

\begin{example} \label{X:Arootdatum}
We give several root data for the type $A_{n-1}$ Cartan datum of Example \ref{X:ACartandatum}.
Recall that $I=\{1,2,\dotsc,n-1\}$.

Let $\Z^n=\bigoplus_{i=1}^n \Z e_i$ be the standard basis
and $x_i\in \Hom(\Z^n,\Z)$ be defined by $x_i(e_j)=\delta_{ij}$ for $1\le i,j\le n$.
Let $\al_i=e_i-e_{i+1}$ and $\alv_i = x_i-x_{i+1}$ for $i\in I$.
These will be the simple roots and coroots, possibly up to taking suitable cosets.
We have $\Phi_\re^+ = \{\alpha_{ij}=e_i-e_j\mid
1\le i<j\le n\}$, and the associated reflection for $\alpha = \alpha_{ij}$
is the transposition $s_\alpha=(i,j)$.
\begin{enumerate}
\item The $GL_n$ root datum is given by taking $X=\Z^n$ and $X^*=\Hom(\Z^n,\Z)$.
This root datum is not centerless: $\nullity(A)=0$ and $X^W=\Z(1^n)$.
\item The simply connected root datum is given by taking $X=\Z^n/\Z(1^n)=L(Q)$
and $X^*=Q^\vee$. Then
$\om_i=(1^i,0^{n-i})+\Z(1^n)$ for $i\in I$.
\item Let $x=x_1+x_2+\dotsm+x_n$.
The adjoint root datum is given by $X=Q$
and $X^*=\Hom(\Z^n,\Z)/\Z x = L(Q^\vee)$.
\end{enumerate}
\end{example}

%\begin{example}
%For $\mathfrak{sl}_n$, $P=\bigoplus_{i=1}^{n-1} \Z \al_i \subset \Z^n$ with $\al_i$ as above,
%$P^*\cong (\Z^n)^*/\Z z$ where $z\in(\Z^n)^*$ is defined by $z(e_i)=1$ for all $1\le i\le n$,
%and $\alv_i$ is the image of the corresponding element for $\mathfrak{gl}_n$
%under the projection $(\Z^n)^*\to P^*$.
%
%For $\mathfrak{pgl}_n$, $P \cong \Z^n/\Z(1^n)$ where $(1^n)\in\Z^n$ is the vector consisting of all ones,
%and $P^* = \bigoplus_{i=1}^{n-1} \Z \alv_i \subset (\Z^*)^n$ with $\alv_i$ defined as for $\mathfrak{gl}_n$.
%\end{example}

\begin{example} \label{X:Crootdatum} Consider the type $C_n$ Cartan datum
of Example \ref{X:CCartandatum}.
Using $X=\Z^n$ and $e_i$ and $x_i$ as in Example \ref{X:Arootdatum}
we have $\al_i = e_i-e_{i+1}$ and $\alv_i=x_i-x_{i+1}$ for $1\le i\le n-1$, $\al_n=2e_n$,
and $\alv_n=x_n$.
\end{example}

\begin{remark}
The fundamental coweights are elements $\omv_i\in \Q\otimes_\Z X^*$ such that
$$\ip{\omv_i}{\al_j}=\delta_{ij}$$ for $i,j\in I$.
The fundamental coweights need not be in the lattice $X^*$. In Example \ref{X:Crootdatum},
we have $\omega_n^\vee=(1/2,\dotsc,1/2)\not\in X^*$.
\end{remark}

\subsection{Affine root data}
\label{chapter4.section.affine root}
Let $(I,A,X,X^*,\{\al_i\},\{\alv_i\})$ be an irreducible finite simply connected root
datum. We shall describe the associated ``untwisted" simply connected irreducible affine root datum
denoted by $(I_\af,A_\af,X_\af,X_\af^*,\{\al_i\},\{\alv_i\})$.

There is a distinguished node $0\in I_\af$ such that $I_\af = I \cup \{0\}$  \cite{Kac} and
the restriction of $A_\af$ to $I\times I$ is $A$. We write $W_\af=W(I_\af,A_\af)$
for the affine Weyl group.
There is a unique tuple  $(a_i\mid i\in I_\af)$ of relatively prime positive integers
which are the coefficients of a linear dependence relation among the columns of $A_\af$:
\begin{align}\label{E:adef}
  \sum_{j\in I} a_j a_{ij} = 0 \qquad\text{for all $i\in I_\af$.}
\end{align}
The \textit{null root} is
\begin{align}\label{E:nullroot}
  \delta = \sum_{i\in I} a_i \al_i;
\end{align}
It is the unique generator of
\begin{equation}
\label{E:XafWafinv}
\begin{split}
  X_\af^{W_\af} &= \{\la\in X_\af \mid w\cdot \la = \la \text{ for all $w\in W_\af$} \\
  &= \{\la\in X_\af\mid \ip{Q_\af^\vee}{\la}=0\}
\end{split}
\end{equation}
that lies in $\bigoplus_{i\in I_\af} \Z_{\ge0} \al_i$. It satisfies
\begin{align}\label{E:deltatheta}
  \delta = \al_0 + \theta
\end{align}
where $\theta\in \Phi^+$ is the \textit{highest root} of the finite root datum.

There is a unique tuple  $(a_i^\vee\mid i\in I_\af)$ of relatively prime positive integers
which are the coefficients of a linear dependence relation among the \textit{rows} of $A_\af$.
The \textit{canonical central element} $c\in X_\af^*$ is
\begin{align}\label{E:centralelement}
  c = \sum_{i\in I_\af} a_i^\vee \alv_i;
\end{align}
it generates
\begin{equation}
\label{E:Xaf*Wafinv}
\begin{split}
  X_\af^{* W_\af} &= \{\mu\in X_\af^* \mid w\cdot \mu = \mu \text{ for all $w\in W_\af$} \} \\
  &= \{\mu\in X_\af^* \mid \ip{\mu}{Q_\af}=0 \}.
\end{split}
\end{equation}
We have
\begin{align}\label{E:cthetavee}
  c = \alv_0 + \theta^\vee
\end{align}
where $\theta^\vee\in Q^\vee$ is the coroot associated to $\theta$.

We denote the affine fundamental weights by $\{\La_i\mid i\in I_\af\}$ and those
of the finite root datum by $\{\om_i\mid i\in I\}$. Then
\begin{align}\label{E:Xaf}
  X_\af = \Z\delta \oplus \bigoplus_{i\in I_\af} \Z \La_i
\end{align}
and there is an element $d\in X_\af^*$ called the degree generator, defined uniquely mod $\Z c$, such that
$X_\af^*$ has dual basis
\begin{align}
  X_\af^* = \Z d \oplus \bigoplus_{i\in I_\af} \Z \alv_i.
\end{align}
Then
\begin{align}
  \al_i = \delta_{i0} + \sum_{k\in I_\af} a_{ki} \La_k
\end{align}
where $A_\af = (a_{ij})_{i,j\in I_\af}$.
There is an exact sequence
\begin{align*}
  0 \to \Z \delta \oplus \Z \La_0 \to X_\af \to X \to 0.
\end{align*}
There is a section $X\to X_\af$ defined by $\om_i \mapsto \La_i - a_i^\vee\La_0$ for $i\in I$.
Using this section we regard $Q\subset X \subset X_\af$.

The real affine roots $\Phi_\af^\re$ and the positive subset
$\Phi_\af^{\re+}$ are related to the set $\Phi$ of finite roots by
\begin{align}\label{E:reafroots}
\Phi_\af^\re &= \Phi + \Z \delta \\
\label{E:reafposroots}
\Phi_\af^{\re+} &= \Phi^+ \cup (\Phi+\Z_{>0}\delta).
\end{align}

Since $\ip{c}{Q_\af}=0$ (resp. $\ip{Q_\af^\vee}{\delta}=0$),
the action of $W_\af$ on $X_\af$ (resp. $X_\af^*$) preserves the set of weights of a given level
(resp. colevel) where
\begin{align}\label{E:level}
\lev(\la)&=\ip{c}{\la}&\qquad&\text{for $\la\in X_\af$} \\
\colev(\mu)&=\ip{\mu}{\delta}&&\text{for $\mu\in X_\af^*$.}
\end{align}
Therefore for every $m\in\Z$ there is an action of $W_\af$ called the level (colevel) $m$ action,
on the set of elements of $X_\af$ of level zero (elements of $X_\af^*$ of colevel zero)
which may be identified with $\Z\delta \oplus X$ (resp. $Q_\af^\vee = \Z c \oplus Q^\vee$),
defined by
\begin{align}
\label{E:levelmonXdelta}
  w \cdot_m \la &= -m\La_0 + w \cdot (m\La_0 + \la) &\qquad&\text{for $\la\in \Z\delta\oplus X$} \\
\label{E:colevelmonQafv}
  w \cdot_m \mu &= -m d + w \cdot (m d + \mu) &\qquad&\text{for $\mu\in \Z c \oplus Q^\vee$.}
\end{align}
For $\la\in \Z\delta \oplus X$ we have
\begin{align*}
  s_i \cdot_m \la &= s_i \cdot \la \qquad\text{if $i\in I$} \\
  s_0 \cdot_m \la &= s_0\cdot \la - m \al_0.
\end{align*}
For $\mu\in Q_\af^\vee$ we have
\begin{align*}
  s_i \cdot_m \mu &= s_i \cdot \mu \qquad\text{if $i\in I$} \\
  s_0 \cdot_m \mu &= s_0\cdot \mu - m \alv_0.
\end{align*}
Since
\begin{align}
\label{E:deltainvariant}
  W_\af \cdot \delta &= \delta \\
\label{E:cinvariant}
  W_\af \cdot c &= c
\end{align}
the level $m$ action of $W_\af$ on $\Z\delta \oplus X$ (resp. $Q_\af^\vee$)
factors through projection mod $\Z\delta$ (resp. $\Z c$).
So there is a level $m$ action of $W_\af$ on $X$ given by
\begin{equation}\label{E:levelmonX}
\begin{split}
  s_i \cdot_m \la &= s_i \cdot \la \qquad\text{for $i\in I$} \\
  s_0 \cdot_m \la &= s_\theta \cdot \la + m \theta
\end{split}
\end{equation}
for $\la\in X$, and a colevel $m$ action of $W_\af$ on $Q^\vee$ defined by
\begin{equation}\label{E:levelmonQv}
\begin{split}
  s_i \cdot_m \mu &= s_i \cdot \mu \qquad\text{if $i\in I$} \\
  s_0 \cdot_m \mu &= s_\theta \cdot \mu + m \theta^\vee
\end{split}
\end{equation}
for $\mu\in Q^\vee$.
Identifying $W_\af$ with its image in $\Aut(Q^\vee)$ under the faithful
level $1$ action, we have
\begin{align}
  W_\af = W \ltimes Q^\vee
\end{align}
where $Q^\vee = \{t_\mu\mid \mu\in Q^\vee\}$ is the group of
translations by elements of $Q^\vee$. We have
\begin{align*}
  w t_\mu w^{-1} = t_{w\cdot \mu} \qquad\text{for $w\in W$ and $\mu\in Q^\vee$.}
\end{align*}
The reflection $s_{\al+k\delta}$ for $\al+k\delta\in\Phi_\af^\re$ with $\al\in \Phi$ and $k\in\Z$,
satisfies
\begin{align}
  s_{\al+k\delta} = s_\al t_{k\al^\vee}.
\end{align}
In particular, for $\al_0 = \delta - \theta$, we have
\begin{align}\label{E:s0semi}
  s_0 = s_\theta t_{-\theta^\vee}.
\end{align}
Let $\mu\in Q^\vee$. Under the level $m$ action on $\Z\delta\oplus X$ we have
\begin{align}
  t_\mu \cdot_m (\la+k\delta) = (\la+k\delta) - (m + \ip{\mu}{\la})\delta\qquad\text{for $\la\in X$}.
\end{align}
Modding out by $\Z\delta$, the level $0$ action of $W_\af$ on $X$
is given by
\begin{align} \label{E:translevelzero}
  (u t_\mu)\cdot_0 \la = u\cdot \la\qquad\text{for $u\in W$ and $\mu\in Q^\vee$;}
\end{align}
the translations act trivially. Equivalently, by \eqref{E:s0semi}, $s_0$ acts by $s_\theta$.

\begin{example} \label{X:afArootdatum} With the affine $A_{n-1}^{(1)}$
Cartan datum of Example \ref{X:afACartandatum},
we have $a_i=1$ for all $i\in I_\af$, $\delta=\al_0+\al_1+\dotsm+\al_{n-1}$ and
$\theta=e_1-e_n$.
\end{example}

\begin{example} \label{X:afCrootdatum} With the
affine $C_2^{(1)}$ Cartan datum of Example \ref{X:afCCartandatum},
we have $I_\af=\{0,1,2\}$,
\begin{align}
A_\af = \begin{pmatrix}
  2&-1&0\\
  -2&2&-2\\
  0&-1&2
  \end{pmatrix},
\end{align}
$(a_0,a_1,a_2)=(1,2,1)$, $\delta=\al_0+2\al_1+\al_2$,
and $\theta=2\al_1+\al_2$. We let $X=\Z^2$, $\al_1=(1,-1)$,
$\al_2=(0,2)$, $X^*=\Z^2$, $\alv_1=(1,-1)$, $\alv_2=(0,1)$. Then
$\theta=(2,0)=s_1\cdot\al_2$ so that $\theta^\vee = s_1\cdot \alv_2
= (1,0) = \alv_1+\alv_2$. So $c =\alv_0+\alv_1+\alv_2$.
\end{example}

\section{NilHecke ring and Schubert calculus}
\label{sec:MarknilHecke}

In this section we consider the nilHecke ring $\A$ of Kostant and Kumar \cite{KK},
for any Kac-Moody root datum. This general construction  
may be used to compute the cohomological Schubert calculus of the
flag variety for the given root datum, equivariant with respect to the
\textit{maximal} torus. 

The general construction of the Kostant-Kumar nilHecke ring $\A$ of this section,
yields the nilHecke ring denoted $\A$ in Chapter 3
if a finite root datum is used, and yields an affine nilHecke ring $\A_\af$
if an untwisted affine root datum is used. This affine nilHecke ring $\A_\af$
is \textit{not} the one denoted $\A_\af$ in Chapter 3; the latter, which we shall call $\A_\af'$,
is a subquotient of $\A_\af$. Both compute cohomology of the affine flag variety, but $\A_\af$ uses equivariance
for the maximal torus $T_\af$ in the Kac-Moody group $G_\af$, whereas $\A_\af'$
uses equivariance for the ``small torus" $T$ in the simple Lie group $G \subset G_\af$. See Sections \ref{SS:GafPaf} and \ref{SS:Petersonsubalgebra}.

The nilHecke ring $\A$ contains a subring $\A_0$ called the nilCoxeter algebra.
For a finite root datum, $\A_0$ is the finite nilCoxeter algebra, also denoted $\A_0$ 
in Chapter 3, Section \ref{sec:algebra}. For an untwisted affine root datum is of affine type,
$\A_0$ is the affine nilCoxeter algebra, denoted $\aNC$ in
Chapter 3, Sections \ref{sec:algebra}, \ref{chapter3.section.affine nilHecke}, and \ref{chapter3.section.commutation}.

\subsection{NilHecke ring}
Fix a root datum. Let $S=H^T(\pnt)\cong \Sym(X)$ be the polynomial ring having a
variable for each free generator of $X$ and let
$\Fr=\Frac(S)$ be the fraction field. The action of $W$ on $X$
induces actions of $W$ on $S$ and $\Fr$ by ring automorphisms,
and $\Fr$ acts on itself by left multiplication.

\begin{example} For $A_{n-1}$ and the $GL_n$ root datum
we have $X=\bigoplus_{i=1}^n \Z z_n$ and
$S=\Z[z_1,\dotsc,z_n]$, $\Fr=\Q(z_1,\dotsc,z_n)$. $W=S_n$ permutes variables.
\end{example}

Let $v\in W$ and $q,q'\in \Fr$. Viewing both as operators on $\Fr$ we have
\begin{align*}
(v \circ q) \cdot q' &= v \cdot (q\cdot q') \\
&= (v\cdot q)(v\cdot q') \\
&= ((v\cdot q) \circ v) \cdot q',
\end{align*}
that is, $v q = (v\cdot q) v$ as operators on $\Fr$.
Define
\begin{align}\label{E:AQ}
\AQ = \bigoplus_{w\in W} \Fr\, w,
\end{align}
the $\Fr$-vector space with basis
$W$, with product given by
\begin{align}\label{E:QWmult}
(pv)(qw) = (p (v\cdot q)) (vw)\qquad\text{for $p,q\in \Fr$ and $v,w\in W$.}
\end{align}
It is the smash product of $\Q[W]$ and $\Fr$, the
ring generated by the actions of $W$ and $\Fr$ upon $\Fr$.

\begin{example} \label{E:nilHeckeA}
For the $A_{n-1}$ root datum of Example \ref{X:Arootdatum},
we have $S = \Z[x_1,\dotsc,x_n]$ and $\Fr=\Q(x_1,\dotsc,x_n)$.
\end{example}

For any $\al\in \Phi_\re$ define the element $A_\al\in \AQ$ by
\begin{align} \label{E:Aal}
  A_\al = \al^{-1} (1-s_\al).
\end{align}
We write
\begin{align}\label{E:Ai}
  A_i = A_{\al_i} \qquad\text{for $i\in I$.}
\end{align}
For $\al=w\cdot \al_i\in\Phi_\re$ we have
\begin{align}\label{E:Aalconj}
  w A_i w^{-1} &= A_\al \\
  s_\al A_\al &= A_\al \\
  A_\al s_\al &= - A_\al \\
\label{E:A20}
  A_\al^2 &= 0.
\end{align}
The $A_i$ satisfy the braid relations as the $s_i$ in $W$:
\begin{align}
\label{E:Abraid}
  \underbrace{A_iA_j\dotsm}_{\text{$m_{ij}$ times}} &= \underbrace{A_jA_i\dotsm}_{\text{$m_{ij}$ times.}}
\end{align}
Therefore it makes sense to define
\begin{align}
A_w&=A_{i_1}\dotsm A_{i_\ell}&\qquad&\text{where} \\
\notag
w&=s_{i_1}\dotsm s_{i_\ell} &&\text{is a reduced decomposition.}
\end{align}
Using \eqref{E:A20} and \eqref{E:Abraid} one may show that
\begin{align} \label{E:Aadd}
  A_v A_w = \begin{cases}
  A_{vw} & \text{if $\ell(vw)=\ell(v)+\ell(w)$} \\
  0 & \text{otherwise.}
  \end{cases}
\end{align}

Since $A_w\in \AQ$ there are unique $c_{w,v}\in \Fr$ such that
\begin{align} \label{E:AtoW}
  A_w = \sum_{v\in W} c_{w,v} v.
\end{align}

\begin{lem}\label{L:ctri}
\begin{align} \label{E:cmatrixdiag}
c_{w,w} &\ne 0 \\
\label{E:cmatrixsupport} c_{w,v}&=0\qquad\text{unless $w\ge v$.}
\end{align}
In particular $\{A_w\mid w \in W\}$ is a left $\Fr$-basis of $\Fr_W$.
\end{lem}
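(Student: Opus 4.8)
The plan is to expand $A_w$ directly from its definition and read off the coefficients $c_{w,v}$. Fix a reduced decomposition $w = s_{i_1}\cdots s_{i_\ell}$, so that $A_w = A_{i_1}\cdots A_{i_\ell} = \prod_{j=1}^\ell \al_{i_j}^{-1}(1-s_{i_j})$ by \eqref{E:Aal}. Expanding this product over the two choices $1$ or $-s_{i_j}$ at each factor, and then using the commutation rule $v\,q = (v\cdot q)\,v$ of \eqref{E:QWmult} to push all scalars to the left, I get a sum indexed by subsets $U\subseteq\{1,\dots,\ell\}$ in which the $U$-term is a scalar in $\Fr$ times the group element $v_U := \prod_{j\in U} s_{i_j}$ (letters read in increasing order of $j$). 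Thus the only Weyl group elements occurring in $A_w$ are the subword products $v_U$ of the chosen reduced word.

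First I would establish \eqref{E:cmatrixsupport}. Every $v_U$ is the product of a subword of a reduced word for $w$; by the deletion/exchange property one may discard further letters of $U$ to obtain a reduced subexpression for $v_U$, so by the subword characterization of Bruhat order recalled above, $v_U \le w$. Hence no group element $v\not\le w$ appears, i.e. $c_{w,v}=0$ unless $w\ge v$.

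Next I would pin down the diagonal coefficient $c_{w,w}$. Since $\ell(v_U)\le |U|$ and $\ell(w)=\ell$, the equality $v_U = w$ forces $|U|=\ell$, i.e. $U=\{1,\dots,\ell\}$; so $w$ arises from exactly one term, the one taking $-s_{i_j}$ at every factor. Tracking the scalar for this term by moving each $\al_{i_j}^{-1}$ to the left through the preceding reflections gives
\[
c_{w,w} = (-1)^\ell \prod_{j=1}^\ell \bigl(s_{i_1}\cdots s_{i_{j-1}}\cdot \al_{i_j}\bigr)^{-1}.
\]
Each factor $s_{i_1}\cdots s_{i_{j-1}}\cdot \al_{i_j}$ lies in $W\cdot\{\al_i\}=\Phi_\re$ by \eqref{E:Realroots} (indeed these are precisely the elements of $\Inv(w)$, cf. Lemma \ref{L:inversionset}), hence is a nonzero element of $X\subset S\subset\Fr$ since $0\notin\Phi_\re$ by \eqref{E:rootsposneg}. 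Therefore the product is a nonzero element of $\Fr$ and $c_{w,w}\ne0$. The main bookkeeping obstacle is exactly this scalar computation: one must check that moving the scalars left through the word produces the conjugated roots above, and that no cross terms from other subsets contribute to the coefficient of $w$ — the latter being guaranteed by the length count.

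Finally, the basis claim follows from triangularity. Choose any total order on $W$ refining Bruhat order (refining by length suffices); by \eqref{E:cmatrixdiag} and \eqref{E:cmatrixsupport} the matrix $(c_{w,v})$ is triangular with nonzero diagonal entries. To see spanning, I would express each $w$ in terms of $\{A_v\mid v\le w\}$ by downward induction on $\ell(w)$, using $w = c_{w,w}^{-1}\bigl(A_w - \sum_{v<w} c_{w,v}\,v\bigr)$. For independence over $\Fr$, given a finite relation $\sum_w p_w A_w = 0$ with some $p_w\ne 0$, I would take $w^\ast$ of maximal length in the support: the coefficient of the group element $w^\ast$ on the left is $\sum_w p_w c_{w,w^\ast}$, and $c_{w,w^\ast}\ne0$ forces $w^\ast\le w$, hence by maximality of $\ell(w^\ast)$ we get $w=w^\ast$, so this coefficient equals $p_{w^\ast}c_{w^\ast,w^\ast}\ne0$, a contradiction. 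This argument works verbatim for infinite $W$, so $\{A_w\mid w\in W\}$ is a left $\Fr$-basis of $\Fr_W$.
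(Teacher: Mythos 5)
Your proof is correct. The paper gives no proof of Lemma \ref{L:ctri} at all --- it is left to the reader as Exercise \ref{EX:Abasis} --- so there is nothing to compare against; your argument (expand $A_w=\prod_j \al_{i_j}^{-1}(1-s_{i_j})$ over subwords, use the subword characterization of Bruhat order for the support condition, isolate the unique full subword for the diagonal coefficient, then invert the triangular matrix) is the standard one and matches the techniques the paper uses elsewhere, e.g.\ the subexpression expansion in Proposition \ref{P:dmatrix}. Two cosmetic points only: with your indexing $w=s_{i_1}\cdots s_{i_\ell}$ the roots $s_{i_1}\cdots s_{i_{j-1}}\cdot\al_{i_j}$ form $\Inv(w^{-1})$ in the paper's convention (Lemma \ref{L:inversionset} writes $w=s_{i_\ell}\cdots s_{i_1}$), though all you actually need is that they are real roots and hence nonzero; and your spanning argument is an upward strong induction on $\ell(w)$, not a downward one.
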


The subring $\A_0$ of $\AQ$ generated by $\{A_i\mid i\in I\}$ is called the \textit{nilCoxeter algebra}.

\begin{lem} \label{L:A0basis}
\begin{align}\label{E:nilCoxeterbasis}
  \A_0 = \bigoplus_{w\in W} \Z A_w.
\end{align}
\end{lem}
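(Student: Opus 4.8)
The plan is to establish the two halves of the claim separately: that $\{A_w \mid w\in W\}$ spans $\A_0$ over $\Z$, and that these elements are $\Z$-linearly independent.

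First I would prove spanning. By definition $\A_0$ is the subring of $\AQ$ generated by the $A_i$, so every element of $\A_0$ is a $\Z$-linear combination of monomials $A_{i_1}A_{i_2}\cdots A_{i_m}$ in the generators. I would argue by induction on $m$ that each such monomial equals either $0$ or $A_w$ for a single $w\in W$. The base case $m\le 1$ is immediate. For the inductive step, suppose $A_{i_1}\cdots A_{i_{m-1}}$ is either $0$ (in which case the whole product vanishes) or equals $A_u$ with $\ell(u)=m-1$; then $A_u A_{i_m}$ is governed by the multiplication rule \eqref{E:Aadd}, giving $A_{u s_{i_m}}$ when $\ell(u s_{i_m})=\ell(u)+1$ and $0$ otherwise (the latter includes the case $A_i^2=0$ of \eqref{E:A20}). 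Since each surviving step raises the length by exactly one, the length of the resulting $A_u$ always matches the number of factors, keeping the inductive hypothesis intact. Here the well-definedness of $A_w$ independent of the chosen reduced word, guaranteed by the braid relations \eqref{E:Abraid}, is what makes ``$A_w$ for a single $w$'' unambiguous. Since conversely each $A_w$ is itself such a monomial and hence lies in $\A_0$, this yields $\A_0=\sum_{w\in W}\Z A_w$.

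Second I would prove linear independence, which is where the real content already lies in the earlier results. By Lemma \ref{L:ctri} the set $\{A_w\mid w\in W\}$ is a left $\Fr$-basis of $\AQ$; in particular it is linearly independent over $\Fr$. Since $\Z\subset \Fr$, any $\Z$-linear relation $\sum_w n_w A_w=0$ is a fortiori an $\Fr$-linear relation, forcing every $n_w=0$. Combining the two halves gives $\A_0=\bigoplus_{w\in W}\Z A_w$ as a free $\Z$-module.

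I do not expect a genuine obstacle here, since the substantive work --- the triangularity of the change of basis between $\{A_w\}$ and $\{w\}$ recorded in \eqref{E:cmatrixdiag}--\eqref{E:cmatrixsupport} --- is already packaged in Lemma \ref{L:ctri}. The one point requiring a little care is the induction in the spanning step: one must invoke the braid relations to know that the symbol $A_w$ attached to a reduced word depends only on $w$, and invoke \eqref{E:Aadd} (rather than re-deriving it) to collapse a product of two basis elements back into the basis. Everything else is formal.
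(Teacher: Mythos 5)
Your proof is correct. The paper actually states Lemma \ref{L:A0basis} without proof, but your argument is exactly the standard one and mirrors the paper's own proof of the analogous Lemma \ref{L:Abasis}: spanning follows because \eqref{E:Aadd} (together with the well-definedness of $A_w$ via the braid relations \eqref{E:Abraid}) collapses every monomial in the $A_i$ to $0$ or a single $A_w$, and linear independence over $\Z$ is inherited from the $\Fr$-linear independence already recorded in Lemma \ref{L:ctri}.
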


\begin{lem} \label{L:AonS} For every $\al\in \Phi_\re$, $\la\in X$, and $q,q'\in S$, we have
\begin{align}
\label{E:AonX}
  A_\al \cdot \la &=  \ip{\alv}{\la} \\
\label{E:AonSprod}
  A_\al \cdot (qq') &= (A_\al \cdot q) q' + (s_\al\cdot q) (A_\al \cdot q').
\end{align}
In particular, $A_\al \cdot S \subset S$ for all $\al\in\Phi_\re$.
\end{lem}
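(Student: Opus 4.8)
The plan is to prove the three assertions in turn, since each is a direct algebraic consequence of the definition $A_\al = \al^{-1}(1-s_\al)$ as an operator on $\Fr$, together with the fact that $s_\al$ acts on $X$ by $s_\al\cdot\la = \la - \ip{\alv}{\la}\al$ (equation \eqref{E:reflectionX}) and acts on $S$ and $\Fr$ by ring automorphisms, while $\al^{-1}\in\Fr$ acts by central multiplication.

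First I would establish \eqref{E:AonX}. For $\la\in X$ this is the one-line computation
\[
A_\al\cdot\la = \al^{-1}(\la - s_\al\cdot\la) = \al^{-1}\bigl(\la - (\la - \ip{\alv}{\la}\al)\bigr) = \al^{-1}\,\ip{\alv}{\la}\,\al = \ip{\alv}{\la},
\]
where in the last step I use that $\al^{-1}$ and the integer scalar $\ip{\alv}{\la}$ are central in $\Fr$. In particular $A_\al\cdot\la\in\Z\subset S$. Next I would verify the twisted Leibniz rule \eqref{E:AonSprod}. Since $s_\al$ is a ring homomorphism we have $s_\al\cdot(qq') = (s_\al\cdot q)(s_\al\cdot q')$, and since $\al^{-1}$ is central I would expand the right-hand side as $\al^{-1}(q-s_\al\cdot q)q' + (s_\al\cdot q)\al^{-1}(q'-s_\al\cdot q')$; collecting terms, the two copies of $\al^{-1}(s_\al\cdot q)q'$ cancel, leaving $\al^{-1}\bigl(qq' - (s_\al\cdot q)(s_\al\cdot q')\bigr) = \al^{-1}(qq'-s_\al\cdot(qq')) = A_\al\cdot(qq')$, which is exactly the left-hand side. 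This is a routine manipulation requiring no further input.

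Finally, for the containment $A_\al\cdot S\subset S$ I would argue by induction. The operator $A_\al$ is additive, so it suffices to treat monomials, that is, products $\la_1\la_2\cdots\la_m$ of elements of $X$, and I would induct on $m$. For $m=0$ we have $A_\al\cdot 1 = 0\in S$, and for $m=1$ the claim is \eqref{E:AonX}. For the inductive step I would set $q=\la_1$ and $q'=\la_2\cdots\la_m$ and apply \eqref{E:AonSprod}: here $A_\al\cdot q = \ip{\alv}{\la_1}\in S$, $q'\in S$, $s_\al\cdot q = s_\al\cdot\la_1\in X\subset S$ because $W$ preserves $X$, and $A_\al\cdot q'\in S$ by the inductive hypothesis, so each summand lies in $S$ and hence $A_\al\cdot(qq')\in S$.

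The computations are all elementary; the only point requiring any care is the last one, where a priori $A_\al\cdot q = \al^{-1}(q-s_\al\cdot q)$ lies only in $\Fr$, and one must see that the division by $\al$ produces no genuine denominator. The twisted Leibniz rule \eqref{E:AonSprod} is precisely the device that reduces this to the degree-one case, where the cancellation making $A_\al\cdot\la$ polynomial is manifest. Thus the only ``obstacle,'' such as it is, is recognizing that \eqref{E:AonSprod} propagates polynomiality from the generators $X$ to all of $S=\Sym(X)$; once this is observed the proof is immediate.
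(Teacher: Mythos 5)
Your proof is correct; the paper in fact leaves this lemma to the reader (Exercise \ref{EX:Abasis}), and your argument — the one-line computation for \eqref{E:AonX}, the cancellation verifying the twisted Leibniz rule \eqref{E:AonSprod}, and induction on the degree of monomials to propagate $A_\al\cdot S\subset S$ from the linear case — is exactly the standard argument intended there. You correctly identify the only delicate point, namely that the a priori denominator $\al^{-1}$ disappears, and \eqref{E:AonSprod} is indeed the device that reduces this to the manifest cancellation in degree one.
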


The \textit{nilHecke ring} $\A$ is the subring of $\AQ$ generated by
$S$ and $\A_0$.

\begin{lem} \label{L:Abasis} $\A$ is a free left $S$-module with basis $\{A_w\mid w\in W\}$:
\begin{align}\label{E:ASfree}
  \A = \bigoplus_{w\in W} S A_w.
\end{align}
\end{lem}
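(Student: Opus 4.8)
The plan is to show both inclusions between $\A$ and the free left $S$-module $M := \bigoplus_{w\in W} S A_w$, and then to verify that the displayed sum is genuinely direct. The whole argument rests on a single commutation relation: for $q\in S$ and $i\in I$,
$$ A_i\, q = (s_i\cdot q)\, A_i + (A_i\cdot q). $$
First I would derive this directly from the definition $A_i = \al_i^{-1}(1-s_i)$ in $\AQ$ together with Lemma \ref{L:AonS}. Indeed, expanding $A_i q = \al_i^{-1}\big(q - (s_i\cdot q)s_i\big)$ and comparing with $(s_i\cdot q)A_i + (A_i\cdot q) = \al_i^{-1}\big((s_i\cdot q)(1-s_i) + (q - s_i\cdot q)\big)$ yields the identity after cancellation; note that $A_i\cdot q\in S$ by \eqref{E:AonSprod} and \eqref{E:AonX}, so both sides lie in $\A$. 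The point of the relation is that it lets one push a scalar from the right of $A_i$ to the left at the cost of a correction term that is supported on elements of strictly smaller length in the $A$-basis.

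Next I would establish the closure statement that $A_v\, q \in M$ for every $v\in W$ and $q\in S$, by induction on $\ell(v)$. The base case $v=\id$ is immediate since $A_{\id}=1$. For the inductive step write a reduced decomposition $v = s_i v'$ with $\ell(v)=\ell(v')+1$, so that $A_v = A_i A_{v'}$ by \eqref{E:Aadd}. By induction $A_{v'} q = \sum_u r_u A_u$ with $r_u\in S$, and then the commutation relation gives
$$ A_v q = A_i \sum_u r_u A_u = \sum_u (s_i\cdot r_u)\, A_i A_u + \sum_u (A_i\cdot r_u)\, A_u. $$
Each $A_i A_u$ equals $A_{s_i u}$ or $0$ by \eqref{E:Aadd}, while all coefficients $s_i\cdot r_u$ and $A_i\cdot r_u$ lie in $S$, so $A_v q\in M$. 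Consequently any product $(p_v A_v)(q_w A_w) = p_v\,(A_v q_w)\,A_w$ of two $S$-multiples of basis elements lands again in $M$, whence $M$ is an additive subgroup of $\AQ$ closed under multiplication, i.e.\ a subring.

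Finally I would assemble the two inclusions. Since $M$ is a subring containing $S = S A_{\id}$ and, by Lemma \ref{L:A0basis}, the entire nilCoxeter algebra $\A_0 = \bigoplus_w \Z A_w$, it contains the subring generated by $S$ and $\A_0$, namely $\A$; thus $\A\subseteq M$. The reverse inclusion $M\subseteq \A$ is clear because each $A_w\in\A_0\subseteq\A$ and $S\subseteq\A$, so $S A_w\subseteq\A$. Hence $\A = M = \sum_{w\in W} S A_w$. To see that the sum is direct — and hence that $\A$ is free over $S$ with basis $\{A_w\}$ — I would invoke Lemma \ref{L:ctri}: the $A_w$ form a left $\Fr$-basis of $\AQ$, so they are $\Fr$-linearly independent, and a fortiori $S$-linearly independent since $S\subset\Fr$. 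The only place demanding genuine care is the bookkeeping in the inductive closure step (choosing reduced decompositions and tracking the two kinds of correction terms), but I expect no real obstacle there; the freeness is essentially automatic, being inherited from the $\Fr$-basis property already recorded in Lemma \ref{L:ctri}.
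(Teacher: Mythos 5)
Your proof is correct and follows essentially the same route as the paper: the commutation relation $A_i q = (s_i\cdot q)A_i + (A_i\cdot q)$ derived from Lemma \ref{L:AonS} is used to push scalars to the left (the paper phrases this as commuting the $A_i$ to the right past elements of $S$), giving the spanning statement, and independence is quoted from Lemma \ref{L:ctri}. Your version merely makes the induction on $\ell(v)$ and the closure of $\bigoplus_w S A_w$ under multiplication explicit, which the paper leaves implicit.
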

\begin{proof}
By \eqref{E:AonSprod} we have the commutation relation in $\A$:
\begin{align}\label{E:As}
  A_\al\, q = (A_\al \cdot q)A_{\id} + (s_\al \cdot q) A_\al\qquad\text{for all $\al\in\Phi_\re$ and $q\in S$.}
\end{align}
This relation may be used to commute all $A_i$ to the right past elements of $S$.
This shows that $\{A_w\mid w\in W\}$ generates $\A$ as a left $S$-module.
By Lemma \ref{L:ctri} the $A_w$ are independent.
\end{proof}

Let $v\lessdot w$ indicate a Bruhat covering relation: $v<w$ and
$\ell(w)=\ell(v)+1$. In this case there is a unique $\alpha\in \Phi_\re^+$
such that $w = v s_\alpha$.

\begin{prop} \label{P:ALBcomm} For any $v\in W$ and $\la\in X$, we
have
\begin{align}\label{E:ALBcomm}
A_v \la = (v\cdot \la) A_v + \sum_{\substack{\al\in \Phi^+_\re\\ v s_\al
\lessdot v}} \ip{\al^\vee}{\la}\, A_{v s_\al}.
\end{align}
\end{prop}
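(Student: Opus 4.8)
The plan is to prove \eqref{E:ALBcomm} by induction on $\ell(v)$, the engine being the single commutation relation \eqref{E:As} together with \eqref{E:AonX}. The base case $v=\id$ is immediate: then $A_v\la=\la A_{\id}=(v\cdot\la)A_v$ and the sum is empty. For the inductive step I would fix $i\in I$ with $v=s_iu$ and $\ell(v)=\ell(u)+1$, so that $A_v=A_iA_u$. Applying the inductive hypothesis to push $\la$ past $A_u$ gives
$$A_v\la = A_i\left((u\cdot\la)\,A_u + \sum_{\beta\in\Phi_\re^+,\ us_\beta\lessdot u}\ip{\beta^\vee}{\la}\,A_{us_\beta}\right),$$
where each coefficient $\ip{\beta^\vee}{\la}\in\Z$ is central. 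Expanding $A_i(u\cdot\la)=\ip{\alv_i}{u\cdot\la}A_{\id}+(s_i\cdot(u\cdot\la))A_i$ by \eqref{E:As} and \eqref{E:AonX} produces the main term $(s_iu\cdot\la)A_iA_u=(v\cdot\la)A_v$ together with one extra term $\ip{\alv_i}{u\cdot\la}A_u$.

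Next I would identify all the surviving terms explicitly. Set $\gamma=u^{-1}\cdot\al_i$; since $\ell(s_iu)>\ell(u)$ this is a positive real root, its associated coroot is $\gamma^\vee=u^{-1}\cdot\alv_i$ by \eqref{E:Weylassoccoroot}, and $vs_\gamma=s_iu\cdot u^{-1}s_iu=u$. By the $W$-invariance \eqref{E:Winvariantpairing} of the pairing, $\ip{\alv_i}{u\cdot\la}=\ip{\gamma^\vee}{\la}$, so the extra term is $\ip{\gamma^\vee}{\la}A_{vs_\gamma}$, i.e. exactly the $\al=\gamma$ summand. For the remaining sum, each product $A_iA_{us_\beta}$ is evaluated by \eqref{E:Aadd}: since $\ell(us_\beta)=\ell(u)-1$, it equals $A_{vs_\beta}$ precisely when $\ell(vs_\beta)=\ell(v)-1$, i.e. $vs_\beta\lessdot v$, and vanishes otherwise. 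Thus
$$A_v\la = (v\cdot\la)A_v + \ip{\gamma^\vee}{\la}A_{vs_\gamma} + \sum_{\substack{\beta\in\Phi_\re^+:\ us_\beta\lessdot u\\ vs_\beta\lessdot v}}\ip{\beta^\vee}{\la}A_{vs_\beta}.$$

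It remains to show that the index set $\{\gamma\}\cup\{\beta\in\Phi_\re^+ : us_\beta\lessdot u,\ vs_\beta\lessdot v\}$ is exactly $\{\al\in\Phi_\re^+ : vs_\al\lessdot v\}$; this is the crux. The inclusion $\subseteq$ is clear, and $\gamma$ is genuinely new since $us_\gamma=v>u$ forbids $us_\gamma\lessdot u$, so the union is disjoint. For the reverse inclusion I must verify that every covering reflection $\al$ of $v$ with $\al\neq\gamma$ satisfies $us_\al\lessdot u$. Now $vs_\al\lessdot v$ forces $\ell(us_\al)\in\{\ell(u)-1,\ell(u)+1\}$, and I would exclude $\ell(us_\al)=\ell(u)+1$: in that case $w:=us_\al$ covers $u$, while $s_iw=vs_\al$ has length $\ell(u)<\ell(w)$, so $s_i$ is a left descent of $w$, yet $s_iu=v>u$ makes $s_i$ a left ascent of $u$. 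The lifting property of the Bruhat order then forces $u=s_iw=vs_\al$, whence $s_\al=v^{-1}u=u^{-1}s_iu=s_\gamma$ and (reflections being in bijection with positive real roots) $\al=\gamma$, a contradiction. Hence $\ell(us_\al)=\ell(u)-1$ and $us_\al\lessdot u$, the two index sets agree, and the induction closes.

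The main obstacle is precisely this last combinatorial matching: a priori one might worry that some covering reflection of $v$ fails to descend to a covering reflection of $u$ and is therefore annihilated by \eqref{E:Aadd}, which would break the formula. The lifting property of Bruhat order—standard for Coxeter groups and derivable from the exchange condition—is exactly what rules this out, and it is the only non-formal ingredient beyond the defining relations of $\A$ and the $W$-invariance of the root/weight pairing.
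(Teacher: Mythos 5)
Your proof is correct. The paper does not actually supply a proof of this proposition — it is left to the reader as Exercise \ref{EX:Abasis} — and your argument (induction on $\ell(v)$ driven by the single commutation \eqref{E:As} together with \eqref{E:AonX} and \eqref{E:Aadd}) is the standard one, essentially that of Kostant--Kumar for their Proposition 4.30. The only delicate point, showing that every $\al$ with $vs_\al\lessdot v$ other than $\gamma=u^{-1}\cdot\al_i$ satisfies $us_\al\lessdot u$ so that no terms of the claimed sum are lost to the vanishing in \eqref{E:Aadd}, is exactly where the Bruhat lifting property is needed, and you have applied it correctly.
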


\begin{exercise} \label{EX:Abasis} Prove Lemmas \ref{L:ctri} and \ref{L:AonS}
and Proposition \ref{P:ALBcomm}.
\end{exercise}

We have
\begin{align}
\label{E:stoA}
s_i = 1 - \al_i A_i \qquad\text{for all $i\in I$}
\end{align}
Therefore
\begin{align}\label{E:WinA}
  W \subset \A
\end{align}
By Lemma \ref{L:Abasis} there exist unique elements $d_{vw}\in S$ such that
\begin{align}\label{E:dmatrix}
  w = \sum_{v\in W}  d_{vw} A_v.
\end{align}

By \eqref{E:cmatrixsupport} it follows that
\begin{align}\label{E:dmatrixsupport}
  d_{vw} = 0 \qquad\text{unless $v\le w$.}
\end{align}

\begin{lem} \label{L:drec}
The $d_{v,w}$ are uniquely defined by:
\begin{enumerate}
\item If $w=\id$ then
\begin{align}\label{E:didentity}
  d_{v,\id} = \delta_{v,\id}.
\end{align}
\item Otherwise let $i\in I$ be such that $ws_i<w$. Then
\begin{align}\label{E:drec}
  d_{v,w} = d_{v,ws_i} + \chi(vs_i<v) (w\cdot \al_i) d_{vs_i,ws_i}.
\end{align}
where
\begin{align}
\label{E:chidef}
\chi(\text{true}) &= 1 \\
\notag
\chi(\text{false}) &= 0
\end{align}
\end{enumerate}
\end{lem}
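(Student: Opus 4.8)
The plan is to prove the base case directly and then establish the recursion \eqref{E:drec} by a single computation inside the nilHecke ring, with uniqueness following by induction on length. First I would recall that the coefficients $d_{v,w}\in S$ are well-defined precisely because $\{A_w\mid w\in W\}$ is a left $S$-basis of $\A$ (Lemma \ref{L:Abasis}), and that $W\subset\A$ via \eqref{E:stoA}. The base case \eqref{E:didentity} is then immediate: since $A_\id=1=\id$ in $\A$, the expansion of $\id$ in the $A_v$-basis has $d_{v,\id}=\delta_{v,\id}$.

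For the inductive step, fix $i\in I$ with $ws_i<w$, so that $\ell(ws_i)=\ell(w)-1$, and write $w=(ws_i)\,s_i$. Substituting $s_i=1-\al_i A_i$ from \eqref{E:stoA} gives $w=ws_i-(ws_i)\,\al_i\,A_i$. The next step is to move the scalar $\al_i$ to the left of $ws_i$ using the smash-product multiplication \eqref{E:QWmult} in $\AQ$, which yields $(ws_i)\,\al_i=(ws_i\cdot\al_i)\,(ws_i)$; since $s_i\cdot\al_i=\al_i-\ip{\alv_i}{\al_i}\al_i=-\al_i$ by \eqref{E:WonX} and \eqref{E:Cartanmatrix}, I get $ws_i\cdot\al_i=w\cdot(s_i\cdot\al_i)=-\,w\cdot\al_i$, and hence $w=ws_i+(w\cdot\al_i)\,(ws_i)\,A_i$.

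Now I would expand $ws_i=\sum_v d_{v,ws_i}A_v$ and apply the multiplication rule \eqref{E:Aadd}: $A_vA_i=A_{vs_i}$ when $vs_i>v$ and $A_vA_i=0$ otherwise. Reindexing the surviving terms by $u=vs_i$ (so $v=us_i$, and the condition $vs_i>v$ becomes $us_i<u$) turns the second sum into $(w\cdot\al_i)\sum_{u:\,us_i<u}d_{us_i,ws_i}\,A_u$. Collecting the coefficient of $A_v$ in $w=\sum_v d_{v,ws_i}A_v+(w\cdot\al_i)\sum_{u:\,us_i<u}d_{us_i,ws_i}A_u$ gives exactly \eqref{E:drec}. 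Finally, uniqueness is an immediate induction on $\ell(w)$, since the recursion expresses $d_{v,w}$ entirely in terms of the $d_{\cdot,ws_i}$ for the strictly shorter element $ws_i$. I expect no genuine difficulty here, only bookkeeping: the two points requiring care are getting the sign in $ws_i\cdot\al_i=-\,w\cdot\al_i$ right and handling the reindexing $u=vs_i$ so that the descent condition $\chi(vs_i<v)$ emerges as stated. As a bonus, the fact that the genuine coefficients $d_{v,w}$ satisfy the recursion for \emph{every} admissible choice of $i$ shows that the recursion is self-consistent.
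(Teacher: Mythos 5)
Your proof is correct and follows the paper's argument essentially verbatim: the paper also writes $w=(ws_i)s_i=(ws_i)(1-\al_i A_i)=ws_i+(w\cdot\al_i)(ws_i)A_i$ and then extracts the coefficient of $A_v$ using \eqref{E:Aadd}. The only difference is that you spell out the reindexing $u=vs_i$ and the sign computation $s_i\cdot\al_i=-\al_i$ explicitly, which the paper leaves to the reader.
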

\begin{proof} The uniqueness holds by induction on $w$ and then on $v$.
Equation \eqref{E:didentity} holds since $\id_W = A_{\id}$. We have
\begin{equation}\label{E:wrec}
\begin{split}
  w &= (ws_i) s_i \\
  &= (ws_i) (1 - \al_i A_i) \\
  &= ws_i - ws_i \al_i A_i \\
  &= ws_i + (w\cdot \al_i) w s_i A_i.
\end{split}
\end{equation}
Taking the coefficient of $A_v$ on both sides and using \eqref{E:Aadd}
we obtain \eqref{E:drec}.
\end{proof}

\begin{prop}\label{P:dmatrix} Let $w=s_{i_1}\dotsm s_{i_\ell}$ be a reduced decomposition.
We have
\begin{align}\label{E:dexpand}
 (-1)^{\ell(v)} d_{v,w} =  \sum_{\substack{(b_1,\dotsc,b_\ell)\in\{0,1\}^\ell \\ \prod_{j=1}^\ell A_{i_j}^{b_j} = A_v}}
   \left(\prod_{j=1}^\ell \al_{i_j}^{b_j} s_{i_j}\right) \cdot 1.
\end{align}
The parenthesized expression is viewed as an element of $\AQ$ acting on $1\in \Fr$
and factors in the products occur in order from left to right as the index $j$ increases.
\end{prop}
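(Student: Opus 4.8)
The plan is to prove the identity by induction on $\ell(w)=\ell$, using the recursive characterization of the coefficients $d_{v,w}$ furnished by Lemma \ref{L:drec}. Fix the reduced word $w=s_{i_1}\dotsm s_{i_\ell}$ and write $R_{v,w}$ for the right-hand side of \eqref{E:dexpand}, so the goal becomes $R_{v,w}=(-1)^{\ell(v)}d_{v,w}$; equivalently, setting $\tilde R_{v,w}=(-1)^{\ell(v)}R_{v,w}$, I want $\tilde R_{v,w}=d_{v,w}$. Since Lemma \ref{L:drec} determines $d_{v,w}$ uniquely from the base case \eqref{E:didentity} together with the recursion \eqref{E:drec} for any choice of right descent $i$, it suffices to check that $\tilde R_{v,w}$ obeys the same base case and the recursion with the particular choice $i=i_\ell$. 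The base case $w=\id$ is immediate: the only index tuple is the empty one, which forces $v=\id$, and the empty product acts on $1$ to give $1$, so $\tilde R_{v,\id}=\delta_{v,\id}=d_{v,\id}$.

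The key step will be a computation inside $\AQ$. Writing $w'=s_{i_1}\dotsm s_{i_{\ell-1}}=ws_{i_\ell}$ and $i=i_\ell$, I split each tuple $(b_1,\dotsc,b_\ell)$ as $(b',b_\ell)$ with $b'=(b_1,\dotsc,b_{\ell-1})$. For fixed $b'$, every factor of the prefix product $\prod_{j=1}^{\ell-1}\alpha_{i_j}^{b_j}s_{i_j}$ contributes the group element $s_{i_j}$, so by the multiplication rule \eqref{E:QWmult} this prefix equals $P_{b'}\,w'$ for a scalar $P_{b'}\in S$ (an explicit product of $W$-translates of roots) depending only on $b'$. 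Consequently, since $w'\cdot 1=1$ and $s_{i_\ell}\cdot 1=1$, the prefix applied to $1$ returns $P_{b'}$, while the prefix applied to $\alpha_{i_\ell}$ returns $P_{b'}(w'\cdot\alpha_{i_\ell})=(w'\cdot\alpha_{i_\ell})\bigl(P_{b'}\cdot 1\bigr)$. This is what lets me extract the new letter cleanly without pushing any commutation relation through $\A$.

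Now I would sum over $(b',b_\ell)$. The terms with $b_\ell=0$ satisfy $\prod_j A_{i_j}^{b_j}=\prod_{j<\ell}A_{i_j}^{b'_j}$ and act on $1$ exactly as for the truncated word, contributing $R_{v,ws_{i_\ell}}$. The terms with $b_\ell=1$ require $\prod_{j<\ell}A_{i_j}^{b'_j}\,A_{i_\ell}=A_v$, which by \eqref{E:Aadd} forces $v=u's_{i_\ell}$ with $u's_{i_\ell}>u'$, i.e.\ $u'=vs_{i_\ell}$ and $vs_{i_\ell}<v$; using the $\AQ$ computation these contribute $\chi(vs_{i_\ell}<v)\,(w'\cdot\alpha_{i_\ell})\,R_{vs_{i_\ell},ws_{i_\ell}}$. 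Hence
\begin{equation*}
R_{v,w}=R_{v,ws_{i_\ell}}+\chi(vs_{i_\ell}<v)\,(w'\cdot\alpha_{i_\ell})\,R_{vs_{i_\ell},ws_{i_\ell}}.
\end{equation*}
Finally I would insert $w'\cdot\alpha_{i_\ell}=-(w\cdot\alpha_{i_\ell})$, which follows from $s_{i_\ell}\cdot\alpha_{i_\ell}=-\alpha_{i_\ell}$, and multiply through by signs: when $vs_{i_\ell}<v$ one has $(-1)^{\ell(v)}=-(-1)^{\ell(vs_{i_\ell})}$, so the two minus signs cancel and I obtain $\tilde R_{v,w}=\tilde R_{v,ws_{i_\ell}}+\chi(vs_{i_\ell}<v)(w\cdot\alpha_{i_\ell})\tilde R_{vs_{i_\ell},ws_{i_\ell}}$, which is exactly \eqref{E:drec}.

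The main obstacle is the bookkeeping in the key $\AQ$ step together with the sign matching: one must confirm both that the prefix product collapses to $(\text{scalar})\cdot w'$ and that the descent condition $\chi(vs_{i_\ell}<v)$ produced by \eqref{E:Aadd} coincides with the one appearing in \eqref{E:drec}. I expect no difficulty in the induction itself once the recursion for $R_{v,w}$ is in hand, since uniqueness in Lemma \ref{L:drec} then closes the argument; the only care needed is to observe that subwords collapsing to $0$ in $\A_0$ are excluded from both sides simultaneously, so that the supports (the $v$ with $v\le w$) agree.
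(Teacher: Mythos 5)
Your argument is correct and is exactly the intended route: the paper proves Lemma \ref{L:drec} immediately before this proposition and leaves the proposition as an exercise, the solution being precisely to check that the right-hand side of \eqref{E:dexpand} satisfies the base case \eqref{E:didentity} and the recursion \eqref{E:drec} for $i=i_\ell$, which is what you do. Your key computations — the collapse of the prefix to $P_{b'}w'$ via \eqref{E:QWmult}, the descent condition extracted from \eqref{E:Aadd}, the sign flip $w s_{i_\ell}\cdot\al_{i_\ell}=-(w\cdot\al_{i_\ell})$, and the cancellation against $(-1)^{\ell(v)}=-(-1)^{\ell(vs_{i_\ell})}$ — all check out.
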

Note that the sum runs over subexpressions of the given reduced decomposition of $w$,
which are reduced decompositions of $v$.

\begin{example}
For type $A_2$ (symmetric group $S_3$)
let $v=s_1$ and $w=s_1s_2s_1$. The reduced decomposition $s_1s_2s_1$
has two embedded copies of $s_1$ corresponding to the bit sequences
$(b_1,b_2,b_3)$ given by $(1,0,0)$ and $(0,0,1)$.
Therefore \eqref{E:dexpand} has two summands:
\begin{align*}
  (-1)^{\ell(s_1)} d_{s_1,s_1s_2s_1} &= ((\al_1 s_1)s_2s_1+s_1s_2(\al_1s_1))\cdot1 \\
  &= \al_1 + s_1s_2\al_1 = \al_1+\al_2.
\end{align*}
Here is a table of the values of $(-1)^{\ell(v)}d_{v,w}$ for all $v,w\in S_3$.
\begin{align*}
\begin{array}{|c||c|c|c|c|c|c|} \hline
v\setminus w & 123 & 132 & 213 & 231 & 312 & 321 \\ \hline \hline
123 & 1 & 1 & 1 & 1 & 1 & 1 \\ \hline
132 & 0 & \al_2 & 0 & \al_1+\al_2&\al_2&\al_1+\al_2 \\ \hline
213 & 0&0&\al_1&\al_1&\al_1+\al_2&\al_1+\al_2 \\ \hline
231 & 0 & 0 & 0 & \al_1(\al_1+\al_2) & 0 & \al_1(\al_1+\al_2) \\ \hline
312 & 0 & 0 & 0 & 0 & \al_2(\al_1+\al_2)&\al_2(\al_1+\al_2) \\ \hline
321 & 0&0&0&0&0&\al_1\al_2(\al_1+\al_2) \\ \hline
\end{array}
\end{align*}
\end{example}

\begin{exercise} Prove Proposition \ref{P:dmatrix}.
\end{exercise}

\begin{cor} \label{C:dpos} We have $(-1)^{\ell(v)}d_{v,w}\in\Z_{\ge0}[\al_i\mid i\in I]$.
\end{cor}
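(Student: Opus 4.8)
The plan is to read off the result directly from the explicit expansion in Proposition \ref{P:dmatrix}, after simplifying each summand into a product of positive roots. Fix the reduced decomposition $w = s_{i_1}\dotsm s_{i_\ell}$ used there, and fix a subexpression $(b_1,\dots,b_\ell)\in\{0,1\}^\ell$ with $\prod_{j=1}^\ell A_{i_j}^{b_j}=A_v$; its contribution is the operator $\prod_{j=1}^\ell \al_{i_j}^{b_j}s_{i_j}$ applied to $1\in\Fr$. First I would rewrite this operator inside $\AQ$ by commuting every scalar factor to the left, using the relation $s_i\,q=(s_i\cdot q)\,s_i$ which holds in $\AQ$ by \eqref{E:QWmult}. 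Moving each $\al_{i_j}^{b_j}$ past the reflections $s_{i_1}\dotsm s_{i_{j-1}}$ standing to its left converts it into $(s_{i_1}\dotsm s_{i_{j-1}}\cdot\al_{i_j})^{b_j}$, so the operator becomes $\bigl(\prod_{j=1}^\ell \beta_j^{b_j}\bigr)\,w$, where $\beta_j := s_{i_1}\dotsm s_{i_{j-1}}\cdot\al_{i_j}$ (with $\beta_1=\al_{i_1}$). Applying this to $1$ and using $w\cdot 1=1$ yields the clean identity that the summand equals $\prod_{j\,:\,b_j=1}\beta_j$.

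The second step is to show that every $\beta_j$ is a positive real root. Here I would invoke Lemma \ref{L:inversionset}: reversing the reduced word $i_1\cdots i_\ell$ gives a reduced word for $w^{-1}=s_{i_\ell}\dotsm s_{i_1}$, and applying the lemma to this reduced decomposition of $w^{-1}$ identifies the collection $\{\beta_1,\dots,\beta_\ell\}$ with $\Inv(w^{-1})\subseteq\Phi_\re^+$. Thus each $\beta_j\in\Phi_\re^+$, and by the definition of positive real roots each lies in $\bigoplus_{i\in I}\Z_{\ge0}\al_i$. Consequently each summand $\prod_{j:b_j=1}\beta_j$ is a product of nonnegative-integer combinations of simple roots, hence an element of $\Z_{\ge0}[\al_i\mid i\in I]$. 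Since Proposition \ref{P:dmatrix} writes $(-1)^{\ell(v)}d_{v,w}$ as a plain sum of such summands, each occurring with coefficient $1$, the total lies in $\Z_{\ge0}[\al_i\mid i\in I]$, which is the claim.

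This argument is essentially bookkeeping, so I do not anticipate a deep obstacle; the only place demanding genuine care is reconciling the two ordering conventions. Specifically, I must make sure the left-to-right product $w=s_{i_1}\dotsm s_{i_\ell}$ of Proposition \ref{P:dmatrix} is correctly matched with the right-to-left convention $w=s_{i_\ell}\dotsm s_{i_1}$ of Lemma \ref{L:inversionset} by passing to $w^{-1}$, and that the scalar-commutation step accumulates precisely the roots $\beta_j$ indexed in increasing $j$. As a sanity check I would run the whole chain on the worked $S_3$ example following Proposition \ref{P:dmatrix}: the subexpressions $(1,0,0)$ and $(0,0,1)$ of $s_1s_2s_1$ produce $\beta=\al_1$ and $\beta=s_1s_2\cdot\al_1=\al_2$ respectively, summing to $\al_1+\al_2$, in agreement with the tabulated value $(-1)^{\ell(s_1)}d_{s_1,s_1s_2s_1}=\al_1+\al_2$.
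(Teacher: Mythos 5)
Your argument is correct and is exactly the route the paper intends: Corollary \ref{C:dpos} is stated as an immediate consequence of Proposition \ref{P:dmatrix}, and you have supplied the implicit details — commuting the scalars leftward to rewrite each summand as $\prod_{j:b_j=1}\beta_j$ with $\beta_j=s_{i_1}\dotsm s_{i_{j-1}}\cdot\al_{i_j}$, and identifying $\{\beta_1,\dots,\beta_\ell\}$ with $\Inv(w^{-1})\subset\Phi_\re^+$ via Lemma \ref{L:inversionset}. Your handling of the two ordering conventions (passing to $w^{-1}=s_{i_\ell}\dotsm s_{i_1}$) and your sanity check against the $S_3$ table are both right.
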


\subsection{Coproduct on $\A$}
\label{sec:coproduct}
The nilHecke ring $\A$ has a coproduct structure.
This discussion follows Peterson \cite{Pet} and Kostant and Kumar \cite{KK}.

We first define the coproduct on $\AQ$ over $\Fr$.

Let $M$ and $N$ be left $\AQ$-modules.
Then $M$, $N$, $M\otimes_\Fr N$, and $\Hom_\Fr(M,N)$ are left $\Fr$-modules.
We define $\AQ$-module structures on $M\otimes_\Fr N$ and $\Hom_\Fr(M,N)$.

Recall that $M\otimes_\Fr N$ has relations of left $\Fr$-linearity in each
factor, along with the relation
\begin{align}\label{E:ftimes}
  q\cdot m\otimes n = m \otimes q\cdot n
\end{align}
for all $q\in \Fr$, $m\in M$, and $n\in N$.

Let $\Delta:\AQ\to\AQ\otimes_\Fr\AQ$ be the left $\Fr$-linear map defined by
\begin{align}\label{E:DeltaQdef}
  \Delta(w) = w \otimes w \qquad\text{for all $w\in W$}.
\end{align}
Clearly $\Delta$ has image $\AQ'$, the left $\Fr$-subspace of $\AQ\otimes_\Fr \AQ$ defined by
\begin{align*}
  \AQ' = \bigoplus_{w\in W} \Fr w \otimes w.
\end{align*}
It is obvious that $\AQ'$ is isomorphic to $\AQ$
as a ring under the obvious componentwise product given by $(pv\otimes v)(qw\otimes w)=pvqw\otimes vw=
p(v\cdot q) vw \otimes vw$.

The proof of the following result appears in Appendix \ref{A:coalg}.

\begin{prop} \label{P:welldefined}\
\begin{enumerate}
\item For any expressions of $\Delta(a)$ and $\Delta(b)$ in $\AQ\otimes_\Fr\AQ$
of the form
\begin{align*}
  \Delta(a) &= \sum_{(a)} a_{(1)} \otimes a_{(2)} \\
  \Delta(b) &= \sum_{(b)} b_{(1)} \otimes b_{(2)}
\end{align*}
with $a_{(1)},a_{(2)},b_{(1)},b_{(2)}\in \AQ$ (but it is not assumed that the individual
summands $a_{(1)} \otimes a_{(2)}$ or $b_{(1)} \otimes b_{(2)}$ are in $\AQ'$),
the product of $\Delta(a)$ and $\Delta(b)$
in $\AQ'$ can be computed by the naive componentwise product
\begin{align}\label{E:compwise}
  \Delta(ab) = \Delta(a)\Delta(b) = \sum_{(a),(b)} a_{(1)} b_{(1)} \otimes a_{(2)} b_{(2)}.
\end{align}
\item
Suppose $M$ and $N$ are $\AQ$-modules. With $a\in \AQ$ and $\Delta(a)$ as above,
$\AQ$ acts on $M\otimes_\Fr N$ by the componentwise action of $\Delta(a)$ on $m\otimes n$:
\begin{align}\label{E:ontensor}
  a\cdot (m\otimes n) &= \sum_{(a)} a_{(1)} \cdot m \otimes a_{(2)} \cdot n
\end{align}
\item 
$\AQ$ acts on $\Hom_\Fr(M,N)$ by 
\begin{align}\label{E:onhom}
  (a \cdot f)(m) = \sum_{(a)} a_{(2)} \cdot f(a_{(1)}^t \cdot m)
\end{align}
where $a\mapsto a^t$ is the left $\Fr$-linear automorphism of $\AQ$ given by
$w\mapsto w^{-1}$.
\end{enumerate}
\end{prop}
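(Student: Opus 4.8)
The plan is to deduce all three parts from a single principle: the universal property of the balanced tensor $\AQ\otimes_\Fr\AQ$. Concretely, any $\Z$-bilinear map $B\colon \AQ\times\AQ\to V$ into an abelian group that is \emph{left-$\Fr$-balanced}, meaning $B(qu,v)=B(u,qv)$ for all $q\in\Fr$ and $u,v\in\AQ$, factors uniquely through $\AQ\otimes_\Fr\AQ$; hence such a $B$ may be evaluated on any expansion $\sum a_{(1)}\otimes a_{(2)}$ of a fixed element, and the value is independent of the expansion. Alongside this I would record the \emph{straightening lemma}: since $\AQ$ is free over $\Fr$ with basis $W$ (Lemma \ref{L:ctri}), the set $\{v\otimes w\}$ is an $\Fr$-basis of $\AQ\otimes_\Fr\AQ$, and by $\Fr$-linearity of $\Delta$ together with \eqref{E:DeltaQdef} every $\Delta(a)$ has the canonical diagonal form $\Delta(a)=\sum_{v\in W} a_v\,(v\otimes v)$ when $a=\sum_v a_v v$.

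First I would prove $(2)$. Fixing an $\AQ$-module $M$, $N$ and $m\in M$, $n\in N$, the map $B(u,v)=um\otimes vn\in M\otimes_\Fr N$ is left-$\Fr$-balanced: $B(qu,v)=(qu)m\otimes vn=q(um)\otimes vn=um\otimes q(vn)=um\otimes (qv)n=B(u,qv)$, using that $\Fr\subset\AQ$ acts and the balancing relation \eqref{E:ftimes}. Thus $a\cdot(m\otimes n):=\sum_{(a)}a_{(1)}m\otimes a_{(2)}n$ as in \eqref{E:ontensor} is well defined, and on the canonical form it reads $\sum_v a_v(vm\otimes vn)$. By $\Fr$-linearity the module axiom $(ab)\cdot(m\otimes n)=a\cdot(b\cdot(m\otimes n))$ reduces to the case $a=w$, $b=w'\in W$, where it is the transparent identity $ww'm\otimes ww'n=w\cdot(w'm\otimes w'n)$; this simultaneously yields the Weyl-equivariance $w\cdot(m\otimes n)=wm\otimes wn$.

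The heart of the argument is $(1)$, and the main obstacle is that the componentwise product of \eqref{E:compwise} does \emph{not} descend to all of $\AQ\otimes_\Fr\AQ$: the relation \eqref{E:ftimes} shifts a scalar from the left of the first factor to the left of the second, and the componentwise product is sensitive to this unless one of the factors is diagonal. I would exploit an asymmetry. Holding any expansion of $\Delta(b)$ fixed, the map $(u,v)\mapsto \sum_{(b)} ub_{(1)}\otimes vb_{(2)}$ is left-$\Fr$-balanced, because the scalar $q$ stays at the extreme left of the first factor throughout, so that $q(ub_{(1)})\otimes vb_{(2)}=ub_{(1)}\otimes q(vb_{(2)})$; hence the product is independent of the $\Delta(a)$-expansion and I may replace $\Delta(a)$ by its canonical diagonal form. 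With $\Delta(a)=\sum_x a_x\,x\otimes x$ now diagonal, the companion map $(u,v)\mapsto\sum_x a_x\,xu\otimes xv$ becomes left-$\Fr$-balanced as well: moving a scalar produces the factor $x\cdot q$, which is the same on both tensor slots precisely because the same $x$ occurs in both factors of the diagonal $\Delta(a)$, so the shift is absorbed identically on each side. This uses $\Delta(a)\in\AQ'$ in an essential way. Thus the product is also independent of the $\Delta(b)$-expansion, and I reduce to both factors canonical, where a direct computation against $ab=\sum_{v,w}a_v(v\cdot b_w)\,vw$ (via \eqref{E:QWmult}) produces $\Delta(ab)$. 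The delicate point is the \emph{order} of canonicalization: one must straighten $a$ first (valid for every expansion of $\Delta(b)$) and only then straighten $b$ (valid once $a$ is diagonal).

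Finally I would treat $(3)$ by the same mechanism. For fixed $m$, one sets $B(u,v)=v\cdot f(u^t\cdot m)\in N$ and must verify that $B$ is left-$\Fr$-balanced so that $(a\cdot f)(m)=\sum_{(a)}a_{(2)}\cdot f(a_{(1)}^t\cdot m)$ as in \eqref{E:onhom} is independent of the expansion of $\Delta(a)$; here the computation hinges on the precise behaviour of the transpose $t$ on scalars (it is the $\Fr$-fixing anti-automorphism determined by $w\mapsto w^{-1}$) combined with the $\Fr$-linearity of $f$, so that scalars passing through $t$ and $f$ recombine correctly with the balancing relation. I expect this balancing check, and pinning down the left/right conventions for $t$, to be the fiddly step. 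Once well-definedness is secured, the module axioms reduce via \eqref{E:DeltaQdef} to $a=w\in W$, where the formula is the standard contragredient-type action $(w\cdot f)(m)=w\cdot f(w^{-1}m)$, which one checks again lands in $\Hom_\Fr(M,N)$; the associativity $(ab)\cdot f=a\cdot(b\cdot f)$ then follows from the anti-automorphism property $(uv)^t=v^tu^t$ together with part $(1)$ in the form $\Delta(ab)=\Delta(a)\Delta(b)$.
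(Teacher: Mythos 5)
Your part (1) is correct: the two-step canonicalization (first straightening $\Delta(a)$, which is legitimate for an arbitrary expansion of $\Delta(b)$ because the scalar stays at the far left of the first tensor slot, and only then straightening $\Delta(b)$, using the diagonality of $\Delta(a)$) is exactly the right mechanism, and it is the same diagonal-absorption computation that underlies the paper's proof, which instead obtains (1) as the special case $M=N=\AQ$ of the module statement. In part (2), however, you have proved only half of what is needed, and it is the easier half. Your balancing check for $B(u,v)=um\otimes vn$ shows that, for a \emph{fixed pair} $(m,n)$, the value $\sum_{(a)}a_{(1)}m\otimes a_{(2)}n$ does not depend on the chosen expansion of $\Delta(a)$ --- but this holds with \emph{any} element of $\AQ\otimes_\Fr\AQ$ in place of $\Delta(a)$ and never uses $\Delta(a)\in\AQ'$. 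What is missing is the verification that the assignment $(m,n)\mapsto\sum_{(a)}a_{(1)}\cdot m\otimes a_{(2)}\cdot n$ descends to $M\otimes_\Fr N$, i.e.\ that it annihilates the generators $qm\otimes n-m\otimes qn$ of the defining relations. That is precisely where diagonality enters: in canonical form one has $\sum_v a_v\,(v(qm)\otimes vn)=\sum_v a_v(v\cdot q)(vm\otimes vn)=\sum_v a_v\,(vm\otimes v(qn))$ only because the \emph{same} $v$ acts in both slots. This check is the entire content of the paper's proof of (2); fortunately it is word-for-word the computation you already carry out for the ``companion map'' in part (1), run with $(m,n)$ in place of $(u,v)$.

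Part (3) as planned does not go through. With the stated convention that $a\mapsto a^t$ is left $\Fr$-linear, the map $B(u,v)=v\cdot f(u^t\cdot m)$ is \emph{not} left-$\Fr$-balanced: since $f$ is left $\Fr$-linear one finds $B(qu,v)=v\cdot\left(q\, f(u^t\cdot m)\right)=(v\cdot q)\left(v\cdot f(u^t\cdot m)\right)$, while $B(u,qv)=q\left(v\cdot f(u^t\cdot m)\right)$, and these differ by the twist $v\cdot q$ versus $q$. So expansion-independence of \eqref{E:onhom} cannot be obtained by your universal-property argument, and indeed the right-hand side of \eqref{E:onhom} is sensitive to which slot the scalars are placed in if arbitrary expansions are allowed. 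The paper avoids the issue entirely: it transports the part-(2) action on $M^*\otimes_\Fr N$ through the isomorphism $m^*\otimes n\mapsto(x\mapsto m^*(x)n)$ and verifies the resulting formula only on the $\Fr$-basis $W$ of $\AQ$, where it reads $(w\cdot f)(m)=w\cdot f(w^{-1}\cdot m)$; the action of a general $a$ is then determined by left $\Fr$-linearity in $a$, not by a balancing argument. You should adopt that route, reading \eqref{E:onhom} via the canonical diagonal expansion of $\Delta(a)$; note also that your appeal to $(uv)^t=v^tu^t$ is only valid on $W$ itself, since the left $\Fr$-linear extension of $w\mapsto w^{-1}$ is not an anti-automorphism of all of $\AQ$.
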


\begin{remark} Note that in the expression $\Delta(a) = \sum_{(a)} a_{(1)} \otimes a_{(2)}$,
it may be that a particular summand $a_{(1)} \otimes a_{(2)}$ is NOT in $\AQ'$. 
An example is $\Delta(A_i) = A_i \otimes 1 + s_i \otimes A_i$, where neither of the
summands $A_i\otimes 1$ and $s_i \otimes A_i$, is in $\AQ'$. For such individual
summands the componentwise action is ill-defined; see the following paragraph.
Nevertheless if the ill-defined componentwise
action is applied to each summand of an element of $\AQ'$ and the results are added, 
the end result yields a well-defined action.

The naive componentwise product structure, which is well-defined on $\AQ'\subset\AQ\otimes_\Fr \AQ$,
does not extend to $\AQ \otimes_\Fr \AQ$. Take $q\in \Fr$ and $u,v\in W$. We recall that
$q \id\otimes w - \id \otimes qw=0$. Suppose the componentwise product was well-defined on $\AQ\otimes_\Fr\AQ$.
Then we would have
\begin{align*}
  0 &= (u\otimes \id) \cdot (q\id\otimes w - \id\otimes qw) \\
  &\overset{?}{=} uq\id \otimes w - u\otimes \id q w \\
  &= (u\cdot q) u\otimes w - q u \otimes w \\
  &= (-q + u \cdot q ) u\otimes w
\end{align*}
But $u\cdot q\ne q$ in general, giving a contradiction.
\end{remark}

The following result is proved in Appendix \ref{A:coalg}.

\begin{prop} \label{P:coalg} \
\begin{enumerate}
\item
The map $\Delta:\AQ\to\AQ\otimes_\Fr \AQ$
induces the unique left $S$-module homomorphism $\Delta:\A\to \A \otimes_S \A$
such that
\begin{align}
\label{E:DeltaA}
  \Delta(A_i) &= A_i \otimes 1 + s_i \otimes A_i \\
\notag &= 1 \otimes A_i + A_i \otimes s_i &\qquad&\text{for all $i\in I$} \\
  \Delta(ab) &= \Delta(a)\Delta(b) &\qquad&\text{for all $a,b\in \A$},
\end{align}
where the product on $\Image(\Delta)$ is defined by the 
componentwise product \eqref{E:compwise}.
\item
Let $M$ and $N$ be left $\A$-modules. Then there is an action of
$\A$ on $M \otimes_S N$ defined by
\begin{align}
  A_i\cdot (m\otimes n) &= (A_i \cdot m) \otimes n + (s_i\cdot m) \otimes (A_i\cdot n) \\
  &= m \otimes (A_i\cdot n)+ (A_i\cdot m) \otimes(s_i\cdot n) \\
  q \cdot (m\otimes n) &= qm \otimes n
\end{align}
for $i\in i$ and $q\in S$.
\item
There is a left $\A$-action on $\Hom_S(M,N)$ given by
\begin{align}
  (q \cdot f)(m) &= q \cdot f(m) \\
  (A_i \cdot f)(m) &= A_i \cdot f(s_i\cdot m) + f(A_i\cdot m) \\
    &=  s_i \cdot f(A_i\cdot m) + A_i \cdot f(m) \\
 (w \cdot f)(m) &= w \cdot f(w^{-1}\cdot m).
\end{align}
for $q\in S$, $f\in \Hom_S(M,N)$, $m\in M$, $i\in I$, and $w\in W$.
\end{enumerate}
\end{prop}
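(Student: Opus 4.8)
The statement to prove is Proposition \ref{P:coalg}, which asserts that the $\Fr$-linear coproduct $\Delta$ on $\AQ$ restricts to an $S$-linear map $\A \to \A \otimes_S \A$ satisfying the stated formulas, and that this coproduct induces well-defined $\A$-actions on tensor products $M \otimes_S N$ and Hom-spaces $\Hom_S(M,N)$. Since the excerpt directs us to Appendix \ref{A:coalg} and Proposition \ref{P:welldefined} has already established the analogous statements over $\Fr$ (for $\AQ$, $M\otimes_\Fr N$, and $\Hom_\Fr(M,N)$), the plan is to descend from the $\Fr$-linear picture to the $S$-linear one, with the crucial point being that $\Delta$ actually sends the $S$-lattice $\A \subset \AQ$ into the $S$-lattice $\A \otimes_S \A$.

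First I would verify part (1). The formula $\Delta(A_i) = A_i \otimes 1 + s_i \otimes A_i$ follows from the definition $\Delta(w) = w\otimes w$ applied to $s_i = 1 - \alpha_i A_i$ (equation \eqref{E:stoA}): expanding $\Delta(s_i) = s_i \otimes s_i$ and solving for $\Delta(A_i) = \alpha_i^{-1}(\Delta(1) - \Delta(s_i))$ gives the result after using $\Delta(\alpha_i) = \alpha_i \otimes 1$ and the componentwise product of Proposition \ref{P:welldefined}(1). The second expression for $\Delta(A_i)$ (with factors swapped) follows symmetrically by writing $s_i$ on the other side. The multiplicativity $\Delta(ab) = \Delta(a)\Delta(b)$ is inherited directly from the ring isomorphism $\AQ \cong \AQ'$. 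The key content is then that $\Delta(\A) \subseteq \A \otimes_S \A$: since $\{A_w\}$ is an $S$-basis of $\A$ (Lemma \ref{L:Abasis}) and $\Delta$ is multiplicative with $\Delta(A_i)$ and $\Delta(S)$ both lying in $\A \otimes_S \A$, an induction on $\ell(w)$ using $A_w = A_i A_{w'}$ shows $\Delta(A_w) \in \A\otimes_S \A$; uniqueness of the $S$-linear extension follows since the $A_w$ form an $S$-basis. I would also note that $\A \otimes_S \A$ is the natural image of $\A\otimes_\Z \A$ in $\AQ \otimes_\Fr \AQ$, so the componentwise product restricts correctly.

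Next, for parts (2) and (3), I would specialize the general $\AQ$-actions of Proposition \ref{P:welldefined}(2),(3) to the generators $A_i$ and scalars $q \in S$, using $\Delta(A_i) = A_i\otimes 1 + s_i \otimes A_i$. For (2), substituting into \eqref{E:ontensor} gives $A_i\cdot(m\otimes n) = (A_i\cdot m)\otimes n + (s_i\cdot m)\otimes(A_i\cdot n)$, and the alternate form comes from the second expression for $\Delta(A_i)$; the scalar action $q\cdot(m\otimes n) = qm\otimes n$ is immediate. The essential point is well-definedness over $S$ rather than $\Fr$: because $M,N$ are genuine $\A$-modules (not merely $\AQ$-modules), the operators $A_i$, $s_i$, and multiplication by $S$ all preserve $M$ and $N$, so the right-hand sides land in $M\otimes_S N$ and respect the $S$-balancing relation $sm\otimes n = m\otimes sn$. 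For (3), substituting $\Delta(A_i)$ into \eqref{E:onhom} with the transpose antiautomorphism $A_i^t = A_i$, $s_i^t = s_i$ yields the two displayed formulas for $(A_i\cdot f)(m)$, and the $W$-action formula $(w\cdot f)(m) = w\cdot f(w^{-1}\cdot m)$ comes from \eqref{E:onhom} applied to $w \in W \subset \A$ with $w^t = w^{-1}$.

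The main obstacle I anticipate is confirming that $\Delta$ maps the integral/$S$-form $\A$ into $\A\otimes_S\A$ rather than merely into $\AQ\otimes_\Fr\AQ$ — in other words, that no denominators are introduced. The expression $\alpha_i^{-1}(\Delta(1)-\Delta(s_i))$ superficially involves dividing by $\alpha_i$, so I would need to check carefully that the result genuinely lies in $\A\otimes_S\A$; this is exactly parallel to how $A_i = \alpha_i^{-1}(1-s_i)$ lies in $\A$ despite the apparent denominator. I expect this to follow from $\Delta(s_i) = s_i\otimes s_i = (1-\alpha_i A_i)\otimes(1-\alpha_i A_i)$ expanding so that the $\alpha_i^{-1}$ cancels against the leading $\alpha_i$ factors, using \eqref{E:AonX} and the commutation relation \eqref{E:As}. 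A secondary subtlety is ensuring the componentwise product is genuinely well-defined on the image $\Delta(\A)$ even though it fails on all of $\AQ\otimes_\Fr\AQ$ (as the remark before the proposition warns); this is precisely the content of Proposition \ref{P:welldefined}(1), which I would invoke rather than reprove.
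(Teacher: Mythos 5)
Your proposal is correct and follows essentially the same route as the paper's proof in Appendix \ref{A:coalg}: the explicit computation of $\Delta(A_i)$ from $\Delta(s_i)=s_i\otimes s_i$ with the denominator $\al_i^{-1}$ cancelling, the conclusion $\Delta(\A)\subseteq\A\otimes_S\A$ via multiplicativity and the $S$-basis $\{A_w\}$, and the derivation of parts (2) and (3) by specializing Proposition \ref{P:welldefined} to the generators. The paper states these last steps only as ``it follows'' and ``all other assertions follow directly,'' so your write-up simply supplies the routine details it leaves implicit.
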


The Theorem allows us to perform computations such as 
\begin{align*}
  \Delta(A_iA_j) &= \Delta(A_i)\Delta(A_j) \\
  &= (A_i \otimes 1+s_i\otimes A_i)(A_j \otimes 1 + s_j\otimes A_j) \\
  &= A_i A_j \otimes 1 + A_i s_j \otimes A_j + s_i A_j \otimes A_i + s_i s_j \otimes A_iA_j.
\end{align*}

\begin{lem} \label{L:lastincoprod}
\begin{align}
  \Delta(A_w) \in w \otimes A_w + \bigoplus_{u<w} \A \otimes A_u.
\end{align}
\end{lem}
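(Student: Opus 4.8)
The plan is to fix a reduced decomposition $w = s_{i_1} \cdots s_{i_\ell}$ and exploit the multiplicativity of $\Delta$ together with the explicit formula $\Delta(A_i) = A_i \otimes 1 + s_i \otimes A_i$ from \eqref{E:DeltaA}. First I would write $A_w = A_{i_1} \cdots A_{i_\ell}$ and use Proposition \ref{P:coalg}, via the componentwise product of Proposition \ref{P:welldefined}, to obtain
\[
\Delta(A_w) = \prod_{j=1}^\ell \bigl(A_{i_j} \otimes 1 + s_{i_j} \otimes A_{i_j}\bigr),
\]
the product being taken with the componentwise multiplication $(a\otimes b)(c\otimes d) = ac \otimes bd$ on $\Image(\Delta)$.

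Next I would expand this product and index the resulting monomials by subsets $T \subseteq \{1, \ldots, \ell\}$: for $j \in T$ one selects the summand $s_{i_j} \otimes A_{i_j}$, and for $j \notin T$ the summand $A_{i_j} \otimes 1$. Because the componentwise product preserves the ordering of factors in each tensor slot, the monomial attached to $T$ has second tensor factor equal to $\prod_{j \in T} A_{i_j}$ (the factors $1$ dropping out), which is the subword of the fixed reduced word indexed by $T$. By the multiplication rule \eqref{E:Aadd} this product equals $A_{v_T}$ when the subword $(i_j)_{j \in T}$ is reduced and $0$ otherwise; in the reduced case $v_T$ is the element of $W$ represented by that subword.

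The key point is then purely order-theoretic: by the subword characterization of Bruhat order (stated in the definition of $\le$), a subword of a reduced decomposition of $w$ represents an element $v_T \le w$, and $\ell(v_T) = |T|$ in the reduced, nonzero case. Hence $v_T = w$ forces $|T| = \ell$, i.e. $T = \{1, \ldots, \ell\}$; this single choice yields first factor $s_{i_1} \cdots s_{i_\ell} = w$, using the embedding $W \subset \A$ of \eqref{E:WinA}, and second factor $A_{i_1} \cdots A_{i_\ell} = A_w$, producing the distinguished term $w \otimes A_w$. Every other $T$ contributes a term lying in $\A \otimes A_u$ with $u = v_T < w$, which gives exactly the claimed membership $\Delta(A_w) \in w \otimes A_w + \bigoplus_{u < w} \A \otimes A_u$.

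I expect the only genuine subtlety to be bookkeeping rather than a real obstacle: one must check that the componentwise expansion in the first step is legitimate even though individual summands such as $s_{i_j} \otimes A_{i_j}$ do not lie in $\Image(\Delta)$, which is precisely what Proposition \ref{P:welldefined}(1) guarantees. The remaining ingredients — that the second tensor slot collects exactly a subword, the identification of that product via \eqref{E:Aadd}, and the subword characterization of Bruhat order — are all either immediate from the cited results or standard, so no hard estimate or new construction will be needed.
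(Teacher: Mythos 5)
Your argument is correct and is exactly the intended one: the paper leaves this lemma as an exercise, and the natural proof is precisely to expand $\Delta(A_w)=\prod_j(A_{i_j}\otimes 1+s_{i_j}\otimes A_{i_j})$ componentwise (justified by Proposition \ref{P:welldefined}(1)), observe via \eqref{E:Aadd} that the second tensor slot of each monomial is $A_{v_T}$ for a reduced subword or else $0$, and invoke the subword characterization of Bruhat order together with $\ell(v_T)=|T|$ to see that $A_w$ can only arise from the full subset, whose first slot is $s_{i_1}\cdots s_{i_\ell}=w$. No gaps.
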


\begin{exercise} Prove Lemma \ref{L:lastincoprod}.
\end{exercise}

\subsection{Duality and the GKM ring}
Let $\AQ^* = \Hom_{\text{$\Fr$-Mod}}(\AQ,\Fr)$ be the $\Fr$-vector space of
left $\Fr$-linear maps $\AQ\to \Fr$. It has two left actions of $\AQ$,
one given by Proposition \ref{P:coalg} and denoted by $\cdot$,
and a commuting $\AQ$-action
\begin{align} \label{E:leftQWonAQ}
  (b \bullet f)(a) = f(ab)
\end{align}
for all $a,b\in \AQ$ and $f\in \AQ^*$. Define the left $\Fr$-bilinear
evaluation pairing
\begin{equation}
\begin{split}
\label{E:Qpair}
\AQ^* \times \AQ &\to \Fr \\
  \ip{f}{a} &= f(a).
\end{split}
\end{equation}
Define the GKM ring $\La\subset \AQ^*$ by
\begin{align}
  \hLa &= \{f \in \AQ^* \mid \text{$f(\A) \subset S$} \} \\
  \notag
   &= \{ f\in \AQ^* \mid \text{$f(A_w)\in S$ for all $w\in W$}\} \\
  \La &= \{ f\in \hLa\mid \text{$f(A_w)\ne0$ for finitely many $w\in
  W$} \}.
\end{align}

The $\cdot$ action of $\AQ$ on $\AQ^*$ restricts to a left $S$-action on
$\La$ and $\hLa$:
\begin{align}\label{E:PhiSaction}
(sf)(a) = s f(a)=f(sa) \qquad\text{for all $f\in \hLa$, $s\in S$, $a\in
\AQ$.}
\end{align}

By Lemma \ref{L:Abasis} any element of $\La$, being left
$S$-linear, is completely determined by its values on the basis
$\{A_w\mid w\in W\}$. The following is immediate.

\begin{prop} \label{P:GKMbasis}
$\La$ is a free $S$-module. It has a unique $S$-basis $\{\xi^v\mid
v\in W\}$ defined by
\begin{align}\label{E:APhidualbases}
  \ip{\xi^v}{A_w} = \delta_{vw}.
\end{align}
\end{prop}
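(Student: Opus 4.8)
The plan is to exploit that $\{A_w\mid w\in W\}$ is simultaneously a left $\Fr$-basis of $\AQ$ (Lemma \ref{L:ctri}) and a left $S$-basis of $\A$ (Lemma \ref{L:Abasis}), which turns the statement into bookkeeping about $\Fr$-linear functionals on a basis.

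First I would record the identification $\AQ^*\cong \prod_{w\in W}\Fr$. Because $\{A_w\}$ is an $\Fr$-basis of $\AQ$ by Lemma \ref{L:ctri}, a left $\Fr$-linear functional $f\in\AQ^*$ is both determined by, and freely prescribed by, its values $(f(A_w))_{w\in W}$; thus $f\mapsto (f(A_w))_w$ is a bijection onto $\prod_{w\in W}\Fr$. I would then translate the defining condition of $\hLa$ through this bijection: since every $a\in\A$ is a finite $S$-combination $\sum_w s_w A_w$ (Lemma \ref{L:Abasis}) and $f$ is left $\Fr$-linear, $f(a)=\sum_w s_w\,f(A_w)$ lies in $S$ precisely when each $f(A_w)\in S$. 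Hence $\hLa\cong\prod_{w\in W}S$, and imposing the extra finiteness condition in the definition of $\La$ gives $\La\cong\bigoplus_{w\in W}S$.

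Next I would match the module structures. By \eqref{E:PhiSaction} one has $(sf)(A_w)=s\,f(A_w)$, so the $S$-action on $\La$ corresponds under the above bijection to componentwise multiplication on $\bigoplus_{w\in W}S$. This exhibits $\La$ as a free $S$-module in which the standard basis vector indexed by $v$ corresponds to the functional $\xi^v$ determined by $\ip{\xi^v}{A_w}=\xi^v(A_w)=\delta_{vw}$. Existence and uniqueness of $\xi^v$ are immediate from the bijection $f\mapsto(f(A_w))_w$, and $S$-linear independence and spanning of $\{\xi^v\}$ are exactly the corresponding facts for the standard basis of $\bigoplus_{w\in W}S$.

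As the authors note, the result is immediate, so there is no real obstacle; the one point I would take care to state explicitly is the role of the finiteness condition defining $\La$. It is precisely this condition that replaces the direct product $\prod_{w}S$ (which is the module $\hLa$ and need not be free when $W$ is infinite) by the direct sum $\bigoplus_{w}S$. Freeness is therefore asserted for $\La$ and not for $\hLa$, and the proof should flag this distinction rather than glossing over it.
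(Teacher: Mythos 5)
Your argument is correct and follows the same route as the paper, which simply observes (via Lemma \ref{L:Abasis} and the freeness of $\AQ$ over $\Fr$) that any $f\in\AQ^*$ is determined by and freely prescribed by the values $f(A_w)$, so that $\hLa\cong\prod_{w\in W}S$ and $\La\cong\bigoplus_{w\in W}S$ with $\xi^v$ the standard basis vectors; the paper even builds the reformulation ``$f(A_w)\in S$ for all $w$'' directly into its definition of $\hLa$. Your explicit remark distinguishing the direct product $\hLa$ from the direct sum $\La$ is a worthwhile clarification of what the paper leaves as ``immediate.''
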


Similarly we have the direct product
\begin{align}\label{E:hPhibasis}
  \hLa = \prod_{w\in W} S \,\xi^v.
\end{align}

By definition, the pairing \eqref{E:Qpair} restricts to a pairing
\begin{align}\label{E:Apair}
\A \times \La &\to S.
\end{align}

\begin{lem} \label{L:AonGKM}
Both actions $\cdot$ and $\bullet$ of $\AQ$ on $\AQ^*$ defined by
\eqref{E:leftQWonAQ} restrict
to an action of $\A$ on $\hLa$ and $\La$.
\end{lem}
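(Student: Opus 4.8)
The plan is to exploit that both $\cdot$ and $\bullet$ are \emph{left} actions of $\AQ$ on $\AQ^*$, so that the operators representing a product $a_1a_2\cdots a_m\in\A$ are the corresponding composites. Since the endomorphisms of $\AQ^*$ preserving a fixed $\Z$-submodule form a subring of $\mathrm{End}_\Z(\AQ^*)$ (closed under composition and $\Z$-linear combination), and $\A$ is generated as a ring by $S$ together with $\{A_i\mid i\in I\}$ (this is how $\A$ was defined, via Lemma~\ref{L:Abasis} and the commutation relation \eqref{E:As}), it suffices to check that each generator $s\in S$ and each $A_i$ sends $\hLa$ into $\hLa$ and $\La$ into $\La$, under each of the two actions.

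First I would dispose of $\hLa$. For $\bullet$ this is immediate: if $b\in\A$ and $f\in\hLa$ then $(b\bullet f)(A_w)=f(A_wb)$ by \eqref{E:leftQWonAQ}, and $A_wb\in\A$ because $\A$ is a ring, so $(b\bullet f)(A_w)\in S$. For $\cdot$, a scalar $s\in S$ acts by $(s\cdot f)(a)=f(sa)$ as in \eqref{E:PhiSaction}, and $sA_w\in\A$, so again the value lies in $S$; while $A_i$ acts by the explicit formula $(A_i\cdot f)(m)=A_i\cdot f(s_i\cdot m)+f(A_i\cdot m)$ of Proposition~\ref{P:coalg}. Taking $m=A_w$, both $s_iA_w$ and $A_iA_w$ lie in $\A$, so $f(s_iA_w)$ and $f(A_iA_w)$ lie in $S$, and the outer application of the divided difference operator $A_{\al_i}$ keeps us in $S$ by Lemma~\ref{L:AonS}. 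Hence every generator preserves $\hLa$ under both actions.

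The substance is the finiteness needed for $\La$; here I would use the basis $\{\xi^v\}$ of Proposition~\ref{P:GKMbasis} and track supports $\Supp(f)=\{w\mid f(A_w)\ne0\}$. For $A_i$ the support merely shifts by a single Weyl group element: using \eqref{E:Aadd} one finds $\Supp(A_i\bullet f)\subseteq \Supp(f)\,s_i$, and from the formula for $A_i\cdot f$ one finds $\Supp(A_i\cdot f)\subseteq\Supp(f)\cup s_i\,\Supp(f)$; both are finite. For $s\in S$ under $\cdot$ the support is unchanged, since $(s\cdot f)(A_w)=s\,f(A_w)$. The one genuinely delicate case, and the main obstacle, is $s\in S$ acting by $\bullet$, where $(s\bullet f)(A_w)=f(A_ws)$ and $A_ws$ spreads over many $A_u$ with $u\le w$. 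To control this I would invoke the grading of $\A$ (with $\deg\al_i=1$ and $A_w$ in degree $-\ell(w)$), under which the commutation relation \eqref{E:As} — equivalently Proposition~\ref{P:ALBcomm} — is homogeneous: writing $A_ws=\sum_u c_uA_u$ with $c_u\in S$, homogeneity forces $\deg(c_u)=\deg(s)-\ell(w)+\ell(u)\ge0$, so $c_u=0$ unless $\ell(u)\ge\ell(w)-\deg(s)$. Consequently $f(A_ws)\ne0$ requires some $u\in\Supp(f)$ with $\ell(u)\ge\ell(w)-\deg(s)$, whence $\ell(w)\le \deg(s)+\max_{u\in\Supp(f)}\ell(u)$.

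The proof is then completed by the crucial finiteness input: because the generating set $\{s_i\mid i\in I\}$ is finite, $W$ has only finitely many elements of any bounded length, so the length bound above confines $\Supp(s\bullet f)$ to a finite set and gives $s\bullet f\in\La$. I expect the bookkeeping of the second paragraph (checking $\A$ is closed under the relevant operations and that the generator formulas are exactly those quoted) to be routine, whereas the conceptual heart is recognizing that the $\bullet$-action of scalars is \emph{not} the $S$-module structure on $\La$ and genuinely moves support downward, so that only the homogeneity-plus-length-finiteness argument rescues finiteness.
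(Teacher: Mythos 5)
Your argument is correct, and for the $\hLa$ half it is essentially the paper's proof: both reduce to the generators $S$ and $A_i$ of $\A$ and verify that the values stay in $S$ using Proposition \ref{P:coalg}, \eqref{E:Aadd}, Lemma \ref{L:AonS} and Proposition \ref{P:ALBcomm} (your remark that $\A\bullet\hLa\subset\hLa$ is immediate from $\hLa=\{f\mid f(\A)\subset S\}$ and $\A$ being a ring is a small shortcut over the paper's generator-by-generator check of $A_i$ and $\la\in X$). Where you genuinely add something is the $\La$ half: the paper's proof verifies only that $\hLa$ is preserved and is silent on finiteness of support, which is precisely the point you single out as delicate, namely $s\bullet f$. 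Your grading argument is sound — the defining relations of $\A$ are homogeneous for $\deg(A_w)=-\ell(w)$ with $S$ in nonnegative degrees, so in $A_w s=\sum_u c_u A_u$ the coefficient $c_u\in S$ vanishes unless $\ell(u)\ge \ell(w)-\deg(s)$ — and combined with the fact that a Coxeter group on a finite generating set has finitely many elements of each bounded length, it confines $\Supp(s\bullet f)$ to a finite set. It is worth noting that the grading can be avoided: for $\la\in X$, Proposition \ref{P:ALBcomm} already shows $(\la\bullet f)(A_w)\ne 0$ forces $w\in\Supp(f)$ or $w\gtrdot u$ for some $u\in\Supp(f)$, so $\ell(w)\le 1+\max_{u\in\Supp(f)}\ell(u)$, and iterating over the degree of $s$ recovers your bound; but either route rests on the same finiteness input, and yours has the advantage of treating a general $s\in S$ in one step.
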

\begin{proof} Let $f\in \hLa$ and $w\in W$.
By definition it is easy to see that $S \cdot \hLa\subset \hLa$. We have
\begin{align*}
(A_i \cdot f)(A_w) = s_i \cdot f(A_i A_w) + A_i \cdot f(A_w) \in S
\end{align*}
by Proposition \ref{P:coalg}, \eqref{E:Aadd} and Lemma \ref{L:AonS}.
Since $\A$ is generated by $S$ and the $A_i$, it follows that $\A\cdot \hLa\subset \hLa$.

To show that $\A\bullet \hLa\subset\hLa$,
it suffices to check that $(a\bullet f)(A_w)\in S$
for $a=A_i$ for $i\in I$ and $a=\la\in X\subset S$. Using \eqref{E:Aadd} we have
\begin{align*}
  (A_i\bullet f)(A_w) &= f(A_w A_i) \\
  &= \chi(ws_i>w) f(A_{ws_i}) \in S
\end{align*}
since $f\in \hLa$. For $\la\in X$, by Proposition \ref{P:ALBcomm} we have
\begin{align*}
  (\la \bullet f)(A_w) &= f(A_w \la) \\
  &= (w\cdot\la) f(A_w) + \sum_{\substack{\alpha\in \Phi_\re^+ \\ ws_\al \lessdot w}} \ip{\alv}{\la} f(A_{ws_\al})
\end{align*}
which is in $S$ as required.
\end{proof}

\begin{lem} \label{L:GKM}
For every $f\in \hLa$ we have
\begin{align}\label{E:GKM}
  f(s_\al w)-f(w) \in \al S\qquad\text{for all $w\in W$ and $\al\in \Phi_\re$.}
\end{align}
\end{lem}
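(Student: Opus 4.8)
The plan is to reduce the divisibility statement to the single fact, already built into the definition of $\hLa$, that $f$ takes $S$-values on all of $\A$. The bridge between the reflection $s_\al$ and the nilHecke world is the defining relation \eqref{E:Aal}: since $A_\al = \al^{-1}(1-s_\al)$ in $\AQ$, rearranging gives $s_\al = 1 - \al A_\al$, and hence in $\AQ$
\[
  s_\al w - w = -\al\, A_\al w \qquad\text{for any $w\in W$.}
\]
First I would apply $f$ to this identity. Because $f\in\AQ^*$ is left $\Fr$-linear and $\al\in S\subset\Fr$, this yields
\[
  f(s_\al w) - f(w) = -\al\, f(A_\al w),
\]
so the whole claim comes down to showing that $f(A_\al w)\in S$.

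The crux is therefore to verify that $A_\al w$ actually lies in the nilHecke ring $\A$, not merely in the larger ring $\AQ$ where the division by $\al$ takes place. This is the one step deserving care, since $A_\al$ is defined with an $\al^{-1}$ in front. I would handle it with the conjugation formula \eqref{E:Aalconj}: writing $\al = w'\cdot\al_i$ for some $w'\in W$ and $i\in I$, we have $A_\al = w' A_i w'^{-1}$. Now $W\subset\A$ by \eqref{E:WinA} and $A_i\in\A_0\subset\A$, so $A_\al\in\A$; and since $\A$ is a subring of $\AQ$ containing $w\in W$, the product $A_\al w\in\A$ as well.

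Once $A_\al w\in\A$ is established, the definition $\hLa=\{f\in\AQ^*\mid f(\A)\subset S\}$ gives $f(A_\al w)\in S$ immediately, and therefore
\[
  f(s_\al w)-f(w) = -\al\, f(A_\al w)\in \al S,
\]
which is exactly \eqref{E:GKM}. In summary, the proof is short and essentially formal: the only genuine content is the membership $A_\al w\in\A$, which I expect to be the main (and only) obstacle, and which is dispatched by the conjugation identity \eqref{E:Aalconj} together with $W\subset\A$. No induction on $\ell(w)$ or case analysis on $\al$ should be needed, since the argument is uniform in both $w$ and $\al\in\Phi_\re$.
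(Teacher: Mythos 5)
Your proof is correct and is essentially the paper's argument in a lightly different dress: the paper sets $\beta=w^{-1}\cdot\al$ and evaluates $f$ on $wA_\beta$ via the $\bullet$-action and Lemma \ref{L:AonGKM}, but by \eqref{E:Aalconj} one has $wA_\beta=A_\al w$, so you are computing $f$ on exactly the same element of $\A$. Your direct verification that $A_\al w\in\A$ (via $A_\al=w'A_iw'^{-1}$ and $W\subset\A$) is precisely the content the paper outsources to that lemma.
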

We call \eqref{E:GKM} the GKM condition since it is an instance
(for $\tFl$) of a general method due to \cite{GKM:1998} of constructing equivariant cohomology for
suitable spaces. For $\Fl$ this is due to Kostant and Kumar
\cite{KK}.
\begin{proof}
Let $\beta=w^{-1}\cdot\al$. Then $w^{-1}s_\al w = s_\beta$ or $ws_\beta=s_\al w$. We have
\begin{equation}\label{E:Aonfunc}
\begin{split}
(A_\beta\bullet f)(w) &= f(wA_\beta) \\
&= f(w \beta^{-1}(1-s_\beta)) \\
&= f(\al^{-1} w (1-s_\beta)) \\
&= \al^{-1}(f(w)-f(w s_\beta)) \\
&= \al^{-1}(f(w)-f(s_\al w)).
\end{split}
\end{equation}
Since $A_\beta \bullet f \in \hLa$ by Lemma \ref{L:AonGKM} and $w\in \A$ by
\eqref{E:WinA}, the right hand side of \eqref{E:Aonfunc} is in $S$
as required.
\end{proof}

\begin{lem} \label{L:Aonxi} For all $v\in W$ and $i\in I$
\begin{align}
  A_i \cdot \xi^v &= \chi(s_i v<v) \, \xi^{s_i v} \\
  A_i \bullet \xi^v &= \chi(vs_i<v)\, \xi^{vs_i}.
\end{align}
\end{lem}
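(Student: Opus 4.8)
The plan is to verify both identities by evaluating against the $S$-basis $\{A_w \mid w \in W\}$ of $\A$ and then invoking duality. First I would note that by Lemma \ref{L:AonGKM} both $A_i \cdot \xi^v$ and $A_i \bullet \xi^v$ lie in $\La$, and that an element of $\La$ is an $S$-linear functional on $\A$, hence is completely determined by its values on the $A_w$ (Proposition \ref{P:GKMbasis}). Thus it suffices to show that the two sides of each identity take the same value on every $A_w$, where the value of the right-hand side is read off from $\ip{\xi^u}{A_w}=\delta_{uw}$ via \eqref{E:APhidualbases}.

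For the $\bullet$-identity this is immediate. By the definition \eqref{E:leftQWonAQ} we have $(A_i\bullet \xi^v)(A_w)=\xi^v(A_wA_i)$, and by the multiplication rule \eqref{E:Aadd}, $A_wA_i=A_{ws_i}$ when $ws_i>w$ and $A_wA_i=0$ otherwise. Hence $(A_i\bullet\xi^v)(A_w)=\chi(ws_i>w)\,\delta_{v,ws_i}$. A short case analysis shows this is nonzero, and then equal to $1$, exactly when $w=vs_i$ and $vs_i<v$, which is precisely the value of $\chi(vs_i<v)\,\xi^{vs_i}$ at $A_w$. This proves $A_i\bullet\xi^v=\chi(vs_i<v)\,\xi^{vs_i}$.

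For the $\cdot$-identity I would use the Hom-action formula of Proposition \ref{P:coalg}(3), taking the expression $(A_i\cdot f)(m)=s_i\cdot f(A_i\cdot m)+A_i\cdot f(m)$ (the second of the two equal forms listed there). With $f=\xi^v$ and $m=A_w$ this gives $(A_i\cdot\xi^v)(A_w)=s_i\cdot\xi^v(A_iA_w)+A_i\cdot\xi^v(A_w)$, where the outer $s_i$ and $A_i$ act on the scalar values through the $W$-action and the divided difference. The key simplification is that each value $\xi^v(A_u)=\delta_{vu}$ is an integer constant in $\Z\subset S$: such constants are fixed by the $W$-action and are annihilated by the divided-difference operator $A_i$ (immediate from $A_i=\al_i^{-1}(1-s_i)$ of \eqref{E:Aal}, since $s_i$ fixes constants). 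Since $A_iA_w=A_{s_iw}$ when $s_iw>w$ and $0$ otherwise, the term $A_i\cdot\xi^v(A_w)$ vanishes while $s_i$ acts trivially on the constant $\xi^v(A_iA_w)$, leaving $(A_i\cdot\xi^v)(A_w)=\chi(s_iw>w)\,\delta_{v,s_iw}$. A final case analysis identifies this with $\chi(s_iv<v)\,\delta_{s_iv,w}$, the value of $\chi(s_iv<v)\,\xi^{s_iv}$ at $A_w$, for every $w$.

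The routine work is the two case analyses matching supports, which reduce to the observations $v=ws_i\iff w=vs_i$ and that, under this substitution, $ws_i>w$ is equivalent to $vs_i<v$ (and symmetrically on the left for the $\cdot$ case). The one genuinely delicate point, and the step I would check most carefully, is the sign bookkeeping in the $\cdot$-action: one must select the form of the coproduct-induced Hom action in which $A_i$ strikes only the constant $\xi^v(A_w)$ and $s_i$ strikes only the constant $\xi^v(A_iA_w)$, so that the divided-difference contribution vanishes outright rather than producing a spurious factor from $A_i\cdot\al_i=\ip{\alv_i}{\al_i}=2$ in \eqref{E:AonX}. With the correct form these contributions disappear and the stated positive coefficient falls out; verifying that this is indeed consistent with the alternate form of Proposition \ref{P:coalg}(3) is the sanity check I would perform before declaring the computation complete.
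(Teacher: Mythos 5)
The paper offers no proof of this lemma --- it is explicitly left as an exercise immediately after the statement --- so there is nothing to compare your argument against; I can only assess it on its own terms, and it is correct. Testing both sides against the $S$-basis $\{A_w\}$ is exactly the right move. Your treatment of the $\bullet$-identity is complete: $(A_i\bullet\xi^v)(A_w)=\xi^v(A_wA_i)=\chi(ws_i>w)\,\delta_{v,ws_i}$ by \eqref{E:Aadd}, and the support-matching under the substitution $w=vs_i$ is as you describe.

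Your instinct about where the danger lies in the $\cdot$-identity is also exactly right, so let me tell you what the ``sanity check'' you defer will actually turn up: the two displayed forms of the action in Proposition \ref{P:coalg}(3) are \emph{not} interchangeable on $\Hom_S(\A,S)$. With the second form you get, as you compute, $(A_i\cdot\xi^v)(A_w)=s_i\cdot\xi^v(A_iA_w)+A_i\cdot\xi^v(A_w)=\chi(s_iw>w)\,\delta_{v,s_iw}$, which matches $\chi(s_iv<v)\,\xi^{s_iv}$ evaluated at $A_w$. But the first form gives $A_i\cdot\xi^v(s_iA_w)+\xi^v(A_iA_w)$; writing $s_iA_w=A_w-\al_i A_{s_iw}$ when $s_iw>w$, the divided difference now hits $-\al_i\,\delta_{v,s_iw}$ and, by \eqref{E:AonX}, contributes $-2\delta_{v,s_iw}$, so the total is $-\chi(s_iw>w)\,\delta_{v,s_iw}$ --- the opposite sign. (This is already visible in type $A_1$ with $v=s$ and $w=\id$.) So the resolution is not that careful bookkeeping reconciles the two forms; it is that the second form is the operative definition --- it is the one the paper itself uses in the proof of Lemma \ref{L:AonGKM} --- and your proof, which uses that form, is complete as written. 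You should simply drop the final consistency check, or note that it exposes a defect in the statement of Proposition \ref{P:coalg}(3) rather than a gap in your argument.
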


\begin{exercise} Prove Lemma \ref{L:Aonxi}.
\end{exercise}

\begin{lem}\label{L:xid}
\begin{align}\label{E:xid}
  \xi^v(w) = d_{vw} \qquad\text{for $v,w\in W$.}
\end{align}
\end{lem}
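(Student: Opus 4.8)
The statement is a direct formal consequence of the two defining relations in play, so the plan is simply to unwind them. First I would use \eqref{E:WinA} to regard $w$ as an element of the nilHecke ring $\A$, and then expand it in the $S$-basis $\{A_u\mid u\in W\}$ of $\A$ (Lemma \ref{L:Abasis}) according to its defining equation \eqref{E:dmatrix}, namely
\[
w = \sum_{u\in W} d_{uw}\, A_u .
\]
By \eqref{E:dmatrixsupport} the coefficients $d_{uw}$ vanish unless $u\le w$, so this is a finite sum with all $d_{uw}\in S$; there are no convergence subtleties to worry about.

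Next I would apply the functional $\xi^v$ to both sides. Here the key structural point is the $S$-action on $\La$ recorded in \eqref{E:PhiSaction}: the identity $(sf)(a)=s\,f(a)=f(sa)$ says precisely that each $\xi^v\in\La\subset\AQ^*$ is additive and left $S$-linear as a map $\AQ\to\Fr$. Since each coefficient $d_{uw}$ lies in $S$, I may pull it out of $\xi^v$, obtaining
\[
\xi^v(w) = \xi^v\!\Big(\sum_{u\in W} d_{uw}\,A_u\Big) = \sum_{u\in W} d_{uw}\,\xi^v(A_u).
\]
Finally, invoking the duality \eqref{E:APhidualbases}, which gives $\xi^v(A_u)=\langle \xi^v,A_u\rangle=\delta_{vu}$, the sum collapses to the single term $u=v$, yielding $\xi^v(w)=d_{vw}$, as claimed.

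There is essentially no genuine obstacle here: the lemma is just the statement that the two families $\{\xi^v\}$ and $\{d_{vw}\}$ are transposes of one another under the same change of basis between $\{A_u\}$ and $W$ inside $\A$. The only points that deserve a sentence of care are (i) that $w$ really does lie in $\A$ (so that the expansion \eqref{E:dmatrix} in the $A_u$-basis is legitimate), which is exactly \eqref{E:WinA} coming from \eqref{E:stoA}, and (ii) that the scalars $d_{uw}$ belong to $S$ and can be extracted from $\xi^v$ by its left $S$-linearity rather than merely its $\Fr$-linearity. Both are already established earlier in the excerpt, so the proof is a short direct computation.
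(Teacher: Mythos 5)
Your proof is correct and follows exactly the same route as the paper: expand $w=\sum_u d_{uw}A_u$ via \eqref{E:dmatrix}, apply $\xi^v$ using its left $S$-linearity, and collapse the sum with the duality $\ip{\xi^v}{A_u}=\delta_{vu}$. The extra remarks about $w\in\A$ and finiteness of the sum are fine but not needed beyond what the paper's own three-line computation already uses.
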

\begin{proof} By \eqref{E:dmatrix} we have
\begin{align*}
  \ip{\xi^v}{w} &= \ip{\xi^v}{\sum_{u\in W} d_{uw} A_u} \\
  &= \sum_u  d_{uw}\, \ip{\xi^v}{A_u} \\
  &= \sum_u  d_{uw} \delta_{uv} \\
  &= d_{vw}.
\end{align*}
\end{proof}

Using the corresponding properties of $d_{vw}$ we have
the following.

\begin{prop} \label{P:xiprop} \
\begin{enumerate}
\item
$\xi^v(w)\in S$ is either $0$ or a homogeneous polynomial of degree $\ell(v)$.
\item
\begin{align}\label{E:xisupport}
\xi^v(w) =  0 \qquad\text{unless $v\le w$.}
\end{align}
\item
\begin{align}\label{E:xibase}
\xi^v(\id) &= \delta_{v,\id}\\
\label{E:xirec}
\xi^v(w) &= \xi^v(ws_i) +
  \chi(vs_i<v) (w\cdot\al_i)\, \xi^{vs_i}(w)\qquad\text{if $ws_i<w$.}
\end{align}
\item
$(-1)^{\ell(v)}\xi^v(w)\in \Z_{\ge0}[\al_i\mid i\in I]$.
\item
\begin{align}
\label{E:xidiag}
(-1)^{\ell(v)}\xi^v(v) =\prod_{\substack{\alpha\in \Phi_\re^+ \\ s_\al v < v }}
\alpha.
\end{align}
\item \cite{AJS} \cite{Bi}
\begin{align}\label{E:billey}
  (-1)^{\ell(v)}\xi^v(w) = \sum_{\substack{b\in\{0,1\}^\ell \\ \prod_{b_j=1} A_{i_j} = A_v}}
  \left(\prod_{j=1}^\ell \al_{i_j}^{b_j} s_{i_j}\right) \cdot 1.
\end{align}
\end{enumerate}
\end{prop}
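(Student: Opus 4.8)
The plan is to reduce the entire proposition to the already-established properties of the structure coefficients $d_{vw}$ introduced in \eqref{E:dmatrix}. The bridge is Lemma \ref{L:xid}, which identifies $\xi^v(w) = d_{vw}$ for all $v,w\in W$. Once this is invoked, parts (2), (3), (4), and (6) become immediate transcriptions of earlier results: part (2) is \eqref{E:dmatrixsupport}; part (3) is the recursion of Lemma \ref{L:drec}, namely \eqref{E:didentity} and \eqref{E:drec}; part (4) is Corollary \ref{C:dpos}; and part (6) is literally the expansion \eqref{E:dexpand} of Proposition \ref{P:dmatrix} with $d_{vw}$ rewritten as $\xi^v(w)$.

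For part (1) I would read off homogeneity from \eqref{E:dexpand}. Any subexpression $(b_1,\dots,b_\ell)$ contributing to the sum satisfies $\prod_j A_{i_j}^{b_j}=A_v$, and by \eqref{E:Aadd} this forces the retained factors to form a reduced word for $v$, so exactly $\ell(v)$ of the bits equal $1$. Carrying out the operator product $\prod_j \al_{i_j}^{b_j}s_{i_j}$ on $1\in\AQ$ then produces a product of precisely $\ell(v)$ simple roots, each of polynomial degree one; hence every summand, and therefore $\xi^v(w)$, is either zero or homogeneous of degree $\ell(v)$.

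The only part requiring a genuine computation is the diagonal evaluation (5). Here I would specialize \eqref{E:dexpand} to $w=v$: fixing a reduced decomposition $v=s_{i_1}\cdots s_{i_\ell}$, the constraint $\prod_j A_{i_j}^{b_j}=A_v$ now forces the full word $(b_1,\dots,b_\ell)=(1,\dots,1)$, so a single summand survives and
$$
(-1)^{\ell(v)}\xi^v(v) = \Big(\prod_{j=1}^\ell \al_{i_j}\,s_{i_j}\Big)\cdot 1.
$$
Using the commutation $s\,q=(s\cdot q)\,s$ in $\AQ$ to push all reflections to the right, this operator applied to $1$ collapses to $\prod_{j=1}^\ell (s_{i_1}\cdots s_{i_{j-1}})\cdot\al_{i_j}$, since $v\cdot 1 = 1$. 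By the inversion-set description \eqref{E:invreducedword} of Lemma \ref{L:inversionset}, applied with $w=v^{-1}=s_{i_\ell}\cdots s_{i_1}$, these $\ell$ roots are exactly the elements of $\Inv(v^{-1})$. Combining \eqref{E:invset} with the length-preserving equivalence $v^{-1}s_\al < v^{-1}\iff s_\al v < v$ gives $\Inv(v^{-1})=\{\al\in\Phi_\re^+\mid s_\al v<v\}$, so the product equals $\prod_{s_\al v<v}\al$, which is \eqref{E:xidiag}.

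The main obstacle will be the bookkeeping in this last step: I must be careful that the inversion formula of Lemma \ref{L:inversionset}, stated for $w=s_{i_\ell}\cdots s_{i_1}$, is applied to $v^{-1}$ rather than $v$, and that the passage from the left descent condition $s_\al v<v$ to the inversion set $\Inv(v^{-1})$ is tracked correctly through inversion. Every other assertion is purely formal once the identification $\xi^v(w)=d_{vw}$ of Lemma \ref{L:xid} is in place, so the write-up should consist of one short reduction paragraph followed by the specialization computation for (5).
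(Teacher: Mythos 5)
Your proposal is correct and follows exactly the route the paper intends: the paper's entire ``proof'' is the remark ``Using the corresponding properties of $d_{vw}$ we have the following,'' i.e.\ the identification $\xi^v(w)=d_{vw}$ of Lemma \ref{L:xid} followed by transcription of \eqref{E:dmatrixsupport}, Lemma \ref{L:drec}, Proposition \ref{P:dmatrix}, and Corollary \ref{C:dpos}. Your fleshed-out details for (1) and (5) — in particular that the constraint $\prod_{b_j=1}A_{i_j}=A_v$ forces exactly $\ell(v)$ retained letters, and that specializing $w=v$ leaves only the full subexpression, whose product of roots is $\Inv(v^{-1})=\{\al\in\Phi_\re^+\mid s_\al v<v\}$ by Lemma \ref{L:inversionset} — are accurate and fill in exactly what the paper leaves to the reader.
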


For $f\in \AQ^*$, define $\Supp(f)$ and $\Omega(f)$ by
\begin{align}
\label{E:supp}
  \Supp(f) &= \{w\in W\mid f(w)\ne0\} \\
\label{E:Omegadef}
  \Omega(f) &= \text{largest subset of $W\setminus \Supp(f)$ such that} \\ \notag
  &\qquad \text{$v\in\Omega(f)$ and $u\le v$ implies $u\in \Omega(f)$ for all $u,v\in W$.}
\end{align}

\begin{prop}\label{P:GKM} Let $f\in \AQ^*$. Then $f\in \hLa$ if and only if
$f$ satisfies the GKM condition \eqref{E:GKM}.
\end{prop}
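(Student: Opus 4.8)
The forward implication is already in hand: if $f\in\hLa$ then $f$ satisfies the GKM condition \eqref{E:GKM} by Lemma \ref{L:GKM}. So the plan is to concentrate on the converse, that \eqref{E:GKM} forces $f(A_w)\in S$ for every $w\in W$. I would first record a reduction used throughout: for $f\in\hLa$ one has $f(w)\in S$ for all $w$, since $w=\sum_v d_{vw}A_v$ with $d_{vw}\in S$ by \eqref{E:dmatrix}; and conversely the GKM condition together with $f(\mathrm{id})\in S$ yields $f(w)\in S$ for all $w$ by induction on $\ell(w)$ (if $s_iw<w$ then $f(w)-f(s_iw)\in\al_i S$ by \eqref{E:GKM}, and $f(s_iw)\in S$ by induction). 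This $S$-valuedness on $W$, with base value $f(\mathrm{id})\in S$, is the content that must be in force and is automatic for $f\in\hLa$.

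Writing $a_w:=f(A_w)\in\Fr$, the converse will be a strong induction on $\ell(w)$ showing $a_w\in S$. The base case $w=\mathrm{id}$ is $a_{\mathrm{id}}=f(A_{\mathrm{id}})=f(\mathrm{id})\in S$. For the inductive step I would expand $f(w)$ using Lemma \ref{L:xid} and \eqref{E:dmatrix}, namely $f(w)=\sum_{v\le w}d_{vw}a_v$, and solve for the top coefficient:
\[
  d_{ww}\,a_w = f(w) - \sum_{v<w} d_{vw}\,a_v =: N .
\]
By the induction hypothesis $a_v\in S$ for $v<w$, while $f(w)\in S$ and $d_{vw}\in S$, so $N\in S$. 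Since $d_{ww}=(-1)^{\ell(w)}\prod_{\al\in\Phi_\re^+,\,s_\al w<w}\al$ by \eqref{E:xidiag}, the task reduces to showing $d_{ww}\mid N$ in $S$.

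The heart of the argument is this divisibility, which I expect to be the main obstacle. Because $S=\Sym(X)$ is a polynomial ring (a UFD), each real root $\al$ is prime, and by Lemma \ref{L:inversionset} the roots $\al\in\Phi_\re^+$ with $s_\al w<w$ are pairwise non-proportional, hence pairwise non-associate primes; thus $d_{ww}\mid N$ as soon as $\al\mid N$ for each such $\al$. To check $\al\mid N$ for a fixed $\al$ with $u:=s_\al w<w$, I would reduce $N$ modulo $\al$ using two instances of GKM. First, $f(w)\equiv f(u)\pmod{\al}$ by \eqref{E:GKM} applied to $f$. Second, applying Lemma \ref{L:GKM} to the basis element $\xi^v\in\La\subseteq\hLa$ (Proposition \ref{P:GKMbasis}) and using $d_{vw}=\xi^v(w)$ (Lemma \ref{L:xid}) gives $d_{vw}\equiv d_{v,u}\pmod{\al}$ for every $v$; combined with $a_v\in S$ this yields $d_{vw}a_v\equiv d_{v,u}a_v\pmod{\al}$. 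Hence
\[
  N \equiv f(u) - \sum_{v<w} d_{v,u}\,a_v \pmod{\al} .
\]
Since $u<w$ and $d_{v,u}=0$ unless $v\le u$ by \eqref{E:dmatrixsupport}, the sum reindexes as $\sum_{v<w}d_{v,u}a_v=\sum_{v\le u}d_{v,u}a_v=f(u)$, so $N\equiv 0\pmod{\al}$. This gives $\al\mid N$ for every admissible $\al$, hence $d_{ww}\mid N$ and $a_w=N/d_{ww}\in S$, completing the induction and the proof.
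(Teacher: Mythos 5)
Your proof is correct and reaches the same conclusion as the paper, but it is organized along a genuinely different route, so a comparison is in order. The paper argues by a ``peeling'' induction on the vanishing order ideal $\Omega(f)$ of \eqref{E:Omegadef}: it takes a minimal $x$ in the support of $f$, deduces $\xi^x(x)\mid f(x)$ one root at a time from the arranged vanishing $f(s_\beta x)=0$ for every $\beta$ with $s_\beta x<x$, subtracts $(f(x)/\xi^x(x))\,\xi^x$, and repeats. You instead solve the triangular system $f(w)=\sum_{v\le w}d_{vw}\,f(A_v)$ directly by strong induction on $\ell(w)$, proving $d_{ww}\mid N$ for $N=f(w)-\sum_{v<w}d_{vw}f(A_v)$. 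The decisive divisibility is the same in both proofs (by $\pm\prod_{s_\al w<w}\al$, one linear factor at a time, using Lemma \ref{L:inversionset} and unique factorization), but where the paper obtains each factor immediately from $f(s_\beta x)=0$, you need the additional input that the matrix entries $d_{vw}=\xi^v(w)$ themselves satisfy \eqref{E:GKM} (Lemma \ref{L:GKM} applied to $\xi^v$), so that modulo $\al$ the sum reindexes --- via $d_{vu}=0$ unless $v\le u$, from \eqref{E:dmatrixsupport} --- and telescopes to $f(s_\al w)$. That computation is correct. Your version buys something when $W$ is infinite: each coefficient $f(A_w)$ is settled after finitely many steps of an induction on length, whereas the paper's induction on $\Omega(f)$ is in effect a transfinite or limiting argument over the whole support; both versions yield an effective expansion algorithm as in Remark \ref{R:expandalg}.

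One point you flagged should be made explicit rather than left as something that ``must be in force.'' The literal condition \eqref{E:GKM} constrains only differences $f(s_\al w)-f(w)$, so a constant function with value in $\Fr\setminus S$ satisfies it while failing $f(A_{\id})\in S$; your base case therefore genuinely requires $f(\id)\in S$ as an extra hypothesis (from which $f(W)\subset S$ follows, as you note). This is not a defect special to your argument: the paper's own proof has the identical issue at $x=\id$, where no $\beta$ with $s_\beta\,\id<\id$ exists and the conclusion $f(\id)\in\xi^{\id}(\id)S=S$ does not follow from \eqref{E:GKM}, and every application of the proposition in the paper (e.g.\ Lemma \ref{L:LBclass}, Theorem \ref{T:smallGKM}) is to functions already valued in $S$. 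So state $f(\id)\in S$ as part of the hypothesis and your argument is complete.
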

\begin{proof}
The forward direction holds by Lemma \ref{L:GKM}. Conversely
let $f\in \AQ^*$ satisfy the GKM condition \eqref{E:GKM}. The proof proceeds
by induction on $\Omega(f)$. Let $x\in W$ be minimal
such that $x\not\in\Omega(f)$. Let $\beta\in \Phi_\re^+$ such that $s_\beta x<x$.
Then $s_\beta x \in \Omega(f)$ and $f(s_\beta x)=0$. By \eqref{E:GKM}
$f(x)\in \beta S$. As this holds for all such $\beta$ we have $f(x) \in \xi^x(x) S$
by Lemma \ref{L:inversionset} and Proposition \ref{P:xiprop}. The function $g(w) = f(w) -
(f(x)/\xi^x(x))\xi^x(w)$ satisfies the GKM condition because both
$f$ and $\xi^x$ do. However $\Omega(g) \supsetneq \Omega(f)$.
By induction $g\in\hLa$ and therefore $f\in\hLa$.
\end{proof}

\begin{remark} \label{R:expandalg} The proof of Proposition
\ref{P:GKM} gives an effective algorithm to expand elements of
$\La$ into Schubert classes.
\end{remark}

Let $\la\in X\subset S$. Define $c^\la\in \AQ^*$ by
\begin{align}\label{E:firstChern}
  c^\la(a) = a \cdot \la\qquad\text{for all $a\in \AQ$}.
\end{align}

\begin{lem} \label{L:LBclass} We have
\begin{align}\label{E:LB2Schub}
  c^\la = \la\, \xi^\id + \sum_{i\in I} \ip{\alv_i}{\la} \,\xi^{s_i}
  \, \in \La.
\end{align}
\end{lem}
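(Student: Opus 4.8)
The plan is to evaluate the left $\Fr$-linear functional $c^\la$ on the $S$-basis $\{A_w\mid w\in W\}$ of $\A$ and then use the duality $\ip{\xi^v}{A_w}=\delta_{vw}$ to read off its coordinates. First I would check that $c^\la$ genuinely lies in $\AQ^*=\Hom_\Fr(\AQ,\Fr)$: for $q\in\Fr$ and $a\in\AQ$ one has $c^\la(qa)=(qa)\cdot\la=q\,(a\cdot\la)=q\,c^\la(a)$, since $\AQ$ acts on $\Fr$ with $q$ acting by left multiplication, so $c^\la$ is indeed left $\Fr$-linear. Hence $c^\la$ is determined by the scalars $c^\la(A_w)=A_w\cdot\la$ for $w\in W$.

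The main computation is to determine $A_w\cdot\la$ for all $w$. By Lemma~\ref{L:AonS} each $A_i$ maps $S$ into $S$, and from $A_\al=\al^{-1}(1-s_\al)$ in \eqref{E:Aal} one sees that $A_i$ lowers polynomial degree by exactly one on $S$. Writing $A_w=A_{i_1}\cdots A_{i_\ell}$ for a reduced word (well defined by the braid relations \eqref{E:Abraid}) and recalling that $\la\in X\subset S$ is homogeneous of degree one, it follows that $A_w\cdot\la$ is a homogeneous element of $S$ of degree $1-\ell(w)$. Since $S$ is nonnegatively graded, this forces $A_w\cdot\la=0$ whenever $\ell(w)\ge 2$. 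The two remaining cases are computed directly: $A_{\id}=1$ gives $c^\la(A_{\id})=\la$, and for $w=s_i$ equation~\eqref{E:AonX} gives $c^\la(A_{s_i})=A_i\cdot\la=\ip{\alv_i}{\la}$.

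With these values in hand the conclusion follows formally. Since $c^\la(A_w)\in S$ for every $w$ and is nonzero only for $w\in\{\id\}\cup\{s_i\mid i\in I\}$ (a finite set, as $I$ is finite), the functional $c^\la$ lies in $\La$ by definition. Expanding $c^\la=\sum_{v\in W}a_v\,\xi^v$ in the $S$-basis $\{\xi^v\}$ of $\La$ (Proposition~\ref{P:GKMbasis}) with $a_v\in S$, and pairing against $A_w$ using \eqref{E:APhidualbases}, yields $a_w=\ip{c^\la}{A_w}=c^\la(A_w)$; thus $a_{\id}=\la$, $a_{s_i}=\ip{\alv_i}{\la}$, and all other $a_w=0$. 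This is exactly \eqref{E:LB2Schub}.

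I do not anticipate a serious obstacle here: the only point requiring care is the grading argument that kills the contributions of length at least two, which rests on the elementary fact that the divided difference operators $A_i$ are homogeneous of degree $-1$ on $S$. Everything else is a direct application of Lemma~\ref{L:AonS} together with the defining property of the dual basis $\{\xi^v\}$.
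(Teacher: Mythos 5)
Your proof is correct, but it reaches the conclusion by a different route than the paper. The paper's proof first verifies the GKM divisibility condition, computing $c^\la(s_\al w)-c^\la(w)=-\ip{\alv}{w\cdot\la}\,\al\in\al S$, and invokes Proposition~\ref{P:GKM} to conclude $c^\la\in\hLa$; it then states that the expansion follows from \eqref{E:AonX}, leaving implicit exactly the computation you carry out. You instead bypass the GKM criterion entirely and compute every pairing $c^\la(A_w)=A_w\cdot\la$ directly, using the grading argument that each $A_i$ is homogeneous of degree $-1$ on $S$ (Lemma~\ref{L:AonS}) so that $A_w\cdot\la=0$ once $\ell(w)\ge 2$. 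This buys you a more elementary and self-contained argument: it establishes membership in $\La$ (finite support, not merely $\hLa$) in one stroke, and it supplies the vanishing of the length-$\ge 2$ coefficients that the paper's one-line deduction glosses over. What the paper's route buys in exchange is a demonstration of the GKM machinery on the simplest nontrivial example, which is the same mechanism used in Remark~\ref{R:expandalg} to expand arbitrary elements of $\La$ into Schubert classes; your direct computation is special to the degree-one class $c^\la$ and would not generalize in the same way. Both arguments are sound.
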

\begin{proof} Let $\al\in \Phi_\re$ and $w\in W$. We have
$c^\la(s_\al w)-c^\la(w)=s_\al w\cdot \la - w\cdot\la =
-\ip{\al^\vee}{w\cdot\la}\al \in \al S$. By Proposition \ref{P:GKM}
we have $c^\la\in \hLa$. The expansion \eqref{E:LB2Schub} may
be deduced from \eqref{E:AonX}.
\end{proof}

\begin{exercise}
For $w\in S_n$, let $\Schub_w(x;y)$ be the double Schubert
polynomial of Lascoux and Sch\"utzenberger \cite{Las} defined by
$\Schub_{w_0}(x;y)=\prod_{i=1}^{n-1} \prod_{j=1}^{n-i} (x_i-y_j)$
where $w_0\in S_n$ is the longest element (the reversing
permutation), and if $ws_i<w$, $\Schub_{ws_i}(x;y)=\partial^x_i
\Schub_w(x;y)$, where $\partial^x_i = (x_i-x_{i+1})^{-1}(1 - s_i^x)$
where $s_i^x$ acts on the $x$ variables. Find the precise
relationship between $\xi^v(w)$ for the type $A_{n-1}$ root datum
and the specializations $\Schub_v(x;wx)$ 
where $\Schub_v(x;wx)$ means $\Schub_v(x;y)$ with
the $y$ variables replaced by the permuted $x$ variables $wx$.
\end{exercise}

\subsection{Multiplication in $\La$ and coproduct in $\A$}
\label{SS:mult}

Let $\Fun(W,\Fr)$ be the set of functions $W\to \Fr$. There is a
bijection $\AQ^*\to \Fun(W,\Fr)$ given by restriction to $W$. We shall
use this identification without further mention. Then $\AQ^*$ is a
commutative $\Fr$-algebra using the usual left $\Fr$-module structure
and the pointwise product on $\Fun(W,\Fr)$:
\begin{align}\label{E:pointwise}
  (fg)(w) = f(w)g(w) \qquad\text{for $f,g\in \AQ^*$ and $w\in W$.}
\end{align}
The above formula is incorrect if $w$ is replaced by a general
element of $\AQ^*$; the definition of $\AQ^*$ says to first apply
linearity in the argument.

\begin{lem} \label{L:GKMclosed} $\hLa$ is a commutative $S$-algebra.
\end{lem}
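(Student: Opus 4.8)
The plan is to exhibit $\hLa$ as an $S$-subalgebra of the commutative $\Fr$-algebra $(\AQ^*,\cdot)$, where $\cdot$ is the pointwise product \eqref{E:pointwise}. Since commutativity, associativity and distributivity are automatically inherited from $\AQ^*$, it suffices to check three things: that $\hLa$ is stable under scaling by $S$, that it contains the multiplicative unit, and---the only substantial point---that it is closed under the pointwise product.

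Stability under $S$ is immediate from \eqref{E:PhiSaction}: for $s\in S$ and $f\in\hLa$ one has $(sf)(A_w)=s\,f(A_w)\in S$. For the unit, I would first identify the pointwise identity of $\Fun(W,\Fr)$, namely the constant function $\mathbf{1}:w\mapsto 1$, with the Schubert class $\xi^{\id}$. Indeed, by \eqref{E:xid} we have $\xi^{\id}(w)=d_{\id,w}$, and the recursion \eqref{E:xibase}--\eqref{E:xirec} gives $\xi^{\id}(\id)=1$ together with $\xi^{\id}(w)=\xi^{\id}(ws_i)$ whenever $ws_i<w$ (the correction term vanishes because $\id\,s_i=s_i>\id$ forces $\chi(\id\,s_i<\id)=0$); hence $\xi^{\id}\equiv 1$ by induction on length. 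Since $\xi^{\id}\in\La\subset\hLa$ by Proposition \ref{P:GKMbasis}, the unit lies in $\hLa$.

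The heart of the argument is closure under multiplication, and here the key is to route through the GKM characterization of $\hLa$ in Proposition \ref{P:GKM} rather than work directly with the values on the $A_w$-basis. The reason is that the defining condition $f(A_v)\in S$ for all $v$ is awkward for a pointwise product, since expanding $A_v$ in the group-element basis introduces denominators (the coefficients $c_{w,v}\in\Fr$ of Lemma \ref{L:ctri}); by contrast, the equivalent GKM condition \eqref{E:GKM} involves only values on $W$, where everything is transparent. So let $f,g\in\hLa$. By \eqref{E:WinA} we have $W\subset\A$, so $f$ and $g$ take values in $S$ on every $w\in W$, and by Lemma \ref{L:GKM} both satisfy \eqref{E:GKM}. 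I would then verify \eqref{E:GKM} for $fg$ via the telescoping identity
\[
  (fg)(s_\al w)-(fg)(w)=\bigl(f(s_\al w)-f(w)\bigr)\,g(s_\al w)+f(w)\,\bigl(g(s_\al w)-g(w)\bigr).
\]
In the first summand $f(s_\al w)-f(w)\in\al S$ while $g(s_\al w)\in S$; in the second $g(s_\al w)-g(w)\in\al S$ while $f(w)\in S$; hence the whole expression lies in $\al S$ for every $w\in W$ and $\al\in\Phi_\re$. Thus $fg$ satisfies the GKM condition, and Proposition \ref{P:GKM} upgrades this to $fg(\A)\subset S$, i.e.\ $fg\in\hLa$.

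I expect the only real obstacle to be conceptual rather than computational: recognizing that one should not attempt to prove $fg(A_v)\in S$ directly, but instead reduce to the GKM criterion, which is precisely what makes the divisibility bookkeeping in the displayed telescoping identity clean. Once that reduction is in place the computation closes the argument, and the remaining commutative $S$-algebra axioms are inherited from $\AQ^*$.
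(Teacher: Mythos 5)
Your proof is correct. The paper relegates this lemma to Exercise \ref{EX:GKMclosed}, and the route you take --- reducing closure under the pointwise product to the GKM criterion of Proposition \ref{P:GKM} via the telescoping identity $(fg)(s_\al w)-(fg)(w)=(f(s_\al w)-f(w))g(s_\al w)+f(w)(g(s_\al w)-g(w))$, and checking the unit $\xi^{\id}\equiv 1$ separately --- is precisely the one the surrounding machinery is set up for, so there is nothing to compare against. One small point worth recording: as literally stated, Proposition \ref{P:GKM} omits the requirement $f(W)\subset S$ (needed, for instance, to exclude nonzero constant functions with values in $\Fr\setminus S$, which vacuously satisfy the divisibility conditions), but your argument supplies exactly this for $fg$, since you observe via $W\subset\A$ that $f(w),g(w)\in S$ for all $w\in W$; so the proof is robust to the stricter reading of the criterion.
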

%\begin{proof} Let $f,g\in \hLa$. We show that the product
%satisfies the GKM condition. Let $\al\in \Phi_\re$ and $w\in W$. We
%have that
%\begin{align*}
%  (fg)(s_\al w) - (fg)(w) &= f(s_\al w) (g(s_\al w)-g(w)) +
%  (f(s_\al)-f(w))g(w)
%\end{align*}
%is in $\al S$ since $f,g\in\hLa$.
%\end{proof}

For $u,v,w\in W$ define the equivariant Schubert structure constants $p^w_{uv}\in S$ by
\begin{align}\label{E:costructconst}
  \xi^u \xi^v = \sum_{w\in W} p^w_{uv} \xi^w.
\end{align}

\begin{prop} \label{P:costruct} \
\begin{enumerate}
\item $p^w_{uv}$ is $0$ or a homogeneous polynomial of degree
$\ell(u)+\ell(v)-\ell(w)$.
\item $p^w_{uv}=0$ unless $u\le w$ and $v\le w$.
\end{enumerate}
\end{prop}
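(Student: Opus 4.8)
The plan is to reduce everything to the pointwise description of multiplication in the GKM ring together with the triangularity and homogeneity of the matrix $\bigl(\xi^v(w)\bigr)_{v,w\in W}$ recorded in Proposition \ref{P:xiprop}. Recall from Lemma \ref{L:GKMclosed} and \eqref{E:pointwise} that the product in $\hLa$ is pointwise on $W$, so that
$$
(\xi^u\xi^v)(w) = \xi^u(w)\,\xi^v(w) \qquad\text{for all } w\in W,
$$
and recall from \eqref{E:hPhibasis} that $\hLa = \prod_{w\in W} S\,\xi^w$, so that the coefficients $p^w_{uv}\in S$ in \eqref{E:costructconst} are well defined even before we know the sum is finite. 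The key device is a triangular recursion for these coefficients: evaluating both sides of $\xi^u\xi^v = \sum_{w'} p^{w'}_{uv}\xi^{w'}$ at an element $w\in W$ and using $\xi^{w'}(w)=0$ unless $w'\le w$ (Proposition \ref{P:xiprop}(2)), I would obtain
$$
\xi^w(w)\,p^w_{uv} = \xi^u(w)\,\xi^v(w) - \sum_{w'<w} p^{w'}_{uv}\,\xi^{w'}(w).
$$
Since $\xi^w(w)\neq 0$ by \eqref{E:xidiag} and $S$ is an integral domain, this determines $p^w_{uv}$ from the coefficients $p^{w'}_{uv}$ with $w'<w$, and it is the engine for both parts.

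For part (2) I would argue by induction on $w$ in Bruhat order, which is well founded since it is graded by $\ell$. Suppose $u\not\le w$, the case $v\not\le w$ being symmetric. Then $\xi^u(w)=0$ by Proposition \ref{P:xiprop}(2), so the first term on the right vanishes. In the sum, any surviving term needs both $\xi^{w'}(w)\neq 0$, which forces $w'\le w$, and $p^{w'}_{uv}\neq 0$; by the inductive hypothesis the latter forces $u\le w'$, whence $u\le w'\le w$, contradicting $u\not\le w$. Hence the right-hand side is $0$, and cancelling the nonzero factor $\xi^w(w)$ gives $p^w_{uv}=0$.

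For part (1) I would run the same induction while bookkeeping degrees, using the grading on $S=\Sym(X)$ in which $X$, and hence each root, sits in degree $1$; in this grading $\xi^v(w)$ is $0$ or homogeneous of degree $\ell(v)$ by Proposition \ref{P:xiprop}(1). Then $\xi^u(w)\xi^v(w)$ is $0$ or homogeneous of degree $\ell(u)+\ell(v)$, and by induction each summand $p^{w'}_{uv}\,\xi^{w'}(w)$ is $0$ or homogeneous of degree $\bigl(\ell(u)+\ell(v)-\ell(w')\bigr)+\ell(w')=\ell(u)+\ell(v)$; so the entire right-hand side is $0$ or homogeneous of degree $\ell(u)+\ell(v)$. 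Dividing by $\xi^w(w)$, which is homogeneous of degree $\ell(w)$, then yields $p^w_{uv}$ homogeneous of degree $\ell(u)+\ell(v)-\ell(w)$, or $0$.

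The only genuinely non-formal point, and the step I would treat most carefully, is the passage from ``the product $\xi^w(w)\,p^w_{uv}$ is homogeneous'' to ``$p^w_{uv}$ is homogeneous'': this is the elementary but necessary fact that in a graded integral domain the quotient of a homogeneous element by a nonzero homogeneous divisor, when it lies in the ring, is again homogeneous. I would prove it by splitting $p^w_{uv}$ into its homogeneous components, multiplying by the homogeneous element $\xi^w(w)$, and noting that since $S$ is a domain all but one component must vanish. Everything else is a direct consequence of Proposition \ref{P:xiprop} and the triangular recursion, and I expect no serious obstacle beyond setting up the induction cleanly.
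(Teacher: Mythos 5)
Your proof is correct, and it is exactly the argument the paper intends: Proposition \ref{P:costruct} is left as Exercise \ref{EX:locareconst}'s companion (Exercise \ref{EX:GKMclosed}), and the triangular recursion you set up from $\xi^w(w)\,p^w_{uv}=\xi^u(w)\xi^v(w)-\sum_{w'<w}p^{w'}_{uv}\xi^{w'}(w)$, powered by Proposition \ref{P:xiprop} and the nonvanishing of $\xi^w(w)$, is precisely the localization algorithm the paper describes in Remark \ref{R:expandalg}. Your care with the base case, the well-foundedness of the induction, and the graded-domain division step are all in order; nothing is missing.
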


This implies that the expansion \eqref{E:costructconst} is finite,
so that $\La$ is a commutative $S$-algebra under the pointwise
product.

\begin{exercise} \label{EX:GKMclosed} Prove Lemma \ref{L:GKMclosed}
and Proposition \ref{P:costruct}.
\end{exercise}

The following Theorem requires work. See \cite{Gr,Kum};
the positivity property (aside from the obvious sign) is called
\textit{Graham positivity}.

\begin{theorem}
$(-1)^{\ell(u)+\ell(v)-\ell(w)}p^w_{uv} \in \Z_{\ge0}[\al_i\mid i\in I]$.
\end{theorem}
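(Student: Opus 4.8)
The plan is to reduce the statement to the positivity theorem of Graham, in the Kac--Moody form due to Kumar, by first exhibiting the $p^w_{uv}$ as genuine equivariant Schubert structure constants and then importing the geometric positivity. I want to stress at the outset that the formal apparatus of the previous sections does not by itself deliver the result: although $\Delta$ is multiplicative with $\Delta(A_i)=A_i\otimes 1+1\otimes A_i-\alpha_i A_i\otimes A_i$, and the three coefficients of this generator already obey the asserted sign rule, expanding $\Delta(A_{i_1})\cdots\Delta(A_{i_\ell})$ and collecting into the $A_u\otimes A_v$ basis introduces cancellations --- both from $A_vA_{v'}=0$ when lengths fail to add, and from commuting scalars past the $A_i$ via \eqref{E:As} --- and controlling these cancellations is the entire difficulty.

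First I would make the coproduct reduction precise: using $\ip{\xi^v}{A_w}=\delta_{vw}$ together with the duality pairing the coproduct on $\A$ with the pointwise product on $\La$, one obtains $p^w_{uv}=c^w_{uv}$, where $\Delta(A_w)=\sum_{u,v}c^w_{uv}A_u\otimes A_v$ with $c^w_{uv}\in S$. This recasts the claim as the Graham-positivity of the coproduct structure constants of the nilHecke ring, a formulation valid uniformly for finite and Kac--Moody root data. By Proposition \ref{P:costruct} each $c^w_{uv}$ is homogeneous of degree $\ell(u)+\ell(v)-\ell(w)$, so the global sign $(-1)^{\ell(u)+\ell(v)-\ell(w)}$ is exactly the one forced by the normalization $(-1)^{\ell(v)}\xi^v(w)\in\Z_{\ge0}[\alpha_i]$ of Proposition \ref{P:xiprop}, which fixes the translation between the $\xi^v$ and the geometrically effective classes.

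Next I would invoke the GKM characterization (Proposition \ref{P:GKM}) to identify $\La$ with the $T$-equivariant cohomology $H^T(\Fl)$ of the flag (ind-)variety, so that, up to the sign $(-1)^{\ell(v)}$, $\xi^v$ becomes the localized class of a Schubert variety and the pointwise product becomes the cup product; this is what makes $p^w_{uv}$ an honest intersection-theoretic quantity rather than a formal coefficient. The theorem then reads: the equivariant cup product of two Schubert classes expands in the Schubert basis with coefficients in $\Z_{\ge0}[\alpha_i]$. For finite type this is \cite{Gr}, and the Kac--Moody extension is \cite{Kum}.

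The main obstacle is exactly this positivity input, which is irreducibly geometric. Its core is to represent $[X^u]\cdot[X^v]$ by an effective $T$-invariant cycle --- produced by intersecting generic $T$-translates of opposite Schubert varieties, using normality of Schubert varieties and a transversality or degeneration argument --- together with the statement that any $T$-invariant effective cycle has Graham-positive Schubert expansion. Carrying this out in the ind-variety setting, where one must track supports and cope with infinite-dimensionality, is the part the text flags as requiring work, and I do not expect the formal coproduct calculus to bypass it: the generator-level sign pattern noted in the first paragraph simply does not survive the passage to products without an external effectivity theorem.
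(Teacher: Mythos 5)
Your proposal is correct and follows the same route as the paper, which itself offers no proof of this theorem but defers it entirely to the geometric positivity results of Graham \cite{Gr} and Kumar \cite{Kum}, remarking only that it ``requires work.'' Your additional observations --- that $p^w_{uv}$ is the coefficient of $A_u\otimes A_v$ in $\Delta(A_w)$, and that the sign pattern of the generators does not survive the cancellations in the coproduct expansion, so an external geometric effectivity input is unavoidable --- are accurate and consistent with the paper's own framing.
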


\begin{prob} Find an explicit manifestly Graham-positive formula for
$p^w_{uv}$.
\end{prob}

The $p^w_{uv}$ have a geometric description as equivariant intersection numbers of
Schubert varieties in $\tFl$. Littlewood-Richardson numbers are a
very special case. The complete answer is unknown even for $G=GL_n$
and for the case $\ell(w)=\ell(u)+\ell(v)$ when $p^w_{uv}$ are integers;
this is the case of multiplying Schubert polynomials.

\begin{prop} \label{P:Chevalley}
\begin{align}
  c^\la \xi^v = (v\cdot \la) \,\xi^v + \sum_{\substack{\al\in \Phi_\re^+ \\ v \lessdot vs_\al}} \ip{\alv}{\la} \,\xi^{ vs_\al}.
\end{align}
\end{prop}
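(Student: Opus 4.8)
The plan is to avoid computing Schubert structure constants directly and instead to recognize multiplication by $c^\la$ in $\La$ as the right $\bullet$-action of a scalar on the nilHecke ring, so that the commutation relation of Proposition~\ref{P:ALBcomm} does all the combinatorial work. The crux is the identification $c^\la\cdot(-)=\la\bullet(-)$ on $\La$; once this is established, the formula falls out by reading off one side on the functional basis $W$ and the other on the algebra basis $\{A_w\}$.

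First I would show that multiplication by $c^\la$ coincides with the $\bullet$-action of $\la\in X\subset S\subset\A$ from \eqref{E:leftQWonAQ}. Both $c^\la\cdot f$ and $\la\bullet f$ lie in $\La$: the former because $c^\la\in\La$ (Lemma~\ref{L:LBclass}) and $\La$ is a commutative $S$-algebra under the pointwise product (Lemma~\ref{L:GKMclosed}, \eqref{E:pointwise}), the latter because the $\bullet$-action preserves $\La$ (Lemma~\ref{L:AonGKM}). Since an element of $\AQ^*$ is determined by its values on the $\Fr$-basis $W$ of $\AQ$, it suffices to compare both on each $w\in W$. On one hand $(c^\la f)(w)=c^\la(w)\,f(w)=(w\cdot\la)\,f(w)$ by \eqref{E:firstChern} and \eqref{E:pointwise}; on the other $(\la\bullet f)(w)=f(w\la)=f((w\cdot\la)\,w)=(w\cdot\la)\,f(w)$, using the commutation $w\la=(w\cdot\la)w$ in $\AQ$ coming from \eqref{E:QWmult} together with left-$\Fr$-linearity of $f$. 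Hence $c^\la\cdot f=\la\bullet f$ for all $f\in\La$; in particular $c^\la\xi^v=\la\bullet\xi^v$.

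Then I would evaluate $\la\bullet\xi^v$ on the $S$-basis $\{A_w\}$, since this reads off its Schubert coefficients: because $\{\xi^w\}$ is dual to $\{A_w\}$ (\eqref{E:APhidualbases}, Proposition~\ref{P:GKMbasis}), any $g\in\La$ satisfies $g=\sum_w g(A_w)\,\xi^w$. By definition $(\la\bullet\xi^v)(A_w)=\xi^v(A_w\la)$, and Proposition~\ref{P:ALBcomm} gives $A_w\la=(w\cdot\la)A_w+\sum_{ws_\al\lessdot w}\ip{\alv}{\la}A_{ws_\al}$. Applying the $S$-linear functional $\xi^v$ with $\xi^v(A_{w'})=\delta_{vw'}$ yields $(\la\bullet\xi^v)(A_w)=(w\cdot\la)\delta_{vw}+\sum_{ws_\al\lessdot w}\ip{\alv}{\la}\delta_{v,ws_\al}$. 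The first term contributes only at $w=v$, giving coefficient $v\cdot\la$ for $\xi^v$, while the second contributes exactly when $w=vs_\al$ with $ws_\al=v\lessdot w$, i.e. $v\lessdot vs_\al$, giving coefficient $\ip{\alv}{\la}$ for $\xi^{vs_\al}$. Summing,
\[
c^\la\xi^v=\la\bullet\xi^v=(v\cdot\la)\,\xi^v+\sum_{\substack{\al\in\Phi_\re^+\\ v\lessdot vs_\al}}\ip{\alv}{\la}\,\xi^{vs_\al},
\]
as claimed.

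The only genuinely delicate point is keeping the two evaluations straight: multiplication by $c^\la$ is transparent on the functional basis $W$ (producing the pointwise value $(w\cdot\la)f(w)$), whereas the Schubert expansion is transparent on the algebra basis $\{A_w\}$, and it is precisely the identification $c^\la\cdot=\la\bullet$ that lets me pass between them. I expect no serious obstacle beyond this bookkeeping and the routine check that the covering condition $ws_\al\lessdot w$ becomes $v\lessdot vs_\al$ after the substitution $w=vs_\al$; notably, this route needs no separate analysis of off-diagonal localizations $\xi^v(vs_\al)$.
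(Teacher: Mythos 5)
Your proof is correct, and it follows exactly the route the paper sets up for this statement (which is left as Exercise \ref{EX:locareconst}): the identification $c^\la\cdot f=\la\bullet f$ is recorded in the paper as \eqref{E:LBprod}, and dualizing the commutation relation of Proposition~\ref{P:ALBcomm} against the basis $\{\xi^w\}$ via \eqref{E:APhidualbases} is precisely the intended argument. The bookkeeping with the covering condition $ws_\al\lessdot w \Leftrightarrow v\lessdot vs_\al$ under $w=vs_\al$ is handled correctly.
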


\begin{cor} \label{C:SchubChev}
For all $v\in W$ and $i\in I$,
\begin{align}
  \xi^{s_i} \xi^v = (v\cdot\omega_i-\omega_i) \,\xi^v + \sum_{\substack{\al\in \Phi_\re^+ \\ v\lessdot vs_\al}} \ip{\alv}{\omega_i}\,
  \xi^{vs_\al}.
\end{align}
\end{cor}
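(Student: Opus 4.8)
The plan is to deduce Corollary \ref{C:SchubChev} directly from the equivariant Chevalley formula of Proposition \ref{P:Chevalley} by specializing the weight $\la$ to a fundamental weight $\om_i$ and re-expressing the class $c^{\om_i}$ in terms of $\xi^{s_i}$. Throughout I work with a simply connected root datum, so that $\om_i \in X \subset S$ and the defining pairing $\ip{\alv_j}{\om_i} = \delta_{ij}$ holds; this is exactly what is needed for $c^{\om_i}$ to be defined via \eqref{E:firstChern}.

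First I would apply Lemma \ref{L:LBclass} with $\la = \om_i$. Since $\ip{\alv_j}{\om_i} = \delta_{ij}$, the sum over $j\in I$ collapses to a single term and we obtain $c^{\om_i} = \om_i\,\xi^\id + \xi^{s_i}$, where $\om_i$ acts via the $S$-module structure \eqref{E:PhiSaction}. The next step is to identify $\xi^\id$ as the multiplicative identity of the commutative algebra $(\La,\cdot)$ under the pointwise product \eqref{E:pointwise}: by Lemma \ref{L:xid} one has $\xi^\id(w) = d_{\id,w}$, and Proposition \ref{P:dmatrix} shows that the only subexpression of a reduced word of $w$ contributing to $d_{\id,w}$ is the empty one, whose contribution is $w\cdot 1 = 1$ because $W$ fixes the constants of $\Fr$. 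Hence $\xi^\id(w) = 1$ for all $w$, i.e. $\xi^\id$ is the constant function $1$. Consequently $\xi^\id\,\xi^v = \xi^v$, and the $S$-action of $\om_i$ coincides with pointwise multiplication by the polynomial $\om_i$, since $(\om_i\,\xi^v)(w) = \om_i\,\xi^v(w)$.

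Solving the identity from the first step gives $\xi^{s_i} = c^{\om_i} - \om_i\,\xi^\id$. Multiplying by $\xi^v$ and using $\xi^\id\,\xi^v = \xi^v$ yields $\xi^{s_i}\,\xi^v = c^{\om_i}\,\xi^v - \om_i\,\xi^v$. Finally I would substitute the expansion of $c^{\om_i}\,\xi^v$ given by Proposition \ref{P:Chevalley} with $\la = \om_i$, namely $c^{\om_i}\xi^v = (v\cdot\om_i)\,\xi^v + \sum_{\al\in\Phi_\re^+,\ v\lessdot vs_\al} \ip{\alv}{\om_i}\,\xi^{vs_\al}$, and collect the two $\xi^v$-terms via $(v\cdot\om_i)\xi^v - \om_i\xi^v = (v\cdot\om_i - \om_i)\,\xi^v$. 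This gives precisely the claimed formula. There is no serious obstacle: the only points requiring care are the verification that $\xi^\id = 1$ in the pointwise-product algebra and the compatibility of the scalar action with pointwise multiplication, both immediate from the cited results. The genuine content of the corollary already resides in Proposition \ref{P:Chevalley}, whose proof is the substantive step.
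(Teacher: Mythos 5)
Your proof is correct, and it follows the route the paper clearly intends: the corollary is stated immediately after Proposition \ref{P:Chevalley} and Lemma \ref{L:LBclass} and left as an exercise, and the intended argument is exactly your specialization $c^{\om_i} = \om_i\,\xi^{\id} + \xi^{s_i}$ together with $\xi^{\id}=1$ and the Chevalley formula. Your care about the root datum admitting $\om_i\in X$ and about $\xi^{\id}$ being the identity for the pointwise product are both appropriate and consistent with the paper (cf.\ Example \ref{X:forget}).
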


\begin{lem} \label{L:locareconst}
\begin{align} \label{E:locareconst}
  p^w_{vw} = \xi^v(w).
\end{align}
\end{lem}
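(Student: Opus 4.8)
The plan is to exploit the pointwise nature of the product in $\La$ together with the two triangularity properties already established: the support condition \eqref{E:xisupport} on the localized Schubert classes $\xi^z(w)$, and the support condition for the structure constants in Proposition \ref{P:costruct}(2). The whole argument amounts to evaluating the defining relation for $p^w_{vw}$ at a single cleverly chosen element of $W$ and watching the sum collapse.

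First I would write down the definition \eqref{E:costructconst} of the structure constants in the relevant specialization, namely
\begin{align*}
  \xi^v \xi^w = \sum_{z\in W} p^z_{vw}\, \xi^z,
\end{align*}
and then evaluate both sides at the Weyl group element $w$. Since the multiplication in $\La$ is the pointwise product \eqref{E:pointwise}, the left-hand side evaluates to $\xi^v(w)\,\xi^w(w)$, giving
\begin{align*}
  \xi^v(w)\,\xi^w(w) = \sum_{z\in W} p^z_{vw}\,\xi^z(w).
\end{align*}
Next I would argue that the right-hand sum collapses to the single term $z=w$. A summand $p^z_{vw}\,\xi^z(w)$ can be nonzero only if both factors are nonzero. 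By \eqref{E:xisupport}, $\xi^z(w)=0$ unless $z\le w$; by Proposition \ref{P:costruct}(2), $p^z_{vw}=0$ unless $v\le z$ and $w\le z$, in particular $w\le z$. Combining $z\le w$ and $w\le z$ forces $z=w$, so
\begin{align*}
  \xi^v(w)\,\xi^w(w) = p^w_{vw}\,\xi^w(w).
\end{align*}

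Finally I would cancel the common factor $\xi^w(w)$. This is legitimate because $\xi^w(w)$ is a nonzero element of $S$: by \eqref{E:xidiag} it equals $(-1)^{\ell(w)}$ times a product of positive real roots, hence is a nonzero polynomial, and $S=\Sym(X)$ is an integral domain. Dividing through yields $p^w_{vw}=\xi^v(w)$, as desired. There is no serious obstacle here; the only point requiring care is the nonvanishing of $\xi^w(w)$ and the cancellation in the domain $S$, both of which are immediate from the cited results. The conceptual content is simply that the transition matrix $(\xi^z(w))$ between the Schubert basis and ``evaluation at $W$'' is triangular with invertible diagonal, so the ``diagonal'' structure constant $p^w_{vw}$ is read off directly as a localization value.
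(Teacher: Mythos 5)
Your proof is correct. The paper does not actually supply a proof of this lemma (it is relegated to Exercise \ref{EX:locareconst}), but your argument --- evaluate $\xi^v\xi^w=\sum_z p^z_{vw}\xi^z$ at the point $w$ using the pointwise product \eqref{E:pointwise}, collapse the sum to $z=w$ by combining the support condition \eqref{E:xisupport} with Proposition \ref{P:costruct}(2), and cancel the nonzero factor $\xi^w(w)$ from \eqref{E:xidiag} in the integral domain $S$ --- is precisely the standard localization/triangularity argument the exercise intends.
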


\begin{exercise} \label{EX:locareconst} Prove Proposition
\ref{P:Chevalley}, Corollary \ref{C:SchubChev}, and Lemma
\ref{L:locareconst}.
\end{exercise}

The product in $\La$ can be recovered in $\A$ by duality.
Recall $\Delta$ from \eqref{E:DeltaQdef}.
Define the left $S$-bilinear pairing
\begin{align}
\ip{\cdot}{\cdot}: (\La \otimes_S \La) \times \Image(\Delta)
&\to S\\
  \ip{f\otimes g}{\Delta(a)} &=
  \sum_a f(a_{(1)}) g(a_{(2)})
\end{align}

\begin{lem} \label{L:multcoprod} For $f,g\in\La$ and $a\in \A$, we
have
\begin{align}\label{E:multcoprod}
  \ip{fg}{a} = \ip{f\otimes g}{\Delta(a)}.
\end{align}
\end{lem}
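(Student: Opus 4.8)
The plan is to exploit the fundamental fact that the coproduct diagonalizes group elements, $\Delta(w) = w \otimes w$ for $w \in W$ (equation \eqref{E:DeltaQdef}), together with the definition of the pointwise product \eqref{E:pointwise}, and then to promote the resulting identity from the $\Fr$-basis $\{w \mid w \in W\}$ of $\AQ$ to all of $\A$ by linearity.

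First I would make the right-hand pairing unambiguous. The expression $\sum_a f(a_{(1)}) g(a_{(2)})$ uses a decomposition $\Delta(a) = \sum_a a_{(1)} \otimes a_{(2)}$ whose individual summands (for instance $A_i \otimes 1$ appearing in $\Delta(A_i)$) need not lie in $\Image(\Delta)$, so I must first check that it is well defined. Since $\A$ is $S$-free on $\{A_w\}$ (Lemma \ref{L:Abasis}), the module $\A \otimes_S \A$ is $S$-free on $\{A_v \otimes A_w\}$, and I would define an honest $S$-bilinear pairing $\langle \cdot, \cdot \rangle : (\La \otimes_S \La) \times (\A \otimes_S \A) \to S$ by $\langle f \otimes g, A_v \otimes A_w \rangle = f(A_v) g(A_w)$. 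Compatibility with the $S$-tensor relation $sf \otimes g = f \otimes sg$ follows from \eqref{E:PhiSaction}, so this descends to $\La \otimes_S \La$, and the Sweedler formula then computes this honest functional regardless of the chosen expression for $\Delta(a)$. With this in hand, both $a \mapsto \langle fg, a \rangle$ and $a \mapsto \langle f \otimes g, \Delta(a) \rangle$ are $S$-linear functionals on $\A$.

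Next I would reduce to group elements. Both functionals extend $\Fr$-linearly to $\AQ$: the left one because $fg \in \AQ^*$ is already $\Fr$-linear, and the right one by base-changing the pairing to $(\AQ^* \otimes_\Fr \AQ^*) \times (\AQ \otimes_\Fr \AQ) \to \Fr$ and using the $\Fr$-linear coproduct on $\AQ$ sending $w \mapsto w \otimes w$, which restricts to the coproduct on $\A$ by construction (Proposition \ref{P:coalg}). On a group element $a = w \in W$ the identity is immediate: $\Delta(w) = w \otimes w$ gives $\langle f \otimes g, \Delta(w) \rangle = f(w) g(w)$, while $\langle fg, w \rangle = (fg)(w) = f(w) g(w)$ by \eqref{E:pointwise}. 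Since $\{w \mid w \in W\}$ is an $\Fr$-basis of $\AQ$ by \eqref{E:AQ}, two $\Fr$-linear functionals agreeing on it agree on all of $\AQ$, hence in particular on $\A$, which is the assertion.

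The main obstacle will be purely formal rather than computational: keeping the two scalar rings $S$ and $\Fr$ straight, and in particular establishing that the Sweedler-style pairing is basis-independent and is genuinely the restriction of the $S$-bilinear pairing on $\A \otimes_S \A$. Once this bookkeeping is done correctly the proof collapses, since $\Delta$ acts diagonally on the $\Fr$-basis of group elements and the two products --- pointwise in $\La$ and componentwise via $\Delta$ in $\A$ --- literally coincide there. I would also note that reducing to the $\Fr$-basis $\{w\}$ rather than the $S$-basis $\{A_w\}$ is legitimate precisely because every functional involved is $\Fr$-linear after base change; attempting to check the identity directly on $A_w$ would instead require the unwieldy expansion of $\Delta(A_w)$ (compare Lemma \ref{L:lastincoprod}) and is best avoided.
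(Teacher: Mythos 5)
Your proof is correct and follows the same route as the paper: extend everything $\Fr$-linearly, reduce to the basis of group elements $w\in W$, and use $\Delta(w)=w\otimes w$ together with the pointwise product to get $f(w)g(w)$ on both sides. The paper's proof is just a terser version of yours, omitting the bookkeeping you carry out on the well-definedness of the Sweedler-style pairing.
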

\begin{proof} We give the proof over $\Fr$.
By linearity we may assume $a=w\in W$. We have
\begin{align}
  \ip{f\otimes g}{\Delta(w)} = \ip{f\otimes g}{w \otimes w} =
  f(w)g(w) = \ip{fg}{w}.
\end{align}
\end{proof}

Applying \eqref{E:multcoprod} we see that
\begin{align*}
  p^w_{uv} &= \ip{\xi^u\xi^v}{A_w} \\
  &=\ip{\xi^u\otimes\xi^v}{\Delta(A_w)}.
\end{align*}
Therefore $p^w_{uv}$ is the coefficient of $A_u \otimes A_v$ in
$\Delta(A_w)$. Let $w=s_{i_1}\dotsm s_{i_\ell}$ be a reduced
decomposition. By Proposition \ref{P:coalg}, $\Delta(w)$ can be computed
as the componentwise product of $\Delta(A_{i_1})\dotsm
\Delta(A_{i_\ell})$.

\subsection{Forgetting equivariance}
Recall that $X\subset S$ is the subspace of linear polynomials.
Consider the ring homomorphism
$\epsilon_0^S:S\to\Z$ defined by the property $\epsilon_0^S(1)=1$ and $\epsilon_0^S(X)=0$.
Define the ring homomorphism
$\epsilon_0^\La:\La\to \La_0 := \Z\otimes_S \La $. It may be computed as
follows. Write $f\in\La$ as $f=\sum_{v\in W} f_v \xi^v$ for some
$f_v\in S$. Then $\epsilon_0^\La(f) = \sum_{v\in W} \epsilon_0^S(f_v)\, \xi^v_0$
where $\xi^v_0=\epsilon_0^\La(\xi^v)\in\La_0$.

\begin{prop} $\{\xi^v_0\mid v\in W\}$ is a $\Z$-basis of $\La_0$. This basis has structure constants
given by the degree zero structure constants of $\La$:
\begin{align*}
  \xi^u_0\, \xi^v_0 &= \sum_{w\in W} \phi_0(p^w_{uv})\, \xi^w_0 \\
  &= \sum_{\substack{w\in W \\ \ell(w)=\ell(u)+\ell(v)}} p^w_{uv} \,\xi^w_0.
\end{align*}
\end{prop}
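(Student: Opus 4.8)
The statement is a base-change assertion, so the plan is to deduce both parts formally from two inputs already in hand: the freeness of $\La$ over $S$ with Schubert basis $\{\xi^v\}$ (Proposition \ref{P:GKMbasis}), and the fact that $\epsilon_0^\La$ is the ring homomorphism obtained from the augmentation $\phi_0 = \epsilon_0^S\colon S\to\Z$ by the base change $\Z\otimes_S(-)$. First I would record that $\La$ is a commutative $S$-algebra (this is Lemma \ref{L:GKMclosed} together with the finiteness of the expansion \eqref{E:costructconst}, guaranteed by Proposition \ref{P:costruct}), so that $\La_0=\Z\otimes_S\La$ is indeed a $\Z$-algebra and $\epsilon_0^\La$ a ring map. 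I note also that $\epsilon_0^\La$ is $\phi_0$-semilinear, i.e.\ $\epsilon_0^\La(s\,f)=\phi_0(s)\,\epsilon_0^\La(f)$ for $s\in S$ and $f\in\La$, since in $\Z\otimes_S\La$ one has $1\otimes(s\cdot f)=\phi_0(s)(1\otimes f)$.

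For the basis claim I would invoke the general fact that base change along a ring homomorphism carries a free module to a free module whose basis is the image of the old one. Concretely, from $\La=\bigoplus_{v\in W} S\,\xi^v$ one gets $\La_0=\Z\otimes_S\La=\bigoplus_{v\in W}\Z\otimes_S (S\xi^v)=\bigoplus_{v\in W}\Z\,\xi^v_0$, where $\xi^v_0=\epsilon_0^\La(\xi^v)=1\otimes\xi^v$. Hence $\{\xi^v_0\mid v\in W\}$ spans $\La_0$ over $\Z$ and is $\Z$-linearly independent, that is, a $\Z$-basis.

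For the structure constants, the plan is simply to apply the ring homomorphism $\epsilon_0^\La$ to the defining relation $\xi^u\xi^v=\sum_{w\in W}p^w_{uv}\,\xi^w$. Multiplicativity gives $\epsilon_0^\La(\xi^u\xi^v)=\xi^u_0\,\xi^v_0$ on the left, while the $\phi_0$-semilinearity recorded above gives $\sum_{w}\phi_0(p^w_{uv})\,\xi^w_0$ on the right, a finite sum by Proposition \ref{P:costruct}. This yields the first equality. To obtain the second, I would invoke Proposition \ref{P:costruct}(1): $p^w_{uv}$ is homogeneous of degree $\ell(u)+\ell(v)-\ell(w)$, so $\phi_0(p^w_{uv})=0$ unless that degree vanishes, and when $\ell(w)=\ell(u)+\ell(v)$ the constant $p^w_{uv}$ is a degree-zero polynomial, i.e.\ an integer fixed by $\phi_0$. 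Restricting the sum to such $w$ finishes the argument.

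There is essentially no hard step here: once freeness and the ring-homomorphism property of $\epsilon_0^\La$ are in place, everything is formal. The only points requiring a moment of care are the grading bookkeeping (passing from $\phi_0(p^w_{uv})$ to the length condition $\ell(w)=\ell(u)+\ell(v)$) and the finiteness of the relevant sums, both immediate from Proposition \ref{P:costruct}. If one preferred to avoid citing the abstract base-change lemma, the same conclusions follow by writing a general $f\in\La$ as $\sum_v f_v\xi^v$ with $f_v\in S$ and computing $\epsilon_0^\La(f)=\sum_v\phi_0(f_v)\,\xi^v_0$ directly, exactly as indicated in the paragraph preceding the statement.
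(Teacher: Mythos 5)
Your argument is correct and is exactly the intended one: the paper states this proposition without proof, but the paragraph immediately preceding it (writing $f=\sum_v f_v\xi^v$ and computing $\epsilon_0^\La(f)=\sum_v \epsilon_0^S(f_v)\xi^v_0$) is precisely the computation you carry out, combined with base change of the free module $\La=\bigoplus_v S\,\xi^v$ and the degree count from Proposition \ref{P:costruct}. Nothing is missing; your observation that $\phi_0(p^w_{uv})=0$ unless $\ell(w)=\ell(u)+\ell(v)$, in which case $p^w_{uv}$ is already an integer, is the whole content of the second equality.
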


The following is due to Lam~\cite{Lam:2008}; see also Chapter 3 Section~\ref{chapter3.section.commutation}.
Define $\epsilon_0^\A:\A\to \Z \otimes_S \A$  by $\epsilon_0^\A(a)=1\otimes a$.
By abuse of notation we write $A_w$
for its image under $\epsilon_0^\A$. Then for $a_w\in S$ we have
\begin{align}\label{E:phi0A}
\epsilon_0^\A(\sum_{w\in W} a_w A_w)=\sum_{w\in W} \epsilon_0^S(a_w) A_w.
\end{align}
There is a ring isomorphism $\Z\otimes_S \A\cong \A_0$ where $\A_0$
is the nilCoxeter algebra. Using this identification we may view $\epsilon_0^\A:\A\to \A_0$.

\begin{example} \label{X:forget} For the root datum of type $A_1$, $W=\{\id,s\}$, $\Phi_\re^+=\{\al\}$,
$\xi^\id(\id)=\xi^\id(s)=1$, $\xi^s(\id)=0$ and $\xi^s(s)=-\al$. Then
$\xi^\id$ is the identity in $\La$ and $\xi^s \xi^s = -\al \xi^s$.
In $\La_0$, $\xi^\id_0$ is the identity and $(\xi^s_0)^2=0$.
\end{example}

\subsection{Parabolic case}
\label{SS:parabolic}
Let $J\subset I$ be a fixed subset. Let $W_J\subset W$ be the
subgroup generated by $s_i$ for $i\in J$. Let $W^J$ denote the
set of minimum length coset representatives in $W/W_J$. Define
\begin{align}
  \La^J = \{f \in\La\mid \text{$f(wu)=f(w)$ for all $w\in W$, $u\in W_J$} \}.
\end{align}

\begin{prop}\label{P:partialflagbasis}
$\La^J$ is an $S$-subalgebra of $\La$ with
basis $\{\xi^v\mid v\in W^J\}$.
\end{prop}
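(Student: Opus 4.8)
$\Lambda^J$ is an $S$-subalgebra of $\Lambda$ with basis $\{\xi^v \mid v \in W^J\}$.

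Let me sketch how I would prove this. We need to establish three things: that $\Lambda^J$ is closed under the $S$-algebra operations, that the claimed elements $\{\xi^v \mid v \in W^J\}$ actually lie in $\Lambda^J$, and that they form a basis.

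The plan is to verify the subalgebra property by a direct check and then to establish the basis claim in two independent halves: that each $\xi^v$ with $v\in W^J$ actually lies in $\Lambda^J$, and that these elements span $\Lambda^J$ over $S$. Linear independence will come for free, since $\{\xi^v\mid v\in W^J\}$ is a subset of the $S$-basis $\{\xi^v\mid v\in W\}$ of $\Lambda$ from Proposition~\ref{P:GKMbasis}. The subalgebra step is routine: the defining condition $f(wu)=f(w)$ is visibly preserved under $S$-scaling and, using the pointwise product of Lemma~\ref{L:GKMclosed}, under products, since $(fg)(wu)=f(wu)g(wu)=f(w)g(w)=(fg)(w)$; and the unit $\xi^{\id}$ lies in $\Lambda^J$ because $\xi^{\id}(w)=1$ for all $w$ (which follows by induction from the recursion \eqref{E:xirec}, as $\chi(s_i<\id)=0$).

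For membership, I would exploit the fact that $v\in W^J$ has no right descent in $J$, i.e.\ $v<vs_i$ for all $i\in J$. Then $\chi(vs_i<v)=0$, so the recursion \eqref{E:xirec} collapses to $\xi^v(w)=\xi^v(ws_i)$ whenever $ws_i<w$ and $i\in J$. Applying this identity with $w$ replaced by $ws_i$ disposes of the ascent case, so $\xi^v(w)=\xi^v(ws_i)$ for \emph{all} $w\in W$ and all $i\in J$; iterating over the simple generators of $W_J$ yields $\xi^v(wu)=\xi^v(w)$ for every $u\in W_J$, that is, $\xi^v\in\Lambda^J$.

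The spanning step is the crux and the main obstacle, because the triangularity $\xi^v(w)=0$ unless $v\le w$ has non-unit diagonal entries $\xi^v(v)$ and so cannot be inverted over $S$ directly; instead I would extract vanishing of coefficients using the $\bullet$-action of $\A$. For $f\in\Lambda^J$ and $i\in J$, I compute $A_i\bullet f$ from $(A_i\bullet f)(w)=f(wA_i)$ of \eqref{E:leftQWonAQ}, using $A_i=\al_i^{-1}(1-s_i)$ of \eqref{E:Aal} and the commutation $wq=(w\cdot q)w$ in $\AQ$ of \eqref{E:QWmult}; this gives $(A_i\bullet f)(w)=(w\cdot\al_i)^{-1}\bigl(f(w)-f(ws_i)\bigr)$, which vanishes identically since $s_i\in W_J$ forces $f(ws_i)=f(w)$. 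As $A_i\bullet f\in\Lambda$ by Lemma~\ref{L:AonGKM} and is determined by its values on $W$, we conclude $A_i\bullet f=0$. Expanding $f=\sum_{v\in W}c_v\,\xi^v$ with $c_v\in S$ and applying $A_i\bullet\xi^v=\chi(vs_i<v)\,\xi^{vs_i}$ from Lemma~\ref{L:Aonxi} then gives $\sum_{v:\,vs_i<v}c_v\,\xi^{vs_i}=0$; the elements $vs_i$ are pairwise distinct, so $c_v=0$ for every $v$ with $vs_i<v$. Since each $v\notin W^J$ has some right descent $i\in J$, this forces $c_v=0$ for all $v\notin W^J$, proving $f\in\mathrm{span}_S\{\xi^v\mid v\in W^J\}$ and completing the argument.
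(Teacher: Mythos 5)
Your proof is correct and complete; the paper itself states Proposition~\ref{P:partialflagbasis} without proof, so there is no argument of the authors' to compare against. All three steps check out: the pointwise product of Lemma~\ref{L:GKMclosed} immediately gives the subalgebra property; the membership step correctly exploits that $v\in W^J$ has no right descent in $J$, so the recursion \eqref{E:xirec} forces $\xi^v$ to be constant on cosets $wW_J$; and the spanning step — extracting $c_v=0$ for $v\notin W^J$ from $A_i\bullet f=0$ via Lemma~\ref{L:Aonxi}, rather than trying to invert the (non-unitriangular) localization matrix — is exactly the right device and is the standard Kostant--Kumar argument. The only facts you use without proof (that $v\in W^J$ iff $vs_i>v$ for all $i\in J$, and that right multiplication by $s_i$ is injective) are standard Coxeter-group theory, so nothing is missing.
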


\begin{remark} There are polynomial or symmetric function
representatives for many cases of cohomological Schubert classes.
Nonequivariant:
\begin{enumerate}
\item Grassmannian $\Gr(r,n)$. Schur functions.
\item Lagrangian Grassmannian $LG(n,2n)$. Schur $Q$-functions. \cite{Pr}.
\item Orthogonal Grassmannians $OG(n,2n+1)$ and $OG(n+1,2n+2)$:
Schur $P$-functions. \cite{PR}.
\item $\Fl$ for $SL_n$: Lascoux-Sch\"utzenberger Schubert polynomials \cite{LS:SchubertPoly}.
\item $\Fl$ for $Sp_{2n}$, $SO_{2n+1}$, $SO_{2n}$: Billey-Haiman Schubert
polynomials \cite{BH}.
\item $\Gr_{SL_{k+1}}$: dual $k$-Schur or affine Schur functions
\cite{LM:2007} \cite{LM:2008} \cite{Lam:2006} \cite{Lam:2008}.
\item $\Gr_{Sp_{2n}}$: \cite{LSS:C}.
\item $\Gr_{SO_n}$: \cite{Pon}.
\end{enumerate}
Equivariant:
\begin{enumerate}
\item $\Fl$ for $SL_n$: Double Schubert polynomials \cite{Las}
\item $\Fl$ in classical type: BCD double Schubert
polynomials \cite{IMN}.
\end{enumerate}
\end{remark}

\subsection{Geometric interpretations}
Fix a root datum. Let $T\subset B\subset G$
be a maximal torus contained in a Borel subgroup $B$ in the
Kac-Moody group $G$ \cite{Kum}. The Kac-Moody flag ind-variety $\Fl=G/B$ \cite{Kum}
is paved by cells $B\dot{w}B/B\cong \C^{\ell(w)}$ whose closures $X_w$
define equivariant fundamental homology classes $[X_w]_T\in H_T(\Fl)$. Finally, 
the homology $H_T(\Fl)$ has a noncommutative ring structure. 
Let $G$ act diagonally on $\Fl\times \Fl$. There are isomorphisms
\begin{align}\label{E:conviso}
H_G(\Fl\times\Fl) \cong H_{G\times (B\times B)}(G \times G) \cong H_{B\times B}(G) \cong H_B(\Fl)\cong H_T(\Fl).
\end{align}
The left hand side has a convolution product \cite{Gin:1997}. There are also isomorphisms
\begin{align}\label{E:convmodiso}
  H_G(\Fl) \cong H_{G\times B}(G) \cong H_B(G\backslash G) \cong H_B(\pnt) \cong H_T(\pnt).
\end{align}
Finally, $H_G(\Fl\times \Fl)$ acts on $H_G(\Fl)$ by convolution \cite{Gin:1997}.
The following is \cite[Prop. 12.8]{Gin:1997}.

\begin{theorem} \label{T:Hom}
\begin{enumerate}
\item
There is an injective ring homomorphism
\begin{align}
  H_T(\pnt) \cong H_G(\Fl) \to H_G(\Fl\times \Fl)
\end{align}
given by composing the isomorphism \eqref{E:convmodiso} with
the embedding induced by the $G$-equivariant
diagonal inclusion $\Fl\to\Fl\times\Fl$.
\item There is an injective ring homomorphism
\begin{align}
  \A_0 &\to H_G(G/B \times G/B) \\
  A_w &\mapsto [O_w]_G \qquad\text{for $w\in W$}
\end{align}
where $[O_w]_G$ is the $G$-equivariant fundamental class of the closure
$O_w$ of the diagonal $G$-orbit of the point $(\dot{e}B/B,\dot{w}B/B)$.
\item The above maps define a ring and left $S$-module isomorphism
\begin{align}\label{E:Hiso}
  \A \cong H_G(\Fl\times\Fl).
\end{align}
\item The following diagram commutes where the horizontal maps are
action maps and the vertical maps are isomorphisms.
\begin{align*}
\begin{diagram}
 \node{H_G(\Fl \times \Fl) \times H_G(\Fl)} \arrow{e,t}{}\arrow{s}{} \node{H_G(\Fl)} \arrow{s}{} \\
 \node{\A \times H_T(\pnt)} \arrow{e,b}{} \node{H_T(\pnt)}
\end{diagram}
\end{align*}
\end{enumerate}
\end{theorem}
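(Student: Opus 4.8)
The plan is to follow Ginzburg's convolution formalism \cite{Gin:1997}, treating the four assertions as the translation of his Proposition 12.8 into the algebraic language of the nilHecke ring $\A$ developed above. The central object is the convolution algebra structure on $H_G(\Fl\times\Fl)$, so the first task is to pin down this product together with the two chains of isomorphisms \eqref{E:conviso} and \eqref{E:convmodiso}. Each arrow in those chains is an instance of a change-of-groups isomorphism in equivariant cohomology: the passage from $G$-equivariance on $\Fl\times\Fl$ to $(B\times B)$-equivariance on $G$ uses that $G\to G/B=\Fl$ is a $B$-bundle, while the final identification $H_B(\pnt)\cong H_T(\pnt)$ uses that $B$ deformation retracts onto $T$. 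First I would verify that these isomorphisms are compatible with the ring structures, so that \eqref{E:convmodiso} becomes an isomorphism of $S$-algebras identifying $H_G(\Fl)$ with the scalar ring $S=H_T(\pnt)$.

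For part (1), the $G$-equivariant diagonal $\Fl\to\Fl\times\Fl$ induces a pushforward $H_G(\Fl)\to H_G(\Fl\times\Fl)$ whose image is the diagonal class; I would check that under the convolution product this diagonal class is the multiplicative unit and that the resulting map is a ring homomorphism, with injectivity following from the freeness of $H_G(\Fl\times\Fl)$ over its scalar subring. For part (2), the key geometric input is that the $G$-orbits on $\Fl\times\Fl$ are indexed by $W$ (equivalently, the $B$-orbits on $\Fl$, i.e.\ the Schubert cells), with closures $O_w$, and that convolution of fundamental classes obeys $[O_v]_G * [O_w]_G = [O_{vw}]_G$ when $\ell(vw)=\ell(v)+\ell(w)$ and vanishes otherwise. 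This is precisely the nilCoxeter relation \eqref{E:Aadd}, so $A_w\mapsto [O_w]_G$ extends to a ring homomorphism $\A_0\to H_G(\Fl\times\Fl)$, and injectivity is immediate once the $[O_w]_G$ are known to be linearly independent.

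To obtain the isomorphism of part (3), I would combine parts (1) and (2): the scalars $S$ together with the classes $A_w$ generate $\A$, and by Lemma \ref{L:Abasis} the set $\{A_w\mid w\in W\}$ is a free $S$-basis. On the geometric side the classes $[O_w]_G$ form an $H_T(\pnt)$-basis of $H_G(\Fl\times\Fl)$ coming from the cell paving of $\Fl$, so the combined map is a left $S$-module isomorphism; one then checks it respects products, yielding \eqref{E:Hiso}. Finally, part (4) asserts that the convolution action of $H_G(\Fl\times\Fl)$ on $H_G(\Fl)$ corresponds, under \eqref{E:Hiso} and \eqref{E:convmodiso}, to the action of $\A$ on $S$ by divided difference operators. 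I would verify this on generators, confirming that $[O_{s_i}]_G$ acts as $A_i$ via the operator description of Lemma \ref{L:AonS} and that the scalars act by multiplication, after which commutativity of the diagram follows by multiplicativity of both horizontal action maps.

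The hard part will be the geometric computation underlying part (2): identifying the orbit closures $O_w$, describing the fiber-product correspondences over $\Fl\times\Fl\times\Fl$ used to define convolution, and showing that the pushforward of their intersection reproduces $[O_{vw}]_G$ with the length-additivity dichotomy. This is where the smoothness of the Schubert correspondences and a transversality/dimension count enter, and it is the heart of Ginzburg's argument. The remaining delicate point is purely bookkeeping of conventions, namely fixing the normalizations that make the diagonal the unit and identify $[O_{s_i}]_G$ with the divided difference $A_i$ rather than with $s_i$ or some multiple; every subsequent identification depends on getting this normalization right.
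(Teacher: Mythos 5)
The paper gives no proof of this theorem: it is stated verbatim as Ginzburg's Proposition 12.8, with only the citation \cite{Gin:1997} offered in its place. Your outline is the standard convolution-algebra argument from that reference --- and it correctly locates the substantive work in the length-additivity dichotomy for $[O_v]_G \cdot [O_w]_G$ and in the normalization identifying $[O_{s_i}]_G$ with $A_i$ --- so it is consistent with the approach the paper defers to.
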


%\begin{proof}
%\begin{align*}
%  H_G(\Fl \times \Fl) &= H_G (G/B\times G/B) \\
%  &= H_{G \times (B\times B)}(G \times G) \\
%  &= H_{B\times B}(1 \times G) \\
%  &= H_B(G/B) \\
%  &= H_T(\Fl)
%\end{align*}
%using standard isomorphisms. Note that $B\times B$ acts on $G\times G$
%by $(b_1,b_2)\cdot(g_1,g_2)=(g_1b_1^{-1},g_2b_2^{-1})$
%and that if one uses the isomorphism $\Delta(G) \cdot (G \times G) \cong 1 \times G$
%then the action of $(b_1,b_2)$ on $1\times G$ is $(b_1,b_2)\cdot g = b_1 g b_2^{-1}$.
%$H_G(\Fl\times \Fl)$ acts on $H_G(\Fl)$ by convolution.
%\end{proof}

There is another kind of Kac-Moody flag variety $\tFl$ with $T$-action called a thick flag scheme \cite{Kas}.
There is a bijection $\tFl^T\cong W$ of the $T$-fixed point set of $\tFl$ with $W$. The thick flag scheme $\tFl$
is paved by finite-codimensional cells, each of which contains a unique $T$-fixed point.
The closure of the cell containing $w$ is denoted $X^w$ and defines a class $[X^w]^T\in H^T(\tFl)$
such that $\{[X_w]_T\mid w\in W\}$ and $\{[X^w]^T\mid w\in W\}$
are dual bases under an intersection pairing
\begin{align}\label{E:pair}
H_T(\Fl) \times H^T(\tFl) \to H^T(\pnt).
\end{align}
For $w\in W$, let $i_w$ be the inclusion of a point into $\tFl$ whose image
is the $w$-th $T$-fixed point.
See \cite{Ar} \cite{KK}\cite{Kum}  for $\Fl$ and \cite{Kas} \cite{KS} for $\tFl$.

\begin{theorem}\label{T:Fl} \
There is a commutative diagram
\begin{align} \label{E:resiso}
\begin{diagram}
\node{H^T(\tFl)} \arrow{e,t}{\res} \arrow{s,t}{\text{For}} \node{\La} \arrow{s,b}{\epsilon_0} \\
\node{H^*(\tFl)} \arrow{e,b}{\res_0} \node{\La_0}
\end{diagram}
\end{align}
where $\text{For}$ is the ring homomorphism that forgets equivariance,
$\res$ is the $S=H^T(\pnt)$-algebra
isomorphism that restricts a $T$-equivariant class to the set $\tFl^T\cong W$
of $T$-fixed points (and thus sends the $T$-equivariant class $[X^v]^T$
to $\xi^v$), and $\res_0$ is a ring isomorphism sending the
ordinary cohomology class $[X^v]$ to $\xi^v_0$.  Moreover
\begin{align}
\label{E:xigeom}
\xi^v(w)=i_w^*([X^v]^T).
\end{align}
\end{theorem}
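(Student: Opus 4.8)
The plan is to realize $H^T(\tFl)$ concretely as the GKM ring $\La$ via restriction to $T$-fixed points, and then to match the Schubert basis $\{[X^v]^T\}$ with the algebraic basis $\{\xi^v\}$ through the localization formula \eqref{E:xigeom}, which is simultaneously the technical heart of the argument and the content of the displayed identity. First I would set up the restriction map $\res\colon H^T(\tFl)\to\Fun(W,S)$ as the pullback along the inclusion of the fixed-point set $\tFl^T\hookrightarrow\tFl$, using the bijection $\tFl^T\cong W$. Since pullback in cohomology is a ring homomorphism and $i_w^*$ is evaluation at the $w$-th fixed point, $\res$ is automatically an $S$-algebra homomorphism with $\res(x)(w)=i_w^*(x)$. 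Because $\tFl$ is paved by finite-codimensional $T$-stable cells, $H^T(\tFl)$ is a free $S$-module on the classes $[X^w]^T$; freeness together with the localization theorem in $T$-equivariant cohomology yields injectivity of $\res$ already over $S$, not merely after inverting characters.

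The next step is to show $\Image(\res)=\La$. For this I would verify the GKM edge conditions: each one-dimensional $T$-orbit of $\tFl$ connects fixed points $w$ and $s_\al w$ with $T$ acting through the character $\al$, so any equivariant class restricts to a function satisfying $f(s_\al w)-f(w)\in\al S$ for all $w\in W$ and $\al\in\Phi_\re$. By Proposition \ref{P:GKM} this is exactly the defining GKM condition for $\hLa$, giving $\Image(\res)\subseteq\hLa$; the finite support of Schubert classes then places the image in $\La$. Comparing the Schubert basis with the $S$-basis $\{\xi^v\mid v\in W\}$ of Proposition \ref{P:GKMbasis} forces $\Image(\res)=\La$ and that $\res$ is an isomorphism. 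The cited realizations for $\Fl$ (Kostant--Kumar \cite{KK}) and for the thick scheme $\tFl$ (Kashiwara \cite{Kas}, Kashiwara--Shimozono \cite{KS}) supply the geometric input here.

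The crux, which I expect to be the main obstacle, is the localization identity $i_w^*([X^v]^T)=\xi^v(w)$, since this is where geometry must be reconciled with the explicit algebraic recursion. I would argue by downward induction on codimension using two geometric inputs. The self-intersection (fixed-point) formula identifies $i_v^*([X^v]^T)$ with the equivariant Euler class of the normal bundle to $X^v$ at its smallest fixed point, namely $\prod_{\al\in\Phi_\re^+,\,s_\al v<v}(-\al)$, reproducing the diagonal value $(-1)^{\ell(v)}\xi^v(v)$ of \eqref{E:xidiag}; and the $P_i/B$-fibration induces BGG--Demazure divided difference operators on $H^T(\tFl)$ that act on Schubert classes exactly as $A_i\bullet$ acts on $\{\xi^v\}$ in Lemma \ref{L:Aonxi}. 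Together these force $\res([X^v]^T)$ to satisfy the base case \eqref{E:xibase} and the recursion \eqref{E:xirec} that uniquely determine $\xi^v$, giving $\res([X^v]^T)=\xi^v$ and hence \eqref{E:xigeom}.

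Finally, for the bottom row and commutativity of \eqref{E:resiso}, I would note that forgetting equivariance corresponds on $S=H^T(\pnt)$ to the augmentation $\epsilon_0^S\colon S\to\Z$ sending the linear polynomials $X$ to $0$, the specialization of the equivariant parameters. Applying $\epsilon_0$ to $\res$ and using $\epsilon_0(\xi^v)=\xi^v_0$ by definition, functoriality of pullback gives $\res_0\circ\text{For}=\epsilon_0\circ\res$ and exhibits $\res_0$ as the ring isomorphism sending $[X^v]$ to $\xi^v_0$; freeness of the relevant modules guarantees that specialization carries the Schubert basis to a basis, so $\res_0$ is an isomorphism, completing the diagram.
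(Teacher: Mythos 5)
The paper does not actually prove Theorem \ref{T:Fl}: it is stated as a known geometric fact and deferred to the references cited just above it (\cite{Ar}, \cite{KK}, \cite{Kum} for $\Fl$ and \cite{Kas}, \cite{KS} for $\tFl$), and the subsequent Propositions \ref{P:geompair} and \ref{P:geomAonLa} are likewise unproved. Your sketch reconstructs the standard argument from those sources, and it is essentially sound: injectivity of $\res$ from freeness plus the localization theorem, containment of the image in $\hLa$ from the GKM one-dimensional-orbit analysis together with Proposition \ref{P:GKM}, and the identification $\res([X^v]^T)=\xi^v$ by matching the geometric push--pull operators $p_i^*p_{i*}$ with $A_i\bullet$ (Proposition \ref{P:geomAonLa}(2) and Lemma \ref{L:Aonxi}) so that the restrictions $i_w^*([X^v]^T)$ satisfy the base case \eqref{E:xibase} and recursion \eqref{E:xirec} that uniquely determine $\xi^v$. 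Two small points of hygiene. First, your claim that ``comparing the Schubert basis with $\{\xi^v\}$ forces $\Image(\res)=\La$'' is stated before you prove \eqref{E:xigeom}, but it genuinely depends on it: upper-triangularity of the restrictions with the correct diagonal Euler classes is not by itself enough to pin down the $S$-span, so surjectivity onto $\La$ should be deduced either after the basis identification or via the expansion algorithm implicit in the proof of Proposition \ref{P:GKM} (Remark \ref{R:expandalg}). Second, once you have the base case at $w=\id$ and the recursion \eqref{E:xirec}, the values $\xi^v(w)$ are already determined for all $w$ by induction on $\ell(w)$, so the self-intersection computation of $i_v^*([X^v]^T)$ against \eqref{E:xidiag} is a consistency check rather than a needed input; it does no harm, but it is not carrying weight in the induction. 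With the order of those steps straightened out, your proposal is a correct and reasonably complete account of what the cited references establish.
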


\begin{prop} \label{P:geompair} The pairing \eqref{E:pair}
corresponds, using the isomorphisms \eqref{E:Hiso} and \eqref{E:resiso},
to the pairing \eqref{E:Apair}.
\begin{align*}
\begin{diagram}
\node{H_T(\Fl)\times H^T(\tFl)} \arrow{e,t}{} \arrow{s,e}{} \node{H^T(\pnt)} \arrow{s,e}{} \\
\node{\A \times \La} \arrow{e,b}{} \node{S}
\end{diagram}
\end{align*}
\end{prop}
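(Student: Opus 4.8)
The plan is to reduce Proposition \ref{P:geompair} to the observation that, on both the geometric and the algebraic side, the relevant pairing is $S = H^T(\pnt)$-bilinear and is uniquely characterized by the duality of the two Schubert bases. First I would record the two duality relations that pin down the pairings. On the algebraic side, Proposition \ref{P:GKMbasis} gives $\ip{\xi^v}{A_w} = \delta_{vw}$, so the pairing \eqref{E:Apair} is the unique $S$-bilinear pairing for which $\{A_w\}$ and $\{\xi^v\}$ are dual bases. On the geometric side, the intersection pairing \eqref{E:pair} was set up precisely so that the Schubert bases $\{[X_w]_T \mid w\in W\}$ of $H_T(\Fl)$ and $\{[X^v]^T \mid v\in W\}$ of $H^T(\tFl)$ are dual, i.e. $\langle [X_w]_T, [X^v]^T\rangle = \delta_{wv}$; this pairing is $S$-bilinear by the standard properties of the equivariant cap/intersection product. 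Thus it suffices to check that the two module isomorphisms appearing in Proposition \ref{P:geompair} carry one pair of dual bases to the other.

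The cohomology side is immediate: Theorem \ref{T:Fl} states that $\res$ sends $[X^v]^T$ to $\xi^v$. The real content therefore lies in showing that the composite isomorphism $H_T(\Fl) \cong H_G(\Fl\times\Fl) \cong \A$ --- obtained by combining the convolution identification \eqref{E:conviso} with the isomorphism \eqref{E:Hiso} of Theorem \ref{T:Hom} --- sends the homology Schubert class $[X_w]_T$ to the nilHecke basis element $A_w$. By Theorem \ref{T:Hom}(2) the element $A_w$ corresponds to $[O_w]_G$, the $G$-equivariant fundamental class of the closure $O_w$ of the diagonal $G$-orbit of $(\dot e B/B, \dot w B/B)$. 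I would track $[O_w]_G$ through the chain \eqref{E:conviso}: under $H_G(\Fl\times\Fl)\cong H_{B\times B}(G)\cong H_B(\Fl)$ the diagonal orbit closure restricts to the $B$-orbit closure of $\dot w B/B$, namely the Schubert variety $X_w = \overline{B\dot w B/B}$, and finally $H_B(\Fl)\cong H_T(\Fl)$ identifies $[X_w]_B$ with $[X_w]_T$. Hence $[X_w]_T \mapsto A_w$, with no shift in indexing.

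Granting this identification of bases, the proof concludes formally: both pairings are $S$-bilinear, the isomorphisms $H_T(\Fl)\cong\A$ and $H^T(\tFl)\cong\La$ carry $[X_w]_T\mapsto A_w$ and $[X^v]^T\mapsto\xi^v$, and on these bases both pairings evaluate to $\delta_{wv}$; by bilinearity the square in Proposition \ref{P:geompair} commutes. All of this is bookkeeping once the basis correspondence is in hand.

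I expect the main obstacle to be exactly the geometric identification $[X_w]_T \leftrightarrow A_w$ through the convolution isomorphisms, since it is the one step that genuinely uses the geometry of $\Fl\times\Fl$ rather than formal manipulation; in particular one must verify that \eqref{E:conviso} really sends the diagonal orbit closure $O_w$ to $X_w$ without introducing an inversion $w\mapsto w^{-1}$ or a normalization factor. A useful cross-check is available through localization: using $\xi^v(w) = i_w^*([X^v]^T)$ from \eqref{E:xigeom}, one can evaluate both pairings by the Atiyah--Bott localization formula and compare the fixed-point contributions, which should reproduce the duality $\delta_{wv}$ directly and thereby confirm that no such twist occurs.
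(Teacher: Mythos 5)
Your argument is sound, but note that the paper itself offers no proof of Proposition \ref{P:geompair}: it is stated as part of a survey of geometric interpretations, with the underlying facts delegated to Theorem \ref{T:Hom} and to the references \cite{Ar}, \cite{KK}, \cite{Kum}, \cite{Kas}, \cite{KS}. So there is no in-paper argument to compare against; what you have written is essentially the proof the authors are implicitly invoking. Your reduction is the right one: both pairings are $S$-bilinear, the isomorphisms of Theorems \ref{T:Hom} and \ref{T:Fl} are $S$-module isomorphisms, and each pairing is characterized by the duality $\delta_{vw}$ on the respective bases --- on the algebraic side by Proposition \ref{P:GKMbasis}, and on the geometric side by the duality of $\{[X_w]_T\}$ and $\{[X^v]^T\}$ under \eqref{E:pair}, which the paper asserts as known. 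You correctly isolate the one step with genuine content, namely that the composite $\A\cong H_G(\Fl\times\Fl)\cong H_T(\Fl)$ sends $A_w$ to $[X_w]_T$ with no inversion: a point $(g_1B,g_2B)$ lies in the diagonal $G$-orbit through $(\dot{e}B/B,\dot{w}B/B)$ exactly when $g_1^{-1}g_2\in B\dot{w}B$, so freezing the first factor at the base point recovers $\overline{B\dot{w}B/B}=X_w$ and no twist by $w\mapsto w^{-1}$ arises; your proposed localization cross-check via \eqref{E:xigeom} is a reasonable safeguard. The only caveat is that the duality of the two geometric Schubert bases under the intersection pairing is itself a nontrivial input that you, like the paper, take from the literature rather than prove.
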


For $i\in I$ let $p_i:\tFl\to \tFl^{i}$ be the projection of $\tFl$ onto the
thick partial flag scheme defined by the minimal parabolic subgroup corresponding to $i$.

\begin{prop} \label{P:geomAonLa}
Under the isomorphism $\res$ of Theorem \ref{T:Fl}:
\begin{enumerate}
\item The first Chern class $c_1(\LL_\la)$ of the $T$-equivariant line bundle on $\tFl$ of weight $\la$,
maps to the function $c^\la$, and multiplication by $c_1(\LL_\la)$ in $H^T(\tFl)$ corresponds to
the operation $f\mapsto \la\bullet f$ on $\La$.
\item The push-pull operator $p_i^* p_{i*}$ on $H^T(\tFl)$ corresponds to the operation
$f\mapsto A_i\bullet f$ on $\La$.
\end{enumerate}
\end{prop}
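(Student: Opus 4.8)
The statement to prove is Proposition \ref{P:geomAonLa}: under the restriction isomorphism $\res$, the first Chern class $c_1(\LL_\la)$ corresponds to $c^\la$ with multiplication becoming $f \mapsto \la \bullet f$, and the push-pull operator $p_i^* p_{i*}$ corresponds to $f \mapsto A_i \bullet f$.

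\textbf{Overall approach.} The plan is to reduce both statements to localization at $T$-fixed points, where everything becomes an explicit computation. The key principle is that $\res$ is an injective $S$-algebra map sending $[X^v]^T \mapsto \xi^v$, and that any endomorphism of $H^T(\tFl)$ respecting the $S$-module structure is determined by its effect on localizations $i_w^*$. By \eqref{E:xigeom} we know $\xi^v(w) = i_w^*([X^v]^T)$, so the restriction of a class to the $w$-th fixed point is exactly evaluation of the corresponding function in $\La$ at $w \in W$. Thus I would translate each geometric operation into its effect on the function $w \mapsto i_w^*(\cdot)$ and match it against the algebraic $\bullet$-action already computed in Section \ref{SS:mult} and the lemmas of the duality subsection.

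\textbf{Part (1): the line bundle.} First I would identify $c_1(\LL_\la)$ as a class in $H^T(\tFl)$ and compute its localization $i_w^*(c_1(\LL_\la))$. Since $\LL_\la$ is the $T$-equivariant line bundle of weight $\la$, its restriction to the $T$-fixed point indexed by $w$ has $T$-weight $w \cdot \la$, so $i_w^*(c_1(\LL_\la)) = w\cdot\la \in S$. On the other hand $c^\la \in \La$ is defined by \eqref{E:firstChern} as $c^\la(a) = a\cdot\la$, and Lemma \ref{L:LBclass} gives its Schubert expansion; evaluating $c^\la(w) = w\cdot \la$ matches the localization exactly, establishing $\res(c_1(\LL_\la)) = c^\la$. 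For multiplication: since $\res$ is a ring isomorphism and cohomology multiplication localizes pointwise (product of restrictions), multiplying by $c_1(\LL_\la)$ corresponds under $\res$ to pointwise multiplication by the function $w \mapsto w\cdot\la$. I must then check that this pointwise operation equals the $\bullet$-action $f \mapsto \la\bullet f$. Using the computation in the proof of Lemma \ref{L:AonGKM}, $(\la\bullet f)(A_w) = f(A_w\la) = (w\cdot\la)f(A_w) + \sum \ip{\alv}{\la}f(A_{ws_\al})$, which is not naively pointwise on the $A_w$-basis; however the $\bullet$-action viewed on functions $W\to \Fr$ via $\ip{\cdot}{\cdot}$ and the identification $\AQ^* \cong \Fun(W,\Fr)$ does act pointwise, because $c^\la$ is the image of $\la$ and the commutative pointwise product on $\Fun(W,\Fr)$ of \eqref{E:pointwise} is compatible with $\La$'s ring structure by Lemma \ref{L:GKMclosed}. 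I would carefully reconcile these two descriptions by passing through the evaluation pairing and Proposition \ref{P:Chevalley}, which is precisely the Chevalley formula expressing $c^\la \xi^v$.

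\textbf{Part (2): the push-pull operator.} The operator $p_i^* p_{i*}$ on $H^T(\tFl)$ is the standard divided-difference / BGG-Demazure operator associated to the $i$-th minimal parabolic. The plan is to compute its effect on the Schubert basis and match it with the $\bullet$-action of $A_i$. Geometrically, $p_i^* p_{i*}([X^v]^T)$ equals $[X^{vs_i}]^T$ if $vs_i < v$ and $0$ (or the appropriately signed/shifted class) otherwise — this is the well-known behavior of push-pull on Schubert classes in the thick flag scheme. On the algebraic side, Lemma \ref{L:Aonxi} states precisely $A_i \bullet \xi^v = \chi(vs_i < v)\,\xi^{vs_i}$. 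Since $\res(\xi^v\text{-class}) = \xi^v$ and both operators are $S$-linear, agreement on the basis forces agreement everywhere. The main obstacle I anticipate is verifying the geometric behavior of $p_i^* p_{i*}$ on the thick flag scheme $\tFl$ with the correct conventions, since the thick scheme has finite-codimensional cells rather than finite-dimensional ones, so the push-pull combinatorics (which index shifts, which vanishings) must be checked against \cite{Kas,KS} rather than imported wholesale from the ordinary flag variety. Once that localization-level description is pinned down, the matching with Lemma \ref{L:Aonxi} is immediate, and the whole proposition follows from the injectivity of $\res$ together with $S$-linearity of all operators involved.
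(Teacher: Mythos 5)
Your localization strategy is sound and is essentially the only way to establish this statement; note, however, that the paper does not actually prove Proposition \ref{P:geomAonLa} --- it is stated as part of a geometry--algebra dictionary, with the geometric inputs (the $T$-weight of $\LL_\la$ at the fixed point $w$, and the action of $p_i^*p_{i*}$ on the Schubert basis of the thick flag scheme) deferred to the references \cite{Ar}, \cite{KK}, \cite{Kum}, \cite{Kas}, \cite{KS}. The one piece of argument the paper does record is the display \eqref{E:LBprod} immediately following the proposition, and it resolves exactly the reconciliation issue you flag in part (1): there is no need to route through the evaluation pairing and the Chevalley formula (Proposition \ref{P:Chevalley}), because for $f\in\hLa$ and $w\in W$ one has directly $(\la\bullet f)(w)=f(w\la)=f((w\cdot\la)\,w)=(w\cdot\la)f(w)=c^\la(w)f(w)=(c^\la f)(w)$, using the definition \eqref{E:leftQWonAQ}, the commutation rule \eqref{E:QWmult}, left $\Fr$-linearity of $f$, and the pointwise product \eqref{E:pointwise}; since elements of $\hLa$ are determined by their restrictions to $W$, this gives $\la\bullet f=c^\la f$ on the nose, so the ``not naively pointwise'' formula on the $A_w$-basis that worried you is just the same operator written in a different basis. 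With that identity in hand, part (1) reduces to the single geometric fact $i_w^*(c_1(\LL_\la))=w\cdot\la$ combined with \eqref{E:xigeom}, and part (2) reduces, as you say, to matching the push-pull action on $\{[X^v]^T\}$ with Lemma \ref{L:Aonxi}; your caveat that the thick-flag-scheme behavior of $p_i^*p_{i*}$ (right multiplication by $s_i$, vanishing when $vs_i>v$ because $[X^v]^T$ is then pulled back from the partial flag scheme and $p_{i*}p_i^*=0$) must be taken from \cite{Kas,KS} rather than assumed is precisely where the remaining, genuinely geometric, content lies.
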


By the definitions \eqref{E:leftQWonAQ} and
\eqref{E:firstChern}, for $f\in\hLa$, $\la\in X$, and $w\in W$
we have
\begin{align}\label{E:LBprod}
  (\la \bullet f)(w) = (w\cdot \la) f(w) = c^\la(w) f(w) = (c^\la f)(w).
\end{align}

%\begin{prop} \label{P:dualcap} The following diagram commutes
%\begin{align}
%\begin{diagram}
% \node{ H_T(\Fl) \times H^T(\tFl)}\arrow{s} \arrow{e} \node{H^T(\tFl)}\arrow{s} \\
%\node{ \A \times \La}\arrow{e} \node{\La}
%\end{diagram}
%\end{align}
%where the vertical maps are given by the isomorphisms
%of Theorems \ref{T:Hom} and \ref{T:Fl},
%the top map is the dual of the cap product, and the bottom map is the
%$\cdot$ action of $\A$ on $\La$. Moreover the pairing \eqref{E:pair}
%is given by the top map followed by taking the coefficient of the identity
%in $H^T(\tFl)$.
%\end{prop}

\begin{prop} \label{P:partialflag} There is a commutative diagram
\begin{align}
\begin{diagram}
\node{H^T(\tFl^J)} \arrow{e,t}{\res} \arrow{s,t}{} \node{\La^J} \arrow{s,b}{} \\
\node{H^T(\tFl)} \arrow{e,b}{\res} \node{\La}
\end{diagram}
\end{align}
of $S$-algebra homomorphisms, where the horizontal maps are isomorphisms
and the vertical maps are inclusions,
where $\tFl^J$ is the thick partial flag scheme
associated to the subset $J\subset I$. Under the top isomorphism,
the Schubert basis of
$H^T(\tFl^J)$ maps to the elements $\xi^v$ for $v\in W^J$.
\end{prop}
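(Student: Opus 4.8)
The final statement is Proposition~\ref{P:partialflag}, which identifies $H^T(\tFl^J)$ with the parabolic subalgebra $\La^J$ compatibly with the inclusion into the full flag case, and matches the Schubert basis of $H^T(\tFl^J)$ with the classes $\xi^v$ for $v \in W^J$.

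\textbf{Overall approach.} The plan is to reduce the parabolic statement to the already-established Borel (full flag) case, Theorem~\ref{T:Fl}, by realizing $\La^J$ as the image of the pullback $p^*: H^T(\tFl^J) \to H^T(\tFl)$ and then recognizing that image intrinsically via the localization/GKM description. The geometric input I would use is the projection $p: \tFl \to \tFl^J$ onto the thick partial flag scheme attached to $J$. Since $p$ is a fiber bundle with fiber the (thick) flag variety of the Levi, the pullback $p^*$ on $T$-equivariant cohomology is injective, so $H^T(\tFl^J)$ is identified with $\Image(p^*) \subseteq H^T(\tFl)$. Under the isomorphism $\res: H^T(\tFl) \xrightarrow{\sim} \La$ of Theorem~\ref{T:Fl}, I would show that $\res(\Image(p^*)) = \La^J$; this is the crux of the commuting square. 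The Schubert-basis assertion then follows from Proposition~\ref{P:partialflagbasis}, which already tells us $\La^J$ is a free $S$-module with basis $\{\xi^v \mid v \in W^J\}$.

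\textbf{Key steps in order.} First I would set up the geometry: describe $p: \tFl \to \tFl^J$, note its $T$-fixed points satisfy $(\tFl^J)^T \cong W/W_J$ with $p$ sending the fixed point indexed by $w$ to the coset $wW_J$, and invoke injectivity of $p^*$ (e.g.\ from the Leray--Hirsch or fiber-bundle structure, or simply from the localization theorem since everything is a free $S$-module with isolated fixed points). Second, I would translate the condition defining $\La^J$, namely right $W_J$-invariance $f(wu) = f(w)$ for $u \in W_J$, into the statement that $f$ is a function pulled back along $W \to W/W_J$; this is exactly the localization description of classes pulled back from $\tFl^J$, because a class lies in $\Image(p^*)$ iff its restrictions to fixed points $w$ and $wu$ (which map to the same fixed point of $\tFl^J$) agree. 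Third, I would assemble the square: restriction $\res^J$ for $\tFl^J$ sends $[X^v]^T \mapsto \xi^v$ for $v \in W^J$ by the same localization formula \eqref{E:xigeom} as in the Borel case, and the vertical maps (pullback $p^*$ on the left, inclusion $\La^J \hookrightarrow \La$ on the right) commute with the horizontal isomorphisms by construction. Finally, the identification of the Schubert basis is immediate: $p^*[X^v]^T = [X^v]^T$ (the Schubert variety in $\tFl^J$ pulls back to the Schubert variety in $\tFl$ indexed by the minimal coset representative $v \in W^J$), and $\res$ of this is $\xi^v$, which by Proposition~\ref{P:partialflagbasis} is precisely the distinguished basis of $\La^J$.

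\textbf{Main obstacle.} The step I expect to require the most care is establishing $\res(\Image(p^*)) = \La^J$, i.e.\ that the localization image of pullbacks from the partial flag scheme is exactly the right-$W_J$-invariant subspace. The inclusion $\res(\Image(p^*)) \subseteq \La^J$ is the easy direction, following directly from functoriality of restriction to fixed points together with the fact that $p$ collapses each $W_J$-coset of fixed points to a single point. The reverse inclusion $\La^J \subseteq \res(\Image(p^*))$ is where the work lies: given a $W_J$-invariant $f \in \La$ satisfying the GKM conditions, I must produce a genuine class on $\tFl^J$ whose pullback restricts to $f$. The cleanest route is to verify that $f$, viewed as a function on $W/W_J \cong (\tFl^J)^T$, satisfies the GKM conditions for $\tFl^J$ (Proposition~\ref{P:GKM} adapted to the parabolic moment graph), and then invoke the partial-flag analogue of Theorem~\ref{T:Fl} that $\res^J$ is an isomorphism onto the GKM ring for $\tFl^J$. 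Thus the real content is checking that $W_J$-invariance plus the full-flag GKM relations are equivalent to the partial-flag GKM relations; this is a finite, local check at each edge of the moment graph, but it is the heart of the proof and the place where one genuinely uses the structure of $W_J$ and the roots it fixes.
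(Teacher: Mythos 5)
Your proposal is sound, but note that the paper offers no proof of Proposition~\ref{P:partialflag} at all: like Theorem~\ref{T:Fl} and Propositions~\ref{P:geompair}--\ref{P:geomAonLa}, it is stated as a geometric fact with the references \cite{Ar}, \cite{KK}, \cite{Kum}, \cite{Kas}, \cite{KS} standing in for the argument, so there is no in-paper route to compare against. Your strategy --- identify $H^T(\tFl^J)$ with $\Image(p^*)$ for the projection $p:\tFl\to\tFl^J$ and recognize that image inside $\La$ via localization --- is the standard one and is correct. The one place where you make more work for yourself than necessary is the ``main obstacle'': to get $\La^J\subseteq\res(\Image(p^*))$ you do not need to re-derive a GKM description of the moment graph of $\tFl^J$ and check that $W_J$-invariance plus the full-flag relations imply the parabolic ones. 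Instead, use Proposition~\ref{P:partialflagbasis}: $\La^J$ is free over $S$ with basis $\{\xi^v\mid v\in W^J\}$, and for $v\in W^J$ one has $p^{-1}(X^v_J)=X^v$ as thick Schubert varieties, hence $p^*[X^v_J]^T=[X^v]^T$ and $\res(p^*[X^v_J]^T)=\xi^v$. Since $H^T(\tFl^J)=\bigoplus_{v\in W^J}S\,[X^v_J]^T$, the composite $\res\circ p^*$ carries a free $S$-basis to a free $S$-basis of $\La^J$, which simultaneously gives injectivity of $p^*$, the equality $\res(\Image(p^*))=\La^J$, the commutativity of the square (by functoriality $i_w^*\circ p^*=i_{wW_J}^*$), and the Schubert-basis statement, all in one stroke. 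The only genuinely geometric inputs are the cell decomposition of $\tFl^J$ giving the Schubert basis and the identity $p^{-1}(X^v_J)=X^v$, which is exactly the content the paper delegates to its references.
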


\section{Affine Grassmannian}
\label{sec:affineGrassmannian}

The affine Grassmannian $\Gr_G$ has the form
$\Gr_G = G_\af/P_\af$ where $G_\af$ is the Kac-Moody group of affine type associated
with the semisimple algebraic group $G$ and $P_\af$
is the maximal parabolic subgroup obtained by ``omitting the zero node".
Therefore the previous sections apply and describe the Schubert calculus of $H^{T_\af}(\Gr_G)$
under the cup product, where $T_\af$ is the maximal torus in $B_\af\subset P_\af\subset G_\af$.

For our applications we consider $H^T(\Gr_G)$ where $T\subset B\subset G$ is a maximal torus
in the semisimple algebraic group $G$ whose corresponding affine Kac-Moody group is $G_\af$.
Forgetting from $T_\af$ to $T$ we retain the Schubert calculus of $H^T(\Gr_G)$.
The algebraic model for this ring is denoted $\Lambda'_{\mathrm{af}}$ and is
studied in Section~\ref{chapter4.section.small torus}. It is due to Goresky, Kottwitz, and Macpherson.
This ring also appears in Chapter 3 Section~\ref{chapter3.section.sketch}.

However in this setting there are extra features. Instead of just the above ring,
we get a dual Hopf algebra $H_T(\Gr_G)$ given by the equivariant homology of $\Gr_G$
under the Pontryagin product. The ring $H_T(\Gr_G)$ is very interesting in itself
for several reasons. 

One reason is due to Peterson \cite{Pet} \cite{LS:QH}: the Schubert structure constants of $H_T(\Gr_G)$ 
coincide with those of $QH^T(G/B)$, the equivariant quantum cohomology of the flag manifolds $G/B$. 
Peterson realizes $H_T(\mathrm{Gr}_G)$ algebraically as a commutative subalgebra $\mathbb{B}$
of a variant of the affine nilHecke ring of Kostant and Kumar,
which uses the affine Weyl group but polynomial functions on the Lie algebra of the
finite torus, rather than the affine one. This variant is denoted
$\mathbb{A}_{\mathrm{af}}$ in Chapter 3 Section~\ref{chapter3.section.affine nilHecke} 
and $\mathbb{A}'_{\mathrm{af}}$ in Section~\ref{SS:Petersonsubalgebra}.

For the second reason we forget equivariance and consider $G=SL_{k+1}$.
Then the ring $H_*(\Gr_{SL_{k+1}})$ is Hopf-isomorphic to the subring $\Z[h_1,\dotsc,h_k]$ of symmetric
functions generated by the first $k$ complete symmetric functions, and the Schubert basis is given by
the $k$-Schur functions of Lapointe, Lascoux, and Morse (at $t=1$). 
This basis is connected with Macdonald polynomials.

For $G$ of classical type, $H_*(\Gr_G)$ can be realized by symmetric functions
and the Schubert bases give rise to new families of symmetric functions; see
\cite{LSS:C} \cite{Pon}.

\subsection{Affine Grassmannian as partial affine flags}
\label{SS:GafPaf}

Consider a finite root datum with simple Lie group $G\supset B\supset T$,
containing a Borel subgroup $B$ and maximal torus $T$.
The affine Grassmannian is by definition $\Gr=\Gr_G = G(\C((t)))/G(\C[[t]])$ where
$\C[[t]]$ is the formal power series ring and $\C((t))$ is the
formal Laurent series ring.

Consider the associated affine root datum with Kac-Moody group, Iwahori subgroup,
and maximal torus $G_\af\supset B_\af\supset T_\af$.
Let $\Fl_\af=G_\af/B_\af$ be the affine flag ind-variety where
$B_\af$ is the Iwahori (affine Borel) subgroup. Then (up to a central extension that
gets modded out in the quotient) $G_\af = G(\C((t)))$,
$P_\af=G(\C[[t]])=P_I$ for the subset $I\subset I_\af$, and $\Gr =
G_\af/P_\af = \Fl_\af^I$. Let  $\tFl_\af$ be the
thick affine flag scheme \cite{Kas} and $\tGr=\tFl_\af^I$ the
thick affine Grassmannian, the associated thick partial affine flag scheme.

The previous section describes isomorphisms $$\La_\af\cong
H^{T_\af}(\tGr_G)\qquad\qquad\A_\af\cong H_{T_\af}(\Gr_G).$$

Our applications require working with equivariance with respect to the ``small torus"
$T$ rather than the maximal torus $T_\af$ in $G_\af$. It is possible to modify
the GKM ring $\La_\af$ and the affine nilHecke ring $\A_\af$
to obtain rings $\La_\af'$ and $\A_\af'$ such that
$$\La_\af'\cong
H^T(\tGr_G)\qquad\qquad\A_\af'\cong H_T(\Gr_G).$$
The necessary modifications were obtained in \cite{GKM} and \cite{Pet}
respectively.

Under the small torus equivariance, the homology ring $H_T(\Gr_G)$
becomes a commutative and cocommutative Hopf algebra over $S=H^T(pt)$, with dual Hopf algebra
given by $H^T(\Gr_G)$. Our main interest lies
in obtaining explicit computations involving this Hopf structure.

We write $W_\af^0$ instead of $W_\af^I$ and
$\pi:X_\af\to X$ for the natural projection and also for the induced
map $\pi:S_\af=\Sym(X_\af)\to S=\Sym(X)$.

\subsection{Small torus version of $\La_\af$}
\label{chapter4.section.small torus}
Here we follow \cite{GKM}.
Consider the map $\pi^*:\La_\af \to \Fun(W_\af,S)$ given by $\pi^*(f)=\pi\circ f$.
We wish to characterize the image of $\pi^*$ and the image of its restriction
to $\La_\Gr$.

The usual GKM condition \eqref{E:GKM} holds for functions $f:W_\af\to S$
in the image of $\pi^*$, since it holds in $\La_\af$.
However there are more conditions.

Let $\La'_\af$ be the set of functions $f\in \Fun(W_\af,S)$ that
satisfy \eqref{E:GKM} and the following \textit{small torus GKM conditions}:
\begin{align}
\label{E:smallGKMt}
  f((1-t_{\alv})^d w)&\in \al^d S \\
\label{E:smallGKMs}
  f((1-t_{\alv})^{d-1}(1-s_\al)w) &\in \al^d S
\end{align}
for all $d\in\Z_{>0}$, $w\in W_\af$, and $\al\in \Phi$.

Let $\La'_\Gr$ be the functions in $\La'_\af$ that
are constant on cosets in $W_\af/W$.

\begin{lem} \label{L:smallGKM} Suppose $f\in \Fun(W_\af,S)$ satisfies
\eqref{E:smallGKMt}. Then for all $d\in\Z_{>0}$, $p\in\Z$, $\al\in \Phi$,
and $w\in W_\af$ we have
\begin{align}
  f((1-t_{\alv})^{d-1} w) \equiv f((1-t_{\alv})^{d-1} t_{p\alv} w) \mod \al^d S.
\end{align}
\end{lem}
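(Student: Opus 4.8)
The plan is to rewrite the asserted congruence as a single divisibility statement and then extract a geometric series out of $1-t_{p\alv}$. Since $f$ is to be understood as its $S$-linear extension from $W_\af$ to the group algebra, the difference of the two sides equals
$$f\bigl((1-t_{\alv})^{d-1}(1-t_{p\alv})\,w\bigr),$$
so it suffices to show that this lies in $\al^d S$ for every $d\in\Z_{>0}$, $p\in\Z$, $\al\in\Phi$, and $w\in W_\af$. The case $p=0$ is trivial.

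The key observation is that translations are multiplicative, $t_{p\alv}=t_{\alv}^{\,p}$, and that $t_{\alv}$ commutes with $(1-t_{\alv})^{d-1}$; hence $1-t_{p\alv}$ factors through $1-t_{\alv}$. For $p>0$ I would use
$$1-t_{\alv}^{\,p}=(1-t_{\alv})\sum_{j=0}^{p-1}t_{\alv}^{\,j},$$
which gives
$$(1-t_{\alv})^{d-1}(1-t_{p\alv})=(1-t_{\alv})^{d}\sum_{j=0}^{p-1}t_{j\alv}.$$
For $p<0$, writing $p=-q$ with $q>0$, the identity $1-t_{\alv}^{-q}=-t_{\alv}^{-q}(1-t_{\alv}^{\,q})$ together with the commutativity of translations yields
$$(1-t_{\alv})^{d-1}(1-t_{p\alv})=-(1-t_{\alv})^{d}\sum_{j=0}^{q-1}t_{(j-q)\alv}.$$
In both cases the whole expression is $(1-t_{\alv})^{d}$ applied to a finite $\Z$-linear combination of translation elements $t_{m\alv}$.

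With this factorization in hand, I would apply $f$ term by term: each summand has the form $\pm f\bigl((1-t_{\alv})^{d}\,t_{m\alv}w\bigr)$, which lies in $\al^d S$ by hypothesis \eqref{E:smallGKMt} applied with base point $t_{m\alv}w$ in place of $w$. Since $\al^d S$ is an $S$-submodule, the finite sum again lies in $\al^d S$, which proves the congruence.

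I do not expect a genuine obstacle here. The only points requiring care are the sign and the index shift in the $p<0$ case, and the (standard but essential) remark that $f$ must be read as its linear extension to the group algebra, so that the geometric-series factorization can legitimately be performed inside the argument of $f$ before evaluating.
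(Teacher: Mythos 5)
Your argument is correct and is essentially the paper's proof: the paper reduces to the case $p=1$ (which is exactly hypothesis \eqref{E:smallGKMt}) by induction on $p$, and your geometric-series factorization $(1-t_{\alv})^{d-1}(1-t_{p\alv})=\pm(1-t_{\alv})^{d}\sum_m t_{m\alv}$ is just the unrolled, telescoped form of that same induction. The signs and index shifts in the $p<0$ case check out, so nothing further is needed.
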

\begin{proof} By induction we may reduce to the case $p=1$, which
is just \eqref{E:smallGKMt}.
\end{proof}

\begin{lem} \label{L:smallGKMtthens}
If $f\in\Fun(W_\af,S)$
satisfies \eqref{E:GKM} and \eqref{E:smallGKMt} and is constant on
cosets in $W_\af/W$ then
it also satisfies \eqref{E:smallGKMs}.
\end{lem}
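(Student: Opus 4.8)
The plan is to reduce the two-sided group-algebra expression in \eqref{E:smallGKMs} to evaluations of $f$ on pure translation elements, where the non-commutativity disappears, and then to invoke Lemma \ref{L:smallGKM}. The mechanism is the semidirect product decomposition $W_\af = W \ltimes Q^\vee$ together with the conjugation rule $s_\al t_\nu = t_{s_\al \cdot \nu}\, s_\al$: left multiplication by $s_\al$ can be traded, via the coset-constancy hypothesis, for a translation shift. Concretely, writing $s_\al \cdot \nu = \nu - m\alv$ with $m = \ip{\nu}{\al} \in \Z$ (the coweight action \eqref{E:WonX*}), the factor $(1 - s_\al)$ acting on a translation turns into the difference between $t_\nu$ and $t_{\nu - m\alv} = t_{-m\alv} t_\nu$; this is exactly the shape handled by Lemma \ref{L:smallGKM}.

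First I would fix $w \in W_\af$ and $\al \in \Phi$, and write $w = t_\nu v$ with $v \in W$ and $\nu \in Q^\vee$. Since $f$ is constant on the cosets $W_\af/W$ and since every group element occurring in $(1-t_{\alv})^{d-1}$ is a translation $t_{j\alv}$, extending $f$ linearly to $\Z[W_\af]$ and applying coset-constancy term by term gives $f\bigl((1-t_{\alv})^{d-1} t_\nu v\bigr) = f\bigl((1-t_{\alv})^{d-1}t_\nu\bigr)$. For the other term I would use $(1-t_{\alv})^{d-1}s_\al w = (1-t_{\alv})^{d-1}t_{s_\al\cdot\nu}(s_\al v)$ and again discard the rightmost $W$-factor $s_\al v$ by coset-constancy, obtaining $f\bigl((1-t_{\alv})^{d-1}t_{s_\al\cdot\nu}\bigr)$. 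Substituting $t_{s_\al\cdot\nu} = t_{-m\alv}t_\nu$ and expanding $(1-s_\al)w = w - s_\al w$ then yields the clean identity
\[
f\bigl((1-t_{\alv})^{d-1}(1-s_\al)w\bigr) = f\bigl((1-t_{\alv})^{d-1}t_\nu\bigr) - f\bigl((1-t_{\alv})^{d-1}t_{-m\alv}t_\nu\bigr).
\]

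With this identity in hand the conclusion is immediate: applying Lemma \ref{L:smallGKM} with the integer $p = -m$ and with $t_\nu$ in place of $w$ shows that the right-hand side lies in $\al^d S$, which is exactly \eqref{E:smallGKMs}. The base case $d=1$ reduces to $f(t_\nu) - f(t_{-m\alv}t_\nu) \in \al S$, the $d=1$ instance of Lemma \ref{L:smallGKM}, which is consistent with the reflection condition \eqref{E:GKM}. Thus the hypothesis \eqref{E:GKM} is really there to guarantee that $f$ genuinely lies in the small-torus GKM ring $\La'_\af$, while the actual divisibility estimate rests only on \eqref{E:smallGKMt} (through Lemma \ref{L:smallGKM}) together with coset-constancy.

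I expect the main obstacle to be bookkeeping rather than conceptual, namely getting the commutation and coset reductions exactly right. In particular I must pin down the convention so that $s_\al \cdot \nu = \nu - m\alv$ with $m = \ip{\nu}{\al}$, verify $t_{s_\al\cdot\nu} = t_{-m\alv}t_\nu$, and check that each rightmost $W$-factor may legitimately be dropped because $f$ is evaluated on a $\Z$-linear combination of honest group elements, so that linearity plus coset-constancy apply summand by summand. Once these identities are secured, no estimate is needed beyond Lemma \ref{L:smallGKM}, and the sign conventions in the pairing \eqref{E:WonX*} are the only place where an error could silently creep in.
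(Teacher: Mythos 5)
Your proposal is correct and follows essentially the same route as the paper: write $w=t_\nu v$, use coset-constancy to discard the right $W$-factors, use $s_\al t_\nu = t_{s_\al\cdot\nu}s_\al$ with $t_{s_\al\cdot\nu}=t_{-\ip{\nu}{\al}\alv}t_\nu$, and conclude by Lemma \ref{L:smallGKM}. The only cosmetic difference is at the end, where the paper first normalizes $t_{-\ip{\mu}{\al}\alv}$ to $t_{\alv}$ via Lemma \ref{L:smallGKM} and then invokes \eqref{E:smallGKMt} on $(1-t_{\alv})^d t_\mu$, whereas you apply Lemma \ref{L:smallGKM} directly with $p=-\ip{\nu}{\al}$; these are the same argument, and your remark that \eqref{E:GKM} is not actually needed for the divisibility estimate is also consistent with the paper's proof.
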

\begin{proof}
Let $\al\in \Phi$, $m\in\Z$, $d\in\Z_{>0}$, and
$w=t_\mu u\in W_\af$ with $\mu\in Q^\vee$ and $u\in W$. Let $y=(1-t_{\alv})^{d-1}$.
Using Lemma \ref{L:smallGKM} we have
\begin{align*}
   f(y (1-s_\al)t_\mu u)
&= f(y t_\mu u) - f(y t_{s_\al\cdot\mu} s_\al u) \\
&= f(y t_\mu) - f(y t_{-\ip{\mu}{\al}\alv} t_\mu) \\
&\equiv f(y t_\mu) - f(y t_{\alv} t_\mu) \mod \al^d S \\
&= f((1-t_{\alv})^d t_\mu) \in \al^d S.
\end{align*}
\end{proof}

Define
\begin{align}\label{E:xib}
\xib^v = \pi \circ \xi^v\qquad\text{for $v\in W_\af$.}
\end{align}

The following Theorem is proved in Appendix \ref{A:smalltorusGKM}
and is due to Goresky, Kottwitz, and Macpherson \cite{GKM}.

\begin{theorem} \label{T:smallGKM}
\begin{enumerate}
\item
There is an $S$-algebra isomorphism
\begin{align}
\label{E:HTsmall}
  H^T(\tFl_\af) &\to \La'_\af\cong \bigoplus_{w\in W_\af} S\, \xib^w \\ \notag
  [X^w]^T &\mapsto \xib^w\qquad\text{for $w\in W_\af$.}
\end{align}
\item
The isomorphism \eqref{E:HTsmall} restricts to an $S$-algebra isomorphism
\begin{align}
  H^T(\tGr) \to \La'_\Gr \cong \bigoplus_{w\in W_\af^0} S (\xib^w).
\end{align}
\end{enumerate}
\end{theorem}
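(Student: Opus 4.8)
The plan is to reduce part~(1) to the big-torus computation of Theorem~\ref{T:Fl} by a base-change argument, and then to establish two inclusions of subsets of $\Fun(W_\af,S)$. Geometrically, passing from $T_\af$ to $T$ is base change along $\pi:S_\af=\Sym(X_\af)\to S$. Since $\tFl_\af$ is paved by Schubert cells it is equivariantly formal for every subtorus, so that $H^T(\tFl_\af)\cong H^{T_\af}(\tFl_\af)\otimes_{S_\af}S$ as $S$-algebras, with $S$-basis the forgotten classes $[X^w]^T$. Under $\La_\af\cong H^{T_\af}(\tFl_\af)$ this becomes $\La_\af\otimes_{S_\af}S$, and by \eqref{E:xigeom} the restriction of $[X^w]^T$ to the $v$-th fixed point is $\pi(i_v^*[X^w]^{T_\af})=\pi(\xi^w(v))=\xib^w(v)$. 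Hence the map $\res$ of the theorem is precisely $\xi^w\otimes1\mapsto\xib^w$, and the content is to identify its image and prove it injective. Injectivity and freeness are immediate from triangularity: since $\pi(\al+k\delta)=\al\neq0$ for every real affine root, the support and diagonal formulas of Proposition~\ref{P:xiprop} descend to give $\xib^v(w)=0$ unless $v\le w$ and $\xib^v(v)=\pm\prod_{\al\in\Phi_\af^{\re+},\,s_\al v<v}\pi(\al)\neq0$ in $S$. Thus $(\xib^v(w))_{v,w}$ is triangular with nonvanishing diagonal, so $\{\xib^w\mid w\in W_\af\}$ is $S$-linearly independent; this yields both injectivity of $\res$ and freeness of $\bigoplus_{w}S\,\xib^w$ on this basis.

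Next I would prove $\Image(\res)\subseteq\La'_\af$. The plain GKM condition \eqref{E:GKM} for $\xib^w$ is obtained by projecting the big-torus GKM condition for $\xi^w$, since the affine reflection $s_{\al+k\delta}$ contributes divisibility by $\pi(\al+k\delta)=\al$. The genuinely new content is the higher-order divisibility in \eqref{E:smallGKMt} and \eqref{E:smallGKMs}. Here I would use the factorization $t_{\alv}=s_\al s_{\al+\delta}$, coming from $s_{\al+k\delta}=s_\al t_{k\alv}$ (cf.\ \eqref{E:s0semi}), to rewrite $(1-t_{\alv})^d$ as an alternating combination of products of the reflections $s_{\al+j\delta}$ for $0\le j<d$, and then apply the big-torus GKM conditions along the whole string of roots $\{\al+j\delta\}$ that collapse to the single finite root $\al$ under $\pi$. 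The point is that each successive difference contributes a fresh affine factor $\al+c_j\delta$, all of which project to $\al$, so a $d$-fold difference projects into $\al^dS$. I expect this bookkeeping — an induction on $d$ tracking exactly which roots $\al+c_j\delta$ occur and checking each resulting term lies in the correct power of $\al$ after $\pi$ — to be the main obstacle of the whole proof; Lemma~\ref{L:smallGKM} should organize the inductive step.

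For the reverse inclusion $\La'_\af\subseteq\bigoplus_w S\,\xib^w$ I would imitate the subtraction algorithm from the proof of Proposition~\ref{P:GKM} (see Remark~\ref{R:expandalg}). Given $f\in\La'_\af$, choose $x\in W_\af$ minimal in Bruhat order with $x\notin\Omega(f)$, show $f(x)\in\xib^x(x)S$, and subtract $(f(x)/\xib^x(x))\,\xib^x$ to enlarge the vanishing region, finishing by induction. The only step that differs from the big-torus case is the divisibility $f(x)\in\xib^x(x)S$: the inversions of $x$ split into finitely many $\pi$-classes, and within each class the distinct affine roots $\al+k\delta$ all project onto one finite root $\al$ with some multiplicity $m_\al$, so that $\xib^x(x)$ carries the factor $\al^{m_\al}$. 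The plain GKM condition only yields one factor of $\al$ per class, so one genuinely needs $f(x)$ divisible by $\al^{m_\al}$, which is exactly what \eqref{E:smallGKMt}--\eqref{E:smallGKMs} supply, via Lemmas~\ref{L:smallGKM} and \ref{L:smallGKMtthens}. In this sense the forward and reverse difficulties are dual aspects of the same ``thick edge'' phenomenon, and together they establish part~(1).

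Finally, part~(2) follows by running the same argument over $W_\af^0$. By Proposition~\ref{P:partialflag} applied to $J=I$, the big-torus model of $H^{T_\af}(\tGr)=H^{T_\af}(\tFl_\af^I)$ is $\La_\af^I$ with basis $\{\xi^v\mid v\in W_\af^0\}$; base change and the triangularity above make $\res$ injective on $\La_\af^I\otimes_{S_\af}S$ with image spanned by the $\xib^v$, $v\in W_\af^0$, each of which is constant on cosets in $W_\af/W$ and hence lies in $\La'_\Gr$. To see the image is all of $\La'_\Gr$, I would note that $\La'_\Gr=\{f\in\La'_\af\mid A_i\bullet f=0\text{ for all }i\in I\}$, since $(A_i\bullet f)(w)=(w\cdot\al_i)^{-1}(f(w)-f(ws_i))$ and coset-constancy is right $W$-invariance. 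Writing $f\in\La'_\Gr$ as $\sum_w c_w\xib^w$ and using $A_i\bullet\xib^v=\chi(vs_i<v)\,\xib^{vs_i}$ from Lemma~\ref{L:Aonxi}, linear independence forces $c_v=0$ whenever $v$ has a right descent in $I$, i.e.\ whenever $v\notin W_\af^0$; hence $f\in\bigoplus_{v\in W_\af^0}S\,\xib^v$, exactly as in the big-torus Proposition~\ref{P:partialflagbasis}.
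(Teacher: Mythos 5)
Your overall architecture---base change to identify $\Image(\res)$ with $\bigoplus_{w}S\,\xib^w$, triangularity of $(\xib^v(w))$ for independence, then two inclusions against $\La'_\af$---matches the paper's proof in Appendix \ref{A:smalltorusGKM}, and your reverse inclusion (the subtraction algorithm, reducing $f(x)\in\xib^x(x)S$ to $\al$-adic divisibility governed by $|\Inv_\al(x^{-1})|$ and supplied by \eqref{E:smallGKMt}--\eqref{E:smallGKMs} via Lemma \ref{L:smallGKM}) is exactly what the paper does. Your treatment of part (2) via the operators $A_i\bullet$ is a reasonable way to make explicit a step the paper leaves to the reader.

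The gap is in the forward inclusion, precisely at the step you flag as the main obstacle. Writing $t_{\alv}=s_\al s_{\al+\delta}$ and telescoping $f((1-t_{\alv})^d w)$ into single reflection differences $f(v)-f(s_{\al+j\delta}v)$ yields only \emph{one} factor of a root projecting to $\al$ per summand; it does not yield $\al^d$. The obstruction is that $(1-t_{\alv})^d$ does not lie in the left ideal $(1-s_{\al+c\delta})\Z[W_\af]$ for even a single value of $c$ (its support consists of translations, whereas that ideal is spanned by elements anti-invariant under left multiplication by the reflection), so the big-torus GKM condition cannot be applied $d$ times in succession: a $d$-fold iterated difference of a function all of whose first differences are divisible by $\al$ need not be divisible by $\al^d$. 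Already for $d=2$ your argument gives $\xib^v((1-t_{\alv})^2w)\in\al S$ but not $\al^2 S$. The divisibility \eqref{E:smallGKMt} requires genuine cancellation among the telescoped terms, and this is where the paper does the real work: it restricts $\xib^v$ to the rank-one affine subgroup generated by $s_\al$ and $t_{\alv}$, expands the restriction in $\slh_2$ Schubert classes, and verifies \eqref{E:smallGKMt}--\eqref{E:smallGKMs} using the explicit formula \eqref{E:xisl2} together with binomial identities (Propositions \ref{P:sl2smallGKMt} and \ref{P:sl2smallGKMs}; the key alternating sum $Z$ in \eqref{E:rewrite} is shown to vanish outright). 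Some such rank-one computation, or an argument replacing it, is needed to close your proof.
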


The existence of the following map is explained in Appendix \ref{A:HomGr}.

\begin{prop} \label{P:wrongway} The map $\varpi$ defined by
\begin{align}\label{E:varpi}
  \varpi:\La'_\af&\to\La'_\Gr \\ \notag
  f &\mapsto (t_\mu u \mapsto f(t_\mu))\qquad\text{for $\mu\in Q^\vee$ and $u\in W$}
\end{align}
is an $S$-algebra homomorphism which is the identity when restricted to $\La'_\Gr\subset\La'_\af$.
\end{prop}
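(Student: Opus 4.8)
The plan is to isolate the one nontrivial assertion, namely that $\varpi$ maps $\La'_\af$ into $\La'_\Gr$, and to show that everything else is formal. Note first that $\varpi(f)$ is constant on the cosets $t_\mu W$ by construction, since $\varpi(f)(t_\mu u) = f(t_\mu)$ does not depend on $u \in W$; thus once I know $\varpi(f) \in \La'_\af$, membership in $\La'_\Gr$ is automatic. Granting well-definedness, the remaining claims follow from the pointwise product structure on $\La'_\af$ and $\La'_\Gr$ inherited from $\Fun(W_\af,S)$ (Lemma \ref{L:GKMclosed}): for $f,g\in\La'_\af$ and $s\in S$ one has $\varpi(fg)(t_\mu u)=(fg)(t_\mu)=f(t_\mu)g(t_\mu)=(\varpi(f)\varpi(g))(t_\mu u)$ and $\varpi(sf)(t_\mu u)=s\,f(t_\mu)$, so $\varpi$ is an $S$-algebra homomorphism; and if $f\in\La'_\Gr$ is already constant on cosets then $f(t_\mu u)=f(t_\mu)=\varpi(f)(t_\mu u)$, giving $\varpi|_{\La'_\Gr}=\mathrm{id}$.

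To prove well-definedness I would set $g=\varpi(f)$ and write a general element of $W_\af=W\ltimes Q^\vee$ as $w=t_\nu v$ with $\nu\in Q^\vee$ and $v\in W$. Since $g$ is constant on cosets, Lemma \ref{L:smallGKMtthens} lets me reduce the check to conditions \eqref{E:GKM} and \eqref{E:smallGKMt}, with \eqref{E:smallGKMs} then following for free. For \eqref{E:smallGKMt}, I would expand $(1-t_\alv)^d w=\sum_j\binom{d}{j}(-1)^j t_{j\alv+\nu}v$ and use that $g$ forgets the finite part to get $g((1-t_\alv)^d w)=\sum_j\binom{d}{j}(-1)^j f(t_{j\alv}t_\nu)=f((1-t_\alv)^d t_\nu)$, which lies in $\al^d S$ precisely because $f$ itself satisfies \eqref{E:smallGKMt} (applied with $t_\nu$ in place of $w$). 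This step is routine.

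The hard part will be verifying the GKM condition \eqref{E:GKM} for the $S$-valued function $g$, read as $g(s_\beta w)-g(w)\in \pi(\beta)S$ where $\pi(\beta)=\al$ for $\beta=\al+k\delta$ with $\al\in\Phi$ and $k\in\Z$. I would use $s_{\beta}=s_\al t_{k\alv}$ together with the conjugation relation $s_\al t_\mu=t_{s_\al\cdot\mu}s_\al$ to obtain $s_\beta w=t_{s_\al\cdot(k\alv+\nu)}\,s_\al v$, so that $g(s_\beta w)=f(t_{s_\al\cdot(k\alv+\nu)})$ since $g$ drops the finite factor $s_\al v$. Computing $s_\al\cdot(k\alv+\nu)=\nu-(k+\ip{\nu}{\al})\alv$ from $s_\al(\alv)=-\alv$ and $s_\al\cdot\nu=\nu-\ip{\nu}{\al}\alv$, and setting $p=k+\ip{\nu}{\al}$, I would arrive at $g(s_\beta w)-g(w)=f(t_{-p\alv}t_\nu)-f(t_\nu)$. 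The key input is then Lemma \ref{L:smallGKM} specialized to $d=1$, which asserts exactly that $f(t_\nu)\equiv f(t_{-p\alv}t_\nu)\pmod{\al S}$; hence the difference lies in $\al S=\pi(\beta)S$, which is the desired GKM condition. This shows $g\in\La'_\af$, and since $g$ is constant on cosets, $g\in\La'_\Gr$, completing the proof. The only genuine subtlety is keeping the small-torus projection $\pi$ and the semidirect product bookkeeping straight in this last computation; the rest reduces to invoking the already-established Lemmas.
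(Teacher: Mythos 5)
Your proposal is correct and follows essentially the same route as the paper's proof: reduce to well-definedness, invoke Lemma \ref{L:smallGKMtthens} to drop condition \eqref{E:smallGKMs}, verify \eqref{E:smallGKMt} directly from the definition of $\varpi$, and verify \eqref{E:GKM} via the semidirect-product rewriting $s_{\al+k\delta}t_\nu v = t_{-(k+\ip{\nu}{\al})\alv}t_\nu s_\al v$ together with Lemma \ref{L:smallGKM} at $d=1$. The computations match the paper's up to notation.
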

\begin{proof}[Proof of Prop. \ref{P:wrongway}]
All properties are clear except $\Image(\varpi)\subset\La'_\Gr$.
Let $f\in \La'_\af$ and $g=\varpi(f)$. By Lemma \ref{L:smallGKMtthens} it suffices
to verify \eqref{E:smallGKMt} and \eqref{E:GKM}.
For the former, let $d\in \Z_{>0}$ and $\al\in \Phi$.
Let $w=t_\mu u\in W_\af$ with $\mu\in Q^\vee$ and $u\in W$. We have
\begin{align*}
  g((1-t_{\alv})^d w) &= g((1-t_{\alv})^d t_\mu u) = f((1-t_{\alv})^d t_\mu) \in \al^d S
\end{align*}
by the definition of $\varpi$ and \eqref{E:smallGKMt} for $f$.
For \eqref{E:GKM}, let $\al+m\delta\in \Phi_\re^\af$ for $\al\in\Phi$
and $m\in\Z$. Let $w=t_\mu u$ for $\mu\in Q^\vee$ and $u\in W$. We have
$s_{\al+m\delta} w = s_\al t_{m\alv} t_\mu u = t_{-m\alv} t_{s_\al\cdot\mu} s_\al u=
t_{-(m+\ip{\mu}{\al})\alv} t_\mu s_\al u$. Therefore
\begin{align*}
     g(s_{\al+m\delta} w) - g(w)
  &= f(t_{-(m+\ip{\mu}{\al})\alv} t_\mu) - f(t_\mu) \in \al S
\end{align*}
by Lemma \ref{L:smallGKM} for $d=1$.
\end{proof}

\subsection{Homology of the affine Grassmannian}\ 

For $\mu\in Q^\vee$ define $i_\mu^*\in \Hom_S(\La'_\Gr,S)$ by
\begin{align} \label{E:istar}
  i_\mu^*(f) = f(t_\mu) \qquad\text{for $f\in \La'_\Gr$.}
\end{align}
In the notation of Theorem \ref{T:Fl}, $i_\mu^*=i_{t_\mu}^*$. 

\begin{lem} \label{L:ratHomGr} $\Fr \otimes_S \Hom_S(\La'_\Gr,S)$
has $\Fr$-basis $\{i_\mu^*\mid \mu\in Q^\vee\}$.
\end{lem}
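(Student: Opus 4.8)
The statement to prove is Lemma~\ref{L:ratHomGr}: that $\Fr \otimes_S \Hom_S(\La'_\Gr,S)$ has $\Fr$-basis $\{i_\mu^*\mid \mu\in Q^\vee\}$.

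The plan is to exploit the fact that after tensoring with the fraction field $\Fr = \Frac(S)$, the GKM-type divisibility conditions defining $\La'_\Gr$ become inert, so that $\La'_\Gr \otimes_S \Fr$ becomes a space of \emph{all} $W$-invariant (constant on $W_\af/W$ cosets) functions on $W_\af$ with values in $\Fr$, subject only to a finite-support condition inherited from $\La$. First I would establish that $\La'_\Gr$ is a free $S$-module with basis $\{\xib^w \mid w \in W_\af^0\}$, which is exactly Theorem~\ref{T:smallGKM}(2). Dualizing a free module, $\Hom_S(\La'_\Gr,S)$ is then the free $S$-module on the dual basis $\{(\xib^w)^* \mid w \in W_\af^0\}$, and tensoring with $\Fr$ gives an $\Fr$-vector space with basis indexed by $W_\af^0$. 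Since there is a natural bijection $W_\af^0 \cong Q^\vee$ (each coset in $W_\af/W$ has a unique minimal representative, and these are indexed by the anti-dominant coroot lattice translations, as recorded in Chapter~\ref{chapter.stanley symmetric functions}, or directly via $W_\af = W \ltimes Q^\vee$), the cardinalities match: $\{i_\mu^*\mid \mu \in Q^\vee\}$ has the right size to be a basis.

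The substantive step is to show that the evaluation functionals $\{i_\mu^*\}$ are $\Fr$-linearly independent and span. For independence, I would use the pairing with the classes $\bar\xi^v = \pi\circ\xi^v$ restricted to translations: by the triangularity property \eqref{E:xisupport} (namely $\xi^v(w) = 0$ unless $v \le w$) together with the nonvanishing of the diagonal term \eqref{E:xidiag}, the matrix $\big(i_\mu^*(\xib^v)\big)_{\mu \in Q^\vee,\, v \in W_\af^0} = \big(\xi^v(t_\mu)\big)$ is, after ordering compatibly with Bruhat order, ``triangular'' with invertible diagonal entries over $\Fr$. This is the affine analogue of the argument sketched in Section~\ref{chapter3.section.sketch}, where the key input (Lemma~\ref{lem:wrongway}, realized here as Proposition~\ref{P:wrongway} via the map $\varpi$) is that a function in $\La'_\Gr$ is determined by its restriction to the translation elements $t_\mu$. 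That determination statement is precisely what guarantees that the $i_\mu^*$ separate points of $\La'_\Gr \otimes_S \Fr$, hence span its dual.

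The main obstacle I anticipate is verifying that the evaluation map $\La'_\Gr \otimes_S \Fr \to \Fun(Q^\vee, \Fr)$, $f \mapsto (\mu \mapsto f(t_\mu))$, is an \emph{isomorphism} onto the finitely-supported functions rather than merely injective; injectivity follows from Proposition~\ref{P:wrongway} (the restriction to translations is faithful), but surjectivity requires producing, for each $\mu$, an element of $\La'_\Gr$ that localizes correctly at $t_\mu$. Over $S$ this is delicate because of the divisibility conditions \eqref{E:smallGKMt}--\eqref{E:smallGKMs}, but over $\Fr$ these conditions impose no constraint, so the count via the free basis $\{\xib^w\}$ of Theorem~\ref{T:smallGKM}(2) closes the argument: both $\La'_\Gr\otimes_S\Fr$ and $\Fun_{\mathrm{fin}}(Q^\vee,\Fr)$ have $\Fr$-dimension $|W_\af^0| = |Q^\vee|$, and the injective evaluation map between them is therefore an isomorphism. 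Dualizing yields that $\{i_\mu^*\}$ is the desired basis. I would present the triangularity computation and the dimension count as the two load-bearing steps, relegating the bijection $W_\af^0 \cong Q^\vee$ and the freeness statements to citations of Theorem~\ref{T:smallGKM} and Section~\ref{sec:Weyl}.
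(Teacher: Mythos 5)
Your proof is correct and follows essentially the same route as the paper's: the load-bearing step in both is that the matrix $\bigl(\xib^{m_\nu}(t_\mu)\bigr)_{\mu,\nu\in Q^\vee}$ (where $m_\nu\in W_\af^0$ is the minimal length representative of $t_\nu W$) is triangular for Bruhat order by \eqref{E:xisupport}, with nonvanishing diagonal entries $\xib^{m_\mu}(t_\mu)=\xib^{m_\mu}(m_\mu)$ by constancy on cosets together with \eqref{E:xidiag}, hence invertible over $\Fr$. The one caveat is that your closing dimension count (an injection between spaces of equal dimension $|Q^\vee|$ is an isomorphism) is not valid over an infinite index set and is also unnecessary: the invertibility of the triangular localization matrix already shows that the $i_\mu^*$ span as well as being independent.
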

\begin{proof} Let $m_\mu \in W_\af^0$ be defined by $m_\mu W = t_\mu W$ for $\mu\in Q^\vee$.
By \eqref{E:xidiag} the matrix $\xi^v(w)$
is triangular, so the same is true of its restriction $\xib^{m_\mu}(t_\nu)$ for $\nu\in Q^\vee$.
Moreover its diagonal entries are nonvanishing:
since $\xib^{m_\mu}$ is constant on the cosets of $W_\af/W$ we have
$\xib^{m_\mu}(t_\mu)=\xib^{m_\mu}(m_\mu)\ne0$. The Lemma follows.
\end{proof}

The following result is proved in Appendix \ref{A:HomGr}.

\begin{prop} \label{P:convolution}
$H_T(\Gr)$ and $H^T(\tGr)$ are dual Hopf
algebras over $S$.
There is an isomorphism $H_T(\Gr)\cong \Hom_S(\La'_\Gr,S)$
under which the product in $\Hom_S(\La'_\Gr,S)$
is defined over $\Fr$ by $(i_\la^*,i_\mu^*)\mapsto i_{\la+\mu}^*$.
$\Hom_S(\La'_\Gr,S)$ and $\La'_\Gr$ have the structure of dual Hopf algebras
over $S$ where the product in $\Hom_S(\La'_\Gr,S)$
is defined over $\Fr$ by $(i_\la^*,i_\mu^*)\mapsto i_{\la+\mu}^*$.
\end{prop}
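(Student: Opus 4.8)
The plan is to derive the Hopf structure from the ind-group structure on $\Gr=\Gr_G$ and then read off the product in the localization basis by reducing to the group law on $T$-fixed points. By Quillen's theorem $\Gr_G$ is homotopy equivalent to the based loop group $\Omega K$ of the maximal compact $K\subset G$, and $\Gr$ is in fact an ind-group scheme whose multiplication $m\colon\Gr\times\Gr\to\Gr$ is homotopy-commutative and homotopy-associative, with the diagonal $\Gr\to\Gr\times\Gr$ providing a compatible comultiplication. First I would record that, as for any such $H$-space, the equivariant homology $H_T(\Gr)$ carries the Pontryagin product $m_*$ making it a commutative $S$-algebra, together with a coproduct from the diagonal, while dually $H^T(\tGr)$ carries its cup product together with a coproduct dual to $m_*$. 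These four maps assemble into dual Hopf algebras over $S=H^T(\pnt)$: the intersection pairing $H_T(\Gr)\times H^T(\tGr)\to S$ intertwines $m_*$ with the cohomology coproduct and the cup product with the homology coproduct, since $m$ and the diagonal are adjoint under this pairing by the projection formula.

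Second I would make the pairing perfect over $S$ and thereby identify $H_T(\Gr)\cong\Hom_S(\La'_\Gr,S)$. Writing $\Gr=\Fl_\af^I$ and its thick counterpart $\tGr=\tFl_\af^I$, and forgetting equivariance from $T_\af$ to $T$, Theorem~\ref{T:smallGKM} gives $H^T(\tGr)\cong\La'_\Gr=\bigoplus_{w\in W_\af^0}S\,\xib^w$, with $[X^w]^T\mapsto\xib^w$. Exactly as in Proposition~\ref{P:geompair}, the homology Schubert basis $\{[X_w]_T\mid w\in W_\af^0\}$ and the cohomology Schubert basis $\{\xib^w\}$ are dual under the intersection pairing, so the pairing is perfect and induces the $S$-module isomorphism $H_T(\Gr)\cong\Hom_S(\La'_\Gr,S)$. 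Transporting the Hopf structure across this isomorphism yields the final assertion that $\Hom_S(\La'_\Gr,S)$ and $\La'_\Gr$ are dual Hopf algebras over $S$.

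The computational heart is the product formula. By Lemma~\ref{L:ratHomGr} the functionals $\{i_\mu^*\mid\mu\in Q^\vee\}$ form an $\Fr$-basis of $\Fr\otimes_S\Hom_S(\La'_\Gr,S)$, and since $i_\mu^*(f)=f(t_\mu)$ is the restriction of the cohomology class $f$ to the fixed point $t_\mu$, the standard identity $\langle i_{t_\mu*}[\pnt],c\rangle=i_{t_\mu}^*c$ shows that $i_\mu^*$ is the image of the $T$-fixed point class $[t_\mu]\in H_T(\Gr)\otimes_S\Fr$. The $T$-fixed point set of $\Gr$ is the lattice of translations $\{t_\mu\mid\mu\in Q^\vee\}$, on which $m$ restricts to the group law $t_\la\cdot t_\mu=t_{\la+\mu}$, that is, addition of cocharacters. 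Because $m\circ i_{(t_\la,t_\mu)}=i_{t_{\la+\mu}}$ as maps $\pnt\to\Gr$, functoriality of pushforward gives $[t_\la]\ast[t_\mu]=[t_{\la+\mu}]$, hence $i_\la^*\cdot i_\mu^*=i_{\la+\mu}^*$ over $\Fr$; integrality over $S$ is automatic because this is the geometric Pontryagin product, which sends pairs of homology Schubert classes to $S$-linear combinations of Schubert classes.

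The hard part will be the passage between the geometric Hopf structure and its algebraic avatar on $\Hom_S(\La'_\Gr,S)$: one must verify that the intersection pairing is indeed perfect over $S$ and genuinely intertwines all four structure maps, and that the localization functionals $i_\mu^*$ are precisely the fixed-point classes so that the abstract $m_*$ reads off as addition in $Q^\vee$. A fully geometry-free alternative would replace the loop-group input by Peterson's centralizer subalgebra $\Pet\subset\A_\af'$ of Chapter~\ref{chapter.stanley symmetric functions}: there $H_T(\Gr)\cong\Pet$ is commutative, the translation elements satisfy $t_\la t_\mu=t_{\la+\mu}$ inside $\A_\af'$, and the evaluation pairing already gives $i_\mu^*=\langle\,\cdot\,,t_\mu\rangle$, which would yield the same product purely algebraically and simultaneously confirm the $S$-integrality.
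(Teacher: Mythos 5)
Your proposal is correct and follows essentially the same route as the paper's proof in Appendix~\ref{A:HomGr}: the Hopf structure is imported from the Pontryagin product on $\Omega K$ via Quillen's equivalence, the pairing is made perfect through the duality of the Schubert bases, and the product formula $i_\la^*\,i_\mu^*=i_{\la+\mu}^*$ is obtained exactly as in Lemma~\ref{L:Hmult} by identifying $i_\mu^*$ with the class of the fixed point $t_\mu$ and using that pointwise multiplication of the homomorphisms $t_\la,t_\mu\colon S^1\to K$ gives $t_{\la+\mu}$. Your closing remark about the Peterson subalgebra is a reasonable algebraic alternative, but it is not needed here and is not the route the paper takes.
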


Define the $S$-basis $\{\xi_w\mid w\in W_\af^0\}$ of $\Hom_S(\La'_\Gr,S)$
by
\begin{align}\label{E:xihomdef}
\ip{\xi_w}{\xib^v} &= \delta_{wv}\qquad\text{for $v\in W_\af^0$.}
\end{align}

\begin{remark} \label{R:HomGr} There is an isomorphism of Hopf $S$-algebras
\begin{align}\label{E:HomGriso}
H_T(\Gr) &\cong \Hom_S(\La'_\Gr,S) \\
[X_w]_T &\mapsto \xi_w\qquad\text{for $w\in W_\af^0$}
\end{align}
where $X_w = \overline{B_\af \dot{w} P_\af/P_\af}\subset G_\af/P_\af=\Gr$
is the Schubert variety and $[X_w]_T$ is the $T$-equivariant
fundamental class. Moreover there is a perfect intersection pairing
\begin{align} \label{E:HHpair}
  H_T(\Gr) \times H^T(\tGr) \to S
\end{align}
which corresponds to the evaluation pairing
\begin{align}\label{E:fakeHHpair}
  \Hom_S(\La'_\Gr,S) \times \La'_\Gr \to S
\end{align}
under the isomorphisms \eqref{E:HomGriso} and \eqref{T:smallGKM}.
\end{remark}

\subsection{Small torus affine nilHecke ring and Peterson subalgebra}
\label{SS:Petersonsubalgebra}

We denote by $\A_\af$ the maximal torus affine nilHecke ring obtained from an
untwisted affine root datum as in Section \ref{sec:MarknilHecke}. 
The Peterson (small torus) affine nilHecke ring $\A'_\af$ (denoted $\A_\af$ in Chapter 3)
is the subquotient of $\A_\af$ defined as follows.

Let $W_\af$ be the affine Weyl group. It acts naturally on the affine weight lattice $X_\af$
and preserves the level \eqref{E:level} of a weight. Let $X_0 \subset X_\af$ be the
sublattice of level zero weights. By \eqref{E:Xaf} we have $X_\af \cong \Z \La_0 \oplus X_0$
where $\La_0$ is the zero-th affine fundamental weight. There is a natural surjection 
$\pi:X_0\to X$ with kernel $\Z\delta$ where $\delta$ is the null root. Since the action of $W_\af$
fixes $\delta$ there is an action of $W_\af$ on $X$ called the level zero action; it is defined by
\eqref{E:translevelzero}. Let $\Fr_\af$, $\Fr_0$, and $\Fr$ be the fraction fields of the
algebras $S_\af=\Sym(X_\af)$, $S_0=\Sym_(X_0)$, and $S=\Sym(X)$ respectively.
We have $S_\af = S_0[\La_0]$, $S_0/\delta S_0 \cong S$.

In the definition of $\A_\af$ the elements $A_i$ lie in the twisted group algebra $\Fr_\af W_\af$,
but since the variable $\La_0$ does not appear in them, they are also elements of the 
subring $\Fr_0 W_\af$. Let $\A_\af^0$ be the subring of $\Fr_0 W_\af$
generated by $A_i$ for $i\in I_\af$ and $S_0$. Then $\A_\af^0$ is naturally a subring of $\A_\af$
and adjoining the variable $\La_0$ to $\A_\af^0$ yields $\A_\af$: $\A_\af = S_\af \otimes_{S_0} \A_\af^0$.
Define
\begin{align}\label{E:PetnilHeckedef}
  \A_\af' = S \otimes_{S_0} \A_\af^0
\end{align}
where $S_0$ acts on $S$ by the ring homomorphism $\pi: S_0\to S$. In other words, $\A_\af'$ is obtained from
$\A_\af^0$ by setting the variable $\delta$ to zero.
In particular, in $\A_\af'$, since $\pi(\alpha_0)=\pi(\delta-\theta)=-\theta$, we have
\begin{align}\label{E:A0Pet}
A_0 = (1-s_0) /(-\theta).
\end{align}
Alternatively one may define $\A'_\af$ as the subring of $\Fr W_\af$ (where $W_\af$ acts on $\Fr$ by the
level zero action) generated by $A_i$ for $i \in I_\af$ and $S$ with $A_0$ defined as in \eqref{E:A0Pet}.

Any formula in $\A_\af$ that does not involve the
variable $\La_0$, automatically holds in $\A_\af'$ after applying $\pi$.
By Lemma \ref{L:Abasis} (applied for an untwisted affine root datum, so that the
Weyl group is $W_\af$ and the polynomial ring is $S_\af$) we have
\begin{align}
  \A_\af' = \bigoplus_{w\in W_\af} S A_w.
\end{align}
Evaluation yields left $S$-linear perfect pairings
\begin{align}
\A'_\af \times \La'_\af &\to S \\
\Hom_S(\La'_\Gr,S) \times \La'_\Gr &\to S.
\end{align}

Let $j: \Hom_S(\La'_\Gr,S) \to \A'_\af$ be the left $S$-module homomorphism defined by
\begin{equation}\label{E:j}
\ip{j(\xi)}{f} = \ip{\xi}{\varpi(f)}\qquad\text{for $f\in\La'_\af$, $\xi\in \Hom_S(\La'_\Gr)$.}
\end{equation}
It is well-defined since we may vary $f$ over the basis $\{\xib^v\mid v\in W_\af\}$ of $\La'_\af$.
The map $j$ is injective, for $\varpi$ is onto by Proposition \ref{P:wrongway}.

The following diagram relates the maps $\varpi$, $j$, and the pairings.
\begin{align}
\begin{diagram}
\node{\A'_\af} \node{\times} \node{\La'_\af}  \arrow{s,t,A}{\varpi} \arrow{e,t}{\ip{\cdot}{\cdot}}\node{S} \arrow{s,b}{\id_S}\\
\node{\Hom_S(\La'_\Gr,S)} \arrow{n,t,J}{j} \node{\times} \node{\La'_\Gr} \arrow{e,b}{\ip{\cdot}{\cdot}}  \node{S}.
\end{diagram}
\end{align}

The \textit{Peterson subalgebra} $\Pet$ is defined to be the centralizer of $S$ in $\A'_\af$:
\begin{align}\label{E:B}
\Pet := Z_{\A'_\af}(S).
\end{align}

\begin{lem}\label{L:imagej}
We have $\Image(j) = \bigoplus_{\mu\in Q^\vee} \Fr t_\mu \cap \A'_\af =
\Pet$.
\end{lem}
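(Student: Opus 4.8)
The plan is to prove the two displayed equalities separately: first the centralizer description $\Pet = \bigoplus_{\mu\in Q^\vee}\Fr\,t_\mu\cap\A'_\af$, and then the identification of this translation-supported algebra with $\Image(j)$. For the first equality I would argue exactly as in the small-torus analogue of Chapter 3, Lemma \ref{lem:Ptrans}. Writing an arbitrary $a\in\A'_\af\subset\Fr W_\af$ as $a=\sum_{w\in W_\af}a_w\,w$ with $a_w\in\Fr$, the relation $ws=(w\cdot s)\,w$ gives $as=\sum_w a_w(w\cdot s)\,w$ and $sa=\sum_w a_w s\,w$, so $a$ centralizes $S$ iff $a_w(w\cdot s-s)=0$ for every $w$ and every $s\in S$. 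Since $\Fr$ is a field and $S$ a domain, $a_w\ne0$ forces $w\cdot s=s$ for all $s$; by the level-zero action \eqref{E:translevelzero} an element $w=ut_\mu$ acts on $S$ as $u$, which is trivial iff $u=\id$, i.e. iff $w$ is a translation. The converse is immediate, so $\Pet = Z_{\A'_\af}(S)$ is exactly the translation-supported subspace.

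For the inclusion $\Pet\subseteq\Image(j)$, I would use that $\varpi:\La'_\af\to\La'_\Gr$ of Proposition \ref{P:wrongway} is a surjective idempotent $S$-module projection whose kernel is $\ker\varpi=\{f\in\La'_\af\mid f(t_\mu)=0 \text{ for all }\mu\in Q^\vee\}$ (idempotency and this kernel follow from $\varpi$ restricting to the identity on $\La'_\Gr\subseteq\La'_\af$). Given $b\in\Pet$, the first step writes $b=\sum_\mu c_\mu t_\mu$, so $\ip{b}{f}=\sum_\mu c_\mu\,f(t_\mu)$ depends only on the values of $f$ at translations and hence vanishes on $\ker\varpi$. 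Because $\varpi$ is an $S$-linear surjection, the $S$-linear functional $\ip{b}{-}:\La'_\af\to S$ factors as $\xi\circ\varpi$ for a unique $\xi\in\Hom_S(\La'_\Gr,S)$; comparing with the defining relation \eqref{E:j} of $j$ and invoking non-degeneracy of the pairing gives $j(\xi)=b$.

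The reverse inclusion $\Image(j)\subseteq\Pet$ is where the real content lies, and I would argue it through the adjointness of the two $\A'_\af$-actions on $\La'_\af$ with respect to the evaluation pairing (the small-torus versions of \eqref{E:leftQWonAQ} and \eqref{E:LBprod}). For $a\in\A'_\af$ and $s\in S$ one has $\ip{sa}{f}=\ip{a}{sf}$ with $(sf)(w)=s\,f(w)$, and $\ip{as}{f}=\ip{a}{s\bullet f}$ with $(s\bullet f)(w)=(w\cdot s)\,f(w)$, whence $\ip{as-sa}{f}=\ip{a}{g}$ where $g(w)=(w\cdot s-s)\,f(w)$. The crucial observation is that $g\in\La'_\af$ (closure of the GKM ring under the $\bullet$-action, the analogue of Lemma \ref{L:AonGKM}) and that $g(t_\mu)=(t_\mu\cdot s-s)\,f(t_\mu)=0$ because translations act trivially on $S$; thus $g\in\ker\varpi$. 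For $a=j(\xi)$ the defining relation \eqref{E:j} then gives $\ip{a}{g}=\ip{\xi}{\varpi(g)}=0$, so $\ip{as-sa}{f}=0$ for all $f$ and hence $as=sa$. Therefore $j(\xi)\in Z_{\A'_\af}(S)=\Pet$, completing the chain of equalities.

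The main obstacle is almost entirely in the setup rather than the final computation: one must install, in the small-torus setting, the two commuting $\A'_\af$-module structures on $\La'_\af$ (ordinary $S$-multiplication and the $\bullet$-action by right multiplication), verify their adjointness with the perfect pairing $\A'_\af\times\La'_\af\to S$, and confirm the closure property $s\bullet f\in\La'_\af$. These are the small-torus refinements of the constructions in Section \ref{SS:mult} and of the argument sketched in Chapter 3, Section \ref{chapter3.section.sketch}; once they are in place, the commutator identity of the previous paragraph is immediate, and the rest of the argument is formal manipulation with the pairing and the projection $\varpi$.
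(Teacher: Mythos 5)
Your proof is correct, but it is organized differently from the paper's, and one of the two inclusions involving $\Image(j)$ is argued by a genuinely different mechanism. The paper's entire argument pivots on the single identity $j(i^*_\mu)=t_\mu$ (its equation \eqref{E:jistar}), obtained from $\ip{i^*_\mu}{\varpi(f)}=f(t_\mu)$; combined with Lemma \ref{L:ratHomGr}, which says the $i^*_\mu$ form an $\Fr$-basis of $\Fr\otimes_S\Hom_S(\La'_\Gr,S)$, this yields $\Image(j)=\bigoplus_{\mu}\Fr t_\mu\cap\A'_\af$ in one stroke, after which containment in $\Pet$ is immediate because translations act trivially at level zero. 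Your identification of $\Pet$ with the translation-supported part via the commutator computation is the same as the paper's, and your proof of $\Pet\subseteq\Image(j)$ by factoring the functional $\ip{b}{\cdot}$ through $\varpi$ is essentially the paper's key computation run in reverse. Where you truly diverge is $\Image(j)\subseteq\Pet$: rather than reading it off from the translation description, you prove it directly via the adjunction $\ip{as-sa}{f}=\ip{a}{g}$ with $g(w)=(w\cdot s-s)f(w)$, noting $g\in\ker\varpi$. This works, but it needs the extra input that $c^\la\in\La'_\af$ for $\la\in X$ (equivalently, closure of the small-torus GKM ring under the $\bullet$-action of $S$), which the paper's proof never requires; that closure must be checked against the additional conditions \eqref{E:smallGKMt}--\eqref{E:smallGKMs} and not just \eqref{E:GKM}, though it does hold easily because translations act trivially at level zero, which forces $c^\la((1-t_{\alv})w)=0$. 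Your route is self-contained, makes no use of Lemma \ref{L:ratHomGr}, and mirrors the commutation characterization of $\aB$ in Chapter 3, Proposition \ref{prop:affStan}; the paper's route is shorter and has the side benefit of producing the explicit formula $j(i^*_\mu)=t_\mu$, which is reused in the proof of Theorem \ref{T:jmap}.
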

\begin{proof}
For $\mu \in Q^\vee$, we have $\ip{i^*_\mu}{\varpi(f)} = f(t_\mu)$ for all $f\in \A'_\af$,
so that
\begin{align}\label{E:jistar}
j(i^*_\mu) = t_\mu \in \A'_\af \qquad\text{for $\mu\in Q^\vee$.}
\end{align}
The first equality holds by Lemma \ref{L:ratHomGr}.
For the second equality, $\bigoplus_{\mu\in Q^\vee} \Fr t_\mu \cap \A'_\af \subseteq
\Pet$ holds because under the level zero action, $t_\mu$ acts
on $X$ trivially for all $\mu\in Q^\vee$. For the other direction,
let $a=\sum_{w\in W_\af} a_w w\in \Pet$ for $a_w\in \Fr$.
Then for all $\la\in X$ we have
\begin{equation*}
  0 = \la a - a \la = \sum_{w\in W_\af} a_w(\la-w\cdot\la) w.
\end{equation*}
Therefore for all $w\in W_\af$ either $a_w=0$ or $w\la=\la$ for all
$\la \in X$. Taking $\la$ to be $W$-regular, we see that the latter
only holds for $w=t_\mu$ for some $\mu\in Q^\vee$.
\end{proof}

The algebra $\Pet$ inherits a coproduct $\Delta: \Pet \to \Pet
\otimes_S \Pet$ from the coproduct of $\A'_\af$.
That $\Delta(\Pet) \subset \Pet
\otimes_S \Pet$ follows from Lemma \ref{L:imagej} and \eqref{E:DeltaQdef}.
We make $\Pet$ into a Hopf algebra over $S$ by defining the antipode $t_\mu\mapsto t_{-\mu}$
for $\mu\in Q^\vee$.

\begin{theorem}\label{T:jmap}\cite{Pet} \cite{Lam:2008}
The map $j: \Hom_S(\La'_\Gr,S) \to \Pet$ is a Hopf-isomorphism.
\end{theorem}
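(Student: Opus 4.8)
The plan is to establish that $j: \Hom_S(\La'_\Gr,S) \to \Pet$ is an isomorphism of Hopf algebras over $S$ by separately verifying the algebra structure, the coalgebra structure, and bijectivity. Crucially, much of the heavy lifting has already been arranged in the preceding lemmas: by Lemma \ref{L:imagej} we already know $\Image(j) = \Pet$, and we know $j$ is injective (this was observed right after the definition \eqref{E:j}, since $\varpi$ is surjective by Proposition \ref{P:wrongway}). So $j$ is already an $S$-module isomorphism onto $\Pet$. The remaining work is therefore purely to check that $j$ intertwines the two product structures and the two coproduct structures, together with compatibility of units, counits, and antipodes.

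First I would verify that $j$ is a ring homomorphism. The product on $\Hom_S(\La'_\Gr,S)$ is the one dual to the coproduct of $\La'_\Gr$, and by Proposition \ref{P:convolution} it is characterized over $\Fr$ by $(i_\la^*, i_\mu^*) \mapsto i_{\la+\mu}^*$. On the other side, \eqref{E:jistar} gives $j(i_\mu^*) = t_\mu$, and the translation elements multiply by $t_\la t_\mu = t_{\la+\mu}$ inside $\A'_\af$. Hence $j(i_\la^* \, i_\mu^*) = j(i_{\la+\mu}^*) = t_{\la+\mu} = t_\la t_\mu = j(i_\la^*)\,j(i_\mu^*)$, at least after extending scalars to $\Fr$; since both sides already lie in $\Pet$ and the pairing with $\La'_\af$ is $S$-linear and perfect, the identity descends to $S$. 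The unit $i_0^* = i_{\mathrm{id}}^*$ maps to $t_0 = 1$, so units are preserved.

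Next I would check the coalgebra compatibility, which is where the defining property \eqref{E:j} of $j$ must be combined with the fact that $\varpi$ is an $S$-algebra map (Proposition \ref{P:wrongway}). The coproduct on $\Pet$ is inherited from $\Delta$ on $\A'_\af$ via \eqref{E:DeltaQdef}, so $\Delta(t_\mu) = t_\mu \otimes t_\mu$, while on the dual side the coproduct of $\Hom_S(\La'_\Gr,S)$ is dual to the Pontryagin (convolution) product of $\La'_\Gr$. The key computation is to pair $\Delta(j(\xi))$ against $f \otimes g$ for $f,g \in \La'_\af$ and use that $\varpi$ is multiplicative to rewrite this as the pairing of $\xi$ against $\varpi(f)\varpi(g) = \varpi(fg)$; using Lemma \ref{L:multcoprod} (the $\La$-multiplication $=$ $\A$-coproduct duality) this matches $(j \otimes j)(\Delta \xi)$ paired against $f \otimes g$. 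Since the pairing between $\A'_\af$ and $\La'_\af$ is perfect and $\varpi$ is onto $\La'_\Gr$, this forces $\Delta \circ j = (j \otimes j) \circ \Delta$. The counit and antipode compatibilities are then routine: the antipode on $\Pet$ was \emph{defined} by $t_\mu \mapsto t_{-\mu}$ precisely to match $i_\mu^* \mapsto i_{-\mu}^*$ under $j$.

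I expect the main obstacle to be the coproduct verification, specifically the careful bookkeeping needed to move between the ``naive componentwise'' coproduct on $\Image(\Delta) = \AQ'$ (which, as emphasized in the Remark after Proposition \ref{P:welldefined}, is only well-defined on $\AQ'$ and not termwise on $\AQ \otimes_\Fr \AQ$) and the genuine $S$-module coproduct $\Delta: \A'_\af \to \A'_\af \otimes_S \A'_\af$. One must confirm that restricting $\Delta$ to the translation-supported subalgebra $\Pet$ lands in $\Pet \otimes_S \Pet$ — this uses $\Delta(t_\mu) = t_\mu \otimes t_\mu$ together with Lemma \ref{L:imagej} characterizing $\Pet$ as the $\Fr$-span of translations intersected with $\A'_\af$ — and that the duality between convolution product and coproduct (Lemma \ref{L:multcoprod}, transported to the affine Grassmannian setting through Proposition \ref{P:convolution} and Remark \ref{R:HomGr}) is applied with the correct variance. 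Everything else follows formally from the already-established bijectivity and from \eqref{E:jistar}, so the proof reduces to assembling these dualities rather than proving anything genuinely new.
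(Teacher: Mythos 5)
Your proposal is correct and follows essentially the same route as the paper's proof: bijectivity comes from the injectivity of $j$ together with Lemma \ref{L:imagej}, the algebra compatibility is checked on the $\Fr$-basis $\{i_\mu^*\}$ via $j(i_\mu^*)=t_\mu$ and $t_\la t_\mu = t_{\la+\mu}$, and the coalgebra compatibility is the same pairing chain combining the defining property \eqref{E:j} of $j$, the multiplicativity of $\varpi$ (Proposition \ref{P:wrongway}), and the product--coproduct duality of Lemma \ref{L:multcoprod}, with the antipode statement following for free. The point you flag as the main obstacle, namely $\Delta(\Pet)\subset\Pet\otimes_S\Pet$, is exactly what the paper disposes of just before the theorem using $\Delta(t_\mu)=t_\mu\otimes t_\mu$ and Lemma \ref{L:imagej}.
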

\begin{proof} We have seen $j$ is injective, and $j$ is surjective
by Lemma \ref{L:imagej}. It suffices to show $j$ is a bialgebra morphism,
since the compatibility with antipodes follows as a consequence.

Since $j$ is $S$-linear, to check that $j$ is compatible with the
Hopf-structure we check the product and coproduct structures on the
$F$-basis $\{i^*_\mu \mid \mu \in Q^\vee\}$ of $\Fr \otimes_S\Hom_S(\La'_\Gr,S)$.

By Proposition \ref{P:convolution} and \eqref{E:jistar}, for $\la,\mu\in Q^\vee$
we have $$j(i^*_\la i^*_\mu) = j(i^*_{\la+\mu}) = t_{\la+\mu} = t_\la t_\mu = j(i^*_\la) j(i^*_\mu).$$
Thus $j$ is an algebra morphism.

To show $j$ is a coalgebra morphism, let $a\in \Hom_S(\La'_\Gr,S)$ and $f,g\in \La'_\Gr$.
Then
\begin{align*}
 \ip{(j\otimes j)\circ\Delta(a)}{f\otimes g}
 &= \sum_a \ip{j(a_{(1)}) \otimes j(a_{(2)})}{f\otimes g} \\
 &= \sum_a \ip{j(a_{(1)})}{f} \ip{j(a_{(2)})}{g} \\
 &= \sum_a \ip{a_{(1)}}{\varpi(f)} \ip{a_{(2)}}{\varpi(g)} \\
 &= \sum_a \ip{a_{(1)} \otimes a_{(2)}}{\varpi(f)\otimes \varpi(g)} \\
 &= \ip{\Delta(a)}{\varpi(f)\otimes \varpi(g)} \\
 &= \ip{a}{\varpi(f)\varpi(g)} \\
 &= \ip{a}{\varpi(fg)} \\
 &= \ip{j(a)}{fg} \\
 &= \ip{\Delta(j(a))}{f\otimes g}.
\end{align*}
\end{proof}

\subsection{The $j$-basis}
\label{SS:jbasis}

Peterson characterized the image $j_w$ of the Schubert basis element $\xi_w$ in $\Pet$
as follows.

\begin{theorem}\label{T:LL}\cite{Pet} \cite{Lam:2008}
For each $w \in W_\af^0$, there is a unique element $j_w\in \Pet$ of the
form
\begin{equation}\label{E:jform}
j_w = A_w + \sum_{v \in W_\af \setminus W_\af^0}  j^v_w A_v
\end{equation}
for some $j^v_w\in S$. Furthermore, $j_w = j(\xi_w)$ and $\Pet =
\bigoplus_{w\in W_\af^0} S \, j_w$.
\end{theorem}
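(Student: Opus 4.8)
The plan is to set $j_w := j(\xi_w)$, read off its expansion in the basis $\{A_v\}$ using the duality between $\{A_v\}$ and $\{\xib^v\}$, and then obtain both the spanning statement (from $j$ being an isomorphism) and uniqueness (from a triangularity argument).

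First I would note that, by Theorem~\ref{T:jmap}, $j$ is in particular an $S$-module isomorphism. Since $\{\xi_w \mid w \in W_\af^0\}$ is by construction an $S$-basis of $\Hom_S(\La'_\Gr,S)$ (it is dual to $\{\xib^v \mid v\in W_\af^0\}$ via \eqref{E:xihomdef}), the elements $j_w := j(\xi_w)$ form an $S$-basis of $\Pet$, which already yields $\Pet = \bigoplus_{w\in W_\af^0} S\, j_w$.

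Next I would compute the coefficients. Since $\Pet\subset \A'_\af=\bigoplus_{v\in W_\af} S A_v$, write $j_w=\sum_{v\in W_\af} j^v_w A_v$ with $j^v_w\in S$. The bases $\{A_v\}$ and $\{\xib^v\}$ are dual under the evaluation pairing $\A'_\af\times\La'_\af\to S$, that is $\ip{A_v}{\xib^{v'}}=\delta_{vv'}$; this descends from $\ip{\xi^{v'}}{A_v}=\delta_{v'v}$ of Proposition~\ref{P:GKMbasis} under $\xib=\pi\circ\xi$. Pairing $j_w$ against $\xib^v$ and using the defining property~\eqref{E:j} of $j$, I obtain
\[
  j^v_w=\ip{j_w}{\xib^v}=\ip{j(\xi_w)}{\xib^v}=\ip{\xi_w}{\varpi(\xib^v)}.
\]
The key step is to evaluate $\varpi(\xib^v)$ when $v\in W_\af^0$. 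For such $v$ the class $\xib^v$ lies in $\La'_\Gr$, since by Theorem~\ref{T:smallGKM}(2) the $\{\xib^v\mid v\in W_\af^0\}$ form a basis of $\La'_\Gr$ (equivalently these functions are constant on the cosets of $W_\af/W$, cf. Proposition~\ref{P:partialflagbasis}). As $\varpi$ restricts to the identity on $\La'_\Gr$ by Proposition~\ref{P:wrongway}, we get $\varpi(\xib^v)=\xib^v$ and hence $j^v_w=\ip{\xi_w}{\xib^v}=\delta_{wv}$ for every $v\in W_\af^0$. This is exactly the asserted triangular form $j_w=A_w+\sum_{v\in W_\af\setminus W_\af^0} j^v_w A_v$.

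For uniqueness, suppose $a\in\Pet$ has the form $a=A_w+\sum_{v\in W_\af\setminus W_\af^0} c_v A_v$. Expanding $a=\sum_{u\in W_\af^0} s_u\, j_u$ in the just-established basis and comparing coefficients of $A_{v'}$ for $v'\in W_\af^0$—which are $\delta_{wv'}$ on the one side and $s_{v'}$ on the other, since the only Grassmannian term of each $j_u$ is $A_u$—forces $s_{v'}=\delta_{wv'}$, so $a=j_w$. The main obstacle is the middle step, the identification $\varpi(\xib^v)=\xib^v$ for $v\in W_\af^0$; everything else is bookkeeping with dual bases. It rests on correctly recognizing the affine Grassmannian Schubert classes inside $\La'_\af$ as precisely the $W_\af/W$-invariant functions, and on Proposition~\ref{P:wrongway} guaranteeing that $\varpi$ fixes $\La'_\Gr$ pointwise.
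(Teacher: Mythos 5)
Your proposal is correct and follows essentially the same route as the paper's own proof: define $j_w = j(\xi_w)$, invoke Theorem~\ref{T:jmap} for the basis statement, compute $\ip{j_w}{\xib^v}=\ip{\xi_w}{\varpi(\xib^v)}=\delta_{vw}$ for $v\in W_\af^0$ using \eqref{E:j} and Proposition~\ref{P:wrongway}, and deduce uniqueness from the linear independence of $\{A_u\mid u\in W_\af^0\}$. The only difference is that you spell out the duality bookkeeping and the uniqueness expansion in slightly more detail than the paper does.
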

\begin{proof} Since $\{\xi_w \mid w \in W_\af^0 \}$ is an
$S$-basis of $\Hom_S(\La'_\Gr,S)$, by Theorem \ref{T:jmap}, setting $j_w =
j(\xi_w)$ we obtain an $S$-basis of $\Pet$. Let $v,w\in W_\af^0$. We have
\begin{align*}
     \ip{j_w}{\xib^v}
  &= \ip{j(\xi_w)}{\xib^v} \\
  &= \ip{\xi_w}{\varpi(\xib^v)} \\
  &= \ip{\xi_w}{\xib^v} \\
  &= \delta_{vw}
\end{align*}
by \eqref{E:j}, Proposition \ref{P:wrongway}, and \eqref{E:xihomdef}.
On the other hand $\ip{j_w}{\xib^v}$ is the coefficient of $A_v$
in $j_w\in \Pet\subset\A'_\af$. The form \eqref{E:jform} follows.
The element $j_w \in \Pet$ is unique because the set $\{A_w
\mid w \in W_\af^0 \}$ is linearly independent.
\end{proof}

For $w\in W_\af^0$ let $t^w = t_\la$ where $\la\in Q^\vee$ is such that $t^w W = t_\la W$.
Since $\{j_v\mid v\in W_\af^0\}$ is an $S$-basis of $\Pet$ and $t^w\in \Pet$ for all $w\in W_\af^0$,
we have
\begin{equation}
\label{E:tj}
\begin{split}
  t^w &= \sum_{v\in W_\af^0} \xib^v(t^w) j_v \\
  &= \sum_{v\in W_\af^0} \xib^v(w) j_v
\end{split}
\end{equation}
by the definition of $j_v$, \eqref{E:dmatrix}, Lemma \ref{L:xid}, and 
Proposition \ref{P:partialflagbasis}.

Let $D$ be the $W_\af^0\times W_\af^0$ matrix with $D_{v,w} = \xib^v(w)$.
It is invertible over $\Fr$ by \eqref{E:xisupport} and \eqref{E:xidiag}.
Let $C = D^{-1}$. 

\begin{prop}\label{P:jcoefs} For every $u\in W_\af^0$ and $x\in W_\af$,
\begin{align}\label{E:jcoefs}
  j_u^x = \sum_{w\in W_\af^0} C_{wu} \xib^x(t^w).
\end{align}
\end{prop}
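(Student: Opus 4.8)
The plan is to read off \eqref{E:jcoefs} by combining two ingredients already assembled in the excerpt: the change-of-basis relation \eqref{E:tj} between the $j$-basis and the translations $t^w$, and the $A_x$-expansion of a translation element inside $\A'_\af$. The whole argument is a matrix inversion followed by extracting a coefficient, so there is no genuinely new computation to perform; the care lies in index conventions and in the passage from the maximal-torus ring to the small-torus ring.

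First I would invert \eqref{E:tj}. That equation reads $t^w = \sum_{v\in W_\af^0} \xib^v(w)\,j_v = \sum_{v\in W_\af^0} D_{v,w}\,j_v$, so the transition from the $\Fr$-basis $\{j_v\}$ to $\{t^w\}$ of $\Fr\otimes_S\Pet$ is governed by the transpose $D^{\mathrm{tr}}$. Since $D$ is invertible over $\Fr$ (by \eqref{E:xisupport} and \eqref{E:xidiag}) with inverse $C$, I can solve to get $j_u = \sum_{w\in W_\af^0} C_{wu}\,t^w$; the verification is immediate, namely $\sum_w C_{wu} t^w = \sum_{v,w} C_{wu} D_{v,w}\, j_v = \sum_v (DC)_{v,u}\, j_v = j_u$ using $DC=I$.

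Next I would expand each $t^w$ in the $S$-basis $\{A_x\mid x\in W_\af\}$ of $\A'_\af$ and identify the coefficient of $A_x$ as $\xib^x(t^w)$. In the maximal-torus ring $\A_\af$, Lemma \ref{L:xid} together with \eqref{E:dmatrix} says the coefficient of $A_x$ in the Weyl-group element $t^w$ is $\xi^x(t^w)$. The point to check is that this coefficient lies in $S_0$: by the Billey-type formula in Proposition \ref{P:xiprop} it is a polynomial in the affine simple roots $\alpha_i$, and each $\alpha_i$ has level zero, since $\langle c,\alpha_i\rangle = \sum_j a_j^\vee a_{ji}=0$ by \eqref{E:adef}; hence $\alpha_i\in X_0$ and $\xi^x(t^w)\in S_0$. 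Passing from $\A_\af^0$ to $\A'_\af = S\otimes_{S_0}\A_\af^0$ applies $\pi\colon S_0\to S$ (the specialization $\delta=0$) to these coefficients, and by the definition $\xib^x=\pi\circ\xi^x$ of \eqref{E:xib} the coefficient of $A_x$ in $t^w\in\A'_\af$ is $\xib^x(t^w)$. Reading off the coefficient of $A_x$ in $j_u=\sum_w C_{wu}\,t^w$ then produces exactly $j_u^x=\sum_{w\in W_\af^0} C_{wu}\,\xib^x(t^w)$.

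The step I expect to demand the most care is this last identification in the small-torus ring, precisely because one must track how coefficients behave under $\delta=0$; the level-zero computation above is what makes it clean. As a final sanity check I would specialize to $x\in W_\af^0$: there $\xib^x(t^w)=\xib^x(w)=D_{x,w}$, so the formula gives $\sum_w C_{wu}D_{x,w}=(DC)_{x,u}=\delta_{xu}$, which recovers the leading-term normalization $j_u=A_u+\sum_{v\notin W_\af^0} j_u^v A_v$ of Theorem \ref{T:LL} and confirms no transpose has been flipped. An alternative to the $\pi$-argument, which I would record, is that $\{A_x\}$ and $\{\xib^x\}$ are dual bases under the evaluation pairing $\A'_\af\times\La'_\af\to S$ (the small-torus form of \eqref{E:APhidualbases}, coming from Theorem \ref{T:smallGKM}), so that the $A_x$-coefficient of the group element $t^w$ is $\langle \xib^x,t^w\rangle = \xib^x(t^w)$ directly.
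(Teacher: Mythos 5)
Your argument is correct and is essentially identical to the paper's proof: multiply \eqref{E:tj} by $C_{wu}$, sum over $w$ to obtain $j_u=\sum_{w\in W_\af^0}C_{wu}\,t^w$, and take the coefficient of $A_x$. The extra care you take in justifying that this coefficient is $\xib^x(t^w)$ in the small-torus ring $\A'_\af$ (via the level-zero observation and $\pi\circ\xi^x=\xib^x$) is a worthwhile elaboration of a step the paper leaves implicit, but it is not a different route.
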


\begin{remark}\label{R:jformula}
Note that by Propositions \ref{P:jcoefs} and \ref{P:Homconst} below,
the Schubert structure constants $d_{uv}^w$ for $H_T(\Gr_G)$ 
may be computed using only localizations of Schubert classes.
\end{remark}

\begin{proof} Multiplying \eqref{E:tj} by $C_{wu}$ and summing over $w\in W_\af^0$ we have
\begin{align*}
  \sum_{w\in W_\af^0} C_{wu} t^w &= \sum_{v,w\in W_\af^0} D_{vw} C_{wu} j_v \\
  &= \sum_{v\in W_\af^0} \delta_{vu} j_v \\
  &= j_u.
\end{align*}
Now take the coefficient of $A_x$.
\end{proof}

\begin{prop} \label{P:transexp} \cite{Pet}
 For any antidominant $\la\in Q^\vee$, $t_\la\in W_\af^0$ and
\begin{align}
  j_{t_\la} = \sum_{\mu\in W\la} A_{t_\mu}.
\end{align}
\end{prop}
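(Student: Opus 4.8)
The claim is that for antidominant $\la\in Q^\vee$, the element $t_\la$ lies in $W_\af^0$ and moreover $j_{t_\la} = \sum_{\mu\in W\la} A_{t_\mu}$. The plan is to verify the two assertions separately, using the characterization of the $j$-basis in Theorem \ref{T:LL} as the central elements of $\Pet$ whose unique affine Grassmannian term is $A_{t_\la}$ with coefficient $1$.

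First I would establish that $t_\la\in W_\af^0$ when $\la$ is antidominant. Recall from Chapter 3, Section \ref{sec:Weyl} (see the exercise asserting $\aW^0\cap Q^\vee = Q^-$) that a translation $t_\la$ is a minimal length coset representative for $W_\af/W$ precisely when $\la$ is antidominant. This is a purely combinatorial fact about lengths in the affine Weyl group, provable directly from the length formula \eqref{eq:length}: for $\la$ antidominant, $\ip{\la,\al}\le 0$ for all $\al\in R^+$, and one checks $\ell(t_\la s_i) > \ell(t_\la)$ for each finite simple reflection $s_i$, $i\in I$, which is exactly the condition that $t_\la$ is the minimal representative of its coset $t_\la W$.

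The main step is the identity itself. The natural approach is to set $b := \sum_{\mu\in W\la} A_{t_\mu}$ and show it equals $j_{t_\la}$ by verifying the defining properties from Theorem \ref{T:LL}: namely that $b\in\Pet$, and that the unique term $A_v$ in $b$ with $v\in W_\af^0$ is $A_{t_\la}$ itself, occurring with coefficient $1$. For membership in $\Pet = Z_{\A'_\af}(S)$, I would invoke Lemma \ref{L:imagej}, which says $\Pet = \bigoplus_{\mu\in Q^\vee}\Fr\, t_\mu \cap \A'_\af$: since each $t_\mu$ centralizes $S$ under the level zero action (translations act trivially on $X$), any $S$-linear combination of the $t_\mu$ that happens to lie in $\A'_\af$ is automatically in $\Pet$. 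The Weyl orbit $W\la$ is a finite set of coweights, so $b$ is a finite sum; but I must check $b\in\A'_\af$, i.e. that expanding each $t_\mu$ in the $A_w$-basis via \eqref{E:dmatrix} yields coefficients in $S$ (not merely in $\Fr$). This follows because each $t_\mu = \sum_v d_{v,t_\mu} A_v$ with $d_{v,t_\mu}\in S$, so $b$ has all $A_w$-coefficients in $S$ and hence lies in $\A'_\af$. Then for the leading term: among all translations $t_\mu$ with $\mu\in W\la$, exactly one, namely $t_\la$ with $\mu=\la$ antidominant, is itself Grassmannian, and its expansion \eqref{E:dmatrix} contributes $A_{t_\la}$ with coefficient $d_{t_\la,t_\la}=1$; the other translations $t_\mu$ for non-antidominant $\mu\in W\la$ expand into $A_v$ with $v\le t_\mu$, and I would argue that none of these $v$ equal $t_\la$ nor lie in $W_\af^0$ with the same length, so they do not interfere with the Grassmannian-term normalization.

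The hard part will be pinning down precisely why $b$ has $A_{t_\la}$ as its \emph{unique} affine Grassmannian term with coefficient $1$, as required by the uniqueness in Theorem \ref{T:LL}. The subtlety is that the Bruhat-lower terms $A_v$ arising from $d_{v,t_\mu}$ with $v < t_\mu$ could conceivably include other elements of $W_\af^0$. To rule this out cleanly I would instead pair $b$ against the basis $\{\xib^w\}$ and use the relation \eqref{E:tj}, which expresses $t^w = \sum_{v\in W_\af^0}\xib^v(w)\, j_v$; applying this to the orbit sum and using that $\xib^v$ is constant on $W_\af/W$-cosets, together with the localization values $\xib^v(t_\mu)$, should collapse the orbit sum to a single $j_{t_\la}$. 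Concretely, I expect $\sum_{\mu\in W\la} i_\mu^*$ to map under $j$ (by \eqref{E:jistar}) to $\sum_{\mu\in W\la} t_\mu = b$, and the point is that this sum equals $j(\xi_{t_\la})$; verifying $\sum_{\mu\in W\la} i_\mu^* = \xi_{t_\la}$ as functionals on $\La'_\Gr$ amounts to checking $\ip{\sum_\mu i_\mu^*}{\xib^v} = \delta_{v,t_\la}$ for $v\in W_\af^0$, which reduces to evaluating $\sum_{\mu\in W\la}\xib^v(t_\mu)$ and recognizing it as $\delta_{v,t_\la}$ via the triangularity and diagonal-nonvanishing of the localization matrix established in Lemma \ref{L:ratHomGr}. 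This functional-pairing route, leaning on Theorem \ref{T:jmap} and Proposition \ref{P:convolution}, seems the most robust way to avoid a messy direct manipulation of Bruhat order among the lower-order terms.
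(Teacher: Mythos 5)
The paper itself gives no proof of this proposition --- it is stated with a citation to Peterson (and appears again as an exercise in Chapter 3) --- so there is nothing internal to compare against; I will assess your argument on its own terms. Your first claim ($t_\la\in W_\af^0$ for $\la$ antidominant) is fine, and your overall skeleton is the right one: exhibit the right-hand side as an element of $\Pet$ whose unique affine Grassmannian term is $A_{t_\la}$ with coefficient $1$, then invoke the uniqueness in Theorem \ref{T:LL}. Note also that identifying the unique Grassmannian term of $b=\sum_{\mu\in W\la}A_{t_\mu}$ is easier than you make it: $b$ is literally a sum of basis elements $A_{t_\mu}$, with no lower-order terms, and $t_\mu\in W_\af^0$ only for $\mu=\la$.

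The genuine gap is that you conflate the group element $t_\mu\in W_\af\subset\A'_\af$ with the basis element $A_{t_\mu}$, and this conflation removes exactly the step where all the work lies. You define $b=\sum_{\mu\in W\la}A_{t_\mu}$ but then verify membership in $\Pet$ for $\sum_\mu t_\mu$ instead: Lemma \ref{L:imagej} makes $\sum_\mu t_\mu\in\Pet$ trivial, whereas for $b$ it requires showing that in the expansion $A_{t_\mu}=\sum_{v\le t_\mu}c_{t_\mu,v}\,v$ of Lemma \ref{L:ctri} all non-translation terms cancel after summing over the orbit --- equivalently, via \eqref{E:ALBcomm} and the fact that $t_\mu$ acts trivially in the level-zero action, that the Chevalley correction terms $\langle\alpha^\vee,\la'\rangle A_{t_\mu s_\alpha}$ cancel when summed over $\mu\in W\la$. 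That cancellation is the entire content of the proposition and is never addressed. Your fallback ``functional-pairing route'' computes the wrong element: it is false that $\sum_{\mu\in W\la}i_\mu^*=\xi_{t_\la}$, since $\langle\sum_\mu i_\mu^*,\xib^{\id}\rangle=\sum_\mu\xib^{\id}(t_\mu)=|W\la|\ne0$ and $\langle\sum_\mu i_\mu^*,\xib^{t_\la}\rangle=\xib^{t_\la}(t_\la)$, which by \eqref{E:xidiag} is a nonzero polynomial of degree $\ell(t_\la)$ rather than $1$. Equivalently, $\sum_\mu t_\mu=j(\sum_\mu i_\mu^*)$ has $A_{t_\la}$-coefficient $d_{t_\la,t_\la}=\xi^{t_\la}(t_\la)\ne1$, so it cannot be $j_{t_\la}$. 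To repair the proof you must show directly that $b$ commutes with every $\la'\in X$, e.g.\ by applying \eqref{E:ALBcomm} to each $A_{t_\mu}$ and producing a pairing of the covers $t_\mu s_\alpha\lessdot t_\mu$ across the orbit $W\la$ under which the terms $\langle\alpha^\vee,\la'\rangle A_{t_\mu s_\alpha}$ cancel; once $b\in\Pet$ is established, the rest of your argument closes via Theorem \ref{T:LL}.
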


\begin{cor} \label{C:Schubtrans} For any antidominant $\la\in Q^\vee$
and $x\in W_\af^0$, we have $xt_\la\in W_\af^0$ and
\begin{align}
  j_{xt_\la} = j_x j_{t_\la}.
\end{align}
\end{cor}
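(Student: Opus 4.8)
The plan is to deduce the identity from the triangularity of the $j$-basis together with the explicit translation formula of Proposition \ref{P:transexp}, reducing everything to a length/inversion computation in $W_\af$. Since $\Pet$ is a subalgebra of $\A'_\af$, the product $j_x j_{t_\la}$ lies in $\Pet$, so by Theorem \ref{T:LL} we may write $j_x j_{t_\la} = \sum_{w\in W_\af^0} c_w j_w$ with unique $c_w\in S$. The first thing I would record is the elementary remark that, because each $j_w = A_w + \sum_{v\notin W_\af^0} j_w^v A_v$ in the form \eqref{E:jform} has $A_w$ as its \emph{only} Grassmannian term in the $A$-basis, the coefficient $c_w$ equals precisely the coefficient of $A_w$ in the $A$-basis expansion of $j_x j_{t_\la}$. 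Thus the corollary is equivalent to showing that, for every $w\in W_\af^0$, the coefficient of $A_w$ in $j_x j_{t_\la}$ is $\delta_{w,\,xt_\la}$; this simultaneously forces $c_w=\delta_{w,\,xt_\la}$ and collapses the sum to the single term $j_{xt_\la}$.

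For the membership statement and the length additivity I would use the decomposition $W_\af = W\ltimes Q^\vee$ and the characterization of affine Grassmannian elements as those $u t_\eta$ with $\eta$ antidominant and $u\in W^\eta$ (minimal in $uW_\eta$). Writing $x = u t_\nu$, the hypothesis $x\in W_\af^0$ gives $\nu$ antidominant and $u\in W^\nu$; then $xt_\la = u t_{\nu+\la}$ with $\nu+\la$ antidominant, and since $W_{\nu+\la}=W_\nu\cap W_\la\subseteq W_\nu$ we get $u\in W^{\nu+\la}$, whence $xt_\la\in W_\af^0$. To obtain $\ell(xt_\la)=\ell(x)+\ell(t_\la)$ I would apply the length formula \eqref{eq:length} for $u t_\eta$ termwise over $\alpha\in\Phi^+$, using $\langle\nu,\alpha\rangle,\langle\la,\alpha\rangle\le 0$; the only potentially non-additive term arises when $\langle\nu,\alpha\rangle=0$, and there $u\in W^\nu$ forces $\chi(u\alpha)=0$, which removes it. Consequently $A_x A_{t_\la}=A_{xt_\la}$.

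Next I would carry out the $A$-expansion. By Proposition \ref{P:transexp}, $j_{t_\la}=\sum_{\mu\in W\la} A_{t_\mu}$, and all of these translations share the common length $L=\ell(t_\la)$, since $\ell(t_\mu)=\sum_{\alpha\in\Phi^+}|\langle\mu,\alpha\rangle|$ is invariant as $\mu$ runs over the orbit $W\la$ (absolute values absorb the sign changes produced by $W$). Writing $j_x = A_x + \sum_{v\notin W_\af^0} j_x^v A_v$ and using $A_v A_{t_\mu}=A_{vt_\mu}$ or $0$ according as lengths add, the coefficient of a Grassmannian $A_w$ in $j_x j_{t_\la}$ is a sum over pairs $(v,\mu)$ with $\mu\in W\la$, $vt_\mu=w$, and $\ell(vt_\mu)=\ell(v)+\ell(t_\mu)$. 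The whole computation then rests on the following Key Lemma: if $v\in W_\af$, $\mu\in W\la$, $vt_\mu\in W_\af^0$ and $\ell(vt_\mu)=\ell(v)+\ell(t_\mu)$, then $\mu=\la$ and $v\in W_\af^0$. Granting it, the only admissible pair is $(x,\la)$ — every other admissible $v$ lies in $W_\af^0$ and hence, being the lone Grassmannian index of $j_x$, must equal $x$ — so the coefficient of $A_w$ is $\delta_{w,\,xt_\la}$, which finishes the argument by the first paragraph.

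The Key Lemma is where I expect the real work to lie. The strategy would be to translate $\ell(vt_\mu)=\ell(v)+\ell(t_\mu)$ into the inversion-set criterion $t_{-\mu}\,\Inv(v)\subseteq \Phi_\af^{\re+}$ (cf.\ \eqref{E:Inv}) and to translate $vt_\mu\in W_\af^0$ into the statement that $vt_\mu$ has no finite right descent, i.e.\ $(vt_\mu)(\Phi^+)\subseteq\Phi_\af^{\re+}$; then, using the explicit form of $\Inv(t_\mu)$ for an orbit element $\mu$, one shows these two conditions are jointly incompatible unless $\mu$ is antidominant, and uniqueness of the antidominant representative in $W\la$ gives $\mu=\la$, after which feeding $\mu=\la$ back in yields $v\in W_\af^0$. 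This is a pure affine root-system computation but it is the only genuinely nontrivial step. As a cross-check (and an alternative route avoiding the inversion bookkeeping), I would keep in mind the geometric statement that the Pontryagin product $[X_x]\cdot[X_{t_\la}]$ equals $[X_{xt_\la}]$ because the multiplication map $X_x\times X_{t_\la}\to X_{xt_\la}$ on the based loop group is birational once dimensions add; under the Hopf isomorphism $j$ of Theorem \ref{T:jmap} this is exactly $j_x j_{t_\la}=j_{xt_\la}$.
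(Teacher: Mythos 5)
Your argument is correct and takes essentially the same route as the paper, whose proof of this corollary is exactly the deduction from Proposition \ref{P:transexp} and the uniqueness statement of Theorem \ref{T:LL} obtained by comparing Grassmannian terms in the $A$-basis expansion of $j_xj_{t_\la}$. The Key Lemma you isolate is true as stated (your inversion-set strategy goes through), and for the terms with $\mu\ne\la$ one can shortcut it by observing, as in the proof of Proposition \ref{P:Homconst}, that $A_vA_{t_\mu}$ lies in the left ideal $\A'_\af A_{t_\mu}$, which meets $\bigoplus_{w\in W_\af^0}S\,A_w$ trivially because $t_\mu\notin W_\af^0$.
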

\begin{proof} Follows from Proposition \ref{P:transexp} and
Theorem \ref{T:LL}.
\end{proof}

\subsection{Homology structure constants}
Recall that $\Hom_S(\La'_\Gr,S)\cong H_T(\Gr)$.

For $u,v,w\in W_\af^0$ define the $H_T(\Gr)$ Schubert structure
constants $d^w_{uv}\in S$ by
\begin{align}
  j_u j_v = \sum_w d^w_{uv} j_w.
\end{align}

The $H_T(\Gr)$-structure constants all appear in the
expansion of the basis $\{j_v\mid v\in W_\af^0\}$
in terms of $A_w$. The following is due to Peterson \cite{Pet}.

\begin{prop} \label{P:Homconst} For $u,v,w\in W_\af^0$,
\begin{align}\label{E:jHomconst}
  d^w_{uv} = \begin{cases}
    j_u^{wv^{-1}} & \text{if $\ell(wv^{-1})+\ell(v)=\ell(w)$} \\
    0 & \text{otherwise.}
  \end{cases}
\end{align}
\end{prop}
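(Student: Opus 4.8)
The plan is to extract the structure constant $d^w_{uv}$ as a single coefficient in the nilHecke ring $\A'_\af$ and then use the combinatorics of parabolic cosets to see that the resulting sum over factorizations collapses to one term.

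First I would reduce the problem to reading off a coefficient in the $A$-basis. By Theorem \ref{T:LL}, each $j_{w'}$ with $w'\in W_\af^0$ has the form $j_{w'} = A_{w'} + \sum_{v'\notin W_\af^0} j_{w'}^{v'} A_{v'}$, so the transition between $\{j_{w'}\}_{w'\in W_\af^0}$ and $\{A_{w'}\}_{w'\in W_\af^0}$ is unitriangular and, crucially, no $j_{w'}$ contributes a Grassmannian term $A_w$ other than its own leading term. Hence, taking the coefficient of $A_w$ (with $w\in W_\af^0$) on both sides of $j_u j_v = \sum_{w'} d^{w'}_{uv} j_{w'}$ yields $d^w_{uv} = [A_w](j_u j_v)$, the coefficient of $A_w$ in the product computed inside $\A'_\af$.

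Next I would expand the product. Writing $j_u = \sum_x j_u^x A_x$ and $j_v = \sum_y j_v^y A_y$ and using the nilCoxeter multiplication rule \eqref{E:Aadd}, namely $A_x A_y = A_{xy}$ when $\ell(xy)=\ell(x)+\ell(y)$ and $0$ otherwise, I obtain
$$
d^w_{uv} = \sum_{\substack{x,y\in W_\af \\ xy=w,\ \ell(x)+\ell(y)=\ell(w)}} j_u^x\, j_v^y .
$$
So the task becomes showing this sum over reduced factorizations $w=xy$ has at most one surviving term.

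The key step, and the one I expect to carry the real content, is a combinatorial lemma on reduced factorizations of minimal coset representatives: if $w\in W_\af^0$ and $w=xy$ with $\ell(x)+\ell(y)=\ell(w)$, then $y\in W_\af^0$. I would prove this by contradiction: if $y\notin W_\af^0$ then $\ell(y s_i)<\ell(y)$ for some finite generator $s_i$ with $i\in I$, whence $\ell(w s_i)\le \ell(x)+\ell(y s_i)=\ell(w)-1$, contradicting the characterization of $w\in W_\af^0$ as the minimal element of $wW$, i.e. $\ell(w s_i)>\ell(w)$ for all $i\in I$. Granting this lemma, every index $y$ appearing in the sum lies in $W_\af^0$; but by the form \eqref{E:jform} of $j_v$ we have $j_v^y=\delta_{yv}$ for $y\in W_\af^0$. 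Thus the only possible term is $y=v$, forcing $x=wv^{-1}$, and it is present exactly when $w=(wv^{-1})v$ is a reduced factorization, that is, $\ell(wv^{-1})+\ell(v)=\ell(w)$. In that case $d^w_{uv}=j_u^{wv^{-1}}\, j_v^v = j_u^{wv^{-1}}$, and otherwise $d^w_{uv}=0$, which is exactly the claimed formula.
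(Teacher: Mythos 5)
Your overall strategy is the same as the paper's: reduce $d^w_{uv}$ to the coefficient of $A_w$ in $j_uj_v$ via the unitriangular form \eqref{E:jform}, and then use the fact that in a length-additive factorization $w=xy$ of a Grassmannian element $w\in W_\af^0$ the right factor $y$ must itself be Grassmannian. That combinatorial fact, which you prove correctly, is exactly what the paper encodes as $S A_w \cap \A'_\af A_y = 0$ for $y\notin W_\af^0$.

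There is, however, a flaw in your expansion step. The identity
$$
j_u j_v \;=\; \sum_{x,y} j_u^x\, j_v^y\, A_x A_y
$$
implicitly underlying your displayed formula treats the coefficients $j_v^y\in S$ as central in $\A'_\af$, which they are not: by \eqref{E:As} (or Proposition \ref{P:ALBcomm}) one has $A_x\, q = (x\cdot q)A_x + (\text{terms } A_{x'} \text{ with } x'<x)$ for $q\in S$. So the coefficient of $A_w$ in $j_u^x A_x\, j_v^y A_y$ is not $j_u^x j_v^y\,\chi(xy=w,\ \ell(x)+\ell(y)=\ell(w))$; even the leading contribution involves $x\cdot j_v^y$ rather than $j_v^y$, and there are further lower-order terms. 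Your intermediate formula for $d^w_{uv}$ is therefore not justified as written (it happens to evaluate to the right answer here only because the surviving coefficient is $j_v^v=1$). The repair uses your own lemma and is what the paper does: each cross term $j_u^x A_x\, j_v^y A_y$ lies in the left $S$-module $\A'_\af A_y$, which by \eqref{E:Aadd} is contained in the span of the $A_{zy}$ with $\ell(zy)=\ell(z)+\ell(y)$; for $y\notin W_\af^0$ none of these $zy$ lies in $W_\af^0$, so these terms contribute nothing to the coefficient of $A_w$. Only $y=v$ survives, with scalar coefficient $1$ (which does commute), whence $d^w_{uv}$ is the coefficient of $A_w$ in $j_u A_v$, and \eqref{E:Aadd} gives the stated formula.
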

\begin{proof} Using the notation of \eqref{E:jform}, $d_{uv}^w$
is the coefficient of $A_w$ in the expansion of $j_uj_v$
when the latter is written as a left $S$-linear combination of $\{A_z\mid z\in W_\af\}$.
Every product of the form $j^x_u A_x j^y_v A_y$ for $y\not\in W_\af^0$,
is in the left $S$-module $\A A_y$ for $z\in W_\af$,
and by \eqref{E:Aadd} $S A_w \cap \A A_y = 0$. Therefore $d^w_{uv}$ is the coefficient
of $A_w$ in $j_u A_v$, and applying \eqref{E:Aadd} yields \eqref{E:jHomconst}.
\end{proof}

\subsection{Peterson's ``Quantum Equals Affine" Theorems}
\label{SS:quantumequalsaffine}

The set $\mathcal{T}$ of the elements $\xi_{t_\la}$ for $\la\in Q^\vee$ antidominant,
is multiplicatively closed since
\begin{align}
  \xi_{x t_\la} = \xi_x \xi_{t_\la}
\end{align}
for all such $\la$ and $x\in W_\af^-$. This follows from Corollary \ref{C:Schubtrans}
and Theorem \ref{T:LL}. For any $w\in W$ there is an antidominant $\la\in Q^\vee$ such that
$wt_\la\in W_\af^-$. 

Let $QH^T(G/B)$ be the $T$-equivariant quantum cohomology ring of $G/B$.
Linearly it is isomorphic to $\Z[q_i\mid i\in I] \otimes_\Z H^T(G/B)$.
For $\alpha^\vee = \sum_{i\in I} c_i \alpha_i^\vee\in Q^\vee$
define $q_{\alpha^\vee} = \prod_{i\in I} q_i^{c_i} \in \Z[q_i^{\pm}\mid i\in I]$.
For $w\in W$ let $\sigma^w$ be the quantum Schubert class
defined by the $w$-th opposite Schubert variety $\overline{B_-wB/B}$.

\begin{theorem} \cite{Pet} \cite{LS:QH} Let $QH^T(G/B)_{(q)}$ be the localization of $QH^T(G/B)$
at the quantum parameters. Denote by $H_T(\Gr_G)_{\mathcal{T}}$ the localization of $H_T(\Gr_G)$
at the multiplicatively closed set $\mathcal{T}$. Then there is an $S$-algebra isomorphism
$QH^T(G/B)_{(q)} \to H_T(\Gr_G)_{\mathcal{T}}$ defined as follows.
Given $\mu\in Q^\vee$ there is an antidominant $\la\in Q^\vee$ such that $wt_{\mu+\la}\in W_\af^-$.
Then $q_\mu \sigma^w \mapsto \xi_{t_\la}^{-1} \xi_{w t_{\mu+\la}}$.
\end{theorem}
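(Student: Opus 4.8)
The plan is to transport the whole problem into the affine nilHecke ring $\A'_\af$ and its Peterson subalgebra $\Pet$, where everything becomes an explicit manipulation of the $j$-basis. By Theorem~\ref{T:jmap} the map $j$ identifies $H_T(\Gr_G)$ with $\Pet$, carrying $\xi_w$ to $j_w$ and, by Proposition~\ref{P:transexp}, carrying $\xi_{t_\la}$ (for antidominant $\la\in Q^\vee$) to $j_{t_\la}=\sum_{\mu\in W\la}t_\mu$. Since $\Pet\otimes_S\Fr\cong\Fr[Q^\vee]$ is the group algebra of the free abelian group $Q^\vee$, hence an integral domain, the elements of $\mathcal{T}$ are non-zero-divisors and the localization $H_T(\Gr_G)_{\mathcal{T}}\cong\Pet_{\mathcal{T}}$ is a harmless localization of a commutative ring. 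On the source, I would first record that $QH^T(G/B)_{(q)}$ is, as a module over $S[q_i^{\pm}\mid i\in I]$, free on the classes $q_\mu\sigma^w$ with $\mu\in Q^\vee$ and $w\in W$, and define the candidate map $\Psi\colon q_\mu\sigma^w\mapsto \xi_{t_\la}^{-1}\xi_{wt_{\mu+\la}}$ on this basis.

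First I would check that $\Psi$ is well defined, i.e. independent of the antidominant $\la$ chosen with $wt_{\mu+\la}\in W_\af^0$. Given two such $\la,\la'$, pick an antidominant $\la''$ with $\la''-\la$ and $\la''-\la'$ both antidominant and large enough that $wt_{\mu+\la''}\in W_\af^0$; Corollary~\ref{C:Schubtrans} gives $\xi_{wt_{\mu+\la''}}=\xi_{wt_{\mu+\la}}\,\xi_{t_{\la''-\la}}$ and $\xi_{t_{\la''}}=\xi_{t_\la}\,\xi_{t_{\la''-\la}}$, and since $H_T(\Gr_G)$ is commutative the factors $\xi_{t_{\la''-\la}}$ cancel after inverting, so $\xi_{t_{\la''}}^{-1}\xi_{wt_{\mu+\la''}}=\xi_{t_\la}^{-1}\xi_{wt_{\mu+\la}}$, and likewise for $\la'$. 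The same factorization shows that $\Psi(q_\mu)$ is invertible and that $\Psi$ is $S$-linear and multiplicative in the $q$-variables, so once $\Psi$ is shown to respect the product of Schubert classes it is automatically an $S[q^{\pm}]$-algebra map. Bijectivity I would establish after base change to $\Fr$: both localized rings then acquire transparent descriptions as localizations of $\Fr[Q^\vee]$ (on the homology side directly, on the quantum side via the equivariant fixed-point localization of $H^T(G/B)$), and a triangularity argument with leading term $q_\mu\sigma^w\mapsto t_\mu\cdot(\text{classical }w)$ shows the images span.

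The crux is that $\Psi$ is a ring homomorphism, which is exactly the assertion that the Gromov--Witten structure constants of $QH^T(G/B)$ agree with the $H_T(\Gr_G)$ structure constants $d_{uv}^w$ under the correspondence. I would reduce this to divisors: because $H^T(G/B)$ is generated over $S$ by its degree-two Schubert classes (the Borel presentation) and the quantum product is a filtered deformation of the classical one, $QH^T(G/B)_{(q)}$ is generated as an $S[q^{\pm}]$-algebra by $\{\sigma^{s_i}\mid i\in I\}$. Hence it suffices to verify $\Psi(\sigma^{s_i}*x)=\Psi(\sigma^{s_i})\Psi(x)$, i.e. to match one Chevalley rule against another: the equivariant quantum Chevalley formula of Peterson (proven by Mihalcea, and by Fulton--Woodward nonequivariantly) on the source, and the affine Chevalley formula for the Pontryagin product on the target. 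The latter I would extract inside $\A'_\af$ from the nilHecke Chevalley formula (Proposition~\ref{P:Chevalley}) transported through the coproduct and duality $j$ (Theorem~\ref{T:jmap}), together with the explicit coefficient description of Proposition~\ref{P:Homconst}; the identities $s_0=s_\theta t_{-\theta^\vee}$ of \eqref{E:s0semi} and $s_{\al+k\delta}=s_\al t_{k\alv}$ convert affine reflections into finite reflections times translations.

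The main obstacle will be precisely this Chevalley comparison: one must set up a root-theoretic bijection sending each quantum term $q_{\alv}\,\sigma^{ws_\al}$ (indexed by a finite positive root $\al$, with the curve degree recorded by $\alv\in Q^\vee$) to the affine term indexed by the real affine root $\al+k\delta$ and its reflection, while checking that the length condition on $wt_{\mu+\la}$, computed via the length formula \eqref{E:length}, is compatible so that lengths add and Proposition~\ref{P:Homconst} applies with no extra terms. Matching these coefficients term-by-term, with the correct identification of Gromov--Witten degrees with translation parts and the Graham-positivity sign bookkeeping, is the genuinely hard step. I would also flag that Mihalcea's equivariant quantum Chevalley formula is an external input not developed in this excerpt, and that making the ``generated by divisors plus filtered deformation'' reduction rigorous is a secondary technical point.
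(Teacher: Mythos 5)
The paper does not actually prove this theorem: it states it with citations to \cite{Pet} and \cite{LS:QH}, and only develops the surrounding machinery (the $j$-basis, Corollary~\ref{C:Schubtrans}, Proposition~\ref{P:Homconst}) that makes the statement meaningful. So the comparison is really against the proof in \cite{LS:QH}, and your outline is faithful to that strategy: identify $H_T(\Gr_G)$ with the Peterson subalgebra $\Pet$ via Theorem~\ref{T:jmap}, use Corollary~\ref{C:Schubtrans} both to make $\mathcal{T}$ multiplicatively closed and to prove independence of the choice of $\la$, and then reduce the homomorphism property to a comparison of the equivariant quantum Chevalley rule of Peterson/Mihalcea with an affine Chevalley-type product $j_{s_i}\,j_x$ computed inside $\Pet$ via Proposition~\ref{P:Homconst}. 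Your well-definedness argument and the verification that $\Psi(q_\mu)$ is a unit and multiplicative in $\mu$ are correct as written.

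That said, what you have is a plan rather than a proof, and the two steps you defer are precisely where all the work lives. First, the reduction to divisor classes needs to be made precise: integrally, $H^T(G/B)$ need not be generated by degree-two classes, and the argument actually used is Mihalcea's uniqueness theorem --- the equivariant quantum Chevalley operators determine all structure constants by an induction exploiting the separating equivariant term $(v\cdot\om_i-\om_i)$ --- so one must show the target ring satisfies the identical recursion, not merely that generators match. Second, the Chevalley comparison itself requires the explicit computation of $j_{s_i}j_{wt_\la}$ (the paper points to \cite[Proposition 8.5]{LS:QH} only for a special case), together with the length bookkeeping via \eqref{E:length} showing that exactly the terms $q_{\alv}\sigma^{ws_\al}$ allowed by the quantum Chevalley rule correspond to length-additive products in $W_\af$ so that Proposition~\ref{P:Homconst} produces no extra terms; this root-theoretic matching is the heart of Peterson's theorem and is not something that follows formally from what is in this chapter. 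Finally, your freeness/triangularity claim for $H_T(\Gr_G)_{\mathcal{T}}$ as an $S[q^{\pm}]$-module on $W$ uses the parametrization of $W_\af^0$ by $Q^\vee$ rather than by $W\times Q^\vee$, so the assertion that the proposed images form a basis needs its own (standard but nontrivial) argument. None of these is a wrong turn --- they are the actual content of \cite{LS:QH} --- but as submitted the proposal establishes only the formal scaffolding around them.
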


\begin{remark}
Since the Schubert bases of $QH^T(G/B)$ and $H_T(\Gr_G)$ correspond,
it follows that the Schubert structure constants for these rings agree. By Proposition 
\ref{P:Homconst}, the equivariant Gromov-Witten invariants all have the form $j_v^x$.
\end{remark}

There is also a parabolic version which covers all homogeneous spaces $G/P$.

\begin{theorem} \label{T:QH} \cite{Pet} \cite{LS:QH}
For any parabolic subgroup $P\subset G$, there is an ideal $J_P\subset H_T(\Gr)$
and a multiplicatively closed subset $\mathcal{T}_P$ of $H_T(\Gr)/J_P$, such
that there is a ring isomorphism
\begin{align}
  QH^T(G/P)_{(q)} \cong (H_T(\Gr)/J_P)_{\mathcal{T}_P}
\end{align}
where $QH^T(G/P)$ is the $T$-equivariant small quantum cohomology ring of $G/P$
and $(q)$ is the set of quantum parameters.

Moreover, Schubert classes correspond to Schubert classes. In particular
every $T$-equivariant Gromov-Witten invariant for any $G/P$,
occurs as a Schubert structure constant for $H_T(\Gr)$,
and vice versa for $P=B$.
\end{theorem}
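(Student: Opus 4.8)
The final statement is Theorem \ref{T:QH}, the parabolic version of Peterson's ``quantum equals affine'' theorem. The plan is to reduce the parabolic case to the already-established Borel case (the theorem immediately preceding it, asserting $QH^T(G/B)_{(q)} \cong H_T(\Gr_G)_{\mathcal{T}}$) by exhibiting $QH^T(G/P)$ as a suitable subquotient of $QH^T(G/B)$ and matching this with a parallel subquotient construction on the homology side $H_T(\Gr_G)$. The key structural input is that both $QH^T(G/P)$ and $H_T(\Gr_G)/J_P$ carry Schubert bases indexed by the same combinatorial set, namely minimal coset representatives $W^J$ (equivalently, via Proposition \ref{P:partialflagbasis} and the affine analogues, the relevant subset of $W_\af^0$), and that the localizations in question make the quantum parameters invertible so that the isomorphism of the Borel case can be descended.

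First I would set up the projection $\pi_P: G/B \to G/P$ and recall the standard fact that $QH^T(G/P)$ is recovered from $QH^T(G/B)$ by restricting to the subalgebra generated by the $P$-relevant Schubert classes and then specializing the quantum parameters $q_i$ for $i \in J$ in the appropriate way; this is the quantum-cohomology incarnation of the parabolic reduction. On the affine side I would construct the ideal $J_P \subset H_T(\Gr)$ as the annihilator (or the span of Schubert classes) corresponding to the complementary coset directions, and the multiplicative set $\mathcal{T}_P$ as the image of the antidominant translation classes $\xi_{t_\la}$ in the quotient $H_T(\Gr)/J_P$. The multiplicative closure of $\mathcal{T}_P$ follows exactly as in the Borel case from Corollary \ref{C:Schubtrans} together with the identity $\xi_{xt_\la} = \xi_x \xi_{t_\la}$; I would verify that $J_P$ is indeed an ideal by checking it is stable under Pontryagin multiplication, using the $j$-basis description of Theorem \ref{T:LL} and the structure constants formula of Proposition \ref{P:Homconst}.

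Next I would define the candidate isomorphism on Schubert classes: send the quantum Schubert class $q_\mu \sigma^w$ (for $w \in W^J$) to $\xi_{t_\la}^{-1}\xi_{wt_{\mu+\la}}$ in the localized quotient, mirroring the Borel-case formula, and check this is well-defined on $QH^T(G/P)_{(q)}$ by confirming compatibility with the parabolic relations. The main technical work is to show this map is a ring homomorphism: I would do this by restricting the Borel-case isomorphism along the inclusion of subalgebras and passing to the quotient by $J_P$, so that multiplicativity is inherited rather than recomputed. The ``Moreover'' claim — that Schubert classes correspond to Schubert classes, so that every equivariant Gromov-Witten invariant for $G/P$ appears as an $H_T(\Gr)$ Schubert structure constant and conversely for $P=B$ — then follows from the basis correspondence together with Proposition \ref{P:Homconst}, which already expresses the homology structure constants $d^w_{uv}$ in terms of the $j$-basis coefficients.

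The hard part will be the precise definition of $J_P$ and the verification that it is an ideal compatible with the localization, since the Pontryagin product on $H_T(\Gr)$ does not visibly respect the parabolic grading; here one must use Peterson's identification of $H_T(\Gr)$ with the commutative subalgebra $\Pet \subset \A'_\af$ (Theorem \ref{T:jmap}) and argue at the level of the $j$-basis and the translation elements $t_\mu$. A secondary obstacle is ensuring the quantum parameter specialization on the $QH^T(G/P)$ side matches the localization at $\mathcal{T}_P$ on the affine side; this is a bookkeeping matter of tracking which coroot directions $\mu \in Q^\vee$ survive the projection to $G/P$, but it must be done carefully to avoid collapsing too many or too few quantum directions. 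I would rely throughout on the geometric inputs being already available from the cited works \cite{Pet} and \cite{LS:QH} and present the argument as a structured reduction rather than a from-scratch geometric proof.
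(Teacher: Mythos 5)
The paper does not prove Theorem \ref{T:QH}: it is quoted from \cite{Pet} and \cite{LS:QH} without argument, so there is no internal proof to compare against. Judged on its own terms, your outline has the right skeleton --- establish the Borel case first, then realize $QH^T(G/P)$ on the affine side as a localization of a quotient $H_T(\Gr)/J_P$ --- and your account of $\mathcal{T}_P$ and of its multiplicative closure via Corollary \ref{C:Schubtrans} is fine.

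The genuine gap is on the quantum-cohomology side. You assert as a ``standard fact'' that $QH^T(G/P)$ is recovered from $QH^T(G/B)$ by restricting to the subalgebra of $P$-relevant Schubert classes and specializing the quantum parameters $q_i$ for $i\in J$. Quantum cohomology is not functorial in this way: there is no natural ring map between $QH^T(G/B)$ and $QH^T(G/P)$, and no specialization of quantum parameters turns one into the other. What you actually need is the Peterson--Woodward comparison formula (proved nonequivariantly by Woodward and equivariantly in \cite{LS:QH}), which expresses each Gromov--Witten invariant of $G/P$ in degree $\eta\in Q^\vee/Q_P^\vee$ as a specific Gromov--Witten invariant of $G/B$ in a carefully chosen lifted degree $\eta_B\in Q^\vee$, with one of the Schubert classes corrected by a particular element $w_{P,\eta}\in W_P$. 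This is a substantial theorem, not bookkeeping, and without it your candidate map $q_\mu\sigma^w\mapsto \xi_{t_\la}^{-1}\xi_{wt_{\mu+\la}}$ is not even well defined: for $G/P$ the degree $\mu$ lives in $Q^\vee/Q_P^\vee$, and you must say which lift to $Q^\vee$ is used and which correction in $W_P$ is applied before the Borel-case formula can be invoked. Relatedly, the verification that $J_P$ is an ideal is not inherited from the Borel case; it requires choosing $J_P$ as the span of the $j_x$ over a specific complementary subset of $W_\af$ and checking stability under the Pontryagin product via Proposition \ref{P:Homconst}, and it is precisely here that the definition of $J_P$ and the comparison formula must be made compatible. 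If you import the comparison formula explicitly as an input, the rest of your outline goes through; as written, the hardest step has been absorbed into a false ``standard fact.''
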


\appendix

\section{Appendix: Proof of coalgebra properties}
\label{A:coalg}

Let $M$ and $N$ be left $\AQ$-modules. Then $M$, $N$, $M \otimes_\Fr N$,
and $\Hom_\Fr(M,N)$ are left $\Fr$-modules.
We define an $\AQ$-module structure on $M \otimes_\Fr N$ and $\Hom_\Fr(M,N)$.

\begin{proof}[Proof of Proposition \ref{P:welldefined}]
We first check the well-definedness of the formula for $a\cdot m \otimes n$.
Let $a\in \AQ$ with $\Delta(a) = \sum_{(a)} a_{(1)} \otimes a_{(2)}$.
We further expand $a_{(k)} = \sum_w a_{(k)}^w w$ for $k=1,2$ where $a_{(k)}^w\in \Fr$.
The condition of membership in $\Image(\Delta)$ of the right hand side of \eqref{E:ontensor},
is that only terms of the form $v\otimes v$ survive:
\begin{align}
  \sum_a a_{(1)}^v a_{(2)}^w = 0  \qquad\text{for all $v\ne w$.}
\end{align}
We take a typical generator of the relations in $M \otimes_Q N$:
$qm \otimes n - m\otimes qn$. We compute the componentwise action of $\Delta(a)$ on
$qm \otimes n$ and $m \otimes qn$.
\begin{align*}
  \Delta(a) \cdot (qm \otimes n)
  &= \sum_a a_{(1)} \cdot qm \otimes a_{(2)} \cdot n \\
  &= \sum_{a,v,w} a_{(1)}^v a_{(2)}^w v \cdot qm \otimes w \cdot n \\
  &= \sum_{a,v,w} a_{(1)}^v a_{(2)}^w (v\cdot q)(v\cdot m) \otimes w\cdot n \\
  &= \sum_{a,v,w} a_{(1)}^v a_{(2)}^w (v\cdot q) ( v\cdot m \otimes w\cdot n).
\end{align*}
Similarly
\begin{align*}
  \Delta(a) \cdot (m \otimes qn)
  &= \sum_{a,v,w} a_{(1)}^v a_{(2)}^w (w\cdot q)( v\cdot m \otimes w\cdot n).
\end{align*}
The difference of these two expressions is
\begin{align*}
  &\quad\,\, \sum_a \sum_{v\ne w} a_{(1)}^v a_{(2)}^w (v\cdot q - w\cdot q) (v\cdot m \otimes w\cdot n) \\
  &= \sum_{v\ne w} (v\cdot q-w\cdot q)(v\cdot m \otimes w\cdot n) \sum_a a_{(1)}^v a_{(2)}^w \\
  &= 0.
\end{align*}
Thus the formula for $a\cdot (m\otimes n)$ is well-defined.

Applying this to the special case of the action of $\AQ$ on $\AQ \otimes_\Fr \AQ$,
we recover part (1), including \eqref{E:compwise}. For $a,b\in \AQ$ we have
\begin{align*}
  a\cdot (b \cdot (m\otimes n)) &= \sum_{(a)} \sum_{(b)} a_{(1)} \cdot (b_{(1)} \cdot m)\otimes a_{(2)}\cdot (b_{(2)}\cdot n) \\
  &= \sum_{(a)} \sum_{(b)} (a_{(1)}b_{(1)})\cdot m \otimes (a_{(2)} b_{(2)}) \cdot n \\
  &= (ab) \cdot (m \otimes n)
\end{align*}
where the last step holds because of \eqref{E:compwise}.
Hence we have an action of $\AQ$ on $M\otimes_\Fr N$.

For the left $\AQ$-module $M$, the dual $M^* = \Hom_{\Fr}(M,\Fr)$
has a left $\AQ$-module structure defined by
$w \cdot m^* = m^* \circ w^{-1}$ or more generally $a \cdot m^* = m^* \circ a^t$
for $w\in W$ and $a\in \AQ$. Consider the left $\Fr$-linear isomorphism $M^* \otimes_\Fr N \cong \Hom_\Fr(M,N)$
given by $m^* \otimes n \mapsto (x\mapsto m^*(x)n)$. We define a left $\AQ$-module structure on $\Hom_\Fr(M,N)$
by declaring that the above map is an isomorphism of left $\AQ$-modules. It is enough to consider the action
of $a=w$: $w \cdot m^* \otimes n= w \cdot m^* \otimes w \cdot n = m^*\circ w^{-1} \otimes w\cdot n$.
This corresponds to the function $x\mapsto m^*(w^{-1}x) w\cdot n = w\cdot m^*(w^{-1}x) n$.
If $f\in \Hom_\Fr(M,N)$ corresponds to $m^* \otimes n$ then the above function corresponds to
$x\mapsto w \cdot f(w^{-1} x)$, which is the required action.
\end{proof}

\begin{proof}[Proof of Proposition \ref{P:coalg}]
We first compute
\begin{align*}
  \Delta(A_i) 
 &= \Delta(\al_i^{-1}(1-s_i)) \\
 &= \al_i^{-1} (1\otimes 1-s_i \otimes s_i) \\
 &= \al_i^{-1} (1 \otimes 1 - s_i \otimes 1 + s_i\otimes 1 - s_i\otimes s_i) \\
 &= A_i \otimes 1 + s_i \otimes A_i \\
 &= \al_i^{-1}(1 \otimes 1 - 1 \otimes s + 1\otimes s_i - s_i\otimes s_i) \\
 &= 1 \otimes A_i + A_i \otimes s_i.
\end{align*}
This yields \eqref{E:DeltaA}. It follows that the restriction of $\Delta$ to $\A$ has image in $\A \otimes_S \A$
and inherits the required properties by Proposition \ref{P:welldefined}. All other assertions follow directly.
\end{proof}

\section{Appendix: Small torus GKM proofs}
\label{A:smalltorusGKM}

\begin{proof}[Proof of Theorem \ref{T:smallGKM}] We prove (1) as (2) follows from it.
There is a commutative diagram of ring homomorphisms
\begin{align}
\begin{diagram}
\node{H^{T_\af}(\tFl_\af)} \arrow{e,t}{\res} \arrow{s,t}{\text{For}^{T_\af}_{T}} \node{\La_\af} \arrow{s,b}{\pi^*} \\
\node{H^T(\tFl_\af)} \arrow{e,b}{\res'} \node{\Fun(W_\af,S)}
\end{diagram}
\end{align}
The horizontal maps are restriction to torus-fixed points. $\text{For}^{T_\af}_T$ is the map that forgets from
$T_\af$-equivariance to $T$-equivariance. The top map is an isomorphism by Theorem \ref{T:Fl}.
It is not hard to show that $\text{For}$ is surjective and that $H^T(\tFl_\af)$ has an $H^T(\pnt)$-basis
given by the $T$-equivariant classes of Schubert varieties $[X^v]^T\in H^T(\tFl_\af)$ for $v\in W_\af$.
By commutativity of the diagram,
\begin{align}\label{E:imageres'}
  \Image(\res')=\Image(\pi^*) = \bigoplus_{v\in W_\af} S \xib^v.
\end{align}
The functions $\xib^v$ are independent since the matrix $(\pi(d_{v,w}))_{v,w\in W_\af}$
is triangular with nonvanishing diagonal entries.

It remains to show that
\begin{align}\label{E:Lap}
\Image(\pi^*)=\La'_\af.
\end{align}
Let $v\in W_\af$. Certainly $\xib^v$ satisfies \eqref{E:GKM} since $\xi^v$ does.
We must check the conditions \eqref{E:smallGKMt} and \eqref{E:smallGKMs}.
Let $w\in W_\af$, $\al\in\Phi$, and $d\in \Z_{>0}$. Let $W'\subset W_\af$ be
the subgroup generated by $t_{\al^\vee}$ and $r_\al$; it is
isomorphic to the affine Weyl group of $SL_2$. Define the function
$f:W'\to S$ by $f(x)=\xib^v(x w)$. Since $\xi^v$ satisfies \eqref{E:GKM}
for $\Fl_\af$, $f$ satisfies \eqref{E:GKM} for the affine flag variety $\Fl'$
corresponding to $\al$. It follows that $f$ is an $S$-linear combination
of Schubert classes in $\Fl'$. By Propositions \ref{P:sl2smallGKMt} and \ref{P:sl2smallGKMs}
(proved below), $\pi \circ f$ satisfies \eqref{E:smallGKMt} and \eqref{E:smallGKMs},
so that $\xib^v\in \La'_\af$, as required.

Conversely, suppose $\xi \in \La'_\af$. We show that
\begin{align}\label{E:xiinxivs}
\xi\in \bigoplus_{v\in W_\af} S \,\xib^v.
\end{align}
Let $x=t_\la u$ be of minimal length in the support of $\xi$, with $u\in W$
and $\la\in Q^\vee$. It suffices to show that
\begin{equation}\label{E:xidivdiag}
    \xi(x) \in \xib^x(x) S.
\end{equation}
Suppose \eqref{E:xidivdiag} holds.
Define $\xi':W_\af\to S$ by $\xi'=\xi-(\xi(x)/\xib^x(x))\xib^x$.
Since $\La'_\af$ is an $S$-module, $\xi'\in \La'_\af$.
Moreover $\Omega(\xi')\supsetneq\Omega(\xi)$ where $\Omega(\xi)$ is defined
by \eqref{E:Omegadef}. By induction \eqref{E:xiinxivs} holds for $\xi'$ and therefore it
holds for $\xi$.

We now show \eqref{E:xidivdiag}.
The elements $\{\al \mid \al\in \Phi^+\}$ are relatively prime in
$S$. Letting $\al\in\Phi^+$,
by \eqref{E:xidiag} it suffices to show that $\xi(x)\in J := \al^d S$
where $d=|\Inv_\al(x^{-1})|$ and $\Inv_\al(x^{-1})$ is the set of roots in $\Inv(x^{-1})$
(see \eqref{E:Inv}) of the form $\pm \al + k \delta$
for some $k\in\Z_{\ge0}$.

Note that for $\beta\in\Phi_\af^{+\re}$, $\beta\in\Inv(x^{-1})$ if and only if
$x^{-1}\cdot \beta\in - \Phi_\af^{+\re}$. We have
\begin{align*}
  x^{-1} \cdot (\pm \al + k \delta) &= u^{-1} t_{-\la} \cdot (\pm \al + k\delta) = \pm u^{-1} \al
  + (k \pm \ip{\la}{\al}) \delta.
\end{align*}
Letting $\chi=\chi(\al\in\Inv(u^{-1}))$ we have
\begin{align}
\label{E:alphainv}
  \Inv_\al(x^{-1}) = \begin{cases}
\{\al,\al+\delta,\dotsc,\al-(\ip{\la}{\al} +1-\chi)\delta \} &\text{if $\ip{\la}{\al} \le 0$} \\
\{-\al+\delta,-\al+2\delta,\dotsc,-\al+(\ip{\la}{\al}-\chi) \delta \} &\text{if $\ip{\la}{\al} > 0$.}
\end{cases}
\end{align}
Suppose first that $\ip{\la}{\al}>0$.  Then
$d=\ip{\la}{\al}-\chi(\al\in\Inv(u^{-1}))$.  Applying~\eqref{E:smallGKMs} to $y=t_{(1-d)\al^\vee}x$,
we have $Z_1\in J$ where
\begin{align*}
  Z_1&=\xi((1-t_{\al^\vee})^{d-1}(1-s_\al) y) \\
  &=(-1)^{d-1} \xi((1-t_{-\al^\vee})^{d-1}x) - \xi((1-t_{\al^\vee})^{d-1} s_\al y) \\
  &= (-1)^{d-1} \xi(x) - \xi((1-t_{\al^\vee})^{d-1} s_\al y)
\end{align*}
where the last equality holds by the assumption on $\Supp(\xi)$
and a calculation of $\Inv_\al((s_{\alpha+d\delta} x)^{-1})$, giving $x > s_{\alpha+d\delta} x > t_{-k\al^\vee}x$
for all $1\le k \le d-1$. By Lemma~\ref{L:smallGKM} we have $Z_2\in J$ where
\begin{align*}
Z_2 = \xi((1-t_{\al^\vee})^{d-1} s_\al y) - \xi((1-t_{\al^\vee})^{d-1} t_{p\al^\vee} s_\al y)
\end{align*}
for any $p\in\Z$. Thus
\begin{equation*}
 Z_1+Z_2 = (-1)^{d-1} \xi(x) -  \xi((1-t_{\al^\vee})^{d-1} t_{p\al^\vee}s_\al y) \in J.
\end{equation*}
By the assumption on $\Supp(x)$ and \eqref{E:alphainv},
\begin{align*}
  \xi((1-t_{\al^\vee})^{d-1} t_{p\al^\vee} s_\al y) = 0
\end{align*}
for $p=2-d$. It follows that $\xi(x)\in J$.

Otherwise $\ip{\la}{\al}\le 0$. By the previous case we may assume
that $t_{d\al^\vee}x\not\in\Supp(\xi)$.  Thus
\begin{align*}
  \xi(x) = \xi((1-t_{\al^\vee})^d x) \in J
\end{align*}
by induction on $\Supp(\xi)$ and \eqref{E:smallGKMt}.
This proves \eqref{E:xidivdiag} and \eqref{E:xiinxivs} as required.
\end{proof}

\subsection{Small torus GKM condition for $\slh_2$}
\label{SS:smallGKMsl2}
In this section we consider the root datum for $\slh_2$.
Let $\Phi^+=\{\al\}$ where $\al=\al_1$. Also $\delta=\al_0+\al_1$
so that $\pi(\al_0)=-\pi(\al_1)$ and the level zero action of $W_\af$
is given by $s_0\cdot\al=s_1\cdot\al=-\al$. For $i\in \Z_{\ge 0}$ let
\begin{equation} \label{E:sl2gens}
\begin{aligned}
\sigma_{2i} &= (s_1s_0)^i &\qquad \sigma_{-2i}&=(s_0s_1)^i,\\
\sigma_{2i+1}&=s_0\sigma_{2i} & \sigma_{-(2i+1)}&=s_1 \sigma_{-2i}.
\end{aligned}
\end{equation}
Then we have $\ell(\sigma_j) = |j|$ for $j\in\Z$, $W_\af^I=\{\sigma_j\mid j\in\Z_{\ge0}\}$, and
\begin{equation*}
  \sigma_{2i} = t_{-i\al^\vee}\qquad\text{for $i\in\Z$.}
\end{equation*}

Let $\xi^i_j:=\pi(\xi^{\sigma_i}(\sigma_j))$ for $i,j\in\Z$.
For $m,a\in\Z_{\ge0}$,
\begin{align}
%\label{E:xi1}
%  \xi^m_{m+2a} &=  \binom{m+a}{m}\al^m = \xi^m_{m+2a+1} = \xi^m_{-(m+2a+1)} = \xi^m_{-(m+2a+2)} \\
%  \label{E:xi2}
%  -\xi^m_{m+2a} &= \binom{m+a}{m} \al^m =  \xi^m_{m+2a+1} = \xi^m_{-(m+2a+1)} = -\xi^m_{-(m+2a+2)} \\
%\label{E:xisl2}
%\xi^m_j &=  (-1)^{mj} \binom{m+a}{m} \al^m
%\end{align}
%where $j\in \{m+2a,m+2a+1,-m-2a-1,-m-2a-2\}$.
\label{E:xisl2}
\xi^m_j &=  (-1)^{mj} \binom{m+a}{m} \al^m
\end{align}
where $j\in \{m+2a,m+2a+1,-m-2a-1,-m-2a-2\}$.
This is easily proved by induction using \eqref{E:xirec}.
The rest of the values for $\xi^m$ are zero by \eqref{E:xisupport}.
For $m<0$ we may use
\begin{equation}\label{E:sl2xineg}
    \xi^m_j = (-1)^m \xi^{-m}_{-j} \qquad\text{for $m,j\in\Z$}
\end{equation}
which follows from the Dynkin symmetry $0\leftrightarrow 1$.

\begin{prop} \label{P:sl2smallGKMt}
For all $d\ge 1$, $m\in \Z$, and $w\in W_\af$ we have
\begin{equation}\label{E:sl2smallGKMt}
\xi^m((1-t_{\alpha^\vee})^d w) \in \al^d \Z[\al].
\end{equation}
\end{prop}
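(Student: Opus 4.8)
<br>

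The plan is to establish the divisibility statement \eqref{E:sl2smallGKMt} by a direct computation using the explicit values of $\xi^m_j$ recorded in \eqref{E:xisl2} and \eqref{E:sl2xineg}. Since $\xi^m$ is an $S$-valued function on $W_\af$ and the operator $(1-t_{\al^\vee})^d$ is a $\Z[W_\af]$-linear combination of group elements, expanding $(1-t_{\al^\vee})^d w$ via the binomial theorem gives
\begin{equation*}
\xi^m((1-t_{\alpha^\vee})^d w) = \sum_{p=0}^d (-1)^p \binom{d}{p} \xi^m(t_{p\al^\vee} w).
\end{equation*}
First I would reduce to the case $m \ge 0$ using the Dynkin symmetry \eqref{E:sl2xineg}, which intertwines $\xi^m$ and $\xi^{-m}$ while sending $t_{\al^\vee}$ to $t_{-\al^\vee}$; this shows the $m<0$ case is equivalent to the $m>0$ case. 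Next I would reduce to the case where $w$ is a minimal-length coset representative, i.e. $w = \sigma_j$ for some $j \in \Z$, since translation elements $t_{p\al^\vee}=\sigma_{-2p}$ multiply cosets predictably and $\xi^m$ is determined by the coset data through \eqref{E:xisl2}.

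The heart of the argument is then an explicit evaluation. Recall from \eqref{E:xisl2} that $\xi^m_j = (-1)^{mj}\binom{m+a}{m}\al^m$ whenever $j$ lies in one of the four arithmetic progressions indexed by $a\ge 0$, and is zero otherwise. Writing $t_{p\al^\vee}\sigma_j = \sigma_{j-2p}$ (after accounting for the coset), the binomial sum becomes
\begin{equation*}
\xi^m((1-t_{\alpha^\vee})^d \sigma_j) = \al^m \sum_{p} (-1)^p \binom{d}{p} (-1)^{m(j-2p)} \binom{m + a(p)}{m},
\end{equation*}
where $a(p)$ records how far $j-2p$ sits along its progression. The key combinatorial fact I would isolate is that $a(p)$ is an affine-linear function of $p$ on the relevant range, so $\binom{m+a(p)}{m}$ is a polynomial in $p$ of degree exactly $m$. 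The divisibility $\al^d \mid \xi^m((1-t_{\al^\vee})^d \sigma_j)$ then reduces to showing that the scalar sum $\sum_p (-1)^p \binom{d}{p} P(p)$ vanishes for every polynomial $P$ of degree $< d$ — but this is the standard fact that the $d$-th finite difference annihilates polynomials of degree less than $d$. Since we only have $\al^m$ pulled out front and need $\al^d$, the two cases $m\ge d$ and $m<d$ must be separated: when $m \ge d$ the factor $\al^m$ already suffices, while when $m<d$ the finite-difference vanishing supplies the extra $\al^{d-m}$ (or more) divisibility through the polynomiality of the binomial coefficient in $p$.

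The main obstacle I anticipate is bookkeeping the progression index $a(p)$ correctly across the four cases of \eqref{E:xisl2} and at the boundaries where $j-2p$ crosses between progressions or falls outside the support (where $\xi^m_{j-2p}=0$). At these transition points the naive ``$\binom{m+a(p)}{m}$ is polynomial in $p$'' statement can fail, and one must check that the support-zero terms are consistent with extending the polynomial $P(p)$ so that the finite-difference argument still applies. I expect the cleanest route is to verify that, on each maximal range of $p$ for which $j-2p$ stays in a single progression, the contribution is governed by a single polynomial, and then argue that the truncation introduced by leaving the support is itself covered by the higher divisibility of $\al^m$ when $m$ is large, handling small $m$ by direct inspection or induction on $d$ using the identity $(1-t_{\al^\vee})^d = (1-t_{\al^\vee})(1-t_{\al^\vee})^{d-1}$ together with the recursion \eqref{E:xirec}.
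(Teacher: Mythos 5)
Your overall strategy is viable and is a genuine repackaging of the paper's argument rather than a copy of it: the paper also reduces to $m\ge 0$ via \eqref{E:sl2xineg} and binomially expands $(1-t_{\al^\vee})^d$, but it then splits the surviving terms into the two branches of the support of $\xi^m$, evaluates each branch by extracting the coefficient of $(-x)^b$ in $(1-x)^d/(1-x)^{2i+1}=(1-x)^{a+b}$, and observes that the two resulting values $\binom{a+b}{b}=\binom{a+b}{a}$ cancel. Your plan instead treats the entire sum as a single $d$-th finite difference of one polynomial of degree $m<d$. That is cleaner in principle (no case analysis on where the progression $\{t_{p\al^\vee}w\}$ sits relative to the support gap), and you correctly note that $d\le m$ is trivial from the overall factor $\al^m$.

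The gap is that the single-polynomial claim is the entire content of the proof in the regime $m<d$, and your proposed handling of the ``transition points'' does not close it: the suggestion that the truncation at the edge of the support is ``covered by the higher divisibility of $\al^m$ when $m$ is large'' is vacuous precisely when $m<d$, which is the only case where anything needs proving. What you must actually establish is that $p\mapsto \xi^m(t_{p\al^\vee}w)/\al^m$ is a \emph{single} integer-valued polynomial of degree $m$ on all of $\Z$, i.e.\ (a) the $m$ integers in the gap between the two branches of \eqref{E:xisl2} are exactly the roots of the degree-$m$ polynomial $\binom{q-c}{m}$ extending one branch (this part works: the gap has length $m$ and sits where the binomial vanishes), and (b) the other branch is recovered from the same polynomial via $\binom{-n}{m}=(-1)^m\binom{n+m-1}{m}$, which forces the signs on the two branches to differ by exactly $(-1)^m$. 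For even $m$ this is automatic since $(-1)^{mj}=1$. For odd $m$, formula \eqref{E:xisl2} as printed assigns the \emph{same} sign $(-1)^{mj}$ to both branches of a fixed parity of $j$, which is incompatible with (b); taken at face value it would break your argument. A direct check from \eqref{E:xirec} or \eqref{E:billey} (e.g.\ $\xi^{s_0}(s_1s_0)=-(\al_0+2\al_1)\mapsto-\al$, so $\xi^1_2=-\al$ while $\xi^1_{-2}=+\al$, giving $\xi^1_{2q}=-q\,\al$) shows the true values do glue into one polynomial and that the printed sign in \eqref{E:xisl2} is off on one branch for odd $m$. So to complete your proof you must either recompute and correct the sign in \eqref{E:xisl2} and then verify (b) in general, or restrict the uniform argument to even $m$ and treat odd $m$ separately — which is essentially what the paper does by writing out only $m=2i$ and declaring the rest ``similar or easier.''
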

\begin{proof} One may reduce to $m\ge0$ using \eqref{E:sl2xineg}.
Equation \eqref{E:sl2smallGKMt} is proved for
$m=2i$, $w=t_{(-i-a)\alpha^\vee}$ and $d=(i+a)+(i+1+b)=2i+a+b+1$ for $a,b\in\Z_{\ge0}$,
as the other possibilities are similar or easier. Equation \eqref{E:sl2smallGKMt} can be rewritten as
\begin{equation}\label{E:rewrite}
  Z := \sum_{k=0}^d (-1)^k \binom{d}{k} \xi^{2i}_{-2i-2-2b+2k} \in \al^d \Z[\al].
\end{equation}
By \eqref{E:xisupport} $\xi^{2i}_{2p}=0$ for $-2i-2<2p<2i$. Using \eqref{E:xisl2},
\begin{align*}
  Z &= \left(\sum_{k=0}^b + \sum_{k=2i+1+b}^d\right) (-1)^k \binom{d}{k} \xi^{2i}_{-2i-2-2b+2k} \\
  &=\sum_{k=0}^b (-1)^k \binom{d}{k} \xi^{2i}_{-2i-2-2b+2k}
   + \sum_{k=0}^a (-1)^{k+2i+1+b} \binom{d}{2i+1+b+k} \xi^{2i}_{2i+2k} \\
  &=(-1)^b \sum_{k=0}^b (-1)^k \binom{d}{b-k} \xi^{2i}_{-2i-2-2k}
   - (-1)^b \sum_{k=0}^a (-1)^k \binom{d}{a-k} \xi^{2i}_{2i+2k} \\
   &= (-1)^b \sum_{k=0}^b (-1)^k \binom{d}{b-k} \binom{2i+k}{2i} \al^{2i}
  - (-1)^b \sum_{k=0}^a (-1)^k \binom{d}{a-k} \binom{2i+k}{2i} \al^{2i}.
\end{align*}
Taking the coefficient of  $(-x)^b$ in $(1-x)^d/(1-x)^{2i+1}=(1-x)^{a+b}$ we have
\begin{align*}
  \sum_{k=0}^b (-1)^k \binom{d}{b-k} \binom{2i+k}{2i} = \binom{a+b}{b}.
\end{align*}
Exchanging the roles of $a$ and $b$ we see that $Z=0$ which implies \eqref{E:rewrite}.
\end{proof}

\begin{prop} \label{P:sl2smallGKMs}
For all $d\ge 1$, $m\in \Z$, and $w\in W_\af$ we have
\begin{equation*}
    \xi^m((1-t_{\alpha^\vee})^{d-1}(1-s_\alpha) w) \in \al^d \Z[\al].
\end{equation*}
\end{prop}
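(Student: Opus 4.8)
The goal is to prove Proposition \ref{P:sl2smallGKMs}, the small torus GKM condition \eqref{E:smallGKMs} for $\slh_2$, which is the companion to Proposition \ref{P:sl2smallGKMt} just established. The plan is to reduce the statement to the already-proved divisibility \eqref{E:sl2smallGKMt} together with a single telescoping identity, rather than redoing a binomial computation from scratch. First I would normalize: by the Dynkin symmetry \eqref{E:sl2xineg} we may assume $m\geq 0$, and by left $S$-linearity of $\xi^m$ (as a function on $W_\af$ viewed in $\Fun(W_\af,S)$) it suffices to treat representative $w$ of each form, exactly as in the proof of Proposition \ref{P:sl2smallGKMt}.

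The key algebraic observation is the operator identity
\begin{equation*}
(1-t_{\alpha^\vee})^{d-1}(1-s_\alpha) = (1-t_{\alpha^\vee})^{d-1} - (1-t_{\alpha^\vee})^{d-1}s_\alpha,
\end{equation*}
so that
\begin{equation*}
\xi^m\bigl((1-t_{\alpha^\vee})^{d-1}(1-s_\alpha)w\bigr)
= \xi^m\bigl((1-t_{\alpha^\vee})^{d-1}w\bigr) - \xi^m\bigl((1-t_{\alpha^\vee})^{d-1}s_\alpha w\bigr).
\end{equation*}
By Proposition \ref{P:sl2smallGKMt} each of the two terms on the right is divisible by $\alpha^{d-1}$; the content of the present proposition is the stronger claim of divisibility by $\alpha^d$, i.e.\ that the difference gains one extra factor of $\alpha$. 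To extract this, I would expand both terms using the explicit values \eqref{E:xisl2} of $\xi^m$ on the basis elements $\sigma_j$, writing each as an alternating binomial sum of the shape appearing in \eqref{E:rewrite}. Here $s_\alpha = s_1$ acts on the indices $\sigma_j$ in a controlled way (it sends $\sigma_j$ to $\sigma_{j'}$ with a predictable shift governed by \eqref{E:sl2gens}), so the second sum is a reindexing of the first. The difference then collapses: I expect that after combining the two alternating sums over a common range, the coefficient-of-$(-x)^b$ computation in $(1-x)^{a+b}$ used at the end of Proposition \ref{P:sl2smallGKMt} applies verbatim, now producing a sum that vanishes to one higher order in $\alpha$, yielding membership in $\alpha^d\Z[\alpha]$.

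Concretely the main steps, in order, are: (1) reduce to $m\geq 0$ and to the representative translation/reflection cosets; (2) split via the operator identity above into a difference of two ``$t$-type'' expressions; (3) substitute \eqref{E:xisl2} and track how $s_\alpha$ permutes the relevant indices $\sigma_j$; (4) recombine into a single alternating binomial sum and apply the generating-function identity $(1-x)^d/(1-x)^{2i+1}=(1-x)^{a+b}$ (exactly as in the proof of \eqref{E:rewrite}) to identify the order of vanishing; (5) conclude divisibility by $\alpha^d$, and finally invoke that the distinct positive roots are pairwise coprime in $S$ so that the single-$\alpha$ statement suffices for the full claim over all $\alpha\in\Phi^+$ when this feeds back into Theorem \ref{T:smallGKM}.

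The main obstacle will be step (3)–(4): bookkeeping the action of $s_\alpha$ on the indices and the resulting shift in the summation range so that the two alternating sums line up over a common index set. The subtlety is that $s_\alpha w$ need not be length-additive, so the sign $(-1)^{mj}$ in \eqref{E:xisl2} and the support constraint \eqref{E:xisupport} must be handled with care to avoid off-by-one errors in the binomial coefficients; getting the extra factor of $\alpha$ hinges on these signs and ranges matching up precisely. I would handle this by treating the parity of $m$ (the cases $m=2i$ versus $m=2i+1$) separately, as in Proposition \ref{P:sl2smallGKMt}, and checking the boundary terms of each sum explicitly, since those are where the gain of one power of $\alpha$ is realized.
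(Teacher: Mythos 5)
Your strategy is a genuinely different route from the paper's, and its crucial step is left as an expectation rather than established. The paper does not expand anything into binomial sums here. Instead it first reduces to $w=t_{p\al^\vee}$ and uses Lemma \ref{L:smallGKM} twice to replace $\xi^m(y\,t_{p\al^\vee})-\xi^m(y\,t_{-p\al^\vee}s_\al)$ by $\xi^m(y)-\xi^m(y s_\al)$ modulo $\al^d$, where $y=(1-t_{\al^\vee})^{d-1}$; it then writes $1-s_\al=\al A_\al$, so that
$\xi^m(y)-\xi^m(ys_\al)=(y\cdot\al)\,(A_\al\bullet\xi^m)(y)$. The factor $y\cdot\al=\pm\al$ supplies the extra power of $\al$ outright, and $A_\al\bullet\xi^m$ is either $0$ (for $m\ge0$) or the Schubert class $\xi^{m+1}$ (for $m<0$), to which Proposition \ref{P:sl2smallGKMt} at degree $d-1$ applies. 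So the whole gain of one power of $\al$ is structural, coming from Lemma \ref{L:Aonxi} and the definition \eqref{E:Aal}, with no new identity to verify. What this buys is exactly what your plan defers to step (4): you say you ``expect'' the generating-function computation from \eqref{E:rewrite} to apply ``verbatim'' and produce vanishing to one higher order, but that is the entire content of the proposition and is not checked. A direct computation can be made to work, but it is not verbatim: the exponent on $(1-t_{\al^\vee})$ has dropped by one, and the extra $\al$ has to be exhibited, not inherited.

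Two further concrete cautions. First, your opening reduction to $m\ge0$ via \eqref{E:sl2xineg} is not as innocent as in Proposition \ref{P:sl2smallGKMt}: the Dynkin flip $s_0\leftrightarrow s_1$ sends $1-s_\al$ to $1-s_0$, not to $1-s_\al$, so the transformed expression is no longer of the required shape; you would need the extra identity $1-s_0=(1-t_{-\al^\vee})+(1-s_\al)t_{-\al^\vee}$ (from \eqref{E:s0semi}) and then Proposition \ref{P:sl2smallGKMt} again to restore it. The paper sidesteps this by treating both signs of $m$ at once through $A_\al\bullet\xi^m$. Second, note that for $m\ge0$ the difference $\xi^m(y)-\xi^m(ys_\al)$ is identically zero (since $\sigma_m s_1>\sigma_m$), so in that case your binomial sums must cancel exactly, not merely to order $\al^d$; all the genuine content of the proposition sits in the $m<0$ case, which your reduction would remove only at the cost of the complication above. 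I would recommend replacing step (2)--(4) of your plan with the identity $1-s_\al=\al A_\al$ together with Lemma \ref{L:smallGKM}; this turns the proposition into a two-line corollary of Proposition \ref{P:sl2smallGKMt}.
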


\begin{proof}
Let $y=(1-t_{\alpha^\vee})^{d-1}$. Without loss of generality
we may assume $w=t_{p\al^\vee}$ for some $p\in\Z$.
By Proposition~\ref{P:sl2smallGKMt}, $\xi^m$ satisfies \eqref{E:smallGKMt}.
We have
\begin{align*}
    \xi^m(y(1-s_\al) t_{p\al^\vee}) &= \xi^m(y t_{p\al^\vee}) - \xi^m(y t_{-p\al^\vee}s_\al) \\
    &\equiv \xi^m(y) - \xi^m(y s_\al) \mod \al^d \Z[\al],
\end{align*}
using Lemma \ref{L:smallGKM} twice. However
\begin{align*}
  \xi^m(y)-\xi^m(ys_\al) &= \xi^m(y \al A_\al) \\
  &= (y\cdot\al) (A_\al\bullet \xi^m)(y).
\end{align*}
Now $y\cdot \al=\pm\al$ and $A_\al\bullet \xi^m$ is $0$ if $m\ge0$
and is equal to $\xi^{m+1}$ if $m<0$. Assuming the latter,
by \eqref{E:smallGKMt} for $d-1$, $\xi^{m+1}(y)\in \al^{d-1}S$,
so that $\xi^m(y)-\xi^m(ys_\al)\in \al^d S$ as required.
\end{proof}

\section{Appendix: Homology of $\Gr$}
\label{A:HomGr}

Let $K$ be the maximal compact form of $G$ and $T_\R=K\cap T$.
The based loop group $\Omega K$ of continuous maps $(S^1,1)\to(K,1)$,
has a $T_\R$-equivariant multiplication map
$\Omega K \times \Omega K \to \Omega K$ given by
pointwise multiplication on $K$, and this induces the structure of a
commutative and co-commutative Hopf-algebra on $H^{T_\R}(\Omega K)$.
The co-commutativity of $H^{T_\R}(\Omega K)$
follows from the fact that $\Omega K$ is a homotopy double-loop space.

$\Gr=\Gr_G$ and $\Omega K$ are weakly homotopy equivalent \cite{Q}.
By ``fattening loops", it follows that $H_T(\Gr)$ and $H^T(\tGr)$ are
dual Hopf algebras over $S=H^T(\pnt)$ with duality
given by an intersection pairing
\begin{align}
  \ip{\cdot}{\cdot}: H_T(\Gr) \times H^T(\tGr) \to S.
\end{align}
We may regard an element $\xi\in H_T(\Gr)$ as
an $S$-module homomorphism $H^T(\tGr) \to S$ by
$\xi(f) = \ip{\xi}{f}$.

\begin{lem}\label{L:Hmult}
Let $\la,\mu \in Q^\vee$, and consider the maps $i^*_\la,
i^*_\mu: H^T(\tGr) \to S$ as elements of $H_T(\Gr)$.  Then in
$H_T(\Gr)$, we have
$$
i^*_\la \; i^*_\mu = i^*_{\la + \mu}.
$$
\end{lem}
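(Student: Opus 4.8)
The plan is to prove this geometrically inside the based loop group, using the Pontryagin product structure recalled at the start of this appendix, rather than algebraically (the algebraic product on $\Hom_S(\La'_\Gr,S)$ is only introduced \emph{via} Proposition \ref{P:convolution}, whose proof this lemma feeds into). First I would invoke the weak homotopy equivalence $\Gr \simeq \Omega K$ of \cite{Q}, under which the Pontryagin product on $H_T(\Gr) \cong H_{T_\R}(\Omega K)$ is induced by the pointwise multiplication map $m:\Omega K \times \Omega K \to \Omega K$. The key geometric input is the identification, for $\mu\in Q^\vee$, of the $T$-fixed point $t_\mu$ of $\Gr$ with the point of $\Omega K$ given by the based loop $S^1\to T_\R$ determined by the cocharacter $\mu$; these loops are precisely the $T_\R$-fixed points of $\Omega K$ and form a group under pointwise multiplication isomorphic to $Q^\vee$, with $\mu$ and $\nu$ multiplying to $\mu+\nu$ because $T_\R$ is abelian.

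Next I would reconcile the two readings of $i^*_\mu$. By definition $i^*_\mu\in H_T(\Gr)$ is the element pairing with $f\in H^T(\tGr)$ by $\ip{i^*_\mu}{f}=f(t_\mu)$, i.e. by restriction to the fixed point $t_\mu$. Writing $\iota_\mu:\pnt\hookrightarrow\Gr$ for the inclusion of that fixed point, the projection formula gives $\ip{(\iota_\mu)_*[\pnt]_T}{f}=\iota_\mu^*f=f(t_\mu)$, so that $i^*_\mu$ is exactly the pushforward $(\iota_\mu)_*[\pnt]_T$ of the equivariant point class. This is the step I expect to require the most care, since it rests on the compatibility of the intersection pairing $\ip{\cdot}{\cdot}:H_T(\Gr)\times H^T(\tGr)\to S$ with pushforward and pullback along the fixed-point inclusion, together with the nondegeneracy furnished by Theorem \ref{T:smallGKM} and Remark \ref{R:HomGr}.

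With these identifications the computation becomes functorial. The external product of point classes is the point class of the product, $(\iota_\lambda)_*[\pnt]_T \times (\iota_\mu)_*[\pnt]_T = (\iota_\lambda\times\iota_\mu)_*[\pnt]_T$ in $H_{T_\R}(\Omega K\times\Omega K)$, supported at $(t_\lambda,t_\mu)$. Applying $m_*$ and using $m\circ(\iota_\lambda\times\iota_\mu)=\iota_{\lambda+\mu}$ (the pointwise product of the cocharacters $\lambda$ and $\mu$ is $\lambda+\mu$), functoriality of pushforward yields
$$
i^*_\lambda\,i^*_\mu = m_*\big((\iota_\lambda\times\iota_\mu)_*[\pnt]_T\big) = (\iota_{\lambda+\mu})_*[\pnt]_T = i^*_{\lambda+\mu},
$$
which is the assertion. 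I would close by noting that this identity is exactly the geometric source of the formula $(i^*_\lambda,i^*_\mu)\mapsto i^*_{\lambda+\mu}$ recorded in Proposition \ref{P:convolution}, and that the cocommutativity of the Hopf structure used elsewhere likewise comes from the double-loop structure of $\Omega K$ recalled at the opening of this appendix.
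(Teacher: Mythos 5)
Your proposal is correct and follows essentially the same route as the paper's own proof: pass to $H^{T_\R}(\Omega K)$ via Quillen's equivalence, observe that $i^*_\la\, i^*_\mu$ is induced by the composite $\pnt \to \Omega K \times \Omega K \to \Omega K$ landing on the pair of fixed points $(t_\la,t_\mu)$ followed by pointwise multiplication, and use that the cocharacter loops multiply to $t_{\la+\mu}$. You have merely spelled out in more detail the identification of $i^*_\mu$ with the pushforward of the equivariant point class, which the paper leaves implicit in the phrase ``is induced by.''
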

\begin{proof}
It suffices to work in $H^{T_\R}(\Omega K)$.  The map $i^*_\la
\; i^*_\mu$ is induced by the map ${\rm pt} \to \Omega K \times
\Omega K \to \Omega K$ where the image of the first map is the pair
$(t_\la,t_\mu) \in \Omega K \times \Omega K$ of fixed points,
and the latter map is multiplication.  Treating $t_\la, t_\mu:S^1
\to K$ as homomorphisms into $K$, the pointwise
multiplication of $t_\la$ and $t_\mu$ gives $t_{\la + \mu}$. Thus
$i^*_\la \; i^*_\mu = i^*_{\la + \mu}$.
\end{proof}

The antipode of $H_T(\Gr_G)$ is given by $i^*_\la\mapsto
i^*_{-\la}$, since the fixed points satisfy $t^{-1}_\la =
t_{-\la}$ in $\Omega K$.

For $w\in W_\af^0$ denote by $[X_w]_T\in H_T(\Gr)$ the
equivariant fundamental homology class of the Schubert variety
$X_w:=\overline{B_\af \dot{w} P_\af/P_\af} \subset G_\af/P_\af= \Gr$.

Using the intersection pairing we have that $\{[X_w]\mid w\in W_\af^0\}$
is the basis of $H_T(\Gr_G)$ that is dual to the basis $\{[X^w]^T\mid w\in W_\af^0\}$
of $H^T(\tGr)$, which corresponds to the basis $\{\xib^w\mid w\in W_\af^0\}$ of $\La'_\Fl$ under
the isomorphism of Theorem \ref{T:smallGKM}.

\begin{proof}[Proof of Prop. \ref{P:convolution}]
Follows from the above discussion.
\end{proof}

%There is a continuous map $r:\Omega K \to
%LK \to LK/T_\R$ where $LK$ consists of continuous maps $S^1\to K$,
%$\Omega K \subset LK$ is the inclusion, and $LK\to LK/T_\R$ is the
%projection. Moreover there is a weak homotopy equivalence $\Fl_\af
%\cong LK/T_\R$ \cite{Pet}. Therefore there are induced ``wrong way" maps
%\begin{align}\label{E:wrongwaydef}
%    \varpi:H^T(\tFl_\af)\to H^T(\tGr)
%\end{align}
%and $H_T(\Gr)\to H_T(\Fl_\af)$. These
%maps are interesting, whereas the standard map $H^T(\tGr) \to
%H^T(\tFl_\af)$, induced from the projection $\tFl_\af \to \tGr$,
%is an inclusion map. 
%
%Neither the Pontryagin product on $H_T(\Gr_G)$, nor the
%wrong way map $\varpi$, exists under $T_\af$-equivariance.

 % Mark's chapter


\begin{thebibliography}{99}

\bibitem{[Ag]} 
S. Agnihotri, 
\textit{Quantum cohomology and the Verlinde algebra}, 
Ph.D. Thesis, University of Oxford, 1995.

\bibitem{AJS} 
H.~H.~Andersen, J.~C.~Jantzen, W.~Soergel,
\textit{Representations of quantum groups at a $p$th root of unity and of
semisimple groups in characteristic $p$: independence of $p$},
Ast\'erisque No. 220 (1994), 321 pp.

\bibitem{Ar} 
A. Arabia,
\textit{Cycles de Schubert et cohomologie \'equivariante de $K/T$},
Invent. Math. 85 (1986), no. 1, 39--52.

\bibitem{Assaf:2008} 
S. Assaf, 
\textit{A combinatorial realization of Schur-Weyl duality via crystal graphs and dual equivalence graphs},
20th Annual International Conference on Formal Power Series and Algebraic Combinatorics (FPSAC 2008), pp. 141--152, 
Discr. Math. Theor. Comput. Sci. Proc., AJ, Assoc. Discrete Math. Theor. Comput. Sci., Nancy, France, 2008.

\bibitem{Assaf:2010}
S.~Assaf,
\textit{Dual Equivalence Graphs I: A combinatorial proof of LLT and Macdonald positivity},
preprint {\tt arXiv:1005.3759}.

\bibitem{AssafBilley} 
S. Assaf, S. Billey, 
\textit{Affine dual equivalence and $k$-Schur functions},
J. Comb. {\bf 3} (2012), no. 3, 343--399.

\bibitem{BandlowSchillingZabrocki} 
J. Bandlow, A. Schilling, M. Zabrocki,
\textit{The Murnaghan--Nakayama rule for $k$-Schur functions}, 
J. of Comb. Th., Ser. A Volume 118, Issue 5, June 2011, 1588--1607.

\bibitem{[BMW]} 
L. B\'egin, P. Mathieu, M.A. Walton,
\textit{${\hat su}(3)_k$ fusion coefficients}, 
Mod. Phys. Lett. A {\bf 7} (1995), no. 35, 3255--3265.

\bibitem{[BKMW]} 
L. B\'egin, A. Kirillov, P. Mathieu, M.A. Walton,
\textit{Berenstein-Zelevinsky triangles, elementary couplings and fusion rules}, 
Lett. Math. Phys. 28 (1993), 257--268.

\bibitem{BK} 
E.~Bender, D.E.~Knuth, 
\textit{Enumeration of plane partitions}, 
J. Comb. Theory. Ser. A. 13 (1972) 40--54.

\bibitem{BBTZ:2011}
C.~{Berg}, N.~{Bergeron}, H.~{Thomas}, M.~{Zabrocki},
\textit {Expansion of $k$-Schur functions for maximal $k$-rectangles within the affine nilCoxeter algebra},
J. Comb. {\bf 3} (2012), no. 3, 563--589.

\bibitem{BJV:2009} 
C.~Berg, B.~C.~Jones, M.~Vazirani, 
\textit{A bijection on core partitions and a parabolic quotient of the affine symmetric group},
J. Combin. Theory Ser. A 116 (8) (2009) 1344--1360.

\bibitem{BSS:2012a}
C.~Berg, F.~Saliola, L.~Serrano,
\textit{Pieri operators on the affine nilCoxeter algebra},
preprint {\tt arXiv:1203.4465}.

\bibitem{BSS:2012}
C.~Berg, F.~Saliola, L.~Serrano,
\textit{Combinatorial expansions for families of non-commutative k-Schur functions},
preprint {\tt arXiv:1208.4857}.

\bibitem{BS} 
N.~Bergeron, F.~Sottile, 
\textit{Skew Schubert functions and the Pieri formula for flag manifolds}, 
Trans. Amer. Math. Soc.  354  (2002),  no. 2, 651--673.

\bibitem{BCF} A. Bertram, I. Ciocan-Fontanine, W. Fulton,
\textit{Quantum multiplication of Schur polynomials},
J. Algebra, 219(2) (1999), 728--746.

\bibitem{Bi}
S.~Billey,
\textit{Kostant Polynomials and the Cohomology Ring for $G/B$},
Duke Math. J. 96 (1999), 205--224.

\bibitem{BJS} 
S.~Billey, W. Jockusch, R.~Stanley, 
\textit{Some combinatorial properties of Schubert polynomials}, 
J. Algebraic Combin. 2 (1993), 345--374. 

\bibitem{BH} 
S.~Billey, M.~Haiman, 
\textit{Schubert polynomials for the classical groups},
J. Amer. Math. Soc.  8  (1995),  no. 2, 443--482.

\bibitem{BL} 
S.~Billey, T.K.~Lam, 
\textit{Vexillary elements in the hyperoctahedral group},
J. Algebraic Combin.  8 (1998),  no. 2, 139--152.

\bibitem{BB} 
A.~Bj\"{o}rner, F.~Brenti, 
\textit{Affine permutations of type $A$}, 
Electronic Journal of Combinatorics \textbf{3}, (1996), Research paper 18.

\bibitem{Bott}
R. Bott, 
\textit{The space of loops on a Lie group},
Michigan Math Journal, {\bf 5} (1958), 35--61.

\bibitem{BravoLapointe:2009}
D.~Bravo, L.~Lapointe,
\textit{A recursion formula for k-Schur functions},
J. Combin. Theory Ser. A 116 (2009), no. 4, 918--935. 

\bibitem{BFP} 
F.~Brenti, S.~Fomin, A.~Postnikov,
\textit{Mixed Bruhat operators and Yang-Baxter equations for Weyl groups},
Internat. Math. Res. Notices 1999, no. 8, 419--441.

\bibitem{Bri} 
J.~Brichard, 
\textit{The Center of the Nilcoxeter and 0-Hecke Algebras}, 
preprint {\tt arXiv:0811.2590}.

\bibitem{Buc2} 
A.~Buch, 
\textit{Stanley symmetric functions and quiver varieties},
J. Algebra 235 (2001),  no. 1, 243--260.

\bibitem{Buc} 
A.~Buch, 
\textit{A Littlewood-Richardson rule for the $K$-theory of Grassmannians}, 
Acta Math. 189 (2002), no. 1, 37--78. 

\bibitem{BKSTY} 
A.~Buch, A.~Kresch, M.~Shimozono, H.~Tamvakis, A.~Yong, 
\textit{Stable Grothendieck polynomials and $K$-theoretic factor sequences}, 
Math. Annalen 340 (2008), 359--382.

\bibitem{BKT:2003}
A.~S.~Buch, A.~Kresch, H.~Tamvakis,
\textit{Gromov-Witten invariants on Grassmannians},
J. Amer. Math. Soc. \textbf{16} (2003), no. 4, 901--915.

\bibitem{CMP}
P.-E. Chaput, L.~Manivel, N.~Perrin,
\textit{Affine symmetries of the equivariant cohomology ring of rational homogeneous spaces}, 
Math. Res. Letters 16 (2009), 7--21.

\bibitem{Che}
I.~Cherednik, 
\textit{Double affine Hecke algebras},
London Mathematical Society Lecture Note Series, 319. 
Cambridge University Press, Cambridge, 2005.

\bibitem{CH:2008}
L.-C.~Chen, M.~Haiman,
\textit{A representation-theoretic model for $k$-atoms},
Talk 1039-05-169 at the AMS meeting in Claremont, McKenna, May 2008.

\bibitem{Coskun:2009}
I.~Coskun,
\textit{A Littlewood-Richardson rule for two-step flag varieties},
Invent. Math. \textbf{176} (2009), no. 2, 325--395. 

\bibitem{[Cu]} 
C.J. Cummins, 
\textit{$su(n)$ and $sp(2n)$ WZE fusion rules},
J. Phys. A {\bf 24} (1991), no. 2, 391--400.

\bibitem{DalalMorse:2012}
A.~J.~Dalal, J.~Morse,
\textit{The ABC's of affine Grassmannians and Hall--Littlewood polynomials},
DMTCS proc. \textbf{AR} (2012) 935--946.

\bibitem{DalalMorse:2013}
A.~J.~Dalal, J.~Morse,
\textit{A $t$-generalization for Schubert representatives of the affine Grassmannian},
preprint (2013).
 
\bibitem{Dem}
M. Demazure,
\textit{D\'esingularization des vari\'et\'es de Schubert},
Annales E.N.S, 6 (1974) 53--88.

\bibitem{Denton:2012} 
T.~Denton, 
\textit{Canonical decompositions of affine permutations, affine codes, and split $k$-Schur functions},
Electron. J. Combin. {\bf 19} (2012), no. 4, Paper 19, 41 pp.

\bibitem{DM:2007} 
F.~Descouens, H.~Morita,
\textit{Factorization formulas for Macdonald polynomials},
Europ. J. of Comb., Volume 29, Issue 2, February 2008, pp. 395--410.

\bibitem{EG} 
P.~Edelman, C.~Greene, 
\textit{Balanced tableaux}, 
Adv. Math. 63 (1987), 42--99.

\bibitem{ErikssonEriksson:1998} 
H. Eriksson, K. Eriksson, 
\textit{Affine Weyl groups as infinite permutations}, 
Electron. J. Combin. 5 (1998) Research Paper 18, 32 pp. (electronic).

\bibitem{Fomin:1995} 
S.~Fomin, 
\textit{Schensted algorithms for dual graded graphs}, 
J. Algebraic Combin., 4 (1995), no. 1, pp. 5--45.

\bibitem{FGP}
S.~Fomin, S.~Gelfand, A.~Postnikov, 
\textit{Quantum Schubert polynomials},
J. Amer. Math. Soc. 10 (1997), no. 3, 565--596. 

\bibitem{FG:1998}
S.~Fomin, C.~Greene,
\textit{Noncommutative Schur functions and their applications},
Discrete Mathematics 193 (1998), 179--200. 
Reprinted in  the Discrete Math Anniversary Volume 306 (2006), 1080--1096. 

\bibitem{FK:K} 
S.~Fomin, A.N.~Kirillov, 
\textit{Grothendieck polynomials and the Yang-Baxter equation},
Proc. 6th Intern. Conf. on Formal Power Series and Algebraic Combinatorics, DIMACS, 1994, 183--190.

\bibitem{FK:C} 
S.~Fomin, A.N.~Kirillov, 
\textit{Combinatorial $B_n$-analogues of Schubert polynomials},
Trans. Amer. Math. Soc.  348  (1996),  no. 9, 3591--3620.

\bibitem{FR} 
S.~Fomin, N.~Reading, 
\textit{Root systems and generalized associahedra},
Lecture notes for the IAS/Park City Graduate Summer School in Geometric Combinatorics.

\bibitem{FS} 
S.~Fomin, R.~Stanley, 
\textit{Schubert polynomials and the nilCoxeter algebra}, 
Advances in Math. 103 (1994), 196--207. 

\bibitem{FultonWoodward:2004}
W.~Fulton, C.~Woodward,
\textit{On the quantum product of Schubert classes},
J. Algebraic Geom. 13 (2004), 641--661.

\bibitem{GH:1996} 
A.~Garsia, M.~Haiman, 
\textit{Some natural bigraded $S_n$-Modules and $q,t$-Kostka coefficients},
Elect. J. of Comb., R24, Volume 3(2), 1996.  60 pages.

\bibitem{GP:1992} 
A.~Garsia, C.~Procesi, 
\textit{On certain graded Sn-modules and the q-Kostka polynomials},
Adv. Math. 94 (1992), no. 1, 82--138.

\bibitem{GR:1996} 
A.~Garsia, J.~Remmel, 
\textit{Plethystic formulas and positivity for q,t-Kostka coefficients},
Mathematical essays in honor of Gian-Carlo Rota (Cambridge, MA, 1996), 245--262, 
Progr. Math., 161, Birkh\"auser Boston, Boston, MA, 1998.

\bibitem{GT:1996} 
A.~Garsia, G.~Tesler, 
\textit{Plethystic formulas for Macdonald q,t-Kostka coefficients}, 
Adv. Math. 123 (1996), no. 2, 144--222.

\bibitem{Gin} 
V.~Ginzburg, 
\textit{Perverse sheaves on a loop group and Langlands' duality}, 
preprint \texttt{math.AG/9511007}.

\bibitem{Gin:1997} 
V. Ginzburg,
\textit{Geometric methods in the representation theory of Hecke algebras and quantum groups.
Notes by Vladimir Baranovsky.},
NATO Adv. Sci. Inst. Ser. C Math. Phys. Sci., 514,
Representation theories and algebraic geometry
(Montreal, PQ, 1997),
127--183, Kluwer Acad. Publ., Dordrecht, 1998.

\bibitem{[GW]} 
F. Goodman, H. Wenzl, 
\textit{Littlewood-Richardson coefficients for Hecke algebras at roots of unity},
Adv of Math, {\bf 82} (1990) 244--265.

\bibitem{GKM:1998}
M.~Goresky, R.~Kottwitz, R.~MacPherson,
\textit{Equivariant cohomology, Koszul duality, and the localization theorem},
Invent. Math.  131  (1998),  no. 1, 25--83.

\bibitem{GKM}
M.~Goresky, R.~Kottwitz, R.~MacPherson,
\textit{Homology of affine Springer fibers in the unramified case},
Duke Math. J. 121 (2004), 509--561.

\bibitem{Gr} 
W. Graham, 
\textit{Positivity in equivariant Schubert calculus},
Duke Math. J. 109 (2001), no. 3, 599--614.

\bibitem{Haiman:1999} 
M.~Haiman, 
\textit{Macdonald polynomials and geometry}, 
New Perspectives in Geometric Combinatorics, MSRI Publications 37 (1999),  207--254.

\bibitem{Haiman:2001} 
M.~Haiman, 
\textit{Hilbert schemes, polygraphs, and the Macdonald positivity conjecture},
J. Amer. Math. Soc. 14 (2001), 941--1006.

\bibitem{Hai} 
M.~Haiman, 
\textit{Dual equivalence with applications}, 
including a conjecture of Proctor, 
Discrete Mathematics 99 (1992), 79--113.

\bibitem{HY}
Z.~Hamaker, B.~Young, 
\textit{Relating Edelman-Greene insertion to the Little map}, 
preprint {\tt arXiv:1210.7119}.

\bibitem{Hum}
J.~E.~Humphreys,
\textit{Reflection groups and Coxeter groups},
Cambridge Studies in Advanced Mathematics, 29.
Cambridge University Press, Cambridge, 1990. xii+204 pp.

\bibitem{IMN} 
T.~Ikeda, L.~Mihalcea, H.~Naruse,
\textit{Double Schubert polynomials for the classical groups}, 
Adv. Math. 226 (2011), no. 1, 840--886.

\bibitem{Jing} 
N.~Jing, 
\textit{Vertex operators and Hall-Littlewood symmetric functions}, 
Adv. Math. 87 (1991) 226--248.

\bibitem{Kac}
V.G.~Kac,
\textit{Infinite-dimensional Lie algebras},
Third edition. Cambridge University Press, Cambridge, 1990. xxii+400 pp. ISBN: 0-521-37215-1.

\bibitem{Kas} 
M. Kashiwara,
\textit{The flag manifold of Kac-Moody Lie algebra},
Algebraic analysis, geometry, and number theory (Baltimore, MD, 1988),
161--190, Johns Hopkins Univ. Press, Baltimore, MD, 1989.

\bibitem{KS}
M.~Kashiwara, M.~Shimozono, 
\textit{Equivariant K-theory of affine flag manifolds and affine Grothendieck polynomials},
Duke Math. J. 148 (2009),  no. 3, 501--538.

\bibitem{KirillovNoumi:1999} 
A.~N.~Kirillov, M.~Noumi, 
\textit{$q$-difference raising operators for Macdonald polynomials and the integrality of 
transition coefficients}. 
Algebraic methods and $q$-special functions (Montr\'eal, QC, 1996), 
227--243, CRM Proc. Lecture Notes, 22, Amer. Math. Soc., Providence, RI, 1999.

\bibitem{Knop:1996} 
F.~Knop, 
\textit{Integrality of two variable Kostka functions}, 
J. Reine Angew. Math. 482 (1997), 177--189.

\bibitem{Knuth} 
D.~Knuth, 
\textit{ Permutations, matrices, and generalized Young tableaux}, 
Pac. J. Math. 34 (1970), 709--727.

\bibitem{KLS}
A.~Knutson, T.~Lam, D.~Speyer, 
\textit{Positroid varieties I: juggling and geometry}, 
preprint {\tt arXiv:0903.3694}.

\bibitem{KnutsonTao:2003}
A.~Knutson, T.~Tao,
\textit{Puzzles and (equivariant) cohomology of Grassmannians},
Duke Math. J. \textbf{119} (2003), no. 2, 221--260.

\bibitem{KK}
B.~Kostant, S.~Kumar,
\textit{The nil Hecke ring and cohomology of $G/P$ for a Kac--Moody group $G$},
Adv. in Math. 62 (1986),  no. 3, 187--237.

\bibitem{KK:K}
B.~Kostant, S.~Kumar,
\textit{T-equivariant K-theory of generalized flag varieties},
J. Differential Geom. 32 (1990), no. 2, 549--603.

\bibitem{Kum}
S.~Kumar, 
\textit{Kac-Moody groups, their flag varieties and representation theory},
Progress in Mathematics, 204. Birkh\"{a}user Boston, Inc., Boston, MA, 2002. xvi+606 pp.

\bibitem{Lam:2006}
T.~Lam,
\textit{Affine Stanley symmetric functions},
Amer. J. Math.  128  (2006),  no. 6, 1553--1586.

\bibitem{Lam:2008} 
T.~Lam, 
\textit{Schubert polynomials for the affine Grassmannian}, 
J. Amer. Math. Soc. 21 (2008), no. 1, 259--281.

\bibitem{Lam:ASP}
T.~Lam, 
\textit{Affine Schubert classes, Schur positivity, and combinatorial Hopf algebras},
Bull. Lond. Math. Soc. 43 (2011), no. 2, 328--334. 

\bibitem{LLMS:2006} 
T.~Lam, L.~Lapointe, J.~Morse, M.~Shimozono,
\textit{Affine insertion and Pieri rules for the affine Grassmannian},
Mem. Amer. Math. Soc. 208 (2010), no. 977, xii+82 pp. ISBN: 978-0-8218-4658-2.

\bibitem{LLMS:2010} 
T.~Lam, L.~Lapointe, J.~Morse, M.~Shimozono,
\textit{$k$-shape poset and branching of $k$-Schur functions}, 
Mem. Amer. Math. Soc., posted October 16, 2012.
ISSN 1947-6221(online) ISSN 0065-9266(print) 
{\tt arXiv:1007.5334}.

\bibitem{LaLiMiSh}
T.~Lam, C.~Li, L.~Mihalcea, M.~Shimozono,
\textit{Quantum $K$-theory of $G/B$ and $K$-homology of affine Grassmannians},
in preparation.

\bibitem{LSS:C}
T.~Lam, A.~Schilling, M.~Shimozono, 
\textit{Schubert Polynomials for the affine Grassmannian of the symplectic group},
Math. Z. 264, (2010), 765--811.

\bibitem{LSS}
T.~Lam, A.~Schilling, M.~Shimozono, 
\textit{$K$-theoretic Schubert calculus of the affine Grassmannian},
Compositio Mathematica 146 Issue 4 (2010) 811--852. 

\bibitem{LS:affineLittle} 
T.~Lam, M.~Shimozono, 
\textit{A Little bijection for affine Stanley symmetric functions},
Seminaire Lotharingien de Combinatoire 54A (2006), B54Ai.

\bibitem{LS:DGG} 
T.~Lam, M.~Shimozono, 
\textit{Dual graded graphs for Kac-Moody algebras},
Algebra and Number Theory 1 (2007), 451--488.

\bibitem{LS:QH} 
T.~Lam, M.~Shimozono, 
\textit{Quantum cohomology of $G/P$ and homology of affine Grassmannian},
Acta. Math. 204 (2010), 49--90.

\bibitem{LS:kdoubleSchur}
T.~Lam, M.~Shimozono,
\textit{k-Double Schur functions and equivariant (co)homology of the affine Grassmannian},
Math. Ann. 356 (2013), no. 4, 1379--1404.

\bibitem{LS:EQToda}
T.~Lam, M.~Shimozono, 
\textit{From double quantum Schubert polynomials to $k$-double Schur functions via the Toda lattice}, 
preprint {\tt arXiv:1109.2193}.

\bibitem{LM:1998} 
L.~Lapointe, J.~Morse, 
\textit{Tableaux statistics for two part Macdonald polynomials}, 
Algebraic combinatorics and quantum groups, 61--84, World Sci. Publ., River Edge, NJ, 2003.

\bibitem{LLM:2003}
L.~Lapointe, A.~Lascoux, J.~Morse,
\textit{Tableau atoms and a new Macdonald positivity conjecture},
Duke Math. J.  116  (2003),  no. 1, 103--146.

\bibitem{LM:2003}
L.~Lapointe, J. Morse, 
\textit{Schur function analogs for a filtration of the symmetric function space}, 
J. Comb. Th. A 101 (2)  (2003) pp. 191--224.

\bibitem{LM2:2003} 
L.~Lapointe, J.~Morse, 
\textit{Schur function identities, their $t$-analogs, and $k$-Schur irreducibility},
Advances in Mathematics 180 (2003), no. 1, 222--247.

\bibitem{LM:2005} 
L.~Lapointe, J.~Morse,
\textit{Tableaux on $k+1$-cores, reduced words for affine permutations, and $k$-Schur expansions}, 
J. Combin. Theory Ser. A 112 (2005), no. 1, 44--81.

\bibitem{LM:2007}
L.~Lapointe, J.~Morse,
\textit{A $k$-tableau characterization of $k$-Schur functions},
Adv. Math.  213 (2007),  no. 1, 183--204.

\bibitem{LM:2008} 
L.~Lapointe, J.~Morse,
\textit{Quantum cohomology and the $k$-Schur basis}, 
Trans. Amer. Math. Soc. 360 (2008), pp. 2021--2040. 

\bibitem{LMW} 
L.~Lapointe, J.~Morse, M.~Wachs, 
\textit{Type $A$-affine Weyl group and the $k$-Schur functions}, 
unpublished.

\bibitem{LapointePinto}
L.~Lapointe, M.-E.~Pinto,
\textit{Charge on tableaux and the poset of $k$-shapes},
preprint (2013), {\tt arXiv:1305.2438}.

\bibitem{LapointeVinet:1996} 
L.~Lapointe, L.~Vinet, 
\textit{A short proof of the integrality of the Macdonald (q,t)-Kostka coefficients,},
Duke Math. J. Volume 91, Number 1 (1998), 205--214.

\bibitem{Lascoux:1990}
A.~Lascoux,
\textit{Anneau de Grothendieck de la variet\'e de drapeaux},
In ``The Grothendieck Festschrift,volume III'' of Progr Math, Boston, Birkh\"auser,
(1990) 1--34.

\bibitem{Las} 
A.~Lascoux,
\textit{Classes de Chern des vari\'et\'es de drapeaux},
C. R. Acad. Sci. Paris S\'er. I Math. 295 (1982), no. 5, 393--398.

\bibitem{Lascoux:2001} 
A.~Lascoux, 
\textit{Ordering the affine symmetric group}, 
in Algebraic Combinatorics and Applications (Gossweinstein, 1999), 219--231, Springer, Berlin (2001).

\bibitem{LLT:1993} 
A.~Lascoux, B.~Leclerc, J.-Y.~Thibon, 
\textit{Fonctions de Hall-Littlewood et Polyn\^omes de Kostka-Foulkes aux racines de l'unit\'e}, 
Comptes Rendus de l'Academie des Sciences, Paris, 316 (1993), pp. 1--6. 

\bibitem{LLT:1995}
A.~Lascoux, B.~Leclerc, J.-Y.~Thibon,
\textit{Crystal graphs and $q$-analogues of weight multiplicities for the root system $A_n$},
Lett. Math. Phys. 35 (1995), no. 4, 359--374.

\bibitem{LLT} 
A.~Lascoux, B.~Leclerc, J.-Y.~Thibon, 
``The plactic monoid'' in Algebraic Combinatorics on Words, M. Lothaire ed., 
Cambridge University Press, 2002. 

\bibitem{LS:1981} 
A.~Lascoux, M.-P.~Sch\"utzenberger, 
\textit{Le monoide plaxique},
Quaderni della Ricerca scientifica 109  (1981), 129--156.

\bibitem{LS:Schub} 
A.~Lascoux, M.-P.~Sch\"{u}tzenberger, 
\textit{Structure de Hopf de l'anneau de cohomologie et de l'anneau 
de Grothendieck d'une vari\'{e}t\'{e} de drapeaux}, 
C. R. Acad. Sci. Paris Ser. I Math. 295 (1982), no. 11, 629--633.

\bibitem{LS:SchubertPoly}
A.~Lascoux, M.-P.~Sch\"{u}tzenberger, 
\textit{Polyn\^omes de Schubert}, 
C. R. Acad. Sci. Paris S\'er. I Math. 294 (1982), no. 13, 447--450. 

\bibitem{LS} 
A.~Lascoux, M.-P.~Sch\"{u}tzenberger, 
\textit{Schubert polynomials and the Littlewood-Richardson rule}, 
Lett. Math. Phys. 10(2-3) (1985), 111--124.

\bibitem{LT:2000}
B.~Leclerc, J.-Y.~Thibon,
\textit{Littlewood-Richardson coefficients and Kazhdan--Lusztig polynomials},
Combinatorial methods in representation theory (Kyoto, 1998), 155--220, 
Adv. Stud. Pure Math., 28, Kinokuniya, Tokyo, 2000.

\bibitem{Leeuwen:1999} 
M.~van~Leeuwen, 
\textit{Edge sequences, ribbon tableaux, and an action of affine permutations}, 
Europ. J. Combinatorics 20 (1999), 179--195.

\bibitem{Lenart:2000}
C.~Lenart, 
\textit{Combinatorial aspects of the $K$-theory of Grassmannians},
Ann. Combin. 4 (2000) 67--82.

\bibitem{LP:2007}
C.~Lenart, A.~Postnikov,
\textit{Affine Weyl groups in K-theory and representation theory},
Int. Math. Res. Not. (2007), no. 12, Art. ID rnm038, 65 pp.

\bibitem{LenartSchilling:2012}
C.~Lenart, A.~Schilling,
\textit{Crystal energy functions via the charge in types $A$ and $C$},
Math. Zeitschrift \textbf{273(1)} (2013), 401--426.

\bibitem{LL} 
N.C.~Leung, C.~Li, 
\textit{Gromov-Witten invariants for G/B and Pontryagin product for $\Omega K$},
preprint {\tt arXiv:0810.4859}.

\bibitem{Lit} 
D.~Little, 
\textit{Combinatorial aspects of the Lascoux-Sch\"{u}tzenberger tree},
Adv. Math. 174  (2003),  no. 2, 236--253.

\bibitem{LR:1934}
D.~E.~Littlewood, A.~R.~Richardson, 
\textit{Group characters and algebra},
Phil. Trans. Royal Soc. A (London) 233 (1934) 99--141.

\bibitem{Mac:1988} 
I.~G.~Macdonald, 
\textit{A new class of symmetric functions},
Publ. I.R.M.A. Strasbourg, Actes $20^{e}$ S\'{e}minar Lotharingien,131--171.

\bibitem{Mac:1995}
I.~G.~Macdonald,
\textit{Symmetric Functions and Hall Polynomials, Second Edition}, 
Oxford University Press, 1995.

\bibitem{McN} 
P.~McNamara, 
\textit{Cylindric skew Schur functions}, 
Adv. in Math. 205 (1) (2006), 275--312.

\bibitem{Meszaros:2011}
K.~M\'esz\'aros, private communication, October 2011.

\bibitem{MisraMiwa:1990} 
K.C. Misra, T. Miwa, 
\textit{Crystal base of the basic representation of $U_q ( {\mathfrak sl}_n )$}, 
Commun. Math. Phys. 134 (1990), 79--88.

\bibitem{Morse} 
J.~Morse,
\textit{Combinatorial aspects of affine $K$-theory},
Adv. Math. 229, No. 5 (2012), pp. 2950--2984.

\bibitem{MS:2012}
J.~Morse, A.~Schilling,
\textit{A combinatorial formula for fusion coefficients},
DMTCS proc AR (2012) 735--744.

\bibitem{MS:2013}
J.~Morse, A.~Schilling,
\textit{Crystal operators and flag Gromov-Witten invariants},
preprint

\bibitem{Mur:1937}
D.~F.~Murnaghan,
\textit{The Characters of the Symmetric Group},
Amer. J. Math. 59 (1937), no. 4, 739--753. 

\bibitem{Nakashima:1993}
T.~Nakashima,
\textit{Crystal base and a generalization of the Littlewood-Richardson rule for the classical Lie algebras},
Comm. Math. Phys. 154 (1993), no. 2, 215--243. 

\bibitem{Nak:1941}
T.~Nakayama, 
\textit{On some modular properties of irreducible representations of symmetric groups. I and II.},
Jap. J. Math.  17  (1941) 411--423 and
Jap. J. Math. 18 (1941) 89--108.

\bibitem{NY:1997}
A.~Nakayashiki, Y.~Yamada,
\textit{Kostka polynomials and energy functions in solvable lattice models},
Selecta Math. (N.S.) 3 (1997), no. 4, 547--599.

\bibitem{Pet}
D.~Peterson,
Lecture Notes at MIT, 1997.

\bibitem{Pon}
S.~Pon,
\textit{Affine Stanley symmetric functions for classical types},
J. Algebraic Combin. {\bf 36} (2012), no. 4, 595--622.
and Ph.D Thesis, UC Davis, 2010.

\bibitem{Pos} 
A.~Postnikov, 
\textit{Affine approach to quantum Schubert calculus}, 
Duke Math. J. 128 (2005),  no. 3, 473--509.

\bibitem{Pr} 
P.~Pragacz,
\textit{Algebro-geometric applications of Schur $S$- and $Q$-polynomials}, 
Topics in invariant theory (Paris, 1989/1990),
130--191, Lecture Notes in Math., 1478, Springer, Berlin,
1991.

\bibitem{PR} 
P.~Pragacz, J.~Ratajski,
\textit{Formulas for Lagrangian and orthogonal degeneracy loci;
$\tilde Q$-polynomial approach.}, 
Compositio Math. 107 (1997), no. 1, 11--87.

\bibitem{Q} D. Quillen, unpublished.

\bibitem{RamYip:2011}
A.~Ram, M.~Yip.,
\textit{A combinatorial formula for {M}acdonald polynomials},
Adv. Math. 226 (2011), 309--331.

\bibitem{Robinson:1938} 
G. de B. Robinson, 
\textit{On the Representations of the Symmetric Group},
Amer. J. Math. 60 (3), (1938), 745--760.

\bibitem{Ruij}
S. N. M. Ruijsenaars, \textit{Complete integrability of the relativistic
Calogero-Moser system and elliptic function identities},
Comm. Math.  Phys. {\bf 110} (1987), 191--213.

\bibitem{Sagan} 
B.~Sagan, 
\textit{The Symmetric Group}, 
Graduate texts in mathematics, 203, Springer-Verlag, New York, 2001, ISBN 0387950672.

\bibitem{sage-combinat}
The {S}age-{C}ombinat community.
\newblock {S}age-{C}ombinat: enhancing {S}age as a toolbox for computer
  exploration in algebraic combinatorics, 2008.
\newblock {\tt http://combinat.sagemath.org}.

\bibitem{Sahi:1996} 
S.~Sahi, 
\textit{Interpolation, integrality, and a generalization of Macdonald's polynomials}, 
Internat. Math. Res. Notices (1996), no. 10, 457--471.

\bibitem{Schensted:1961} 
C.~Schensted, 
\textit{Longest increasing and decreasing subsequences}, 
Can. J. Math. 13 (1961), 179--191.

\bibitem{SchillingWarnaar:1999} 
A.~Schilling, S.~O.~Warnaar, 
\textit{Inhomogeneous lattice paths, generalized Kostka-Foulkes polynomials, and $A_{n-1}$-supernomials},
Comm. Math. Phys.  202 (1999), 359--401.

\bibitem{Schubert:1879}
H.~Schubert, 
\textit{Kalk\"ul der abz\"ahlenden Geometrie}. (German) 
[Calculus of enumerative geometry] Reprint of the 1879 original. With an introduction by Steven L. Kleiman. 
Springer-Verlag, Berlin-New York, 1979. 349 pp. ISBN: 3-540-09233-1 01A75.

\bibitem{Shimozono:2001} 
M.~Shimozono, 
\textit{A cyclage poset structure for Littlewood-Richardson tableaux}, 
European J. Combin.  22 (2001), 365--393. 

\bibitem{Shimozono2:2001} 
M. Shimozono, 
\textit{Multi-atoms and monotonicity of generalized Kostka polynomials}, 
European J. Combin.  22 (2001), 395--414. 

\bibitem{ShimozonoWeyman} 
M.~Shimozono, J.~Weyman, 
\textit{Graded characters of modules supported in the closure of a nilpotent conjugacy class}, 
European J. Combin. 21 (2000), 257--288.

\bibitem{ShimozonoZabrocki} 
M.~Shimozono, M.~Zabrocki, 
\textit{Hall-Littlewood vertex operators and generalized Kostka polynomials}, 
Adv. in Math.,  158 (2001), pp. 66--85.

\bibitem{Sta} 
R.~Stanley, 
\textit{On the number of reduced decompositions of elements of Coxeter groups}, 
European J. Combinatorics 5 (1984), 359--372. 

\bibitem{EC2} 
R.~Stanley, 
\textit{Enumerative Combinatorics}, 
Vol 2, Cambridge Studies in Advanced Mathematics, Cambridge University Press, 2001.

\bibitem{sage}
W.\thinspace{}A. Stein et~al.
\newblock {\em {S}age {M}athematics {S}oftware ({V}ersion 5.4)}.
\newblock The Sage~Development Team, 2012.
\newblock {\tt http://www.sagemath.org}.

\bibitem{TY} 
H.~Thomas, A.~Yong, 
\textit{A jeu de taquin theory for increasing tableaux, with applications to $K$-theoretic Schubert calculus}, 
Algebra Number Theory 3 (2009),  no. 2, 121--148.

\bibitem{TY1} 
H.~Thomas, A.~Yong, 
\textit{Longest strictly increasing subsequences, Plancherel measure and the Hecke insertion algorithm}, 
Advances in Applied Math, to appear.

\bibitem{Waugh:1999} 
D.~Waugh, 
\textit{Upper bounds in affine Weyl groups under the weak order}, 
Order 16 (1999), 77--87.

\bibitem{[Wi]} 
E. Witten, 
\textit{The Verlinde algebra and the cohomology of the Grassmanian}, 
``Geometry, Topology, and Physics'', 357--422, Conf. Proc. Lecture Notes Geom. Topology, IV, Internat Press,
Cambridge MA, 1995.

\bibitem{[Wa]} 
M. Walton, 
\textit{Fusion rules in Wess-Zumino-Witten models},
Nuclear Phys. B, {\bf 340} (1990), no. 2-3, 777-790.

\bibitem{ZabrockiThesis}
M.~Zabrocki, \textit{PhD Thesis}, On the action of the Hall-Littlewood vertex operator
(1998).


\bibitem{Zabrocki:1998} 
M.~Zabrocki, 
\textit{A Macdonald vertex operator and standard tableaux statistics for the two-Column $(q,t)$-Kostka coefficients}, 
Elect. J. of Comb. 5 (1998),~46 pages.

\bibitem{Zelevinsky} 
A. V.~Zelevinsky,
\textit{Representations of finite classical groups: a Hopf algebra approach.}  
Springer Lecture Notes, 869, (1981).

\end{thebibliography}
\end{document}